\documentclass[oneside]{amsart}
\usepackage[utf8]{inputenc}
\usepackage{import}
\input{preamble.sty}
\usepackage[backend=bibtex,style=alphabetic,sorting=nty,doi=false,isbn=false,url=false]{biblatex}
\addbibresource{bibliography.bib}


\title[Separability of products in relatively hyperbolic groups]{Quasiconvexity of virtual joins and separability of products in relatively hyperbolic groups}

\author{Ashot Minasyan, Lawk Mineh}
\address{CGTA, School of Mathematical Sciences,
University of Southampton, Highfield, Southampton, SO17~1BJ, United
Kingdom.}
\address{Mathematical Institute, University of Bonn,
53115 Bonn, Germany.}
\email{aminasyan@gmail.com, lawk@math.uni-bonn.de}


\begin{document}

\begin{abstract}
A relatively hyperbolic group $G$ is said to be QCERF if all finitely generated relatively quasiconvex subgroups are closed in the profinite topology on $G$. 

Assume that $G$ is a QCERF relatively hyperbolic group with double coset separable (eg, virtually polycyclic) peripheral subgroups. Given any two finitely generated relatively quasiconvex subgroups $Q,R \leqslant G$ we prove the existence of finite index subgroups $Q'\leqslant_f Q$ and $R' \leqslant_f R$ such that the join $\langle Q',R'\rangle$ is again relatively quasiconvex in $G$. We then show that, under the minimal necessary hypotheses on the peripheral subgroups, products of finitely generated relatively quasiconvex subgroups are closed in the profinite topology on \(G\). From this we obtain the separability of products of finitely generated subgroups for several classes of groups, including limit groups, Kleinian groups and balanced fundamental groups of finite graphs of free groups with cyclic edge groups.
\end{abstract}

\keywords{Relatively hyperbolic groups, relatively quasiconvex subgroups, virtual joins, double coset separability, product separability, limit groups, Kleinian groups}
\subjclass[2020]{20F67,20F65,20E26,20H10}

\maketitle

\setcounter{tocdepth}{1}
\tableofcontents

\section{Introduction}
Any group can be equipped with the \emph{profinite topology}, whose basic open sets are cosets of finite index subgroups. A subset of a group is said to be \emph{separable} if it is closed in the profinite topology. 
The trivial subgroup of a group $G$ is separable if and only if the profinite topology is Hausdorff; in this case $G$ is said to be \emph{residually finite}. 
If every finitely generated subgroup of $G$ is separable then $G$ is called \emph{LERF} (or \emph{subgroup separable}), and if the product of any two finitely generated subgroups is separable, $G$ is said to be \emph{double coset separable}. 

In this paper we will be interested in various separability properties of relatively hyperbolic groups. 
The notion of a relatively hyperbolic group was originally suggested by Gromov \cite{Gromov1987} as a generalisation of word hyperbolic groups. 
The concept was further developed by Farb \cite{FarbRHG}, Bowditch \cite{BowditchRHG}, Dru\c{t}u-Sapir \cite{DS}, Osin \cite{OsinRHG} and Groves-Manning \cite{GrovesManning}, whose various definitions were later shown to be equivalent by Hruska \cite{HruskaRHCG}. 
Relative hyperbolicity is a relative property of a group \(G\) in the sense that one must specify a collection of \emph{peripheral subgroups} \(\{H_\nu \mid \nu \in \Nu\}\) with respect to which \(G\) is relatively hyperbolic (see Definition~\ref{def:rh_gp}). 
Typical examples of relatively hyperbolic groups include geometrically finite Kleinian groups, fundamental groups of finite volume manifolds of pinched negative curvature, and small cancellation quotients of free products.
Respectively, these groups are hyperbolic relative to their maximal parabolic subgroups, their cusp subgroups and the images of the free factors (see, for example, \cite{OsinRHG}).

\subsection{Quasiconvexity of virtual joins}
Since general finitely generated subgroups of word hyperbolic (relatively hyperbolic) groups can be quite wild and
need not be separable, it is customary to restrict one's attention to quasiconvex (respectively, relatively quasiconvex) subgroups. 

\emph{Quasiconvex subgroups} play a central role in the study of word hyperbolic groups.  
They are precisely the finitely generated quasi-isometrically embedded subgroups, and, hence, they are hyperbolic themselves and are generally well-behaved. 

If $Q$ and $R$ are two quasiconvex subgroups of a hyperbolic group $G$ then the intersection $S=Q \cap R$ is also quasiconvex (see, for example, Short \cite{Short}) but the join $\langle Q, R \rangle$ need not be. 
This can be remedied by considering a \emph{virtual join} of $Q$ and $R$, which is defined as $\langle Q',R' \rangle$, for some finite index subgroups $Q' \leqslant_f Q$ and $R' \leqslant_f R$. 
The existence of a quasiconvex virtual join  $\langle Q',R' \rangle$ 
was proved by Gitik \cite{Gitik_ping-pong} under the assumption that $S=Q \cap R$ is separable in $G$. 
More precisely, Gitik's theorem states that there exist finite index subgroups $Q' \leqslant_f Q$ and $R' \leqslant_f R$ such that $Q'\cap R'=S$ and the virtual join $\langle Q',R' \rangle$ is quasiconvex in $G$; moreover, $\langle Q',R' \rangle$ will be naturally isomorphic to the amalgamated free product $Q'*_S R'$. This theorem was an important ingredient in the proof that double cosets of quasiconvex subgroups are separable in LERF hyperbolic groups (see \cite{Gitik-double_coset_sep,MinGFERF}).

In the setting of relatively hyperbolic groups, the natural sub-objects are the \emph{relatively quasiconvex subgroups}, which are themselves relatively hyperbolic in a way that is compatible with the ambient group. 
Basic examples of relatively quasiconvex subgroups are \emph{maximal parabolic subgroups} (that is, conjugates of the peripheral subgroups), \emph{parabolic subgroups} (subgroups of maximal parabolics) and finitely generated undistorted (equivalently, quasi-isometrically embedded) subgroups (see \cite{HruskaRHCG}).

In \cite{HruskaRHCG}, Hruska proved that the intersection of two relatively quasiconvex subgroups is again relatively quasiconvex. 
However, until now the existence of a relatively quasiconvex virtual join $\langle Q',R'\rangle$, for two relatively quasiconvex subgroups \(Q\) and \(R\) in a relatively hyperbolic group \(G\), such that $S=Q \cap R$ is separable in $G$, was only known in special cases:

\begin{itemize}
    \item Mart\'{i}nez-Pedroza \cite{MPComb} proved it in the case when $R \leqslant P$, for some maximal parabolic subgroup $P$ of $G$, such that $Q \cap P \subseteq R$;
    
    \item Mart\'{i}nez-Pedroza and Sisto \cite{MPS} proved it when $Q$ and $R$ have \emph{compatible parabolics} (that is, for every maximal parabolic subgroup $P$ of $G$ either $Q \cap P \subseteq R \cap P$ or $R \cap P \subseteq Q \cap P$);
    
    \item Yang \cite{Yang} (unpublished; see also McClellan's thesis \cite{McCl}) proved it when $R$ is a \emph{full subgroup} of $G$ (that is, for every maximal parabolic subgroup $P$ in $G$, $R \cap P$ is either finite or has finite index in $P$). 
\end{itemize}

Similarly to Gitik's theorem \cite{Gitik_ping-pong}, in all three cases above the authors establish an isomorphism between the virtual join $\langle Q',R' \rangle$ and the amalgamated free product $Q'*_{S'} R'$, where $S'=Q' \cap R' \leqslant_fS$.

The extra assumptions on $Q$ and $R$ in each of the above results from \cite{MPComb,MPS,Yang,McCl} imply that $Q$ and $R$ have \emph{almost compatible parabolics} (see Definition~\ref{def:almost_compatible} below). 
Unfortunately this is still a significant restriction and a more general result is desirable.
Moreover, in the absence of almost compatibility one cannot expect a virtual join to split as an amalgamated free product of $Q'$ and $R'$. 
Indeed, for example if both $Q$ and $R$ are subgroups of an abelian peripheral subgroup of $G$ then any virtual join $\langle Q',R' \rangle$ would again be abelian.

One of the goals of the present paper is to establish quasiconvexity of virtual joins without making any compatibility assumptions on $Q$ and $R$. 
However we need to impose stronger assumptions on the properties of the profinite topology on $G$ than just separability of $S=Q \cap R$: we will require the finitely generated relatively quasiconvex subgroups to be separable and the peripheral subgroups to be {double coset separable}.

\begin{definition}[QCERF] 
\label{def:QCERF} 
    We will say that a relatively hyperbolic group $G$ is \emph{QCERF} if every finitely generated relatively quasiconvex subgroup in $G$ is separable.
\end{definition}

\begin{theorem}
\label{thm:sep->qc_intro}
     Let \(G\) be a finitely generated relatively hyperbolic group. Suppose that $G$ is QCERF and the peripheral subgroups of $G$ are double coset separable. 
     If \(Q, R \leqslant G\) are finitely generated relatively quasiconvex subgroups and $S=Q \cap R$ then there exist finite index subgroups $Q'\leqslant_f Q$ and $R' \leqslant_f R$, with \(Q' \cap R' = S\), such that the virtual join $\langle Q',R'\rangle$ is relatively quasiconvex in $G$.
     
    More precisely, there exists $L \leqslant_f G$, with $S \subseteq L$, such that for any $L' \leqslant_f L$, satisfying $S \subseteq L'$, we can choose $Q'=Q\cap L' \leqslant _f Q$, and there exists $M \leqslant_f L'$, with $Q' \subseteq M$, such that for any $M' \leqslant_f M$, satisfying $Q' \subseteq M'$, we can choose $R'=R \cap M' \leqslant_f R$.
\end{theorem}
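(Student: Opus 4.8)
The plan is to split the argument into a purely geometric part and a profinite-topology part. First I would establish a \emph{combination criterion}: for suitable constants depending only on $G$, $Q$ and $R$, if finite-index subgroups $Q'\leqslant_f Q$ and $R'\leqslant_f R$ satisfy $Q'\cap R'=S$ together with a quantitative ``mutual separation modulo $S$'' condition — roughly, that no coset of $Q'$ and no coset of $R'$ fellow-travel in $\ga$, or along a common parabolic coset, for longer than a prescribed distance unless this is forced by an element of $S$ — then $\langle Q',R'\rangle$ is relatively quasiconvex in $G$. The engine behind such a criterion is Gromov's local-to-global principle for quasigeodesics, applied in the Groves--Manning cusped space associated to $(G,\mathcal H)$ (or a relative version of it in $\ga$): a path obtained by concatenating alternately long $Q'$- and $R'$-geodesic segments is forced to be a global quasigeodesic, and relative quasiconvexity of the join is read off from this. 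With the criterion in hand, the theorem reduces to \emph{achieving} the separation condition (and the equality $Q'\cap R'=S$) by passing to finite-index subgroups of $Q$ and $R$, and this is where the hypotheses on the profinite topology of $G$ enter.

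The second ingredient is the separability toolkit. By Hruska's theorem each intersection $Q\cap gRg^{-1}$ (in particular $S$ itself) is relatively quasiconvex, hence finitely generated relatively quasiconvex, hence separable in $G$ by the QCERF hypothesis; via the standard observation that a finite union of double cosets avoids any prescribed point, this produces a subgroup $L\leqslant_f G$ with $S\subseteq L$ such that for every intermediate $L'$ the ``non-parabolic'' fellow-travelling between cosets of $Q'=Q\cap L'$ and cosets of $R$ is confined to what $S$ already accounts for. The parabolic interactions are handled using double coset separability of the peripheral subgroups: only finitely many conjugacy classes of maximal parabolic subgroups meet $Q$ (respectively $R$) in an infinite subgroup, and for each relevant peripheral $P$ the subgroups $A=Q\cap P$ and $B=R\cap P$ are finitely generated (by the structure theory of relatively quasiconvex subgroups), so $AB$ is separable in $P$; this, together with separability of $P$ itself in $G$ (again QCERF, since maximal parabolics are relatively quasiconvex), yields finite-index subgroups that intersect $P$ in subgroups whose alternating products cannot be short except along $S\cap P$. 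Interleaving these finitely many separability conditions and intersecting the resulting finite-index subgroups produces $L$, and — once $Q'$ is chosen — the subgroup $M$.

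The construction then proceeds in the two stages dictated by the statement. First, using the inputs above, fix $L\leqslant_f G$ with $S\subseteq L$ so that for \emph{every} intermediate $L'\leqslant_f L$ with $S\subseteq L'$ the group $Q'=Q\cap L'$ already half-satisfies the combination criterion on the $Q$-side and has $Q'\cap R=S$. Now fix such an $L'$ and the corresponding $Q'$. Passing from $Q$ to the smaller $Q'$ creates only finitely many new ``bad'' double-coset configurations of $Q'$ against $R$ and against the finitely many relevant parabolic cosets; separating each of them (QCERF for the relatively quasiconvex ones, double coset separability of peripherals for the parabolic ones) and intersecting yields $M\leqslant_f L'$ with $Q'\subseteq M$ such that for every intermediate $M'$ the group $R'=R\cap M'$ completes the separation condition and satisfies $Q'\cap R'=S$. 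The combination criterion then gives that $\langle Q',R'\rangle$ is relatively quasiconvex, which proves the theorem; the nested ``$\forall L'\,\exists M\,\forall M'$'' shape is preserved on purpose, since it is exactly what is needed to intersect this conclusion with further finite-index conditions in the applications to product separability.

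The main obstacle is the parabolic part of the separation condition. In all previously known cases $Q$ and $R$ have (almost) compatible parabolics, so for each maximal parabolic $P$ one of $Q\cap P$, $R\cap P$ is, up to finite index, contained in the other; there is then essentially a single subgroup governing fellow-travelling along cosets of $P$, and no genuine product to control. Without any compatibility assumption $A=Q\cap P$ and $B=R\cap P$ are truly incomparable, and a geodesic in the cusped space can oscillate between a coset of $A$ and a coset of $B$ inside a single coset of $P$; excluding \emph{short} such oscillations not explained by $S$ is precisely the statement that the natural map from a free product onto $AB\leqslant P$ acquires no short relations after passing to finite-index subgroups, which is what double coset separability of $P$ supplies. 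Making this quantitative, uniform over the finitely many relevant cosets of peripheral subgroups, and compatible with the non-parabolic separation obtained from QCERF — and then propagating it faithfully through the two-stage construction so that the final nested-quantifier statement holds — is the technical heart of the argument.
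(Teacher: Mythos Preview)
Your high-level architecture matches the paper's --- a geometric combination criterion (Theorem~\ref{thm:metric_qc}, conditions \descref{C1}--\descref{C5}) followed by a profinite step realising those conditions in the nested $\forall L'\,\exists M\,\forall M'$ form --- and you correctly locate the difficulty in the parabolic interactions. But there is a concrete error with real consequences. You claim that $S=Q\cap R$ is ``relatively quasiconvex, hence finitely generated relatively quasiconvex, hence separable by QCERF''. Hruska's theorem gives relative quasiconvexity of $S$, but \emph{not} finite generation: the peripherals here are only assumed double coset separable, not slender, and in that generality the intersection of two finitely generated relatively quasiconvex subgroups can fail to be finitely generated. QCERF therefore says nothing directly about $S$ or about the intersections $Q\cap gRg^{-1}$.

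This gap propagates to the parabolic step. Your condition --- separability of $(Q\cap P)(R\cap P)$ in $P$ --- is indeed one of the required ingredients (it underlies \descref{C5}). But the combination criterion also needs \descref{C3}: $\minx\bigl((PQ'\cup PR')\setminus PS\bigr)\ge C$ for each $P$ in a fixed finite family of maximal parabolics, and this forces one to prove that the double coset $PS$ is separable \emph{in $G$}. Since $S$ may be infinitely generated, neither QCERF nor peripheral double coset separability yields this directly. The paper's route is a detour through amalgamated free products: Mart\'{i}nez-Pedroza's combination theorem produces $H\leqslant_f P$ with $\langle H,Q\rangle\cong H*_{D}Q$ relatively quasiconvex ($D=P\cap Q$), and a new general lemma (Proposition~\ref{prop:dc_in_am}) shows that in $B*_{D}C$ with $D$ separable, separability of $DV$ (for $V\subseteq C$) implies separability of $BV$. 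Applied inside $\langle H,Q\rangle$ with $V=S\subseteq Q$, this upgrades separability of $DS$ --- which \emph{is} accessible, since $DS=DR\cap Q$ and $DR$ is separable by the same amalgam trick run with $R$ in place of $Q$ --- to separability of $HS$, hence of $PS$. This amalgamated-product mechanism is the key technical step your outline does not anticipate.
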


One can observe that the choice of $R' \leqslant_f R$ in the above theorem depends on the choice of $Q' \leqslant_f Q$. In the case when the peripheral subgroups are abelian the situation is easier:

\begin{theorem}
\label{thm:sep->qc_for_ab_parab}
    Let \(G\) be a finitely generated group hyperbolic relative to a finite collection of abelian subgroups. Assume that $G$ is QCERF. 
    If \(Q, R \leqslant G\) are relatively quasiconvex subgroups and  \(S = Q \cap R\) then there exists a finite index subgroup $L \leqslant_f G$, with $S \subseteq L$, such that the virtual join $\langle Q',R'\rangle$ is relatively quasiconvex in $G$, for arbitrary subgroups $Q' \leqslant_f Q \cap L$ and $R' \leqslant_f R \cap L$, satisfying $Q' \cap R'=S$.
\end{theorem}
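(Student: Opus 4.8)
The plan is to run the same windmill/ping-pong construction that underlies Theorem~\ref{thm:sep->qc_intro}: build $\langle Q',R'\rangle$ as an increasing union and control the relative quasiconvexity constants at each stage by means of a combination theorem of Mart\'{i}nez--Pedroza type (in the generalised form developed in this paper). The new point is that when the peripherals are abelian the separability data needed to launch the construction can be chosen \emph{uniformly} in $Q'$ and $R'$, which is exactly what lets one fixed $L \leqslant_f G$ serve for every admissible pair. First I would unwind the hypotheses. Since $G$ is QCERF, every finitely generated parabolic subgroup of $G$ is relatively quasiconvex and hence separable; consequently each peripheral subgroup $H_\nu$ ($\nu \in \Nu$) is LERF, and, being abelian, is therefore double coset separable, because the product of two finitely generated subgroups of an abelian group is again a finitely generated subgroup. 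So all the separability hypotheses behind Theorem~\ref{thm:sep->qc_intro} are available, and it remains to extract the uniformity.

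Next I would isolate the obstruction data governing the combination theorem. By relative quasiconvexity of $Q$ and $R$ and local finiteness of the quotient of the relative Cayley graph $\ga$ by the peripheral cosets, only finitely many maximal parabolic subgroups $P$ have $Q \cap P$ or $R \cap P$ infinite, and the left cosets of $Q'$ and $R'$ in $\ga$ that can violate the separation required by the combination theorem are controlled by finitely many double cosets of the shapes $QgR$, $QgP$, $RgP$ and $P_1gP_2$, with $P,P_1,P_2$ among those finitely many maximal parabolics. Using QCERF --- separability of $S$ and of the finitely many relevant cosets of $Q$, $R$ and of the intersections $Q \cap P$, $R \cap P$ --- together with double coset separability of the peripherals, I would produce $L \leqslant_f G$ with $S \subseteq L$ so that, for \emph{every} $Q' \leqslant_f Q \cap L$ and $R' \leqslant_f R \cap L$, all the ``removable'' obstructions --- those not forced by a shared parabolic part --- vanish.

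The overlaps that survive passing to finite index are the parabolic ones, and here abelianness is decisive. For a maximal parabolic $P$, the subgroup $\langle Q'\cap P,\,R'\cap P\rangle \leqslant P$ is abelian, hence parabolic and relatively quasiconvex, and this is precisely the subgroup appearing as the intersection of the virtual join with $P$; moreover, since $P$ is abelian and double coset separable one can arrange, uniformly over $L$, that $(Q\cap L\cap P)(R\cap L\cap P)$ has controlled finite index in $(Q\cap P)(R\cap P)$, so that the constant fed into the combination theorem does not deteriorate as $Q'$ and $R'$ shrink below $L$. This uniformity is exactly why, in the abelian case, the choice of $R'$ need not depend on the choice of $Q'$. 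Feeding this data into the combination theorem yields that $\langle Q',R'\rangle$ is relatively quasiconvex, and one checks that its maximal parabolic subgroups are, up to conjugacy, the subgroups $\langle Q'\cap P, R'\cap P\rangle$ together with the parabolics inherited from $Q'$ and $R'$ --- so no unexpected peripheral appears, and $Q'\cap R' = S$ is preserved (this last point using that the removable obstructions, in particular the $QgR$ with $g\notin S$, have been killed).

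The main obstacle, just as for Theorem~\ref{thm:sep->qc_intro}, is verifying the geometric hypotheses of the combination theorem: after the finitely many removable obstructions are deleted, the relevant cosets of $Q'$ and $R'$ must stay far apart in the metric on $\ga$, with constants independent of the particular $Q',R'$ chosen below $L$. The abelian hypothesis is what makes this uniformity attainable, since it pins down the parabolic interactions completely; converting ``finitely many bad double cosets, each separable'' into a single working $L$, and reading off the peripheral structure of the resulting join, is the technical heart of the argument.
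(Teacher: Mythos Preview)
Your high-level intuition is correct --- abelian peripherals are what buy the uniformity in $Q'$ and $R'$ --- but the mechanism you describe is not the paper's, and your sketch contains steps that are not justified. The paper does not run a windmill or ping-pong, does not build $\langle Q',R'\rangle$ as an increasing union, and does not catalogue double cosets of the shapes $QgR$, $QgP$, $RgP$, $P_1gP_2$. Nor does it use (or prove) your claim that $\langle Q'\cap P,\,R'\cap P\rangle$ is the intersection of $\langle Q',R'\rangle$ with $P$; that statement is not obvious and is not needed.

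The paper's route is its Metric Quasiconvexity Theorem (Theorem~\ref{thm:metric_qc}): there is a finite family $\mathcal{P}$ of maximal parabolics and, for each $A\ge 0$, constants $B,C$ so that any $Q'\leqslant Q$, $R'\leqslant R$ satisfying five metric conditions \descref{C1}--\descref{C5} enjoy \descref{P1}--\descref{P3}. The point for abelian peripherals is Remark~\ref{rem:ab_periph->C4_and_C5}: when every $P\in\mathcal{P}$ is abelian, \descref{C4} and \descref{C5} follow \emph{automatically} from \descref{C1}, by a two-line computation with commuting subgroups (e.g.\ $Q_P\cap\langle Q_P',R_P'\rangle=Q_P\cap Q_P'R_P'=Q_P'(Q_P\cap R_P')\subseteq Q_P'S_P=Q_P'$). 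That is where abelianness is decisive.

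It then remains only to satisfy \descref{C1}--\descref{C3}, and this is done uniformly by Proposition~\ref{prop:sep->C2-C3}: separability of $Q$ and $R$ (from QCERF) and of the double cosets $PS$ for $P\in\mathcal{P}$ (Proposition~\ref{prop:parab_times_S_is_sep}, using that finitely generated abelian groups are double coset separable) produces a single $L\leqslant_f G$ with $S\subseteq L$ such that \emph{any} $Q'\leqslant Q\cap L$, $R'\leqslant R\cap L$ with $Q'\cap R'=S$ satisfy \descref{C2} and \descref{C3}; \descref{C1} is the hypothesis $Q'\cap R'=S$. Theorem~\ref{thm:metric_qc} then gives \descref{P1}. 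The dependence of $R'$ on $Q'$ in Theorem~\ref{thm:sep->qc_intro} comes entirely from having to enforce \descref{C5} via separability in the peripheral subgroups; abelian peripherals remove that step, which is exactly the uniformity you were after --- but obtained through the paper's conditions rather than through a Mart\'{i}nez-Pedroza--style combination.
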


In fact, one can slightly weaken the assumptions in Theorem~\ref{thm:sep->qc_for_ab_parab} by requiring the peripheral subgroups of $G$ to be virtually abelian instead of abelian: see Corollary~\ref{cor:virt_ab_periph}.

Unlike the previous results from \cite{MPS,Yang}, Theorem~\ref{thm:sep->qc_intro} does not require any (almost) compatibility of parabolics from the subgroups $Q$ and $R$. To work in this general setting, we develop a novel approach which uses the profinite topology on $G$ to carefully select the finite index subgroups $Q' \leqslant_f Q$ and $R' \leqslant_f R$ satisfying certain metric properties (see Subsections~\ref{subsec:3.1}, \ref{subsec:sep_assump} and Section~\ref{sec:sep->metric}). We also give a new and simple criterion for establishing separability of double cosets in amalgamated free products in Section~\ref{sec:dcs_in_amalgams}. 

Theorem~\ref{thm:sep->qc_intro} applies to a wide class of relatively hyperbolic groups, including all limit groups, all Kleinian groups and many groups acting on CAT($0$) cube complexes.
Regarding QCERF-ness, Manning and Mart\'{i}nez-Pedroza \cite{MMPSep} proved that the following two statements are equivalent: 

\begin{itemize}
    \item[(a)] every finitely generated group hyperbolic relative to a finite collection of LERF and slender subgroups is QCERF;
    \item[(b)] all word hyperbolic groups are residually finite.
\end{itemize}
Recall that a group is called \emph{slender} if every subgroup is finitely generated. 
The question of whether statement (b) is true is a well-known open problem. 
If the answer to it is positive then, for example, all finitely generated groups hyperbolic relative to virtually polycyclic subgroups will be QCERF.

Large classes of relatively hyperbolic groups have already been proved to be QCERF. 
One of the first results in this direction is due to Wilton \cite{WiltonLimitGps}, who established QCERF-ness of limit groups.  
The ground-breaking work of Haglund and Wise \cite{Haglund-Wise} and Agol \cite{Agol} implies that any word hyperbolic group acting geometrically on a CAT($0$) cube complex is QCERF. 
One of the consequences of this result is that all finitely generated Kleinian groups are QCERF. 
More recently, Einstein and Groves \cite{Einstein-Groves} and Groves and Manning \cite{Grov-Man-spec} extended this theory to relatively hyperbolic groups acting (weakly) relatively geometrically on CAT($0$) cube complexes. 
Einstein and Ng \cite{Einstein-Ng} used it to show that full relatively quasiconvex subgroups of $C'(1/6)$-small cancellation quotients of free products of residually finite groups are separable. In the case when the free factors are LERF and slender the latter result can be combined with a theorem of Manning and Mart\'{i}nez-Pedroza \cite[Theorem~1.7]{MMPSep} to conclude that such small cancellation free products are QCERF.

By a theorem of Lennox and Wilson \cite{L-W} all virtually polycyclic groups are double coset separable, hence the assumption about peripheral subgroups in Theorem~\ref{thm:sep->qc_intro} is automatically true in many relevant cases. 
However whether this assumption is actually necessary is less obvious. 
It is required in our approach, but it would be interesting to see whether the theorem remains valid without it. 
As expected from the results in \cite{MPS, Yang}, it is not needed if the relatively quasiconvex subgroups $Q$ and $R$ have almost compatible parabolics: see Theorem~\ref{thm:almost_compat->qc_comb} below.

\subsection{Separability of double cosets}\label{subsec:1.2}
In group theory knowing that double cosets of certain subgroups are separable is often quite useful. For example, the separability of double cosets of hyperplane subgroups was used by Haglund and Wise in \cite{Haglund-Wise} to give a criterion for virtual specialness of a compact non-positively curved cube complex. 
Separability of double cosets of abelian subgroups in Kleinian groups was an important ingredient in the theorem of Hamilton, Wilton and Zalesskii \cite{HWZ} that fundamental groups of compact orientable $3$-manifolds are conjugacy separable.

Double coset separability of free groups was first proved by Gitik and Rips \cite{Gitik-Rips}. Shortly after, Niblo \cite{Niblo} came up with a new criterion for separability of double cosets and applied it to show that finitely generated Fuchsian groups and fundamental groups of Seifert-fibred $3$-manifolds are double coset separable.
Separability of double cosets of quasiconvex subgroups in QCERF word hyperbolic groups was proved by the first author in \cite{MinGFERF}. 
Mart\'{i}nez-Pedroza and Sisto \cite{MPS} generalised this to double cosets of relatively quasiconvex subgroups with compatible parabolics in QCERF relatively hyperbolic groups; Yang \cite{Yang} and McClellan \cite{McCl} treated the case when at least one of the factors is full. 
Our proof of Theorem~\ref{thm:sep->qc_intro} almost immediately yields the following.

\begin{corollary}
\label{cor:double_cosets_sep}
    Let \(G\) be a finitely generated group hyperbolic relative to a finite collection of subgroups \(\lbrace H_\nu \, | \, \nu \in \Nu \rbrace\). 
    Suppose that \(G\) is QCERF and \(H_\nu\) is double coset separable, for every \(\nu \in \Nu\).
    Then for all finitely generated relatively quasiconvex subgroups \(Q, R \leqslant G\), the double coset $QR$ is separable in $G$.
\end{corollary}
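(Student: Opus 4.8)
The plan is to show that $QR$ is closed in the profinite topology on $G$; equivalently, that for every $g \in G \setminus QR$ there is a finite-index normal subgroup $N \trianglelefteq_f G$ with $g \notin QRN$. The argument reduces this, via a finite-index trick, to the separability of a single product $Q'R'$ produced by Theorem~\ref{thm:sep->qc_intro}, and then extracts the latter from the proof of that theorem.

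First, apply Theorem~\ref{thm:sep->qc_intro} to the pair $Q, R$ and fix finite-index subgroups $Q' \leqslant_f Q$ and $R' \leqslant_f R$ with $Q' \cap R' = S$ such that the virtual join $K := \langle Q', R' \rangle$ is relatively quasiconvex in $G$. Choosing finite sets of coset representatives, write $Q = \bigsqcup_{i=1}^{m} q_i Q'$ and $R = \bigsqcup_{j=1}^{n} R' r_j$, so that
\[
    QR \;=\; \bigcup_{i=1}^{m} \bigcup_{j=1}^{n} q_i \, (Q'R') \, r_j .
\]
Since left and right translations are homeomorphisms of $G$ in the profinite topology, and a finite union of closed sets is closed, $QR$ is separable provided $Q'R'$ is. Concretely: if $g \notin QR$ then each of the finitely many elements $q_i^{-1} g r_j^{-1}$ lies outside $Q'R'$, so separating each of them from $Q'R'$ in $G$ and intersecting the resulting finite-index normal subgroups produces the required $N$. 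Thus the task is to prove that $Q'R'$ is separable in $G$.

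The subgroup $K$ is finitely generated and relatively quasiconvex in $G$, hence separable in $G$ by the QCERF hypothesis; likewise every finite-index subgroup of $K$, being again finitely generated and relatively quasiconvex in $G$, is separable in $G$. A routine consequence is that any subset of $K$ which is separable in the profinite topology of $K$ is also separable in $G$: given $h \in K$ outside such a subset $A$, pick a normal subgroup $N_K \trianglelefteq_f K$ with $h \notin A N_K$, note that $A N_K$ is a finite-index subgroup of $K$ and hence separable in $G$, and separate $h$ from it there; points of $G \setminus K$ are handled by separability of $K$. The same circle of ideas shows that $K$ is itself QCERF. Since $K$ is moreover relatively hyperbolic --- being relatively quasiconvex in $G$ --- it suffices to prove that $Q'R'$ is separable in $K$.

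This final step is the main obstacle, and it is where the construction underlying Theorem~\ref{thm:sep->qc_intro} is invoked once more: in the course of arranging that $K$ be relatively quasiconvex, the construction yields precise control over the parabolic subgroups of $K$ and over the intersections of $Q'$ and $R'$ with the maximal parabolic subgroups of $G$. Together with double coset separability of the peripheral subgroups of $G$, this is exactly the input needed to separate an arbitrary $h \in K \setminus Q'R'$ from $Q'R'$ within $K$: the ``hyperbolic-direction'' part of the separation is furnished directly by relative quasiconvexity in the style of \cite{MinGFERF, MPS}, while the ``parabolic-direction'' part --- where a product $Q'R'$ may fill an essentially arbitrary subset of a parabolic coset --- is precisely what double coset separability of the peripheral subgroups handles. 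Granting this, $Q'R'$ is separable in $K$, hence in $G$, the finite-index reduction above applies, and $QR$ is separable.
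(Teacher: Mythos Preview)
Your reduction to separability of $Q'R'$ is correct, but the final paragraph is not a proof: it is hand-waving precisely where the difficulty lies. Showing that $Q'R'$ is separable in $K=\langle Q',R'\rangle$ is another instance of the very statement you are trying to prove (a double coset of two relatively quasiconvex subgroups in a QCERF relatively hyperbolic group with suitable peripherals), so invoking ``the construction underlying Theorem~\ref{thm:sep->qc_intro} once more'' is circular. Nothing in Theorem~\ref{thm:sep->qc_intro} asserts that $K$ splits as an amalgam $Q'*_S R'$ or otherwise has a structure in which $Q'R'$ is visibly closed; indeed, the whole point of the paper is that without almost compatible parabolics no such splitting is available. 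Your appeal to a ``hyperbolic-direction'' / ``parabolic-direction'' dichotomy does not supply an actual mechanism for separating $h\in K\setminus Q'R'$ from $Q'R'$.

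The paper's argument sidesteps this entirely, and the idea you are missing is property \descref{P3}. One does \emph{not} fix $Q',R'$ once and for all, and one does \emph{not} try to separate $Q'R'$. Instead, given $g\notin QR$, set $A=|g|_X+1$ and apply the detailed form of Theorem~\ref{thm:sep->qc_intro} (Theorem~\ref{thm:sep->qc_intro-detailed}) to obtain $Q'\leqslant_f Q$, $R'\leqslant_f R$ depending on $g$ and satisfying both \descref{P1} and \descref{P3}. Property \descref{P3} gives $\minx\bigl(Q\langle Q',R'\rangle R\setminus QR\bigr)\ge A$, so $g\notin Q\langle Q',R'\rangle R$. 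On the other hand, $H=\langle Q',R'\rangle$ is finitely generated and relatively quasiconvex by \descref{P1}, hence separable by QCERF, and
\[
Q H R=\bigcup_{i,j} a_i H b_j
\]
is a finite union of cosets of a separable subgroup, hence separable. This closed set contains $QR$ but not $g$, and since $g$ was arbitrary, $QR$ is closed. The point is that the approximating closed set is the \emph{triple} product $Q\langle Q',R'\rangle R$, which is trivially separable, rather than $Q'R'$, which is not.
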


Clearly the assumptions of Corollary~\ref{cor:double_cosets_sep} are the minimal possible. This result is powerful enough to prove a conjecture of Hsu and Wise from \cite{Hsu-Wise}: see Corollary~\ref{cor:virt_spec}.

In the case when the relatively hyperbolic group $G$ admits a weakly relatively geometric action on a CAT($0$) cube complex Corollary~\ref{cor:double_cosets_sep} was proved by Groves and Manning~\cite{Grov-Man-spec}. 
Groves and Manning's argument uses Dehn fillings to approximate $G$ by QCERF word hyperbolic groups, thus reducing the statement to separability of double cosets in hyperbolic groups from \cite{MinGFERF}. 
Our approach is completely different as we always work within $G$.

In the following definition we will use a preorder $\preccurlyeq$ on the sets of subsets of a group $G$, introduced by the first author in \cite{Min-Some_props_of_subsets}:
\[
    \text{given }U, V \subseteq G \text{ we will write } U \preccurlyeq V \text{ if there exists a finite subset } Y \subseteq G \text{ such that } U \subseteq VY.
\]
If $d_X$ is the word metric on $G$, corresponding to a finite generating set $X$, and $U,V$ are subsets of $G$ then $U \preccurlyeq V$ if and only if $U$ is contained in a finite $d_X$-neighbourhood of $V$. If $U$ and $V$ are subgroups of $G$ then $U \preccurlyeq V$ is equivalent to $|U:(U \cap V)|<\infty$ (see \cite[Lemma~2.1]{Min-Some_props_of_subsets}).

\begin{definition}[Almost compatible parabolics]
\label{def:almost_compatible} 
    Let $Q$ and $R$ be  subgroups of a relatively hyperbolic groups $G$. 
    We will say that $Q$ and $R$ have \emph{almost compatible parabolics} if for every maximal parabolic subgroup $P$ of $G$ either $Q \cap P \preccurlyeq R \cap P$ or $R \cap P \preccurlyeq Q \cap P$.
\end{definition}

Clearly if $G$ is a relatively hyperbolic group and $Q,R$ are subgroups with compatible parabolics then they have almost compatible parabolics. 
The same is true if at least one of $Q$, $R$ is a full subgroup of $G$.

In the case when the relatively quasiconvex subgroups $Q$ and $R$ have almost compatible parabolics the assumption that the peripheral subgroups $H_\nu$ are double coset separable can be dropped from Corollary~\ref{cor:double_cosets_sep}, allowing us to recover the double coset separability results from \cite{MPS,Yang,McCl}.

\begin{corollary} 
\label{cor:almost_comp->sep_dc} 
    Suppose that $G$ is a finitely generated QCERF relatively hyperbolic group. 
    If $Q$ and $R$ are finitely generated relatively quasiconvex subgroups of $G$ with almost compatible parabolics then the double coset $QR$ is separable in $G$.
\end{corollary}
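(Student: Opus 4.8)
The plan is to run the argument almost verbatim from the proof of Corollary~\ref{cor:double_cosets_sep}, feeding it the almost-compatible version of our virtual join theorem, Theorem~\ref{thm:almost_compat->qc_comb}, in place of Theorem~\ref{thm:sep->qc_intro}; the point is that the former produces a relatively quasiconvex virtual join splitting as an amalgamated free product \emph{without} assuming the peripheral subgroups double coset separable. First I would apply Theorem~\ref{thm:almost_compat->qc_comb} to obtain finite index subgroups $Q' \leqslant_f Q$ and $R' \leqslant_f R$ such that $J := \langle Q', R' \rangle$ is relatively quasiconvex in $G$ and the natural epimorphism $Q' *_{S'} R' \to J$ is an isomorphism, where $S' = Q' \cap R'$. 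Writing $Q = \bigsqcup_i q_i Q'$ and $R = \bigsqcup_j R' r_j$ we get $QR = \bigcup_{i,j} q_i\,(Q'R')\,r_j$, a finite union of left-right translates of $Q'R'$; since separability is preserved by finite unions and by translations, it suffices to prove that the double coset $Q'R'$ is separable in $G$.

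Next I would show that $Q'R'$ is separable in $J$ using the criterion for separability of double cosets in amalgamated free products from Section~\ref{sec:dcs_in_amalgams}, and this is where almost compatibility of parabolics is used. The subgroups $Q'$ and $R'$, being finite index in the finitely generated relatively quasiconvex subgroups $Q$ and $R$, are themselves finitely generated and relatively quasiconvex in $G$, hence separable since $G$ is QCERF; in particular $Q'$ and $R'$ are QCERF relatively hyperbolic groups in their own right, and the profinite topology of $G$ restricts to the full profinite topology on each. The peripheral subgroups of the relatively hyperbolic group $J = Q' *_{S'} R'$ are, up to conjugacy in $J$ and finite index, of the form $J \cap P$ with $P$ a maximal parabolic subgroup of $G$; by Definition~\ref{def:almost_compatible} each such $J \cap P$ is commensurable with $Q' \cap P$ or with $R' \cap P$, hence is a parabolic subgroup of one of the factors, hence relatively quasiconvex in $G$ and separable, with subgroup topology induced from $G$. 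Consequently every subgroup the amalgam criterion requires us to separate (the factors $Q'$, $R'$, the edge group $S'$ with the relevant conjugates intervening in reduced words, and the peripheral subgroups of $J$) is relatively quasiconvex, hence separable, and no double coset separability of the $H_\nu$ is needed --- almost compatibility is exactly what prevents a peripheral of $J$ from being a genuine product inside some $H_\nu$.

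Finally I would transfer separability from $J$ to $G$. Since $J$ is relatively quasiconvex in the QCERF group $G$, it is separable in $G$ and the profinite topology of $G$ induces the full profinite topology on $J$. Given $g \in G \setminus Q'R'$: if $g \notin J$, a finite index subgroup $K \leqslant_f G$ with $g \notin JK$ also satisfies $g \notin Q'R'K$; if $g \in J$, pick $N \leqslant_f J$ with $g \notin Q'R'N$, lift it to $K \leqslant_f G$ with $J \cap K \leqslant N$, and observe $Q'R'K \cap J = Q'R'(J \cap K) \subseteq Q'R'N$, which misses $g$, so $g \notin Q'R'K$. Hence $Q'R'$ is separable in $G$, and by the reduction of the first paragraph so is $QR$.

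The hard part is the middle step: correctly setting up the amalgam criterion of Section~\ref{sec:dcs_in_amalgams} for $J = Q' *_{S'} R'$ and making precise the claim that almost compatibility of the parabolics of $Q$ and $R$ forces the peripheral subgroups of the virtual join --- and the conjugates of the edge group $S'$ that appear in reduced expressions --- to be (commensurable with) parabolic subgroups of the factors, so that their separability follows from QCERF-ness alone. The transfer step also leans on the fact, available from the QCERF hypothesis, that the profinite topology of $G$ restricts to the full profinite topology on a finitely generated relatively quasiconvex subgroup.
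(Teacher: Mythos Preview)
Your proposal has a genuine gap: Theorem~\ref{thm:almost_compat->qc_comb} does \emph{not} assert that the virtual join $J=\langle Q',R'\rangle$ splits as $Q'*_{S'}R'$. It only produces $Q'\leqslant_f Q$ and $R'\leqslant_f R$ satisfying properties \descref{P1}--\descref{P3}, and none of these says anything about an amalgamated free product decomposition. The splitting was established in the earlier papers \cite{MPS,Yang,McCl} under their stronger hypotheses, but the present paper never proves it (and explicitly remarks in the Introduction that such a splitting cannot be expected in general). So your first step is not justified by the results available, and the entire middle part of your argument --- the appeal to the amalgam criterion of Section~\ref{sec:dcs_in_amalgams} --- collapses.

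Even granting the splitting, your middle step is shakier than you suggest. To apply Corollary~\ref{cor:sep_double_coset_amalg} you need $S'=Q'\cap R'$ separable in $J$; but $S'=S=Q\cap R$ need not be finitely generated (the paper explicitly flags this point before Proposition~\ref{prop:parab_times_S_is_sep}), so QCERF-ness of $G$ does not directly give separability of $S$. Your discussion of peripheral subgroups of $J$ and how almost compatibility constrains them is tangential to what Corollary~\ref{cor:sep_double_coset_amalg} actually requires.

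The paper's proof is much shorter and avoids all of this: for each $g\notin QR$ one sets $A=|g|_X+1$, applies Theorem~\ref{thm:almost_compat->qc_comb} to obtain $Q',R'$ with \descref{P1} and \descref{P3}, and observes that \descref{P3} forces $g\notin Q\langle Q',R'\rangle R$. The latter set is separable because $\langle Q',R'\rangle$ is finitely generated relatively quasiconvex (by \descref{P1}) and $Q\langle Q',R'\rangle R$ is a finite union of its translates. Thus $QR$ is an intersection of separable sets. The key conceptual point you missed is that property \descref{P3} already does the work of approximating $QR$ from above by separable sets; you do not need to analyse $Q'R'$ or the internal structure of the join at all.
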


\subsection{Separability of products of quasiconvex subgroups}
The third part of this paper is dedicated to proving separability for more general products $F_1 \dots F_s$, where $s \in \NN$ is arbitrary and $F_1, \dots,F_s$ are relatively quasiconvex subgroups in a relatively hyperbolic group.

\begin{definition}[{RZ}$_s$ and product separability]
    Let \(P\) be a group and let \(s \in \NN\).
    We say that $P$ has property \emph{{RZ}$_s$} if for arbitrary finitely generated subgroups \(E_1, \dots, E_s \leqslant P\) the product \(E_1 \dots E_s\) is separable in \(P\).
    If \(P\) has property {RZ}\(_{s}\) for all \(s \in \NN\), we say that \(P\) is \emph{product separable}. 
\end{definition}

Thus RZ$_1$ means that the group is LERF and RZ$_2$ is equivalent to double coset separability.
The definition of {RZ}$_s$ is due to Coulbois \cite{Coulb}; he named it after Ribes and Zalesskii, who proved in \cite{RibesZal} that free groups are product separable, confirming a conjecture of Pin and Reutenauer from \cite{PinReute}. 
Pin and Reutenauer showed that product separability of free groups implies Rhodes’ type II conjecture from semigroup theory (see  \cite{PinReute,RhodesConj} for the background).

In \cite{MinGFERF}, generalising the result of \cite{RibesZal}, the first author proved that the product of finitely many quasiconvex subgroups is separable in a QCERF word hyperbolic group. 
Moreover, in \cite{Coulb} Coulbois showed that, for every $s \in \NN$, free products of groups with property {RZ}\(_s\) also have property {RZ}\(_s\). 
Taken together, these facts motivate the following theorem.

\begin{theorem}
\label{thm:RZs}
    Let \(G\) be a finitely generated group hyperbolic relative to a finite collection of subgroups \(\{ H_\nu \, | \, \nu \in \Nu \}\), and let $s \in \NN$.
    Suppose that \(G\) is {QCERF} and  \(H_\nu\) has property RZ$_s$, for each \(\nu \in \Nu\).
    If \(F_1, \dots, F_s \leqslant G\) are finitely generated relatively quasiconvex subgroups of \(G\), then the product \(F_1 \dots F_s\) is separable in \(G\).
\end{theorem}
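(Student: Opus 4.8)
The plan is to induct on $s$, using Theorem~\ref{thm:sep->qc_intro} (together with its double-coset corollary, Corollary~\ref{cor:double_cosets_sep}) to replace the left-most factors by a single relatively quasiconvex subgroup after passing to finite-index subgroups, in the spirit of the argument in \cite{MinGFERF} for the word-hyperbolic case and of Coulbois' treatment \cite{Coulb} of free products. The base cases $s=1$ (QCERF) and $s=2$ (Corollary~\ref{cor:double_cosets_sep}) are already available. For the inductive step, suppose the result holds for products of fewer than $s$ relatively quasiconvex subgroups in $G$ and in each peripheral subgroup $H_\nu$ (the latter by hypothesis, since $H_\nu$ has property RZ$_s$ hence RZ$_{s-1}$). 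Given $F_1, \dots, F_s$ and an element $g \notin F_1 \cdots F_s$, I want to produce a finite-index subgroup of $G$ containing $F_1 \cdots F_s$ but not $g$.

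The key step is the following ``virtual join'' reduction: apply Theorem~\ref{thm:sep->qc_intro} to $Q = F_1$ and $R = F_2$ to obtain finite-index subgroups $F_1' \leqslant_f F_1$ and $F_2' \leqslant_f F_2$ whose join $J = \langle F_1', F_2'\rangle$ is relatively quasiconvex in $G$. One then needs to descend from the full product $F_1 \cdots F_s$ to a finite union of cosets of the smaller product $J F_3 \cdots F_s$, which has only $s-1$ relatively quasiconvex factors, so the inductive hypothesis applies. Concretely, writing $F_1 = \bigsqcup_i a_i F_1'$ and $F_2 = \bigsqcup_j b_j F_2'$ as finite unions of cosets, we have $F_1 F_2 \subseteq \bigcup_{i,j} a_i b_j \langle F_1', F_2'\rangle Y$ for a suitable finite set $Y$ coming from the coset representatives; more carefully, $F_1 F_2 \cdots F_s$ is covered by finitely many double-coset-type translates $a\, J F_3 \cdots F_s\, $ (one must be slightly careful about which side the finite sets appear on, but since $J \supseteq F_1' \cup F_2'$ and the $a_i, b_j$ range over finite sets, $F_1 F_2 \subseteq C J$ for a finite $C \subseteq G$, whence $F_1 \cdots F_s \subseteq C (J F_3 \cdots F_s)$). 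Since separable sets are closed under finite unions and left translation, and $J F_3 \cdots F_s$ is separable by the inductive hypothesis (it is a product of $s-1$ finitely generated relatively quasiconvex subgroups — here one uses that $J$ is finitely generated, being relatively quasiconvex, and that relative quasiconvexity of $J$ makes Theorem~\ref{thm:RZs} applicable with parameter $s-1$), it follows that $C(J F_3 \cdots F_s)$ is separable in $G$. This almost finishes the step, except that we have separated $F_1 \cdots F_s$ inside a possibly larger set $C(JF_3\cdots F_s)$, and we must recover separability of $F_1 \cdots F_s$ itself.

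The subtlety — and what I expect to be the main obstacle — is precisely this last point: enlarging $F_1, F_2$ to $F_1', F_2'$ and then to $J$ goes the \emph{wrong way} for separability, since $F_1 F_2 \supseteq F_1' F_2'$ but $F_1 F_2 \subseteq J F_3 \cdots F_s$ need not be an equality, so a priori we have only shown that a superset of $F_1 \cdots F_s$ is separable. The standard remedy (as in \cite{MinGFERF}) is to work instead with the decomposition of $F_1 \cdots F_s$ into finitely many translates $a_i b_j\, F_1' F_2' F_3 \cdots F_s$ over the coset representatives, so that it suffices to separate each $F_1' F_2' F_3 \cdots F_s$; but $F_1' F_2'$ is still not obviously a single relatively quasiconvex subgroup. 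The fix is to be more careful in invoking Theorem~\ref{thm:sep->qc_intro}: that theorem gives, for $g$ avoiding the relevant product, a finite-index subgroup $L \leqslant_f G$ with $F_1' \cap F_2' = \langle F_1', F_2'\rangle \cap (\text{something})$ and crucially allows us to choose $F_1' = F_1 \cap L'$, $F_2' = F_2 \cap M'$ with arbitrary further finite-index refinements; iterating this flexibility, one arranges that $\langle F_1', F_2'\rangle = F_1' \ast_{F_1' \cap F_2'} F_2'$ is undistorted and that the finitely many translates of $F_1' F_2' F_3 \cdots F_s$ needed to cover $F_1 \cdots F_s$ can each be separated from the image of $g$, using the inductive hypothesis applied to the $(s-1)$-fold products $\langle F_1', F_2'\rangle F_3 \cdots F_s$ together with the observation that, inside $\langle F_1', F_2'\rangle$, the subset $F_1' F_2'$ is separable because $\langle F_1', F_2'\rangle$ is itself a relatively hyperbolic (indeed, relatively quasiconvex in $G$, hence QCERF with RZ$_s$ peripherals inherited from $G$) group to which RZ$_2$ applies. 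Assembling these finitely many separating finite-index subgroups and intersecting them yields a single finite-index subgroup of $G$ containing $F_1 \cdots F_s$ and excluding $g$, completing the induction.
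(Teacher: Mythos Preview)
Your high-level strategy matches the paper's: induct on $s$, use the virtual join theorem on the first two factors to replace them by a single relatively quasiconvex subgroup $J=\langle F_1',F_2'\rangle$, and then invoke the inductive hypothesis on the $(s-1)$-fold product $JF_3\cdots F_s$. You also correctly identify the essential difficulty: the containment $F_1\cdots F_s \subseteq C\,(JF_3\cdots F_s)$ goes the wrong way, and one must show that $g$ can actually be excluded from a set of this larger form.

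However, your proposed fix does not close this gap. First, the claim that one can arrange $\langle F_1',F_2'\rangle \cong F_1' *_{F_1'\cap F_2'} F_2'$ is false in general: the paper explicitly notes (in the paragraph following the bullet list of prior results in the Introduction) that absent almost compatible parabolics, a virtual join need not split as an amalgamated product. Second, and more seriously, even granting that $F_1'F_2'$ is separable in $J$ (which does follow from applying the $s=2$ case to $J$), there is no mechanism to pass from ``$F_1'F_2'$ closed in $J$'' together with ``$JF_3\cdots F_s$ closed in $G$'' to ``$F_1'F_2'F_3\cdots F_s$ closed in $G$''. Multiplication of a closed set by a subgroup does not preserve closedness, and approximating $F_1'F_2'$ from above by finite-index subgroups of $J$ does not obviously interact well with the remaining factors $F_3,\dots,F_s$.

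What is genuinely needed --- and what Part~\ref{part:multicosets} of the paper is devoted to --- is a product analogue of property \descref{P3}: for each $g\notin QRT_1\cdots T_m$ one must produce $Q',R'$ (depending on $g$) such that $g\notin Q\langle Q',R'\rangle R\,T_1\cdots T_m$. This requires extending the path-representative and shortcutting machinery to broken lines of the form $qp_1\cdots p_n r\,t_1\cdots t_m$, introducing new metric conditions \descref{C2-m} and \descref{C5-m} to control multiple backtracking that spills into the tail segments $t_1,\dots,t_m$, and running a two-case argument (Proposition~\ref{prop:mcs_shortcutting_quasigeodesic} and the proof in Section~\ref{sec:RZs_proof}) based on whether the ``tail height'' of path representatives can be forced to be large. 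The RZ$_s$ hypothesis on peripherals is used precisely here, to establish \descref{C5-m}. None of this is captured by your appeal to separability of $F_1'F_2'$ inside $J$.
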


We note that separability of products of full relatively quasiconvex subgroups in a QCERF relatively hyperbolic group was proved by McClellan \cite{McCl}.

Finitely generated virtually abelian groups are product separable. 
Therefore, Theorem~\ref{thm:RZs} applies to finitely generated  QCERF relatively hyperbolic groups with virtually abelian peripheral subgroups. 
Examples of such groups include limit groups, geometrically finite Kleinian groups and $C'(1/6)$-small cancellations quotients of free products of finitely generated virtually abelian groups (see \cite{Oregon-Reyes}). 
We discuss some applications of Theorem~\ref{thm:RZs} in Subsection~\ref{subsec:prod_sep}, and give a brief outline of the proof at the beginning of Part~\ref{part:multicosets}.

\subsection*{Acknowledgements} The authors would like to thank Pavel Zalesskii, Sam Shepherd and Benjamin Steinberg for helpful discussions, and the referee for careful reading of the paper and for valuable corrections.

\section{Applications}
\label{sec:applications}
In this section we list some applications of the main results from the Introduction.

\subsection{Geometrically finite virtual joins}\label{subsec:geom_fin_joins}
A \emph{Kleinian group} is a discrete subgroup of the (orientation-preserving) isometries of the real hyperbolic $3$-space, \(\mathrm{Isom}(\HH^3)\).
Recall that a Kleinian group \(G\) has an induced action on the ideal boundary \(\partial \HH^3\) of hyperbolic space by homeomorphisms, under which the smallest \(G\)-invariant compact subset, \(\Lambda G\), is called its \emph{limit set}. 
A subgroup \(P \leqslant G\) is called \emph{parabolic} if it has a single fixed point \(p\) in \(\partial \HH^3\) and setwise fixes some horosphere centred at \(p\).
We say that \(G\) is \emph{geometrically finite} if every point of \(\Lambda G\) is either a conical limit point or a bounded parabolic point (see \cite{Bowditch_GeomFin} for definitions). 
Examples of geometrically finite Kleinian groups include the fundamental groups of finite volume hyperbolic 3-manifolds.

As noted in the Introduction, geometrically finite  Kleinian groups are relatively hyperbolic with respect to conjugacy class representatives of their maximal parabolic subgroups (which are virtually abelian).
Moreover, geometrically finite subgroups are exactly the relatively quasiconvex subgroups of geometrically finite Kleinian groups \cite[Corollary 1.6]{HruskaRHCG}.

Baker and Cooper \cite{Baker_Cooper} showed, using geometric methods, that if \(G\) is a finitely generated Kleinian group and \(Q\) and \(R\) are geometrically finite subgroups of \(G\) with almost compatible parabolics, then there are finite index subgroups \(Q' \leqslant_f Q\) and \(R' \leqslant_f R\) such that the join \(\langle Q', R' \rangle\) is geometrically finite.
In \cite{MPS} Mart\'{i}nez-Pedroza and Sisto recover this result for geometrically finite Kleinian groups as a special case of their work, using techniques closer to those in the present paper.
Using Theorem~\ref{thm:sep->qc_intro}, we are able to eliminate the hypothesis of compatible parabolic subgroups in these results:

\begin{corollary}\label{cor:geom_fin_combination}
    Let \(G\) be a geometrically finite Kleinian group, and suppose that \(Q, R \leqslant G\) are geometrically finite subgroups of \(G\). 
    Then there are finite index subgroups \(Q' \leqslant_f Q\) and \(R' \leqslant_f R\) such that \(\langle Q', R' \rangle\) is a geometrically finite subgroup of \(G\).
\end{corollary}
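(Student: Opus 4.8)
The plan is to deduce Corollary~\ref{cor:geom_fin_combination} directly from Theorem~\ref{thm:sep->qc_intro}, the only work being to verify that $G$, $Q$ and $R$ satisfy its hypotheses and then to translate its conclusion back into the language of Kleinian groups. Throughout I would regard $G$ as hyperbolic relative to the finite collection $\{H_\nu \mid \nu \in \Nu\}$ of conjugacy class representatives of its maximal parabolic subgroups, as recalled above.

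First I would check the hypotheses on the ambient group. Being geometrically finite, $G$ is finitely generated (indeed, finitely presented). Each peripheral subgroup $H_\nu$ is a discrete group of parabolic isometries of $\HH^3$ fixing a common point $p \in \partial \HH^3$; acting on a horosphere centred at $p$, it embeds as a discrete group of Euclidean isometries of the plane, so $H_\nu$ is virtually abelian, in particular virtually polycyclic, and hence double coset separable by the theorem of Lennox and Wilson \cite{L-W}. Next, $G$ is QCERF, since finitely generated Kleinian groups are QCERF by the work of Haglund--Wise \cite{Haglund-Wise} and Agol \cite{Agol} discussed in the Introduction. Finally, by \cite[Corollary~1.6]{HruskaRHCG} the relatively quasiconvex subgroups of $G$ are precisely its geometrically finite subgroups, so $Q$ and $R$ are relatively quasiconvex; and they are finitely generated, since geometrically finite groups are.

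With all the hypotheses in place, Theorem~\ref{thm:sep->qc_intro} produces finite index subgroups $Q' \leqslant_f Q$ and $R' \leqslant_f R$ for which the virtual join $\langle Q', R'\rangle$ is relatively quasiconvex in $G$. Since $Q'$ and $R'$ have finite index in the finitely generated groups $Q$ and $R$, they are finitely generated, and hence so is $\langle Q', R'\rangle$; applying the dictionary of \cite[Corollary~1.6]{HruskaRHCG} once more, $\langle Q', R'\rangle$ is a geometrically finite subgroup of $G$, which is exactly the claim. In this argument there is no genuinely hard step: all of the substance is already contained in Theorem~\ref{thm:sep->qc_intro}, and the only points needing a moment's care are the verification that parabolic subgroups of Kleinian groups are virtually abelian (so that the Lennox--Wilson theorem applies) and the double use of Hruska's identification of relative quasiconvexity with geometric finiteness, at the start and at the end of the argument.
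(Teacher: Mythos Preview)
Your proposal is correct and follows essentially the same approach as the paper's own proof: verify that $G$ is finitely generated, that the peripheral subgroups are virtually abelian and hence double coset separable, that $G$ is QCERF (the paper cites Agol's LERF theorem for Kleinian groups directly, whereas you invoke Haglund--Wise and Agol, but either route works), then use Hruska's identification of geometric finiteness with relative quasiconvexity on both ends and apply Theorem~\ref{thm:sep->qc_intro}. The only cosmetic difference is that you spell out why $\langle Q',R'\rangle$ is finitely generated before invoking Hruska's dictionary at the end, which the paper leaves implicit.
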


\begin{proof}
    The group \(G\) is geometrically finite, so it is finitely generated \cite[Theorem~12.4.9]{Ratcliffe} and hyperbolic relative to a finite collection of finitely generated virtually abelian subgroups \cite{BowditchRHG,HruskaRHCG}.
    Agol proved that all finitely generated Kleinian groups are LERF \cite[Corollary 9.4]{Agol};
    in particular, this means that they are QCERF.
    Therefore \(G\) is a QCERF relatively hyperbolic group with double coset separable peripheral subgroups. By Hruska's result \cite[Corollary~1.6]{HruskaRHCG}, a subgroup of $G$ is geometrically finite if and only if it is relatively quasiconvex.
    We may now apply Theorem~\ref{thm:sep->qc_intro} to obtain the desired conclusion.
\end{proof}

\subsection{Product separability}
\label{subsec:prod_sep}
Recall that a group $G$ is product separable if the product of finitely many finitely generated subgroups is closed in the profinite topology on $G$. 
Until now, few examples of groups were known to be product separable: free abelian groups, free groups \cite{RibesZal}, groups of the form $F \times \mathbb{Z}$, where $F$ is free \cite{You}, and locally quasiconvex LERF hyperbolic groups \cite{MinGFERF} (eg, surface groups). 
Additionally, the class of product separable groups is closed under taking subgroups, finite index supergroups and free products \cite{Coulb}. 
However, this class is not closed under direct products (eg, the direct product of two non-abelian free groups is not even LERF \cite{Al-Gre}). 
It also does not contain some polycyclic groups: in \cite{L-W} Lennox and Wilson proved that the integral Heisenberg group $H_3(\mathbb{Z})$, which is polycyclic (in fact, finitely generated nilpotent of class $2$), is not product separable as it does not have property RZ$_3$.

We use Theorem~\ref{thm:RZs} to establish product separability for many more groups.

\begin{theorem}
\label{thm:prod_sep} 
    The following groups are product separable:
    \begin{itemize}
        \item[(i)] limit groups;
        \item[(ii)] finitely generated Kleinian groups;
        \item[(iii)] fundamental groups of finite graphs of free groups with cyclic edge groups, as long as they are balanced.
    \end{itemize}
\end{theorem}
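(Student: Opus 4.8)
The plan is to derive each of the three cases from Theorem~\ref{thm:RZs} by exhibiting the group as (a finite-index subgroup of, or a free product built from) a relatively hyperbolic group whose peripheral subgroups are finitely generated virtually abelian—hence product separable—and which is QCERF. Since the class of product separable groups is closed under subgroups, finite-index supergroups and free products (Coulbois \cite{Coulb}), in each case it suffices to reduce to such a relatively hyperbolic group and then check the two hypotheses of Theorem~\ref{thm:RZs}: QCERF-ness and property $\mathrm{RZ}_s$ for the peripheral subgroups (for every $s$).

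For (ii), a finitely generated Kleinian group $G$ is, after passing to a finite-index subgroup if necessary, geometrically finite or can be handled directly; geometrically finite Kleinian groups are finitely generated and hyperbolic relative to a finite collection of finitely generated virtually abelian subgroups (by \cite{BowditchRHG,HruskaRHCG}, as already used in the proof of Corollary~\ref{cor:geom_fin_combination}), and Agol's theorem \cite{Agol} gives LERF, hence QCERF. Finitely generated virtually abelian groups are product separable, so they have $\mathrm{RZ}_s$ for all $s$, and Theorem~\ref{thm:RZs} applies. The non-geometrically-finite case is reduced to the geometrically finite one via a standard argument (e.g.\ Scott-type covering-space or tameness results), or one invokes that finitely generated Kleinian groups are virtually special and cocompactly cubulated or geometrically finite in a suitable sense; either way product separability descends to $G$ by the subgroup/finite-index closure properties. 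For (i), Wilton \cite{WiltonLimitGps} proved that limit groups are QCERF, and a limit group is hyperbolic relative to its maximal non-cyclic abelian subgroups, which are finitely generated free abelian, hence product separable; Theorem~\ref{thm:RZs} then gives the result directly. For (iii), a balanced fundamental group $G$ of a finite graph of free groups with cyclic edge groups is hyperbolic relative to a finite collection of finitely generated free abelian subgroups (this is the key structural input—balancedness is exactly what rules out Baumslag--Solitar-type obstructions and guarantees the relatively hyperbolic structure with abelian, rather than merely solvable, peripherals); such groups are QCERF by known results (they act geometrically on CAT($0$) cube complexes, or one cites the relevant separability theorem in the literature for this class), and free abelian peripherals are product separable, so Theorem~\ref{thm:RZs} applies once more.

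In each case the argument is: (1) identify the relatively hyperbolic structure with finitely generated virtually abelian peripheral subgroups; (2) cite QCERF-ness from the literature; (3) note that finitely generated virtually abelian groups are product separable, hence have $\mathrm{RZ}_s$ for all $s$; (4) apply Theorem~\ref{thm:RZs} to conclude $F_1\cdots F_s$ is separable for all finitely generated relatively quasiconvex $F_i$; (5) upgrade from relatively quasiconvex subgroups to \emph{all} finitely generated subgroups using that $G$ is locally relatively quasiconvex (every finitely generated subgroup of a limit group, of a finitely generated Kleinian group, and of a balanced graph of free groups with cyclic edges is relatively quasiconvex—this local quasiconvexity is a known property of these three classes), and finally (6) invoke the subgroup/finite-index-supergroup/free-product closure of the class of product separable groups \cite{Coulb} to handle any reduction steps.

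The main obstacle is step (5) together with the structural claim in step (1) for case (iii): one must know that these groups are \emph{locally} relatively quasiconvex, so that Theorem~\ref{thm:RZs}'s conclusion about relatively quasiconvex factors actually yields product separability (a statement about \emph{all} finitely generated subgroups); and for balanced graphs of free groups with cyclic edge groups one must carefully establish the relatively hyperbolic structure with finitely generated free abelian peripherals, which is where balancedness is essential and where the bulk of the citation-assembly work lies. For limit groups and Kleinian groups these facts are classical, so those cases are short; case (iii) requires the most care.
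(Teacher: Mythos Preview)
Your overall strategy is right for case~(i), but there are genuine gaps in cases~(ii) and~(iii).

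For (ii), your step~(5) asserts that every finitely generated subgroup of a finitely generated Kleinian group is relatively quasiconvex. This is false: if $M$ is a closed hyperbolic $3$-manifold fibring over the circle with fibre $\Sigma$, then $\pi_1(\Sigma)$ is a finitely generated normal subgroup of infinite index in $\pi_1(M)$, hence not geometrically finite and therefore not relatively quasiconvex. The paper handles exactly this obstacle: after reducing to the finite-volume case (via Selberg's lemma and Brooks' embedding theorem), it invokes the tameness theorem together with Canary's covering theorem to get a dichotomy---every finitely generated subgroup is either geometrically finite or a \emph{virtual fibre} subgroup. The geometrically finite factors are handled by Theorem~\ref{thm:RZs}; the virtual fibre case requires a separate elementary argument (Lemmas~\ref{lem:Kl-1} and~\ref{lem:Kl-2}) exploiting that such a subgroup is normal in a finite-index subgroup with quotient $\mathbb{Z}$, and $\mathbb{Z}$ is product separable. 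Your vague appeal to ``Scott-type covering-space or tameness results'' or virtual specialness does not supply this.

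For (iii), your structural claim that a balanced graph of free groups with cyclic edge groups is hyperbolic relative to \emph{free abelian} subgroups is incorrect. The relatively hyperbolic structure (from Bigdely--Wise) has peripheral subgroups that are themselves fundamental groups of finite graphs of infinite cyclic groups---i.e., generalised Baumslag--Solitar groups---which need not be abelian. Balancedness does not force these peripherals to be abelian; rather, it makes them \emph{unimodular} in Levitt's sense, whence they are virtually $F\times\mathbb{Z}$ with $F$ free. The paper proves (Lemma~\ref{lem:graph_of_cyclic_gps}) that such groups are product separable via Levitt's structure theorem and You's result that $F\times\mathbb{Z}$ is product separable; only then does Theorem~\ref{thm:RZs} apply. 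Your local quasiconvexity claim for this class does hold, but it also needs the Bigdely--Wise tameness criterion rather than a bare assertion.
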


Recall that a group \(G\) is called a \emph{limit group} if it is finitely generated and fully residually free (that is, for every finite subset \(A \subset G\), there is a free group $F$ and a homomorphism \(\varphi \colon G \to F\) that is injective when restricted to \(A\)).
Limit groups played an important role in the solutions of Tarski's problems about the first order theory of free groups by Sela \cite{Sela} and Kharlampovich-Myasnikov \cite{K-M}.

Following Wise, we say that a group $G$ is \emph{balanced} if for every infinite order element $g \in G$ the conjugacy between $g^m$ and $g^n$ implies that $n=\pm m$. 
In \cite{Wise-balanced}, Wise proved that the fundamental group $G$ of a finite graph of free groups with cyclic edge groups is LERF if and only if it is balanced if and only if $G$ does not contain any \emph{non-Euclidean} Baumslag-Solitar subgroups $BS(m,n)=\langle a,t\mid t a^m t^{-1}=a^n \rangle$, with $m, n \in \mathbb{Z}\setminus\{0\}$ and $n \neq \pm m$.

Part (iii) of Theorem~\ref{thm:prod_sep} generalises a result of Coulbois \cite[Theorem~5.18]{Coulbois-thesis}, who proved that the free amalgamated product of two free groups along a cyclic subgroup is product separable. 
Theorem~\ref{thm:prod_sep}(iii) confirms (in a strong way) a conjecture of Hsu and Wise \cite[Conjecture~15.5]{Hsu-Wise}, which states that a balanced group splitting as a finite graph of free groups with cyclic edge groups is double coset separable. 

\begin{corollary}\label{cor:virt_spec}
Suppose that $G$ splits as a fundamental group of a finite graph of finitely generated free groups with cyclic edge groups. If  $G$ is balanced then it  is virtually compact special; in other words, $G$ has a finite index subgroup which is isomorphic to the fundamental group of a compact non-positively curved special cube complex (in the sense of Haglund and Wise \cite{Haglund-Wise}).
\end{corollary}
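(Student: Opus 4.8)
The plan is to deduce this from the cubulation theorem of Hsu and Wise together with the Haglund--Wise criterion for virtual specialness, feeding in the product separability of $G$ established in Theorem~\ref{thm:prod_sep}(iii). Since $G$ is balanced, it contains no non-Euclidean Baumslag--Solitar subgroup (by Wise's theorem \cite{Wise-balanced}); this is precisely the hypothesis under which Hsu and Wise \cite{Hsu-Wise} show that $G$ acts freely and cocompactly on a CAT($0$) cube complex $\widetilde{X}$. Put $X = \widetilde{X}/G$, a compact non-positively curved cube complex with $\pi_1(X) \cong G$. It suffices to prove that $X$ is virtually special: a finite special cover $\widehat{X} \to X$ is again a compact non-positively curved cube complex, so it exhibits the finite index subgroup $\pi_1(\widehat{X}) \leqslant G$ as the fundamental group of a compact special cube complex.

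Next I would check that the hyperplane subgroups fall within the scope of the separability results. For a hyperplane $\widehat{D}$ of $\widetilde{X}$, the stabiliser $H = \operatorname{Stab}_G(\widehat{D})$ acts freely (since $G$ does) and cocompactly on the connected convex subcomplex $\widehat{D}$ --- cocompactly because $X$ has only finitely many hyperplanes --- so $H$ is finitely generated, and there are only finitely many hyperplane subgroups of $G$ up to conjugacy. By Theorem~\ref{thm:prod_sep}(iii) the group $G$ is product separable; in particular it is LERF, so every hyperplane subgroup is separable in $G$, and it is double coset separable, so for any hyperplane subgroups $H_1, H_2 \leqslant G$ and any $g \in G$ the set $H_1 g H_2 = (H_1 \cdot g H_2 g^{-1})\, g$ is separable in $G$ (apply double coset separability to the finitely generated subgroups $H_1$ and $g H_2 g^{-1}$ and translate).

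It then remains to invoke the virtual specialness criterion of Haglund and Wise \cite{Haglund-Wise}: a compact non-positively curved cube complex $X$ admits a finite special cover provided that each hyperplane subgroup of $\pi_1(X)$ is separable and, for each ordered pair of (not necessarily distinct) hyperplane subgroups $H_1, H_2$, the double cosets $H_1 g H_2$ responsible for the self-intersection, self-osculation and inter-osculation of hyperplanes in $X$ are separable in $\pi_1(X)$. The separability facts assembled in the previous paragraph are exactly these hypotheses (separability of the relevant index-two subgroups needed to make one-sided hyperplanes two-sided being a consequence of LERF), so $X$, and hence $G$, is virtually compact special.

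The bulk of the difficulty is front-loaded into Theorem~\ref{thm:prod_sep}(iii); given it, the points needing care are (a) verifying that $G$ meets the hypothesis of the Hsu--Wise cubulation theorem, which is exactly where balancedness enters, since non-Euclidean Baumslag--Solitar groups are not cocompactly cubulated; (b) checking that hyperplane stabilisers are finitely generated so that the product separability of $G$ genuinely applies to them and to their double cosets; and (c) matching the separability conditions in the Haglund--Wise criterion --- stated there in terms of hyperplane subgroups and their double cosets --- with the conclusions of Theorem~\ref{thm:prod_sep}(iii). I expect (c), namely pinning down the precise form of the Haglund--Wise criterion one wishes to cite (including the treatment of one-sidedness and self-intersection), to be the most delicate bookkeeping, but it requires no ideas beyond those already present in the paper.
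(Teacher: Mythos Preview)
Your proposal is correct and follows essentially the same route as the paper: invoke the Hsu--Wise cubulation theorem, apply Theorem~\ref{thm:prod_sep}(iii) to obtain double coset separability, and then use the Haglund--Wise virtual specialness criterion. The paper's proof is more terse (it simply cites \cite[Theorem~10.4]{Hsu-Wise}, Theorem~\ref{thm:prod_sep}, and \cite[Theorem~9.19]{Haglund-Wise} in succession), while you have usefully unpacked why the hyperplane subgroups are finitely generated so that the separability results genuinely apply.
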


\begin{proof} Hsu and Wise \cite[Theorem~10.4]{Hsu-Wise} proved that $G$ admits a proper cocompact action on a CAT($0$) cube complex $\mathcal X$. By Theorem~\ref{thm:prod_sep}, $G$ is double coset separable, hence, by a result of Haglund and Wise \cite[Theorem~9.19]{Haglund-Wise}, $G$ has a finite index subgroup $K$ such that  $K \backslash \mathcal{X}$ is a special cube complex.
\end{proof}

After the completion of this paper the authors learned of a recent result of Shepherd and Woodhouse \cite[Theorem~1.2]{Shep-Wood}, which gives an alternative proof of Corollary~\ref{cor:virt_spec}, using different methods. 

One of the original motivations for considering product separability of groups came from semigroups and automata theory. 
Pin and Reutenauer \cite{PinReute} used this property to characterise the profinitely closed rational subsets of free groups.

Recall that for a monoid \(M\), the \emph{rational subsets} \(\Rat(M) \subseteq 2^M\) form the smallest collection of subsets of \(M\) satisfying the following conditions:

\begin{enumerate}
    \item \(\emptyset \in \Rat(M)\) and, for each \(m \in M\), \(\{m\} \in \Rat(M)\);
    \item if \(A, B \in \Rat(M)\), then \(AB \in \Rat(M)\) and \(A \cup B \in \Rat(M)\);
    \item if \(A \in \Rat(M)\), then \(A^* \in \Rat(M)\), where \(A^*\) is the submonoid of \(M\) generated by \(A\).
\end{enumerate}
We refer the reader to \cite{PinReute} for an account of the basic theory of rational subsets. 

In a group $G$ it makes sense to consider the subgroup closure instead of the $*$-closure. Thus we define the set $\Rat^0(G) \subseteq 2^G$ as the smallest collection of subsets of $G$ containing all finite subsets, closed under finite unions, products and subgroup closure.  
It is easy to see that $\Rat^0(G)$ consists of all subsets of the form $gF_1 \dots F_s$, where $s \in \NN_0$, $g \in G$ and $F_1,\dots,F_s$ are finitely generated subgroups of $G$ (\cite[Proposition~2.2]{PinReute}). 
Evidently $\Rat^0(G) \subseteq \Rat(G)$; moreover, it is not difficult to show that $\Rat^0(G) = \Rat(G)$ if and only if $G$ is torsion.

The following theorem was proved by Pin and Reutenauer \cite[Corollary~2.5]{PinReute} in the case of free groups (see also \cite[Section 12.3]{Ribes-book} for a slightly different argument), however the proof is readily seen to remain valid in all product separable groups.

\begin{theorem}[Pin-Reutenauer] 
\label{thm:P-R} 
    If $G$ is a product separable group then $\Rat^0(G)$ is precisely the class of all separable rational subsets of $G$. 
\end{theorem}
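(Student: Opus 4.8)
The plan is to establish the two inclusions separately. First, every element of $\Rat^0(G)$ of the form $gF_1\dots F_s$ that happens to be separable is trivially a separable rational subset, so one direction is vacuous; the real content is showing that *every* separable rational subset of $G$ lies in $\Rat^0(G)$, and conversely that every member of $\Rat^0(G)$ is automatically separable when $G$ is product separable. I would begin with the easy half: if $G$ is product separable, then any $gF_1\dots F_s\in\Rat^0(G)$ is separable, because $F_1\dots F_s$ is separable by hypothesis and left multiplication by $g$ is a homeomorphism of the profinite topology; hence every element of $\Rat^0(G)$ is separable. Combined with $\Rat^0(G)\subseteq\Rat(G)$, this shows $\Rat^0(G)$ is contained in the class of separable rational subsets.

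For the reverse inclusion --- that a separable rational subset must belong to $\Rat^0(G)$ --- I would follow Pin and Reutenauer's original argument, checking that it nowhere uses freeness beyond product separability. The key structural fact is a normal-form type description of $\Rat(G)$: one shows (by induction on the construction of rational sets, i.e.\ on the number of applications of rules (1)--(3)) that every rational subset $A\subseteq G$ can be written as a finite union $A=\bigcup_{i=1}^n g_i H_{i,1}\dots H_{i,k_i}\, w_i$ or, more conveniently, that the *closure* $\overline{A}$ in the profinite topology decomposes as a finite union of cosets of products of finitely generated subgroups. The heart of the matter is handling the submonoid-generation rule (3): given $A\in\Rat(G)$, one must understand $\overline{A^*}$. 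Here product separability enters decisively, because $A^*$ is generated as a monoid by the finite data defining $A$, and one argues that $\overline{A^*}=\overline{\langle A\rangle}$ up to the relevant coset structure --- the point being that in a product separable group the profinite closure of a finitely generated submonoid is controlled by finitely generated subgroups. Once $\overline{A}$ is shown to be a finite union of sets in $\Rat^0(G)$ for every $A\in\Rat(G)$, a separable rational set $A$ satisfies $A=\overline{A}\in\Rat^0(G)$ (using that $\Rat^0(G)$ is closed under finite unions), completing the proof.

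The main obstacle, and the step requiring the most care, is verifying that Pin--Reutenauer's treatment of the $*$-operation goes through verbatim: specifically, that for a rational set $A$ the closure $\overline{A^*}$ is a finite union of members of $\Rat^0(G)$, and that the only group-theoretic input needed is property RZ$_s$ for the relevant $s$ (which product separability supplies for all $s$). One should check that the inductive bookkeeping --- tracking how the number $s$ of factors and the generating sets grow under unions, products, and submonoid closures --- does not require any property of free groups (such as the Ribes--Zalesskii theorem in its original profinite-topology form) beyond what product separability already guarantees. I expect this to be essentially a routine but somewhat delicate transcription; the paper's phrase ``the proof is readily seen to remain valid in all product separable groups'' signals that no genuinely new idea is needed, only a careful audit of \cite[Corollary~2.5]{PinReute} (or the account in \cite[Section~12.3]{Ribes-book}) confirming that ``free'' may be replaced throughout by ``product separable''.
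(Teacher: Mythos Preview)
Your proposal is correct and takes essentially the same approach as the paper: the paper does not give an independent proof but simply observes that Pin and Reutenauer's original argument \cite[Corollary~2.5]{PinReute} (or the variant in \cite[Section~12.3]{Ribes-book}) goes through verbatim once freeness is replaced by product separability, which is exactly the audit you propose to carry out. Your sketch of that audit---the easy direction via product separability and translation-invariance, and the hard direction by proving inductively that $\overline{A}\in\Rat^0(G)$ for every $A\in\Rat(G)$, with the $*$-step being the only place requiring care---accurately captures the structure of Pin--Reutenauer's proof.
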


\begin{corollary}
\label{cor:sep_rat_subsets}
    If $G$ is a group from one of the classes (i)--(iii), described in Theorem~\ref{thm:prod_sep}, then the set of  separable rational subsets of $G$ coincides with $\Rat^0(G)$.
\end{corollary}


\section{Plan of the paper}
\subsection{The metric quasiconvexity theorem}\label{subsec:3.1}
Let $G$ be a relatively hyperbolic group generated by a finite set $X$, and let $Q, R$ be relatively quasiconvex subgroups of $G$. 
The technical heart of this paper is Theorem~\ref{thm:metric_qc} below, which, given some relatively quasiconvex subgroups $Q' \leqslant Q$ and $R' \leqslant R$, provides sufficient metric conditions for the relative quasiconvexity of the join $\langle Q',R' \rangle$.

\begin{definition}[$\minx$]
    Let \(G\) be a group with finite generating set \(X\), and let \(Y \subseteq G\). Then  we denote the number \(\min \lbrace \abs{g}_X \, | \, g \in Y \rbrace\) by \(\minx(Y)\), with the usual convention that minimum over the empty set is $+\infty$.
\end{definition}

Let $S=Q \cap R$ and $A \ge 0$ be some constant. We will be interested in finding  subgroups $Q'\leqslant Q$ and $R' \leqslant R$ satisfying the following properties:

\medskip
\begin{itemize}
    \descitem{P1} if $Q'$ and $R'$ are relatively quasiconvex in $G$ then so is the subgroup \(\langle Q', R' \rangle\);
    \descitem{P2} \(\minx\Bigl(\langle Q', R' \rangle \setminus S\Bigr) \geq A\);
    \descitem{P3} \(\minx \Bigl(Q \langle Q', R' \rangle R \setminus QR \Bigr) \geq A\).
\end{itemize}
\medskip

\begin{remark} \phantom{a}
\begin{itemize}
    \item Observe that quasiconvexity of $Q'$ and $R'$ is only required in property \descref{P1}.

    \item Property \descref{P2} says that all ``short'' elements of $\langle Q',R'\rangle$ belong to $S$.

    \item Property \descref{P3} is the key ingredient for proving that the double coset $QR$ is separable in $G$ in Corollary~\ref{cor:double_cosets_sep}.
\end{itemize}
\end{remark}

Let us now describe the metric conditions used to establish the above properties.
Given a finite collection $\mathcal P$ of maximal parabolic subgroups of $G$, constants $B,C \ge 0$ and subgroups $Q' \leqslant Q$, $R' \leqslant R$, we will consider the following conditions:

\begin{itemize}
    \descitem{C1} \(Q' \cap R' = S\);
    \descitem{C2} \(\minx(Q \langle Q', R'\rangle Q \setminus Q ) \geq B\) and \(\minx(R \langle Q', R'\rangle R \setminus R ) \geq B\);
    \descitem{C3} \(\minx \Bigl( (PQ' \cup PR') \setminus PS\Bigr) \geq C\), for each $P \in \mathcal{P}$.
\end{itemize}

Moreover, if not all of the subgroups in $\mathcal{P}$ are abelian then we will need two more conditions (here for subgroups $H,P \leqslant G$, we use $H_P$ to denote the intersection  $H \cap P \leqslant P$):

\begin{itemize}
    \descitem{C4} \(Q_P \cap \langle Q_P', R_P' \rangle = Q'_P\) and \(R_P \cap \langle Q_P', R_P' \rangle = R_P'\), for every $P \in \mathcal{P}$;
    \descitem{C5}  \(\minx \Bigl(q \langle Q_P', R_P' \rangle R_P \setminus qQ'_P R_P\Bigr) \geq C \), for each $P \in \mathcal{P}$ and all $q \in Q_P$.
\end{itemize}

\begin{remark}
\label{rem:ab_periph->C4_and_C5}
    If the peripheral subgroups of $G$ are abelian then condition \descref{C4} follows from \descref{C1} and condition \descref{C5} is trivially true.
\end{remark}

Indeed, if $P$ is abelian, then, in the notation of \descref{C4}, $\langle Q_P',R_P' \rangle=Q_P'R_P'$, hence 
\[
    Q'_P \subseteq Q_P \cap \langle Q_P', R_P' \rangle = Q_P \cap Q_P'R_P'= Q_P'(Q_P \cap R'_P) \subseteq Q_P'S_P = Q_P',
\]
where the last equality used  that $S_P=S \cap P \subseteq Q_P'$ by \descref{C1}. 
The second equality of \descref{C4} can be proved in the same fashion.

Similarly, if $q \in Q_P$ then
$q \langle Q_P', R_P' \rangle R_P=qQ_P' R_P' R_P=q Q_P' R_P$, so that
\[\minx \Bigl(q \langle Q_P', R_P' \rangle R_P \setminus qQ'_P R_P\Bigr)=\minx(\emptyset)=+\infty,\]
thus \descref{C5} holds.

\begin{remark}
\label{rem:sep->metric} 
    In this paper we will be primarily interested in the existence of finite index subgroups $Q' \leqslant_f Q $ and $R' \leqslant_f R$ satisfying the above conditions. This may be easier to interpret through the lens of the profinite topology on \(G\) (see Section~\ref{sec:sep->metric}):
    \begin{itemize}
        \item conditions \descref{C1} and \descref{C4} can be ensured by choosing any finite index subgroup $M \leqslant_f G$ with $ S\subseteq M$, and setting $Q'=Q \cap M$, $R'=R \cap M$;

        \item the existence of finite index subgroups $Q' \leqslant_f Q $ and $R' \leqslant_f R$ satisfying condition \descref{C2} can be deduced from separability of $Q$ and $R$ in $G$;

        \item the existence of finite index subgroups $Q' \leqslant_f Q $ and $R' \leqslant_f R$ satisfying condition \descref{C3} can be deduced from separability of the double coset $PS$ in $G$;

        \item if $Q_P' \leqslant_f Q_P $ is already chosen then $R_P' \leqslant_f R_P$, satisfying \descref{C5}, can be constructed with the help of separability of the double coset $Q_P' R_P$ in $P$. 
        Indeed, if $Q_P=\bigcup_{j=1}^n a_j Q_P'$, then the inequality in \descref{C5} can be re-written as \(\minx \Bigl(a_j \langle Q_P', R_P' \rangle Q_p'R_P \setminus a_jQ'_P R_P\Bigr) \geq C \), for every $j=1,\dots,n$. 
        Thus our approach to establishing \descref{C5} will be to choose $R' \leqslant_f R$ after $Q'\leqslant_f Q$ has already been constructed (in other words, $R'$ will depend on $Q'$).
    \end{itemize}
\end{remark}

\begin{theorem}[Metric quasiconvexity theorem]
\label{thm:metric_qc}
    Let \(G\) be relatively hyperbolic group generated by a finite set \(X\). Suppose that \(Q, R \leqslant G\) are relatively quasiconvex subgroups and denote \(S = Q \cap R\). 
    There exists a finite collection $\mathcal{P}$ of maximal parabolic subgroups of $G$ such that for any \(A \geq 0\) there are constants \(B,C \geq 0\)   satisfying the following.

    Suppose that \(Q' \leqslant Q\), \(R' \leqslant R\) are  subgroups of \(G\) satisfying conditions \descref{C1}--\descref{C5}.
    Then these subgroups enjoy properties  \descref{P1}--\descref{P3} above.
\end{theorem}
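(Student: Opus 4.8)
The proof of the Metric Quasiconvexity Theorem splits naturally into establishing the three properties \descref{P1}, \descref{P2}, \descref{P3} in turn, using the machinery of the relative Cayley graph $\ga$ and the geometry of paths with parabolic components (``paths through the peripheral subgroups''). The first task is to fix the collection $\mathcal{P}$: since $Q$ and $R$ are relatively quasiconvex, there are only finitely many maximal parabolic subgroups $P$ (up to the action of $Q$, resp.\ $R$) for which $Q \cap P$ or $R \cap P$ is infinite, and $\mathcal{P}$ should be a finite set of representatives of these, possibly enlarged to a finite set that ``sees'' all the relevant parabolic interactions between $Q$ and $R$. The key point is that $\mathcal{P}$ depends only on $Q$ and $R$, not on the constant $A$ or on $Q', R'$; once $\mathcal{P}$ is fixed, the constants $B$ and $C$ are chosen afterwards, as large as needed in terms of $A$ and the quasiconvexity/hyperbolicity constants.

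The plan for \descref{P1} is to run a Gitik/Mart\'inez-Pedroza-style ping-pong argument on the tree associated to a graph-of-groups decomposition: conditions \descref{C1}, \descref{C4} are exactly what is needed to guarantee that $\langle Q', R'\rangle$ splits appropriately (roughly as a graph of groups with vertex groups $Q'$, $R'$ and edge groups controlled by the $\langle Q_P', R_P'\rangle$), and then one invokes a combination theorem for relatively quasiconvex subgroups (in the spirit of Mart\'inez-Pedroza \cite{MPComb} / Mart\'inez-Pedroza-Sisto \cite{MPS}) to conclude relative quasiconvexity of the join from relative quasiconvexity of the factors. The conditions \descref{C2} (no short elements in $Q\langle Q',R'\rangle Q$ outside $Q$, and symmetrically) and \descref{C3}, \descref{C5} provide the ``long overlap'' hypotheses that make the ping-pong/combination machinery apply — essentially they force any geodesic in $\ga$ representing an element of the join to track an alternating concatenation of $Q'$-pieces and $R'$-pieces with controlled backtracking.

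For \descref{P2} and \descref{P3}, the plan is to take an element $g \in \langle Q', R'\rangle$ (resp.\ $g \in Q\langle Q',R'\rangle R$) with $|g|_X$ small, say $< A$, and use the structure obtained in \descref{P1} together with the BCP (bounded coset penetration) property to show $g$ must lie in $S$ (resp.\ in $QR$). Concretely: a short element cannot have a long ``interior'' in the ping-pong decomposition, so it must be a single syllable, i.e.\ it lies in a conjugate of one of the $\langle Q_P', R_P'\rangle$ or in $Q'$ or $R'$ alone; conditions \descref{C2} handle the non-parabolic collapse (short elements of $Q\langle Q',R'\rangle Q$ outside $Q$ are excluded, so the element is in $Q$, and symmetrically in $R$, hence in $S$), while \descref{C3}, \descref{C5} handle the parabolic syllables — \descref{C3} says short elements of $PQ' \cup PR'$ lie in $PS$, and \descref{C5} upgrades this to the $Q_P'R_P$-level statement needed for the double-coset property \descref{P3}. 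The bookkeeping is to convert the given bound $|g|_X < A$ in the word metric into a bound on the number and length of syllables, which is where $B$ and $C$ get chosen large relative to $A$ and the relative-hyperbolicity constants.

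The main obstacle, I expect, is \descref{P1} in the absence of any compatibility of parabolics: the edge groups of the relevant splitting are the joins $\langle Q_P', R_P'\rangle$ inside the peripheral subgroups $P$, which need \emph{not} be of the form $Q_P'$ or $R_P'$ and need not even be finitely generated a priori, so the standard combination theorems do not apply off the shelf. Handling this requires treating each such $P$ as a new peripheral subgroup along which one amalgamates, verifying that $\langle Q_P', R_P'\rangle$ sits inside $P$ in a controlled way (this is what \descref{C4}, \descref{C5} are for), and then proving a combination theorem relative to this enlarged peripheral structure — essentially a careful path-geometry argument in $\ga$ showing that geodesics between points of the join fellow-travel concatenations of geodesics in $Q'$, $R'$ and the edge cosets, with the phase-transition/BCP estimates controlling the errors. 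This is presumably carried out in Sections~\ref{sec:sep->metric} and the subsequent technical sections of the paper.
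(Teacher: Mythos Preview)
Your proposal has a genuine gap at its core: the ping-pong/graph-of-groups approach to \descref{P1} cannot work here. You propose that conditions \descref{C1}, \descref{C4} guarantee that $\langle Q',R'\rangle$ splits as a graph of groups with vertex groups $Q',R'$ and edge groups $\langle Q_P',R_P'\rangle$, and then to invoke a combination theorem. But no such splitting exists in general --- the paper explicitly points this out in the Introduction: in the absence of almost compatible parabolics ``one cannot expect a virtual join to split as an amalgamated free product of $Q'$ and $R'$'', e.g.\ if $Q,R$ both sit inside an abelian peripheral subgroup then $\langle Q',R'\rangle$ is abelian. Conditions \descref{C1} and \descref{C4} are far too weak to produce a splitting; they only say that certain intersections are no larger than expected. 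The existing combination theorems (Mart\'{i}nez-Pedroza, Mart\'{i}nez-Pedroza--Sisto, Yang) all require some compatibility hypothesis precisely to get such a splitting, and the whole point of this theorem is to dispense with that hypothesis.

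The paper's actual argument is purely geometric and never produces an algebraic decomposition of $\langle Q',R'\rangle$. One writes $g\in\langle Q',R'\rangle$ as an alternating product and takes the associated broken line $p=p_1\cdots p_n$ in $\Gamma(G,X\cup\mathcal{H})$ of \emph{minimal type} (lexicographic in $(n,\ell(p),\sum|h|_X)$). Minimality bounds the Gromov products at the nodes (Lemma~\ref{lem:bddinnprod}) and controls adjacent backtracking (Section~\ref{sec:adj_backtracking}); conditions \descref{C3}, \descref{C4}, \descref{C5} are used in Section~\ref{sec:multitracking} to show that any instance of \emph{multiple} backtracking is long in $d_X$ --- not via a normal form, but by building a parallel parabolic path and deriving a contradiction with minimality of $n$. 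One then applies a ``shortcutting'' procedure (Section~\ref{sec:quasigeods}) that replaces maximal backtracking runs by single $\mathcal{H}$-edges, and Proposition~\ref{prop:mpquasigeodesic} shows the result is uniformly $(\lambda,c)$-quasigeodesic without backtracking. Relative quasiconvexity \descref{P1} then follows by fellow-travelling, and \descref{P2}, \descref{P3} by the same quasigeodesicity (a short $g$ forces $m=0$ in the shortcutting, hence $p$ itself is quasigeodesic, hence $n\le 1$). Your choice of $\mathcal{P}$ is also off: the paper takes $\mathcal{P}=\mathcal{P}_1=\{tH_\nu t^{-1}:|t|_X\le C_1\}$, a metric collection governed by the constant $C_1$ of Lemma~\ref{lem:shortspikes}, not the parabolics meeting $Q$ or $R$ in an infinite subgroup.
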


Rough sketches of the proofs of Theorems~\ref{thm:metric_qc} and \ref{thm:sep->qc_intro} are given in the beginning of Part \ref{part:metric_qc_double_cosets} of the paper.

\subsection{The separability assumptions}\label{subsec:sep_assump} As the reader may notice, our main results in the Introduction assume that the underlying relatively hyperbolic group $G$ is QCERF and the peripheral subgroups of $G$ are double coset separable. Indeed, the essence of our method is in finding (sufficiently many) finite index subgroups $Q' \leqslant_f Q$ and $R'\leqslant_f R$ satisfying conditions \descref{C1}--\descref{C5} by using properties of the profinite topology. However, a careful analysis of the arguments reveals that instead of the full QCERF assumption it is possible to require the separability only of certain finitely generated relatively quasiconvex subgroups related to $Q$ and $R$. For example, the proof of Theorem~\ref{thm:sep->qc_intro} relies on the separability conditions~\ref{cond:1}--\ref{cond:3} from Theorem~\ref{thm:sep->qc_comb}, which are established in Section~\ref{sec:dc_when_one_is_parab} using  the separability of relatively quasiconvex subgroups $Q$, $R$, $K$, $\langle K, T\rangle$ and $\langle K, V \rangle$, where $K \leqslant_f P\in \mathcal{P}$, $ T\leqslant_f Q$, $V  \leqslant_f R$ and $\mathcal{P}=\mathcal{P}_1$ is a finite collection of maximal parabolic subgroups of $G$ that depends on $Q$ and $R$ (see Notation~\ref{not:constants_and_P_1_in_part_1}). The exact requirements for double coset separability of the peripheral subgroups are easier to trace: it suffices to look at condition \ref{cond:4} of Theorem~\ref{thm:sep->qc_comb}.

\subsection{Section outline}
This paper is structured as follows.
There are three parts: Part~\ref{part:background} contains background material and useful preliminary results (Sections~\ref{sec:prelim}-\ref{sec:RH_gps}), Part~\ref{part:metric_qc_double_cosets} is dedicated to the proof of the metric quasiconvexity theorem and the double coset separability results that follow from them (Sections~\ref{sec:path_reps}-\ref{sec:double_coset_sep}), and Part~\ref{part:multicosets} is essentially dedicated to the proof and applications of Theorem~\ref{thm:RZs} (Sections~\ref{sec:multicoset_defs}-\ref{sec:ex_prod_sep}).

Section~\ref{sec:prelim} covers generalities and Section~\ref{sec:RH_gps} covers definitions and results specific to relatively hyperbolic groups.
In Section~\ref{sec:path_reps} we introduce the terminology of \emph{path representatives}, their associated \emph{types}, and make some observations about path representatives that have minimal type.
Sections~\ref{sec:adj_backtracking} and \ref{sec:multitracking} are devoted to controlling certain instances of backtracking in minimal type path representatives.
In Section~\ref{sec:quasigeods} we describe the "shortcutting" of a broken line, and establish its quasigeodesicity under some technical assumptions.
Section~\ref{sec:metric_qc} contains the proof of Theorem~\ref{thm:metric_qc}.
In Sections~\ref{sec:sep->metric} and \ref{sec:dc_when_one_is_parab} we show how finite index subgroups \(Q' \leqslant_f Q\) and \(R' \leqslant_f R\) satisfying conditions \descref{C1}-\descref{C5} can be obtained using separability, with the help of a new criterion for separability of double cosets in amalgamated products from Section~\ref{sec:dcs_in_amalgams}.
Section~\ref{sec:sep->qc} contains proofs of Theorems~\ref{thm:sep->qc_intro} and \ref{thm:sep->qc_for_ab_parab}, while Section~\ref{sec:double_coset_sep} contains the proof of Corollary~\ref{cor:almost_comp->sep_dc}.

In Section~\ref{sec:multicoset_defs} we generalise the content of Section~\ref{sec:path_reps} to the setting of products of subgroups, as well as introducing new metric conditions \descref{C2-m} and \descref{C5-m}.
Sections~\ref{sec:mcs_multitracking1} and \ref{sec:mcs_multitracking2} are product analogues to Section~\ref{sec:multitracking}; similarly, Section~\ref{sec:mcs_sep->metric} generalises Section~\ref{sec:sep->metric}.
Finally, Section~\ref{sec:RZs_proof} contains the proof of Theorem~\ref{thm:RZs}, and Section~\ref{sec:ex_prod_sep} establishes new examples of product separable groups, proving Theorem~\ref{thm:prod_sep}.


\part{Background}
\label{part:background}
In this part we will present the definitions and basic results that will be necessary for the rest of the paper.

\section{Preliminaries}
\label{sec:prelim}

\subsection{Notation}
We write \(\NN\) for the set of natural numbers \(\{1, 2, 3 \dots\}\), and \(\NN_0\) for \(\NN \cup \{0\}\).

Let \(G\) be a group.
If \(H\) is a finite index (respectively, finite index normal) subgroup of \(G\), then we write \(H \leqslant_f G\) (respectively, \(H \lhd_f G\)). 
For a subgroup $T \leqslant G$ and elements $a,b \in G$ we will write $T^a=aTa^{-1} \leqslant G$ and $b^a=aba^{-1} \in G$.

By a generating set $\mathcal{A}$ of $G$ we will mean a set $\mathcal{A}$ together with a map $\mathcal{A} \to G$ such that the image of $\mathcal{A}$ under this map generates $G$.

If \(\mathcal{A}\) is a generating set for \(G\), then we denote by \(\Gamma(G,\mathcal{A})\) the (left) Cayley graph of \(G\) with respect to \(\mathcal{A}\). 
The standard edge path length metric on \(\Gamma(G,\mathcal{A})\) will be denoted $d_{\mathcal{A}}(\cdot,\cdot)$. 
After identifying \(G\) with the vertex set of \(\Gamma(G,\mathcal{A})\), this metric induces the \emph{word metric} associated to $\mathcal{A}$: \(d_{\mathcal{A}}(g,h) = \abs{g^{-1}h}_{\mathcal{A}}\) for all $g,h \in G$, where \(\abs{g}_{\mathcal{A}}\) denotes the length of the shortest word in \(\mathcal{A}^{\pm 1}\) representing \(g\) in $G$.

Abusing the notation, we will identify the combinatorial Cayley graph $\Gamma(G,\mathcal{A})$ with its geometric realisation. 
The latter is a geodesic metric space and, given two points $x,y$ in this space, we will use $[x,y]$ to denote a geodesic path from $x$ to $y$ in $\Gamma(G,\mathcal{A})$. 
In general $\Gamma(G,\mathcal{A})$ need not be uniquely geodesic, so there will usually be a choice for $[x,y]$, which will either be specified or will be clear from the context (eg, if $x$ and $y$ already belong to some geodesic path under discussion, then $[x,y]$ will be chosen as the subpath of that path).

If \(Y \subseteq G\) is a subset of \(G\) and \(K \geq 0\), we denote by
\[
    N_{\mathcal{A}}(Y,K) = \{ g \in G \, | \, d_{\mathcal{A}}(g,Y) \leq K \}
\]
the \(K\)-neighbourhood of \(Y\) with respect to \(d_{\mathcal{A}}\).
Note that when \(\mathcal{A}\) is a finite generating set, the metric \(d_{\mathcal{A}}\) is proper. 
However, in this paper we will also be working with infinite generating sets:  see Section~\ref{sec:RH_gps} below, where generating sets of the form $\mathcal{A}=X \cup \mathcal{H}$ are considered.

The following general fact will be used quite often.
\begin{lemma}
\label{lem:nbhdintersection}
    Let \(G\) be a group generated by a finite set $\mathcal{A}$. If \(A, B \leqslant G\) are subgroups of \(G\) then for every \(K \geq 0\) there is a constant \(K' = K'(A,B,K) \geq 0\) such that for any $x \in G$ we have
    \[        N_{\mathcal{A}}(xA,K) \cap N_{\mathcal{A}}(xB,K) \subseteq N_{\mathcal{A}}(x(A \cap B),K').    \]
\end{lemma}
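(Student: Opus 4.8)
The plan is to reduce to the case $x = 1$ by left-translation, and then to exploit the fact that $A$, $B$, and $A \cap B$ are all subgroups sitting inside the finitely generated group $G$, so that "being within $K$ of a coset" is a statement about finitely many cosets of the relevant intersections. First I would observe that left multiplication by $x$ is an isometry of $\Gamma(G,\mathcal{A})$ and sends $A$ to $xA$, $B$ to $xB$, $A \cap B$ to $x(A\cap B)$; since it commutes with taking $K$-neighbourhoods, it suffices to prove the inclusion $N_{\mathcal{A}}(A,K) \cap N_{\mathcal{A}}(B,K) \subseteq N_{\mathcal{A}}(A \cap B, K')$ for a single constant $K' = K'(A,B,K)$, and the general case follows by applying the isometry $x^{-1}\cdot(-)$.

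Next I would argue as follows. Suppose $g \in N_{\mathcal{A}}(A,K) \cap N_{\mathcal{A}}(B,K)$, so there exist $a \in A$ and $b \in B$ with $d_{\mathcal{A}}(g,a) \le K$ and $d_{\mathcal{A}}(g,b) \le K$; hence $d_{\mathcal{A}}(a,b) \le 2K$, i.e. $a^{-1}b$ lies in the ball $\bar{B}_{\mathcal{A}}(1,2K)$ of radius $2K$ about the identity. Since $\mathcal{A}$ is finite, this ball is a finite set, so there are only finitely many possible values of the element $w := a^{-1}b$; moreover $w = a^{-1}b$ with $a \in A$, $b \in B$, so $w$ lies in the (finite) set $D := \bar{B}_{\mathcal{A}}(1,2K) \cap A^{-1}B$. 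For each $w \in D$, fix once and for all a factorisation $w = \alpha_w \beta_w$ with $\alpha_w \in A$, $\beta_w \in B$. Then $a^{-1}b = \alpha_w\beta_w$, so $\alpha_w^{-1}a^{-1} \cdot b\beta_w^{-1} \in$ ... more cleanly: $a\alpha_w = b\beta_w^{-1}$ is an element of $A \cap B$ (it is $a\alpha_w \in A$ and equals $b\beta_w^{-1} \in B$). Wait — let me set it up so the correction is bounded. Write $c := a\alpha_w$, where $w = a^{-1}b$ and $\alpha_w\beta_w = w$; then $c = a\alpha_w \in A$, and $c = a \cdot a^{-1}b\beta_w^{-1} = b\beta_w^{-1} \in B$, so $c \in A \cap B$. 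Now $d_{\mathcal{A}}(g,c) \le d_{\mathcal{A}}(g,a) + d_{\mathcal{A}}(a,c) = d_{\mathcal{A}}(g,a) + \abs{\alpha_w}_{\mathcal{A}} \le K + \max_{w \in D}\abs{\alpha_w}_{\mathcal{A}}$. Setting $K' := K + \max_{w \in D}\abs{\alpha_w}_{\mathcal{A}}$, which is finite because $D$ is finite and depends only on $A$, $B$, $K$ (and $\mathcal{A}$), we get $g \in N_{\mathcal{A}}(A \cap B, K')$, as required.

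The only genuinely delicate point — and the one I would be most careful about — is making sure the constant $K'$ does not depend on $g$ or on the coset representative $x$: this is why it is essential to fix the factorisations $\alpha_w\beta_w = w$ in advance, uniformly over the finite set $D$, and to reduce to $x=1$ first so that $D$ is genuinely finite and independent of everything but $A,B,K,\mathcal{A}$. Everything else is a routine triangle-inequality estimate. One should also note that finiteness of $\mathcal{A}$ is used exactly once, to guarantee the ball $\bar{B}_{\mathcal{A}}(1,2K)$ is finite; this is the hypothesis that makes the argument work and is presumably why the lemma is stated for finite $\mathcal{A}$ even though neighbourhoods are later taken in the infinite generating set $X \cup \mathcal{H}$ as well.
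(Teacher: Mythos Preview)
Your argument is correct. The paper's proof performs the same first step --- reducing to $x=1$ via the left-translation isometry --- but then simply cites \cite[Proposition~9.4]{HruskaRHCG} rather than writing out the details; your direct finite-ball argument is essentially the standard elementary proof underlying that citation, so the two approaches coincide in substance.
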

\begin{proof} After applying the left translation by $x^{-1}$, which preserves the metric $d_{\mathcal{A}}$, we can assume that $x=1$. Now the statement follows, for example, from \cite[Proposition~ 9.4]{HruskaRHCG}.
\end{proof}

Suppose that $\gamma$ is a combinatorial path (edge path) in $\Gamma(G,\mathcal{A})$.
We will denote the initial and terminal endpoints of \(\gamma\) by \(\gamma_-\) and \(\gamma_+\) respectively.
We will write \(\ell(\gamma)\) for the length (that is, the number of edges) of \(\gamma\). 
We will also use $\gamma^{-1}$ to denote the inverse of $\gamma$, which is the path starting at $\gamma_+$, ending at $\gamma_-$ and traversing $\gamma$ in the reverse direction.
If \(\gamma_1, \dots, \gamma_n\) are combinatorial paths with \((\gamma_i)_+ = (\gamma_{i+1})_-\), for each \(i \in \{1, \dots, n-1\}\), we will denote their concatenation by \(\gamma_1 \dots \gamma_n\).

Since $\Gamma(G,\mathcal{A})$ is a labelled graph, every combinatorial path $\gamma$ comes with a label $\Lab(\gamma)$, which is a word over the alphabet $\mathcal{A}^{\pm 1}$.
We denote by \(\elem{\gamma} \in G\) the element represented by \(\Lab(\gamma)\) in $G$.
Finally, we write \(\abs{\gamma}_{\mathcal{A}} = |\elem{\gamma}|_{\mathcal{A}}=d_{\mathcal{A}}(\gamma_-,\gamma_+)\). 
Note that $\Lab(\gamma^{-1})$ is the formal inverse of $\Lab(\gamma)$, so that and $|\gamma^{-1}|_{\mathcal{A}}=|\gamma|_{\mathcal{A}}$ and $\widetilde{\gamma^{-1}}={\elem{\gamma}}^{-1}$.


\subsection{Quasigeodesic paths}
In this section we assume that $\Gamma$ is a graph equipped with the standard path length metric $d(\cdot,\cdot)$.

\begin{definition}[Quasigeodesic]
\label{def:quasigeodesic}
    Let $\lambda \ge 1$ and $c \ge 0$ be some numbers and let $p$ be an edge path in  $\Gamma$. 
    Recall that $p$ is said to be \emph{$(\lambda,c)$-quasigeodesic} if for every combinatorial subpath $q$ of $p$ we have
    \[
        \ell(q) \le \lambda d (q_-,q_+) +c.
    \]
\end{definition}

\begin{lemma}
\label{lem:qgeod_with_attachments_is_qeod} 
    Suppose that $s=rpt$ is a concatenation of three combinatorial paths $r$, $p$ and $t$ in $\Gamma$ such that $\ell(r)\le D$ and $\ell(t) \le D$, for some $D \ge 0$, and $p$ is $(\lambda,c)$-quasigeodesic, for some $\lambda \ge 1$ and $c \ge 0$. 
    Then the path $s$ is $(\lambda,c')$-quasigeodesic, where $c'=c+2(\lambda+1)D$.
\end{lemma}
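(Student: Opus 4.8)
The plan is to prove Lemma~\ref{lem:qgeod_with_attachments_is_qeod} directly from the definition of quasigeodesic, by taking an arbitrary combinatorial subpath $q$ of $s=rpt$ and bounding $\ell(q)$ in terms of $d(q_-,q_+)$. First I would decompose $q$ according to how it meets the three pieces: write $q=r'p't'$, where $r'$ is the (possibly empty) part of $q$ lying in $r$, $p'$ is the part lying in $p$, and $t'$ is the part lying in $t$. Since $r'$ is a subpath of $r$ we have $\ell(r')\le \ell(r)\le D$, and similarly $\ell(t')\le D$; also $p'$ is a combinatorial subpath of the $(\lambda,c)$-quasigeodesic $p$, so $\ell(p')\le \lambda\, d(p'_-,p'_+)+c$.

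Next I would combine these bounds. We have $\ell(q)=\ell(r')+\ell(p')+\ell(t')\le \ell(p')+2D$. To control $d(p'_-,p'_+)$ in terms of $d(q_-,q_+)$, note that $d(q_-,p'_-)\le \ell(r')\le D$ and $d(p'_+,q_+)\le \ell(t')\le D$, so by the triangle inequality $d(p'_-,p'_+)\le d(q_-,q_+)+2D$. Substituting,
\[
    \ell(q)\le \lambda\bigl(d(q_-,q_+)+2D\bigr)+c+2D = \lambda\, d(q_-,q_+) + c + 2(\lambda+1)D,
\]
which is exactly the claimed inequality with $c'=c+2(\lambda+1)D$.

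One small technical point worth handling cleanly is the degenerate cases: $q$ might be entirely contained in $r$, or in $t$, or in $p$, or might meet only two of the three pieces, and $p'$ (or $r'$, $t'$) might be empty or a single vertex. If $p'$ is empty then $\ell(q)\le 2D\le c+2(\lambda+1)D$ trivially, so the inequality holds regardless of $d(q_-,q_+)$; otherwise $p'$ has well-defined endpoints and the argument above goes through verbatim. I would phrase the decomposition so that these cases are subsumed automatically, e.g. by allowing $r'$, $p'$, $t'$ to be trivial paths. There is no real obstacle here — the only thing to be careful about is making the "part of $q$ in each piece" decomposition rigorous, which follows from the fact that $q$, being a combinatorial subpath of the concatenation $rpt$, is determined by a contiguous block of edges of $rpt$, and intersecting that block with the edge sets of $r$, $p$, $t$ gives three contiguous subblocks.
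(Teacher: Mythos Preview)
Your proof is correct and follows essentially the same approach as the paper: decompose the subpath $q$ into its intersections with $r$, $p$, $t$, apply the quasigeodesic estimate to the middle piece, and use the triangle inequality together with the length bounds $\ell(r),\ell(t)\le D$ to absorb the ends. The paper's argument is phrased slightly differently (it first disposes of the case $q\subseteq r$ or $q\subseteq t$, then names the middle subpath $a$ rather than $p'$), but the computation and the constants match exactly.
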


\begin{proof} 
    Consider an arbitrary combinatorial subpath $q$ of $s$. We need to show that
    \begin{equation}
    \label{eq:need_to_show_for_q}
        \ell(q) \le \lambda d (q_-,q_+) +c+2(\lambda+1)D.
    \end{equation}

    If $q$ is contained in $r$ or in $t$ then the desired inequality follows from the assumptions that $\ell(r) \le D$ and $\ell(t) \le D$.
    Therefore we can further suppose that $q_-$ is a vertex of $rp$ and $q_+$ is a vertex of $pt$. The bounds on the lengths of $r$ and $t$ imply that there is a combinatorial subpath $a$ of $p$ such that there are at most $D$ edges of $s$ between $q_-$ and $a_-$ and between $a_+$ and $q_+$. Thus
    $d(q_-,a_-) \le D$, $d(q_+,a_+) \le D$ and $\ell(q) \le \ell(a)+2D$

    The assumption that $p$ is $(\lambda,c)$-quasigeodesic implies that
    \begin{equation}\label{eq:ell_of_q}
        \ell(q) \le \ell(a)+2D \le \lambda d(a_-,a_+)+c+2D.
    \end{equation}
    The triangle inequality gives
    $ d(a_-,a_+)\le d(q_-,q_+)+2D$,
    which, combined with \eqref{eq:ell_of_q}, shows that \eqref{eq:need_to_show_for_q} holds, as required.
\end{proof}

\begin{lemma}
\label{lem:perturbed_quasigeodesic}
    Let \(\lambda\geq 1, c \geq 0\) and $K \in \NN$.
    Suppose that \(p\) is a combinatorial path in $\Gamma$ and let \(p'\) be a path obtained by replacing some edges of \(p\) with combinatorial paths of length at most \(K\).
    If \(p\) is \((\lambda,c)\)-quasigeodesic then \(p'\) is  \((K\lambda,2K^2\lambda+Kc+2K)\)-quasigeodesic.
\end{lemma}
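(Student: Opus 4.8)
The plan is to verify the quasigeodesic inequality directly on an arbitrary combinatorial subpath $q'$ of $p'$. First I would set up a correspondence between subpaths of $p'$ and subpaths of $p$: each edge $e$ of $p$ is replaced by a path $\sigma_e$ of length at most $K$ (with $\sigma_e = e$ for the unreplaced edges), so every vertex of $p'$ lies on some $\sigma_e$ and is within distance $K$ (in $p'$, hence in $\Gamma$) of a vertex of $p$. Given $q'$, let $q'_- $ lie on $\sigma_{e}$ and $q'_+$ lie on $\sigma_{f}$, where $e,f$ are edges of $p$; let $q$ be the subpath of $p$ running from the initial vertex of $e$ to the terminal vertex of $f$ (or the appropriate degenerate choice if $e=f$ or they are adjacent). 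Then $q'$ is a subpath of $\sigma_e q'' \sigma_f$-type concatenation where $q''$ is the part of $p'$ coming from the interior edges of $q$; more precisely $\ell(q') \le K\,\ell(q)$ since $q'$ is obtained from the at most $\ell(q)$ edges of $q$ by the replacement, each contributing at most $K$ edges.

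Next I would bound $\ell(q)$ using that $p$ is $(\lambda,c)$-quasigeodesic: $\ell(q) \le \lambda\, d(q_-,q_+) + c$. The endpoints $q_-,q_+$ are vertices of $p$ lying within $K$ edges (along $p'$, hence within distance $K$ in $\Gamma$) of $q'_-,q'_+$ respectively — here one uses that a single $\sigma_e$ has length at most $K$, so $q_-$ and $q'_-$ are both on $\sigma_e$ and hence at distance $\le K$, and similarly at the other end. Thus $d(q_-,q_+) \le d(q'_-,q'_+) + 2K$ by the triangle inequality. Combining,
\[
\ell(q') \le K\,\ell(q) \le K\bigl(\lambda\, d(q_-,q_+) + c\bigr) \le K\lambda\bigl(d(q'_-,q'_+)+2K\bigr) + Kc = K\lambda\, d(q'_-,q'_+) + 2K^2\lambda + Kc.
\]
This already gives the claim with additive constant $2K^2\lambda + Kc$; the extra $+2K$ in the statement is slack absorbing the degenerate cases (when $q'_-$ and $q'_+$ lie on the same $\sigma_e$, or on $\sigma_e$, $\sigma_f$ with $f$ immediately after $e$, where the subpath $q$ of $p$ may be empty or a single edge and the crude estimate $\ell(q') \le K\ell(q)$ needs a $+K$ correction, or simply bounding $\ell(q') \le 2K$ directly).

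The only real subtlety — the ``main obstacle'' — is the bookkeeping in these boundary cases: making sure the chosen subpath $q$ of $p$ genuinely has endpoints close to those of $q'$ and length controlling $\ell(q')$ even when $q'$ starts and ends inside the same replacement path, or inside two consecutive ones. I would handle this by splitting into the case $\ell(q') \le 2K$ (inequality then immediate since the right-hand side is at least $0 \ge \ell(q') - 2K$... so in fact $\ell(q') \le 2K \le 2K^2\lambda + Kc + 2K$) and the case $\ell(q') > 2K$, in which $q'$ must meet the interior of at least one full $\sigma_e$ and the correspondence with a nonempty subpath $q$ of $p$ works cleanly as above. No genuinely hard estimate is involved; it is purely a matter of organising the cases so the stated constants come out.
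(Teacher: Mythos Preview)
Your approach is correct and essentially the same as the paper's. The only difference is bookkeeping: the paper picks the subpath $r$ of $p$ between the \emph{first and last vertices of $p$ lying on $q'$} (rather than extending out to the initial vertex of $e$ and terminal vertex of $f$), which yields the uniform bound $\ell(q') \le K\ell(r) + 2K$ and disposes of the degenerate case (no vertex of $p$ on $q'$) in one line, so no separate case analysis is needed.
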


\begin{proof}
    Let $q$ be any combinatorial subpath of $p'$ and write \(q_-=x\) and \(q_+=y\). We need to show that
    \begin{equation}\label{eq:ineq_for_length_of_q}
    \ell(q)\le  K \lambda d(x,y) + 2K^2\lambda + Kc + 2K.
    \end{equation}
    
    If $q$ does not contain any vertices of $p$ then $\ell(q) \le K$ and \eqref{eq:ineq_for_length_of_q} holds. Otherwise, let $z$ and $w$ be the first and the last vertices of $q$ that lie on $p$ respectively, and let $r$ be the subpath of $p$ starting at $z$ and ending at $w$. The assumptions imply that \(d(x,z) \leq K\), \(d(y,w) \leq K\) and
    \begin{equation}
    \label{eq:bd_on_len_q}
        \ell(q) \leq K \ell(r) + 2K.
    \end{equation}
    Using the quasigeodesicity of \(p\) and the triangle inequality, we obtain
    \begin{equation*}
        \ell(r) \leq \lambda d(z,w) + c \leq \lambda d(x,y) + 2K\lambda + c,
    \end{equation*}
    which, combined with (\ref{eq:bd_on_len_q}), gives (\ref{eq:ineq_for_length_of_q}).
\end{proof}

\subsection{Hyperbolic metric spaces}

In this subsection \((\Gamma,d)\) denotes a geodesic metric space.

\begin{definition}[Gromov product]
    Let \(x, y, z \in \Gamma\) be points.
    The \emph{Gromov product} of \(x\) and \(y\) with respect to \(z\) is
    \[
        \langle x, y \rangle_z = \frac{1}{2}\Big( d(x,z) + d(y,z) - d(x,y) \Big).
    \]
\end{definition}

It is easy to see that the Gromov products satisfy the following equations:
\[
    d(x,y)=\langle y,z \rangle_x+\langle x, z \rangle_y,~
    d(y,z)=\langle x, z \rangle_y+\langle x, y \rangle_z \text{ and } d(z,x)=\langle x, y \rangle_z+\langle y, z \rangle_x.
\]

The following elementary property of Gromov products is an immediate consequence of the triangle inequality.

\begin{remark}
\label{rem:Gr_prod_ineq}
    Suppose that $x,y,z$ are points in $\Gamma$, $u$ is a point on any geodesic segment $[x,z]$, from $x$ to $z$, and $v$ is a point on any geodesic segment $[z,y]$, from $z$ to $y$. Then \[\langle u,v \rangle_z \le \langle x,y \rangle_z.\]
\end{remark}

\begin{definition}[$\delta$-thin triangle]
    Let \(\Delta\) be a geodesic triangle in \(\Gamma\) with vertices \(x, y,\) and \(z\), and let \(\delta \geq 0\).
    Denote by \(T_\Delta\) the (possibly degenerate) tripod with edges of length \(\langle x, y \rangle_z, \langle y, z \rangle_x\), and \(\langle z, x \rangle_y\) respectively.
    There is an map from \(\{x,y,z\}\) to the extremal vertices of \(T_\Delta\), which extends uniquely to a map \(\phi \colon \Delta \to T_\Delta\), whose restriction to each side of \(\Delta\) is an isometry.
    If the diameter in \(\Gamma\) of \(\phi^{-1}(\{t\})\) is at most \(\delta\), for all \(t \in T_\Delta\), then \(\Delta\) is said to be \emph{\(\delta\)-thin}.
\end{definition}

\begin{definition}[Hyperbolic space]
    The space \(\Gamma\) is said to be a \emph{hyperbolic metric space} if there is a constant \(\delta \geq 0\) such that every geodesic triangle in \(\Gamma\) is \(\delta\)-thin.
\end{definition}

The above definition of \(\delta\)-hyperbolicity is not the most commonly used in the literature, though it is well-known to be equivalent to other definitions after possibly increasing \(\delta\): see, for example, \cite[III.H.1.17]{Bridson_Haefliger}.
For technical reasons we will always assume that \(\delta\) is chosen to be sufficiently large so that all the definitions in this reference are satisfied.

In the remainder of this subsection we assume that \(\Gamma\) is a \(\delta\)-hyperbolic graph, for some $\delta \ge 0$, and $d(\cdot,\cdot)$ is the standard path length metric on $\Gamma$.

\begin{definition}[Broken line] 
\label{def:broken_line}
    A \emph{broken line} in $\Gamma$ is a path $p$ which comes with a fixed decomposition as a concatenation of combinatorial geodesic paths $p_1,\dots,p_n$ in $\Gamma$, so that $p=p_1p_2 \dots p_n$. 
    The paths $p_1, \dots, p_n$ will be called the \emph{segments} of the broken line $p$, and the vertices $p_-=(p_1)_-$, $(p_1)_+=(p_2)_-$, $\dots,$ $(p_{n-1})_+=(p_n)_-$ and $(p_{n+1})_+=p_+$ will be called the \emph{nodes} of $p$.
\end{definition}

The following statement is a special case of Lemma~4.2 from \cite{AshotResHom}, applied to the situation when each $p_i$ is geodesic (so, in the notation of that lemma, we can take $\overline{\lambda}=1$, $\overline{c}=0$ and $\nu=\delta$).
Note that due to a slightly different definition of quasigeodesicity used in \cite{AshotResHom}, a $(\lambda,c)$-quasigeodesic in the sense of \cite{AshotResHom} is $(1/\lambda,c/\lambda)$-quasigeodesic in the sense of Definition~\ref{def:quasigeodesic} above, and vice-versa.

\begin{lemma}
\label{lem:concat_of_geodesics-original}
    Let $c_0, c_1$ and $c_2$ be constants such that \(c_0 \geq 14\delta\), $c_1 = 12( c_0+\delta)+1$ and $c_2=10 (\delta+c_1)$.

   Suppose that $p=p_1 \dots p_n$ is a broken line in $\Gamma$, where $p_i$ is a geodesic with $(p_i)_-=x_{i-1} $, $(p_i)_+=x_i $, $i=1,\dots,n$.
   If \(d(x_{i-1}, x_{i}) \geq c_1 \) for \(i = 1, \dots, n\), and \(\langle x_{i-1}, x_{i+1} \rangle_{x_i} \leq c_0\) for each \(i = 1, \dots, n-1\), then the path \(p\) is \((4, c_2)\)-quasigeodesic.
\end{lemma}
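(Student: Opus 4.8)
The plan is to reduce this statement to the cited Lemma~4.2 of \cite{AshotResHom}, carefully translating between the two conventions for quasigeodesicity. First I would recall the set-up of that lemma: it handles a concatenation of $(\overline{\lambda},\overline{c})$-quasigeodesic segments whose consecutive Gromov products at the nodes are small compared to the segment lengths, and it concludes that the whole concatenation is quasigeodesic with explicit constants. Since each $p_i$ is a genuine geodesic, I would apply that lemma with $\overline{\lambda}=1$, $\overline{c}=0$, and $\nu=\delta$, so that the only hypotheses left to verify are the lower bound $d(x_{i-1},x_i)\ge c_1$ on segment lengths and the upper bound $\langle x_{i-1},x_{i+1}\rangle_{x_i}\le c_0$ on the node Gromov products — both of which are exactly what we are assuming.

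The one genuinely delicate point — and the step I expect to be the main obstacle — is bookkeeping the constants across the two differing definitions of quasigeodesic. As the remark preceding the lemma statement notes, a $(\lambda,c)$-quasigeodesic in the sense of \cite{AshotResHom} is a $(1/\lambda, c/\lambda)$-quasigeodesic in the sense of Definition~\ref{def:quasigeodesic}, and conversely. So I would (i) state precisely which thresholds Lemma~4.2 of \cite{AshotResHom} requires on the segment lengths and on the Gromov products in its own notation, (ii) check that $c_0\ge 14\delta$, $c_1 = 12(c_0+\delta)+1$ and $c_2 = 10(\delta+c_1)$ are chosen so that, after the conversion $1/\lambda \leftrightarrow \lambda$ and $c/\lambda \leftrightarrow c$, they meet or exceed those thresholds, and (iii) read off the output constants of that lemma and convert them back into Definition~\ref{def:quasigeodesic} form, verifying that what comes out is $(4,c_2)$. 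Concretely, the $4$ here should be the reciprocal (up to the conventions) of the multiplicative constant produced by \cite{AshotResHom}, and $c_2$ should dominate the additive constant produced there; the specific numerology $c_1 = 12(c_0+\delta)+1$ and $c_2 = 10(\delta+c_1)$ is presumably engineered precisely to make this work, so the task is to confirm the inequalities rather than to discover the constants.

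Assuming the constant-chasing goes through, the proof is essentially a one-line citation: under the stated hypotheses all the assumptions of \cite[Lemma~4.2]{AshotResHom} hold with $\overline{\lambda}=1$, $\overline{c}=0$, $\nu=\delta$, so the broken line $p=p_1\cdots p_n$ is quasigeodesic with the constants that lemma provides, and translating those constants into the convention of Definition~\ref{def:quasigeodesic} yields exactly that $p$ is $(4,c_2)$-quasigeodesic. I would therefore write the proof as: fix the conventions, quote the lemma, verify the two numerical hypotheses are implied by $d(x_{i-1},x_i)\ge c_1$ and $\langle x_{i-1},x_{i+1}\rangle_{x_i}\le c_0$, and close by unwinding the output constants — keeping the explicit arithmetic to the minimum needed to certify the claimed values $4$ and $c_2$.
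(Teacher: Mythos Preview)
Your proposal is correct and matches the paper's approach exactly: the paper does not give a standalone proof of this lemma but simply records, in the paragraph preceding the statement, that it is the special case of \cite[Lemma~4.2]{AshotResHom} with $\overline{\lambda}=1$, $\overline{c}=0$, $\nu=\delta$, together with the remark on converting between the two quasigeodesicity conventions. Your plan to cite that lemma, specialise the parameters, and track the constants through the convention change is precisely what the paper intends.
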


We will need an extension of the above lemma which allows the first and the last geodesic segments $p_1$ and $p_n$ to be short.

\begin{lemma}
\label{lem:concat}
    For any constant $c_0$, satisfying $c_0 \ge 14 \delta$, let $c_1=c_1(c_0) = 12( c_0+\delta)+1$ and $c_3=c_3(c_0)=10 (\delta+2c_1)$.

    Suppose that $p=p_1 \dots p_n$ is a broken line in $\Gamma$, where $p_i$ is a geodesic with $(p_i)_-=x_{i-1}$, $(p_i)_+=x_i$, $i=1,\dots,n$.
    If \(d(x_{i-1}, x_{i}) \geq c_1 \) for \(i = 2, \dots, n-1\), and \(\langle x_{i-1}, x_{i+1} \rangle_{x_i} \leq c_0\) for each \(i = 1, \dots, n-1\), then the path \(p\) is \((4,c_3)\)-quasigeodesic.
\end{lemma}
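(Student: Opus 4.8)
The plan is to reduce Lemma~\ref{lem:concat} to the already-established Lemma~\ref{lem:concat_of_geodesics-original} by padding the broken line so that its first and last segments become long. The only new feature here is that $p_1$ and $p_n$ are allowed to have length less than $c_1$, while the Gromov product condition at every node (including the first and last) is retained. So the first thing I would do is dispose of the easy degenerate cases: if $n=1$ the path is a geodesic and there is nothing to prove, and if $n=2$ then $p = p_1 p_2$ with a single node $x_1$ at which $\langle x_0, x_2\rangle_{x_1} \le c_0$; in this case an elementary Gromov-product estimate shows $\ell(p) = d(x_0,x_1)+d(x_1,x_2) = d(x_0,x_2) + 2\langle x_0,x_2\rangle_{x_1} \le d(x_0,x_2) + 2c_0$, and one checks $2c_0 \le c_3$, so $p$ is even $(1,c_3)$-quasigeodesic. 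Hence assume $n \ge 3$.

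For $n \ge 3$ the idea is to prepend a geodesic $p_0$ from some point $x_{-1}$ to $x_0$ of length exactly $c_1$ (or any length $\ge c_1$), chosen so that $x_0$ lies on a geodesic from $x_{-1}$ to $x_1$ — concretely, extend the geodesic $p_1$ backwards past $x_0$ by $c_1$; since $\Gamma$ is a graph we can do this along edges, producing a geodesic $p_0$ with $(p_0)_- = x_{-1}$, $(p_0)_+ = x_0$, $\ell(p_0) = c_1$, and such that $p_0 p_1$ is geodesic, i.e. $\langle x_{-1}, x_1\rangle_{x_0} = 0 \le c_0$. Symmetrically, append a geodesic $p_{n+1}$ of length $c_1$ extending $p_n$ forwards past $x_n$ to a point $x_{n+1}$, so that $\langle x_{n-1}, x_{n+1}\rangle_{x_n} = 0 \le c_0$. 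Now the enlarged broken line $p' = p_0 p_1 \dots p_n p_{n+1}$ has all of its segments of length $\ge c_1$: the middle ones $p_2,\dots,p_{n-1}$ by hypothesis, and $p_0, p_1, p_{n}, p_{n+1}$ because $p_1$ now sits between two segments — wait, I need $d(x_{i-1},x_i) \ge c_1$ for all $i$ in the range required by Lemma~\ref{lem:concat_of_geodesics-original}, which is $i = 1,\dots, n+2$ in the new indexing. The segments of length possibly $< c_1$ are only the original $p_1$ and $p_n$; but these are now interior segments of $p'$, and Lemma~\ref{lem:concat_of_geodesics-original} requires \emph{all} segments long, so this naive padding does not quite work.

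So the main obstacle is exactly this: Lemma~\ref{lem:concat_of_geodesics-original} needs every segment long, and padding the ends does not lengthen $p_1$ and $p_n$. The fix is to instead \emph{merge} the short end segments into their neighbours. Since $p_0 p_1$ is a geodesic (by our choice of the backward extension) and $\langle x_1, x_3\rangle_{x_2} \le c_0$, we can regard $q_1 := p_0 p_1$ as a single geodesic segment from $x_{-1}$ to $x_1$ of length $c_1 + \ell(p_1) \ge c_1$, and similarly $q_{n'} := p_n p_{n+1}$ as a single geodesic segment of length $\ge c_1$. Then $p' = q_1 p_2 \dots p_{n-1} q_{n'}$ is a broken line all of whose segments have length $\ge c_1$, and whose nodes are $x_1, x_2, \dots, x_{n-1}$ with Gromov products $\langle x_{i-1}, x_{i+1}\rangle_{x_i} \le c_0$ for $i = 1, \dots, n-1$ by hypothesis (note $x_{-1}$ and $x_{n+1}$ play no role as they are endpoints, not nodes). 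Applying Lemma~\ref{lem:concat_of_geodesics-original} with the \emph{same} $c_0$, hence the same $c_1 = 12(c_0+\delta)+1$, and $c_2 = 10(\delta + c_1)$, we conclude $p'$ is $(4, c_2)$-quasigeodesic. Finally, $p$ is obtained from $p'$ by deleting the initial subpath $p_0$ and the terminal subpath $p_{n+1}$, each of length $c_1$; so for any combinatorial subpath $\beta$ of $p$, $\beta$ is also a subpath of $p'$ and $\ell(\beta) \le 4\,d(\beta_-,\beta_+) + c_2 \le 4\,d(\beta_-,\beta_+) + c_3$ since $c_3 = 10(\delta + 2c_1) \ge c_2$. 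Hence $p$ is $(4,c_3)$-quasigeodesic, as claimed. I would double-check the inequalities $2c_0 \le c_3$ and $c_2 \le c_3$ (both immediate from $c_1 \ge 14\delta \ge 0$ and the definitions), and verify that the graph structure really does allow extending a geodesic by an integer length along edges — which it does, taking $c_1$ to be the relevant integer, or rounding up, absorbing the $O(1)$ discrepancy into the constant.
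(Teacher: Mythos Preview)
There is a genuine gap in your argument for $n \ge 3$. When you apply Lemma~\ref{lem:concat_of_geodesics-original} to the padded broken line $p' = q_1\, p_2 \dots p_{n-1}\, q_{n'}$, the Gromov product required at the first interior node $x_1$ is $\langle (q_1)_-, (p_2)_+\rangle_{x_1} = \langle x_{-1}, x_2\rangle_{x_1}$, \emph{not} $\langle x_0, x_2\rangle_{x_1}$; your parenthetical that ``$x_{-1}$ and $x_{n+1}$ play no role as they are endpoints'' is exactly where the error lies, since $x_{-1}$ is the initial vertex of the first segment $q_1$ and therefore enters the Gromov product at the first node. The hypothesis only bounds $\langle x_0, x_2\rangle_{x_1}$ by $c_0$, and backward extension can only \emph{increase} this product: a short computation gives $\langle x_{-1}, x_2\rangle_{x_1} - \langle x_0, x_2\rangle_{x_1} = \langle x_{-1}, x_2\rangle_{x_0} \ge 0$, and this difference can be as large as $d(x_{-1},x_0) = c_1 \gg c_0$. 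For a concrete failure take $\Gamma = \ZZ$ (so $\delta = 0$), $c_0 = 1$, $c_1 = 13$, $x_1 = 0$, $x_0 = 1$, $x_2 = 100$: then $\langle x_0,x_2\rangle_{x_1} = 1 \le c_0$, but your extension produces $x_{-1} = 14$ and $\langle x_{-1},x_2\rangle_{x_1} = 14 > c_0$, so Lemma~\ref{lem:concat_of_geodesics-original} does not apply to $p'$. (A secondary, comparatively minor issue is that geodesic extensions need not exist in an arbitrary $\delta$-hyperbolic graph.)

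The paper's proof goes in the opposite direction: rather than extending the short end segments, it \emph{drops} them. If, say, $\ell(p_1) < c_1 \le \ell(p_n)$, then $q = p_2 \dots p_n$ satisfies all hypotheses of Lemma~\ref{lem:concat_of_geodesics-original} directly (every segment long, Gromov products at $x_2, \dots, x_{n-1}$ bounded by $c_0$), hence is $(4,c_2)$-quasigeodesic; one then reattaches the short segment $p_1$ via Lemma~\ref{lem:qgeod_with_attachments_is_qeod} with $D = c_1$, obtaining that $p = p_1 q$ is $(4, c_2 + 10c_1) = (4, c_3)$-quasigeodesic. The remaining three cases (both ends long, both short, only the last short) are handled in the same way.
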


\begin{proof}
    This follows easily by combining Lemma~\ref{lem:concat_of_geodesics-original} with Lemma~\ref{lem:qgeod_with_attachments_is_qeod}. 
    Indeed, there are four possibilities depending on whether or not $d(x_0,x_1) \ge c_1$ and $d(x_{n-1},x_n) \ge c_1$. 
    Since all of these cases are similar, let us concentrate on the situation when  $d(x_0,x_1) <c_1$ and $d(x_{n-1},x_n) \ge c_1$. 
    Then the path $q=p_2p_3 \dots p_n$ is $(4,c_2)$-quasigeodesic by  Lemma~\ref{lem:concat_of_geodesics-original}, where $c_2=10 (\delta+c_1)$. 
    Since $\ell(p_1)=d(x_0,x_1) < c_1$, we can apply Lemma~\ref{lem:qgeod_with_attachments_is_qeod} to deduce that the path $p=p_1\dots p_n=p_1 q$ is $(4,c_3)$-quasigeodesic, where $c_3=c_2+10c_1=10 (\delta+2c_1)$ as required.
\end{proof}

\subsection{Profinite topology and separable subsets}

Let \(G\) be a group.
The \emph{profinite topology} on \(G\) is the topology \(\pt(G)\) whose basis consists of left cosets to finite index subgroups of \(G\).

A subset \(Z \subseteq G\) is called \emph{separable} (in \(G\)) if it is closed in \(\pt(G)\).
Evidently finite unions and arbitrary intersections of separable subsets are separable.
It is easy to see that a subset \(Z \subseteq G\) is separable if and only if for every \(g \in G\setminus Z\), there is a finite group \(Q\) and a homomorphism \(\varphi \colon G \to Q\) such that \(\varphi(g) \notin \varphi(Z)\) in $Q$. A subgroup \(H \leq G\) is separable if and only if it is the intersection of  the finite index subgroups of $G$ containing it.

The following observation stems from the fact that the group operations of taking an inverse and multiplying by a fixed element are homeomorphisms with respect to the profinite topology.

\begin{remark}
\label{rem:sep_props}
    Let $Z$ be a separable subset of a group $G$. 
    Then for every $g \in G$ the subsets $Z^{-1}$, $gZ$ and $Zg$ are also separable.
\end{remark}

\begin{lemma}
\label{lem:induced_top} 
    Suppose that $A$ is a subgroup of a group $G$.
    \begin{itemize}
        \item[(a)] Every subset of $A$ which is closed in $\pt(G)$ is also closed in $\pt(A)$.
        \item[(b)] If every finite index subgroup of $A$ is separable in $G$ then every closed subset of $\pt(A)$ is closed in $\pt(G)$.
    \end{itemize}
\end{lemma}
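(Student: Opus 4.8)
The plan is to prove the two parts of Lemma~\ref{lem:induced_top} by translating statements about closed sets into the language of finite-index subgroups and homomorphisms to finite groups.

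\textbf{Part (a).} Let $Z \subseteq A$ be closed in $\pt(G)$, and take any $a \in A \setminus Z$. Since $Z$ is closed in $\pt(G)$, there is a finite-index subgroup $N \leqslant_f G$ and an element $g \in G$ with $a \in gN$ and $gN \cap Z = \emptyset$. Intersecting with $A$: the subgroup $N \cap A$ has finite index in $A$, and $a(N\cap A) \subseteq gN \cap A$, so $a(N\cap A) \cap Z \subseteq gN \cap Z = \emptyset$. Thus $a$ has a basic $\pt(A)$-neighbourhood disjoint from $Z$, which shows $Z$ is closed in $\pt(A)$. (One could equivalently phrase this via the continuity of the inclusion $A \hookrightarrow G$ with respect to the profinite topologies, since preimages of finite-index subgroups of $G$ are finite-index in $A$.)

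\textbf{Part (b).} Assume every finite-index subgroup of $A$ is separable in $G$, and let $Y \subseteq A$ be closed in $\pt(A)$; we must show $Y$ is closed in $\pt(G)$. Take $g \in G \setminus Y$. There are two cases. If $g \notin A$, then since $A$ itself is a finite-index subgroup of $A$, it is separable in $G$ by hypothesis, so $Y \subseteq A$ being contained in a separable set, we can separate $g$ from $A \supseteq Y$; more carefully, there is $L \leqslant_f G$ with $A \subseteq L$ and $g \notin L$ — wait, separability of $A$ in $G$ only gives finitely many finite-index subgroups whose intersection is $A$, one of which, say $L \leqslant_f G$, excludes $g$; then the coset $gL$ misses $L \supseteq A \supseteq Y$. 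If instead $g \in A$, then since $Y$ is closed in $\pt(A)$ there is a finite-index subgroup $B \leqslant_f A$ and $a \in A$ with $g \in aB$ and $aB \cap Y = \emptyset$, i.e. $g \notin a^{-1}\!\cdot\!$(hmm, let me keep it as) the complement of $aB$ in $A$ is a union of finitely many cosets of $B$, one of which contains $g$, say $gB$, with $gB \cap Y = \emptyset$. By hypothesis $B$ is separable in $G$, and by Remark~\ref{rem:sep_props} so is the coset $gB$; hence there is $M \leqslant_f G$ and $h \in G$ with $g \in hM$ and $hM \cap gB = \emptyset$. Then $hM \cap Y \subseteq hM \cap gB = \emptyset$ (using $Y \subseteq A$ and $Y \cap gB^{c} = $ — here I should double-check: we need $hM \cap Y = \emptyset$; since $Y \subseteq gB \cup (\text{cosets of }B\text{ disjoint from }Y)$ is not quite the right containment). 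The cleanest formulation: $Y$ is closed in $\pt(A)$ means $A \setminus Y$ is open, so there is a basic open set $aB \subseteq A \setminus Y$ with $g \in aB$; then $aB$ is separable in $G$ by Remark~\ref{rem:sep_props} (coset of the finite-index, hence $G$-separable, subgroup $B$), and since $g \in aB$ while $aB \cap Y = \emptyset$... that still only separates $g$ from $Y$ if we further know $g \notin \overline{Y}$. Let me instead argue directly that $Y$ is separable: write $A = \bigsqcup_{i} a_i B$ is not what we want either.

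\textbf{Cleaner approach to (b).} Since $Y$ is closed in $\pt(A)$, $Y$ is an intersection of clopen sets in $\pt(A)$; equivalently, for each $g \in G \setminus Y$ we produce a finite-index subgroup of $G$ and a coset separating $g$ from $Y$. If $g \in A$: pick (by closedness of $Y$ in $\pt(A)$) a finite-index subgroup $B \leqslant_f A$ such that $gB \cap Y = \emptyset$ — this exists because $Y$ is a union of some, but not all, cosets of a suitable finite-index subgroup after intersecting the clopen neighbourhoods, or more simply because the complement $A\setminus Y$ is open so contains a basic neighbourhood $gB$ of $g$ with $B \leqslant_f A$. Now $gB$ is separable in $G$: indeed $B$ is separable in $G$ by hypothesis, hence so is $gB$ by Remark~\ref{rem:sep_props}. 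Since $g \notin G \setminus gB$ — rather, we want to separate $g$ from $Y$, and $Y \cap gB = \emptyset$, i.e. $Y \subseteq G \setminus gB$; but $G \setminus gB$ need not be separable.

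So the genuinely correct route is: show that $Y$, being closed in $\pt(A)$, is a \emph{finite} union of cosets is false in general; rather, $Y = \bigcap_{j} C_j$ where each $C_j$ is a clopen subset of $A$ in $\pt(A)$, and each such $C_j$, being a finite union of cosets of finite-index subgroups of $A$, is separable in $G$ by Remark~\ref{rem:sep_props} together with the hypothesis (finite-index subgroups of $A$ are $G$-separable) and the fact that finite unions of separable sets are separable. Then $Y = \bigcap_j C_j$ is an intersection of $G$-separable sets, hence $G$-separable. This is the argument I would write up. The one technical point to verify is that a clopen subset of $\pt(A)$ really is a \emph{finite} union of cosets of finite-index subgroups of $A$: a clopen set is open, hence a union of cosets of finite-index subgroups; being also closed with complement open, and using that we may pass to a common finite-index subgroup $B \leqslant_f A$ (intersection of finitely many — but a priori infinitely many cosets appear). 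In fact one shows: $C$ clopen in $\pt(A)$ $\iff$ $C$ is a union of cosets of a single finite-index subgroup $B \leqslant_f A$ (take $B$ to be any finite-index subgroup with $C$ a union of $B$-cosets — such $B$ exists since $C$ is open; then $C$ is automatically a finite union since there are only finitely many $B$-cosets). This resolves the finiteness issue.

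\textbf{Main obstacle.} The essential subtlety — and the only place any care is needed — is in part (b): one must not confuse "$g$ can be separated from $Y$ using the profinite topology of $A$" with "$Y$ is separable in $G$", and the bridge is precisely the observation that a $\pt(A)$-clopen set is a finite union of cosets of finite-index subgroups of $A$, so that the hypothesis (each such subgroup is separable in $G$) plus stability of separability under cosets (Remark~\ref{rem:sep_props}), finite unions, and arbitrary intersections does the job. Part (a) is routine from the definitions.
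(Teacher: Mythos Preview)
Your part (a) is correct and matches the paper's observation that intersecting a basic closed set of $\pt(G)$ with $A$ yields a basic closed set of $\pt(A)$.

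For part (b), after several false starts you arrive at essentially the paper's argument: write the closed set $Y$ as an intersection of sets each of which is a finite union of cosets of a finite-index subgroup of $A$; such finite unions are separable in $G$ by the hypothesis together with Remark~\ref{rem:sep_props}, and arbitrary intersections of separable sets are separable. However, your execution introduces an unnecessary and incorrectly justified step. You claim that every clopen subset $C$ of $\pt(A)$ is a union of cosets of a \emph{single} finite-index subgroup $B \leqslant_f A$, and you justify this by saying ``such $B$ exists since $C$ is open''. That inference is false: openness alone does not guarantee a single $B$. For instance, in $A = \mathbb{Z}$ the set $\mathbb{Z} \setminus \{0\}$ is open in $\pt(\mathbb{Z})$ (as $\{0\}$ is closed by residual finiteness), yet it is not a union of cosets of any single $n\mathbb{Z}$, since any such union containing $n$ would also contain $0$. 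Your argument never uses the closedness of $C$ in that step, so the justification cannot stand as written.

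The fix is exactly what the paper does and avoids the issue entirely: do not pass through arbitrary clopen sets. Since cosets of finite-index subgroups form a basis for $\pt(A)$, every closed set is an intersection of \emph{basic} closed sets, i.e.\ complements of single cosets $aB$ with $B \leqslant_f A$, and each such complement is automatically a finite union of the remaining cosets of that same $B$. Then the hypothesis plus Remark~\ref{rem:sep_props} makes each basic closed set separable in $G$, and you are done. No claim about general clopen sets is needed.
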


\begin{proof} 
    Claim (a) immediately follows from the observation that the intersection of $A$ with any basic closed subset from $\pt(G)$ is either empty or is a basic closed subset of $\pt(A)$.

    If each finite index subgroup of $A$ is separable in $G$ then, in view of Remark~\ref{rem:sep_props}, every basic closed set in $\pt(A)$ is closed in the profinite topology of $G$. 
    Claim (b) of the lemma now follows from the fact that any closed subset of $A$ is the intersection of basic closed sets. 
\end{proof}

\begin{lemma}
\label{lem:fi_dc}
    Let $G$ be a group with subgroups $A,B$. 
    Suppose that $A' \leqslant_f A$, $B' \leqslant_f B$ and $A'B'$ is separable in $G$. 
    Then $AB$ is separable in $G$.
\end{lemma}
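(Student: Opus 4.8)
The plan is to write $A$ and $B$ as finite unions of cosets of $A'$ and $B'$ respectively, use this to express $AB$ as a finite union of translates of $A'B'$, and then invoke Remark~\ref{rem:sep_props} together with the fact that a finite union of separable sets is separable.

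First I would fix coset representatives. Since $A' \leqslant_f A$ and $B' \leqslant_f B$, there exist finite sets $\{a_1, \dots, a_m\} \subseteq A$ and $\{b_1, \dots, b_n\} \subseteq B$ such that $A = \bigcup_{i=1}^m a_i A'$ and $B = \bigcup_{j=1}^n B' b_j$. Multiplying these decompositions gives
\[
    AB = \bigcup_{i=1}^m \bigcup_{j=1}^n a_i A' B' b_j.
\]
Next, for each pair $(i,j)$, the set $a_i A' B' b_j$ is obtained from the separable set $A'B'$ by left multiplication by $a_i$ followed by right multiplication by $b_j$, so by Remark~\ref{rem:sep_props} it is separable in $G$. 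Finally, $AB$ is a finite union of separable subsets, hence separable in $G$, since finite unions of closed sets in $\pt(G)$ are closed.

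There is no real obstacle here: the only mild point to be careful about is that right cosets rather than left cosets of $B'$ are the natural thing to use (so that the representatives $b_j$ end up on the correct side in the product), but this is immediate since $B' \leqslant_f B$ is equivalent to $B$ being a finite union of right cosets of $B'$. Everything else is a direct application of the stability of separability under translation (Remark~\ref{rem:sep_props}) and under finite unions.
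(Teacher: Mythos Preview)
Your proposal is correct and matches the paper's proof essentially verbatim: the paper also writes $A=\bigsqcup_{i=1}^m a_iA'$ and $B=\bigsqcup_{j=1}^n B'b_j$, expresses $AB=\bigcup_{i,j} a_iA'B'b_j$, and concludes by Remark~\ref{rem:sep_props}.
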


\begin{proof}
    Let $A=\bigsqcup_{i=1}^m a_iA'$ and   $B=\bigsqcup_{j=1}^n B'b_j$. 
    Then \[AB=\bigcup_{i=1}^m \bigcup_{j=1}^n a_iA'B'b_j,\] which is separable in $G$ by Remark~\ref{rem:sep_props}.
\end{proof}

The next two lemmas use the notation introduced in Subsections~\ref{subsec:1.2} and \ref{subsec:3.1}.

\begin{lemma}
\label{lem:V_sep_and_U_leq_V->UV-sep}
    Let $A,B$ be subgroups of a group $G$ such that $A \preccurlyeq B$. If $B$ is separable in $G$ then so are the double cosets $AB$ and $BA$.
\end{lemma}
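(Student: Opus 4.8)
The plan is to reduce the statement about double cosets to the already-established fact that finite unions and translates of separable sets are separable (Remark~\ref{rem:sep_props}), together with the elementary characterisation of the relation $\preccurlyeq$ for subgroups. Recall from the discussion after the definition of $\preccurlyeq$ that for subgroups $A, B \leqslant G$ the condition $A \preccurlyeq B$ is equivalent to $|A : (A \cap B)| < \infty$ (by \cite[Lemma~2.1]{Min-Some_props_of_subsets}). So the hypothesis $A \preccurlyeq B$ gives us a finite set of coset representatives $a_1, \dots, a_m \in A$ with $A = \bigcup_{i=1}^m a_i (A \cap B)$.

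First I would handle $AB$. Writing $C = A \cap B \leqslant B$, we have
\[
    AB = \Bigl( \bigcup_{i=1}^m a_i C \Bigr) B = \bigcup_{i=1}^m a_i (CB) = \bigcup_{i=1}^m a_i B,
\]
where the last equality uses $C \leqslant B$, so $CB = B$. Since $B$ is separable in $G$, each left translate $a_i B$ is separable by Remark~\ref{rem:sep_props}, and hence the finite union $AB = \bigcup_{i=1}^m a_i B$ is separable, being a finite union of separable sets.

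For $BA$ I would run the symmetric argument: using the same decomposition $A = \bigcup_{i=1}^m a_i C$ we get $BA = \bigcup_{i=1}^m B a_i C$, and since $C \leqslant B$ we cannot directly collapse $B a_i C$; instead it is cleaner to take right coset representatives, i.e. write $A = \bigcup_{i=1}^m C a_i'$ for suitable $a_i' \in A$ (possible since $|A : C| < \infty$), so that
\[
    BA = B \Bigl( \bigcup_{i=1}^m C a_i' \Bigr) = \bigcup_{i=1}^m (BC) a_i' = \bigcup_{i=1}^m B a_i',
\]
again using $C \leqslant B$. Each $B a_i'$ is separable by Remark~\ref{rem:sep_props}, so the finite union $BA$ is separable. (Alternatively one can note $BA = (A^{-1}B^{-1})^{-1} = (AB)^{-1}$ fails since $A, B$ need not be the relevant objects — so the direct argument above is the safe route; or apply the $AB$ case to the separable subgroup $B$ and the subgroup $A$ on the other side after taking inverses, using that $Z$ separable implies $Z^{-1}$ separable.)

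I do not anticipate a genuine obstacle here — the only thing to be careful about is which side the finitely many coset representatives sit on, so that the absorption $CB = B$ (resp. $BC = B$) is valid; once the decomposition of $A$ into finitely many cosets of $A \cap B$ is chosen on the correct side, the result is immediate from Remark~\ref{rem:sep_props} and closure of separability under finite unions.
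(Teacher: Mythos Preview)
Your argument for $AB$ is exactly the paper's. For $BA$ your right-coset argument is correct, but the paper takes the shorter route you dismissed in your parenthetical: since $A$ and $B$ are \emph{subgroups}, $A^{-1}=A$ and $B^{-1}=B$, so $(AB)^{-1}=B^{-1}A^{-1}=BA$ is perfectly valid, and then Remark~\ref{rem:sep_props} (separability of inverses) finishes immediately. Your hesitation there was unfounded.
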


\begin{proof} 
    By \cite[Lemma 2.1]{Min-Some_props_of_subsets} $A \cap B$ has finite index in $A$, so $A=\bigsqcup_{i=1}^m a_i(A \cap B)$, for some $a_1,\dots,a_m \in A$. 
    It follows that $AB=\bigcup_{i=1}^m a_iB$, so it is separable by Remark~\ref{rem:sep_props}.
    The same remark also implies that $BA=(AB)^{-1}$ is separable in $G$.
\end{proof}

The main use of the profinite topology in this paper stems from the following elementary facts.
\begin{lemma}
\label{lem:sep->large_minx}
    Let $G$ be a group generated by a finite set $X$, and let $P \leqslant G$ be a subgroup. Suppose that $Z$ is  a separable subset of $P$.
    \begin{enumerate}[label=(\alph*)]
        \item If a finite subset $U \subseteq P$ is  disjoint from $Z$ then there is a normal finite index subgroup $N \lhd_f P$ such that $U \cap ZN=\emptyset$. Thus the image of $U$ in the quotient $P/N$ will be disjoint from the image of $Z$.
        \item For every constant $C \ge 0$ there is a finite index normal subgroup $N \lhd_f P$ such that \[\minx(ZN \setminus Z) \ge C.\]
        \item For any finite subset $A \subseteq P$ and any $C \ge 0$ there exists $N \lhd_f P$ such that \[\minx(aZN \setminus aZ) \ge C,~\text{ for all } a \in A.\]
    \end{enumerate}
\end{lemma}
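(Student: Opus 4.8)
The plan is to prove the three parts in order, with (a) doing the real work and (b), (c) following by bookkeeping. For part (a): since $Z$ is separable in $P$ and $U$ is a finite set disjoint from $Z$, for each $u \in U$ we have $u \notin Z$, so there is a finite index subgroup $P_u \leqslant_f P$ with $u \notin Z P_u$ — indeed, separability of $Z$ gives a finite group $Q_u$ and a homomorphism $\varphi_u\colon P \to Q_u$ with $\varphi_u(u) \notin \varphi_u(Z)$, and we take $P_u = \ker \varphi_u$ (or, if $Z$ is not a subgroup, we use the basic-open-set formulation: $P \setminus Z$ is open, so it contains a coset $u P_u'$, and replacing $P_u'$ by a finite index subgroup contained in it handles the coset issue — more cleanly, separability of $Z$ means $Z$ is an intersection of sets of the form $\varphi^{-1}(\varphi(Z))$ over homomorphisms $\varphi$ to finite groups, so pick $\varphi_u$ with $\varphi_u(u) \notin \varphi_u(Z)$ and set $P_u = \ker\varphi_u$). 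Now intersect: let $N = \bigcap_{u \in U} \bigcap_{g \in P} g P_u g^{-1}$; since each $P_u$ has finite index and $U$ is finite, $N$ is a finite index normal subgroup of $P$ (the normal core of a finite index subgroup has finite index). We have $N \leqslant P_u$ for every $u$, so $Z N \subseteq Z P_u$, whence $u \notin Z N$ for all $u \in U$; that is, $U \cap ZN = \emptyset$. The final sentence of (a) is immediate: since $N$ is normal, $ZN$ is a union of cosets of $N$, so it is exactly the preimage of its image under $P \to P/N$, and disjointness of $U$ from $ZN$ says precisely that the image of $U$ misses the image of $Z$.

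For part (b), apply (a) with $U = U_C := \{p \in P \setminus Z \mid |p|_X \le C\}$, which is a finite subset of $P$ because $d_X$ is proper (as $X$ is finite) and is disjoint from $Z$ by construction. Part (a) gives $N \lhd_f P$ with $U_C \cap ZN = \emptyset$. Then any element of $ZN \setminus Z$ of $X$-length at most $C$ would lie in $U_C \cap ZN$, a contradiction; hence $\minx(ZN \setminus Z) \ge C$, as required.

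For part (c), one can either reduce to (b) applied to each translate or run (a) once with a suitably enlarged finite set. I would run (a) directly: let $U = \{p \in P \mid |p|_X \le C,~ p \in aZ \setminus aZ\}$ — wait, that is empty; instead put $W = \bigcup_{a \in A}\bigl(a^{-1}\cdot(N_X(aZ, C)) \setminus Z\bigr)$, or more transparently, for each $a \in A$ note that $aZ$ is separable in $P$ by Remark~\ref{rem:sep_props}, so by (b) there is $N_a \lhd_f P$ with $\minx(aZ N_a \setminus aZ) \ge C$; then take $N = \bigcap_{a \in A} N_a$, which is finite index and normal since $A$ is finite, and observe that $aZN \subseteq aZN_a$ gives $aZN \setminus aZ \subseteq aZN_a \setminus aZ$, so $\minx(aZN \setminus aZ) \ge C$ for every $a \in A$. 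This completes the proof.

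The only genuinely delicate point is the separability bookkeeping in part (a) when $Z$ is merely a separable subset rather than a subgroup: one must phrase "$u \notin Z$, $Z$ separable" correctly as producing a finite index normal subgroup $N$ with $u \notin ZN$, which is exactly the content of the finite-quotient characterisation of separability recalled just before Remark~\ref{rem:sep_props}; everything after that is routine finiteness and normal-core arguments.
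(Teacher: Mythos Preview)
Your proof is correct and follows essentially the same route as the paper: part (a) uses the finite-quotient characterisation of separability to find, for each $u\in U$, a finite index normal subgroup $N_u\lhd_f P$ with $u\notin ZN_u$, then intersects; part (b) applies (a) to the finite set of short elements of $P\setminus Z$; and part (c) applies (b) to each translate $aZ$ (separable by Remark~\ref{rem:sep_props}) and intersects the resulting normal subgroups. The only cosmetic difference is that in (a) you pass to normal cores, which is unnecessary since $\ker\varphi_u$ is already normal; otherwise the arguments coincide.
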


\begin{proof}   
    (a) Let $U=\{u_1,\dots,u_m\} \subseteq P$. Since $u_i \notin Z$ and $Z$ is separable in $P$, there exists $N_i \lhd_f P$ such that $u_iN_i \cap Z=\emptyset$, for each $i=1,\dots,m$.
    We set $N=\bigcap_{i=1}^m N_i \lhd_f P$, so that $u_iN \cap Z=\emptyset$.
    That is, $u_i \notin ZN$ for all \(i = 1, \dots m\). Therefore $U \cap ZN=\emptyset$ and (a) has been proved.

    Claim (b) follows by applying claim (a) to the finite subset $U=\{g \in P\setminus Z \mid |g|_X <C \}$ of $P$.

    To prove (c), suppose that $A=\{a_1,\dots,a_k\} \subseteq P$. By Remark~\ref{rem:sep_props}, $a_jZ$ is separable in $P$, for every $j=1,\dots,k$, so, according to part (b),  there exists $N_j \lhd_f P$ such that 
    \[
        \minx(a_jZN_j \setminus a_jZ) \ge C, \text{ for each }j=1,\dots,k .
    \] 
    It is easy to see that the normal subgroup $N= \bigcap_{j=1}^k N_j \lhd_f P$ enjoys the required property.
\end{proof}

The following statement is well-known; we include a proof for completeness.

\begin{lemma}
\label{lem:Lemma_0}
    Let \(G\) be a group with subgroups \(K \leqslant_f H \leqslant G\). If \(K\) is separable in \(G\), then there is  \(L \leqslant_f G\) such that \(L \cap H = K\)
\end{lemma}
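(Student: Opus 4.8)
\textbf{Proof proposal for Lemma~\ref{lem:Lemma_0}.}

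The plan is to write $H$ as a disjoint union of finitely many cosets of $K$, say $H = \bigsqcup_{i=1}^{n} h_i K$ with $h_1 = 1$, and then to separate the finitely many ``bad'' coset representatives $h_2, \dots, h_n$ away from $K$ all at once. Concretely, for each $i \in \{2,\dots,n\}$ we have $h_i \notin K$ (as the cosets are distinct and $h_1 K = K$), so by separability of $K$ in $G$ there is a finite index subgroup $L_i \leqslant_f G$ with $K \subseteq L_i$ and $h_i \notin L_i$. Set $L_0 = \bigcap_{i=2}^{n} L_i \leqslant_f G$; this still contains $K$, and it contains none of $h_2, \dots, h_n$.

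The subgroup $L_0$ need not be contained in $H$, so the second step is to intersect with $H$: put $L = L_0 \cap H$. Since $L_0 \leqslant_f G$ we have $L \leqslant_f H$, and since $H \leqslant_f G$ — no wait, the hypothesis only gives $K \leqslant_f H$, not $H \leqslant_f G$. So $L \leqslant_f H$ but $L$ need not have finite index in $G$. I would instead set $L = L_0$ directly and argue $L_0 \cap H = K$: the issue is only whether $L_0$ has finite index in $G$, which it does by construction. So the correct choice is $L = L_0$, and it remains to verify $L \cap H = K$.

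For this verification: clearly $K \subseteq L$ and $K \subseteq H$, so $K \subseteq L \cap H$. Conversely, take $g \in L \cap H$. Since $g \in H = \bigsqcup_{i=1}^{n} h_i K$, we have $g \in h_i K$ for a unique $i$; if $i \neq 1$ then $h_i = g k^{-1}$ for some $k \in K \subseteq L$, and since $g \in L$ this forces $h_i \in L \subseteq L_i$, contradicting $h_i \notin L_i$. Hence $i = 1$, i.e.\ $g \in h_1 K = K$. Therefore $L \cap H = K$, and since $L = L_0 \leqslant_f G$ we are done.

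There is essentially no obstacle here; the only subtlety worth care is the one flagged above — one must take the finite index subgroup inside $G$ (the intersection $\bigcap L_i$) rather than inside $H$, since the conclusion requires $L \leqslant_f G$ but the hypothesis $K \leqslant_f H$ does not propagate to $H \leqslant_f G$. The finiteness of the index $[H:K]$ is exactly what makes the collection of elements to be separated finite, so that a single finite intersection suffices.
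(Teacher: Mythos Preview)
Your proof is correct and follows essentially the same approach as the paper: both use the finiteness of $[H:K]$ to reduce to separating finitely many bad elements/cosets from $K$, then take a finite intersection. The only cosmetic difference is that the paper finds a normal subgroup $N \lhd_f G$ disjoint from the nontrivial cosets $Kh_i$ and sets $L = KN$, whereas you directly pick finite index subgroups $L_i$ containing $K$ with $h_i \notin L_i$ and set $L = \bigcap_i L_i$; the verification that $L \cap H = K$ is the same in both.
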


\begin{proof}
    Since \(K\) is of finite index in \(H\), we can write \(H = K \cup Kh_1 \cup \dots \cup Kh_m\) for some \(h_1, \dots h_m \in H \setminus K\).
    The subgroup \(K\) is separable in \(G\), meaning that it is closed in \(\pt(G)\).
    Following Remark~\ref{rem:sep_props}, the union \(Kh_1 \cup \dots \cup Kh_m\) is also closed in \(\pt(G)\).
    Thus the subset \((G \setminus H) \cup K = G \setminus (Kh_1 \cup \dots \cup Kh_m)\) is open in \(\pt(G)\) and contains the identity.
    It follows from the definition of the profinite topology that there is a finite index normal subgroup \(N \lhd_f G\) with \(N \subseteq (G \setminus H) \cup K\).
    Observe that \(Kh_i \cap N = \emptyset\), for every \(i= 1, \dots, m\), so \(N \cap H \leqslant K\).
    Now set \(L = KN\leqslant_f G\). Then \(L \cap H = KN \cap H = K(N \cap H) = K\), as required.
\end{proof}


\section{Relatively hyperbolic groups}
\label{sec:RH_gps}
In this section we define relatively hyperbolic groups and collect various properties that will be used throughout the paper.

\subsection{Definition}
We will define relatively hyperbolic groups following the approach of Osin (for full details, see \cite{OsinRHG}).

\begin{definition}[Relative generating set, relative presentation] 
\label{def:rel_gen_set}
    Let \(G\) be a group, \(X \subseteq G\) a subset and \(\lbrace H_\nu \, | \, \nu \in \Nu \rbrace\) a collection of subgroups of \(G\). 
    The group \(G\) is said to be \emph{generated by \(X\) relative to \(\lbrace H_\nu \, | \, \nu \in \Nu \rbrace\)} if it is generated by \(X \sqcup \mathcal{H}\), where \(\mathcal{H}= \bigsqcup_{\nu \in \Nu} (H_\nu \setminus\{1\})\) (with the obvious map $X \sqcup \mathcal{H} \to G$). 
        If this is the case, then there is a surjection
    \[
        F = F(X) \ast (\ast_{\nu \in \Nu} H_\nu) \to G,
    \]
    where $F(X)$ denotes the free group on $X$.
    Suppose that the kernel of this map is the normal closure of a subset $\mathcal{R} \subseteq F$. Then $G$ can equipped with the \emph{relative presentation}
\begin{equation} \label{eq:rel_pres}
\langle X, H_\nu, \nu \in \mathcal{N} \mid \mathcal{R} \rangle.    
\end{equation} 

If \(X\) is a finite set, then \(G\) is said to be \emph{finitely generated relative to \(\lbrace H_\nu \, | \, \nu \in \Nu \rbrace\)}. If \(\mathcal{R}\) is also finite, \(G\) is said to be \emph{finitely presented relative to \(\lbrace H_\nu \, | \, \nu \in \Nu \rbrace\)} and the presentation above is a \emph{finite relative presentation}.
\end{definition}

With the above notation, we call the Cayley graph \(\ga\) the \emph{relative Cayley graph} of \(G\) with respect to  \(X\) and \(\lbrace H_\nu \, | \, \nu \in \Nu \rbrace\).
Note that when \(X\) is itself a generating set of \(G\), \(d_{X\cup\mathcal{H}}(g,h) \leq d_X(g,h)\), for all \(g,h \in G\).

\begin{definition}[Relative Dehn function]
    Suppose that \(G\) has a finite relative presentation \eqref{eq:rel_pres} with respect to a collection of subgroups \(\lbrace H_\nu \, | \, \nu \in \Nu \rbrace\).
    If \(w\) is a word in the free group \(F(X\sqcup\mathcal{H})\), representing the identity in \(G\), then it is equal in \(F\) to a product of conjugates
    \[
        w \stackrel{F}{=} \prod_{i=1}^n a_i r_i a_i^{-1},
    \]
    where \(a_i \in F\) and \(r_i \in \mathcal{R}\), for each \(i\).
    The \emph{relative area} of the word \(w\) with respect to the relative presentation, \(Area^{rel}(w)\), is the least number \(n\) among products of conjugates as above that are equal to \(w\) in \(F\).

    A \emph{relative isoperimetric function} of the above presentation is a function \(f \colon \NN \to \NN\) such that \(Area^{rel}(w)\) is at most \( f(\abs{w})\), for every freely reduced word \(w\) in \(F(X\sqcup\mathcal{H})\) representing the identity in \(G\).
    If an isoperimetric function exists for the presentation, the smallest such function is called the \emph{relative Dehn function} of the presentation.
\end{definition}

\begin{definition} [Relatively hyperbolic  group]
\label{def:rh_gp}
    Let \(G\) be a group and let \(\lbrace H_\nu \, | \, \nu \in \Nu \rbrace\) be a collection of subgroups of \(G\). 
    If \(G\) admits a finite relative presentation with respect to this collection of subgroups which has a well-defined linear relative Dehn function, it is called \emph{hyperbolic relative to} \(\lbrace H_\nu \, | \, \nu \in \Nu \rbrace\).
    When it is clear what the relevant collection of subgroups is, we refer to \(G\) simply as a \emph{relatively hyperbolic group}.
    The groups \(\lbrace H_\nu \, | \, \nu \in \Nu \rbrace\) are called the \emph{peripheral subgroups} of the relatively hyperbolic group $G$, and their conjugates in $G$ are called \emph{maximal parabolic subgroups}. 
    Any subgroup of a maximal parabolic subgroup is said to be \emph{parabolic}.
\end{definition}

\begin{lemma}[{\cite[Corollary 2.54]{OsinRHG}}]
\label{lem:Cayley_graph-hyperbolic}
    Suppose that $G$ is a group generated by a finite set $X$ and hyperbolic relative to a collection of subgroups \(\lbrace H_\nu \mid \nu \in \Nu \rbrace\), and let \(\mathcal{H} = \bigsqcup_{\nu \in \Nu} (H_\nu \setminus \{1\})\). 
    Then the Cayley graph $\ga$ is $\delta$-hyperbolic, for some $\delta \ge 0$.
\end{lemma}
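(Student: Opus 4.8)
This is the relative analogue of Gromov's theorem that a finitely presented group satisfying a linear isoperimetric inequality has a hyperbolic Cayley graph, and it is precisely \cite[Corollary~2.54]{OsinRHG}; here I sketch the strategy one would follow. The plan is to translate the ``linear relative Dehn function'' hypothesis into a linear bound on the number of essential faces of van Kampen diagrams over the fixed finite relative presentation, and then to run the classical combinatorial argument deriving thinness of geodesic triangles from such a bound.

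First I would fix a finite relative presentation $\langle X, H_\nu,\, \nu\in\Nu \mid \mathcal{R}\rangle$ for $G$ together with a constant $K\ge 0$ such that $Area^{rel}(w)\le K\abs{w}$ for every freely reduced word $w$ over $X\sqcup\mathcal{H}$ representing $1$ in $G$. The next step is to reinterpret relative area via diagrams: each such $w$ bounds a van Kampen diagram over the relative presentation in which the faces labelled by relators from $\mathcal{R}$ number at most $K\abs{w}$, the remaining faces being ``$H_\nu$-faces'' whose boundary labels are words over $H_\nu\setminus\{1\}$ representing $1$ in $H_\nu$. Viewing this as a diagram over the relative Cayley complex, whose $1$-skeleton is $\ga$, one sees that $\ga$ is simply connected and carries a linear bound on the number of $\mathcal{R}$-faces needed to fill a loop. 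Then I would run the standard argument (as in \cite[III.H]{Bridson_Haefliger} in the absolute case, adapted by Osin to the relative setting): fill a geodesic bigon by a diagram of minimal $\mathcal{R}$-area and, by induction on the number of faces using the linear bound, extract a uniform $\delta\ge 0$ with respect to which every geodesic triangle in $\ga$ is $\delta$-thin, as required.

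The hard part, and the reason Osin's proof is lengthy, is the bookkeeping around the $H_\nu$-faces: since the isoperimetric inequality controls only the $\mathcal{R}$-faces, one must verify that the cone-like $H_\nu$-faces do not obstruct the minimal-area and thin-triangle inductions, which in \cite{OsinRHG} is handled via the general theory of diagrams over relative presentations. As the statement is exactly \cite[Corollary~2.54]{OsinRHG}, in this paper we simply cite that result.
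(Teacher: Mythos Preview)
The paper gives no proof of this lemma at all: it is stated with the attribution \cite[Corollary~2.54]{OsinRHG} and used as a black box. Your proposal correctly identifies this and ends by citing Osin, so it is consistent with the paper's treatment; the sketch of Osin's argument you include is extra material that the paper does not provide, but since you explicitly frame it as a plan and defer to the citation, there is no discrepancy to flag.
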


In the remainder of this section (namely, in Subsections \ref{subsec:quasigeod_in_rh_gps}--\ref{subsec:qc_subset_in_rh_gps}, we will assume that \(G\) is a group generated by a finite subset $X$ and hyperbolic relative to a finite collection of subgroups \(\{ H_\nu \, | \, \nu \in \Nu \}\). As usual, we will let \(\mathcal{H} = \bigsqcup_{\nu \in \Nu} (H_\nu \setminus \{1\})\).

\subsection{Geodesics and quasigeodesics in relatively hyperbolic groups}\label{subsec:quasigeod_in_rh_gps}
\begin{definition}[Path components]
    Let \(p\) be a combinatorial path in \(\Gamma(G,X\cup\mathcal{H})\).
    A non-trivial combinatorial subpath of \(p\) whose label consists entirely of elements of \(H_\nu \setminus \{1\}\), for some \(\nu \in \Nu\), is called an \emph{\(H_\nu\)-subpath} of \(p\).

    An \(H_\nu\)-subpath is called an \emph{\(H_\nu\)-component} if it is not contained in any strictly longer \(H_\nu\)-subpath.
    We will call a subpath of \(p\) an \(\mathcal{H}\)-subpath (respectively, an \emph{\(\mathcal{H}\)-component}) if it is an \(H_\nu\)-subpath (respectively, an \(H_\nu\)-component), for some \(\nu \in \Nu\).
\end{definition}

\begin{definition}[Connected and isolated components]
    Let \(p\) and $q$ be edge paths in \(\Gamma(G,X\cup\mathcal{H})\) and suppose that \(s\) and \(t\) are \(H_\nu\)-subpaths of \(p\) and $q$ respectively, for some $\nu \in \Nu$.
    We say that \(s\) and \(t\) are \emph{connected} if $s_-$ and $t_-$ belong to the same left coset of $H_\nu$ in $G$. The latter means that for all vertices $u$ of $s$ and $v$ of $t$ either $u=v$ or there is an edge \(e\) in $\ga$ with $\Lab(e) \in H_\nu \setminus\{1\}$ and \(e_- = u, e_+ = v\).

    If \(s\) is an $H_\nu$-component of a path $p$ and $s$ is not connected to any other \(H_\nu\)-component of \(p\) then we say that \(s\) is \emph{isolated} in \(p\).
\end{definition}

\begin{definition}[Phase vertex]
    A vertex \(v\) of a combinatorial path \(p\) in \(\Gamma(G,X\cup\mathcal{H})\) is called \emph{non-phase} if it is an interior vertex of an \(\mathcal{H}\)-component of \(p\) (that is, if it lies in an \(\mathcal{H}\)-component which it is not an endpoint of).
    Otherwise \(v\) is called \emph{phase}.
\end{definition}

\begin{definition}[Backtracking]
    If all \(\mathcal{H}\)-components of a combinatorial path \(p\) are isolated, then \(p\) is said to be \emph{without backtracking}.
    Otherwise we say that \(p\) \emph{has backtracking}.
\end{definition}

\begin{remark}
\label{rem:comp_of_geod_is_an_edge}
    If \(p\) is a geodesic edge path in \(\Gamma(G,X\cup\mathcal{H})\) then every $\mathcal{H}$-component of $p$ will consist of a single edge, labelled by an element from $\mathcal{H}$. Therefore every vertex of $p$ will be phase.
    Moreover, it is easy to see that \(p\) will be without backtracking.
\end{remark}

The following is a basic observation about the lengths of paths in the relative Cayley graph whose \(\mathcal{H}\)-components are uniformly short.

\begin{lemma}
\label{lem:rel_geods_with_short_comps}
    Let \(p\) be a path in \(\Gamma(G,X\cup\mathcal{H})\) and suppose there is a constant \(\Theta \geq 1\) such that for any \(\mathcal{H}\)-component \(h\) of \(p\), we have \(|h|_X \leq \Theta\). Then \(|p|_X \leq \Theta \ell(p)\).
\end{lemma}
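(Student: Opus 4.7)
The plan is to decompose the path $p$ into its constituent pieces according to whether each edge is an $X$-edge or belongs to an $\mathcal{H}$-component, and then bound the $X$-length of each piece separately.

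More precisely, since every edge of $\Gamma(G,X\cup\mathcal{H})$ is labelled either by an element of $X^{\pm 1}$ or by a non-trivial element of some $H_\nu$, the path $p$ can be uniquely written as a concatenation $p = e_1 h_1 e_2 h_2 \dots$ (possibly starting or ending with either an $X$-edge segment or an $\mathcal{H}$-component, possibly with empty pieces), where each $e_i$ is a (possibly empty) maximal subpath of $X$-edges and each $h_j$ is an $\mathcal{H}$-component of $p$. Suppose there are $k$ $X$-edges in total and $m$ $\mathcal{H}$-components $h_1, \dots, h_m$, with $h_j$ having $\ell(h_j) = n_j \ge 1$ edges. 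Then
\[
    \ell(p) = k + \sum_{j=1}^m n_j.
\]

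Now I would estimate the $X$-length of $\widetilde{p}$ by exhibiting a word over $X^{\pm 1}$ representing it in $G$. Each $X$-edge of $p$ has a label in $X^{\pm 1}$, so it contributes a single letter. For each $\mathcal{H}$-component $h_j$, the group element $\widetilde{h_j}$ satisfies $|\widetilde{h_j}|_X = |h_j|_X \le \Theta$ by hypothesis, so it can be represented by a word of length at most $\Theta$ in $X^{\pm 1}$. Concatenating these words in the order dictated by the decomposition of $p$ yields a word over $X^{\pm 1}$ representing $\widetilde{p}$ in $G$ of total length at most
\[
    k + m\Theta \le \Theta k + \Theta \sum_{j=1}^m n_j = \Theta\, \ell(p),
\]
where the inequality uses $\Theta \ge 1$ and $n_j \ge 1$. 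Hence $|p|_X = |\widetilde{p}|_X \le \Theta\,\ell(p)$, as required.

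There is no real obstacle here; the argument is essentially a bookkeeping exercise, and the only small subtlety is invoking $\Theta \ge 1$ to absorb the individual $X$-edge contributions into the bound.
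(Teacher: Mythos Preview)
Your proof is correct and follows essentially the same approach as the paper: decompose $p$ into its $X$-labelled edges and its $\mathcal{H}$-components, bound each piece separately, and use $\Theta\ge 1$ together with $\ell(h_j)\ge 1$ to absorb everything into $\Theta\,\ell(p)$. The only cosmetic difference is that you exhibit an explicit word over $X^{\pm 1}$ while the paper phrases the same estimate via the triangle inequality on $d_X$.
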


\begin{proof}
    We can write \(p\) as a concatenation \(p = a_0 h_1 a_1 \dots a_{n-1} h_n a_n\), where \(h_1, \dots, h_n\) are the \(\mathcal{H}\)-com\-po\-nents of \(p\) and \(a_0, \dots, a_n\) are subpaths of \(p\) all whose edges are labelled by elements of \(X^{\pm1}\).

    It follows from the triangle inequality that
    \begin{equation*}
       |p|_X= d_X(p_-, p_+) \leq \sum_{i = 0}^n d_X((a_i)_-, (a_i)_+) + \sum_{i = 1}^n d_X((h_i)_-, (h_i)_+).
    \end{equation*}
    Since each edge of \(a_i\) is labelled by an element of \(X^{\pm1}\), we have that \(d_X((a_i)_-, (a_i)_+) \leq \ell(a_i)\), for all \(i = 0, \dots, n\).
    Moreover, \(d_X((h_i)_-, (h_i)_+)=|h_i|_X \leq \Theta \ell(h_i)\), for each \(i = 1, \dots, n\), by the hypothesis of the lemma, as \(\ell(h_i) \ge 1\).

    Combining the above three inequalities with the fact that \(\Theta \geq 1\), we obtain
    \[         
        |p|_X \leq \sum_{i=0}^n \ell(a_i) + \sum_{i=1}^n \Theta \ell(h_i) \leq \Theta \Big(\sum_{i=0}^n \ell(a_i) + \sum_{i=1}^n \ell(h_i)\Big) = \Theta \ell(p).
    \]
\end{proof}

\begin{lemma}[{\cite[Lemma 3.1]{OsinRHG}}]
\label{lem:osinisoperimetric}
    There is  a constant \(M \geq 1\) such that if \(h_1, \dots, h_n\) are isolated \(\mathcal{H}\)-components of a cycle \(q\) in \(\Gamma(G,X\cup\mathcal{H})\), then
    \[
        \sum_{i=1}^n \abs{h_i}_X \leq M\ell(q).
    \]
\end{lemma}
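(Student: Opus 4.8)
As the citation indicates, this is Osin's lemma, so the expected treatment is a reference; the underlying argument, which I would reproduce, goes through van Kampen diagrams and is a fairly direct consequence of the linearity of the relative Dehn function (Definition~\ref{def:rh_gp}). The plan is as follows. Fix a finite relative presentation $\langle X, H_\nu\ (\nu\in\Nu)\mid\mathcal R\rangle$ of $G$ with linear relative Dehn function and let $C\ge 1$ be a constant with $Area^{rel}(w)\le C\abs{w}$ for every word $w$ over $X^{\pm1}\sqcup\mathcal H$ representing $1$ in $G$ (free reduction only shortens $w$, so this holds for all such words). Since $\mathcal R$ is finite, the quantity $K:=\max_{r\in\mathcal R}\sum_{e}\abs{\elem e}_X$, where the sum runs over the edges $e$ of the loop in $\ga$ labelled by $r$, is finite. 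I claim $M:=CK$ works.

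Given a cycle $q$ in $\ga$, its label represents $1$ in $G$, so there is a van Kampen diagram $\Delta$ over the relative presentation with $\partial\Delta$ labelled by $\Lab(q)$ and with at most $C\ell(q)$ two-cells, each a polygon whose boundary edge labels have total $X$-length at most $K$. After the standard preliminary surgeries one may assume $\Delta$ is reduced and ``$\mathcal H$-normal'': for every left coset $gH_\nu$ the $\mathcal H$-edges of $\Delta$ with both endpoints in $gH_\nu$, together with those endpoints, span a connected subgraph of $\Delta$ (and this does not increase the number of two-cells). The claim then reduces to $\sum_{i=1}^n\abs{h_i}_X\le K\cdot(\#\text{ two-cells of }\Delta)$, which I would obtain by a charging argument. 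For each isolated $H_{\nu_i}$-component $h_i$ of $q=\partial\Delta$, with endpoints $u_i,v_i$ in a coset $g_iH_{\nu_i}$, the $\mathcal H$-normality provides a simple path $\rho_i$ inside $\Delta$ from $u_i$ to $v_i$ consisting entirely of $\mathcal H$-edges with both endpoints in $g_iH_{\nu_i}$; since $h_i$ is isolated in $\partial\Delta$, none of the edges of $\rho_i$ lies on $\partial\Delta$, so each is carried by a two-cell. As $h_i$ and $\rho_i$ are paths in $\ga$ between the same pair of vertices, $\elem{h_i}=\elem{\rho_i}$, whence $\abs{h_i}_X=\abs{\elem{h_i}}_X$ is at most the sum of the $X$-lengths of the edge labels along $\rho_i$. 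Finally, distinct isolated components $h_i$ involve cosets $g_iH_{\nu_i}$ that are pairwise distinct (otherwise two of the $h_i$ would be connected), so on the boundary of each two-cell they use disjoint sets of $\mathcal H$-edges; hence the total $X$-length charged to a single two-cell is at most $K$, and summing over $i$ gives $\sum_i\abs{h_i}_X\le K\cdot(\#\text{ two-cells})\le CK\,\ell(q)=M\ell(q)$.

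The step I expect to be the real obstacle is the ``$\mathcal H$-normalisation'' in the second paragraph: verifying that one may modify $\Delta$ so that the $\mathcal H$-edges over each coset form a connected subgraph without creating new two-cells, so that the path $\rho_i$ actually exists inside $\Delta$. This is exactly the kind of combinatorial fact about van Kampen diagrams over relative presentations that is established in \cite{OsinRHG}. Everything else — extracting $\Delta$ from the linear relative isoperimetric inequality, finiteness of $K$, the harmless effect of free reduction on the components $h_i$, the identity $\elem{h_i}=\elem{\rho_i}$, and the final double counting — is routine.
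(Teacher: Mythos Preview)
The paper gives no proof of this lemma; it is simply quoted from \cite{OsinRHG}. Your opening sentence already identifies this correctly, and for the purposes of the paper nothing more is required.

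Your supplementary sketch of Osin's argument is in the right spirit but contains a genuine gap. In the relative presentation one works with, van Kampen diagrams carry two kinds of $2$-cells: the $\mathcal{R}$-cells (boundaries labelled by the finitely many relators in $\mathcal{R}$) and the $\mathcal{S}$-cells (boundaries labelled by words over a single $H_\nu$ representing the identity there). The linear relative Dehn function bounds only the number of $\mathcal{R}$-cells, not the total number of $2$-cells; there may be arbitrarily many $\mathcal{S}$-cells. So your assertion that $\Delta$ has ``at most $C\ell(q)$ two-cells, each \dots with total $X$-length at most $K$'' conflates the two types. Your path $\rho_i$ could a priori run through $\mathcal{S}$-cells, whose boundary edges have no uniform $X$-length bound, and your charging argument would then fail. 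What Osin actually proves (essentially his Lemma~2.27) is that the maximal connected $H_\nu$-subdiagram containing an isolated boundary component $h_i$ has its internal frontier made up entirely of edges lying on $\mathcal{R}$-cells; it is this that forces the cost of $h_i$ onto $\mathcal{R}$-cells and makes the bound $M=CK$ work. Your ``$\mathcal{H}$-normalisation'' step hints at the right structure but does not capture this $\mathcal{R}$/$\mathcal{S}$ dichotomy, which is the heart of the matter.
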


\begin{lemma}
\label{lem:qgds_with_long_comps}
    For any \(\lambda \geq 1\), \(c \geq 0\) and \(A \geq 0\) there is a constant \(\eta = \eta(\lambda,c,A) \geq 0\) such that the following is true.

    Suppose that \(p\) is a \((\lambda,c)\)-quasigeodesic path in $\ga$ possessing an isolated \(\mathcal{H}\)-component \(h\) such that \(\abs{h}_X \geq \eta\). Then \(\abs{p}_X \geq A\).
\end{lemma}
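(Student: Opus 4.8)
The plan is to close the path $p$ up into a cycle by adjoining to it a geodesic of the word metric $d_X$ (rather than of $\dxh$) between its endpoints, and then to apply Osin's isolated-components estimate, Lemma~\ref{lem:osinisoperimetric}. The point of closing up with an $X$-geodesic is that such a path has no $\mathcal{H}$-edges at all, so the component $h$ remains isolated in the resulting cycle; this is the one place where a little care is needed.

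Concretely, I would let $M \geq 1$ be the constant from Lemma~\ref{lem:osinisoperimetric} and set $\eta = \eta(\lambda,c,A) := M(\lambda+1)A + Mc$. Assuming $\abs{h}_X \geq \eta$, I would argue by contradiction that $\abs{p}_X \geq A$, so suppose $\abs{p}_X < A$. Since $X$ generates $G$ (as is assumed throughout this subsection), there is a geodesic $\gamma$ from $p_+$ to $p_-$ in $\Gamma(G,X)$; as $X \subseteq X \cup \mathcal{H}$, this $\gamma$ is also a path in $\ga$, every edge of which is labelled by an element of $X^{\pm 1}$. In particular $\gamma$ has no $\mathcal{H}$-components and $\ell(\gamma) = d_X(p_+,p_-) = \abs{p}_X < A$.

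Next I would form the cycle $q = p\gamma$ in $\ga$. Since $\gamma$ contributes no $\mathcal{H}$-edges, every $\mathcal{H}$-component of $q$ lies inside $p$ (a component cannot straddle either of the two junction vertices $p_+$, $p_-$), so the $\mathcal{H}$-components of $q$ are precisely those of $p$; hence $h$, being isolated in $p$, is also isolated in $q$. Lemma~\ref{lem:osinisoperimetric} then yields $\abs{h}_X \leq M\ell(q) = M(\ell(p) + \ell(\gamma))$. Applying the $(\lambda,c)$-quasigeodesic inequality to the whole path $p$ and using $\dxh(p_-,p_+) \leq d_X(p_-,p_+) = \abs{p}_X$, we get $\ell(p) \leq \lambda\,\dxh(p_-,p_+) + c \leq \lambda\abs{p}_X + c$. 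Combining these estimates gives $\abs{h}_X \leq M(\lambda+1)\abs{p}_X + Mc < M(\lambda+1)A + Mc = \eta$, contradicting $\abs{h}_X \geq \eta$. Therefore $\abs{p}_X \geq A$.

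I do not expect a serious obstacle: the only subtlety is the observation, highlighted above, that one must close $p$ up with a geodesic of $\Gamma(G,X)$ and not of $\ga$ — had we used a relative geodesic $[p_+,p_-]$, its (single-edge) $\mathcal{H}$-components could be connected to $h$, and $h$ would no longer be visibly isolated in the cycle. Using an $X$-geodesic avoids this at the negligible cost of a slightly longer cycle, which is harmless since $\ell(\gamma) = \abs{p}_X$ is controlled by exactly the quantity we are trying to bound.
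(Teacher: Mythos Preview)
Your proof is correct and essentially identical to the paper's: close up $p$ with an $X$-labelled path to form a cycle, observe that $h$ stays isolated since the added path has no $\mathcal{H}$-edges, apply Lemma~\ref{lem:osinisoperimetric}, and bound $\ell(p)$ via quasigeodesicity together with $\dxh(p_-,p_+)\le |p|_X$. Even your choice of $\eta = M(\lambda+1)A + Mc$ matches the paper's.
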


\begin{proof} 
    Let $M \ge 1$ be the constant from Lemma~\ref{lem:osinisoperimetric}, and set
    \begin{equation} 
    \label{eq:defn_of_zeta}
        \eta = M(1 + \lambda)A + Mc.
    \end{equation}

    Let \(q\) be a path in \(\Gamma(G,X\cup\mathcal{H})\), labelled by a word over \(X^{\pm 1}\), with endpoints \(q_- = p_-\) and \(q_+ = p_+\), such that \(\ell(q) = \abs{p}_X\).

    Consider the cycle \(r = pq^{-1}\) in \(\Gamma(G,X\cup\mathcal{H})\), formed by concatenating \(p\) and the inverse of \(q\).
    By the quasigeodesicity of \(p\), \(\ell(p) \leq \lambda \abs{p}_{X\cup\mathcal{H}} + c \leq \lambda \abs{p}_X + c\).
    Now \(\ell(r) = \ell(p) + \ell(q)\), therefore
    \begin{equation}
    \label{eq:len_of_cycle_r}
        \ell(r) \leq (1+ \lambda) \abs{p}_X + c.
    \end{equation}

    Since \(h\) is isolated in \(p\) it must also be an isolated $\mathcal H$-component of  the cycle \(r\) (because all edges of $q$ are labelled by letters from $X^{\pm 1}$). Hence $\abs{h}_X \leq M \ell(r)$
    by Lemma~\ref{lem:osinisoperimetric}, so \eqref{eq:len_of_cycle_r} implies that
    \begin{equation}
    \label{eq:bound_on_h_in_r}
        |p|_X \ge \frac{1}{1+\lambda} (\ell(r)-c) \ge \frac{1}{M(1+\lambda)}(|h|_X-Mc).
    \end{equation}
    Combining the above inequality with \eqref{eq:defn_of_zeta} and the assumption that \(\abs{h}_X \geq \eta\), we obtain
    the desired bound $\abs{p}_X \ge A$.
\end{proof}

\begin{proposition}[{\cite[Proposition 3.2]{OsinFilling}}]
\label{prop:osinpolygon}
    There is a constant \(L \ge 0\) such that if \(\Delta\) is a geodesic triangle in \(\Gamma(G,X\cup\mathcal{H})\) and some side $p$ is an isolated $\mathcal{H}$-component of $\Delta$  then $ \abs{p}_X \leq L$.
\end{proposition}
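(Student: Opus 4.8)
The plan is to reduce to the case that the side \(p\) consists of a single edge, and then apply Osin's isoperimetric inequality (Lemma~\ref{lem:osinisoperimetric}) to a short auxiliary cycle that contains \(p\) as an isolated \(\mathcal{H}\)-component. Write \(\Delta = pqr\) with \(p = [x_0,x_1]\), \(q = [x_1,x_2]\), \(r = [x_2,x_0]\) geodesic sides. Since a geodesic in \(\ga\) cannot contain two consecutive \(\mathcal{H}\)-edges (otherwise one could shorten it, or it would backtrack), the hypothesis forces \(p\) to be a single edge labelled by some \(h \in H_\nu \setminus \{1\}\); in particular \(x_1 \in x_0 H_\nu\) and, crucially, every element of the coset \(x_0 H_\nu\) is joined to \(x_0\) by an edge of \(\ga\), so \(x_0 H_\nu \subseteq \{ v \mid \dxh(x_0,v) \le 1 \}\). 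Let \(\delta\) be the hyperbolicity constant of \(\ga\) (Lemma~\ref{lem:Cayley_graph-hyperbolic}), let \(M\) be the constant of Lemma~\ref{lem:osinisoperimetric}, and put \(K = \lceil \delta \rceil + 5\). I would prove the statement with \(L = M(2K + \delta + 3)\), splitting into two cases according to \(\min\{\ell(q),\ell(r)\}\).

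First I would dispose of the easy case \(\min\{\ell(q),\ell(r)\} < K\). Then, since \(\abs{\ell(q)-\ell(r)} \le \dxh(x_0,x_1) = 1\), the cycle \(\Delta\) has length \(1 + \ell(q) + \ell(r) \le 2K\); as \(p\) is an isolated \(\mathcal{H}\)-component of \(\Delta\), Lemma~\ref{lem:osinisoperimetric} gives \(\abs{p}_X \le M\ell(\Delta) \le L\) at once.

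In the main case \(\min\{\ell(q),\ell(r)\} \ge K\), I would let \(w_q\) be the vertex of \(q\) at distance \(K\) from \(x_1\) and \(w_r\) the vertex of \(r\) at distance \(K\) from \(x_0\). Applying \(\delta\)-thinness to the geodesic triangle on \(x_0,x_1,x_2\) (together with \(\abs{\ell(q)-\ell(r)} \le 1\)) gives \(\dxh(w_q,w_r) \le \delta + 2\): points of \(q\) and \(r\) equidistant from \(x_2\) lie within \(\delta+1\) of each other, and the small mismatch in the distances of \(w_q\) and \(w_r\) from \(x_2\) costs at most one more unit. I would then form the cycle \(\sigma = p \cdot q' \cdot \epsilon \cdot r'\), where \(q' \subseteq q\) runs from \(x_1\) to \(w_q\), \(\epsilon = [w_q,w_r]\) is a geodesic, and \(r' \subseteq r\) runs from \(w_r\) to \(x_0\); thus \(\ell(\sigma) \le 1 + K + (\delta+2) + K = 2K + \delta + 3\). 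To finish via Lemma~\ref{lem:osinisoperimetric} it remains to see that \(p\) is an isolated \(\mathcal{H}\)-component of \(\sigma\). Every \(\mathcal{H}\)-edge of \(q'\) or \(r'\) is an \(\mathcal{H}\)-edge of \(q\) or \(r\), and none of these is connected to \(p\) since \(p\) is isolated in \(\Delta\); the same observation shows that the edges of \(\sigma\) immediately before and after \(p\) do not extend it, so \(p\) is a maximal \(\mathcal{H}\)-subpath of \(\sigma\). Finally, an \(\mathcal{H}\)-edge of \(\epsilon\) connected to \(p\) would have a vertex \(v \in x_0 H_\nu\), hence \(\dxh(x_0,v) \le 1\); but every vertex \(v\) of \(\epsilon\) satisfies \(\dxh(x_0,v) \ge \dxh(x_1,w_q) - \dxh(x_1,x_0) - \ell(\epsilon) \ge K - 1 - (\delta+2) > 1\), a contradiction. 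So Lemma~\ref{lem:osinisoperimetric} yields \(\abs{p}_X \le M\ell(\sigma) \le L\).

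The hard part is exactly this last point in the second case: one must choose the cut-off distance \(K\) large enough, depending only on \(\delta\), to guarantee that the short-cut geodesic \(\epsilon\) bridging \(q\) and \(r\) stays outside the coset \(x_0 H_\nu\). If it did not, \(\epsilon\) could contribute an \(\mathcal{H}\)-component connected to \(p\), destroying the isolation of \(p\) in the auxiliary cycle \(\sigma\) and rendering Lemma~\ref{lem:osinisoperimetric} inapplicable; everything else is routine bookkeeping with the thin-triangle inequality and the definitions of \(\mathcal{H}\)-component and connectedness.
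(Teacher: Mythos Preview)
Your argument is correct. The paper does not supply its own proof of this proposition: it is quoted verbatim from Osin's work \cite[Proposition~3.2]{OsinFilling}, where it is established (in greater generality, for geodesic $n$-gons) by a direct analysis of components in the polygon. You have instead reduced the triangle case to Lemma~\ref{lem:osinisoperimetric} by truncating the two long sides at a fixed distance $K\approx\delta$ from the vertices of $p$ and bridging the cut points with a short geodesic $\epsilon$. The key verification---that $p$ remains isolated in the auxiliary cycle $\sigma$---is handled correctly: any $H_\nu$-edge of $q'$ or $r'$ connected to $p$ would already witness a failure of isolation in $\Delta$, and every vertex of $\epsilon$ lies at $d_{X\cup\mathcal{H}}$-distance at least $K-1-(\delta+2)>1$ from $x_0$, hence outside the coset $x_0H_\nu$. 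Your bound $\dxh(w_q,w_r)\le\delta+2$ follows from $\delta$-thinness together with $|\ell(q)-\ell(r)|\le 1$, and the single-edge reduction for $p$ is exactly Remark~\ref{rem:comp_of_geod_is_an_edge}.

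One small point of phrasing: when you write ``a geodesic cannot contain two consecutive $\mathcal{H}$-edges'', this is only true for edges from the \emph{same} $H_\nu$; but since $p$ is an $H_\nu$-component all its edges carry the same peripheral label, so your conclusion that $p$ is a single edge is unaffected.
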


\begin{lemma}
\label{lem:isol_comp_in_triangles_are_short}
    There is a constant $L \ge 0$ such that if $p_1$ and $p_2$ are geodesic paths in $\ga$ with $(p_1)_+=(p_2)_-$, and $s$ and $t$ are connected $H_\nu$-components of $p_1$, $p_2$ respectively, for some $\nu \in \Nu$, then $d_X(s_+,t_-) \le L$.
\end{lemma}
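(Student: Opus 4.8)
The plan is to exhibit $s_+$ and $t_-$ as two vertices of a geodesic triangle whose third side is a single $\mathcal H$-edge $e$, and then bound $\abs{e}_X=d_X(s_+,t_-)$ using Proposition~\ref{prop:osinpolygon}; I expect to be able to take $L$ to be exactly the constant from that proposition. Since $p_1,p_2$ are geodesic, Remark~\ref{rem:comp_of_geod_is_an_edge} gives that $s$ and $t$ are single edges labelled by elements of $H_\nu\setminus\{1\}$ and that $p_1,p_2$ are without backtracking; and since $s$ and $t$ are connected, the vertices $s_-,s_+,t_-,t_+$ all lie in a single left coset $\Pi$ of $H_\nu$, so in particular $t_-^{-1}s_+\in H_\nu$. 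If $s_+=t_-$ there is nothing to prove, so I would assume $s_+\ne t_-$ and let $e$ be the edge of $\ga$ from $t_-$ to $s_+$ labelled by $t_-^{-1}s_+\in H_\nu\setminus\{1\}$. Writing $o=(p_1)_+=(p_2)_-$, let $u$ and $v$ be the subpaths of $p_1$ and $p_2$ from $s_+$ to $o$ and from $o$ to $t_-$ respectively; both are geodesic, and $\Delta=uve$ is a geodesic triangle with vertices $s_+,o,t_-$. First I would dispose of degeneracies: if $o=s_+$ then $u$ is trivial and $v$ is a geodesic of length at most $1$ between two points of $\Pi$, but a length-$1$ possibility would make $v$ an $H_\nu$-edge of $p_2$ ending at $t_-$, sitting immediately before the $H_\nu$-component $t$ of $p_2$ and contradicting maximality of $t$; hence $\ell(v)=0$, so $s_+=t_-$, a contradiction. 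Symmetrically $o\ne t_-$, so $\Delta$ is non-degenerate and $u,v$ are non-trivial.

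The technical heart is to verify that $e$ is an isolated $\mathcal H$-component of $\Delta$. It is an $\mathcal H$-component because the first edge of $u$ (which immediately follows $s$ in $p_1$) is not an $H_\nu$-edge — if it were, then together with $s$ it would form an $H_\nu$-subpath of $p_1$ properly containing $s$, contradicting that $s$ is an $H_\nu$-component of $p_1$ — and, symmetrically, the last edge of $v$ is not an $H_\nu$-edge, using $t$. For isolation, suppose $f\ne e$ is an $H_\nu$-component of $\Delta$ connected to $e$; since $e$ is itself a component, $f$ is disjoint from $e$ and so lies in $uv$, and all vertices of $f$ lie in $\Pi$. If $f\subseteq u$, or $f$ straddles the node $o$, then the part of $f$ lying in $u$ is a single edge (as $p_1$ is geodesic), hence an $H_\nu$-component of $p_1$ all of whose vertices are in $\Pi$, and it is therefore connected to $s$; but it is distinct from $s$ (it lies at or after $s_+$ along $p_1$, and in the straddling case it is the final edge of $p_1$ whereas $o\ne s_+$), contradicting that $p_1$ is without backtracking. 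The case $f\subseteq v$ is symmetric, using $t$ and that $p_2$ is without backtracking. Hence $e$ is isolated in $\Delta$.

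Finally, Proposition~\ref{prop:osinpolygon} applied to the geodesic triangle $\Delta$, one of whose sides, namely $e$, is an isolated $\mathcal H$-component, yields $\abs{e}_X\le L$, and therefore $d_X(s_+,t_-)=\abs{e}_X\le L$, as required. I expect the isolation step above to be the main obstacle: the only place any real care is needed is the bookkeeping ruling out the degenerate configurations and, especially, the $\mathcal H$-component of $\Delta$ that straddles the node $o$ — but in each case the argument collapses to the no-backtracking property of $p_1$ and $p_2$ recorded in Remark~\ref{rem:comp_of_geod_is_an_edge}.
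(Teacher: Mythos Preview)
Your proof is correct and follows essentially the same strategy as the paper: build the geodesic triangle on $s_+,\,(p_1)_+,\,t_-$ with third side the $H_\nu$-edge $e$, verify $e$ is an isolated $\mathcal H$-component, and apply Proposition~\ref{prop:osinpolygon}. The only difference is in the isolation step: the paper argues more directly, showing via a one-line distance computation (using $\dxh(s_-,v)\le 1$ and geodesicity of $p_1$) that the only vertex of $[s_+,(p_1)_+]$ lying in the coset $s_+H_\nu$ is $s_+$ itself, and symmetrically for the other side --- this avoids your case analysis (degeneracies, straddling components) entirely.
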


\begin{proof} 
    Let $L \ge 0$ be the constant provided by Proposition~\ref{prop:osinpolygon}.

    Since the component $s$ of $p_1$ is connected to the component $t$ of $p_2$, we know that $h=(s_+)^{-1}t_- \in H_\nu$. 
    If $h=1$ then $s_+=t_-$ and there is nothing to prove, otherwise $s_+$ and $t_-$ are endpoints of an edge $e$ labelled by $h$ in $\ga$.

    Consider the geodesic triangle $\Delta$ with vertices $s_+$, $(p_1)_+$ and $t_-$, where the sides $[s_+,(p_1)_+] $ and $[(p_1)_+,t_-]$ are chosen to be subpaths of $p_1$ and $p_2$ respectively, and the side $[s_+,t_-]$ is the edge $e$.

    If $v \in [s_+,(p_1)_+]$ is a vertex belonging to the left coset $s_+H_\nu$ then  $\dxh(s_-,v)=1$ and $s_+ \in [s_-,v]$ in $p_1$. 
    Since $\dxh(s_-,s_+)=1$ and $p_1$ is geodesic, we can conclude that $v=s_+$. 
    Similarly, the only vertex of $[(p_1)_+,t_-]$ which belongs to the left coset $t_-H_\nu=s_+H_\nu$ is $t_-$. 
    It follows that the edge $e$ is an isolated $H_\nu$-component of $\Delta$. Hence  $d_X(s_+,t_-) \le L$ by Proposition~\ref{prop:osinpolygon}.
\end{proof}

\begin{proposition}[{\cite[Theorem 3.26]{OsinRHG}}]
\label{prop:osinslimtriangles}
    Let \(\Delta\) be a combinatorial geodesic triangle in \(\Gamma(G,X\cup\mathcal{H})\) with sides \(p\), \(q\) and \(r\). 
    There is a constant \(\sigma = \sigma(G,\mathcal{H},X)  \in \NN_0\) such that for any vertex \(u \in p\), there is a vertex \(v \in q \cup r\) with \(d_X(u,v) \leq \sigma\).
\end{proposition}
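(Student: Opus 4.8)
The statement is a theorem of Osin, and the plan is to recover it from the tools already assembled above: the $\delta$-hyperbolicity of $\Gamma(G,X\cup\mathcal H)$ (Lemma~\ref{lem:Cayley_graph-hyperbolic}) together with the consequences of the linear relative isoperimetric inequality packaged in Lemma~\ref{lem:osinisoperimetric}, Proposition~\ref{prop:osinpolygon} and Lemma~\ref{lem:isol_comp_in_triangles_are_short}. First I would dispose of the ``metric-in-$d_{X\cup\mathcal H}$'' half, which is essentially free: since $\Gamma(G,X\cup\mathcal H)$ is $\delta$-hyperbolic, the triangle $\Delta=pqr$ is $\delta$-thin, so for the given vertex $u\in p$ there is a vertex $w\in q\cup r$ with $d_{X\cup\mathcal H}(u,w)\le\delta+1$. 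Fix $w$ so that $d_{X\cup\mathcal H}(u,w)$ is minimal among vertices of $q\cup r$, and then a geodesic $\gamma$ from $u$ to $w$ in $\Gamma(G,X\cup\mathcal H)$ whose $d_X$-length is minimal among all such geodesics; then $\ell(\gamma)\le\delta+1$. The whole point is that $d_{X\cup\mathcal H}$-closeness does not bound $d_X(u,w)$: $\gamma$ may traverse $\mathcal H$-edges that are arbitrarily long in the word metric.

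Since $\ell(\gamma)\le\delta+1$, it suffices to bound the $d_X$-length of each of the (at most $\delta+1$) $\mathcal H$-components of $\gamma$, each of which is a single edge because $\gamma$ is geodesic (Remark~\ref{rem:comp_of_geod_is_an_edge}). To make Lemma~\ref{lem:osinisoperimetric} and Proposition~\ref{prop:osinpolygon} applicable I would first localise, replacing the possibly very long sides of $\Delta$ by short geodesics. Using the standard $\delta$-fellow-travelling of the sides of a geodesic triangle in a $\delta$-hyperbolic space, write $p=p_1p_2$ so that $p_2$ runs $\delta$-close to a subsegment of one of $q$, $r$ — say $u$ lies on $p_2$ and the relevant side is $q$, the other cases being symmetric. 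Let $u_1$ be the vertex of $p$ lying $\delta+2$ edges beyond $u$ (or $u_1=p_+$ if $p_2$ is shorter than that), let $w_1\in q$ be the point matched to $u_1$ by the fellow-travelling, and fix a geodesic $\gamma_1$ from $u_1$ to $w_1$ of length $\le\delta+1$. Then the closed path
\[
\mathcal Q \;=\; [u,u_1]_p\cdot\gamma_1\cdot[w_1,w]_q\cdot\gamma^{-1}
\]
has all four of its sides geodesic, and the triangle inequality bounds $\ell([w_1,w]_q)=d_{X\cup\mathcal H}(w_1,w)$ by $3\delta+4$; hence $\ell(\mathcal Q)\le\kappa$ for a constant $\kappa=\kappa(\delta)$.

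Now analyse an $\mathcal H$-component $e$ of $\gamma$ inside this bounded-length cycle $\mathcal Q$. If $e$ is isolated in $\mathcal Q$, then Lemma~\ref{lem:osinisoperimetric} gives $|e|_X\le M\kappa$. Otherwise $e$ is connected to an $\mathcal H$-component $s$ of one of the three geodesic sides $[u,u_1]_p$, $\gamma_1$, $[w_1,w]_q$; applying Lemma~\ref{lem:isol_comp_in_triangles_are_short} to the appropriate pair of consecutive sides of $\mathcal Q$ then forces two of the endpoints of $e$ and $s$ to be within $d_X$-distance $L$ of one another. Together with the minimality of $|\gamma|_X$ — which prevents $\gamma$ from diving into and out of a parabolic coset already visited by $p$ near $u$, since such a detour could be replaced by a subpath of $p$ followed by a single $\mathcal H$-edge inside the common coset, shortening $\gamma$ in $d_X$ — this either bounds $|e|_X$ outright or exhibits a vertex of $q\cup r$ at bounded $d_X$-distance from $u$. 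Processing the $\mathcal H$-components of $\gamma$ in order from $u$ and taking $M\kappa$ as the threshold separating ``short'' from ``long'' components, one concludes that a long component cannot be isolated in $\mathcal Q$, so one of the favourable cases always applies, and this produces the required uniform bound $d_X(u,v)\le\sigma$ with $\sigma=\sigma(G,\mathcal H,X)$.

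The step I expect to be the main obstacle is precisely this last piece of bookkeeping: verifying that the bounded-length quadrilateral $\mathcal Q$ genuinely ``sees'' every long $\mathcal H$-edge of $\gamma$ as either isolated or matched to one of its short geodesic sides, and that in the matched case one really can relocate $v$ to a vertex of $q\cup r$ near $u$ rather than near the far endpoint of the long edge. This is where the minimality choices for $w$ and $\gamma$, together with the linear relative isoperimetric inequality encoded in Lemma~\ref{lem:osinisoperimetric}, do the essential work; the rest is routine $\delta$-hyperbolic geometry.
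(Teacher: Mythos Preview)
The paper does not prove this proposition: it is imported verbatim as \cite[Theorem~3.26]{OsinRHG} and used as a black box, so there is no ``paper's own proof'' to compare your attempt against.

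As for your outline itself, the strategy of passing to a bounded-length cycle and invoking Lemma~\ref{lem:osinisoperimetric} is in the right spirit, but the step you flag as ``bookkeeping'' is a genuine gap, not a routine detail. When an $\mathcal H$-component $e$ of $\gamma$ is connected to a component $s$ of $[u,u_1]_p$ or $[w_1,w]_q$, Lemma~\ref{lem:isol_comp_in_triangles_are_short} only bounds the $d_X$-distance between the \emph{inner} endpoints $s_+$ and $t_-$ of components in \emph{consecutive} geodesics sharing an endpoint; in your quadrilateral, $\gamma$ and $[u,u_1]_p$ meet at $u$, while $\gamma$ and $[w_1,w]_q$ meet at $w$, so the lemma as stated does not apply to the pair $(\gamma, [w_1,w]_q)$ unless you first triangulate $\mathcal Q$. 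More seriously, even when $e$ connects to $s\subset[u,u_1]_p$, you learn nothing about $d_X(u,s_-)$: the segment $[u,u_1]_p$ is short only in $d_{X\cup\mathcal H}$, and its own $\mathcal H$-components can be arbitrarily $d_X$-long. Your minimality-of-$|\gamma|_X$ manoeuvre does not obviously rescue this, since rerouting through $p$ to reach the shared coset may itself be $d_X$-long. Osin's original proof in \cite{OsinRHG} does not proceed via the packaged lemmas you cite; it works directly with the relative Dehn function and van Kampen diagrams, and the result you are trying to reprove is in fact one of the inputs needed to derive statements like Proposition~\ref{prop:osinbcp}, so some care is required to avoid circularity if you want a self-contained derivation.
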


\begin{definition}[$k$-similar paths]
    Let \(p\) and \(q\) be paths in \(\Gamma(G,X\cup\mathcal{H})\), and let \(k \geq 0\).
    The paths \(p\) and \(q\) are said to be \emph{\(k\)-similar} if \(d_X(p_-,q_-) \leq k\) and \(d_X(p_+,q_+) \leq k\).
\end{definition}

\begin{proposition}[{\cite[Proposition 3.15, Lemma 3.21 and Theorem~3.23]{OsinRHG}}]
\label{prop:osinbcp}
    For any \(\lambda \geq 1\), \(c, k \geq 0\) there is a constant \(\kappa = \kappa(\lambda,c,k) \geq 0\) such that if \(p\) and \(q\) are $k$-similar \((\lambda,c)\)-quasigeodesics in \(\Gamma(G,X\cup\mathcal{H})\) and \(p\) is without backtracking, then
    \begin{enumerate}
        \item for every phase vertex \(u\) of \(p\), there is a phase vertex \(v\) of \(q\) with \(d_X(u,v) \leq \kappa\);
        \item every \(\mathcal{H}\)-component \(s\) of \(p\), with \(\abs{s}_X \geq \kappa\), is connected to an \(\mathcal{H}\)-component of \(q\).
    \end{enumerate}
    Moreover, if \(q\) is also without backtracking then
    \begin{enumerate}[resume]
        \item if \(s\) and \(t\) are connected \(\mathcal{H}\)-components of \(p\) and \(q\) respectively, then
        \[
            \max \{ d_X(s_-,t_-), d_X(s_+,t_+) \} \leq \kappa .
        \]
    \end{enumerate}
\end{proposition}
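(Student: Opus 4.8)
The plan is to derive all three conclusions from the $\delta$-hyperbolicity of $\Gamma(G,X\cup\mathcal H)$ (Lemma~\ref{lem:Cayley_graph-hyperbolic}) together with the relative isoperimetric inequality (Lemma~\ref{lem:osinisoperimetric}), bootstrapping from the case of geodesics. First I would invoke stability of quasigeodesics in hyperbolic spaces: since $p$ and $q$ are $(\lambda,c)$-quasigeodesics and $k$-similar — so that $\dxh(p_-,q_-)\le k$ and $\dxh(p_+,q_+)\le k$ — there is a constant $\varepsilon=\varepsilon(\lambda,c,k,\delta)$ such that $p$, $q$ and the geodesics $\gamma=[p_-,p_+]$, $\gamma'=[q_-,q_+]$ all lie within pairwise $\dxh$-Hausdorff distance $\varepsilon$. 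This reduces the problem to two tasks: (a) proving the analogues of (1)--(3) for the two geodesics $\gamma$, $\gamma'$ (whose corresponding endpoints are $\dxh$-close), and (b) transferring the conclusions from $\gamma$, $\gamma'$ back to $p$, $q$, which is where the no-backtracking hypothesis enters.

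For (a): fixing geodesic connectors $[\gamma_-,\gamma'_-]$, $[\gamma_+,\gamma'_+]$ of $\dxh$-length $\le\varepsilon$ and a diagonal, one obtains two geodesic triangles whose union contains $\gamma$ and $\gamma'$; applying the slim-triangles estimate in the $d_X$-metric (Proposition~\ref{prop:osinslimtriangles}) twice shows every vertex of $\gamma$ is $d_X$-within $2\sigma$ of $\gamma'\cup[\gamma_-,\gamma'_-]\cup[\gamma_+,\gamma'_+]$, and a short argument (using that geodesic $\mathcal H$-components are single edges, Remark~\ref{rem:comp_of_geod_is_an_edge}, and that the connectors have bounded $\dxh$-length) upgrades this to the analogue of (1). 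For the analogue of (2): given an $\mathcal H$-edge $e$ of $\gamma$, say an $H_\nu$-edge from $a$ to $b$, pick vertices $a',b'\in\gamma'$ with $\dxh(a,a')\le\varepsilon$ and $\dxh(b,b')\le\varepsilon$ and form the cycle built from $e$, the short geodesics $[b,b']$, $[a',a]$, and the subpath of $\gamma'$ between $a'$ and $b'$; since that subpath is geodesic of length $\le 1+2\varepsilon$, the cycle has length bounded by a constant $D=D(\lambda,c,\varepsilon)$. If $e$ were isolated in this cycle, Lemma~\ref{lem:osinisoperimetric} would give $\abs{e}_X\le MD$; so once $\abs{e}_X$ exceeds $MD$, the edge $e$ must be connected to an $\mathcal H$-component of the cycle, and a short case analysis (an $\mathcal H$-edge of a connector connected to $e$ would be forced adjacent to $a$ or $b$, which is excluded by taking $\abs{e}_X$ larger still) shows $e$ is connected to an $\mathcal H$-component of $\gamma'$. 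Finally, for the analogue of (3), with $e$ connected to an $H_\nu$-edge $f$ of $\gamma'$, I would run the triangle argument once more with $e$ and $f$ as sides and invoke Lemma~\ref{lem:isol_comp_in_triangles_are_short} to bound $d_X(e_-,f_-)$ and $d_X(e_+,f_+)$.

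For (b): a phase vertex $w$ of $p$ lies $\dxh$-within $\varepsilon$ of a vertex $w'$ of $\gamma$; to conclude that one may take $w'$ phase with $d_X(w,w')$ bounded, I would note that a geodesic $[w,w']$ of $\dxh$-length $\le\varepsilon$ containing a long $\mathcal H$-edge $g$ would, together with short subpaths of $p$ and $\gamma$, form a bounded-length cycle in which $g$ is either short — a contradiction — or connected to an $\mathcal H$-component of $p$, which would force $p$ to have backtracking. Composing the transfers $p\rightsquigarrow\gamma\rightsquigarrow\gamma'\rightsquigarrow q$ in this way and tracking $\mathcal H$-components through each step with Lemmas~\ref{lem:osinisoperimetric} and \ref{lem:isol_comp_in_triangles_are_short} yields (1)--(3) with a constant $\kappa=\kappa(\lambda,c,k)$, also depending on the fixed constants $\delta$, $M$, $\sigma$, $L$ of $G$. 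The main obstacle throughout is that hyperbolicity only ever produces closeness in the relative metric $\dxh$, whereas (1)--(3) are statements about the word metric $d_X$; bridging this gap is exactly the role of the isoperimetric inequality, and the delicate point is the book-keeping that always realises a ``close'' pair of vertices as opposite corners of a \emph{bounded-length} cycle in which the offending $\mathcal H$-edge is \emph{isolated}. The no-backtracking assumption on $p$ (and, for (3), on $q$) is used precisely to guarantee this isolation whenever one of the two given paths is involved.
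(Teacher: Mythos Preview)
The paper does not prove this proposition: it is stated as a citation to Osin's monograph (Proposition~3.15, Lemma~3.21 and Theorem~3.23 of \cite{OsinRHG}) and used as a black box throughout. There is therefore nothing in the present paper to compare your argument against.

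That said, your outline follows the standard route taken in Osin's original proofs: reduce to geodesics via Morse stability in the hyperbolic graph $\Gamma(G,X\cup\mathcal H)$, then use the relative isoperimetric inequality (Lemma~\ref{lem:osinisoperimetric}) on short cycles to upgrade $\dxh$-closeness to $d_X$-closeness, with the no-backtracking hypothesis supplying the isolation needed to apply that lemma. The ingredients and the overall logic are correct. One point to be careful about in the transfer step (b): when you pass from a phase vertex $w$ of $p$ to a nearby vertex of $\gamma$, the cycle you build must have length bounded purely in terms of $\lambda,c,\varepsilon$, which requires controlling the $\dxh$-length of the relevant subpath of $p$ --- this is where quasigeodesicity of $p$ is used again, not just once at the outset. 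Osin's treatment handles this by working directly with pairs of quasigeodesics rather than factoring through geodesics, which streamlines the bookkeeping; your two-stage reduction works but requires more care at the seams.
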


\subsection{Quasigeodesicity of paths with long components}
One of the tools for proving Theorem~\ref{thm:metric_qc} will be the next result of Mart\'{i}nez-Pedroza from \cite{MPComb}.

\begin{proposition}[{\cite[Proposition 3.1]{MPComb}}]
\label{prop:mpquasigeodesic-original}
     There are constants \(\zeta_0 \ge 0\) and \(\lambda_0 \geq 1\) such that the following holds. If  \(q=r_0s_1 \dots r_n s_{n+1}\) is a concatenation of geodesic paths \(r_0, s_1, \dots, r_{n}, s_{n+1}\) in \(\Gamma(G,X\cup\mathcal{H})\) such that
     \begin{enumerate}
         \item \(s_i\) is an \(\mathcal{H}\)-component of \(q\), for each \(i = 1, \dots, n+1\),
         \item \(\abs{s_i}_X \geq \zeta_0\), for every $i=1,\dots,n+1$,
         \item \(s_i\) is not connected to \(s_{i+1}\), for every \(i = 1, \dots, n\),
     \end{enumerate}
     then \(q\) is \((\lambda_0, 0)\)-quasigeodesic in \(\Gamma(G,X\cup\mathcal{H})\) without backtracking.
 \end{proposition}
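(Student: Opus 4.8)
The plan is to prove, by induction on the number $n+1$ of long components, that every path of the stated shape is $(\lambda_0,0)$-quasigeodesic and without backtracking, for universal constants $\zeta_0,\lambda_0$ fixed below. First I would fix the structural constants in order: the hyperbolicity constant $\delta$ of $\ga$ (Lemma~\ref{lem:Cayley_graph-hyperbolic}), the constant $M$ of Lemma~\ref{lem:osinisoperimetric}, the constant $L$ of Proposition~\ref{prop:osinpolygon}, the constant $\sigma$ of Proposition~\ref{prop:osinslimtriangles}, and the constant $\kappa$ of Proposition~\ref{prop:osinbcp}; then $\lambda_0$; and finally $\zeta_0$, taken larger than a suitable explicit expression in all of the above (possibly after a preliminary pass giving a crude quasigeodesic bound for $q$). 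Two observations are used repeatedly: since each $r_i$ is geodesic it is without backtracking and each of its $\mathcal{H}$-components is a single edge (Remark~\ref{rem:comp_of_geod_is_an_edge}); and since each $s_i$ is simultaneously a geodesic and an $\mathcal{H}$-component it too is a single edge, as two consecutive $H_\nu$-edges could be shortened. Finally, every combinatorial subpath of $q$ is again of the same shape (allowing the flanking geodesics to be trivial and a trailing geodesic, and including geodesics themselves), so this class is closed under passing to subpaths; consequently it suffices to prove, for every $q$ in the class, the single length estimate $\ell(q)\le\lambda_0\,\abs{q}_{X\cup\mathcal{H}}$, together with the absence of backtracking.

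I would deduce "no backtracking" from the length estimate, since this step is clean. Assume $q$ satisfies $\ell(q')\le\lambda_0\,\abs{q'}_{X\cup\mathcal{H}}$ for all its subpaths $q'$ but has backtracking; among all connected pairs $(a,b)$ of $\mathcal{H}$-components of $q$ with $a$ preceding $b$, choose one minimising the number $k$ of $\mathcal{H}$-components of $q$ lying strictly between them. Let $p^*$ be the subpath from $a_+$ to $b_-$ and $f$ the single $\mathcal{H}$-edge joining $a_+$ to $b_-$ supplied by connectedness; then $\abs{p^*}_{X\cup\mathcal{H}}\le 1$, so $\ell(p^*)\le\lambda_0$. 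Since consecutive geodesic segments of $q$ are separated by one of the $s_j$, and each $r_j$ is backtracking-free, $p^*$ cannot lie inside a single $r_j$; a short case analysis using condition~(3) and the maximality of the $s_i$ as components then rules out $k=0$ and shows that $p^*$ contains some long component $s_i$. Minimality of $k$ forces every $\mathcal{H}$-component of $p^*$ to be isolated in the cycle $C=p^*f$ (a connection to another component of $p^*$, or to $f$, would produce a connected pair with fewer components between), so Lemma~\ref{lem:osinisoperimetric} gives $\zeta_0\le\abs{s_i}_X\le M\ell(C)=M(\ell(p^*)+1)\le M(\lambda_0+1)$, contradicting the choice of $\zeta_0$.

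For the length estimate I would induct on $n$, the base case $n=0$ (a geodesic followed by one edge) being elementary. In the inductive step I would regroup the segments of $q$ into consecutive blocks, merging each maximal run of "short" segments (those of $\dxh$-length below the constant $c_1$ of Lemma~\ref{lem:concat_of_geodesics-original}) with one neighbouring long geodesic, so that every block is again in the class, has strictly fewer than $n+1$ long components — hence is $(\lambda_0,0)$-quasigeodesic without backtracking by induction — and, except possibly for the first and last blocks, has $\dxh$-length at least $c_1$. The key step is to bound the Gromov product at each block-node $x$ by a constant $c_0$ depending only on $\delta,M,L,\sigma,\kappa$: if this product were large, a thin-triangle argument (Proposition~\ref{prop:osinslimtriangles}) would place, on a geodesic realising it, a long $\mathcal{H}$-component which, by the Bounded Coset Penetration property (Proposition~\ref{prop:osinbcp}, using $\abs{s_i}_X\ge\zeta_0\ge\kappa$), is connected to the long component of an adjacent block, while condition~(3) together with Proposition~\ref{prop:osinpolygon} prevents the geodesic beginning the next block from "cancelling" that component — a contradiction once $\zeta_0$ is large. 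With the blocks long and the block-node Gromov products bounded, Lemma~\ref{lem:concat} (in the version for quasigeodesic segments, combined with Lemma~\ref{lem:qgeod_with_attachments_is_qeod} to absorb short first and last blocks) shows $q$ is $(4,c_3)$-quasigeodesic, and one upgrades this to $(\lambda_0,0)$ with $\lambda_0=4+c_3$: for a subpath with distinct endpoints this is immediate, while a subpath with equal endpoints would be a non-trivial cycle in the class, hence — being $(4,c_3)$-quasigeodesic and therefore of bounded length — excluded by the same isolated-component computation as in the previous paragraph. The degenerate case in which no $r_i$ is long (so no block can be made long) is handled directly by a minimal-counterexample argument: the $s_i$ are forced to be isolated components of the cycle formed by $q$ and a geodesic joining its endpoints (proper subpaths already satisfy the estimate, which rules out backtracking and, via Proposition~\ref{prop:osinbcp}, connections to the geodesic), and Lemma~\ref{lem:osinisoperimetric} again gives the contradiction.

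The main obstacle is the bound on the Gromov products at the block-nodes: this is the geometric core of the statement, the only place where the hypothesis $\abs{s_i}_X\ge\zeta_0$ is genuinely needed, and making it hold uniformly in $n$ is exactly what dictates the regrouping of short intermediate geodesics and the induction. The remaining work — the maximality-of-components case analyses, the management of short outer blocks, and the precise order in which $\delta,M,L,\sigma,\kappa,\lambda_0,\zeta_0$ are fixed (so that no circularity arises) — is routine but must be organised carefully. Alternatively, one may view Lemma~\ref{lem:osinisoperimetric} as a packaging of the linear relative isoperimetric inequality through van Kampen diagrams and carry out the whole argument diagrammatically, which is close to Mart\'{i}nez-Pedroza's original approach.
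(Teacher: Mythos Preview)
The paper does not prove this proposition: it is quoted from Mart\'{i}nez-Pedroza \cite[Proposition~3.1]{MPComb} and used as a black box. (The paper's own contribution is the mild extension in Proposition~\ref{prop:mpquasigeodesic}, whose proof \emph{reduces} to Proposition~\ref{prop:mpquasigeodesic-original} via an embedding into a larger relatively hyperbolic group.) So there is no proof in the paper to compare your proposal against.

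That said, a brief remark on your outline. The inductive regrouping-into-blocks strategy is reasonable in spirit, but as written there is a circularity in the constants. You apply the concatenation lemma, in its version for quasigeodesic segments (\cite[Lemma~4.2]{AshotResHom}), to blocks that are $(\lambda_0,0)$-quasigeodesic by the inductive hypothesis; but the output constant $c_3$ of that lemma depends on the quasigeodesicity constants of the input segments, hence on $\lambda_0$ itself, and the equation $\lambda_0 = 4 + c_3(\lambda_0)$ that you then need has no reason to admit a solution. You gesture at a fix (``a preliminary pass giving a crude quasigeodesic bound''), and indeed a two-stage bootstrap can be made to work, but this is exactly where the substance lies and it is not spelled out. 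Mart\'{i}nez-Pedroza's original proof in \cite{MPComb} avoids this constant-management issue altogether by working directly with van Kampen diagrams over the relative presentation and the linear relative isoperimetric function --- the diagrammatic route you mention in your last sentence.
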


We will actually need a slightly more general version of Proposition~\ref{prop:mpquasigeodesic-original}, as follows.

\begin{proposition}
\label{prop:mpquasigeodesic}
    There exist constants \(\lambda \geq 1\) and $c \ge 0$ such that for every \(\rho \ge 0\) there is \(\zeta_1 > 0\) such that the following holds. 
    Suppose that \(p=a_0b_1a_1 \dots b_na_n\) is a concatenation of geodesic paths \(a_0, b_1, \dots, b_n, a_n\) in \(\Gamma(G,X\cup\mathcal{H})\) such that
    \begin{enumerate}
        \item \(b_i\) is an \(\mathcal{H}\)-subpath of \(p\), for each \(i = 1, \dots, n\),
        \item \(\abs{b_i}_X \geq \zeta_1\), for each \(i=1,\dots,n\);
        \item \(b_i\) is not connected to \(b_{i+1}\), for every $i=1,\dots,n-1$;
        \item if $b_i$ is connected to a component $h$ of $a_i$ or $a_{i-1}$ then
        $|h|_X \le \rho$, $i=1,\dots,n$.
    \end{enumerate}
     Then \(p\) is a \((\lambda, c)\)-quasigeodesic  without backtracking.
\end{proposition}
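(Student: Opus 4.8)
The plan is to reduce to Mart\'{i}nez-Pedroza's Proposition~\ref{prop:mpquasigeodesic-original} by absorbing the short ``attached'' components of the $a_i$'s into the long $\mathcal{H}$-subpaths $b_i$. The key observation is that each $b_i$ is merely an $\mathcal{H}$\emph{-subpath}, not necessarily a full $\mathcal{H}$-component of $p$: it may be connected to components of the adjacent geodesics $a_{i-1}$ or $a_i$. So first I would, for each $i$, look at the maximal $\mathcal{H}$-component $\hat b_i$ of $p$ containing $b_i$. By hypothesis (4), any component $h$ of $a_{i-1}$ or $a_i$ connected to $b_i$ satisfies $|h|_X \le \rho$; since a geodesic $a_j$ has all its $\mathcal{H}$-components equal to single edges (Remark~\ref{rem:comp_of_geod_is_an_edge}) and, by geodesicity of $a_j$, at most one such component can be connected to $b_i$ from each side, the portion of $p$ making up $\hat b_i$ beyond $b_i$ itself consists of at most one edge of $a_{i-1}$ at the start and one edge of $a_i$ at the end, each of $X$-length at most $\rho$, plus possibly single connecting edges of $X$-length at most $L$ (the constant from Proposition~\ref{prop:osinpolygon} applied via Lemma~\ref{lem:isol_comp_in_triangles_are_short}, since these connecting edges together with subpaths of $a_{j}$ bound a triangle with an isolated component). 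Hence $|\hat b_i|_X \ge |b_i|_X \ge \zeta_1$, and simultaneously $\hat b_i$ differs from $b_i$ only by boundedly-short pieces attached at each end.

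Next I would re-decompose $p$ as a concatenation $p = r_0 \hat b_1 r_1 \cdots \hat b_m r_{m+1}$ (after merging any consecutive $b_i, b_{i+1}$ that happen to lie in a common component — although hypothesis (3) rules this out directly, one should still be careful that the $\hat b_i$ are genuinely disjoint and non-adjacent, which again follows from (3) together with the geodesicity of the $a_j$'s). Here each $r_k$ is a concatenation of a bounded-length tail of one geodesic, some full geodesics $a_j$ in between, and a bounded-length head of the next geodesic — so each $r_k$ is a geodesic ``with attachments''. But Proposition~\ref{prop:mpquasigeodesic-original} requires the pieces \emph{between} consecutive long components to themselves be geodesics. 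To handle this, I would instead use the intermediate geodesic segments $a_j$ as the ``$r$''-pieces and treat the short attached edges as part of neither: that is, apply Proposition~\ref{prop:mpquasigeodesic-original} with the $s_i$ being the $\hat b_i$ (now genuine non-connected $\mathcal{H}$-components with $|s_i|_X \ge \zeta_1 \ge \zeta_0$, provided we set $\zeta_1 = \max\{\zeta_0, 1\}$) and the $r$-pieces being geodesics joining the phase endpoints of consecutive $\hat b_i$. The subtlety is that $(\hat b_i)_+$ to $(\hat b_{i+1})_-$ need not be joined by a subpath of $p$; but the $X$-distance between these points and the corresponding nodes of the original broken line $p$ is bounded by $2\rho + 2L$, so one may insert a geodesic $r_i'$ of bounded length there. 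Then Proposition~\ref{prop:mpquasigeodesic-original} yields that the modified path $q = r_0' \hat b_1 r_1' \cdots \hat b_m r_{m+1}'$ is $(\lambda_0, 0)$-quasigeodesic without backtracking.

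Finally, I would recover the quasigeodesicity of the original $p$ from that of $q$. The path $p$ is obtained from $q$ by replacing each short geodesic ``correction'' segment $r_i'$ with the corresponding actual subpath of $p$ of $X$-length at most $2\rho + 2L + (\text{length of }a_i)$ — but that segment's $\mathcal{H}$-length (not $X$-length) is what matters for quasigeodesicity in $\ga$, and since $a_i$ is geodesic in $\ga$, these replacements change lengths by a controlled amount. Concretely, $p$ and $q$ are $k$-similar for $k = 2\rho + 2L$, agree outside boundedly many boundedly-long segments, and $\ell$-distances are comparable; a direct argument (or repeated application of Lemma~\ref{lem:qgeod_with_attachments_is_qeod} and Lemma~\ref{lem:perturbed_quasigeodesic}, the latter to re-subdivide, noting the correction pieces have bounded $\mathcal{H}$-length) shows $p$ is $(\lambda, c)$-quasigeodesic for suitable $\lambda \ge 1$, $c \ge 0$ depending only on $\lambda_0$ and the universal constants $L, \delta, \sigma$ — crucially \emph{not} on $\rho$, since the $\rho$-dependence is confined to $\zeta_1$ exactly as in the statement. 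Absence of backtracking for $p$ follows because any two connected $\mathcal{H}$-components of $p$ would either both sit inside some $\hat b_i$ (impossible, as $\hat b_i$ is a single component) or produce connected components in $q$, contradicting that $q$ is without backtracking; the short attached edges are already absorbed into the $\hat b_i$ and so cannot create new backtracking.

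The main obstacle I anticipate is the bookkeeping in the second step: verifying that after absorbing the short attachments the enlarged subpaths $\hat b_i$ are genuinely pairwise non-connected full $\mathcal{H}$-components and that the ``gap'' geodesics between them can be chosen of uniformly (in $\rho$, once $\rho$ is fixed) bounded length, so that the hypotheses of Proposition~\ref{prop:mpquasigeodesic-original} are met on the nose. This requires careful use of geodesicity of the $a_j$'s together with Lemma~\ref{lem:isol_comp_in_triangles_are_short} and Proposition~\ref{prop:osinbcp} to rule out unexpected connections, and a clean estimate that the only new length contributed comes from at most one $\rho$-short edge plus one $L$-short connecting edge on each side of each $b_i$.
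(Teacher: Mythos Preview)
Your strategy --- absorb adjacent short components into enlarged $\hat b_i$, feed the result into Proposition~\ref{prop:mpquasigeodesic-original}, then recover $p$ --- is the paper's approach, and the decomposition $p = r_0 \hat b_1 r_1 \cdots \hat b_n r_n$ with each $r_i$ a subpath of $a_i$ is correct (your worries about ``full geodesics $a_j$ in between'' and ``correction segments $r_i'$'' are misplaced: the subpath of $p$ from $(\hat b_i)_+$ to $(\hat b_{i+1})_-$ is already a geodesic subpath of $a_i$). But two things block a direct application of Proposition~\ref{prop:mpquasigeodesic-original}. First, that proposition requires \emph{all} pieces, including the $s_i$, to be geodesic, while your $\hat b_i$ can have length up to $3$; the paper fixes this by replacing each $\hat b_i$ with a single edge $s_i$ and later recovering $p$ via Lemma~\ref{lem:perturbed_quasigeodesic} (each $s_i$ replaced by a path of length $\le 3$), which is where the $\rho$-independent constants $\lambda = 3\lambda_0$, $c = 18\lambda_0 + 6$ come from.

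Second, and more substantially, Proposition~\ref{prop:mpquasigeodesic-original} is stated only for paths of the form $r_0 s_1 \dots r_n s_{n+1}$, \emph{ending in a long component}, whereas after your reduction the path ends in the geodesic $r_n$. Appending an artificial $s_{n+1}$ labelled in some existing $H_\nu$ need not work: if $r_n$ is trivial and $s_n$ is an $H_\nu$-component then any such $s_{n+1}$ is connected to $s_n$, and when $|\Nu|=1$ there is no other peripheral subgroup available. The paper's solution is to embed $G$ isometrically into $G_1 = G * \langle z \rangle$, which is hyperbolic relative to $\{H_\nu \mid \nu\in\Nu\} \cup \{\langle z\rangle\}$, and append an edge $s_{n+1}$ labelled by a long power $z^k$ in this \emph{fresh} peripheral subgroup --- automatically disconnected from $s_n$. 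Then Proposition~\ref{prop:mpquasigeodesic-original} applies to $q = r_0 s_1 \dots r_n s_{n+1}$ in $\Gamma(G_1, X'\cup\mathcal{H}')$, and the desired path is recovered as a subpath of $q$. This free-product embedding trick is the genuine missing ingredient in your proposal.
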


\begin{proof}
    The argument below employs the following trick: for each $i=1,\dots, n$, we replace the $\mathcal{H}$-component of $p$ containing $b_i$ by a single edge $s_i$, and then embed the resulting path $p'$ into a larger path $q$ to which Proposition~\ref{prop:mpquasigeodesic-original} can be applied. Since a subpath of a $(\lambda,c)$-quasigeodesic path without backtracking is again $(\lambda,c)$-quasigeodesic and  without backtracking, this will complete the proof. In order to construct the path $q$ we add an extra infinite peripheral subgroup $Z$ by  embedding $G$ into a larger relatively hyperbolic group $G_1$.

    Let us consider the free product $G_1=G*Z$, where $Z=\langle z \rangle$ is an infinite cyclic group. Since $G$ is hyperbolic relative to the family $\{H_\nu \mid \nu \in \Nu\}$, the group $G_1$ is hyperbolic relative to the union $\{H_\nu \mid \nu \in \Nu\} \cup \{Z\}$ (this can be fairly easily deduced from the definition or from many existing combination theorems for relatively hyperbolic groups, eg, \cite[Corollary~1.5]{Osin-comb}).

    Note that $G$ embeds in $G_1$ and $G_1$ is generated by the finite set $X'=X \sqcup \{z\}$. Let $\mathcal{H'}= \mathcal{H} \sqcup Z\setminus\{1\}$, so that the Cayley graph $\ga$ is naturally a subgraph of the Cayley graph $\Gamma(G_1,X' \cup \mathcal{H}')$. Therefore we can think of $p$ as a path in $\Gamma(G_1,X' \cup \mathcal{H}')$.

    The normal form theorem for free products (\cite[Theorem~IV.1.2]{LS}) implies that the embedding of $G$ into $G_1$ is isometric with respect to both proper and relative metrics, more precisely
    \begin{equation}
    \label{eq:isom}
        d_X(g,h)=d_{X'}(g,h) ~\text{ and }~ \dxh(g,h)=d_{X' \cup \mathcal{H}'} (g,h), \text{ for all } g,h \in G.
    \end{equation}
    An alternative way to see this is to use the retraction $r:G_1 \to G$, such that $r(x)=x$ for all $x \in X$ and $r(z)=1$. Then $r(X')=X \cup \{1\}$, $r(H_\nu)=H_\nu$, for all $\nu \in \Nu$, and $r(Z)=\{1\}$.

    Let $\zeta_0 \ge 0$ and $\lambda_0 \ge 1$ be the constants provided by Proposition~\ref{prop:mpquasigeodesic-original} applied to the group $G_1$, its finite generating set $X'$ and its Cayley graph $\Gamma(G_1,X' \cup \mathcal{H}')$. Set $\zeta_1=\zeta_0+2\rho+1 >0$.

    For each $i=1,\dots,n$, let $t_i$ denote the $H_{\nu_i}$-component of $p$ containing the edge $b_i$, $\nu_i \in \mathcal{N}$. Note that $t_1,\dots,t_n$ are pairwise distinct by condition (3), in particular no two of them share a common edge.
    In view of Remark~\ref{rem:comp_of_geod_is_an_edge}, for every $i=1,\dots,n$ we can represent $t_i$ as a concatenation $t_i=h_{i-1}b_if_i$, where
    \begin{itemize}
        \item $h_{i-1}$ is either the last edge and an $H_{\nu_{i}}$-component of $a_{i-1}$ if $a_{i-1}$ ends with an $H_{\nu_{i}}$-component, or $h_{i-1}$ is the trivial path, consisting of the vertex $(a_{i-1})_+$, if $a_{i-1}$ does not end with an $H_{\nu_{i}}$-component;
        \item $f_i$ is the first edge and an $H_{\nu_i}$-component of $a_i$ if $a_{i}$ starts with an $H_{\nu_{i}}$-component, or $f_i$ is the trivial path, consisting of the vertex $(a_i)_-$, if $a_{i}$ does not start with an $H_{\nu_{i}}$-component.
    \end{itemize}
    Note that for each $i=1,\dots, n$ we have $|h_{i-1}|_X \le \rho$ and $|f_i|_X \le \rho$, by condition (4). By (2) and the triangle inequality we get
    \begin{equation}
    \label{eq:length_of_t_i}
        |t_i|_X \ge |b_i|_X-2\rho \ge \zeta_0+1,~ \text{ for } i=1,\dots,n.
    \end{equation}

    Therefore $p$ decomposes as a concatenation \[p=r_0t_1r_1 \dots t_n r_n,\] where $r_i$ is a subpath of $a_i$, $i=0,\dots,n$, so that $a_0=r_0h_0$, $a_1=f_1r_1h_1$, $\dots$, $a_n=f_nr_n$.

    By \eqref{eq:length_of_t_i} the endpoints of the $H_{\nu_i}$-component $t_i$ of $p$ must be distinct, hence there is an edge $s_i$ joining them in $\ga$, such that $\Lab(s_i) \in H_{\nu_i}\setminus\{1\}$, $i=1,\dots,n$. Now,  \eqref{eq:length_of_t_i} and \eqref{eq:isom} imply that
    \begin{equation*}
    \label{eq:length_of_s_i}
        |s_i|_{X'}=|t_i|_{X'}=|t_i|_X \ge \zeta_0,~ \text{ for } i=1,\dots,n.
    \end{equation*}

    Choose $k \in \NN$ so that $|z^k|_{X'} \ge \zeta_0$ and let $s_{n+1}$ be the edge in  $\Gamma(G_1,X' \cup \mathcal{H}')$, starting at $p_+=(r_n)_+$ and labelled by $z^k$. Observe that $|s_{n+1}|_{X'}=|z^k|_{X'} \ge \zeta_0$.

    Consider the path $q$ in $\Gamma(G_1,X' \cup \mathcal{H}')$, defined as the concatenation $q=r_0s_1 \dots r_n s_{n+1}$. By \eqref{eq:isom} the paths $r_0,\dots,r_n$ are still geodesic in  $\Gamma(G_1,X' \cup \mathcal{H}')$, and $s_1,\dots,s_{n+1}$ are  $\mathcal{H}'$-components of $q$, by construction. Finally, $s_i$ is not connected to $s_{i+1}$, for $i=1,\dots,n-1$, because elements of $G$ that belong to different $H_\nu$-cosets continue to do so in $G_1$, and $s_n$ is not connected to $s_{n+1}$ because $H_{\nu_n}$ and $Z$ are distinct peripheral subgroups of $G_1$. Therefore all of the assumptions of Proposition~\ref{prop:mpquasigeodesic-original} are satisfied, which allows us to conclude that the path $q$ is $(\lambda_0,0)$-quasigeodesic without backtracking in  $\Gamma(G_1,X' \cup \mathcal{H}')$.

    Consequently, the path $p'=r_0s_1r_1 \dots s_n r_n$ is $(\lambda_0,0)$-quasigeodesic without backtracking in  $\Gamma(G_1,X' \cup \mathcal{H}')$, as a subpath of $q$. Since $p'$ only contains vertices and edges from $\ga$, we see that $p'$ is also  $(\lambda_0,0)$-quasigeodesic without backtracking in  $\ga$.

    Now, the original path $p$ can be obtained by replacing the edges $s_1,\dots, s_n$ of $p'$ by paths $t_1,\dots,t_n$, each of which has length at most $3$. Hence, by Lemma~\ref{lem:perturbed_quasigeodesic}, $p$ is $(3\lambda_0,18\lambda_0+6)$-quasigeodesic. Since $p'$ is without backtracking and every $\mathcal{H}$-component of $p$ is connected to an $\mathcal H$-component of $p'$ (and vice-versa), by construction, the path $p$ must also be without backtracking.

    Thus we have shown that the path $p$ is $(\lambda,c)$-quasigeodesic without backtracking in $\ga$, where $\lambda=3\lambda_0$ and $c=18\lambda_0+6$.
\end{proof}

\subsection{Quasiconvex subsets in relatively hyperbolic groups}\label{subsec:qc_subset_in_rh_gps}
In this paper we shall use the definition of a relatively quasiconvex subgroup given by Osin in \cite{OsinRHG}. 
For convenience we state it in the case of arbitrary subsets rather than just subgroups.

\begin{definition}[Relatively quasiconvex subset]
\label{def:rel_qc}
    A subset $Q \subseteq G$ is said to be \emph{relatively quasiconvex} (with respect to $\{H_\nu \mid \nu \in \Nu\}$) if there exists $\varepsilon \ge 0$ such that for every geodesic path $q$ in $\Gamma(G,X \cup \mathcal{H})$, with $q_-,q_+ \in Q$, and every vertex $v$ of $q$ we have $d_X(v,Q) \le \varepsilon$.

    Any number $\varepsilon \ge 0$ as above will be called a \emph{quasiconvexity constant} of $Q$.
\end{definition}

Osin proved that relative quasiconvexity of a subset is independent of the choice of a finite generating set $X$ of $G$: see \cite[Proposition 4.10]{OsinRHG} -- the proof there is stated for relatively quasiconvex subgroups but actually works more generally for relatively quasiconvex subsets.

We outline some basic properties of quasiconvex subsets and subgroups of $G$ in the next two lemmas.

\begin{lemma}
\label{lem:props_of_qc_subsets}
    Let $Q$ be a relatively quasiconvex subset of $G$. Then
    \begin{enumerate}[label=(\alph*)]
        \item the subset $gQ$ is relatively quasiconvex, for every $g \in G$;
        \item if $T \subseteq G$ lies at a finite $d_X$-Hausdorff distance from $Q$ then $T$ is relatively quasiconvex.
    \end{enumerate}
\end{lemma}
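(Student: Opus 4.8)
The plan is to reduce both parts to the definition of relative quasiconvexity, using that left translations are label-preserving isometries of $\Gamma(G,X\cup\mathcal{H})$ and that geodesics of the relative Cayley graph are well-behaved (Remark~\ref{rem:comp_of_geod_is_an_edge} and the $BCP$ property, Proposition~\ref{prop:osinbcp}).

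For (a) I would fix a quasiconvexity constant $\varepsilon$ of $Q$ and an element $g \in G$. Since the Cayley graph is a left Cayley graph, left multiplication by $g^{-1}$ is a label-preserving automorphism of $\Gamma(G,X\cup\mathcal{H})$ which is also an isometry for $d_X$ and which carries $gQ$ onto $Q$. Given any geodesic path $q$ in $\Gamma(G,X\cup\mathcal{H})$ with $q_-,q_+ \in gQ$, the translated path $g^{-1}q$ is again geodesic and has endpoints in $Q$, so every vertex $w$ of $g^{-1}q$ satisfies $d_X(w,Q) \le \varepsilon$. Translating back, each vertex $v$ of $q$ satisfies $d_X(v,gQ) = d_X(g^{-1}v,Q) \le \varepsilon$, so $gQ$ is relatively quasiconvex with the same constant $\varepsilon$. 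I expect this part to be completely routine.

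For (b) I would show quasiconvexity is preserved under a bounded $d_X$-perturbation via the $BCP$ property. Suppose the $d_X$-Hausdorff distance between $T$ and $Q$ is at most $\mu$, let $\varepsilon$ be a quasiconvexity constant of $Q$, put $\kappa = \kappa(1,0,\mu)$ (the constant from Proposition~\ref{prop:osinbcp}), and set $\varepsilon' = \kappa + \varepsilon + \mu$. Let $q$ be a geodesic in $\Gamma(G,X\cup\mathcal{H})$ with $q_-,q_+ \in T$; choose $a,b \in Q$ with $d_X(q_-,a) \le \mu$ and $d_X(q_+,b) \le \mu$, and let $q'$ be a geodesic in $\Gamma(G,X\cup\mathcal{H})$ from $a$ to $b$. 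Then $q$ and $q'$ are $\mu$-similar $(1,0)$-quasigeodesics whose vertices are all phase, and $q$ is without backtracking (Remark~\ref{rem:comp_of_geod_is_an_edge}), so part (1) of Proposition~\ref{prop:osinbcp} gives, for each vertex $u$ of $q$, a vertex $v$ of $q'$ with $d_X(u,v) \le \kappa$. Since $q'$ is geodesic with endpoints in $Q$ we get $d_X(v,Q) \le \varepsilon$, hence $d_X(u,Q) \le \kappa+\varepsilon$, and since $Q \subseteq N_X(T,\mu)$ this yields $d_X(u,T) \le \varepsilon'$. Thus $T$ is relatively quasiconvex with constant $\varepsilon'$.

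The only delicate point is matching the hypotheses of the $BCP$ property correctly: relative quasiconvexity is defined using geodesics of the relative metric $d_{X\cup\mathcal{H}}$ but neighbourhoods in the word metric $d_X$, so one must use that a bound on $d_X$ between endpoints (which also bounds $d_{X\cup\mathcal{H}}$, since $d_{X\cup\mathcal{H}} \le d_X$) is exactly the notion of $k$-similarity in Proposition~\ref{prop:osinbcp}, and that geodesics of the relative Cayley graph are $(1,0)$-quasigeodesics without backtracking all of whose vertices are phase. Everything else is bookkeeping with the triangle inequality.
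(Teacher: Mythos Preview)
Your proof is correct and follows essentially the same approach as the paper: part (a) via left translation being an isometry for both $d_X$ and $d_{X\cup\mathcal{H}}$, and part (b) by choosing a nearby geodesic with endpoints in $Q$, invoking Proposition~\ref{prop:osinbcp} with $\kappa=\kappa(1,0,\mu)$ to transfer vertices, and then applying the quasiconvexity of $Q$ together with the Hausdorff bound to obtain the constant $\kappa+\varepsilon+\mu$. Your extra remarks about phase vertices and the absence of backtracking for geodesics are exactly the points that justify the use of Proposition~\ref{prop:osinbcp}.
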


\begin{proof} 
    Claim (a) follows immediately from the fact that left multiplication by $g$ induces an isometry of $G$ with respect to both the proper metric $d_X$ and the relative metric $d_{X \cup \mathcal{H}}$.

    To prove claim (b), suppose that $\varepsilon \ge 0$ is a quasiconvexity constant of $Q$ and  the $d_X$-Hausdorff distance between $Q$ and $T$ is less than $k\in \NN$.
    Consider any geodesic path $t$ in $\Gamma(G, X\cup\mathcal{H})$ with $t_-,t_+ \in T$, and take any vertex $v$ of $t$. 
    Then there are $x,y \in Q$ such that $d_X(x,t_-)\le k$ and $d_X(y,t_+) \leq k$. Let $q$ be any geodesic connecting $x$ with $y$. 
    Then $q$ is $k$-similar to $t$, hence there is a vertex $u$ of $q$ such that $d_X(v,u) \leq \kappa$, where $\kappa = \kappa(1,0,k) \ge 0$ is the global constant given by Proposition~\ref{prop:osinbcp} applied to \(k\)-similar geodesics.  
    By the relative quasiconvexity of $Q$, there exists $w \in Q$ such that $d_X(u,w) \leq \varepsilon$. Moreover, \(d_X(w,T) \leq k\) by assumption. 
    Therefore $d_X(v,T) \leq \kappa+\varepsilon+k$, thus $T$ is relatively quasiconvex in $G$.
\end{proof}

\begin{lemma}
\label{lem:props_of_qc_sbgps}
    Suppose that $Q \leqslant G$ is a relatively quasiconvex subgroup. 
    Then for all $g \in G$ and $Q' \leqslant_f Q$ the subgroups $gQg^{-1}$ and $Q'$ are relatively quasiconvex in $G$.
\end{lemma}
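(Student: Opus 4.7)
The plan is to derive both assertions from Lemma~\ref{lem:props_of_qc_subsets}, which tells us that relative quasiconvexity is preserved under left translation and under perturbation by a bounded $d_X$-Hausdorff distance.

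For the conjugate $gQg^{-1}$, I would first invoke Lemma~\ref{lem:props_of_qc_subsets}(a) to conclude that the left translate $gQ$ is relatively quasiconvex. The remaining step is to show that $gQg^{-1}$ lies within a finite $d_X$-Hausdorff distance of $gQ$: for any element $gq \in gQ$, left-invariance of $d_X$ gives
\[
    d_X(gq, gqg^{-1}) = |(gq)^{-1}gqg^{-1}|_X = |g^{-1}|_X = |g|_X,
\]
and conversely each $gqg^{-1} \in gQg^{-1}$ is within $|g|_X$ of $gq \in gQ$. Hence the $d_X$-Hausdorff distance between $gQ$ and $gQg^{-1}$ is at most $|g|_X$, and Lemma~\ref{lem:props_of_qc_subsets}(b) yields the relative quasiconvexity of $gQg^{-1}$.

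For the finite index subgroup $Q' \leqslant_f Q$, the natural strategy is again to show that $Q'$ sits at a finite $d_X$-Hausdorff distance from the quasiconvex set $Q$. The subtle point is that one should use \emph{right} coset representatives: write $Q = \bigsqcup_{i=1}^n Q'q_i$ with $q_1,\dots,q_n \in Q$. Then for an arbitrary $q = q'q_i \in Q$, left-invariance of $d_X$ gives
\[
    d_X(q, q') = |q^{-1}q'|_X = |q_i^{-1}|_X = |q_i|_X,
\]
which is bounded by $D = \max_{i} |q_i|_X$ independently of $q$. Since $Q' \subseteq Q$ trivially, the $d_X$-Hausdorff distance between $Q'$ and $Q$ is at most $D$, and Lemma~\ref{lem:props_of_qc_subsets}(b) gives the desired conclusion. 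Using \emph{left} cosets would not work directly, because the displacement one obtains is $|q'^{-1}q_i^{-1}q'|_X$, which need not be bounded; noticing this subtlety is really the only non-routine point in the argument, and once the right cosets are used the proof is immediate.
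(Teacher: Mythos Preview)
Your proof is correct and follows essentially the same route as the paper: both parts are deduced from Lemma~\ref{lem:props_of_qc_subsets} by observing that $gQg^{-1}$ is within $d_X$-Hausdorff distance $|g|_X$ of the translate $gQ$, and that a finite index subgroup $Q'$ is within bounded $d_X$-Hausdorff distance of $Q$ via right coset representatives. Your remark about right versus left cosets is a nice clarification of why this particular choice makes the estimate immediate, though the paper simply uses right cosets without comment.
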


\begin{proof} 
    By claim (a) of Lemma~\ref{lem:props_of_qc_subsets}, the coset $gQ$ is relatively quasiconvex and the $d_X$-Hausdorff distance between this coset and $gQg^{-1}$ is at most $|g|_X$, hence $gQg^{-1}$ is relatively quasiconvex in $G$ by claim (b) of the same lemma.

    Suppose that $Q=\bigcup_{i=1}^m Q' h_i$, where $h_i \in Q$, $i=1,\dots,m$. 
    Then the $d_X$-Hausdorff distance between $Q$ and $Q'$ is bounded above by $\max\{|h_i|_X \mid 1\le i \le m\}$, so $Q'$ is relatively quasiconvex by Lemma~\ref{lem:props_of_qc_subsets}(b).
\end{proof}

\begin{corollary} 
\label{cor:parab->qc} 
    Any parabolic subgroup of $G$ is relatively quasiconvex.
\end{corollary}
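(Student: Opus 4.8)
The plan is to reduce, by conjugation-invariance, to proving that every subgroup of a peripheral subgroup is relatively quasiconvex, and to check the latter directly in the relative Cayley graph. First I would show that any subgroup $P \leqslant H_\nu$, for some $\nu \in \Nu$, is relatively quasiconvex with quasiconvexity constant $0$. Indeed, let $q$ be a geodesic in $\ga$ with $q_-, q_+ \in P$. If $q_- = q_+$ there is nothing to verify; otherwise $q_-^{-1} q_+ \in H_\nu \setminus \{1\}$, which is one of the generators comprising $\mathcal{H}$, so $q_-$ and $q_+$ are joined by a single edge and $\ell(q) = \dxh(q_-, q_+) = 1$. Hence the only vertices of $q$ are $q_-$ and $q_+$, both of which lie in $P$, and so $d_X(v, P) = 0$ for every vertex $v$ of $q$, which is exactly what Definition~\ref{def:rel_qc} requires.

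Next, let $P$ be an arbitrary parabolic subgroup of $G$. By Definition~\ref{def:rh_gp} there are $g \in G$ and $\nu \in \Nu$ with $P \leqslant g H_\nu g^{-1}$, so that $g^{-1} P g$ is a subgroup of $H_\nu$ and is therefore relatively quasiconvex by the previous paragraph. Writing $P = g\,(g^{-1} P g)\,g^{-1}$ and applying Lemma~\ref{lem:props_of_qc_sbgps} then shows that $P$ is relatively quasiconvex in $G$.

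The single point that needs care --- and essentially the only obstacle in this short argument --- is that a subgroup of a relatively quasiconvex subgroup need not be relatively quasiconvex in general, so one cannot merely invoke relative quasiconvexity of the maximal parabolic $g H_\nu g^{-1}$ and pass to $P$. Instead one must establish relative quasiconvexity for arbitrary subgroups of an \emph{unconjugated} peripheral subgroup from scratch (exploiting that in $\ga$ each such subgroup sits inside a set of diameter at most $1$), and only then transport the conclusion along the conjugation by $g$ via Lemma~\ref{lem:props_of_qc_sbgps}.
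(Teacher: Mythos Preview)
Your proof is correct and follows essentially the same approach as the paper: first show directly that any subgroup of a peripheral subgroup $H_\nu$ is relatively quasiconvex with constant $0$ (since geodesics between its points have length at most one in $\ga$), then transport this conclusion along a conjugation using Lemma~\ref{lem:props_of_qc_sbgps}. Your added remark about why one cannot simply pass to subgroups of a maximal parabolic is a nice clarification, but the core argument matches the paper's.
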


\begin{proof} 
    Let $H=gQg^{-1}$ be a parabolic subgroup, where $g \in G$ and $Q \leqslant H_\nu$, for some $\nu \in \Nu$. 
    The subgroup $Q$ is relatively quasiconvex in $G$ (with quasiconvexity constant $0$), because any geodesic connecting two elements of $Q$ consists of a single edge in $\ga$. 
    Therefore $H$ is relatively quasiconvex by Lemma~\ref{lem:props_of_qc_sbgps}.
\end{proof}

\begin{lemma}
\label{lem:fg_qc_int_parab_is_fg} 
    Let $P$ be a maximal parabolic subgroup of $G$ and let $Q$ be a finitely generated relatively quasiconvex subgroup of $G$. 
    Then the subgroups $P$ and $Q \cap P$ are finitely generated.
\end{lemma}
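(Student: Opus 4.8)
The plan is to treat the two subgroups separately; in both cases the essential input is the standard fact that a finitely generated relatively hyperbolic group has finitely generated peripheral subgroups.

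\emph{Finite generation of $P$.} By definition of a maximal parabolic subgroup, $P = gH_\nu g^{-1}$ for some $g \in G$ and some $\nu \in \Nu$, so conjugation by $g$ identifies $P$ with $H_\nu$ and it suffices to prove that $H_\nu$ is finitely generated. Since $G$ is generated by the finite set $X$ and is hyperbolic relative to $\{H_\nu \mid \nu \in \Nu\}$, this is exactly Osin's theorem that the peripheral subgroups of a finitely generated relatively hyperbolic group are themselves finitely generated (see \cite{OsinRHG}).

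\emph{Finite generation of $Q \cap P$.} The idea is to run the same argument inside $Q$. By Hruska \cite{HruskaRHCG}, the relatively quasiconvex subgroup $Q$ is itself hyperbolic relative to a finite collection of subgroups --- its \emph{induced peripheral structure} --- which can be taken to be a set of representatives of the $Q$-conjugacy classes of the infinite subgroups of the form $Q \cap P'$, where $P'$ ranges over the maximal parabolic subgroups of $G$. As $Q$ is finitely generated, applying Osin's theorem to $Q$ in place of $G$ shows that each subgroup in this induced peripheral structure is finitely generated. Now if $Q \cap P$ is finite there is nothing to prove, while if $Q \cap P$ is infinite then, by the description of the induced peripheral structure, it is conjugate in $Q$ to one of those finitely generated subgroups, and hence is itself finitely generated.

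The argument contains no delicate estimates; the one step that genuinely requires care is the appeal to Hruska's induced relatively hyperbolic structure on $Q$, since it is precisely this that justifies the second application of the finite-generation result for peripheral subgroups of a finitely generated relatively hyperbolic group.
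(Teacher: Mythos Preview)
Your proof is correct and follows essentially the same approach as the paper: both parts invoke Osin's theorem that peripheral subgroups of a finitely generated relatively hyperbolic group are finitely generated, first applied to $G$ and then to $Q$ via Hruska's induced peripheral structure on a relatively quasiconvex subgroup.
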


\begin{proof} 
    The fact that each $H_\nu$ is finitely generated, provided $G$ is finitely generated, was proved by Osin in \cite[Theorem 1.1]{OsinRHG}.

    Now, Hruska \cite[Theorem 9.1]{HruskaRHCG} proved that every quasiconvex subgroup $Q$ of $G$ is itself relatively hyperbolic and maximal parabolic subgroups of $Q$ are precisely the infinite intersections of $Q$ with maximal parabolic subgroups of $G$. 
    In other words, if $P \leqslant G$ is maximal parabolic, then $Q \cap P$ is either finite or a maximal parabolic subgroup of $Q$. 
    Combined with Osin's result \cite[Theorem 1.1]{OsinRHG} mentioned above we can conclude that if $Q$ is finitely generated then so is $Q \cap P$, as required.
\end{proof}

The following property of quasiconvex subgroups will be useful.

\begin{lemma}
\label{lem:shortening} 
    Let $Q, R \leqslant G$ be relatively quasiconvex subgroups of $G$. For every $\zeta \ge 0$ there exists a constant \(\mu=\mu(\zeta) \geq 0\) such that the following holds.

    Suppose $x \in G$, \(a \in Q \), \(b \in R\) are some elements, $[x,xa]$ and $[x,xb]$ are geodesic paths in $\ga$, and \(u \in [x,xa]\), \(v \in [x,xb]\) are vertices such that \(d_X(u,v) \leq \zeta\).
    Then there is an element \(z \in x(Q \cap R)\) such that \(d_X(u,z) \leq \mu\) and \(d_X(v,z) \leq \mu\).
\end{lemma}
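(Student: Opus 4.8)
The plan is to reduce immediately to the case $x=1$, then combine relative quasiconvexity of $Q$ and $R$ with the ``intersection of neighbourhoods'' estimate of Lemma~\ref{lem:nbhdintersection}.

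\textbf{Reduction.} Left multiplication by $x^{-1}$ is an isometry of $\ga$ preserving both $d_X$ and $\dxh$, and it carries the coset $x(Q\cap R)$ to $Q\cap R$. So it suffices to treat the case $x=1$: now $u$ is a vertex of a geodesic $[1,xa]=[1,a]$ in $\ga$ whose endpoints $1$ and $a$ both lie in $Q$ (as $Q$ is a subgroup and $a\in Q$), and similarly $v$ is a vertex of a geodesic $[1,b]$ with endpoints in $R$.

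\textbf{Applying relative quasiconvexity.} Let $\varepsilon\geq 0$ be a common quasiconvexity constant for $Q$ and $R$ in the sense of Definition~\ref{def:rel_qc}. Since $u$ lies on the geodesic $[1,a]$ with $1,a\in Q$, there is $q\in Q$ with $d_X(u,q)\leq\varepsilon$; likewise there is $r\in R$ with $d_X(v,r)\leq\varepsilon$. By the triangle inequality together with the hypothesis $d_X(u,v)\leq\zeta$,
\[
    d_X(q,r)\leq d_X(q,u)+d_X(u,v)+d_X(v,r)\leq 2\varepsilon+\zeta.
\]

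\textbf{Pushing into $Q\cap R$.} Put $\zeta'=2\varepsilon+\zeta$. Then $q\in Q\subseteq N_X(Q,\zeta')$, and $q\in N_X(R,\zeta')$ because $d_X(q,r)\leq\zeta'$ with $r\in R$. Applying Lemma~\ref{lem:nbhdintersection} with the finite generating set $X$, the subgroups $A=Q$, $B=R$, and the constant $K=\zeta'$ (and with $x=1$), we obtain $K'=K'(Q,R,\zeta')\geq 0$ and an element $z\in Q\cap R$ with $d_X(q,z)\leq K'$. Two further applications of the triangle inequality give $d_X(u,z)\leq\varepsilon+K'$ and $d_X(v,z)\leq d_X(v,u)+d_X(u,z)\leq\zeta+\varepsilon+K'$. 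Hence $\mu:=\zeta+\varepsilon+K'$ has the required property, and it depends only on $\zeta$ once $Q$, $R$ and $X$ are fixed. Translating back by $x$ produces the element $xz\in x(Q\cap R)$ demanded by the statement.

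\textbf{Expected difficulties.} There is no genuinely hard step here; the proof is bookkeeping with the triangle inequality. The only points requiring care are: (i) performing the translation to $x=1$ \emph{before} invoking relative quasiconvexity, so that the geodesics in question have both endpoints inside the relevant subgroup (otherwise Definition~\ref{def:rel_qc} does not apply); and (ii) invoking Lemma~\ref{lem:nbhdintersection} with respect to the \emph{finite} generating set $X$ and the proper metric $d_X$, since that lemma is stated only for finitely generated groups, whereas the ambient geodesics live in the relative Cayley graph $\ga$.
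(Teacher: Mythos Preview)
Your proof is correct and follows essentially the same approach as the paper: reduce to $x=1$ by left translation, use relative quasiconvexity of $Q$ and $R$ to place $u$ (or a nearby point) in neighbourhoods of both subgroups, and then apply Lemma~\ref{lem:nbhdintersection}. The only cosmetic difference is that the paper applies Lemma~\ref{lem:nbhdintersection} directly to $u$ with constant $K=\varepsilon+\zeta$, whereas you first pass to the auxiliary point $q\in Q$ and use $K=2\varepsilon+\zeta$; this changes the resulting $\mu$ slightly but is otherwise the same argument.
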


\begin{proof}
    Denote by \(\varepsilon \geq 0\) a quasiconvexity constant of the subgroups \(Q\) and \(R\).
    After applying the left translation by $x^{-1}$, which is an isometry with respect to both metrics $d_X$ and $\dxh$, we can assume that $x=1$. 
    Let $K'=K'(Q,R,\varepsilon+\zeta)$ be the constant given by Lemma~\ref{lem:nbhdintersection}.

    Since $x=1 \in Q \cap R$, $xa=a \in Q$ and $xb=b \in R$, by the relative quasiconvexity of $Q$ and $R$ we know that $u \in N_X(Q,\varepsilon)$ and $v \in N_X(R,\varepsilon)$. 
    By the assumptions $d_X(u,v) \le \zeta$, it follows that $u \in N_X(Q,\varepsilon+\zeta) \cap N_X(R,\varepsilon+\zeta)$, hence $u \in N_X(Q \cap R,K')$ by Lemma~\ref{lem:nbhdintersection}.

    Thus there exists $z \in Q \cap R$ such that $d_X(u,z) \le K'$, and, hence, $d_X(v,z) \le K'+\zeta$ by the triangle inequality. 
    Therefore the statement of the lemma holds for $\mu=K'+\zeta$.
\end{proof}

The next combination theorem was proved by Mart\'{i}nez-Pedroza.
\begin{theorem}[{\cite[Theorem~1.1]{MPComb}}]
\label{thm:M-P_comb} 
    Let $G$ be a relatively hyperbolic group generated by a finite set $X$. 
    Suppose that $Q$ is a relatively quasiconvex subgroup of $G$, $P$ is a maximal parabolic subgroup of $G$ and $D=Q \cap P$. 
    There is a constant $C \ge 0$ such that the following holds. 
    If $H \leqslant P$ is any subgroup satisfying
    \begin{enumerate}
        \item $H \cap Q=D$, and
        \item $\minx( H \setminus D) \ge C$,
    \end{enumerate}
    then the subgroup $A=\langle H,Q \rangle$ is relatively quasiconvex in $G$ and is naturally isomorphic to the amalgamated free product $H*_{D} Q$.

    Moreover, for every maximal parabolic subgroup $T$ of $G$, there exists $u \in A$ such that  \[\text{either } A \cap T \subseteq uQu^{-1}~\text{ or }~ A \cap T \subseteq uHu^{-1}.\]
\end{theorem}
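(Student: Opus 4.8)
The plan is a ping-pong/Bass--Serre argument carried out inside the relative Cayley graph $\ga$. After conjugating $Q$, $H$ and $D=Q\cap P$ by a fixed element (which changes relative quasiconvexity constants and $\abs{\cdot}_X$-lengths only in a controlled way, cf.\ Lemma~\ref{lem:props_of_qc_sbgps}), we may assume $P=H_\nu$ for some $\nu\in\Nu$, so that every nontrivial element of $H\leqslant P$ labels a single edge of $\ga$. Fix left transversals of $D$ in $H$ and in $Q$; then every element of the abstract amalgamated product $\widehat A=H\ast_D Q$ has a normal form $d\,y_1y_2\cdots y_m$, where $d\in D$ and the $y_j$ are nontrivial transversal elements lying alternately in $H\setminus D$ and $Q\setminus D$. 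To such a form associate a \emph{canonical path} $p=p_0p_1\cdots p_m$ from $1$ to the image of $d\,y_1\cdots y_m$: take $p_j$ to be the single $\mathcal H$-edge labelled $y_j$ when $y_j\in H\setminus D$, and a geodesic of $\ga$ representing $y_j$ (suitably based) when $y_j\in Q\setminus D$, with $p_0$ a short path for $d$.

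The core step is to choose $C$ large enough that every canonical path is a $(\lambda,c)$-quasigeodesic without backtracking, for some fixed $\lambda\geq1$, $c\geq0$. The $\mathcal H$-edges coming from $H$-syllables have $\abs{\cdot}_X\geq C$ by hypothesis~(2), and hypothesis~(1) forces consecutive syllables to lie in distinct left cosets of $H_\nu$ (an $H$-syllable followed by $y\in Q\setminus D$ moves to $yH_\nu\neq H_\nu$ since $y\notin H_\nu$), so these long $\mathcal H$-edges are pairwise non-connected; one then invokes Proposition~\ref{prop:mpquasigeodesic-original}, or rather its refinement Proposition~\ref{prop:mpquasigeodesic}. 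I expect the main obstacle to be precisely the verification of the hypotheses of these propositions: a geodesic representing a $Q$-syllable may itself begin or end with an $H_\nu$-component that merges with an adjacent $H$-edge, and controlling the size of this overlap (the parameter $\rho$ of Proposition~\ref{prop:mpquasigeodesic}) requires relative quasiconvexity of $Q$ together with a careful choice of the $Q$-geodesics. Granting quasiconvexity of canonical paths, injectivity of $\widehat A\to G$ follows: for $m\leqslant1$ one checks directly that $d\,y_1\in Q$ or $d\,y_1\in H$ is trivial only if $y_1\in D$, while for $m\geqslant2$ the path $p$ has, by the no-backtracking property, an isolated $\mathcal H$-component of $\abs{\cdot}_X\geq C-2\rho\geq\eta(\lambda,c,1)$, whence $\abs{p}_X\geq1>0$ by Lemma~\ref{lem:qgds_with_long_comps}. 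Thus $A\cong H\ast_D Q$.

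Relative quasiconvexity of $A$ then follows from stability of quasigeodesics in the $\delta$-hyperbolic graph $\ga$ (Lemma~\ref{lem:Cayley_graph-hyperbolic}). Given $a,a'\in A$, translate so $a=1$, and compare a geodesic $\gamma$ from $1$ to $a'$ with the canonical quasigeodesic $p$ from $1$ to $a'$. Since $\gamma$ is geodesic, hence without backtracking, and $0$-similar to $p$, Proposition~\ref{prop:osinbcp}(1) places every vertex of $\gamma$ within a uniform $d_X$-distance of a phase vertex of $p$. Every phase vertex of $p$ is uniformly $d_X$-close to $A$: the nodes lie in $A$, an $H$-edge contributes only its two (in $A$) endpoints as phase vertices, and a phase vertex on a $Q$-geodesic segment based at $w\in A$ lies within the quasiconvexity constant of $Q$ of $wQ\subseteq A$. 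Hence every vertex of $\gamma$ is uniformly $d_X$-close to $A$, which is exactly relative quasiconvexity in the sense of Definition~\ref{def:rel_qc}.

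For the ``moreover'' statement, observe that a maximal parabolic $T=gH_\mu g^{-1}$ has $\ga$-diameter at most $2\dxh(1,g)+1$, so $A\cap T$ has bounded diameter in $\ga$. Let $A\cong H\ast_D Q$ act on its Bass--Serre tree $\mathcal T$, with base vertex $\widetilde v$ stabilised by $Q$. For $a\in A$, the distance $d_{\mathcal T}(\widetilde v,a\widetilde v)$ is bounded in terms of the syllable length of $a$, hence, since the canonical path $p_a$ is a $(\lambda,c)$-quasigeodesic, in terms of $\dxh(1,a)$. Therefore the $(A\cap T)$-orbit of $\widetilde v$ is bounded in $\mathcal T$, so $A\cap T$ preserves a bounded subtree and fixes its centre; as $A$ acts on $\mathcal T$ without inversions, it fixes a vertex of $\mathcal T$, whose stabiliser is an $A$-conjugate $uQu^{-1}$ or $uHu^{-1}$. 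Hence $A\cap T\subseteq uQu^{-1}$ or $A\cap T\subseteq uHu^{-1}$ for some $u\in A$, as required.
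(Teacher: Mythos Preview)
This theorem is stated in the paper without proof; it is quoted from Mart\'inez-Pedroza \cite{MPComb}. Your outline follows the same strategy as the original: build canonical paths from normal forms in $H\ast_D Q$, show via Proposition~\ref{prop:mpquasigeodesic-original} or~\ref{prop:mpquasigeodesic} that they are uniform quasigeodesics without backtracking, and deduce both injectivity and relative quasiconvexity. Your Bass--Serre tree argument for the ``moreover'' clause is also correct.

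The obstacle you single out is, however, a genuine gap in what you have written, and the phrase ``careful choice of the $Q$-geodesics'' does not resolve it. Condition~(4) of Proposition~\ref{prop:mpquasigeodesic} requires that any $H_\nu$-component of a $Q$-geodesic connected to an adjacent $H$-edge have $\abs{\cdot}_X\le\rho$, and relative quasiconvexity of $Q$ alone does not give such a bound: if a $Q$-geodesic $\gamma$ from $w$ to $wy$ begins with an $H_\nu$-edge $e$, then quasiconvexity of $Q$ together with Lemma~\ref{lem:nbhdintersection} only place $e_+$ within a uniform constant $K'$ of some point $wd$ with $d\in D$, which leaves $\abs{e}_X=d_X(w,e_+)$ unbounded. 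The missing ingredient is a minimisation over the \emph{reduced form} itself, not just over geodesics for fixed syllables. Replacing the adjacent syllables $(h,y)$ by $(hd,\,d^{-1}y)$ (still reduced, and $hd\in H\setminus D$ so $\abs{hd}_X\ge C$ by hypothesis~(2)) and taking as path for the new $Q$-syllable the single $H_\nu$-edge from $wd$ to $e_+$ followed by the tail of $\gamma$ gives a path of length $\ell(\gamma)$ whose initial $H_\nu$-edge has $\abs{\cdot}_X\le K'$; either this path is geodesic, or the new $Q$-syllable has strictly shorter relative length and the total path length drops. Iterating---first to minimise total path length, then to shorten each remaining long boundary component---produces a reduced form and $Q$-geodesics for which all such components have $\abs{\cdot}_X\le K'$, so one may take $\rho=K'$ and then $C\ge\zeta_1(\rho)$. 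Without this step (or an equivalent device) the appeal to Proposition~\ref{prop:mpquasigeodesic} is not justified.
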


\part{Quasiconvexity of virtual joins}
\label{part:metric_qc_double_cosets}
This part of the paper is mostly devoted to the proofs of Theorems \ref{thm:metric_qc} and \ref{thm:sep->qc_intro}. Let us start by giving  brief outlines of the arguments.

Suppose \(G\) is a group generated by finite set \(X\) and hyperbolic relative to a collection of subgroups \(\{H_\nu \, | \, \nu \in \Nu\}\). Denote \(\mathcal{H} = \bigsqcup_{\nu \in \Nu} H_\nu \setminus \{1\}\) and take any \(A \geq 0\).
Consider two finitely generated relatively quasiconvex subgroups \(Q, R \leqslant G \). Set \(S = Q \cap R\) and suppose that \(Q' \leqslant Q\) and \(R' \leqslant R\) are subgroups satisfying conditions \descref{C1}-\descref{C5} from Subsection~\ref{subsec:3.1}, with some finite collection of maximal parabolic subgroups $\mathcal{P}$ of $G$ (which is independent of $A$) and parameters \(B\) and \(C\) that are sufficiently large with respect to \(A\).

Every element \(g \in \langle Q', R' \rangle\) can be written as a product of elements of \(Q'\) and \(R'\), which gives rise to a broken geodesic line in \(\Gamma(G, X \cup \mathcal{H})\) (not necessarily uniquely), whose label represents \(g\) in $G$.
We choose a path \(p\) from the collection of such broken lines, representing \(g\), that is minimal in a certain sense.
The path \(p\) may fail to be uniformly quasigeodesic, as it may travel through $H_\nu$-cosets for an arbitrarily long time.
We do, however, have some metric control over such instances of backtracking, using the fact that \(Q'\) and \(R'\) satisfy conditions \descref{C1}-\descref{C5} and the minimality of \(p\).

We construct a new path from \(p\), which we call the \emph{shortcutting} of \(p\), that turns out to be uniformly quasigeodesic.
Informally speaking, the shortcutting of \(p\) is obtained by replacing each maximal instance of backtracking in consecutive geodesic segments of \(p\) with a single edge, then connecting these edges in sequence by geodesics.
The resulting path can be seen to satisfy the hypotheses of Proposition~\ref{prop:mpquasigeodesic}.
It follows that the shortcutting of \(p\) is uniformly quasigeodesic, and hence \(\langle Q', R' \rangle\) is relatively quasiconvex.
Properties \descref{P2} and \descref{P3} also follow from this quasigeodesicity, giving us Theorem~\ref{thm:metric_qc}.

Now suppose that \(G\) is QCERF and its peripheral subgroups are double coset separable. 
In Theorem~\ref{thm:sep->qc_comb} we use the separability assumptions on \(G\) and \(\{H_\nu \, | \, \nu \in \Nu\}\) to deduce the existence of a finite index subgroup \(M \leqslant_f G\) such that \(Q' = Q \cap M \leqslant_f Q, R' = R \cap M \leqslant_f R\) satisfy conditions \descref{C1}-\descref{C5} with constants \(B\) and \(C\) large enough to apply Theorem~\ref{thm:metric_qc} (as suggested in Remark~\ref{rem:sep->metric}). 
Conditions \descref{C1} and \descref{C4} are essentially automatic.
Conditions \descref{C2}, \descref{C3} and \descref{C5} can be assured to hold for the subgroups \(Q'\) and \(R'\) using Lemma~\ref{lem:sep->large_minx} by the QCERF condition on \(G\), separability of double cosets \(PS\) (where \(P\) is one of finitely many maximal parabolic subgroups) and double coset separability of the peripheral subgroups, respectively.

The remaining technical difficulty is in showing that the double cosets of the form \(PS\) as above are separable in \(G\).
To this end, we prove a general result about lifting separability of certain double cosets in amalgamated free products.
This is then combined with a result of Mart\'{i}nez-Pedroza (Theorem~\ref{thm:M-P_comb}), allowing us to deduce  Theorem~\ref{thm:sep->qc_intro} from Theorem~\ref{thm:metric_qc}.


\section{Path representatives}
\label{sec:path_reps}
Let us set the notation that will be used in the next few sections.

\begin{convention} 
\label{conv:main}
    We fix a group  $G$, generated by a finite set $X$, which is hyperbolic relative to a finite family of subgroups \(\lbrace H_\nu \, | \, \nu \in \Nu \rbrace\). We let $\mathcal{H}=\bigsqcup_{\nu \in \Nu} (H_\nu\setminus\{1\})$. 
    It follows that the Cayley graph $\ga$ is $\delta$-hyperbolic, for some $\delta \in \NN$ (see Lemma~\ref{lem:Cayley_graph-hyperbolic}).

    Furthermore, we assume that  \(Q, R \leqslant G\) are fixed relatively quasiconvex subgroups of \(G\), with a quasiconvexity constant \(\varepsilon \ge 0\), and denote $S=Q \cap R$.
\end{convention}

In this section \(Q'\) and \(R'\) will denote some subgroups of $Q$ and $R$ respectively. 
We will introduce path representatives of elements in $\langle Q',R' \rangle$ and will order such representatives by their types. 
This will be crucial in our proof of Theorem~\ref{thm:metric_qc}.

\begin{definition}[Path representative, I]
\label{def:path_reps}
    Consider an arbitrary element \(g \in \langle Q', R' \rangle\).
    Let \(p=p_1 \dots p_n\) be a broken line in \(\Gamma(G,X\cup\mathcal{H})\) with geodesic segments \(p_1, \dots, p_n\),  such that $\elem{p}=g$ and \(\elem{p_i} \in Q' \cup R'\) for each \(i \in \{1,\dots,n\}\).
    We will call \(p\) a \emph{path representative} of \(g\).
\end{definition}

To choose an optimal path representative we define their types.

\begin{definition}[Type of a path representative, I] 
\label{def:type_of_path_rep}
    Suppose that $p=p_1\dots p_n$ is a broken line in $\ga$.
    For each $i=1,\dots,n$, let $T_i$ denote the set of all \(\mathcal{H}\)-components of $p_i$, and let $T= \bigcup_{i=1}^n T_i$.  
    We define the \emph{type} $\tau(p)$ of $p$ to be the triple 
    \[
        \tau(p)=\Big(n,\ell(p),\sum_{t \in T} |t|_X \Big) \in {\NN_0}^3,
    \]
    where $\ell(p)=\sum_{i=1}^n \ell(p_i)$ is the length of $p$.
\end{definition}

\begin{definition}[Minimal type]
    Given \(g \in \langle Q', R' \rangle\), the set $\mathcal S$ of all path representatives of $g$ is non-empty. 
    Therefore the subset $\tau(\mathcal{S})=\{\tau(p) \mid p \in \mathcal{S}\} \subseteq {\NN_0}^3$, where ${\NN_0}^3$ is equipped with the lexicographic order, will have a unique minimal element.

    We will say that $p=p_1 \dots p_n$ is a \emph{path representative of $g$ of minimal type} if $\tau(p)$ is the minimal element of $\tau(\mathcal S)$.
\end{definition}

\begin{remark}
\label{rem:alt}
    Note that if \(p_1\) and \(p_2\) are paths with \((p_1)_+ = (p_2)_-\) whose labels both represent elements of \(Q'\) (or, respectively, both \(R'\)), then the label of any geodesic \([(p_1)_-,(p_2)_+]\) also represents an element of \(Q'\) (respectively, \(R'\)).
    Hence in a path representative of $g \in \langle Q',R' \rangle$ of minimal type, the labels of the consecutive segments necessarily alternate between representing elements of \(Q' \setminus (Q' \cap R')\) and \(R' \setminus (Q' \cap R')\), whenever \(g\) is not itself an element of \(Q' \cap R'\).
\end{remark}

The minimality of the type of a path representative is thus a numerical condition on the total lengths of the paths $p_i$ and the total lengths of their components. 
In the next few sections we will study local properties induced by this global condition. 
The first such property is stated in the next lemma.

\begin{notation}
    Let \(x,y,z \in G\).
    We will write \(\langle x, y \rangle^{rel}_z=\frac12 (\dxh(x,z)+\dxh(y,z)-\dxh (x,y))\) to denote the Gromov product of \(x\) and \(y\) with respect to \(z\) in the relative metric \(d_{X\cup\mathcal{H}}\).
\end{notation}

\begin{lemma}[Gromov products are bounded]
\label{lem:bddinnprod}
    There is a constant \( C_0 \geq 0\) such that the following holds.

    Let $Q' \leqslant Q$ and $R' \leqslant R$ be subgroups satisfying condition \descref{C1}. If \(p=p_1 \dots p_n\) is a minimal type path representative of an element \(g \in \langle Q', R' \rangle\) and $f_0, \dots, f_n \in G$ are the nodes of $p$ (that is,  $f_{i-1}=(p_i)_-$, for $i=1,\dots,n$, and $f_n=(p_n)_+$)  then
    \(\langle f_{i-1}, f_{i+1} \rangle_{f_i}^{rel} \leq C_0\) for each \(i= 1, \dots, n-1\).
\end{lemma}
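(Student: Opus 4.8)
The plan is to argue by contradiction: suppose that for some $i \in \{1,\dots,n-1\}$ the Gromov product $\langle f_{i-1}, f_{i+1} \rangle_{f_i}^{rel}$ is large, and use this to produce a path representative of $g$ of strictly smaller type, contradicting minimality. Since $p_i$ is geodesic from $f_{i-1}$ to $f_i$ and $p_{i+1}$ is geodesic from $f_i$ to $f_{i+1}$, a large Gromov product at $f_i$ means that $p_i$ and $p_{i+1}$ ``backtrack'' near $f_i$: there is a vertex $u$ on $p_i$ near the end and a vertex $v$ on $p_{i+1}$ near the start with $\dxh(u,v)$ small (in fact at most a uniform constant depending only on $\delta$, by $\delta$-thinness of the degenerate triangle with the third side $[f_{i-1},f_{i+1}]$, or more directly from the definition of the Gromov product). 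The subword of $\Lab(p_i)$ read from $u$ to $f_i$ represents an element of $Q' \cup R'$-type, and similarly the subword of $\Lab(p_{i+1})$ from $f_i$ to $v$; by Remark~\ref{rem:alt} these lie on opposite sides (one in $Q'$, one in $R'$), say $a \in Q' \setminus S$ and $b \in R' \setminus S$ up to relabelling.

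The key point is then that $d_{X \cup \mathcal{H}}(u,v)$ being small forces $u^{-1}v$ to be a \emph{short} element of $G$, which is simultaneously in $Q'$ (after translation, since $u \in f_{i-1} Q'$-ish... more precisely one relates $u,v$ via the segment elements) — I would instead argue as follows: the element $\elem{[u,f_i]} \in Q'$ and $\elem{[f_i,v]} \in R'$ (say), and $\elem{[u,v]}$, represented by a \emph{geodesic} of length $\dxh(u,v)$, together with the relation $[u,v] \simeq [u,f_i][f_i,v]$ shows that $\elem{[u,v]} = \elem{[u,f_i]} \cdot \elem{[f_i,v]} \in Q'R'$. If this element actually lies in $S = Q' \cap R'$ then we can reroute: replace the subpath $[u,f_i][f_i,v]$ of $p$ (two geodesic segments, or rather their tail/head) by a single geodesic $[u,v]$ whose label represents an element of... wait, this requires the short element to be \emph{absorbable}. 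The cleaner route: choose $C_0$ so that any element of $Q'R' \setminus QR$ (or of $\langle Q',R'\rangle \setminus S$) has $d_X$-length, hence $d_{X\cup\mathcal H}$-length, at least some threshold — but that is exactly what conditions \descref{C1}--\descref{C5} are designed to deliver later, so at \emph{this} stage we cannot assume it.

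Let me reconsider. I believe the correct argument does not use conditions \descref{C2}--\descref{C5} at all here (only \descref{C1}, as stated). If $\langle f_{i-1},f_{i+1}\rangle^{rel}_{f_i}$ is large, pick $u$ on $p_i$, $v$ on $p_{i+1}$ with $\dxh(u,v)$ bounded by $\delta+1$ or so, $u \ne f_i \ne v$, and with $u,v$ both phase vertices if possible. Then $u^{-1}v \in G$ has bounded $\dxh$-length. Now $u^{-1}f_i = \elem{[u,f_i]}$ and $f_i^{-1}v = \elem{[f_i,v]}$; one of these is in $Q'$ and the other in $R'$. Consider the new broken line obtained by replacing the segments $p_i = p_i' [u,f_i]$ and $p_{i+1} = [f_i,v] p_{i+1}'$ by $p_i'$, then a geodesic $e$ from $u$ to $v$, then $p_{i+1}'$; but the labels no longer alternate correctly since $\elem{e} = (u^{-1}f_i)(f_i^{-1}v)$ need not lie in $Q' \cup R'$. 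To fix this I would instead reroute so as to \emph{cancel}: reflect that $\elem{[f_i,v]}^{-1} = v^{-1}f_i \in R'$, and write $\elem{[u,v]} = \elem{[u,f_i]}\elem{[f_i,v]}$. Hmm — the honest version is that this requires replacing two segments by \emph{three}: $p_i'$, the geodesic $[u,v]$, and $p_{i+1}'$, checking the new middle segment's label is in $Q'$ or $R'$ — which fails in general.

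Given the subtlety, the actual argument the authors surely intend: use that $p$ has \emph{minimal type} so in particular minimal number of segments $n$, minimal length $\ell(p)$, minimal total component length, in \emph{lexicographic} order. A large Gromov product lets us cut a ``corner'' at $f_i$: there are vertices $u \in p_i$, $v \in p_{i+1}$ with $\dxh(u,v) \le 2\langle f_{i-1},f_{i+1}\rangle^{rel}_{f_i} + 4\delta$ small — no, large product means we can take $u,v$ \emph{far} from $f_i$ along each segment. Replace the middle portion by: keep $p_1\dots p_{i-1}$, then the initial part $q_i$ of $p_i$ up to $u$, then a geodesic $[u,v]$, then the final part $q_{i+1}$ of $p_{i+1}$ from $v$. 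Now $\Lab(q_i)$ represents $\elem{p_i}\cdot(u^{-1}f_i)^{-1} \cdot$... this still breaks alternation. I now think the right statement is that $\elem{[u,v]}$ combines with a \emph{neighbouring} segment. The hard part — and the main obstacle I flag — is precisely getting the rerouted path to still be a \emph{path representative} (labels in $Q' \cup R'$) while strictly decreasing the type; I expect the resolution is to take $u,v$ with $u^{-1}v \in S$ using \descref{C1} together with relative quasiconvexity of $Q$ and $R$ (Lemma~\ref{lem:shortening}: $u \in f_{i-1}Q$-ish side and $v$ on the other, close up $\Rightarrow$ a nearby point of $x(Q\cap R)$), so that the short connecting element can be absorbed into $Q'$ on both sides after an adjustment, collapsing the corner and reducing $\ell(p)$ or the segment count. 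So concretely: set $C_0$ to be the output of $\delta$-hyperbolicity plus the constant $\mu$ from Lemma~\ref{lem:shortening} (applied with an appropriate $\zeta$), assume for contradiction $\langle f_{i-1},f_{i+1}\rangle^{rel}_{f_i} > C_0$, extract close phase vertices $u \in p_i$, $v \in p_{i+1}$, apply Lemma~\ref{lem:shortening} to get $z \in f_{i-1}S$ (or the relevant coset) with $u,v$ both $\mu$-close to $z$, reroute $p$ through $z$: the new path has the same or fewer segments but strictly smaller length (the corner is strictly cut because the product exceeded $C_0 \ge$ twice the detour bound), and its segment labels still alternate in $Q',R'$ because $z$ lies in the appropriate coset of $S \subseteq Q' \cap R'$ — contradicting minimality of $\tau(p)$.
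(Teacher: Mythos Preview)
Your final paragraph lands on the right strategy --- reroute through a point $z \in f_i(Q\cap R)$ produced by Lemma~\ref{lem:shortening}, then use minimality of $\tau(p)$ --- and this is exactly what the paper does. But there is one genuine gap you should be aware of.

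The $\delta$-hyperbolicity of $\ga$ only gives you vertices $u \in p_i$, $v \in p_{i+1}$ with $\dxh(u,v)$ bounded (as you yourself note early on). Lemma~\ref{lem:shortening}, however, requires $d_X(u,v) \le \zeta$: its proof runs through relative quasiconvexity (Definition~\ref{def:rel_qc}) and Lemma~\ref{lem:nbhdintersection}, both of which concern $d_X$-neighbourhoods. Two vertices in the same $H_\nu$-coset can have $\dxh$-distance $1$ but arbitrarily large $d_X$-distance, so you cannot apply Lemma~\ref{lem:shortening} directly to the pair $(u,v)$ you extracted. The paper bridges this by invoking Proposition~\ref{prop:osinslimtriangles} (Osin's slim triangle result in the \emph{proper} metric): it moves to a vertex $u_1$ on $[u,f_i]$ slightly further in, then applies Proposition~\ref{prop:osinslimtriangles} to the triangle with vertices $u, f_i, v$ to find $v_1$ on $[f_i,v]$ with $d_X(u_1,v_1) \le \sigma$. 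Only then can Lemma~\ref{lem:shortening} be invoked with $\zeta = \sigma$.

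Two smaller points. First, the coset is $f_i(Q\cap R)$, not $f_{i-1}S$: in Lemma~\ref{lem:shortening} the basepoint $x$ is the common node $f_i = (p_i)_+ = (p_{i+1})_-$, and then \descref{C1} gives $S \subseteq Q' \cap R'$ so that the new segments $[f_{i-1},z]$ and $[z,f_{i+1}]$ have labels in $Q'$ and $R'$ respectively. Second, the paper does not literally derive a contradiction: rather, assuming the Gromov product is at least $C_0$, the minimality of $\ell(p)$ forces the inequality $\ell(p_i)+\ell(p_{i+1}) \le \ell(p_i')+\ell(p_{i+1}')$, and a short calculation turns this into the bound $\langle f_{i-1},f_{i+1}\rangle^{rel}_{f_i} \le \mu + \delta + 2\sigma + 2 = C_0$. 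Your ``strictly cut the corner'' phrasing would also work, but you would need to track the constants more carefully to see that the new path is genuinely shorter.
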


\begin{proof} 
    Let \(\sigma \in \NN_0\) be the constant from Proposition~\ref{prop:osinslimtriangles} and let \(\mu=\mu(\sigma) \geq 0\) be given by Lemma~\ref{lem:shortening}.
    Set \(C_0 = \mu + \delta + 2 \sigma +2\), and assume that \(p=p_1 \dots p_n\) is a path representative of \(g \in \langle Q', R' \rangle\) of minimal type.

    Take any $i \in \{1,\dots,n-1\}$. Choose vertices \(u \in p_i \) and \(v \in p_{i+1}\) so that
    \(d_{X\cup\mathcal{H}}(f_i,u) = d_{X\cup\mathcal{H}}(f_i,v) = \lfloor\langle f_{i-1}, f_{i+1} \rangle_{f_i}^{rel} \rfloor \). As \(\Gamma(G,X\cup\mathcal{H})\) is \(\delta\)-hyperbolic, we must have \(d_{X\cup\mathcal{H}}(u,v) \leq \delta\).

    If \(\langle f_{i-1}, f_{i+1} \rangle_{f_i}^{rel} < C_0\) then we are done, so suppose otherwise.
    Then $\dxh(u,f_i) \ge \delta+\sigma+1 \in \NN$, so
    there is a vertex \(u_1\) on the subpath $[u,f_i]$ of $p_i$ such that \[ d_{X\cup\mathcal{H}}(u_1,u) = \delta + \sigma +1.\]

    Applying Proposition~\ref{prop:osinslimtriangles} to the geodesic triangle \(\Delta\) with sides $[u,f_i]$, $[f_i,v]$ and $[u,v]$ (here we choose $[f_i,v]$ to be a subpath of $p_{i+1}$), we can find some vertex \(v_1 \in [u,v] \cup [f_i,v]\) with \(d_X(v_1,u_1) \leq \sigma\) .
    If \(v_1 \in [u,v]\), then, by the triangle inequality, \[d_{X\cup\mathcal{H}}(u_1,u) \le \dxh(u_1,v_1)+\dxh(u,v)\leq \sigma + \delta,\] which would contradict the choice of \(u_1\).
    Therefore it must be that \(v_1 \in [f_i,v]\) (see Figure~\ref{fig:bdd_inn_prod}).
    \begin{figure}[ht]
        \centering
        \includegraphics{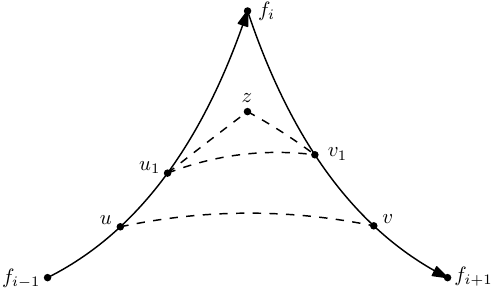}
        \caption{We obtain a different path representative for \(g\) by replacing \(p_i\) and \(p_{i+1}\) with geodesics from \(f_{i-1}\) to \(z\) to \(f_{i+1}\). }
        \label{fig:bdd_inn_prod}
    \end{figure}

    Since the path representative $p$ has minimal type, in view of Remark~\ref{rem:alt} we must have either $\elem{p_i} \in Q'$ and $\elem{p_{i+1}} \in R'$ or $\elem{p_i} \in R'$ and $\elem{p_{i+1}} \in Q'$. Without loss of generality let us assume the former. 
    We can apply Lemma~\ref{lem:shortening} to find \(z \in f_i(Q \cap R)\) with \(d_X(u_1,z) \leq \mu\) and \(d_X(v_1,z) \leq \mu\). 
    Let $p_i'$ be a geodesic path in $\ga$ joining $f_{i-1}=(p_i)_-$ with $z$ and let $p_{i+1}'$ be a geodesic path joining $z$ with $f_{i+1}=(p_{i+1})_+$. 
    Observe that $f_{i-1} \in f_iQ'$ and $Q \cap R \subseteq Q'$ by \descref{C1}, whence
    \[ 
        \elem{p'_i}=f_{i-1}^{-1}z \in Q' f_i^{-1} f_i (Q \cap R)=Q'.
    \]
    Similarly, $\elem{p_{i+1}'} \in R'$. It follows that the path $p'=p_1 \dots p_{i-1} p_i' p_{i+1}' p_{i+2} \dots p_n$ is also a path representative of the same element $g \in \langle Q',R' \rangle$.

    Since $p$ has minimal type, by the assumption, it must be that $\ell(p_i)+\ell(p_{i+1}) \le \ell(p_i')+\ell(p_{i+1}')$, which can be re-written as
    \begin{equation}
    \label{eq:ineq_on_dist-1}
        \dxh(f_{i-1},f_i)+\dxh(f_i,f_{i+1}) \le \dxh(f_{i-1},z)+\dxh(z,f_{i+1}).
    \end{equation}

    Since $u_1 \in p_i$, we have $\dxh(f_{i-1},f_i)=\dxh(f_{i-1},u_1)+\dxh(u_1,f_i)$. 
    On the other hand, \[\dxh(f_{i-1},z) \le \dxh(f_{i-1},u_1)+\dxh(u_1,z) \le \dxh(f_{i-1},u_1)+\mu,\] by the triangle inequality. 
    Similarly, 
    \[
        \dxh(f_{i},f_{i+1})=\dxh(f_{i},v_1)+\dxh(v_1,f_{i+1}) \text{ and } \dxh(z,f_{i+1}) \le \dxh(v_1,f_{i+1})+\mu.
    \]
    Combining the above inequalities with \eqref{eq:ineq_on_dist-1}, we obtain
    \begin{equation}
    \label{eq:Gr_prod_bound}
        \dxh(u_1,f_i)+\dxh(f_i,v_1) \le 2 \mu.
    \end{equation}

    Now, by construction, we have
    \begin{equation}
    \label{eq:d(u1,fi)}
        \dxh(u_1,f_i) = \dxh(u,f_i)-\dxh(u_1,u)=\lfloor\langle f_{i-1}, f_{i+1} \rangle_{f_i}^{rel}\rfloor-(\delta+\sigma+1).
    \end{equation}
    On the other hand, since $\dxh(v_1,u_1) \le \sigma$, we achieve
    \begin{equation}
    \label{eq:d(fi,v1)}
        \dxh(f_i,v_1) \ge \dxh(u_1,f_i)-\dxh(v_1,u_1) \ge \lfloor\langle f_{i-1}, f_{i+1} \rangle_{f_i}^{rel}\rfloor-(\delta+2\sigma+1).
    \end{equation}
    After combining \eqref{eq:d(u1,fi)}, \eqref{eq:d(fi,v1)} and \eqref{eq:Gr_prod_bound}, we obtain
    \[
        2\lfloor\langle f_{i-1}, f_{i+1} \rangle_{f_i}^{rel}\rfloor-(2\delta+3\sigma+2) \le 2 \mu.
    \]
    Therefore, we can conclude that $\langle f_{i-1}, f_{i+1} \rangle_{f_i}^{rel} \le \mu+ \delta+2 \sigma+2=C_0$, as required.
\end{proof}


\section{Adjacent backtracking in path representatives of minimal type}
\label{sec:adj_backtracking}
In this section we continue working under Convention~\ref{conv:main}. 
Our goal here is to study the possible backtracking within two 
adjacent segments in a minimal type path representative.

\begin{lemma}
\label{lem:one_comp_in_cusp_is_bounded}
    For all non-negative numbers $\zeta$ and $\xi$ there exists $\tau=\tau(\zeta,\xi) \ge 0$ such that the following holds.

    Suppose that $Q' \leqslant Q$ and $R' \leqslant R$ are subgroups satisfying \descref{C1}, $g \in \langle Q',R' \rangle$ and $p=p_1\dots p_n$ is a path representative of $g$ of  minimal type.  
    If for some \(i \in \{1,\dots,n-1\}\) $s$ and $t$ are connected $\mathcal{H}$-components of $p_i$ and $p_{i+1}$ respectively, such that $d_X(s_-,t_+) \le \zeta$ and $d_X(s_+,(p_i)_+) \le \xi$, then $|s|_X \le \tau$ and $|t|_X \le \tau$.
\end{lemma}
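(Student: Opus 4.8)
The plan is to argue by contradiction using the minimality of the type of $p$. Suppose $s$ is an $H_\nu$-component of $p_i$ and $t$ is an $H_\nu$-component of $p_{i+1}$ that are connected, with $d_X(s_-,t_+)\le\zeta$ and $d_X(s_+,(p_i)_+)\le\xi$. Since $s$ and $t$ are connected, the vertices $s_-,s_+,t_-,t_+$ all lie in a single left coset $g_0H_\nu$ of $H_\nu$, so the element $h=\elem{s}^{-1}\,\bigl((s_-)^{-1}(t_-)\bigr)^{-1}\cdots$—more cleanly: writing everything in terms of the coset, we have $(s_+)^{-1}t_+\in H_\nu$. The key observation is that a long component can be ``rerouted'': I would like to replace the portion of $p$ between $s_-$ and $t_+$ by a shorter path whose label still lies in the appropriate product of $Q'$ and $R'$, thereby contradicting minimality of $\tau(p)$ in one of its three coordinates.

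**The rerouting.**
Concretely, let $w=(p_i)_+=(p_{i+1})_-$ be the node between the two segments. Using $d_X(s_+,w)\le\xi$ and $d_X(s_-,t_+)\le\zeta$, together with the fact that $s_+,w,t_+$ (and $s_-$) all lie within bounded $d_X$-distance of the coset $s_-H_\nu$, I would first argue that $w$ itself lies within a bounded $d_X$-distance of $s_-H_\nu=t_+H_\nu$: indeed the subpath of $p_i$ from $s_+$ to $w$ has $d_X$-length at most $\xi$, so $w^{-1}s_+$ has $d_X$-norm $\le\xi$, giving control on how far $w$ sits from the coset. Now I build a new path representative $p'$ of $g$ as follows. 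Replace $p_i$ by a geodesic $p_i'$ from $(p_i)_-$ to $s_-$ followed by a single edge labelled by the element of $H_\nu$ carrying $s_-$ to $w$ (this element lies in $H_\nu$ because $w$ is within bounded distance of the coset — here I may need to enlarge things slightly and instead route to the coset representative and then take a short geodesic correction, absorbing a bounded error). Similarly replace $p_{i+1}$ by a single $H_\nu$-edge from $w$ to $t_+$ followed by a geodesic from $t_+$ to $(p_{i+1})_+$. Crucially, $\elem{p_i'}$ still lies in $Q'$ (resp.\ $R'$) because it differs from $\elem{p_i}$ by right-multiplication by an element of $\langle s_-^{-1}w\rangle$-type correction that lies in the parabolic — and more importantly the \emph{cosets} of $s_-$ and $w$ relative to the endpoints are unchanged, so by the alternating structure (Remark~\ref{rem:alt}) and condition \descref{C1} the label of each new segment stays in $Q'$ or $R'$. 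The new $\mathcal H$-components at $s_-\to w$ and $w\to t_+$ have $d_X$-length bounded by $\xi+\zeta$ plus universal constants, independent of $|s|_X,|t|_X$.

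**Extracting the contradiction.**
Comparing types: if $|s|_X$ or $|t|_X$ exceeds some threshold $\tau=\tau(\zeta,\xi)$, then the third coordinate $\sum_{u\in T}|u|_X$ strictly decreases under this replacement (the long components $s,t$ are gone, replaced by components of length $O(\zeta+\xi)$), while the number of segments $n$ is unchanged and $\ell(p)$ is controlled — I should be slightly careful here, since replacing a geodesic segment by a pair of geodesics plus an edge could in principle increase $\ell$. To handle this, I would instead aim for a replacement that keeps $n$ fixed and does not increase $\ell(p)$: route $p_i'$ as a \emph{single} geodesic from $(p_i)_-$ to the new node and $p_{i+1}'$ as a single geodesic from the new node to $(p_{i+1})_+$, where the new node is chosen as a vertex in the coset $s_-H_\nu$ at bounded $d_X$-distance from $w$; since $d_{X\cup\mathcal H}(s_-,w)\le$ (length of $p_i$-portion in $\mathcal H\cup X$ metric) and passing through the coset collapses the long $\mathcal H$-component into a bounded jump, the relative lengths do not increase, so $\ell(p')\le\ell(p)$, and if $|s|_X$ is large the drop is strict or, if $\ell$ is unchanged, the third coordinate strictly drops. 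Either way $\tau(p')<\tau(p)$ lexicographically, contradicting minimality. Setting $\tau(\zeta,\xi)$ to be the resulting explicit bound (in terms of $\zeta$, $\xi$, $\delta$, $\varepsilon$, and the constants $\kappa,\sigma,L$ from Section~\ref{sec:RH_gps}) completes the argument.

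**Main obstacle.**
The delicate point I anticipate is ensuring simultaneously that (a) the new segment labels genuinely lie in $Q'$ and $R'$ — which forces the new node to lie in the \emph{correct} coset (that of $s_-$, which equals that of $t_+$) rather than merely near it, and reconciling this with the requirement that (b) the total $d_{X\cup\mathcal H}$-length not increase. Getting both at once may require choosing the new node to be $w$ itself when $w$ lies in the coset $s_-H_\nu$, and otherwise absorbing the discrepancy by noting $d_X(s_+,w)\le\xi$ forces $w$ into a bounded neighbourhood of the coset so that one may shift by a bounded-length $X$-path — at the cost of a slightly larger constant $\tau$. Verifying that this bounded shift does not disrupt the coset membership of the labels (it does not, since it is the same shift applied at the end of one segment and the start of the next, cf.\ the argument in Lemma~\ref{lem:bddinnprod}) is the crux.
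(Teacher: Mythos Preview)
Your proposal has the right overall shape---reroute through a new node, compare types---but there is a genuine gap at exactly the point you flag as ``the crux,'' and it is more serious than you suggest.

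\textbf{Finding the new node.} You want a node $w'$ such that the new segment labels land in $Q'$ and $R'$; this forces $w' \in xS$ where $x=(p_i)_+$ (using \descref{C1}). You also want $w'$ in, or close to, the coset $s_-H_\nu$ so that the rerouted path collapses the long components. But you never explain why such a point exists with $d_X(s_-,w')$ bounded independently of $|s|_X$. Taking $w'$ close to $x$ does not help: in the simplest case $s_+=x$ one has $x\in xS\cap s_-H_\nu$, but choosing $w'=x$ changes nothing. The paper resolves this in two steps: first Lemma~\ref{lem:shortening} (relying on quasiconvexity of $Q$ and $R$) produces $z\in x(Q\cap R)$ with $d_X(s_-,z)\le\mu(\zeta)$; then Lemma~\ref{lem:nbhdintersection} upgrades this to $w\in x(Q\cap R\cap aH_\nu a^{-1})$ close to $s_-$, where $a=x^{-1}s_+$ has $|a|_X\le\xi$. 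Note the node lands in a \emph{conjugate} of $H_\nu$, not in the coset $s_-H_\nu$ itself.

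\textbf{Controlling the third coordinate.} Even granting a good node, your plan to take ``single geodesics'' $p_i',p_{i+1}'$ through it gives no control over their $\mathcal{H}$-components, so you cannot compare the third coordinate of $\tau$ when $\ell(p')=\ell(p)$. The paper's key device here is to build the new paths $q,r$ as concatenations $[(p_i)_-,s_-]\,e\,[wa,w]$ and $[w,wb]\,f\,[t_+,(p_{i+1})_+]$, where $[wa,w]$ and $[w,wb]$ are \emph{left translates} of the original tail $[s_+,x]$ and head $[x,t_-]$; this makes $\Lab(q)$ and $\Lab(p_i)$ differ in exactly one letter (the $H_\nu$-edge $e$ versus $s$), and similarly for $r$. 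Minimality then forces $|s|_X+|t|_X\le|e|_X+|f|_X$, and the right-hand side is bounded by construction. This translation trick is the substantive idea your sketch is missing.
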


\begin{proof}
    Let $\mu=\mu(\zeta) \ge 0$ be the constant from Lemma~\ref{lem:shortening}. Since $|X|<\infty$ and $|\Nu|<\infty$ we can define the constant $k \ge 0$ as follows:
    \begin{equation}
    \label{eq:def_of_k}
        k=\max\{K'(Q \cap R, c H_\nu c^{-1},\xi+\mu) \mid \nu \in \Nu,~c \in G,~|c|_X \le \xi\},
    \end{equation}
    where for each $c \in G$ and $\nu  \in \Nu$ the constant $K'(Q \cap R, c H_\nu c^{-1},\xi+\mu)$ is given by Lemma~\ref{lem:nbhdintersection}. 
    Let $L \ge 0$ be the constant from Lemma~\ref{lem:isol_comp_in_triangles_are_short} and set $\tau=2k+2\xi+ \zeta+L \ge 0$.

    Let $p=p_1\dots p_n$ be a path representative of some $g \in \langle Q',R' \rangle$ of minimal type. Suppose that $s$ and $t$ are connected $H_\nu$-components of $p_i$ and $p_{i+1}$ respectively, for some $i \in \{1,\dots,n-1\}$ and $\nu \in \Nu$, such that $d_X(s_-,t_+) \le \zeta$ and $d_X(s_+,(p_i)_+) \le \xi$.

    Note that, by Lemma~\ref{lem:isol_comp_in_triangles_are_short},
    \begin{equation}
    \label{eq:dist_from_s+_to_t-}
        d_X(s_+,t_-) \le L.
    \end{equation}

    Denote $x=(p_i)_+=(p_{i+1})_- \in G$, $a=x^{-1} s_+ \in G$ and $b=x^{-1}t_- \in G$: see Figure~\ref{fig:bounded_comps}.

    \begin{figure}[ht]
        \centering
        \includegraphics{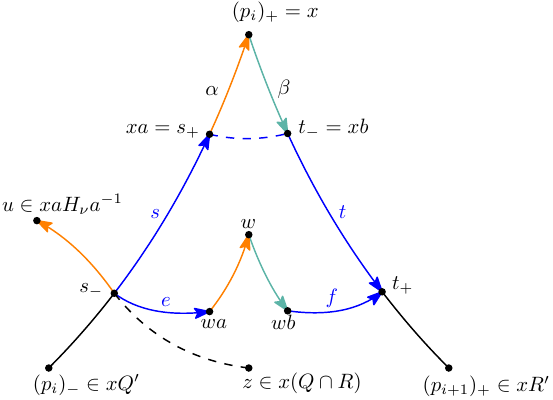}
        \caption{Illustration of Lemma~\ref{lem:one_comp_in_cusp_is_bounded}.} 
        \label{fig:bounded_comps}
    \end{figure}

    Note that
    \begin{equation}
    \label{eq:a_and_b_in_the_same_H-coset}
        aH_\nu=bH_\nu,~\text{ hence } aH_\nu a^{-1}=bH_\nu b^{-1},
    \end{equation}
    because the $H_\nu$-components $s$ and $t$ are connected. 
    Using the lemma hypotheses and (\ref{eq:dist_from_s+_to_t-}) we also have
    \begin{equation}
    \label{eq:mod_a_and_b}
        |a|_X=d_X(x,s_+) \le \xi ~\text{ and }~ |b|_X \le d_X(x,s_+)+d_X(s_+,t_-) \le \xi+L.
    \end{equation}

    In view of Remark~\ref{rem:alt}, without loss of generality we can assume that $\Lab(p_i)$ represents an element of $Q'$ and $\Lab(p_{i+1})$ represents an element of $R'$ in $G$ (the other case can be treated similarly).
    Applying Lemma~\ref{lem:shortening}, we can find $z \in x(Q \cap R)$ such that $d_X(s_-,z) \le \mu$.

    Consider the element $u=s_- a^{-1}= x a \elem{s}^{-1} a^{-1} \in x aH_\nu a^{-1}$, and observe that $d_X(s_-,u)=|a^{-1}|_X \le \xi$.  
    On the other hand, $d_X(s_-,x(Q \cap R)) \le d_X(s_-,z) \le \mu$, whence
    \[
        s_- \in N_X\Bigl(x(Q \cap R),\xi+\mu\Bigr) \cap N_X\Bigl(x aH_\nu a^{-1},\xi+\mu\Bigr).  
    \]

    Therefore, according to Lemma~\ref{lem:nbhdintersection}, there exists $w \in x(Q \cap R \cap aH_\nu a^{-1})$ such that
    \begin{equation}
    \label{eq:d_X(s_-,w)}
        d_X(s_-,w) \le k,
    \end{equation}
    where $k \ge 0$ is the constant defined in \eqref{eq:def_of_k}.

    Let $\alpha$ be the subpath of $p_i$ from $s_+=xa$ to $(p_i)_+=x$. Choose the geodesic path $[wa,w]$ as the translate $wx^{-1} \alpha$. 
    Observe that \(s_- \in xaH_\nu\) and \(wa \in xaH_\nu a^{-1} a = xaH_\nu\) lie in the same \(H_\nu\)-coset.
    Thus $\dxh(s_-,wa) \le 1$; 
    if \(s_- = wa\) we let $e$ be the trivial path in $\ga$ consisting of the single vertex $s_-$, and otherwise we let $e$ be the edge of $\ga$ labelled by an element of $H_\nu \setminus \{1\}$ that joins $s_-$ to $wa$. 
    Define the path $q$ in $\ga$ as the concatenation
    \begin{equation}
    \label{eq:def_of_q}
        q=[(p_i)_-,s_-]\, e\, [wa,w],
    \end{equation}
    where $[(p_i)_-,s_-]$ is chosen as the initial segment of $p_i$.

    Since $\ell(e) \le 1=\dxh(s_-,s_+)$, we can bound the length of the path $q$ from above as follows:
    \begin{equation}
    \begin{split}
    \label{eq:length_of_q}
        \ell(q) & =\dxh((p_i)_-,s_-)+\ell(e)+\dxh(wa,w) \\ 
                & \le \dxh((p_i)_-,s_-)+\dxh(s_-,s_+)+\dxh(xa,x)=\ell(p_i).
    \end{split}
    \end{equation}

    Now we construct a similar path from $w$ to $(p_{i+1})_+$. 
    Let $\beta$ be the subpath of $p_{i+1}$ from $(p_{i+1})_-=x$ to $t_-=xb$. 
    Choose the geodesic path $[w,wb]$ as the translate $wx^{-1} \beta$. 
    Recall that $t_+\in xb H_\nu$ and note that the inclusion $w \in xaH_\nu a^{-1}$, together with \eqref{eq:a_and_b_in_the_same_H-coset}, imply that $wb \in xbH_\nu$ also.
    If $t_+=wb$ then let $f$ be the trivial path in $\ga$ consisting of the single vertex $t_+$, otherwise let \(f\) be the edge in \(\ga\) joining the vertices $wb$ and $t_+$ with $\Lab(f) \in H_\nu \setminus\{1\}$. 
    We now define the path $r$ in $\ga$ as the concatenation
    \begin{equation}
    \label{eq:def_of_r}
        r=[w,wb]\, f \, [t_+,(p_{i+1})_+],
    \end{equation}
    where $[t_+,(p_{i+1})_+]$ is chosen as the ending segment of $p_{i+1}$. 
    Similarly to the case of $q$ we can estimate that
    \begin{equation}
    \label{eq:length_of_r}
        \ell(r) \le \ell(p_{i+1}).
    \end{equation}

    Note that since $q_-=(p_i)_-=x \elem{p_i}^{-1} \in xQ'$, $q_+=w \in x(Q \cap R)$ and $Q \cap R \subseteq Q'$, we have $\elem{q} \in Q'$. Similarly, $\elem{r} \in R'$.

    Let $p_i'$ be a geodesic path from $q_-=(p_i)_-$ to $q_+=w$, and let $p_{i+1}'$ be a geodesic path from $w=r_-$ to $(p_{i+1})_+=r_+$. 
    Since $\elem{p_i'}=\elem{q} \in Q'$ and $\elem{p_{i+1}'}=\elem{r} \in R'$, the broken line $p'=p_1 \dots p_{i-1} p_i' p_{i+1}' p_{i+2} \dots p_n$ is a path representative of the same element $g \in G$.

    If at least one of the paths $q$, $r$ is not geodesic in $\ga$, then, in view of \eqref{eq:length_of_q} and \eqref{eq:length_of_r} we have
    \[
        \ell(p_i')+\ell(p_{i+1}') < \ell(q)+\ell(r) \le \ell(p_i)+\ell(p_{i+1}),
    \]
    hence $\ell(p)=\sum_{i=1}^n \ell(p_i)>\ell(p')$, contradicting the minimality of the type of $p$.

    Hence both $q$ and $r$ must be geodesic in $\ga$ , so we can further assume that $p_i'=q$ and $p_{i+1}'=r$. 
    Moreover, the inequality $\ell(p) \le \ell(p')$ must hold by the minimality of the type of $p$. 
    Therefore $\ell(p_i)+\ell(p_{i+1}) \le \ell(q)+\ell(r)$, which, in view of \eqref{eq:length_of_q} and \eqref{eq:length_of_r}, implies that $\ell(q)=\ell(p_i)$, $\ell(r)=\ell(p_{i+1})$ and $\ell(p)=\ell(p')$. 
    In particular, $e$ and $f$ are actual edges of $\ga$ (and not trivial paths).

    The definition \eqref{eq:def_of_q} of $q$ implies that $\Lab(q)$ can differ from $\Lab(p_i)$ in at most one letter, which is the label of the $H_\nu$-component $e$ in $\Lab(q)$ and the label of the $H_\nu$-component $s$ in $\Lab(p_i)$. 
    Indeed, 
    \[
        \Lab(p_i)=\Lab([(p_i)_-,s_-]) \Lab(s) \Lab(\alpha) \text{ and } \Lab(q)=\Lab([(p_i)_-,s_-]) \Lab(e) \Lab(\alpha),
    \] 
    where we used the fact that $[wa,w]$ is the left translate of $\alpha$, by definition, and hence it has the same label as $\alpha$.

    Similarly, (\ref{eq:def_of_r}) implies $\Lab(r)$ can differ from $\Lab(p_i)$ in at most one letter which is the label of $f$ in $r$ and the label of $t$ in $p_{i+1}$. 
    The minimality of the type of $p$ therefore implies that
    \begin{equation}
    \label{eq:sum_of_lengths_of_s_and_t}
        |s|_X+|t|_X \le |e|_X+|f|_X.
    \end{equation}

    Now, using the triangle inequality, \eqref{eq:d_X(s_-,w)} and \eqref{eq:mod_a_and_b} we obtain
    \begin{equation}
    \label{eq:mod_e}
        |e|_X=d_X(s_-,wa) \le d_X(s_-,w)+d_X(w,wa) \le k+|a|_X \le k+\xi .
    \end{equation}

    To estimate $|f|_X$ we also use the inequality $d_X(s_-,t_+) \le \zeta$:
    \begin{equation}
    \label{eq:mod_f}
    \begin{split}
        |f|_X=d_X(t_+,wb) & \le d_X(t_+,w)+|b|_X \\
            & \le d_X(t_+,s_-)+d_X(s_-,w)+\xi+L \le \zeta+ k+\xi+L .
    \end{split}
    \end{equation}

 Combining \eqref{eq:sum_of_lengths_of_s_and_t}--\eqref{eq:mod_f} together, we achieve
\[\max\{|s|_X,|t|_X\}\le |e|_X+|f|_X \le 2k + 2 \xi + \zeta + L=\tau.\]
    This inequality completes the proof of the lemma. 
\end{proof}

The following auxiliary definition will only be used in the remainder of this section.

\begin{definition}
\label{def:tau_j}
    Let $C_0 \ge 0$ be the constant provided by Lemma~\ref{lem:bddinnprod}, let $L \ge 0$ be the constant given by Lemma~\ref{lem:isol_comp_in_triangles_are_short} and let $\kappa=\kappa(1,0,L) \ge 0$ be the constant from Proposition~\ref{prop:osinbcp}.

    Define the sequences $(\zeta_j)_{j \in \NN}$, $(\xi_j)_{j \in \NN}$ and $(\tau_j)_{j \in \NN}$ of non-negative real numbers as follows.

    Set $\zeta_1=\kappa$, $\xi_1=C_0+1$ and $\tau_1=\max\{\kappa, \tau(\zeta_1,\xi_1)\}$, where $\tau(\zeta_1,\xi_1)$ is given by Lemma~\ref{lem:one_comp_in_cusp_is_bounded}.

    Now suppose that $j>1$ and the first $j-1$ members of the three sequences have already been defined. Then we set
    \[
        \zeta_j=\kappa,~\xi_j=C_0+1+\sum_{k=1}^{j-1} \tau_k \text{ and } \tau_j=\max\{\kappa,\tau(\zeta_j,\xi_j)\},
    \]
     where $\tau(\zeta_j,\xi_j)$ is given by Lemma~\ref{lem:one_comp_in_cusp_is_bounded}.
\end{definition}

\begin{lemma}
\label{lem:shortspikes}
    There exists a constant $C_1 \ge 0$ such that the following is true.

    Let $Q' \leqslant Q$ and $R' \leqslant R$ be subgroups satisfying \descref{C1} and let  $p=p_1\dots p_n$ be a minimal type path representative for an element $g \in \langle Q',R' \rangle$. 
    Suppose that, for some \(i \in \{1,\dots,n-1\}\), $q$ and $r$ are connected $\mathcal{H}$-components of $p_i$ and $p_{i+1}$ respectively. 
    Then \(d_X(q_+, (p_i)_+) \leq C_1\) and \(d_X((p_i)_+,r_-) \leq C_1\).
\end{lemma}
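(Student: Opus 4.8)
The plan is to reduce the statement to Lemma~\ref{lem:one_comp_in_cusp_is_bounded} by an induction that controls the nested backtracking between $p_i$ and $p_{i+1}$ near their common node $x := (p_i)_+ = (p_{i+1})_-$. First I would observe that, by Lemma~\ref{lem:isol_comp_in_triangles_are_short}, connected $\mathcal{H}$-components $q$ of $p_i$ and $r$ of $p_{i+1}$ satisfy $d_X(q_+, r_-) \le L$; hence, by the triangle inequality, it suffices to produce a uniform bound on $d_X(s_+, x)$ over all $\mathcal{H}$-components $s$ of $p_i$ that are connected to some $\mathcal{H}$-component of $p_{i+1}$. For such an $s$, with connected partner $t$ in $p_{i+1}$, applying Lemma~\ref{lem:bddinnprod} together with Remark~\ref{rem:Gr_prod_ineq} to the points $s_+ \in p_i$ and $t_- \in p_{i+1}$ gives $\langle s_+, t_- \rangle^{rel}_x \le \langle f_{i-1}, f_{i+1} \rangle^{rel}_x \le C_0$; since $\dxh(s_+, t_-) \le 1$ (the components being connected), this yields $\dxh(s_+, x) \le 2C_0 + 1$. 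In particular there are only boundedly many such components $s$; I would list them as $s^{(1)}, \dots, s^{(m)}$ in order of increasing distance from $x$ along $p_i$, with $t^{(j)}$ denoting the $\mathcal{H}$-component of $p_{i+1}$ connected to $s^{(j)}$ (these are pairwise distinct because $p_{i+1}$, being geodesic, is without backtracking by Remark~\ref{rem:comp_of_geod_is_an_edge}).

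The core of the argument would be an induction on $j$ showing that $|s^{(j)}|_X \le \tau_j$ and $d_X(s^{(j)}_+, x) \le \xi_j$, with $\zeta_j, \xi_j, \tau_j$ as in Definition~\ref{def:tau_j}. For a fixed $j$, let $\alpha$ be the subpath of $p_i$ from $s^{(j)}_+$ to $x$ and $\beta$ the subpath of $p_{i+1}$ from $x$ to $t^{(j)}_-$; these are geodesics without backtracking, and since they share the endpoint $x$ and $d_X(s^{(j)}_+, t^{(j)}_-) \le L$, the paths $\alpha$ and $\beta^{-1}$ are $L$-similar. Proposition~\ref{prop:osinbcp} (with $\kappa = \kappa(1,0,L)$) then tells us that every $\mathcal{H}$-component of $\alpha$ of $d_X$-length at least $\kappa$ is connected to an $\mathcal{H}$-component of $p_{i+1}$, hence coincides with some $s^{(k)}$ with $k < j$ and so has $d_X$-length at most $\tau_k$ by the inductive hypothesis; combined with the bound $\dxh(s^{(j)}_+, x) \le 2C_0 + 1$ on the number of edges of $\alpha$ (and using that the remaining $\mathcal{H}$-components of $\alpha$ are $\kappa$-short), this bounds $d_X(s^{(j)}_+, x)$ by $\xi_j$. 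Proposition~\ref{prop:osinbcp}(3) also gives $d_X(s^{(j)}_-, t^{(j)}_+) \le \kappa = \zeta_j$. Feeding the bounds $d_X(s^{(j)}_-, t^{(j)}_+) \le \zeta_j$ and $d_X(s^{(j)}_+, x) \le \xi_j$ into Lemma~\ref{lem:one_comp_in_cusp_is_bounded}, applied to the minimal-type path representative $p$ at index $i$ with the connected components $s^{(j)} \subseteq p_i$ and $t^{(j)} \subseteq p_{i+1}$, yields $|s^{(j)}|_X \le \tau(\zeta_j, \xi_j) \le \tau_j$, completing the inductive step.

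Finally, since $m$ is bounded above by a constant $N$ depending only on $C_0$, setting $C_1 := \xi_N + L$ gives the required bound: for the component $q = s^{(j)}$ (with $j \le m \le N$) we get $d_X(q_+, x) \le \xi_j \le C_1$, and then $d_X(x, r_-) \le d_X(x, q_+) + d_X(q_+, r_-) \le C_1$. The main obstacle is the constant bookkeeping in the inductive step — verifying that the sequences of Definition~\ref{def:tau_j} really do close, which requires carefully accounting for the contributions of the (boundedly many) nested relevant components $s^{(1)}, \dots, s^{(j-1)}$, of the $\kappa$-short $\mathcal{H}$-components, and of the $X$-edges appearing in $\alpha$, as well as checking that $\alpha$ and $\beta^{-1}$ genuinely meet the hypotheses of Proposition~\ref{prop:osinbcp} and that the $\mathcal{H}$-components of $\alpha$ that get connected to $p_{i+1}$ are indeed among the earlier $s^{(k)}$. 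The Gromov-product estimate and the reductions at the start are routine.
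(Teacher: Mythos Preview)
Your overall strategy matches the paper's, but there is one genuine gap. At step $j$ you take $\alpha=[s^{(j)}_+,x]\subseteq p_i$ and $\beta=[x,t^{(j)}_-]\subseteq p_{i+1}$ and then invoke Proposition~\ref{prop:osinbcp}(3) to obtain $d_X(s^{(j)}_-,t^{(j)}_+)\le\kappa$. But $s^{(j)}$ is \emph{not} an $\mathcal H$-component of your $\alpha$ (it ends exactly where $\alpha$ begins), and $t^{(j)}$ is not a component of your $\beta^{-1}$; so clause~(3), which concerns connected components of the two $k$-similar paths, does not apply to the pair $(s^{(j)},t^{(j)})$. Extending $\alpha,\beta$ by one edge to include them would require the very bound $d_X(s^{(j)}_-,t^{(j)}_+)\le k$ you are trying to prove, so this is circular.

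The paper avoids this by fixing $\alpha=[q_+,x]$ and $\beta=[x,r_-]$ \emph{once}, for the $q,r$ given in the statement; these are $L$-similar by Lemma~\ref{lem:isol_comp_in_triangles_are_short}. It then inducts over \emph{all} $\mathcal H$-components $s_1,\dots,s_l$ of this fixed $\alpha$ (listed from $x$ outwards), not only the ``relevant'' ones. Each $s_j$ is then genuinely a component of $\alpha$, so whenever $|s_j|_X\ge\kappa$ Proposition~\ref{prop:osinbcp}(2)--(3), applied to the fixed pair $(\alpha,\beta^{-1})$, produces a partner $t_j$ in $\beta$ with $d_X((s_j)_-,(t_j)_+)\le\kappa$, exactly what Lemma~\ref{lem:one_comp_in_cusp_is_bounded} needs. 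This choice also dissolves your bookkeeping worry: every $\mathcal H$-component of $[(s_j)_+,x]$ is one of $s_1,\dots,s_{j-1}$, so there are no separate ``$\kappa$-short non-relevant'' components to track, and $d_X((s_j)_+,x)\le \ell(\alpha)+\sum_{k<j}\tau_k\le (C_0+1)+\sum_{k<j}\tau_k=\xi_j$ matches Definition~\ref{def:tau_j} on the nose.
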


\begin{proof} 
    Denote $x=(p_{i})_+=(p_{i+1})_- \in G$. 
    First, let us show that
    \begin{equation}
    \label{eq:dxh(q_+,(p_i)_+}
        \dxh(q_+,x) \le C_0+1,
    \end{equation}
    where $C_0 \ge 0$ is the global constant provided by Lemma~\ref{lem:bddinnprod}. 
    Indeed, the latter lemma states that $\langle (p_i)_-,(p_{i+1})_+ \rangle_{x}^{rel} \le C_0$. 
    Since $q_+$ and $r_-$ are points on the geodesics $p_i$ and $p_{i+1}$, Remark~\ref{rem:Gr_prod_ineq} implies that
    \[
        \langle q_+,r_-\rangle_{x}^{rel} \le \langle (p_i)_-,(p_{i+1})_+ \rangle_{x}^{rel} \le C_0.
    \] 
    Consequently,
    \begin{align*}
        C_0 \ge \langle q_+,r_-\rangle_{x}^{rel} & = \frac12 \Bigl( \dxh(x,q_+)+\dxh(x,r_-)-\dxh(q_+,r_-)\Bigr) \\ 
            & \ge \frac12\Bigl( 2\dxh(x,q_+)-2\dxh(q_+,r_-)\Bigr) \ge \dxh(x,q_+) -1,
    \end{align*}
    where the last inequality used the fact that $\dxh(q_+,r_-) \le 1$, which is true because $q$ and $r$ are connected $\mathcal{H}$-components. 
    This establishes the inequality \eqref{eq:dxh(q_+,(p_i)_+}.

    Let $\alpha$ denote the subpath of $p_i$ starting at $q_+$ and ending at $x$, and let $\beta$ denote the subpath of $p_{i+1}$ starting at $x$ and ending at $r_-$. Let $s_1,\dots,s_l$, $l \in \NN_0$, be the set of all $\mathcal{H}$-components of $\alpha$ listed in the reverse order of their occurrence.
    That is, $s_1$ is the last $\mathcal{H}$-component of $\alpha$ (closest to $\alpha_+=x$) and $s_l$ is the first $\mathcal{H}$-component of $\alpha$ (closest to $\alpha_-=q_+$). 
    Note that, by \eqref{eq:dxh(q_+,(p_i)_+},
    \begin{equation} 
    \label{eq:bound_on_l}
        l \le \ell(\alpha) = d_{X\cup\mathcal{H}}(x,q_+) \le C_0+1.
    \end{equation}

    Let $L \ge 0$ be the constant given by Lemma~\ref{lem:isol_comp_in_triangles_are_short}, then
    \begin{equation}
    \label{eq:d_X(q_+,r_-)}
        d_X(\alpha_-,\beta_+) =d_X(q_+,r_-) \le L.
    \end{equation}
    It follows that the geodesic paths $\alpha$ and $\beta^{-1}$ are $L$-similar in $\ga$. 
    Let $\kappa=\kappa(1,0,L) \ge 0$ be the constant provided by Proposition~\ref{prop:osinbcp}.

    We will now prove the following.

    \begin{claim} 
        For each $j=1,\dots,l$ we have
        \begin{equation}
        \label{eq:tau_j}
            |s_{j}|_X \leq \tau_j,
        \end{equation}
        where $\tau_j \ge 0$ is given by  Definition~\ref{def:tau_j}.
    \end{claim}

    We will establish the claim by induction on $j$. For the base of induction, $j=1$, note that if  $|s_1|_X < \kappa$ then the inequality $|s_1|_X \le \tau_1$ will be true by definition of $\tau_1$.    
    Thus we can suppose that $|s_1|_X \ge \kappa$. 
    In this case, by Proposition~\ref{prop:osinbcp}, $s_1$ must be connected to some $\mathcal{H}$-component of $\beta^{-1}$. Claim (3) of the same proposition implies that there is an $\mathcal H$-component $t_1$ of $\beta$, such that $s_1$ is connected to $t_1$ and $d_X((s_1)_-,(t_1)_+) \le \kappa$. 
    Note that, by construction, $s_1$ and $t_1$ are also connected $\mathcal{H}$-components of $p_i$ and $p_{i+1}$ respectively.

    Observe that the subpath of $\alpha$ from $(s_1)_+$ to $x$ is labelled by letters from $X^{\pm 1}$ because it has no $\mathcal H$-components. 
    Therefore $d_X((s_1)_+,x) \le \ell(\alpha) \le C_0+1$.  
    Consequently, we can apply Lemma~\ref{lem:one_comp_in_cusp_is_bounded} to deduce that  $|s_1|_X\le \tau(\zeta_1,\xi_1)$, where $\zeta_1=\kappa$ and $\xi_1=C_0+1$.

    Thus we have shown that $|s_1|_X\le \tau_1$, where $\tau_1=\max\{\kappa, \tau(\zeta_1,\xi_1)\}$, and the base of induction has been established.

    Now, suppose that $j>1$ and inequality \eqref{eq:tau_j} has been proved for all strictly smaller values of $j$. 
    If $|s_j|_X< \kappa$ then are done, because $\tau_j \ge \kappa$ by definition. 
    So we can assume that $|s_j|_X \ge \kappa$. As before, we can use  Proposition~\ref{prop:osinbcp}, to find an $\mathcal{H}$-component $t_j$ of $\beta$ such that $s_j$ is connected to $t_j$ and $d_X((s_j)_-,(t_j)_+) \le \kappa$.

    By construction, $s_1, \dots, s_{j-1}$ is the list of all $\mathcal{H}$-components of the subpath $[(s_j)_+,x]$ of $\alpha$, hence
    \[
        d_X((s_j)_+,x) \le \ell(\alpha)+\sum_{k=1}^{j-1} |s_k|_X \le C_0+1+\sum_{k=1}^{j-1} \tau_k, 
    \] 
    where the second inequality used \eqref{eq:bound_on_l} and the induction hypothesis. 
    This allows us to apply Lemma~\ref{lem:one_comp_in_cusp_is_bounded} again, and conclude that $|s_j|_X \le \tau(\zeta_j,\xi_j)$, where $\zeta_j=\kappa$ and $\xi_j=C_0+1+\sum_{k=1}^{j-1} \tau_k$.

    Thus, $|s_j|_X \le \max\{\kappa,\tau(\zeta_j,\xi_j)\}=\tau_j$, as required.
    Hence the claim has been proved by induction on $j$.

    We are finally ready to prove the main statement of the lemma. Since $s_1,\dots,s_l$ is the list of all $\mathcal H$-components of $\alpha$, we can combine the inequalities \eqref{eq:bound_on_l} and \eqref{eq:tau_j} to achieve
    \[
        d_X(q_+,(p_i)_+)=|\alpha|_X \le \ell(\alpha)+\sum_{j=1}^l |s_j|_X \le C_0+1+\sum_{j=1}^l \tau_j \le C_0+1+\sum_{j=1}^{\lfloor C_0+1 \rfloor} \tau_j .
    \]

    On the other hand, by the triangle inequality and \eqref{eq:d_X(q_+,r_-)}, we have
    \[
        d_X((p_i)_+,r_-)\le L+d_X(q_+,(p_i)_+) \le  L+C_0+1+\sum_{j=1}^{\lfloor C_0+1 \rfloor} \tau_j. 
    \]

    We have shown that the constant $\displaystyle C_1=L+C_0+1+\sum_{j=1}^{\lfloor C_0+1 \rfloor} \tau_j > 0$ is an upper bound for $d_X(q_+,(p_i)_+)$ and $d_X((p_i)_+,r_-)$, thus the lemma is proved.
\end{proof}

\begin{definition}[Consecutive, adjacent and multiple backtracking]
    Let \(p=p_1 \dots p_n\) be a broken line in \(\Gamma(G,X\cup\mathcal{H})\).
    Suppose that for some $i,j$, with $1 \le i <j \le n$, and $\nu \in \Nu$ there exist pairwise connected $H_\nu$-components $h_i,h_{i+1},\dots, h_j$ of the paths $p_i,p_{i+1}, \dots, p_j$, respectively.
    Then we will say that $p$ has \emph{consecutive backtracking} along the components $h_i,\dots,h_j$ of \(p_i, \dots, p_j\).
    Moreover, if \(j=i+ 1\), we will call it an instance of \emph{adjacent backtracking}, while if \(j>i+1\) will use the term \emph{multiple backtracking}.
\end{definition}

The next lemma shows that, among path representatives of minimal type, instances of adjacent backtracking where at least one of the components is sufficiently long with respect to the proper metric \(d_X\) must have initial and terminal vertices far apart in $d_X$.

\begin{lemma}[Adjacent backtracking is long]
\label{lem:longadjbacktracking}
    For any \(\zeta \geq 0\) there is \(\Theta_0 = \Theta_0(\zeta) \in \NN\) such that the following holds.

    Let $Q' \leqslant Q$ and $R' \leqslant R$ be subgroups satisfying \descref{C1} and let \(p=p_1 \dots p_n\) be a minimal type path representative for an element \(g \in \langle Q', R' \rangle\). Suppose that for some $i \in \{1,\dots,n-1\}$ the paths \(p_i\) and \(p_{i+1}\) have connected \(\mathcal H\)-components \(q\) and \(r\) respectively, satisfying
    \[
        \max\{ |q|_X,|r|_X\} \geq \Theta_0.
    \]
    Then \(d_X(q_-,r_+) \geq \zeta\).
\end{lemma}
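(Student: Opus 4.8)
The plan is to argue by contradiction: suppose the conclusion fails, so there is some $\zeta \ge 0$ for which no suitable $\Theta_0$ exists. Then for every $\Theta \in \NN$ we can find subgroups $Q' \leqslant Q$, $R' \leqslant R$ satisfying \descref{C1}, a minimal type path representative $p = p_1 \dots p_n$ of some $g \in \langle Q', R' \rangle$, and an index $i$ with connected $\mathcal H$-components $q$ of $p_i$ and $r$ of $p_{i+1}$ such that $\max\{|q|_X, |r|_X\} \ge \Theta$ but $d_X(q_-, r_+) < \zeta$. Fixing such data with $\Theta$ large (to be specified), I want to derive a contradiction with the minimality of $\tau(p)$.

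First I would use Lemma~\ref{lem:shortspikes}: since $q$ and $r$ are connected $\mathcal H$-components of $p_i$ and $p_{i+1}$, we get $d_X(q_+, (p_i)_+) \le C_1$ and $d_X((p_i)_+, r_-) \le C_1$ for the global constant $C_1$. In particular $q_+$ is within bounded $d_X$-distance of the node $x = (p_i)_+ = (p_{i+1})_-$. Now the hypothesis $d_X(q_-, r_+) < \zeta$ places $q_-$ and $r_+$ close together. Combining these with Lemma~\ref{lem:one_comp_in_cusp_is_bounded} applied with $\zeta$ as here and $\xi = C_1$: that lemma says if $s, t$ are connected $\mathcal H$-components of $p_i, p_{i+1}$ with $d_X(s_-, t_+) \le \zeta$ and $d_X(s_+, (p_i)_+) \le \xi$, then $|s|_X \le \tau(\zeta, \xi)$ and $|t|_X \le \tau(\zeta, \xi)$. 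Taking $(s, t) = (q, r)$, the hypotheses are exactly met with $\xi = C_1$, so both $|q|_X$ and $|r|_X$ are bounded above by $\tau(\zeta, C_1)$. Thus setting $\Theta_0 = \Theta_0(\zeta) = \lfloor \tau(\zeta, C_1) \rfloor + 1 \in \NN$ forces a contradiction with $\max\{|q|_X, |r|_X\} \ge \Theta_0$, and the lemma follows.

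In fact this shows the statement is essentially an immediate corollary of Lemmas~\ref{lem:shortspikes} and~\ref{lem:one_comp_in_cusp_is_bounded} rather than requiring a genuine contradiction argument: one simply takes $\Theta_0(\zeta) = \tau(\zeta, C_1) + 1$, observes that any instance of adjacent backtracking along $q, r$ in a minimal type path representative with $d_X(q_-, r_+) < \zeta$ automatically satisfies the hypotheses of Lemma~\ref{lem:one_comp_in_cusp_is_bounded} (with $\xi = C_1$ supplied by Lemma~\ref{lem:shortspikes}), hence has $\max\{|q|_X, |r|_X\} \le \tau(\zeta, C_1) < \Theta_0$, which contradicts the assumed lower bound. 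The only mild subtlety — and the step I would be most careful about — is verifying that Lemma~\ref{lem:shortspikes} genuinely applies in the needed form: it is stated for connected $\mathcal H$-components $q, r$ of consecutive segments $p_i, p_{i+1}$ of a minimal type path representative, which is precisely our setup, so $d_X(q_+, (p_i)_+) \le C_1$ is available directly. No new ideas beyond bookkeeping the constants are required.
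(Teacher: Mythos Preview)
Your proposal is correct and matches the paper's proof essentially verbatim: the paper defines $\Theta_0 = \lfloor \tau(\zeta, C_1) \rfloor + 1$ and observes that the contrapositive follows immediately from Lemma~\ref{lem:one_comp_in_cusp_is_bounded} (with $\xi = C_1$ supplied by Lemma~\ref{lem:shortspikes}). Your remark that this is a direct corollary rather than a genuine contradiction argument is exactly how the paper presents it.
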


\begin{proof} 
    For any $\zeta \ge 0$ we can define $\Theta_0=\lfloor\tau(\zeta,C_1)\rfloor+1$, where $C_1$ is the constant from Lemma~\ref{lem:shortspikes} and $\tau(\zeta,C_1)$ is provided by Lemma~\ref{lem:one_comp_in_cusp_is_bounded}.

    It follows that if \(d_X(q_-,r_+) < \zeta\) then $|q|_X < \Theta_0$ and $|r|_X < \Theta_0$, which is the contrapositive of the required statement.
\end{proof}


\section{Multiple backtracking in path representatives of minimal type}
\label{sec:multitracking}
As before, we keep working under Convention~\ref{conv:main}.
In this section we deal with multiple backtracking in path representatives of elements from $\langle Q',R'\rangle$. Proposition~\ref{prop:multitracking_path} below uses condition \descref{C3} to show that any instance of multiple backtracking essentially takes place inside a parabolic subgroup. In order to achieve this we first prove two auxiliary statements.

\begin{notation}
    Throughout this section \(C_1 \geq 0\) will be the constant given by Lemma~\ref{lem:shortspikes} and $\mathcal{P}_1$ will denote the finite collection of parabolic subgroups of $G$ defined by
    \[
        \mathcal{P}_1=\lbrace t H_\nu t^{-1} \mid \nu \in \Nu, \abs{t}_X \leq C_1 \rbrace.
    \]
Consider the subset $O=\{o \in PS \mid P \in \mathcal{P}_1,~ \abs{o}_X \le 2C_1\}$ of $G$. Since $|O|<\infty$, we can choose and fix a finite subset $\Omega \subseteq S$ such that every element $o \in O$ can be written as $o=fh$, where $f \in P$, for some $P \in \mathcal{P}_1$, and $h \in \Omega$. We define a constant $E$ by  
\begin{equation}\label{eq:def_of_E}
E=\max\{\abs{h}_X \mid h \in \Omega\} \ge 0.    
\end{equation}
    
\end{notation}

\begin{lemma}
\label{lem:end_sides_constr} 
    There exists a constant $ D\ge 0$ such that the following holds.

    Let $\nu \in \Nu$ and $b \in G$ be an element with $|b|_X \le C_1$, so that $P=bH_\nu b^{-1} \in\mathcal{P}_1$,  and let $p$ be a geodesic path in $\ga$ with $\elem{p} \in Q \cup R$.

    Suppose that there is a vertex $v$ of $p$ and an element $u \in P$ such that $v \in Pb=bH_\nu$ and $u^{-1}p_- \in S=Q \cap R$. Then there exists a geodesic path $p'$ in $\ga$ such that
    \begin{itemize}
        \item $p'_-=u$ and $d_X(p'_+,v) \le D$;
        \item if $\elem{p} \in Q$ then $\elem{p'} \in Q \cap P$, otherwise $\elem{p'} \in R \cap P$.
    \end{itemize}
\end{lemma}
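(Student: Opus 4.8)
The plan is to combine the relative quasiconvexity of $Q$ (respectively $R$) with the fact that the vertex $v$ lies in a coset of one of the finitely many parabolic subgroups in $\mathcal{P}_1$, and then extract $p'$ from Lemma~\ref{lem:nbhdintersection}.

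By symmetry I would assume $\elem{p} \in Q$; the case $\elem{p}\in R$ is identical with $R$ in place of $Q$ throughout. Write $s = u^{-1}p_- \in S$, so that $p_- = us$. Since $S \subseteq Q$, we have $p_-Q = uQ$, and since $\elem{p} = p_-^{-1}p_+ \in Q$, both endpoints of $p$ lie in the coset $p_-Q = uQ$. This coset is relatively quasiconvex with quasiconvexity constant $\varepsilon$ by Lemma~\ref{lem:props_of_qc_subsets}(a), so the vertex $v$ of the geodesic $p$ satisfies $d_X(v, uQ) \le \varepsilon$. On the other hand, $v \in Pb$ and $|b|_X \le C_1$, so $d_X(v, P) \le C_1$; and as $u \in P$ we have $uP = P$, whence $d_X(v, uP) \le C_1$. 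Setting $K = \max\{\varepsilon, C_1\}$ we obtain $v \in N_X(uQ, K) \cap N_X(uP, K)$.

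Now Lemma~\ref{lem:nbhdintersection}, applied with the subgroups $Q$, $P$ and the point $u$, produces a constant $K' = K'(Q,P,K) \ge 0$ and an element $w \in Q \cap P$ with $d_X(v, uw) \le K'$. Since $\mathcal{P}_1$ is finite and $\varepsilon$, $C_1$ are global constants, I would set $D = \max\{ K'(Q,P,K),\, K'(R,P,K) \mid P \in \mathcal{P}_1\}$, which is finite and independent of the given data, so that $d_X(v,uw) \le D$. Finally, I take $p'$ to be any geodesic path in $\ga$ from $u$ to $uw$: then $p'_- = u$, $\elem{p'} = u^{-1}(uw) = w \in Q \cap P$, and $d_X(p'_+, v) = d_X(uw, v) \le D$, as required. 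There is no essential obstacle here; the only points needing care are that relative quasiconvexity must be invoked for the coset $p_-Q$ rather than for $Q$ itself (which is fine by Lemma~\ref{lem:props_of_qc_subsets}(a)), and that the constant $D$ must be chosen uniformly over the finitely many $P \in \mathcal{P}_1$ and over the two subgroups $Q$ and $R$.
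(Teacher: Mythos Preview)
Your proof is correct and follows essentially the same approach as the paper: both use relative quasiconvexity to place $v$ within $\varepsilon$ of $uQ$, use $|b|_X\le C_1$ to place $v$ within $C_1$ of $uP=P$, and then invoke Lemma~\ref{lem:nbhdintersection} to find a point of $u(Q\cap P)$ within a uniform $D$ of $v$. The only cosmetic difference is that the paper writes the resulting point as $w\in u(Q\cap P)$ rather than as $uw$ with $w\in Q\cap P$.
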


\begin{proof}
    Let $K=\max\{C_1, \varepsilon\} \ge 0$, where $\varepsilon$ is the quasiconvexity constant of $Q$ and $R$, and let
    \begin{equation}
    \label{eq:def_of_D}
        D= \max\{K'(Q,P,K),K'(R,P,K) \mid P \in \mathcal{P}_1\},
    \end{equation}
    where $K'(Q,P,K)$ and $K'(R,P,K)$ are  obtained from Lemma~\ref{lem:nbhdintersection}.

    Denote $x =p_- \in G$ and assume, without loss of generality, that $\elem{p} \in Q$ (the case $\elem{p} \in R$ can be treated similarly). By the quasiconvexity of \(Q\), we have that \( d_X(v,x Q) \leq \varepsilon\). Moreover, $xQ=uQ$ as $u^{-1}x \in S \subseteq Q$.

    By the assumptions, $vb^{-1} \in P$, hence $d_X(v,P) \le |b|_X \le C_1$. Since $uP=P$ we see that
    \begin{equation*}
        v \in N_X(uQ,\varepsilon) \cap N_{X}( uP,C_1).
    \end{equation*}
    Applying Lemma \ref{lem:nbhdintersection}, we find \(w \in u(Q \cap P)\) such that \(d_X( v,w) \leq D\) (see Figure~\ref{fig:multitracking1}).

    \begin{figure}[ht]
        \centering
        \includegraphics{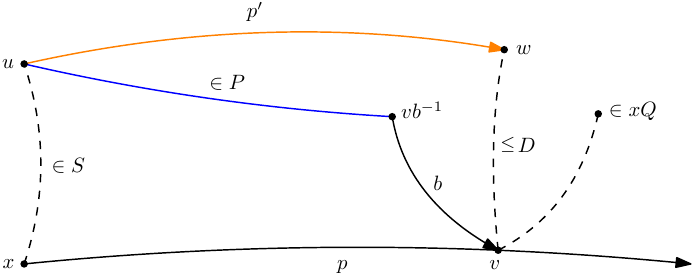}
        \caption{Illustration of Lemma \ref{lem:end_sides_constr}.}
        \label{fig:multitracking1}
    \end{figure}

    Let \(p'\) be any geodesic in \(\Gamma(G,X\cup\mathcal{H})\) starting at $u$ and ending at $w$. It is easy to see that $p'$ satisfies all of the required properties, so the lemma is proved.
\end{proof}

The next lemma describes how condition \descref{C3} is used in this paper.

\begin{lemma}
\label{lem:(c3)->vertex_constr} 
    Assume that subgroups $Q' \leqslant Q$ and $R' \leqslant R$ satisfy conditions \descref{C1} and \descref{C3} with constant \(C\) and family \(\mathcal{P}  \) such that $C \ge 2C_1 + 1$ and $\mathcal{P}_1 \subseteq \mathcal{P}$.
    Let $P=bH_\nu b^{-1} \in\mathcal{P}_1$, for some $\nu \in \Nu$ and $b \in G$, with $|b|_X \le C_1$, and let $p$ be a path in $\ga$ with $\elem{p} \in Q' \cup R'$.

    Suppose that there is a vertex $v$ of $p$ and an element $u \in P$ satisfying $u^{-1}p_- \in S$, $v \in Pb$, and $d_X(v,p_+) \le C_1$. Then there exists a geodesic path $p'$ such that $(p')_-=u$, $\elem{p'} \in P$, $(p')_+^{-1}p_+ \in S$ and $d_X((p')_+,p_+) \le E$, where $E$ is the constant from \eqref{eq:def_of_E}. In particular, if $\elem{p} \in Q'$ (respectively, $\elem{p} \in R'$) then $\elem{p'} \in Q' \cap P$ (respectively, $\elem{p'} \in R' \cap P$).
\end{lemma}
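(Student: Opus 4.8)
The plan is to combine Lemma~\ref{lem:end_sides_constr} with condition \descref{C3} (and the definition of $E$) as follows. First I would apply Lemma~\ref{lem:end_sides_constr} to the path $p$, the element $u \in P$ and the vertex $v$: this is legitimate because $\elem{p} \in Q' \cup R' \subseteq Q \cup R$, $u^{-1}p_- \in S$ and $v \in Pb = bH_\nu$. It yields a geodesic $p''$ with $p''_- = u$, $d_X(p''_+, v) \le D$, and $\elem{p''} \in Q \cap P$ or $R \cap P$ according to whether $\elem{p} \in Q'$ or $\elem{p} \in R'$; in particular $\elem{p''} \in P$. Using $d_X(v,p_+) \le C_1$ and the triangle inequality, $d_X(p''_+, p_+) \le D + C_1$. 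Now consider the element $o = (p_-)^{-1} p_+ \cdot$ (corrected: rather) $o = u^{-1} p_+$; write it instead as the product of $u^{-1}p_- \in S$ and $\elem{p}$. To place $o$ in the finite set $O$ I would argue that $o = \elem{p''} \cdot ((p''_+)^{-1} p_+)$, where $\elem{p''} \in P \in \mathcal{P}_1$ and $(p''_+)^{-1} p_+$ has $d_X$-length at most $D + C_1$; so $o$ lies in $P \cdot N_X(1, D+C_1)$, which is a finite-radius modification of the setup — but the definition of $O$ only allows the second factor to have length $\le 2C_1$, so this direct approach needs a preliminary reduction.

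The key extra ingredient, which is where condition \descref{C3} enters, is to first replace the coarse bound $d_X(p''_+,p_+) \le D+C_1$ by the much sharper bound $d_X(p''_+,p_+) \le 2C_1$. Here is the mechanism. We know $p''_+ \in u(Q\cap P)$ or $u(R\cap P)$, and also $p''_+$ lies within $D$ of $v \in Pb$; since $P$ is a coset-stable set and $ub^{-1}$... more precisely: $p_+ = p_- \cdot \elem{p} \in u S \elem{p}$. Multiplying on the left by a suitable element, consider the coset $P p_+$ versus $P p_- = P$ (since $u \in P$): because $v \in Pb$ lies on $p$ and $d_X(v,p_+)\le C_1$, we get $d_X(p_+, Pb) \le C_1$, hence $d_X(p_+ , P) \le 2C_1$ (as $|b|_X \le C_1$), i.e. there is $w \in P$ with $d_X(p_+, w) \le 2C_1$. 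Then $w^{-1}p_+ \in O' := \{ g : |g|_X \le 2C_1\}$ and, more importantly, $u^{-1}p_+ = (u^{-1}w)(w^{-1}p_+)$ with $u^{-1}w \in P$; but I actually want the element to lie in $O = \{o \in PS : |o|_X \le 2C_1\}$. Note $u^{-1}p_+ = (u^{-1}p_-)\elem{p} \in S \cdot \elem{p}$, and since $Q' \cup R'$ elements are involved we may instead look at $(p'')_+^{-1} p_+$: this lies in $(Q\cap P)^{-1} S^{-1}\cdot$(stuff) — I would unwind that $(p'')_+^{-1}p_+ = \elem{p''}^{-1}(u^{-1}p_-)\elem{p} \in P \cdot S \cdot (\text{element of } Q' \text{ or } R')$; using condition \descref{C1} ($Q'\cap R' = S$) and $\elem{p''} \in Q \cap P$ when $\elem{p} \in Q'$, the product $\elem{p''}^{-1}(u^{-1}p_-)$ lies in $Q \cap P$... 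This bookkeeping, together with \descref{C3}'s inequality $\minx((PQ' \cup PR')\setminus PS) \ge C \ge 2C_1+1$ applied to $P \in \mathcal{P}_1 \subseteq \mathcal{P}$, forces the relevant short element to lie in $PS$, hence (being of $d_X$-length $\le 2C_1$) to lie in $O$.

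Once the element in question is known to lie in $O$, I invoke the definition of $\Omega$: it can be written as $fh$ with $f \in P'$ for some $P' \in \mathcal{P}_1$ and $h \in \Omega$, so $|h|_X \le E$. Tracking the cosets carefully, $f$ will in fact be forced to lie in the same $P$ (because $\mathcal{P}_1$-cosets intersecting in a large ball coincide, or directly because $u^{-1}p_+ \in Pf^{-1}\cdot$...), which lets me absorb $f$ into a modification of $p''$ to produce the desired geodesic $p'$ with $p'_- = u$, $\elem{p'} \in P$, $(p')_+^{-1}p_+ = h^{-1} \in S$ (so $(p')_+^{-1}p_+ \in S$) and $d_X((p')_+, p_+) = |h|_X \le E$. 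The final sentence of the lemma — that $\elem{p'} \in Q' \cap P$ when $\elem{p} \in Q'$ (resp. $R' \cap P$) — then follows because $\elem{p'} = \elem{p''}(\dots)$ with $\elem{p''} \in Q \cap P$ (resp. $R \cap P$) and the correction factors lie in $S \subseteq Q' \cap R'$, combined with condition \descref{C4} or a direct argument: $\elem{p'} \in P$ and $\elem{p'} = u^{-1}p_- \elem{p} (p')_+^{-1}p_+ \cdot(\text{inverse adjustments})$, an element of $\langle\text{stuff in } Q'\rangle \cap P$; since $u^{-1}p_- \in S \subseteq Q'$ and $(p')_+^{-1}p_+ \in S \subseteq Q'$ and $\elem{p} \in Q'$, we get $\elem{p'} \in Q' \cap P$.

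\textbf{Main obstacle.} The genuinely delicate part is the middle paragraph: extracting from condition \descref{C3} the sharp containment of the relevant double-coset element in $PS$ rather than merely a coarse neighbourhood bound, and keeping straight which maximal parabolic $P$ the various auxiliary elements ($\elem{p''}$, the coset representative $w$, the factor $f$ coming from $\Omega$) belong to. All of these live in $\mathcal{P}_1$, and one repeatedly uses that two distinct cosets of members of $\mathcal{P}_1$ cannot be $d_X$-close beyond a controlled bound — so the constants $C_1$, $2C_1+1$, $D$, $E$ have to be juggled carefully to ensure $C \ge 2C_1+1$ suffices. Everything else is routine application of Lemma~\ref{lem:nbhdintersection}, Lemma~\ref{lem:end_sides_constr} and the quasiconvexity of $Q,R$.
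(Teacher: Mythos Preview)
Your proposal contains the right idea in the middle paragraph, but you bury it under an unnecessary detour and then misidentify the main obstacle.

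The invocation of Lemma~\ref{lem:end_sides_constr} is entirely superfluous here, and it is precisely what creates the constant mismatch $D+C_1$ versus $2C_1$ that you then struggle to repair. The paper never calls Lemma~\ref{lem:end_sides_constr} in this proof. Instead it goes directly: set $z = vb^{-1} \in P$ (this is your element $w$). Then
\[
|z^{-1}p_+|_X = d_X(z,p_+) \le d_X(z,v)+d_X(v,p_+) \le |b|_X + C_1 \le 2C_1,
\]
and
\[
z^{-1}p_+ = (z^{-1}u)\,(u^{-1}p_-)\,\elem{p} \in P \cdot S \cdot (Q'\cup R') = P(Q'\cup R'),
\]
using $u,z\in P$, $u^{-1}p_-\in S$, and \descref{C1}. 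Since $|z^{-1}p_+|_X \le 2C_1 < C$, condition \descref{C3} forces $z^{-1}p_+\in PS$, hence $z^{-1}p_+\in O$. Writing $z^{-1}p_+ = fh$ with $f\in P$ and $h\in\Omega$, the geodesic $p'$ from $u$ to $zf\in P$ does everything required: $(p')_+^{-1}p_+ = h\in S$ with $|h|_X\le E$. The last sentence is then immediate from $\elem{p'} = (u^{-1}p_-)\,\elem{p}\,(p_+^{-1}(p')_+) \in S\,\elem{p}\,S$, exactly as you wrote at the end.

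Your ``main obstacle'' --- worrying about which $P\in\mathcal{P}_1$ the auxiliary elements belong to, and invoking bounded-intersection properties of distinct parabolic cosets --- is a red herring. You already know $z^{-1}p_+\in PS$ for \emph{your} specific $P$ from \descref{C3}; the set $\Omega$ is finite and can be (and should be read as being) chosen so that for each $o\in O$ and each $P\in\mathcal{P}_1$ with $o\in PS$ there is a decomposition $o=fh$ with $f\in P$ and $h\in\Omega$. No malnormality argument is needed.
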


\begin{proof} 
    Denote $x=p_-$,  $y=p_+$ and $z=vb^{-1} \in P$ (see Figure~\ref{fig:multitracking2}). Then $u^{-1}z \in P$ and $x^{-1}y=\elem{p} \in Q' \cup R'$.

    \begin{figure}[ht]
        \centering
        \includegraphics{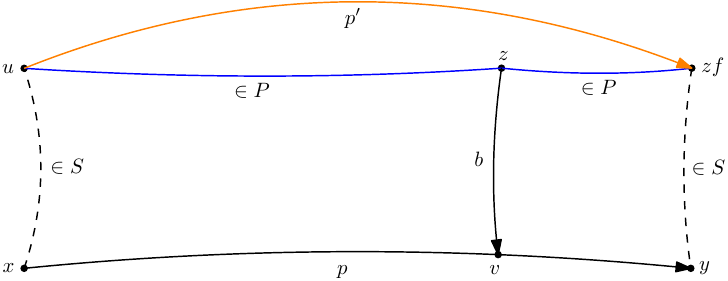}
        \caption{Illustration of Lemma \ref{lem:(c3)->vertex_constr}.}
        \label{fig:multitracking2}
    \end{figure}

    Since $u^{-1} x \in S = Q' \cap R'$, we obtain 
    \[
        u^{-1}y=(u^{-1}x) (x^{-1}y) \in Q' \cup R',
    \]
    whence $z^{-1}y=(z^{-1}u )(u^{-1}y) \in P (Q' \cup R')$. 
    Now, observe that
    \[
        |z^{-1} y|_X=d_X(z,y) \le d_X(z,v)+d_X(v,y) \le |b|_X+C_1 \le 2 C_1 < C.
    \] 
    Condition \descref{C3} now implies that $z^{-1}y \in PS $. 
    That is, $z^{-1}y =fh$ for some $f \in P$ and $h \in \Omega$, where $\Omega$ is the finite subset of $S$ defined above the statement of the lemma.
    Let $p'$ be a geodesic path starting at $u$ and ending at $zf \in P$. Then $\elem{p'}=u^{-1}zf \in P$,
    \[
        (p')_+^{-1}p_+=f^{-1}z^{-1}y=h \in S ~\text{ and }~ d_X((p')_+,p_+)=\abs{h}_X \le E.
    \]

    The last statement of the lemma follows from \descref{C1} and the observation that 
    \[
        \elem{p'}=u^{-1}(p')_+=u^{-1} p_- \, \elem{p} \, (p_+)^{-1}(p')_+ \in S\,\elem{p}\, S. \qedhere
    \]
\end{proof}

\begin{proposition}
\label{prop:multitracking_path}
    Let $D \ge 0$ is the constant provided by Lemma~\ref{lem:end_sides_constr}, and let $E$ be given by \eqref{eq:def_of_E}. Suppose that $Q' \leqslant Q$ and $R' \leqslant R$ are subgroups satisfying \descref{C1} and \descref{C3}, with constant \(C \ge 2C_1 + 1\) and family \(\mathcal{P} \supseteq \mathcal{P}_1\).

    Let \(p = p_1 \dots p_n\) be a path representative for an element \(g \in \langle Q', R' \rangle\) with minimal type.
    If \(p\) has consecutive backtracking along \(\mathcal{H}\)-components \(h_i, \dots, h_j\) of the subpaths \(p_i, \dots, p_j\) respectively, then there is a subgroup \(P \in \mathcal{P}_1\) and a path \(p' = p'_i \dots p'_j\) satisfying the following properties:
    \begin{itemize}
        \item[(i)] \(p'_k\) is geodesic with \(\elem{p'_k} \in P\) for all \(k = i, \dots, j\);
        \item[(ii)] \((p'_i)_+ = (p_i)_+\), \((p'_k)_+^{-1} (p_k)_+ \in S\) and  $d_X((p'_k)_+,(p_k)_+) \le E$, for all $k=i+1,\dots,j-1$;
        \item[(iii)] \(d_X(p'_-,(h_i)_-) \leq D\) and \(d_X(p'_+,(h_j)_+) \leq D\);  
        \item[(iv)] \(\elem{p'_i} \in Q\cap P\) if \(\elem{p_i} \in Q'\), and  \(\elem{p'_i} \in R \cap P\) if \(\elem{p_i} \in R'\); similarly, \(\elem{p'_j} \in Q \cap P\) if \(\elem{p_j} \in Q'\),  and  \(\elem{p'_j} \in R \cap P\) if \(\elem{p_j} \in R'\);
        \item[(v)] for each \(k \in \{i+1, \dots, j-1\}\), \(\Lab(p'_k)\) either represents an element of \(Q' \cap P\) or an element of \(R' \cap P\).
    \end{itemize}
\end{proposition}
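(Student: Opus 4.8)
The strategy is to treat the endpoints and the interior of the backtracking chain separately, using Lemma~\ref{lem:end_sides_constr} on the two ends and Lemma~\ref{lem:(c3)->vertex_constr} on each interior segment, then glue the resulting geodesics together. First I would fix a parabolic subgroup $P \in \mathcal{P}_1$ containing all the connected components $h_i, \dots, h_j$: since these are pairwise connected $H_\nu$-components, they all lie in a single left coset $g_0 H_\nu$, and by Lemma~\ref{lem:shortspikes} the node $(p_i)_+$ (equivalently, any $(h_k)_-$, $(h_k)_+$ up to bounded $d_X$-distance) is within $C_1$ of $g_0 H_\nu$; writing $g_0 H_\nu = b H_\nu$ with $|b|_X \le C_1$ gives $P = b H_\nu b^{-1} \in \mathcal{P}_1$ and every $(h_k)_\pm$ lies in $P b$ up to controlled error. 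The base point $u$ of the new path $p'$ will be chosen so that $p'_- = u$ with $u^{-1}(p_i)_+ \in S$: take $u = (p_i)_+$, which is trivially a valid choice for the right endpoint condition in (ii). Actually, more carefully: we build $p'$ so that $(p'_i)_+ = (p_i)_+$, and then $p'_-$ is forced to be some element $u \in P$ close to $(h_i)_-$; the choice of $u$ at the very left is governed by Lemma~\ref{lem:end_sides_constr}.

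Concretely, I would proceed as follows. \emph{Step 1: Normalise the coset.} As above, identify $P = bH_\nu b^{-1} \in \mathcal{P}_1$ with all $h_k$ connected components inside $Pb$; this uses Lemma~\ref{lem:shortspikes} to bound $d_X$ between consecutive nodes $(p_k)_+$ and the component endpoints $(h_k)_+, (h_{k+1})_-$ by $C_1$. \emph{Step 2: The right end $p'_j$.} Apply Lemma~\ref{lem:end_sides_constr} to the geodesic $p_j$ (with $\elem{p_j} \in Q' \cup R' \subseteq Q \cup R$), with vertex $v = (h_j)_+ \in Pb$ and $u = (p_j)_- = (p_{j-1})_+$ playing the role of the base point — here $u^{-1} p_- = 1 \in S$ works, or more precisely one sets things up so the relevant $S$-condition holds inductively — obtaining a geodesic $p'_j$ with $(p'_j)_- \in P$, $\elem{p'_j} \in Q \cap P$ or $R \cap P$ (giving part of (iv)), and $d_X((p'_j)_+, (h_j)_+) \le D$ (part of (iii)). \emph{Step 3: The interior segments.} For each $k = i+1, \dots, j-1$ apply Lemma~\ref{lem:(c3)->vertex_constr} to $p_k$ with $v = (h_k)_+$ (or $(h_{k+1})_-$; these are within $C_1$ by Lemma~\ref{lem:shortspikes}, so $d_X(v, (p_k)_+) \le C_1$ is satisfied) and $u = (p'_{k+1})_-$, which lies in $P$ by the previous step; this produces $p'_k$ geodesic with $\elem{p'_k} \in P$, $(p'_k)_+^{-1}(p_k)_+ \in S$, $d_X((p'_k)_+, (p_k)_+) \le E$ (part of (ii)), and $\elem{p'_k}$ representing an element of $Q' \cap P$ or $R' \cap P$ (part (v)). The $S$-condition $(p'_{k+1})_-^{-1} (p_{k+1})_- \in S$ required as input to the lemma is exactly the output $(p'_{k+1})_-^{-1}(p_k)_+ = (p'_{k+1})_-^{-1}(p_{k+1})_- \in S$ from the previous step — so the induction (running $k$ downward from $j-1$) closes. \emph{Step 4: The left end $p'_i$.} Finally apply Lemma~\ref{lem:end_sides_constr} again to $p_i$, this time with $v = (h_i)_-$ and $u = (p'_{i+1})_-$, to obtain $p'_i$ with $\elem{p'_i} \in Q \cap P$ or $R \cap P$ (finishing (iv)), $(p'_i)_- = u \in P$, and $d_X((p'_i)_-, (h_i)_-) \le D$ (finishing (iii)); and $(p'_i)_+ = (p_i)_+$ holds because $(p'_i)_+ = (p'_{i+1})_-$ is, by construction in Step 3 applied at $k = i+1$... wait — rather, $(p'_i)_+$ must equal $(p'_{i+1})_- = u$, whereas (ii) demands $(p'_i)_+ = (p_i)_+$; so in fact Step 4 should instead take $u$ as the output endpoint and the requirement $(p'_i)_+ = (p_i)_+$ is arranged by running the whole construction so that $p'_i$ starts where $p_i$ starts — I would set this up by noting $(p_i)_+ = (p_{i+1})_-$ and threading the equality $(p'_{k})_+ = (p'_{k+1})_-$ throughout, with the leftmost segment's left endpoint $(p'_i)_-$ free and its right endpoint pinned to $(p_i)_+$. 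Concatenating, $p' = p'_i p'_{i+1} \cdots p'_j$ is well-defined since consecutive endpoints match.

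\emph{The main obstacle} I anticipate is bookkeeping the $S$-coset conditions and the endpoint-matching conditions simultaneously: Lemma~\ref{lem:(c3)->vertex_constr} takes as hypothesis $u^{-1} p_- \in S$ and produces $(p')_+^{-1} p_+ \in S$, so the chain must be traversed in the correct direction (from the $(p_i)_+$ end outward) for these to compose, and one must verify that the endpoint $(p'_k)_+$ produced by the lemma really does match the base point $u$ fed into the next invocation — which forces us to use $(p'_{k})_+ = (p_{k})_+$ for interior $k$ so that the "next" segment sees $u = (p'_{k})_+ = (p_{k})_+ = (p_{k+1})_-$... no: the segments share $(p_k)_+ = (p_{k+1})_-$, and we want $(p'_k)_+ = (p'_{k+1})_-$. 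Reconciling these two — the new path's internal nodes must coincide, and for all but finitely many they must equal the old nodes up to $S$ — is the delicate point, resolved by choosing which endpoint of each interior $p'_k$ is "exact" (equal to an old node) versus "approximate" (within $E$), and I expect that choosing $(p'_k)_+ = (p_k)_+$ exactly for interior $k$ while letting $(p'_k)_- = (p'_{k+1})_- $ be whatever the downward induction hands us is the consistent choice. The geometric inputs (bounded $d_X$-distances, membership in $Pb$) are all immediate from Lemmas~\ref{lem:shortspikes}, \ref{lem:end_sides_constr}, \ref{lem:(c3)->vertex_constr}; it is only the algebraic threading of cosets that requires care.
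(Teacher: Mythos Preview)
Your overall plan is right—the paper does exactly what you outline: Lemma~\ref{lem:end_sides_constr} on the two extreme segments and Lemma~\ref{lem:(c3)->vertex_constr} on the interior segments, chained via the $S$-coset condition. The gap is the \emph{direction} of the induction. You build from $p_j$ downward, but the asymmetry in property~(ii) forces you to start at $p_i$: the requirement $(p'_i)_+=(p_i)_+$ is an \emph{exact} equality, whereas the interior nodes only need $(p'_k)_+^{-1}(p_k)_+\in S$. The paper therefore translates so that $(p_i)_+=1$, takes $u_i=1\in P$, applies Lemma~\ref{lem:end_sides_constr} to $p_i^{-1}$ (so that the resulting geodesic, reversed, ends exactly at $(p_i)_+$), and then runs Lemma~\ref{lem:(c3)->vertex_constr} forward for $k=i+1,\dots,j-1$, feeding $u_k=(p'_k)_+$ into the next step. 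This makes the continuity $(p'_{k+1})_-=(p'_k)_+$ automatic and matches the input hypothesis $u^{-1}p_-\in S$ of Lemma~\ref{lem:(c3)->vertex_constr} to the output $(p'_k)_+^{-1}(p_k)_+\in S$ of the previous step.

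Your backward scheme breaks in two places. First, at Step~2 you take $u=(p_j)_-$ without checking $u\in P$; this would require a normalisation at the $j$-end, but then there is no mechanism left to force $(p'_i)_+=(p_i)_+$ at the other end—you only get an $S$-coset relation there, which is strictly weaker than (ii) demands. Second, Lemma~\ref{lem:(c3)->vertex_constr} is intrinsically forward: with input $u$ it produces $(p')_-=u$ and outputs the $S$-condition at $(p')_+$. Applying it as you do (to $p_k$, not $p_k^{-1}$) with $u=(p'_{k+1})_-$ yields $(p'_k)_-=(p'_{k+1})_-$, so the concatenation $p'_kp'_{k+1}$ is not even well-defined. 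You noticed this in your ``wait'' aside, but the proposed fix (``$(p'_k)_+=(p_k)_+$ exactly for interior $k$'') is incompatible with $\elem{p'_k}\in P$ and is not what (ii) asserts. Simply reverse the direction: pin the exact node at $i$, then induct upward.
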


\begin{proof} 
    Figure \ref{fig:multitracking_parabolic_path} below is a sketch of the path \(p'\) above the subpath \(p_i p_{i+1} \dots p_{j-1} p_j\) of \(p\). 

    \begin{figure}[ht]
        \centering
        \includegraphics{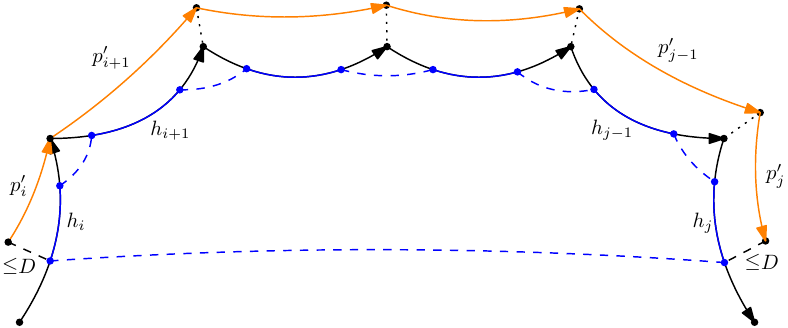}
        \caption{The new path \(p'\) constructed in Proposition~\ref{prop:multitracking_path}. The dotted lines between \(p\) and \(p'\) are paths whose labels represent elements of \(S\).}
        \label{fig:multitracking_parabolic_path}
    \end{figure}

    Note that claim (v) follows from claim (ii) and condition \descref{C1}, so we only need to establish claims (i)--(iv).

    By the assumptions, there is $\nu \in \mathcal{N}$ such that for each \(k \in \{i, \dots, j\}\), the path $p_k$ is a concatenation \(p_k = a_k h_k b_k\), where \(h_k\) is an \(H_\nu\)-component of \(p_k\) and \(a_k,b_k\) are subpaths of \(p_k\).

    According to Lemma~\ref{lem:shortspikes}, we have
    \begin{equation}\label{eq:lengths_of_a_k_and_b_l}
    \abs{b_k}_X \le C_1, \text{ for } k=i,\dots,j-1.
    \end{equation}

    After translating everything by $(p_i)_+^{-1}$ we can assume that $(p_i)_+=1$.
    From here on, we let $b=\elem{b_i}^{-1} \in G$ and \(P= b H_\nu b^{-1}\).
    As noted in \eqref{eq:lengths_of_a_k_and_b_l}, \(\abs{b}_X =\abs{{b_i}}_X \leq C_1\), so \(P \in \mathcal{P}_1\).

    Since the components \(h_i\) and \(h_k\) are connected, for every $k=i+1,\dots,j$, the elements $(h_i)_+=(b_i)_-=b$ and $(h_k)_+$ all belong to the same left coset $bH_\nu=Pb$, thus
    \begin{equation}\label{eq:in_Pb}
        (h_k)_+ \in Pb, \text{ for all } k=i+1,\dots,j.
    \end{equation}

    The rest of the argument will be divided into three steps.

    \medskip
    \noindent \underline{\emph{Step 1:}} construction of the path $p_i'$.

        Set $u_i=(p_i)_+=1$ and $v_i=(h_i)_-$. Then $v_i =\elem{b_i}^{-1} \elem{h_i}^{-1} \in bH_\nu=Pb$, so the path $p_i^{-1}$, its vertex $v_i$ and the element $u_i=1 \in P$ satisfy the assumptions of Lemma~\ref{lem:end_sides_constr}. Therefore there exists a path $q$ with $q_-=u_i$, $d_X(q_+,v) \le D$ and such that $\elem{q}  \in Q \cap P$ if $\elem{p_i} \in Q$ and $\elem{q}  \in R \cap P$ if $\elem{p_i} \in R$.

        It is easy to check that the path $p_i'=q^{-1}$ enjoys the required properties.

    \medskip
    \noindent \underline{\emph{Step 2:}} construction of the paths $p_k'$, for $k=i+1,\dots,j-1$.

        We will define the paths $p_k'$ by induction on $k$. For $k=i+1$ we consider the path $p_{i+1}$, its vertex $v_{i+1}=(h_{i+1})_+$ and the element $u_i=1=(p_{i+1})_-$.
        Since $v_{i+1} \in Pb$ by \eqref{eq:in_Pb} and $d_X(v_{i+1}, (p_{i+1})_+)=|b_{i+1}|_X \le C_1$ by \eqref{eq:lengths_of_a_k_and_b_l},  we can apply Lemma~\ref{lem:(c3)->vertex_constr} to find a geodesic path $p_{i+1}'$ starting at $u_i$ and satisfying the required conditions.

        Now suppose that the required paths $p_{i+1}',\dots,p_m'$ have already been constructed for some $m \in \{i+1,\dots,j-2\}$. To construct the path $p_{m+1}'$, let $v_{m+1}$ be the vertex $(h_{m+1})_+$ of $p_{m+1}$ and set $u_m=(p_m')_+$. Then $u_m \in P$ and $u_m^{-1} (p_{m+1})_-=(p_m')_+^{-1} (p_{m})_+ \in S$ by the induction hypothesis. In view of \eqref{eq:in_Pb} and \eqref{eq:lengths_of_a_k_and_b_l}, $v_{m+1} \in Pb$ and $d_X(v_{m+1}, (p_{m+1})_+) \le C_1$, therefore we can find a geodesic path $p_{m+1}'$ with the desired properties by using Lemma~\ref{lem:(c3)->vertex_constr}.

        Thus we have described an inductive procedure for constructing the paths $p_k'$, for $k=i+1,\dots,j-1$.

    \medskip
    \noindent \underline{\emph{Step 3:}} construction of the path $p_j'$.

        This step is similar to Step 1: the path $p_j'$ will start at $u_{j-1}=(p_{j-1}')_+ \in P$ and can be constructed by applying Lemma~\ref{lem:end_sides_constr} to the path $p_j$ and the elements $v_j=(h_j)_+ \in P b $, $u_{j-1} \in P$.

    \medskip

    We have thus constructed a sequence of geodesic paths \(p'_i, \dots, p'_j\) whose concatenation \(p'\) satisfies all the properties from the proposition.
\end{proof}

We will now prove the main result of this section, which states that the initial and terminal vertices of an instance of multiple backtracking in a minimal type path representative must lie far apart in the proper metric $d_X$, provided \(Q' \leqslant Q\) and \(R' \leqslant R\) satisfy \descref{C1}--\descref{C5} with sufficiently large constants.

\begin{proposition}[Multiple backtracking is long]
\label{prop:long_multitracking}
    For any \(\zeta \geq 0\) there is a constant \(C_2 = C_2(\zeta) \geq 0\) such that if \(Q' \leqslant Q\) and \(R' \leqslant R\) are subgroups satisfying conditions \descref{C1}-\descref{C5} with constants \(B \ge C_2\) and \(C \ge C_2\) and a family \(\mathcal{P} \supseteq \mathcal{P}_1\), then the following is true.

    Let \(p = p_1 \dots p_n\) be a minimal type path representative for an element \(g \in \langle Q', R' \rangle\).
    If \(p\) has multiple backtracking along \(\mathcal{H}\)-components \(h_i, \dots, h_j\) of \(p_i, \dots, p_j\)
    then \(d_X((h_i)_-,(h_j)_+) \geq \zeta\).
\end{proposition}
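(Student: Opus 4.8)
Looking at this, I need to prove Proposition~\ref{prop:long_multitracking}: multiple backtracking in a minimal type path representative is long in the $d_X$ metric, given conditions \descref{C1}--\descref{C5}.

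Let me think about the structure. Proposition~\ref{prop:multitracking_path} already tells us that the multiple backtracking along $h_i,\dots,h_j$ is "shadowed" by a path $p' = p'_i\dots p'_j$ living in a parabolic subgroup $P \in \mathcal{P}_1$, with $\tilde{p'_k} \in P$, and the endpoints of $p'$ are within $D$ of $(h_i)_-$ and $(h_j)_+$ respectively. Moreover, $\tilde{p'_i}$ lies in $Q\cap P$ or $R\cap P$ (according to whether $\tilde{p_i}\in Q'$ or $R'$), $\tilde{p'_j}$ similarly, and the intermediate $\tilde{p'_k}$ lie in $Q'\cap P$ or $R'\cap P$ and the consecutive ones alternate (by Remark~\ref{rem:alt}, since $p$ has minimal type). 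So $\tilde{p'} = \tilde{p'_i}\tilde{p'_{i+1}}\cdots\tilde{p'_j}$ is an element of $\langle Q'_P, R'_P\rangle$ sandwiched between an element of $Q_P$ (or $R_P$) on the left and $Q_P$ (or $R_P$) on the right — i.e., it's essentially an element of $Q_P \langle Q'_P, R'_P\rangle R_P$ or one of the three other variants. The key now is to use conditions \descref{C2}, \descref{C5} (which control $\minx$ of things like $q\langle Q'_P,R'_P\rangle R_P \setminus qQ'_PR_P$ and $Q\langle Q',R'\rangle Q \setminus Q$, etc.) to say: if this element is "short" in $d_X$, then actually $\tilde{p'}$ must collapse into a smaller double coset, and tracking back, this collapse would let us build a shorter path representative of $g$, contradicting minimality of the type of $p$ — unless the backtracking wasn't really "multiple."

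Here is the plan in more detail. First I would set up notation: translate so $(h_i)_+ = $ something convenient, apply Proposition~\ref{prop:multitracking_path} to get $P\in\mathcal{P}_1$ and $p' = p'_i\cdots p'_j$. Then I consider $g' = \tilde{p'} \in P$; note $d_X(p'_-, (h_i)_-)\le D$ and $d_X(p'_+,(h_j)_+)\le D$, so $d_X((h_i)_-,(h_j)_+) \ge d_X(p'_-,p'_+) - 2D = |g'|_X - 2D$. Hence it suffices to show $|g'|_X$ is large. Now $g' = \tilde{p'_i}\cdots\tilde{p'_j}$. Write $g'_0 = \tilde{p'_{i+1}}\cdots\tilde{p'_{j-1}} \in \langle Q'_P, R'_P\rangle$ (this is where $j > i+1$ is used — there is at least one intermediate segment, though actually one must be careful: if $j = i+1$ it's adjacent not multiple; if $j = i+2$ there's exactly one intermediate segment). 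By part (iv) of Proposition~\ref{prop:multitracking_path}, $\tilde{p'_i} \in Q_P$ or $R_P$ and $\tilde{p'_j}\in Q_P$ or $R_P$. WLOG say $\tilde{p'_i}\in Q_P$. Then $g' \in Q_P \langle Q'_P, R'_P\rangle (Q_P\cup R_P)$. Now I want to argue: if $|g'|_X < C_2$ then, via \descref{C2} (which says $\minx(Q\langle Q',R'\rangle Q\setminus Q)\ge B$ and $\minx(R\langle Q',R'\rangle R\setminus R)\ge B$) and \descref{C5} (which controls $\minx(q\langle Q'_P,R'_P\rangle R_P \setminus qQ'_PR_P)$), we can conclude $g'$ lies in a smaller set — for instance in $Q_P$, or in $qQ'_PR_P$ for the relevant $q$ — which in turn forces $g_0'$ (or the relevant sub-element) to collapse, contradicting that the $\tilde{p'_k}$, $i+1\le k\le j-1$, genuinely alternate between $Q'_P\setminus S$ and $R'_P\setminus S$ (which they do, by minimality of the type of $p$ and Remark~\ref{rem:alt} applied within $P$ — actually I'd need $g'_0\notin S$, otherwise there's nothing to contradict, so I should handle the case $j-1 = i+1$, i.e. a single intermediate segment, where $g'_0 = \tilde{p'_{i+1}} \in (Q'_P\cup R'_P)\setminus S$ directly).

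The main obstacle, I expect, is the bookkeeping with the four cases (depending on whether $\tilde{p'_i}$ and $\tilde{p'_j}$ land in $Q$ or $R$) and relating "collapse of $g'$ into a smaller double coset in $P$" back to "existence of a shorter path representative of $g$ in $G$". For the latter: if $g'$ (or rather, the appropriate modification) collapses, one replaces the subpath $p_i p_{i+1}\cdots p_j$ of $p$ by a shorter broken line whose segments still have labels in $Q'\cup R'$ — using that $\tilde{p'_k}$ and $(p_k)_+$ differ by elements of $S\subseteq Q'\cap R'$ (part (ii)), so one can graft the tail $p_{j+1}\cdots p_n$ back on after adjusting by an element of $S$. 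Making this grafting precise, and checking that it genuinely strictly decreases the type $\tau(p)$ (in the lexicographic order on $\NN_0^3$ — probably decreasing $n$ or $\ell(p)$), is the delicate part. There's also the subtlety that one must first apply Lemma~\ref{lem:bddinnprod}-type control, or perhaps use that the $h_k$ being connected and "long enough" relative to the constants forces the picture to be genuine; but I suspect the cleanest route is: choose $C_2$ large enough (in terms of $\zeta$, $D$, $E$, $C_1$, the constants $K'$ from Lemma~\ref{lem:nbhdintersection}, and the $\minx$-thresholds $B,C$ needed) so that "$|g'|_X < C_2$" immediately triggers the collapse, then derive the contradiction with minimality. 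So the answer is: set $C_2 = \zeta + 2D + (\text{the larger of the thresholds needed to invoke \descref{C2} and \descref{C5}})$, and show the contrapositive — if $d_X((h_i)_-,(h_j)_+) < \zeta$ then $|g'|_X < \zeta + 2D \le C_2$, forcing the collapse and contradicting minimal type of $p$.
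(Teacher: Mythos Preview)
Your proposal is correct and follows essentially the same route as the paper: apply Proposition~\ref{prop:multitracking_path} to get the parabolic shadow $p'=p'_i\cdots p'_j$ with endpoints within $D$ of $(h_i)_-$ and $(h_j)_+$, assume $d_X((h_i)_-,(h_j)_+)<\zeta$ so that $|p'|_X<\zeta+2D$, then split into four cases according to whether $\elem{p_i},\elem{p_j}$ lie in $Q'$ or $R'$, and use \descref{C2} (for the $QQ$ and $RR$ cases) or \descref{C5} (for the $QR$ and $RQ$ cases) to force $\elem{p'}$ into a smaller set; finally build a path representative of $g$ with strictly fewer segments, contradicting minimality of $n$.

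One point you should make explicit: condition \descref{C4} is essential in the ``grafting'' step and you do not name it. In the $QQ$ case, \descref{C2} only gives $\elem{p'}\in Q$, hence $\elem{\beta}=\elem{p'_{i+1}}\cdots\elem{p'_{j-1}}\in Q_P\cap\langle Q'_P,R'_P\rangle$; it is \descref{C4} that upgrades this to $\elem{\beta}\in Q'_P$, which is what you need for the replacement segment to have label in $Q'$. Similarly in the $QR$ case, after \descref{C5} gives $\elem{\beta}\in Q'_PR_P$, you still need \descref{C4} to replace $R_P$ by $R'_P$. Also, the contradiction is cleanly with the first coordinate $n$ of the type (you produce a representative with at most $j-i-1<n$ fewer segments replaced by one or two), not with ``alternation'' per se; your closing remarks get this right but the earlier phrasing is a bit muddled.
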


\begin{proof}
    Let \(\zeta \geq 0\) and define \(C_2(\zeta) = \max{\{2C_1, \zeta + 2D\}} + 1\), where \(D \geq 0\) is the constant obtained from Lemma~\ref{lem:end_sides_constr}.

    In view of the assumptions we can apply Proposition~\ref{prop:multitracking_path} to find a path \(p' = p'_i \dots p'_j\) and \(P \in \mathcal{P}_1\) satisfying properties (i)--(v) from its statement.
    Let \(\alpha\) be a geodesic with \(\alpha_- = (p_j')_-\) and \(\alpha_+ = (p_j)_-\), and let $\beta=p'_{i+1} \dots p'_{j-1}$.
    We will denote \(x_k = \elem{p_k}\) and \(x'_k = \elem{p'_k}\), for each \(k \in \{i, \dots, j\}\), and \(z = \elem{\alpha}\).
    Condition \descref{C1}, together with claim (ii) of Proposition~\ref{prop:multitracking_path}, tell us that \(z \in S = Q' \cap R'\), and claim (v) yields that
    \begin{equation}
    \label{eq:elem(beta)_in_Q'R'}
        \elem{\beta} = x_{i+1}' \dots x_{j-1}' \in \langle Q'_P,R'_P \rangle
    \end{equation}
    (as before, for a subgroup $H \leqslant G$ we denote by $H_P \leqslant G$ the intersection $H \cap P$).

    Now suppose, for a contradiction, that \(d_X((h_i)_-,(h_j)_+) < \zeta\).
    Then
    \begin{equation}
    \label{eq:d_X(p'_-, p'_+)}
    |p'|_X=d_X(p'_-, p'_+) < \zeta + 2D < C_2 \le \min\{B,C\},
    \end{equation} by claim (iii) of Proposition~\ref{prop:multitracking_path}.
    There are four cases to consider depending on whether \(\elem{p_i}\) and \(\elem{p_j}\) are elements of \(Q'\) or \(R'\).

    \medskip
    \underline{\emph{Case 1:}} $x_i=\elem{p_i} \in Q'$ and $x_j=\elem{p_j} \in Q'$.
        Then, by claim (iv) of Proposition~\ref{prop:multitracking_path}, both \(x'_i\) and \(x'_j\) are elements of \(Q_P\) .
        It follows that \( \elem{p'} \in Q_P \langle Q'_P, R'_P \rangle Q_P \subseteq Q \langle Q', R' \rangle Q\).
        By \eqref{eq:d_X(p'_-, p'_+)} and \descref{C2}, there is \(q \in Q\) such that \(\elem{p'} = q\). Therefore
        \begin{equation}
        \label{eq:elem(beta)_in_Q}
            \elem{\beta}={x_i'}^{-1}\, \elem{p'} \,  {x_j'}^{-1}= {x_i'}^{-1} \, q \,  {x_j'}^{-1} \in Q.
        \end{equation}

        Combining \eqref{eq:elem(beta)_in_Q} with \eqref{eq:elem(beta)_in_Q'R'}  and using condition \descref{C4}, we get
        \[
            \elem{\beta} \in Q \cap \langle Q'_P,R'_P \rangle=Q_P \cap \langle Q'_P,R'_P \rangle=Q'_P.
        \]

        Let $\gamma$ be any geodesic path in $\ga$ starting at $(p_i)_-$ and ending at $(p_j)_+$. 
        Then $\gamma$ shares the same endpoints with the path $p_i \beta  \alpha  p_j$, therefore their labels represent the same element of $G$:
        \[
            \elem{\gamma}=x_i \, \elem{\beta} \, z \, x_j \in Q'\, Q_P'\,  S\, Q'=Q'. 
        \]
        Thus we can use $\gamma$ to obtain another path representative for \(g\) through $p_1 \dots p_{i-1} \gamma p_{j+1} \dots p_n$,
        which consists of strictly fewer geodesic subpaths than \(p=p_1 \dots p_n\).
        This contradicts the minimality of the type of \(p\), so Case 1 has been considered.

    \medskip
    \underline{\emph{Case 2:}} both $\elem{p_i}$ and $\elem{p_j}$ are elements of $R'$.
        This case can be dealt with identically to Case 1.

    \medskip
    \underline{\emph{Case 3:}} $x_i=\elem{p_i} \in Q'$ and $x_j=\elem{p_j} \in R'$.
        Then \(x'_i \in Q_P\) and \(x'_j \in R_P\) by claim (iv) of Proposition~\ref{prop:multitracking_path}.
        Hence \(\Lab(p')\) represents an element of \(x'_i \langle Q'_P, R'_P \rangle R_P\) with \(x'_i \in Q_P\).
        In view of \eqref{eq:d_X(p'_-, p'_+)}, we can use condition \descref{C5} to deduce that \(\elem{p'} \in x'_i Q'_P R_P\).
        It follows that
        \[
            \elem{\beta}=(x_i')^{-1} \, \elem{p'} \, (x_j')^{-1} \in Q_P'\, R_P,
        \]
        so there exist $q \in Q'_P$ and $r \in R_P$ such that $\elem{\beta}=qr$.
        Combining this with \eqref{eq:elem(beta)_in_Q'R'} we conclude that $r=q^{-1}\elem{\beta} \in R_P \cap \langle Q_P',R_P' \rangle$, so $r \in R_P'$ by condition \descref{C4}, whence
        \begin{equation}
        \label{eq:elem(beta)_in_dc}
            \elem{\beta}=qr \in Q'_P\, R_P'.
        \end{equation}

        Observe that the paths $\gamma=p_i \dots p_j$ and $p_i \beta \alpha p_j$ have the same endpoints, hence their labels represent the same element of $G$:
        \[
            \elem{\gamma}=x_i \elem{\beta} z x_j  \in Q' \, Q'_P\, R_P' \, S \, R' \subseteq Q'\, R'.
        \]
        Therefore there are elements $q_1 \in Q'$ and $r_1 \in R'$ such that $\elem{\gamma}=q_1 r_1$.

        Let $\gamma_1$ be a geodesic path in $\ga$ starting at $\gamma_-=(p_i)_-$ and ending at $\gamma_- q_1$ and let $\gamma_2$ be a geodesic path starting at $(\gamma_1)_+$ and ending at $(\gamma_1)_+ r_1=\gamma_+=(p_j)_+$. 
        Since $\elem{\gamma_1} =q_1\in Q'$ and $\elem{\gamma_2}=r_1 \in R'$ the path \(p_1 \dots p_{i-1} \gamma_1 \gamma_2 p_{j+1} \dots p_n\) is a path representative of \(g\). 
        Moreover, it consists of fewer than \(n\) geodesic segments because $j>i+1$ (by the definition of multiple backtracking), contradicting the minimality of the type of \(p\). 
        This contradiction shows that Case~3 is impossible.

    \medskip
    \underline{\emph{Case 4:}} $x_i=\elem{p_i} \in R'$ and $x_j=\elem{p_j} \in Q'$. 
        Then \(x'_i \in R_P\) while \(x'_j \in Q_P\), which implies that $\elem{p'} \in R_P \langle Q'_P,R'_P \rangle x_j'$, hence $\elem{p'}^{-1} \in (x_j')^{-1}\langle Q'_P, R'_P \rangle R_P$.

        By \eqref{eq:d_X(p'_-, p'_+)}, we can use \descref{C5} to conclude that \(\elem{p'}^{-1} \in (x'_j)^{-1} Q'_P R_P\), thus $\elem{p'} \in R_P Q'_P x_j'$.
        The rest of the argument proceeds similarly to the previous case, leading to a contradiction with the minimality of the type of $p$. Hence Case~4 is also impossible.

    \medskip
    We have arrived at a contradiction in each of the four cases, so  \(d_X((h_i)_-,(h_j)_+) \ge \zeta\), as required.
\end{proof}


\section{Constructing quasigeodesics from broken lines}
\label{sec:quasigeods}
In this section we detail a procedure that takes as input a broken line and a natural number, and outputs another broken line together with some additional vertex data.
We show that if a broken line satisfies certain metric conditions, then the new path constructed through this procedure is uniformly quasigeodesic.

We assume that $G$ is a group generated by a finite set $X$ and hyperbolic relative to a finite family of subgroups $\{H_\nu \mid \nu \in \Nu\}$. As usual we set $\mathcal{H}=\bigsqcup_{\nu \in \Nu} (H_\nu\setminus\{1\})$, and by Lemma~\ref{lem:Cayley_graph-hyperbolic} we know that the Cayley graph $\ga$ is $\delta$-hyperbolic, for some $\delta \ge 0$.

The outline of the construction is as follows: we begin with a broken line \(p = p_1 \dots p_n\) in \(\ga\).
Starting from the initial vertex \(p_-\), we note in sequence (along the vertices of \(p\)) the vertices marking the start and end of maximal instances of consecutive backtracking in \(p\) involving sufficiently long \(\mathcal{H}\)-components.
Once we have done this, we construct the new path by connecting (in the same sequence) the marked vertices with geodesics.

\begin{procedure}[$\Theta$-shortcutting]
\label{proc:shortcutting}
    Fix a natural number \(\Theta \in \NN\) and let \(p = p_1 \dots p_n\) be a broken line in \(\Gamma(G,X\cup\mathcal{H})\). Let \(v_0, \dots, v_d\) be the enumeration of all vertices of \(p\) in the order they occur along the path (possibly with repetition), so that \(v_0 = p_-\), \(v_d = p_+\) and $d=\ell(p)$.

    We construct a broken line \(\Sigma(p,\Theta)\), called the \(\Theta\)-\emph{shortcutting} of \(p\), which comes with a finite set  \(V(p,\Theta) \subset \{0,\dots,d\} \times \{0,\dots,d\}\) corresponding to indices of vertices of \(p\) that we shortcut along.

    In the algorithm below we will refer to numbers \(s,t,N \in \{0,\dots,d\}\) and a subset \(V \subseteq \{0,\dots,d\} \times \{0,\dots,d\}\). To avoid excessive indexing these will change value throughout the procedure.
    The parameters $s$ and $t$ will indicate the starting and terminal vertices of subpaths of $p$ in which all $\mathcal H$-components have lengths less than $\Theta$. The parameter $N$ will keep track of how far along the path $p$ we have proceeded. The set $V$ will collect all pairs of indices $(s,t)$ obtained during the procedure.
    We initially take \(s = 0\), $N=0$ and \(V = \emptyset\).
    
    \begin{steps}
        \item
            If there are no edges of \(p\) between \(v_N\) and \(v_d\) that are labelled by elements of \(\mathcal{H}\), then add the pair $(s,d)$ to the set $V$ and skip ahead to Step 4.
            Otherwise, continue to Step 2.
        \item
            Let \(t \in \{0,\dots,d\}\) be the least natural number with \(t \geq N\) for which the edge of \(p\) with endpoints \(v_t\) and \(v_{t+1}\) is an \(\mathcal{H}\)-component $h_i$ of a geodesic segment $p_i$ of \(p\), for some \(i \in \{1, \dots, n\}\).

            If $i=n$ or if $h_i$ is not connected to a component of $p_{i+1}$ then set $j=i$. Otherwise, let \(j \in \{i+1,\dots,n\}\) be the maximal integer such that \(p\) has consecutive backtracking along \(\mathcal{H}\)-components \(h_i, \dots, h_j\) of segments \(p_i, \dots, p_j\).             
            Proceed to Step 3.

        \item 
            If \[\max\Big\{\abs{h_k}_X \, \Big| \, k = i, \dots, j\Big\} \geq \Theta,\] then add the pair \((s,t)\) to the set \(V\) and redefine $s=N$   in \(\{1,\dots,d\}\) to be the index of the vertex $(h_j)_+$ in the above enumeration $v_0,\dots,v_d$ of the vertices of $p$.
            Otherwise let $N$ be the index of $(h_i)_+$, and leave $s$ and \(V\) unchanged.
            
            Return to Step~1 with the new values of \(s\), $N$ and \(V\).
        \item
            Set \(V(p,\Theta) = V\). The above constructions gives a natural ordering of $V(p,\Theta)$: \[V(p,\Theta) = \{(s_0, t_0), \dots, (s_m,t_m)\},\] where \(s_k  \le t_k <  s_{k+1}\), for all \(k = 0, \dots, m-1\). Note that $s_0=0$ and $t_m=d$. Proceed to Step 5.
        \item
            For each $k=0,\dots,m$, let $f_k$ be a geodesic segment (possibly trivial) connecting $v_{s_k}$ with $v_{t_k}$. Note that when $k <m$,
            \(v_{t_k}\) and \(v_{s_{k+1}}\) are in the same left coset of $H_\nu$, for some $\nu \in \Nu$. 
            If \(v_{t_k} = v_{s_{k+1}}\) then let \(e_{k+1}\) be the trivial path at \(v_{t_k}\), otherwise let \(e_{k+1}\) be an edge of $\ga$ starting at \(v_{t_k}\), ending at \(v_{s_{k+1}}\) and  labelled by an element of \(H_\nu \setminus\{1\}\).

            We define the broken line \(\Sigma(p,\Theta)\) to be the concatenation \(f_0 e_1 f_1 e_2 \dots f_{m-1} e_m f_m\).
    \end{steps}
\end{procedure}

\begin{remark} 
\label{rem:shortcutting}
    Let us collect some observations about Procedure~\ref{proc:shortcutting}.
    \begin{itemize}
        \item[(a)] Since \(p\) has only finitely many vertices and $N$ increases at each iteration of Step 3 above, the procedure will always terminate after finitely many steps.
        \item[(b)] The newly constructed broken line \(\Sigma(p,\Theta)\) has the same endpoints as $p$, and each node of \(\Sigma(p,\Theta)\) is a vertex of $p$.
        \item[(c)] By construction, for any $k \in \{0,\dots,m\}$ the subpath of $p$ between $v_{s_k}$ and $v_{t_k}$ contains no edge labelled by an element $h \in \mathcal{H}$ satisfying $\abs{h}_X \ge \Theta$.
    \end{itemize}
\end{remark}

Figure~\ref{fig:shortcutting_example} below sketches an example of the output of Procedure~\ref{proc:shortcutting}.
\begin{figure}[hb]
    \centering
    \includegraphics[]{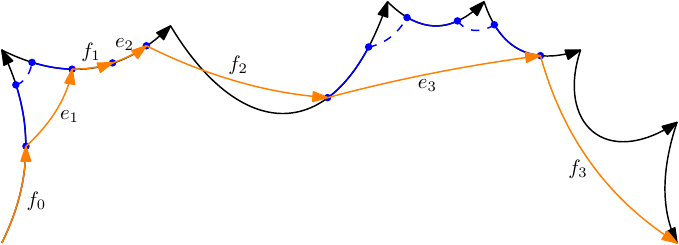}
    \caption{An example of a shortcutting of a path \(p\) in $\ga$. The path \(p\) contains long \(\mathcal{H}\)-components, some of which are involved in instances of consecutive backtracking, as indicated by the dashed lines. The path \(\Sigma(p,\Theta)=f_0e_1f_1e_2f_2e_3f_3\) is drawn on top of \(p\).}
    \label{fig:shortcutting_example}
\end{figure}

In the next definition we describe paths that will serve as input for the above procedure.

\begin{definition}[Tamable broken line]
\label{def:tamable}
    Let \(p = p_1 \dots p_n\) be a broken line in $\ga$, and let \(B, C, \zeta \geq 0, \Theta \in \NN\).
    We say that \(p\) is \emph{\((B,C,\zeta,\Theta)\)-tamable} if all of the following conditions hold:
    \begin{enumerate}[label=(\itshape{\roman*})]
        \item \label{cond:tam_1} \(\abs{p_i}_X \geq B\), for \(i = 2, \dots, n-1\);
        \item \label{cond:tam_2} \(\langle (p_i)_-, (p_{i+1})_+ \rangle_{(p_i)_+}^{rel} \leq C\), for each \(i = 1, \dots, n-1\);
        \item \label{cond:tam_3} whenever \(p\) has consecutive backtracking along \(\mathcal{H}\)-components \(h_i, \dots, h_j\), of segments \(p_i, \dots, p_j\), such that
            \[
                \max\Big\{ \abs{h_k}_X \, \Big| \, k = i, \dots, j \Big\} \geq \Theta,
            \]
        it must be that \(d_X\Bigl( (h_i)_-,(h_j)_+ \Bigr) \geq \zeta\).
    \end{enumerate}
\end{definition}

The remainder of this section is devoted to showing the following result about quasigeodesicity of shortcuttings for tamable paths with appropriate constants.

\begin{proposition}
\label{prop:shortcutting_quasigeodesic}
    Given arbitrary \(c_0 \geq 14\delta\) and \(\eta \geq 0\) there are constants \( \lambda = \lambda(c_0) \geq 1\), \(c = c(c_0) \geq 0\) and \(\zeta = \zeta(\eta,c_0) \geq 1\) such that for any natural number \(\Theta \geq \zeta\) there is \(B_0 = B_0(\Theta,c_0) \geq 0\) satisfying the following.

    Let \(p = p_1 \dots p_n\) be a \((B_0,c_0,\zeta,\Theta)\)-tamable broken line in $\ga$ and let \(\Sigma(p,\Theta)\) be the \(\Theta\)-shortcutting, obtained by applying Procedure~\ref{proc:shortcutting} to \(p\),  \(\Sigma(p,\Theta) = f_0 e_1 f_1 \dots f_{m-1} e_m f_m\). 
    Then $e_k$ is non-trivial, for each $k=1,\dots,m$, and $\Sigma(p,\Theta)$  is \((\lambda,c)\)-quasigeodesic without backtracking.  

    Moreover, for any \(k \in \{ 1, \dots, m\}\), if we denote by \(e'_k\) the \(\mathcal{H}\)-component of \(\Sigma(p,\Theta)\) containing \(e_k\), then  \(\abs{e'_k}_X \geq \eta\).
\end{proposition}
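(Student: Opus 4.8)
The plan is to reduce the quasigeodesicity of $\Sigma(p,\Theta)$ to an application of Proposition~\ref{prop:mpquasigeodesic}, with the segments $f_k$ playing the role of the geodesics $a_k$ and the components $e_k'$ playing the role of the $\mathcal H$-components $b_k$. First I would fix the output parameters: given $c_0 \ge 14\delta$, let $\lambda=3\lambda_0$, $c$ be the constant coming from Proposition~\ref{prop:mpquasigeodesic} for the peripheral family of $G$ (applied with some $\rho$ to be chosen), and then I must choose $\zeta$, $\Theta \ge \zeta$ and $B_0$ to make the hypotheses of that proposition hold. The key intermediate object is the \emph{intermediate broken line} $q$ obtained from $p$ by replacing, for each $k=1,\dots,m$, the \emph{entire maximal $\mathcal H$-subpath} of $p$ through which the shortcutting jumps from $v_{t_k}$ to $v_{s_{k+1}}$ by the single edge $e_{k+1}$, while leaving the rest of $p$ intact; then $\Sigma(p,\Theta)$ is obtained from $q$ by replacing each maximal backtrack-free stretch $v_{s_k}\rightsquigarrow v_{t_k}$ of $q$ by a geodesic $f_k$. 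I will show $\Sigma(p,\Theta)$ is a concatenation of geodesics $f_0, e_1, f_1,\dots,e_m,f_m$ to which Proposition~\ref{prop:mpquasigeodesic} applies, then invoke Lemma~\ref{lem:perturbed_quasigeodesic} or a direct argument to pass back if needed (though here the $f_k$ and $e_k$ are already geodesics, so no perturbation is needed and $c$ is exactly the constant from Proposition~\ref{prop:mpquasigeodesic}).

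The verification of the four hypotheses of Proposition~\ref{prop:mpquasigeodesic} for the decomposition $\Sigma(p,\Theta)=f_0 e_1 f_1\dots e_m f_m$ proceeds as follows. Hypothesis (1), that $e_k'$ is an $\mathcal H$-subpath of the shortcutting, holds by construction once we know $e_k$ is non-trivial. Hypothesis (3), that $e_k'$ and $e_{k+1}'$ are not connected, must be arranged: between $v_{t_k}$ and $v_{s_{k+1}}$ the path $p$ (hence the relevant portion we kept) contains a node of $p$ at which $p$ changes between a $Q'$-segment and an $R'$-segment, or more precisely the maximal instances of consecutive backtracking we collapsed are genuinely maximal, so their $\mathcal H$-coset cosets differ; I would spell this out from Step~2--3 of Procedure~\ref{proc:shortcutting}, using that a new pair is only recorded when the backtracking stops. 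Hypothesis (4), the bound $|h|_X \le \rho$ on components of $f_k$ (or of $f_{k\pm1}$) connected to $e_k'$, forces the choice of $\rho$: since $f_k$ is a geodesic in $\ga$ its $\mathcal H$-components are single edges, but an edge of $f_k$ could a priori be long. This is where I would invoke a slim-triangle/fellow-travelling argument: $f_k$ fellow-travels the subpath of $p$ from $v_{s_k}$ to $v_{t_k}$, whose long $\mathcal H$-components are all $<\Theta$ by Remark~\ref{rem:shortcutting}(c), so any $\mathcal H$-component of $f_k$ that is connected to $e_k'$ must be short — bounded by some function of $\Theta$, $\delta$ and the constants of Proposition~\ref{prop:osinbcp}; set $\rho$ to that bound. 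The heart of the matter is Hypothesis (2), $|e_k'|_X \ge \zeta_1$: this is where tameness condition \ref{cond:tam_3} enters. The component $e_k'$ has the same endpoints as a maximal instance of consecutive backtracking $h_i,\dots,h_j$ with $\max_k|h_k|_X \ge \Theta$; by \ref{cond:tam_3}, $d_X((h_i)_-,(h_j)_+) \ge \zeta$. But $(e_k')_- $ is within bounded $d_X$-distance (a function of $\delta$ and the $\mathcal H$-component structure, independent of $\Theta$) of $(h_i)_-$, and similarly at the other end — indeed $e_k'$ is the $\mathcal H$-component of $\Sigma$ containing $e_k=[v_{t_k},v_{s_{k+1}}]$, and the shortcutting was designed so that $v_{t_k}$ and $v_{s_{k+1}}$ are the endpoints of this maximal backtracking instance up to the "short-component prefix/suffix" that got absorbed into the $f_k$'s. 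So one gets $|e_k'|_X \ge \zeta - O(1) \ge \zeta_1$ provided $\zeta$ is chosen large enough relative to $\zeta_1$ (which itself depends on $\rho$, hence on $\Theta$ — so the logical order of choices is: $c_0 \Rightarrow \lambda_0,\delta \Rightarrow \rho$ depends on $\Theta$, which is awkward). I would resolve the apparent circularity by noting $\rho$ can be taken as a function of $\delta$ and the Proposition~\ref{prop:osinbcp}-constant for $L$-similar geodesics alone (the fellow-travelling bound does not see $\Theta$, only that the components of $p$ it fellow-travels are isolated components of a quasigeodesic), so $\rho$ is absolute, then $\zeta_1=\zeta_1(\rho)$ is absolute, then $\zeta$ is chosen $\ge \zeta_1 + O(1)$, and finally $\Theta \ge \zeta$ and $B_0$ are chosen last.

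The role of $B_0$ and tameness conditions \ref{cond:tam_1}, \ref{cond:tam_2} is to guarantee that $\Sigma(p,\Theta)$ is honestly built from geodesic segments $f_k$ that are \emph{long} (so the "no backtracking / quasigeodesic" conclusion is not vacuous) and that the broken line $q$ from which we pass to $\Sigma$ already has bounded Gromov products at its nodes: condition \ref{cond:tam_2} gives $\langle (p_i)_-,(p_{i+1})_+\rangle^{rel}_{(p_i)_+} \le c_0$ at every node of $p$, and I must check this survives the collapsing operation, i.e. that the nodes of $\Sigma$ that are "old" nodes of $p$ still have controlled Gromov product, while the "new" nodes $v_{t_k}=(e_k')_-$ etc. have controlled Gromov product because $e_k'$ is long (by Hypothesis (2) just established) and an $\mathcal H$-component meets the adjacent geodesics $f_{k-1},f_k$ only in bounded-length overlaps. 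Then $B_0$ is chosen so large that after collapsing, the surviving long segments $f_k$ still have $|f_k|_X$ (equivalently $|f_k|_{X\cup\mathcal H}$) bounded below by whatever Proposition~\ref{prop:mpquasigeodesic} or the concatenation lemmas require; concretely $B_0 = B_0(\Theta,c_0)$ absorbs $\Theta$ because collapsing a backtracking instance of a segment of $p$ can shorten it by up to $O(\Theta)$, so we need $|p_i|_X \ge B_0 \gg \Theta$ to keep the residual $f_k$ long. Finally, the "moreover" clause — $|e_k'|_X \ge \eta$ — is immediate once $\Sigma(p,\Theta)$ is known to be $(\lambda,c)$-quasigeodesic: $e_k'$ is an isolated $\mathcal H$-component of this quasigeodesic (isolated because $\Sigma$ is without backtracking), and $|e_k'|_X \ge |h_{k_0}|_X - O(\rho) \ge \Theta - O(\rho) \ge \eta$ for the index $k_0$ where $\max|h_k|_X \ge \Theta$, provided $\Theta$ was chosen $\ge \eta + O(\rho)$ at the outset; so I would fold $\eta$ into the choice of $\zeta$ (taking $\zeta = \zeta(\eta,c_0) \ge \max\{\zeta_1, \eta\} + O(\rho) + 1$). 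The main obstacle, as flagged, is bookkeeping the order of quantifiers so that $\rho$ (hence $\zeta_1$, hence $\zeta$) is genuinely independent of $\Theta$, and then checking that collapsing backtracking preserves the "bounded Gromov product at every node" hypothesis needed to feed the shortcutting into Proposition~\ref{prop:mpquasigeodesic}; everything else is a careful but routine fellow-travelling argument using Propositions~\ref{prop:osinbcp} and \ref{prop:osinslimtriangles}.
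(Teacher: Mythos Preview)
Your overall strategy---verify the hypotheses of Proposition~\ref{prop:mpquasigeodesic} for the decomposition $\Sigma(p,\Theta)=f_0e_1f_1\dots e_mf_m$---is exactly the paper's approach, and you correctly identify that the crux is making $\rho$ independent of $\Theta$. But your proposed mechanism for bounding $\rho$ has a genuine gap.

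You argue that a component $h$ of $f_k$ connected to $e_{k+1}$ is short because $f_k$ fellow-travels the subpath $p'$ of $p$ from $v_{s_k}$ to $v_{t_k}$, whose $\mathcal H$-components have $X$-length $<\Theta$. This only yields $\rho=O(\Theta)$, which you recognise is circular; your attempted fix (``the fellow-travelling bound does not see $\Theta$, only that \dots\ a quasigeodesic'') does not work as stated, because fellow-travelling controls distances between endpoints, not $X$-lengths of connected components---a component of $f_k$ connected to a short component of $p'$ need not itself be short. The paper's argument is structurally different. The key missing step is to first prove, using tamability conditions~\ref{cond:tam_1} and~\ref{cond:tam_2} together with Lemma~\ref{lem:concat}, that $p'$ is $(4,c_3)$-quasigeodesic with $c_3=c_3(c_0)$. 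One then sets $\rho=\kappa(4,c_3,0)$, the BCP constant from Proposition~\ref{prop:osinbcp}; this depends only on $c_0$, breaking the circularity. The bound $|h|_X\le\rho$ is then proved by \emph{contradiction}: if $|h|_X>\rho$, Proposition~\ref{prop:osinbcp} produces a component $h'$ of $p'$ connected to $h$ (hence to $e_{k+1}$), and a case analysis on the location of $h'$ either yields a strictly longer instance of consecutive backtracking (contradicting the maximality in Procedure~\ref{proc:shortcutting}) or forces $d_{X\cup\mathcal H}(h'_+,p'_+)>1$ via the $(4,c_3)$-quasigeodesicity of $p'$ and the choice of $B_0$ (contradicting connectedness). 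The same quasigeodesicity of $p'$ and the same dichotomy are what show $e_k$ and $e_{k+1}$ are not connected.

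Two smaller points. Your reference to ``$Q'$-segments and $R'$-segments'' is out of place: the proposition concerns arbitrary tamable broken lines, not path representatives. And your plan to track Gromov products at the nodes of $\Sigma(p,\Theta)$ is a red herring: Proposition~\ref{prop:mpquasigeodesic} has no such hypothesis, and the paper never computes them. Finally, for the ``moreover'' clause, the paper does not compare $e_k'$ to the long component $h_{k_0}$ of $p$; it uses $|e_k'|_X\ge|e_k|_X-2\rho$ (since $e_k'=h_1e_kh_2$ with $|h_i|_X\le\rho$) together with $|e_k|_X\ge\zeta$, which follows directly from tamability condition~\ref{cond:tam_3} when the backtracking has $\ge 2$ components and from $|e_k|_X\ge\Theta\ge\zeta$ when it has one.
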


The idea of the proof will be to show that under the above assumptions the broken line \(\Sigma(p,\Theta)\) satisfies the hypotheses of Proposition~\ref{prop:mpquasigeodesic}.

\begin{notation}
    For the remainder of this section we fix arbitrary constants \(c_0 \geq 14\delta\) and \(\eta \geq 0\). We let \(\rho = \kappa(4,c_3,0)\), where \(c_3 = c_3(c_0) \geq 0\) is the constant from Lemma~\ref{lem:concat} and \(\kappa(4,c_3,0)\) is the constant obtained by applying Proposition~\ref{prop:osinbcp} to \((4,c_3)\)-quasigeodesics.
    Let $\zeta_1>0$, $\lambda \ge 1$ and \(c \ge 0\) be the constants given by Proposition~\ref{prop:mpquasigeodesic}, applied with constant \(\rho\). 
    Note that the constants $\lambda$ and $c$ only depend on $c_0$ and do not depend on $\eta$.

    We now define the constant \(\zeta\) by
    \begin{equation}\label{eq:choice_of_eta}
        \zeta = \max\Big\{ \zeta_1, \eta \Big\} + 2\rho + 1.
    \end{equation}
    Finally we take any natural number $\Theta \ge \zeta$ and 
    \begin{equation}\label{eq:choice_of_B_0}
        B_0 = \max\Big\{ (12 c_0 + 12\delta + 1)\Theta, (4+c_3)\Theta+1 \Big\}.
    \end{equation}
\end{notation}

The proof of Proposition~\ref{prop:shortcutting_quasigeodesic} will consist of the following four lemmas. 
Throughout these lemmas we use the constants defined above and assume that \(p = p_1 \dots p_n\) is a \((B_0,c_0,\zeta,\Theta)\)-tamable broken line in $\ga$.
As before, we write \(v_0, \dots, v_d\) for the set of vertices of \(p\) in the order of their appearance. 
We let  \(\Sigma(p,\Theta) = f_0 e_1 f_1 \dots f_{m-1} e_m f_m\) be the \(\Theta\)-shortcutting and \(V(p,\Theta) = \{(s_0,t_0), \dots, (s_m,t_m)\}\) be the set obtained by applying Procedure~\ref{proc:shortcutting} to $p$.

\begin{lemma}
\label{lem:e_j-long}
    For each $k=1,\dots,m$, we have $|e_k|_X \ge \zeta >0$.
\end{lemma}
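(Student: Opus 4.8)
The plan is to trace the bookkeeping of Procedure~\ref{proc:shortcutting} to determine exactly which two vertices of $p$ each path $e_k$ joins, and then to read off the desired bound directly from the tamability hypothesis \ref{cond:tam_3}.

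Fix $k \in \{1,\dots,m\}$. By Step~5 of Procedure~\ref{proc:shortcutting}, $e_k$ joins $v_{t_{k-1}}$ to $v_{s_k}$, where $(s_{k-1},t_{k-1})$ and $(s_k,t_k)$ are consecutive elements of $V(p,\Theta)$. Since $k-1 \le m-1$, the pair $(s_{k-1},t_{k-1})$ is not the final pair produced in Step~1, so it must have been appended to $V$ during a pass through Step~3 in which the first branch was taken. Unwinding that pass: in Step~2 the procedure had selected an instance of consecutive backtracking along $\mathcal{H}$-components $h_i,\dots,h_j$ (possibly with $j=i$) of segments $p_i,\dots,p_j$, with $v_{t_{k-1}}=(h_i)_-$; taking the first branch of Step~3 means $\max\{|h_l|_X : l=i,\dots,j\}\ge\Theta$, after which $s$ was reset to the index of $(h_j)_+$. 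Because $s$ is only ever changed in the first branch of Step~3 — which simultaneously appends a pair to $V$ — this reset value is precisely $s_k$, so $v_{s_k}=(h_j)_+$. Hence
\[
|e_k|_X = d_X\bigl((h_i)_-,(h_j)_+\bigr).
\]

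It then remains to bound this distance from below, splitting into two cases. If $j>i$, then $h_i,\dots,h_j$ is an instance of consecutive backtracking in the sense of the relevant definition, so condition~\ref{cond:tam_3} of $(B_0,c_0,\zeta,\Theta)$-tamability yields $d_X((h_i)_-,(h_j)_+)\ge\zeta$. If $j=i$ — the degenerate case not covered by that definition, which requires $i<j$ — then $(h_i)_-$ and $(h_j)_+=(h_i)_+$ are just the two endpoints of the single $\mathcal{H}$-component $h_i$, so $d_X((h_i)_-,(h_i)_+)=|h_i|_X\ge\Theta\ge\zeta$, using $\Theta\ge\zeta$ from the choice of $\Theta$. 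In either case $|e_k|_X\ge\zeta$, and $\zeta\ge 1>0$ by \eqref{eq:choice_of_eta}, which completes the argument.

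The only genuinely delicate part is the identification carried out in the second paragraph: matching $v_{t_{k-1}}$ and $v_{s_k}$ to $(h_i)_-$ and $(h_j)_+$ respectively, while keeping careful track of the fact that $s$, $t$, $N$ and $V$ are overwritten repeatedly as the procedure runs. Once that matching is established, the conclusion is immediate — from condition~\ref{cond:tam_3} in the main case, and from the definitions together with $\Theta\ge\zeta$ in the degenerate case $j=i$ — and no further estimates are needed. (In particular this also shows each $e_k$ is non-trivial, as claimed in Proposition~\ref{prop:shortcutting_quasigeodesic}, since $\zeta>0$.)
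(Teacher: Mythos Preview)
Your proof is correct and follows essentially the same approach as the paper: identify the endpoints of $e_k$ as $(h_i)_-$ and $(h_j)_+$ for the instance of consecutive backtracking detected in Steps~2--3, then split into the degenerate case $j=i$ (where $|e_k|_X=|h_i|_X\ge\Theta\ge\zeta$) and the case $j>i$ (where condition~\ref{cond:tam_3} applies). The paper's version is terser but the logic is identical.
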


\begin{proof}
    By the construction in Procedure~\ref{proc:shortcutting}, there are pairwise connected \(\mathcal{H}\)-components \(h_1, \dots\), \( h_j\) of consecutive segments of \(p\), such that $j \ge 1$, \((h_1)_- = (e_k)_-\), \((h_s)_+ = (e_k)_+\) and $\max\{\abs{h_l}_X \mid l=1,\dots,j\} \ge \Theta$.

    If $j=1$ we see that $|e_k|_X=|h_1|_X \ge \Theta \ge \zeta$, and if $j>1$ then we know that $|e_k|_X \ge  \zeta$ by property \ref{cond:tam_3} from Definition~\ref{def:tamable}.
\end{proof}

\begin{lemma}
\label{lem:beta_quasigeodesic}
    The subpaths of \(p\) between \(v_{s_k}\) and \(v_{t_k}\), for \(k = 0, \dots, m\), are \((4,c_3)\)-quasigeodesic.
\end{lemma}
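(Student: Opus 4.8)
The statement to prove is Lemma~\ref{lem:beta_quasigeodesic}: each subpath of $p$ between $v_{s_k}$ and $v_{t_k}$ is $(4,c_3)$-quasigeodesic. The natural tool is Lemma~\ref{lem:concat}, which says that a broken line consisting of geodesic segments is $(4,c_3)$-quasigeodesic provided the \emph{internal} nodes are far apart (distance $\ge c_1$ in the relative metric) and all consecutive Gromov products at nodes are bounded by $c_0$. So I would fix $k$ and look at the subpath $q$ of $p$ from $v_{s_k}$ to $v_{t_k}$: it inherits a broken-line structure from $p$, namely $q = q_a q_{a+1} \dots q_b$ where $q_a$ is a (possibly proper, terminal) subsegment of $p_a$, $q_b$ is a (possibly proper, initial) subsegment of $p_b$, and $q_{a+1}, \dots, q_{b-1}$ are full segments $p_{a+1}, \dots, p_{b-1}$ of $p$. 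Each $q_i$ is geodesic since it is a subpath of the geodesic $p_i$.

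**Key steps.** First, the Gromov product condition \ref{cond:tam_2} of Definition~\ref{def:tamable} gives $\langle (p_i)_-, (p_{i+1})_+\rangle^{rel}_{(p_i)_+} \le c_0$ at every internal node of $p$; by Remark~\ref{rem:Gr_prod_ineq}, restricting the endpoints to points lying on the geodesics $p_i$, $p_{i+1}$ only decreases the Gromov product, so the analogous bound $\le c_0$ holds at every internal node of $q$ (the nodes of $q$ are nodes of $p$, and for the possibly-truncated end segments the relevant endpoints move inward along geodesics). Second, I need the internal segments of $q$ — that is, $q_{a+1} = p_{a+1}, \dots, q_{b-1} = p_{b-1}$ — to have relative length $\ge c_1 = c_1(c_0) = 12(c_0+\delta)+1$. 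These are honest segments of $p$ with index in $\{2,\dots,n-1\}$ (they are strictly between the first and last segments of $p$, hence also strictly between segments $1$ and $n$), so condition \ref{cond:tam_1} gives $|p_i|_X \ge B_0$. I then convert this proper-metric bound to a relative-metric bound: by Remark~\ref{rem:shortcutting}(c), the subpath of $p$ between $v_{s_k}$ and $v_{t_k}$ contains no $\mathcal{H}$-edge $h$ with $|h|_X \ge \Theta$, so every $\mathcal{H}$-component of $q_i = p_i$ has $X$-length $< \Theta$; hence by Lemma~\ref{lem:rel_geods_with_short_comps}, $|p_i|_X \le \Theta\, \ell(p_i) = \Theta\, d_{X\cup\mathcal{H}}((p_i)_-,(p_i)_+)$, giving $d_{X\cup\mathcal{H}}((p_i)_-,(p_i)_+) \ge |p_i|_X/\Theta \ge B_0/\Theta \ge 12 c_0 + 12\delta + 1 = c_1$ by the choice \eqref{eq:choice_of_B_0} of $B_0$. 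With these two hypotheses verified for the broken line $q$, Lemma~\ref{lem:concat} applies and yields that $q$ is $(4,c_3)$-quasigeodesic.

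**Edge cases and the main obstacle.** I should handle degenerate situations: if $q$ has only one segment (i.e. $a = b$), it is a geodesic subpath and there is nothing to prove; if $q$ has two segments the "internal" constraint is vacuous and Lemma~\ref{lem:concat} still applies. One subtlety worth a sentence is that the first/last segments $q_a, q_b$ of $q$ may be short — but Lemma~\ref{lem:concat} was stated precisely to allow the first and last segments to be short, so this is not a problem. The only genuinely fiddly point — and the one I'd expect to take the most care — is the bookkeeping around how the truncated end segments $q_a \subseteq p_a$ and $q_b \subseteq p_b$ interact with the Gromov-product bounds at the nodes $(q_a)_+ = (p_a)_+ = v$ and $(q_{b-1})_+ = (p_{b-1})_+$: one must check that $v_{s_k}$ and $v_{t_k}$ themselves being interior points of geodesic segments of $p$ does not spoil the application of Remark~\ref{rem:Gr_prod_ineq} at the first and last internal nodes of $q$. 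This is handled exactly by Remark~\ref{rem:Gr_prod_ineq}: moving an endpoint from $(p_a)_-$ inward to $v_{s_k}$ along the geodesic $p_a$ only decreases $\langle \cdot,\cdot\rangle^{rel}$ at the relevant node. So the argument is essentially a matter of carefully invoking \ref{cond:tam_1}, \ref{cond:tam_2}, Remark~\ref{rem:shortcutting}(c), Lemma~\ref{lem:rel_geods_with_short_comps}, Remark~\ref{rem:Gr_prod_ineq}, and Lemma~\ref{lem:concat} in sequence, with no hard new idea required.
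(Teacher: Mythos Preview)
Your proposal is correct and follows essentially the same approach as the paper's proof: decompose the subpath as a broken line inheriting segments from $p$, use Remark~\ref{rem:shortcutting}(c) together with Lemma~\ref{lem:rel_geods_with_short_comps} and condition~\ref{cond:tam_1} to bound the relative lengths of the internal segments below by $c_1$, use condition~\ref{cond:tam_2} and Remark~\ref{rem:Gr_prod_ineq} for the Gromov-product bounds (including at the truncated ends), and then apply Lemma~\ref{lem:concat}. The edge cases and the handling of the truncated first and last segments are exactly as in the paper.
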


\begin{proof}
    We write \(c_1=c_1(c_0)=12 c_0 + 12\delta + 1\), as in Lemma~\ref{lem:concat}.
    
    Choose any \(k \in \{0,\dots,m\}\) and denote by \(p'\) be the subpath of \(p\) starting at \(v_{s_k}\) and terminating at \(v_{t_k}\).
    If \(v_{s_k}\) and \(v_{t_{k}}\) are both vertices of $p_i$, for some $i  \in \{1,\dots,n\}$, then \(p'\) is geodesic and we are done.
    Otherwise \(p' = p'_i p_{i+1} \dots p_{j-1} p'_j\), for some \(i, j \in \{1, \dots, n\}\), with \(i < j\), where \(p'_i\) is a terminal segment of \(p_i\) and \(p'_j\) is an initial segment of \(p_j\).

    By Remark~\ref{rem:shortcutting}(c), the paths \(p_{i+1}, \dots, p_{j-1}\) contain no \(\mathcal{H}\)-components \(h\) with \(|h|_X \geq \Theta\).
    Since \(p\) is \((B_0,c_0,\zeta,\Theta)\)-tamable, \(\abs{p_l}_X \geq B_0\) for each \(l = i+1, \dots, j-1\) by condition~\ref{cond:tam_1}.
    Thus we can combine Lemma~\ref{lem:rel_geods_with_short_comps} with \eqref{eq:choice_of_B_0} to obtain
    \[
        \dxh\Bigl((p_l)_-,(p_l)_+\Bigr)= \ell(p_l) \geq \frac{1}{\Theta}|p_l|_X \geq \frac{B_0}{\Theta} \geq c_1, \text{ for each } l \in \{i+1, \dots, j-1\}.
    \]

    Again, from the assumption that \(p\) is \((B_0,c_0,\zeta,\Theta)\)-tamable, we have that 
    \[
        \langle (p_l)_-,(p_{l+1})_+ \rangle_{(p_l)_+}^{rel} \leq c_0, \text{ for all } l=i,\dots,j-1,
    \]
    using condition~\ref{cond:tam_2}. 
    In view of Remark~\ref{rem:Gr_prod_ineq},
    \[
        \langle (p'_i)_-,(p_{i+1})_+ \rangle_{(p'_i)_+}^{rel} \leq c_0~\text{ and }~\langle (p_{j-1})_-,(p'_{j})_+ \rangle_{(p_{j-1})_+}^{rel} \leq c_0.
    \]
    Therefore we can use Lemma~\ref{lem:concat} to conclude that \(p'\) is \((4,c_3)\)-quasigeodesic, as required.
\end{proof}

\begin{lemma}
\label{lem:short_ending_components}
    If \(k \in \{0, \dots, m-1\}\) and \(h\) is an \(\mathcal{H}\)-component of \(f_{k}\) or \(f_{k+1}\) that is connected to \(e_{k+1}\), then \(\abs{h}_X \leq \rho\).
\end{lemma}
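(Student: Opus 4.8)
The statement says that components of the geodesic segments $f_k$, $f_{k+1}$ in the shortcutting that are connected to the joining edge $e_{k+1}$ must be short (bounded by $\rho$) in the $d_X$-metric. The idea is to realise $f_k$ (respectively $f_{k+1}$) as one side of a geodesic triangle or bigon whose other relevant side is a $(4,c_3)$-quasigeodesic subpath of $p$ (supplied by Lemma~\ref{lem:beta_quasigeodesic}), together with one short $\mathcal H$-component, and then invoke Proposition~\ref{prop:osinbcp} together with the definition $\rho = \kappa(4,c_3,0)$.

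\medskip

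\emph{First step.} Fix $k \in \{0,\dots,m-1\}$, and suppose $h$ is an $\mathcal H$-component of $f_{k+1}$ connected to $e_{k+1}$ (the case of $f_k$ is symmetric, using $e_{k+1}^{-1}$ and $f_k^{-1}$). Recall that $f_{k+1}$ is a geodesic from $v_{s_{k+1}}$ to $v_{t_{k+1}}$, while $e_{k+1}$ is an edge from $v_{t_k}$ to $v_{s_{k+1}}$, labelled by some $\eta_{k+1} \in H_\nu \setminus \{1\}$. By Remark~\ref{rem:shortcutting}(c), the subpath $q$ of $p$ from $v_{s_{k+1}}$ to $v_{t_{k+1}}$ contains no $\mathcal H$-component of $d_X$-length $\geq \Theta$; by Lemma~\ref{lem:beta_quasigeodesic}, $q$ is $(4,c_3)$-quasigeodesic. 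Also $q$ has the same endpoints as $f_{k+1}$, so $f_{k+1}$ and $q$ form a $0$-similar pair of paths: $f_{k+1}$ is geodesic (hence $(1,0)$-quasigeodesic, hence $(4,c_3)$-quasigeodesic) without backtracking, and $q$ is $(4,c_3)$-quasigeodesic. The plan is to apply Proposition~\ref{prop:osinbcp}(2) with $p := f_{k+1}$ and $q$ as above.

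\medskip

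\emph{Second step — the obstacle.} Here is the subtlety: Proposition~\ref{prop:osinbcp}(2) tells us that if $h$ is an $\mathcal H$-component of $f_{k+1}$ with $|h|_X \geq \kappa(4,c_3,0) = \rho$, then $h$ is connected to some $\mathcal H$-component $h'$ of $q$. We want a contradiction from assuming $|h|_X > \rho$. The point is that $h$ is connected to $e_{k+1}$, i.e. to the $\mathcal H$-component of $\Sigma(p,\Theta)$ ending the shortcut — and, crucially, by the construction in Procedure~\ref{proc:shortcutting}, $(e_{k+1})_-=v_{t_k}$ is the $\mathcal{H}$-coset representative $v_{t_k}H_\nu = v_{s_{k+1}}H_\nu$, and $v_{s_{k+1}}=(f_{k+1})_-=q_-$. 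So $h$, being connected to $e_{k+1}$, lies in the coset $q_- H_\nu$. If $|h|_X \geq \rho$, then $h$ is connected to a component $h'$ of $q$, forcing $h'$ to also lie in $q_- H_\nu$; but then $h'$ is an $\mathcal H$-component of $q$ connected "back to the start", which contradicts the fact that the $\mathcal{H}$-component of $p$ beginning at $v_{t_k}$ (which is $e_{k+1}$ — wait, $e_{k+1}$ is not in $p$) — more precisely, we must use that in Procedure~\ref{proc:shortcutting}, Step 2, $t_k$ was chosen \emph{maximal} for the consecutive backtracking, and by Remark~\ref{rem:shortcutting}(c) the short subpath $q$ starting at $v_{s_{k+1}}$ cannot begin with a long $\mathcal H$-component, while its first $\mathcal H$-component $h'$ in the coset $q_-H_\nu$ would have to be the component $h_j$'s successor — which, by maximality of $j$ in Step 2, is not connected to it. This is precisely where one closes the loop. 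The main work is therefore bookkeeping with the construction in Procedure~\ref{proc:shortcutting}, not any hard geometry: one must argue that a component of $q$ lying in $q_- H_\nu$ would contradict either Remark~\ref{rem:shortcutting}(c) (it would be a long $\mathcal H$-component inside $q$) or the maximality of the consecutive backtracking recorded at the previous step. Since $|h|_X \geq \rho$ entails $h$ connects to such a component $h'$ of $q$ with $|h'|_X \geq |h|_X - 2\kappa \geq \rho - 2\kappa$... — actually cleaner: by Proposition~\ref{prop:osinbcp}(3), $d_X(h_-, h'_-) \leq \kappa$ and $d_X(h_+,h'_+)\le\kappa$, and since $h$ is in the coset $q_-H_\nu$ so is $h'$, so $h'$ is a component of $q$ in the starting coset, which together with the maximality in the procedure yields the contradiction. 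Hence $|h|_X < \rho$, and I would then simply note $|h|_X \leq \rho$ since $\rho$ was defined as this threshold (or absorb the strict inequality).

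\medskip

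\emph{Wrap-up.} Thus in all cases $|h|_X \leq \rho$, proving the lemma. I expect the genuinely delicate point to be the precise extraction of the contradiction in the second step — namely pinning down exactly which clause of the $\Theta$-shortcutting construction is violated when a component of the short subpath $q$ lies in the coset $q_-H_\nu$ — and making sure the symmetric argument for $f_k$ (using $q$ being the subpath between $v_{s_k}$ and $v_{t_k}$, whose \emph{terminal} coset is $v_{t_k}H_\nu$) goes through with the roles of initial/terminal reversed.
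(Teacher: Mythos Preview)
Your setup is the same as the paper's: compare $f_k$ (resp.\ $f_{k+1}$) against the $(4,c_3)$-quasigeodesic subpath $p'$ of $p$ sharing its endpoints (Lemma~\ref{lem:beta_quasigeodesic}), and apply Proposition~\ref{prop:osinbcp}(2) so that if $|h|_X > \rho$ then $h$ is connected to an $\mathcal{H}$-component $h'$ of $p'$. You also correctly anticipate that the delicate point is extracting a contradiction from the existence of $h'$. But your sketch of that extraction has genuine gaps. You omit the observation that geodesicity of $f_k$ forces $h$ to be its \emph{last} edge (dually, the first edge of $f_{k+1}$), so $h_+=v_{t_k}$; this is what ensures $h'$ is connected to the edge $u$ of $p$ from $v_{t_k}$ to $v_{t_k+1}$, anchoring the case analysis. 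More seriously, you miss the hardest case: when $h'$ and $u$ are separated by at least one full segment of $p$, neither ``maximality of the backtracking'' nor your claim that ``it would be a long $\mathcal H$-component inside $q$'' yields a contradiction --- Proposition~\ref{prop:osinbcp}(2) gives no lower bound on $|h'|_X$, so Remark~\ref{rem:shortcutting}(c) is not violated.

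The paper closes this remaining case by a length argument: some full intermediate segment $p_l$ lies between $h'_+$ and $v_{t_k}$; since $|p_l|_X \ge B_0$ and all its $\mathcal{H}$-components have $d_X$-length $<\Theta$ (Remark~\ref{rem:shortcutting}(c)), Lemma~\ref{lem:rel_geods_with_short_comps} gives $\ell(p_l)\ge B_0/\Theta$, and then the $(4,c_3)$-quasigeodesicity of $p'$ forces $\dxh(h'_+,v_{t_k}) \ge \frac{B_0}{4\Theta}-\frac{c_3}{4}> 1$, contradicting $\dxh(h'_+,h_+)\le 1$. The specific choice $B_0 \ge (4+c_3)\Theta+1$ in \eqref{eq:choice_of_B_0} is exactly what makes this inequality work, and it never appears in your argument. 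Your fallback to Proposition~\ref{prop:osinbcp}(3) is also unjustified: that clause needs \emph{both} paths to be without backtracking, and the broken-line subpath $p'$ need not be; the paper uses only part~(2) together with the structural case analysis above.
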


\begin{proof} 
    Arguing by contradiction, suppose that \(h\) is an \(\mathcal{H}\)-component of \(f_{k}\)  connected to \(e_{k+1}\) and satisfying \(\abs{h}_X > \rho\) (the other case when \(h\) is an \(\mathcal{H}\)-component of \(f_{k+1}\) is similar).  
    Remark~\ref{rem:comp_of_geod_is_an_edge} tells us that \(h\) is a single edge of \(f_k\).
    Moreover, since \(h\) and \(e_{k+1}\) are connected and \((f_k)_+ = (e_{k+1})_-\), we have \(d_{X\cup\mathcal{H}}(h_-,(f_k)_+) \leq 1\).
    The geodesicity of \(f_{k}\) in \(\ga\) now implies that \(h\) must in fact be the last edge of \(f_{k}\), 
    so that \(h_+ = (f_k)_+ = v_{t_k}\).

    Let \(p' = p'_i p_{i+1} \dots p_{j-1} p'_j\) be the subpath of \(p\) with \(p'_- = v_{s_k}\) and \(p'_+ = v_{t_k}\), where \(p'_i\) and \(p'_j\) are non-trivial subpaths of \(p_i\) and \(p_j\) respectively.
    By Lemma~\ref{lem:beta_quasigeodesic}, \(p'\) is \((4,c_3)\)-quasigeodesic.

    Since \(\abs{h}_X > \rho = \kappa(4,c_3,0)\) we may apply Proposition \ref{prop:osinbcp} to find that \(h\) is connected to an \(\mathcal{H}\)-component of \(p'\) (which may consist of multiple edges, each of which is an \(\mathcal{H}\)-component of a segment of \(p\)).
    We write \(h'\) for the final edge of this \(\mathcal{H}\)-component and denote by \(u\) the edge of \(p\) with endpoints \(v_{t_k}\) and \(v_{t_k + 1}\) (see Figure~\ref{fig:short_ending_comps}).
    Procedure~\ref{proc:shortcutting} and the assumption that $h$ is connected to $e_{k+1}$ imply that \(u\) is an \(\mathcal{H}\)-component of a segment of \(p\) and \(h'\) and \(u\) are connected as \(\mathcal{H}\)-subpaths of \(p\).
    
    \begin{figure}[hb]
        \centering
        \includegraphics[]{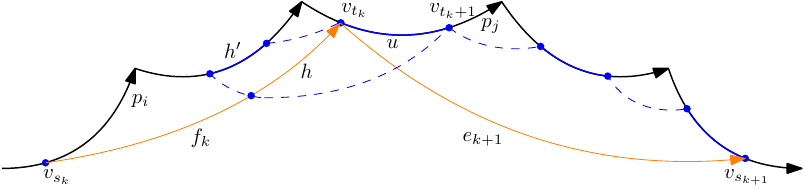}
        \caption{Illustration of Lemma~\ref{lem:short_ending_components}.}
        \label{fig:short_ending_comps}
    \end{figure}

    Suppose, first, that \(p'_j\) is a proper subpath of \(p_j\), so that \(u\) belongs to the segment \(p_j\), as shown on Figure~\ref{fig:short_ending_comps}.
    Then there are the following possibilities.

    \medskip

    \emph{\underline{Case 1}: \(h'\) is an edge of \(p_j\).}

        In this case \(h'\) and \(u\) are connected distinct \(\mathcal{H}\)-subpaths of \(p_j\), which is a geodesic.
        This contradicts the observation of Remark~\ref{rem:comp_of_geod_is_an_edge}, that geodesics are without backtracking and \(\mathcal{H}\)-components of geodesics are single edges.

    \medskip

    \emph{\underline{Case 2}: \(h'\) is an \(\mathcal{H}\)-component of \(p_{j-1}\).}

        Let \(t \in \{0,\dots,d\}\) be such that \(v_t = h'_-\), and note that
        \begin{equation}
        \label{eq:tk-1_s_sk}
            s_k \leq t < t_k.
        \end{equation}
        By the construction from Procedure~\ref{proc:shortcutting}, there are pairwise connected \(\mathcal{H}\)-components \(h_j, \dots, h_{j+l}\), of segments \(p_j, \dots, p_{j+l}\), with \((e_{k+1})_- = (h_j)_-=v_{t_k}\) and \((e_{k+1})_+ = (h_{j+l})_+=v_{s_{k+1}}\), such that \[\max\{\abs{h_j}_X, \dots,\abs{h_{j+l}}_X\} \geq \Theta\] and \(l \in \{0,\dots,n-j\}\)  is chosen to be maximal with this property.
        Then the components \(h', h_j, \dots, h_{j+l}\) constitute a larger instance of consecutive backtracking, starting at $h'_-=v_t$, with \[\max  \Big\{\abs{h'}_X,\abs{h_j}_X, \dots,\abs{h_{j+l}}_X\} \geq \Theta.\] In view of \eqref{eq:tk-1_s_sk}, this contradicts the choice of $t_k$ and the inclusion of $(s_k,t_k)$ in the set $V(p,\Theta)$ at Steps 2 and 3 of Procedure~\ref{proc:shortcutting}.

    \medskip

    \emph{\underline{Case 3}: \(h'\) is an \(\mathcal{H}\)-component of one of the paths \(p'_i, p_{i+1}, \dots, p_{j-2}\).}

        Then the subpath $q$ of $p'$ from $h'_+$ to $p'_+=v_{t_k}$ contains all of $p_{j-1}$.
        By Remark~\ref{rem:shortcutting}(c), \(p_{j-1}\) contains no \(\mathcal{H}\)-components \(q\) satisfying \(\abs{q}_X \geq \Theta\).
        Therefore, in view of Lemma~\ref{lem:rel_geods_with_short_comps} and the assumption that \(p\) is \((B_0,c_0,\zeta,\Theta)\)-tamable, we can deduce that
        \(
            \Theta \ell(p_{j-1}) \geq \abs{p_{j-1}}_X \ge B_0.
        \)
        Combining this with the $(4,c_3)$-quasigeodesicity of \(p'\), we obtain
        \begin{equation*}\label{eq:dist_of_h'}
            \dxh(h'_+,p'_+) \geq \frac{1}{4} \Big( \ell(q) - c_3 \Big) \geq \frac{1}{4} \Big( \ell(p_{j-1}) - c_3 \Big) \geq \frac{B_0}{4\Theta} - \frac{c_3}{4} > 1,
        \end{equation*}
        where the last inequality follows from \eqref{eq:choice_of_B_0}.
        On the other hand, the fact that \(h'\) and \(h\) are connected gives \(d_{X\cup\mathcal{H}}(h'_+,p'_+) = \dxh(h'_+,h_+) \leq 1\), contradicting the above.

    \medskip

    In each case we arrive at a contradiction, so it is impossible that \(\abs{h}_X > \rho\) if \(p'_j\) is a proper subpath of \(p_j\).
    If \(p'_j\) is instead the whole subpath \(p_j\), we may carry out a similar analysis.
    In this situation it must be that \(u\) is an \(\mathcal{H}\)-component of the segment \(p_{j+1}\).
    We now have only two relevant cases to consider:  \(h'\) is an \(\mathcal{H}\)-component of \(p_j\) or \(h'\) is an \(\mathcal{H}\)-component of one of the paths \(p'_i, p_{i+1}, \dots, p_{j-1}\). Both of them will lead to contradictions similarly to Cases 2 and 3 above.

    Therefore it must be that \(\abs{h}_X \leq \rho\), as required.
\end{proof}

\begin{lemma}
\label{lem:ek_ek+1_not_connected}
    For each \(k \in \{ 1, \dots, m-1\}\), the  \(\mathcal{H}\)-subpaths \(e_k\) and \(e_{k+1}\) of $\Sigma(p,\Theta)$ are not connected.
\end{lemma}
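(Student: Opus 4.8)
The plan is to argue by contradiction. Suppose that for some $k \in \{1,\dots,m-1\}$ the $\mathcal{H}$-subpaths $e_k$ and $e_{k+1}$ of $\Sigma(p,\Theta)$ are connected; equivalently, their endpoints $v_{t_{k-1}}, v_{s_k}$ (endpoints of $e_k$) and $v_{t_k}, v_{s_{k+1}}$ (endpoints of $e_{k+1}$) all lie in the same left coset of some $H_\nu$. Between $e_k$ and $e_{k+1}$ in $\Sigma(p,\Theta)$ sits the geodesic segment $f_k$, which by construction joins $v_{s_k}$ to $v_{t_k}$. Since $e_k$ and $e_{k+1}$ are both $H_\nu$-components and connected, the elements $v_{s_k}$ and $v_{t_k}$ lie in the same $H_\nu$-coset, so $d_{X\cup\mathcal{H}}(v_{s_k}, v_{t_k}) \le 1$. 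As $f_k$ is a geodesic in $\ga$ between these two vertices, we conclude that either $f_k$ is trivial (i.e.\ $v_{s_k} = v_{t_k}$) or $f_k$ consists of a single edge which is itself an $H_\nu$-component. In either case, $f_k$ together with $e_k$ and $e_{k+1}$ forms an instance of $H_\nu$-backtracking, and I want to translate this back into backtracking along the original path $p$ that contradicts the maximality built into Procedure~\ref{proc:shortcutting}.

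The first key step is to recall that, by Remark~\ref{rem:shortcutting}(c) and the construction, the subpath $p'$ of $p$ running from $v_{s_k}$ to $v_{t_k}$ has all of its $\mathcal{H}$-components of $d_X$-length less than $\Theta$, and by Lemma~\ref{lem:beta_quasigeodesic} this $p'$ is $(4,c_3)$-quasigeodesic. Now $v_{s_k}$ and $v_{t_k}$ are the endpoints of $e_k$ and $e_{k+1}$ respectively, and both $e_k$ and $e_{k+1}$ are long $\mathcal{H}$-components ($|e_k|_X \ge \zeta$ by Lemma~\ref{lem:e_j-long}, and similarly for $e_{k+1}$). In Procedure~\ref{proc:shortcutting}, $e_k$ arises from a maximal instance of consecutive backtracking in $p$ along components $h_i,\dots,h_j$ of segments $p_i,\dots,p_j$ (with $(h_i)_- = v_{s_{k-1}}$, $(h_j)_+ = v_{s_k}$... more precisely with the last component ending at $v_{s_k}$), and likewise $e_{k+1}$ comes from a maximal instance along $h_{i'},\dots,h_{j'}$ with the first component starting at $v_{t_k}$. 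The edge (or trivial path) $f_k$ being an $H_\nu$-component connecting $v_{s_k}$ to $v_{t_k}$ then means: the last component of the first instance (ending at $v_{s_k}$), the geodesic segments of $p$ in between — which by Remark~\ref{rem:shortcutting}(c) contain only short $\mathcal{H}$-components — and the first component of the second instance (starting at $v_{t_k}$) should all be connected as $H_\nu$-subpaths of $p$, or at least be bridgeable into one long instance of consecutive backtracking.

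The main obstacle, and the heart of the argument, is showing that the two maximal instances of backtracking producing $e_k$ and $e_{k+1}$ actually fuse into a single instance of consecutive backtracking along segments of $p$, thereby contradicting the \emph{maximality} of $j$ in Step~2 of Procedure~\ref{proc:shortcutting} (this is exactly the kind of argument used in Case~2 of the proof of Lemma~\ref{lem:short_ending_components}). The subtlety is that the intermediate segments $p_{j+1},\dots,p_{i'-1}$ of $p$ lying strictly between $v_{s_k}$ and $v_{t_k}$ need not carry any $H_\nu$-component at all. To handle this I would show that the connectedness of $e_k$ and $e_{k+1}$ forces $v_{s_k}$ and $v_{t_k}$ to be at $d_X$-distance at most a bounded constant (at most $L$ via Lemma~\ref{lem:isol_comp_in_triangles_are_short}, or simply $1$ in $d_{X\cup\mathcal{H}}$), so the subpath $p'$ of $p$ between them is a $(4,c_3)$-quasigeodesic whose endpoints are $d_X$-close; applying Lemma~\ref{lem:qgds_with_long_comps} or the quasigeodesic inequality directly, $p'$ must be short in $\ell(\cdot)$. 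But any two segments $p_l$ with $i < l < $ (relevant index) have $\dxh((p_l)_-,(p_l)_+) = \ell(p_l) \ge B_0/\Theta \ge c_1$ by tamability condition~\ref{cond:tam_1}, Lemma~\ref{lem:rel_geods_with_short_comps} and \eqref{eq:choice_of_B_0}; combined with the Gromov product bound from condition~\ref{cond:tam_2}, Lemma~\ref{lem:concat} forces $\ell(p')$ to be \emph{large} unless $p'$ contains at most one segment of $p$. If $p'$ is contained in a single segment $p_l$, then $e_k$, this single edge/geodesic $f_k$, and $e_{k+1}$ would all be $H_\nu$-components/subpaths within or adjacent to the geodesic $p_l$; but a geodesic in $\ga$ is without backtracking and its $\mathcal{H}$-components are single edges (Remark~\ref{rem:comp_of_geod_is_an_edge}), which is incompatible with $f_k$ being a genuine $H_\nu$-edge flanked by the long components $e_k$ and $e_{k+1}$ coming from backtracking in $p$ — the maximality in Step~2 would have extended the first instance past $p_j$ to absorb $f_k$ and beyond. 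Either way we reach a contradiction, so $e_k$ and $e_{k+1}$ cannot be connected.
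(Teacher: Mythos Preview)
Your approach is essentially the same as the paper's: assume $e_k$ and $e_{k+1}$ are connected, deduce $d_{X\cup\mathcal{H}}(v_{s_k},v_{t_k})\le 1$, and then split into the three cases (i) $h_i$ and $q_1$ lie in the same segment (impossible by Remark~\ref{rem:comp_of_geod_is_an_edge}), (ii) they lie in adjacent segments (contradicts maximality in Step~2 of Procedure~\ref{proc:shortcutting}), and (iii) a full segment $p_l$ lies between them (then $\ell(p')\ge\ell(p_l)\ge B_0/\Theta$, while the $(4,c_3)$-quasigeodesicity of $p'$ from Lemma~\ref{lem:beta_quasigeodesic} forces $\ell(p')\le 4+c_3$, contradicting \eqref{eq:choice_of_B_0}). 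Your write-up blurs these cases together and drops in a couple of irrelevant references (Lemma~\ref{lem:isol_comp_in_triangles_are_short} and Lemma~\ref{lem:qgds_with_long_comps} play no role here; only the relative bound $d_{X\cup\mathcal{H}}\le 1$ and the bare quasigeodesic inequality are needed), but the substantive argument is correct and matches the paper.
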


\begin{proof}
    Suppose that $e_k$ is connected to $e_{k+1}$ for some $k\in \{1,\dots,m-1\}$.
    As before, according to Procedure~\ref{proc:shortcutting}, there exist two sets of pairwise connected \(\mathcal{H}\)-components of consecutive segments of $p$, \(h_1, \dots, h_i\) and \(q_1, \dots, q_j\), such that \((h_1)_- = (e_k)_-\), \((h_i)_+ = (e_k)_+\), \((q_1)_- = (e_{k+1})_-\), \((q_j)_+ = (e_{k+1})_+\) and
    \[
        \max\Big\{\abs{h_1}_X, \dots,\abs{h_i}_X\Big\} \geq \Theta,~~  \max\Big\{\abs{q_1}_X, \dots,\abs{q_j}_X \Big\} \geq \Theta .
    \]

    Since \(e_k\) and \(e_{k+1}\) are connected, \(h_i\) and \(q_1\) will be connected $\mathcal{H}$-subpaths of $p$, in particular they cannot be contained in the same segment of the broken line $p$ by Remark~\ref{rem:comp_of_geod_is_an_edge}.
    If \(h_i\) and \(q_1\) are \(\mathcal{H}\)-components of adjacent segments of \(p\), then the components \(h_1, \dots, h_i, q_1, \dots, q_j\) constitute a longer instance of consecutive backtracking in $p$, which contradicts the construction of $e_k$ in Procedure~\ref{proc:shortcutting}.

    Therefore it must be the case that the subpath \(p'\) of \(p\) between \((e_k)_+=(h_i)_+=v_{s_{k}}\) and \((e_{k+1})_-=(q_1)_-=v_{t_{k}}\) contains at least one full segment \(p_l\) (with \(1 < l < n\)).
    By Remark~\ref{rem:shortcutting}(c) the path \(p'\) has no \(\mathcal{H}\)-components \(h\) satisfying \(\abs{h}_X \geq \Theta\). Therefore we can combine Lemma~\ref{lem:rel_geods_with_short_comps} with the fact that $p$ is \((B_0,c_0,\zeta,\Theta)\)-tamable to deduce that 
    \begin{equation}\label{eq:len_of_p'_adj}
        \ell(p') \geq \ell(p_l) \geq \frac{\abs{p_l}_X}{\Theta} \geq \frac{B_0}{\Theta}.
    \end{equation}
    Moreover, by Lemma~\ref{lem:beta_quasigeodesic} the path \(p'\) is \((4,c_3)\)-quasigeodesic so that
    \[
        \ell(p') \leq 4d_{X\cup\mathcal{H}}((e_k)_+,(e_{k+1})_-) + c_3 \leq 4 + c_3,
    \]
    where the last inequality is true because \(e_k\) and \(e_{k+1}\) are connected.
    Combined with (\ref{eq:len_of_p'_adj}), the above inequality gives
    $ B_0 \leq (4 + c_3)\Theta$,
    which contradicts the choice of \(B_0\) in \eqref{eq:choice_of_B_0}.

    Therefore $e_k$ and $e_{k+1}$ cannot be connected, for any $k \in \{1,\dots,m-1\}$.
\end{proof}

\begin{proof}[Proof of Proposition~\ref{prop:shortcutting_quasigeodesic}]
    The construction, together with Lemmas~\ref{lem:e_j-long}, \ref{lem:short_ending_components} and \ref{lem:ek_ek+1_not_connected}, show that the \(\Theta\)-shortcutting \(\Sigma(p,\Theta) = f_0 e_1 f_1 \dots f_{m-1} e_m f_m\) satisfies the hypotheses of Proposition~\ref{prop:mpquasigeodesic} and $e_k$ is non-trivial, for each $k=1,\dots,m$.
    Therefore  \(\Sigma(p,\Theta)\) is \((\lambda,c)\)-quasigeodesic without backtracking.

    For the final claim of the proposition, consider any \(k \in \{ 1, \dots, m\}\) and denote by \(e'_k\) the \(H_\nu\)-component of \(\Sigma(p,\Theta)\) containing \(e_k\), for some $\nu \in \Nu$.     Lemma~\ref{lem:ek_ek+1_not_connected} implies that $e'_k$ is the concatenation $h_1 e_k h_2$, where $h_1$ is either trivial or it is an $H_\nu$-component of $f_{k-1}$, and $h_2$     is either trivial or it is an $H_\nu$-component of $f_{k}$.
    Combining the triangle inequality with Lemmas~\ref{lem:e_j-long}, \ref{lem:short_ending_components} and equation \eqref{eq:choice_of_eta}, we obtain
    \[
        \abs{e'_k}_X \geq \abs{e_k}_X - \abs{h_1}_X-\abs{h_2}_X  \geq \zeta-2\rho \ge \eta,
    \]
    as required.
\end{proof}


\section{Metric quasiconvexity theorem}
\label{sec:metric_qc}

This section comprises a proof of Theorem \ref{thm:metric_qc}, and, as usual, we work under Convention~\ref{conv:main}.
First we will show that if some subgroups \(Q'\leqslant Q\) and \(R'\leqslant R\) satisfy conditions \descref{C1}-\descref{C5} with appropriately large constants, then minimal type path representatives of \(\langle Q', R' \rangle\) meet the conditions of Proposition~\ref{prop:shortcutting_quasigeodesic}.
We will then use the quasigeodesicity of shortcuttings of these path representatives to obtain properties \descref{P1}-\descref{P3}.

\begin{lemma}
\label{lem:C2_implies_old_C2}
    Suppose that \(Q' \leqslant Q\) and \(R' \leqslant R\) satisfy \descref{C2} with constant \(B \geq 0\). Then
    \[
        \minx \Bigl( (Q' \cup R') \setminus S \Bigr) \geq B.
    \]
\end{lemma}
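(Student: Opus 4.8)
The plan is to take an arbitrary element $g \in (Q' \cup R') \setminus S$ and show directly that $|g|_X \ge B$ by placing $g$ inside one of the two double cosets controlled by \descref{C2}. There are two cases according to whether $g$ lies in $Q'$ or in $R'$.

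First I would handle the case $g \in Q' \setminus S$. Since $Q' \leqslant \langle Q',R' \rangle$, writing $g = 1 \cdot g \cdot 1$ shows $g \in R\langle Q',R' \rangle R$. Now $g \in Q$ because $Q' \leqslant Q$; if we also had $g \in R$, then $g \in Q \cap R = S$, contradicting $g \notin S$. Hence $g \notin R$, so $g \in R\langle Q',R' \rangle R \setminus R$, and the second inequality in \descref{C2} gives $|g|_X \ge B$. The case $g \in R' \setminus S$ is symmetric: here $g \in Q\langle Q',R' \rangle Q$, and $g \notin Q$ (otherwise $g \in Q \cap R = S$), so $g \in Q\langle Q',R' \rangle Q \setminus Q$ and the first inequality in \descref{C2} yields $|g|_X \ge B$.

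Since every $g \in (Q' \cup R') \setminus S$ satisfies $|g|_X \ge B$ in both cases, we conclude $\minx\bigl((Q' \cup R') \setminus S\bigr) \ge B$. There is no real obstacle here; the only thing to be careful about is the bookkeeping that $Q' \leqslant Q$ and $R' \leqslant R$ (so membership in $Q'$ forces membership in $Q$), together with the trivial observation that an element of $Q$ lying in $R$ must lie in $S$, which is what lets us conclude the element is genuinely in the "$\setminus Q$" or "$\setminus R$" part of the relevant double coset.
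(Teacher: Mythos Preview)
Your proof is correct and follows essentially the same approach as the paper's: in each case you place $g$ in the appropriate double coset $R\langle Q',R'\rangle R$ or $Q\langle Q',R'\rangle Q$, observe that $g \notin S$ forces $g$ out of the corresponding subgroup $R$ or $Q$, and then invoke \descref{C2}. The paper's version is just a slightly terser rendition of the same argument.
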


\begin{proof}
    Let \(g \in (Q' \cup R') \setminus S\).
    If \(g \in Q'\) then \(g \notin R\) as \(g \notin S\).
    Therefore \(g \in Q' \setminus R \subseteq R \langle Q', R' \rangle R \setminus R\), whence \(\abs{g}_X \geq B\) by \descref{C2}.
    Similarly, if \(g \in R'\) then \(g \in Q \langle Q', R' \rangle Q \setminus Q\), and  \descref{C2} again implies that \(\abs{g}_X \geq B\).
\end{proof}

\begin{notation}\label{not:constants_and_P_1_in_part_1}
\label{not:main_in_part_2}
    For the remainder of this section we fix the following notation:
    \begin{itemize}
        \item \(C_0\) is the constant provided by Lemma~\ref{lem:bddinnprod};
        \item $c_0=\max\{C_0,14\delta\}$ and $c_3=c_3(c_0)$ is the constant obtained by applying Lemma~\ref{lem:concat};
        \item $\lambda=\lambda(c_0)$ and $c=c(c_0)$ are the first two constants from Proposition~\ref{prop:shortcutting_quasigeodesic};
        \item \(C_1 \ge 0\) is the constant from Lemma \ref{lem:shortspikes};
        \item $\mathcal{P}_1$ is the finite family of parabolic subgroups of $G$ defined by
        \begin{equation*}\label{eq:family_P_1}
            \mathcal{P}_1 = \lbrace t H_\nu t^{-1} \, | \, \nu \in \Nu, \abs{t}_X \leq C_1 \rbrace.
        \end{equation*}
    \end{itemize}
\end{notation}

\begin{lemma}
\label{lem:pathreps_have_qgd_shortcutting}
    For each \(\eta \geq 0\) there are constants \(C_3 = C_3(\eta) \geq 0\), $\zeta=\zeta(\eta) \ge 1$, \( \Theta_1 = \Theta_1(\eta) \in \NN\)  and  \(B_1 = B_1(\eta) \geq 0\)  such that the following is true.

    Suppose that \(Q'\leqslant Q\) and \(R'\leqslant R\) are subgroups satisfying conditions \descref{C1}-\descref{C5} with constants \(B \geq B_1\) and \(C \geq C_3\) and family \(\mathcal{P} \supseteq \mathcal{P}_1\). If \(p = p_1 \dots p_n\) is a minimal type path representative for an element \(g \in \langle Q', R' \rangle\) then \(p\) is \((B,c_0,\zeta,\Theta_1)\)-tamable.

    Moreover, let \(\Sigma(p,\Theta_1) = f_0 e_1 f_1 \dots f_{m-1} e_m f_m\) be the \(\Theta_1\)-shortcutting of \(p\) obtained from Procedure~\ref{proc:shortcutting}, and let $e_k'$ be the \(\mathcal{H}\)-component of \(\Sigma(p,\Theta_1)\) containing \(e_k\), $k=1,\dots,m$. Then \(\Sigma(p,\Theta_1)\) is a \((\lambda,c)\)-quasigeodesic without backtracking and  \(\abs{e'_k}_X \geq \eta\), for each \(k = 1, \dots, m\).
\end{lemma}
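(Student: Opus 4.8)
The goal is to verify that a minimal type path representative $p = p_1 \dots p_n$ is $(B, c_0, \zeta, \Theta_1)$-tamable for suitable constants, and then invoke Proposition~\ref{prop:shortcutting_quasigeodesic} to obtain the quasigeodesicity of $\Sigma(p,\Theta_1)$ and the lower bound on the lengths of the $\mathcal{H}$-components $e_k'$. The three conditions of Definition~\ref{def:tamable} must be checked in turn. Condition \ref{cond:tam_2}, that $\langle (p_i)_-, (p_{i+1})_+\rangle_{(p_i)_+}^{rel} \le c_0$, is immediate from Lemma~\ref{lem:bddinnprod}: that lemma gives the bound $C_0$, and we set $c_0 = \max\{C_0, 14\delta\} \ge C_0$ in Notation~\ref{not:main_in_part_2}, so this holds for any subgroups satisfying \descref{C1}. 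Condition \ref{cond:tam_1}, that $\abs{p_i}_X \ge B$ for $i = 2, \dots, n-1$, should follow from condition \descref{C2} via Lemma~\ref{lem:C2_implies_old_C2}: each interior segment $p_i$ has $\elem{p_i} \in (Q' \cup R') \setminus S$ (an interior segment cannot have label in $S$ by the minimality of the type and Remark~\ref{rem:alt}, since consecutive segments alternate between $Q' \setminus S$ and $R' \setminus S$), so $\abs{p_i}_X = \abs{\elem{p_i}}_X \ge B$ by Lemma~\ref{lem:C2_implies_old_C2} applied with constant $B$. We will need $B \ge B_1$ where $B_1$ is chosen below; here $B_1$ can be taken to be $B_0(\Theta_1, c_0)$ from Proposition~\ref{prop:shortcutting_quasigeodesic}.

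The substantive point is condition \ref{cond:tam_3}: whenever $p$ has consecutive backtracking along $\mathcal{H}$-components $h_i, \dots, h_j$ with $\max\{\abs{h_k}_X : k = i, \dots, j\} \ge \Theta_1$, then $d_X((h_i)_-, (h_j)_+) \ge \zeta$. This splits according to whether the backtracking is adjacent ($j = i+1$) or multiple ($j > i+1$). For adjacent backtracking we invoke Lemma~\ref{lem:longadjbacktracking}: given the target $\zeta$, that lemma provides $\Theta_0(\zeta)$ so that if one of the two connected components has $d_X$-length at least $\Theta_0(\zeta)$ then the endpoints are at $d_X$-distance $\ge \zeta$. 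For multiple backtracking we invoke Proposition~\ref{prop:long_multitracking}: given $\zeta$, that proposition provides $C_2(\zeta)$ so that if $B, C \ge C_2(\zeta)$ and $\mathcal{P} \supseteq \mathcal{P}_1$, then $d_X((h_i)_-, (h_j)_+) \ge \zeta$ for \emph{any} instance of multiple backtracking — no length hypothesis on the components is needed there. So the plan is: first fix $\eta \ge 0$; then set $\zeta = \zeta(\eta)$ to be the constant given by Proposition~\ref{prop:shortcutting_quasigeodesic} (applied with $c_0$ and $\eta$), which is $\ge 1$; then set $\Theta_1 = \max\{\zeta, \Theta_0(\zeta)\} \in \NN$, so that $\Theta_1 \ge \zeta$ (as required to apply Proposition~\ref{prop:shortcutting_quasigeodesic}) and $\Theta_1 \ge \Theta_0(\zeta)$ (so the adjacent case goes through); then set $C_3 = \max\{C_2(\zeta), 2C_1 + 1, C_3'\}$ where $C_3'$ accounts for the hypotheses of Proposition~\ref{prop:long_multitracking} and Proposition~\ref{prop:multitracking_path} on $C$; and finally set $B_1 = \max\{C_2(\zeta), B_0(\Theta_1, c_0)\}$ where $B_0$ is from Proposition~\ref{prop:shortcutting_quasigeodesic}. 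With these choices, conditions \descref{C1}--\descref{C5} holding with $B \ge B_1$, $C \ge C_3$ and $\mathcal{P} \supseteq \mathcal{P}_1$ guarantee all three tamability conditions.

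Once tamability is established, Proposition~\ref{prop:shortcutting_quasigeodesic} (applied with the same $c_0$ and $\eta$, noting $\Theta_1 \ge \zeta$ and $B \ge B_1 \ge B_0(\Theta_1,c_0)$) directly yields that $\Sigma(p,\Theta_1) = f_0 e_1 f_1 \dots f_{m-1} e_m f_m$ is $(\lambda, c)$-quasigeodesic without backtracking, that each $e_k$ is non-trivial, and that $\abs{e_k'}_X \ge \eta$ for the $\mathcal{H}$-component $e_k'$ of $\Sigma(p,\Theta_1)$ containing $e_k$. I expect the main obstacle to be bookkeeping: carefully justifying that an interior segment of a minimal type path representative cannot have label in $S$ (so that Lemma~\ref{lem:C2_implies_old_C2} applies to give condition \ref{cond:tam_1}), and tracking the precise interdependence of the constants $\zeta$, $\Theta_1$, $C_3$, $B_1$ so that the hypotheses of Lemma~\ref{lem:longadjbacktracking}, Proposition~\ref{prop:long_multitracking}, and Proposition~\ref{prop:shortcutting_quasigeodesic} are all simultaneously satisfied. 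The geometric content is entirely contained in the cited results; the work here is to assemble them in the right order with compatible parameters.
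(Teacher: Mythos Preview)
Your proposal is correct and follows essentially the same approach as the paper's proof: define $\zeta$ via Proposition~\ref{prop:shortcutting_quasigeodesic}, set $\Theta_1 = \max\{\Theta_0(\zeta),\zeta\}$, $C_3 = C_2(\zeta)$, $B_1 = \max\{B_0(\Theta_1,c_0), C_2(\zeta)\}$, verify the three tamability conditions using Remark~\ref{rem:alt} with Lemma~\ref{lem:C2_implies_old_C2}, Lemma~\ref{lem:bddinnprod}, and the adjacent/multiple split via Lemma~\ref{lem:longadjbacktracking} and Proposition~\ref{prop:long_multitracking}, then invoke Proposition~\ref{prop:shortcutting_quasigeodesic}. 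Your extra terms in $C_3$ (the $2C_1+1$ and $C_3'$) are harmless but redundant, since $C_2(\zeta) = \max\{2C_1,\zeta+2D\}+1$ already exceeds $2C_1+1$ and already encodes the constraint from Proposition~\ref{prop:multitracking_path}.
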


\begin{proof}
    We define the following constants:
    \begin{itemize}
        \item \(\zeta = \zeta(\eta,c_0) \geq 0\), the constant provided by Proposition~\ref{prop:shortcutting_quasigeodesic};
        \item \(C_3 = C_2(\zeta) \geq 0\), where \(C_2(\zeta)\) is given by Proposition~\ref{prop:long_multitracking};
        \item \(\Theta_1 = \max\{\Theta_0(\zeta), \zeta\}\), where \(\Theta_0\) is the constant of Lemma~\ref{lem:longadjbacktracking};
        \item \(B_1 = \max\{ B_0(\Theta_1,c_0), C_2(\zeta) \} \geq 0\), where \(B_0\) is the remaining constant of Proposition~\ref{prop:shortcutting_quasigeodesic}.
    \end{itemize}

    Let \(B \geq B_1\) and \(C \geq C_3\).
    Suppose that $Q'$, $R'$, $g$ and $p$ are as in the statement of the lemma.
    In view of Remark~\ref{rem:alt}, \(\elem{p_i} \in (Q' \cup R') \setminus S\), for every \(i = 2, \dots, n-1\).
    Therefore, by Lemma~\ref{lem:C2_implies_old_C2}, we have
    \begin{equation}
    \label{eq:pi_long}
        \abs{p_i}_X \geq B,  \text{ for each } i = 2, \dots, n-1.
    \end{equation}

    On the other hand, Lemma~\ref{lem:bddinnprod} tells us that
    \begin{equation}
    \label{eq:pathreps_bdd_in_prod}
        \langle (p_i)_-, (p_{i+1})_+ \rangle_{(p_i)_+}^{rel} \leq C_0 \le c_0, \text{ for each }i = 1, \dots, n-1.
    \end{equation}

    Now suppose that \(p\) has consecutive backtracking along \(\mathcal{H}\)-components \(h_i, \dots, h_j\) of segments \(p_i, \dots, p_j\) satisfying
    \[
        \max\Big\{ \abs{h_i}_X,\dots, \abs{h_j}_X \Big\} \geq \Theta_1.
    \]
    If \(j = i+1\) then Lemma~\ref{lem:longadjbacktracking} and the choice of \(\Theta_1\) give that \(d_X((h_i)_-,(h_j)_+) \geq \zeta\).
    Otherwise Proposition~\ref{prop:long_multitracking} gives the same inequality.
    The above together with (\ref{eq:pi_long}) and (\ref{eq:pathreps_bdd_in_prod}) show that \(p\) is \((B,c_0,\zeta,\Theta_1)\)-tamable.
    
    The remaining claims of the lemma follow from Proposition~\ref{prop:shortcutting_quasigeodesic}.
\end{proof}

We can now deduce the relative quasiconvexity of \(\langle Q', R' \rangle\) by applying Lemma~\ref{lem:pathreps_have_qgd_shortcutting} with $\eta=0$.

\begin{proposition}
\label{prop:metric_P1} 
    Let $\beta_1=B_1(0)$ and $\gamma_1=C_3(0)$ be the constants provided by Lemma~\ref{lem:pathreps_have_qgd_shortcutting} applied to the case when $\eta=0$.

    Suppose that \(Q'\leqslant Q\) and \(R'\leqslant R\) are relatively quasiconvex subgroups of $G$ satisfying conditions \descref{C1}-\descref{C5} with family \(\mathcal{P} \supseteq \mathcal{P}_1\) and constants \(B \geq \beta_1\), \( C \geq \gamma_1\). 
    Then the subgroup \(\langle Q', R' \rangle\) is relatively quasiconvex in $G$.
\end{proposition}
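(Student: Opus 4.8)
The plan is to unwind the definition of relative quasiconvexity from Definition~\ref{def:rel_qc}: we must produce a single constant $\varepsilon' \ge 0$ such that every geodesic of $\ga$ with endpoints in $\langle Q',R'\rangle$ stays $d_X$-close to $\langle Q',R'\rangle$. First I would fix any $g \in \langle Q',R'\rangle$ and pick a path representative $p=p_1\dots p_n$ of $g$ of minimal type (Definition~\ref{def:type_of_path_rep}), which exists since the set of path representatives is non-empty. The idea is that the nodes of $p$ all lie in $\langle Q',R'\rangle$ by construction (each prefix $\elem{p_1}\cdots\elem{p_i}$ is a product of elements of $Q'\cup R'$), so it suffices to show that every vertex of a geodesic $[1,g]$ is uniformly $d_X$-close to the broken line $p$, and then that every vertex of $p$ is uniformly $d_X$-close to a node of $p$.

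The second of these is handled by the shortcutting machinery: applying Lemma~\ref{lem:pathreps_have_qgd_shortcutting} with $\eta = 0$ (this is exactly why $\beta_1 = B_1(0)$ and $\gamma_1 = C_3(0)$ appear in the statement), the hypotheses \descref{C1}--\descref{C5} with $B \ge \beta_1$, $C \ge \gamma_1$ and $\mathcal P \supseteq \mathcal P_1$ guarantee that $p$ is $(B,c_0,\zeta,\Theta_1)$-tamable, and hence its $\Theta_1$-shortcutting $\Sigma(p,\Theta_1) = f_0 e_1 f_1 \dots f_{m-1} e_m f_m$ is $(\lambda,c)$-quasigeodesic without backtracking, with each $\mathcal H$-component $e_k'$ containing $e_k$. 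Now $\Sigma(p,\Theta_1)$ and the geodesic $[1,g]$ share endpoints, so $[1,g]$ is a geodesic and $\Sigma(p,\Theta_1)$ a $(\lambda,c)$-quasigeodesic without backtracking with the same endpoints (i.e.\ $0$-similar); by Proposition~\ref{prop:osinbcp}(1), every vertex of $[1,g]$ is within $d_X$-distance $\kappa(\lambda,c,0)$ of a phase vertex of $\Sigma(p,\Theta_1)$. By Remark~\ref{rem:shortcutting}(b) the nodes of $\Sigma(p,\Theta_1)$ are vertices of $p$; and the phase vertices of $\Sigma(p,\Theta_1)$ that are not nodes lie on the geodesic pieces $f_k$ within $d_X$ of $p$ via an application of Lemma~\ref{lem:isol_comp_in_triangles_are_short} or direct estimates, while the interior vertices of the $e_k'$ are never phase. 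So every vertex of $[1,g]$ is $d_X$-close to a vertex of $p$ lying on some $f_k$, which is at $d_X$-distance at most $\ell(f_k)/2$... no — here I instead observe that each node $v_{s_k}, v_{t_k}$ of $\Sigma(p,\Theta_1)$ is a vertex of $p$, and one shows using Remark~\ref{rem:shortcutting}(c) together with Lemma~\ref{lem:rel_geods_with_short_comps} that between consecutive such marked vertices the relevant subpath of $p$ has uniformly bounded $d_X$-diameter? That is false in general, so the cleaner route is: a vertex $w$ of $[1,g]$ is $\kappa(\lambda,c,0)$-close in $d_X$ to a phase vertex $w'$ of $\Sigma(p,\Theta_1)$; this $w'$ lies on some $f_k$ or is an endpoint of some $e_k$; in either case $w'$ is a vertex of $p$; finally $d_X(w, \langle Q',R'\rangle) \le d_X(w,w') + d_X(w', \langle Q',R'\rangle)$, and $d_X(w', \langle Q',R'\rangle)$ is bounded since $w'$ lies on $p$ whose endpoints are in $\langle Q',R'\rangle$ and $Q',R'$ are relatively quasiconvex — wait, relative quasiconvexity of $Q', R'$ controls distances only to $Q'$ and $R'$ along geodesics, not along $p$.

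So the correct and most robust argument is: since $Q'$ and $R'$ are assumed relatively quasiconvex, each geodesic segment $p_i$ of $p$ has endpoints in a coset of $Q'$ or $R'$, hence every vertex of $p_i$ is within $d_X$-distance $\varepsilon(Q')$ or $\varepsilon(R')$ of that coset of $Q'$ or $R'$, and therefore within bounded $d_X$-distance of $\langle Q',R'\rangle$ itself (as the relevant coset $(p_i)_- Q'$ or $(p_i)_- R'$ is contained in $\langle Q',R'\rangle$ because $(p_i)_- \in \langle Q',R'\rangle$). Thus every vertex of $p$ is within $\varepsilon'' := \max\{\varepsilon(Q'),\varepsilon(R')\}$ of $\langle Q',R'\rangle$ in $d_X$. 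Combining with the previous paragraph, every vertex $w$ of the geodesic $[1,g]$ satisfies $d_X(w,\langle Q',R'\rangle) \le \kappa(\lambda,c,0) + \varepsilon''$, where we also note that interior (non-phase) vertices of $\mathcal H$-components of $\Sigma(p,\Theta_1)$ are endpoints of single edges $e_k'$ and so we apply Proposition~\ref{prop:osinbcp}(1) to phase vertices only — but every vertex of the \emph{geodesic} $[1,g]$ is phase by Remark~\ref{rem:comp_of_geod_is_an_edge}, so this is fine. By left-invariance of $d_X$ this bound, which is independent of $g$, shows that $\langle Q',R'\rangle$ is relatively quasiconvex with quasiconvexity constant $\kappa(\lambda,c,0) + \varepsilon''$.

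The main obstacle is the bookkeeping in the middle step: verifying that a phase vertex $w'$ of $\Sigma(p,\Theta_1)$ provided by Proposition~\ref{prop:osinbcp} is genuinely a vertex of $p$ — nodes are (Remark~\ref{rem:shortcutting}(b)), interior vertices of the $f_k$ are (they are subpaths of segments of $p$ by Lemma~\ref{lem:beta_quasigeodesic}'s construction), and the only subtlety is whether a phase vertex could lie "inside" an $e_k$; but $e_k$ is a single edge joining two vertices of $p$, so its only phase vertices are its two endpoints, both on $p$. Once that is pinned down, the rest is a routine triangle-inequality assembly, so I expect the write-up to be short.
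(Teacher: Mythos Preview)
Your overall strategy matches the paper's, but there is a genuine gap in your ``bookkeeping'' step. You assert that interior vertices of the geodesic pieces $f_k$ of $\Sigma(p,\Theta_1)$ are vertices of $p$, justifying this by saying the $f_k$ ``are subpaths of segments of $p$ by Lemma~\ref{lem:beta_quasigeodesic}'s construction''. This is false: in Procedure~\ref{proc:shortcutting}, Step~5, each $f_k$ is defined as an \emph{arbitrary geodesic} in $\ga$ joining $v_{s_k}$ to $v_{t_k}$, not as the subpath of $p$ between those vertices. Lemma~\ref{lem:beta_quasigeodesic} says precisely that this subpath of $p$ is only $(4,c_3)$-quasigeodesic, and in general it traverses several segments of the broken line $p$, so it is not geodesic and cannot coincide with $f_k$. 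Consequently a phase vertex $w'$ of $\Sigma(p,\Theta_1)$ lying in the interior of some $f_k$ need not be a vertex of $p$ at all, and your bound $\kappa(\lambda,c,0)+\varepsilon''$ does not follow.

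The fix, which is exactly what the paper does, costs one extra application of Proposition~\ref{prop:osinbcp}: having found a phase vertex $v$ of $\Sigma(p,\Theta_1)$ with $d_X(u,v)\le \kappa(\lambda,c,0)$, note that $v$ lies on some $f_k$; the subpath of $p$ with the same endpoints as $f_k$ is $(4,c_3)$-quasigeodesic by Lemma~\ref{lem:beta_quasigeodesic} and is $0$-similar to $f_k$, so Proposition~\ref{prop:osinbcp} yields a vertex $w$ of $p$ with $d_X(v,w)\le \kappa(4,c_3,0)$. Now $w$ genuinely lies on some segment $p_j$, and your quasiconvexity argument for $Q'$ and $R'$ applies to $w$. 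The resulting quasiconvexity constant is $\kappa(\lambda,c,0)+\kappa(4,c_3,0)+\varepsilon''$, not $\kappa(\lambda,c,0)+\varepsilon''$.
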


\begin{proof}
    By assumption the subgroups \(Q'\) and \(R'\) are relatively quasiconvex, with some quasiconvexity constant $\varepsilon' \ge 0$.
    For any element \(g \in \langle Q', R' \rangle\) consider a geodesic \(\tau\) in \(\Gamma(G,X\cup\mathcal{H})\) with \(\tau_- = 1\) and \(\tau_+ = g\). Let \(u\) be any vertex of \(\tau\).

    Since \(g \in \langle Q', R' \rangle\), it has a path representative \(p = p_1 \dots p_n\) of minimal type, with $p_-=1$.
    Let \(\Sigma(p,\Theta) = f_0 e_1 f_1 \dots f_{m-1} e_m f_m\) be the \(\Theta\)-shortcutting of \(p\) obtained from Procedure~\ref{proc:shortcutting}, where $\Theta=\Theta_1(0)$ is provided by Lemma~\ref{lem:pathreps_have_qgd_shortcutting}.
    This lemma implies that $p$ is \((B,c_0,\zeta,\Theta)\)-tamable and \(\Sigma(p,\Theta)\) is a \((\lambda,c)\)-quasigeodesic without backtracking, where $\lambda \ge 1$ and $c \ge 0$ are the constants fixed in Notation~\ref{not:main_in_part_2}.    Therefore, by Proposition~\ref{prop:osinbcp}, there is a phase vertex \(v\) of \(\Sigma(p,\Theta)\) with \(d_X(u,v) \leq \kappa(\lambda,c,0)\).

    Since each $e_i$ is a single edge, the vertex \(v\) lies on the geodesic subpath \(f_i\) of \(\Sigma(p,\Theta)\), for some \(i \in \{0, \dots, m\}\).
    The subpath of \(p\) sharing endpoints with \(f_i\) is \((4,c_3)\)-quasigeodesic by Lemma~\ref{lem:beta_quasigeodesic}.
    Hence there is a vertex \(w\) of \(p\) such that \(d_X(v,w) \leq \kappa(4,c_3,0)\), by Proposition~\ref{prop:osinbcp}.

    Now \(w\) is a vertex of a subpath \(p_j\) of \(p\), for some \(j \in \{ 1, \dots, n \}\).
    Let \(x = (p_j)_-\), and note that \(x \in \langle Q', R' \rangle\).
    Without loss of generality, suppose that \(\elem{p_j} \in Q'\) (the case when  \(\elem{p_j} \in R'\) can be treated similarly).
    Then by the relative quasiconvexity of \(Q'\), \(d_X(w,xQ') \leq \varepsilon'\), whence \(d_X(w,\langle Q', R' \rangle) \leq \varepsilon'\).
    Therefore
    \begin{align*}
        d_X(u,\langle Q', R' \rangle) &\leq d_X(u,v) + d_X(v,w) + d_X(w,\langle Q', R' \rangle) \\
            &\leq \kappa(\lambda,c,0) + \kappa(4,c_3,0) + \varepsilon',
    \end{align*}
    so that \(\langle Q', R' \rangle\) is a relatively quasiconvex subgroup of $G$, with the quasiconvexity constant $\kappa(\lambda,c,0) + \kappa(4,c_3,0) + \varepsilon'$.
\end{proof}

We will next show that properties \descref{P2} and \descref{P3} will be satisfied if one chooses the constants \(B\) and \(C\) of \descref{C1}-\descref{C5} to be sufficiently large with respect to \(A\).

\begin{lemma}
\label{lem:metric_P2}
    For any \(A \geq 0\) there exist constants \(\beta_2 = \beta_2(A) \geq 0\) and $\gamma_2=\gamma_2(A) \ge 0$ such that if \(Q'\leqslant Q\) and \(R'\leqslant R\) satisfy conditions \descref{C1}-\descref{C5} with constants \(B \geq \beta_2\) and \(C \geq \gamma_2\) and family \(\mathcal{P} \supseteq \mathcal{P}_1\), then \[\minx \Big(\langle Q', R' \rangle \setminus S\Big) \geq A.\]
\end{lemma}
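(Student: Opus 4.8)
The strategy is the same as in the proof of relative quasiconvexity (Proposition~\ref{prop:metric_P1}): take an element $g \in \langle Q',R'\rangle \setminus S$, pick a minimal type path representative $p = p_1 \dots p_n$ of $g$ with $p_- = 1$, apply Lemma~\ref{lem:pathreps_have_qgd_shortcutting} to get that $p$ is tamable and that its $\Theta_1$-shortcutting $\Sigma(p,\Theta_1) = f_0 e_1 f_1 \dots f_{m-1} e_m f_m$ is a $(\lambda,c)$-quasigeodesic without backtracking, with the components $e_k'$ long in the $d_X$-metric. I will then show that $\abs{g}_X = \abs{p}_X$ is large by arguing that $\Sigma(p,\Theta_1)$ is long. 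There are two cases depending on whether $\Sigma(p,\Theta_1)$ has any long $\mathcal{H}$-components (i.e.\ whether $m \geq 1$).

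First I would set up the constants. Given $A \geq 0$, apply Lemma~\ref{lem:qgds_with_long_comps} to $(\lambda,c)$-quasigeodesics with target length $A$ to obtain $\eta = \eta(\lambda,c,A) \geq 0$; then feed this $\eta$ into Lemma~\ref{lem:pathreps_have_qgd_shortcutting} to get $C_3(\eta)$, $\zeta(\eta)$, $\Theta_1(\eta)$, $B_1(\eta)$, and set $\gamma_2 = \max\{C_3(\eta), \gamma_1\}$ and $\beta_2 = \max\{B_1(\eta), \beta_1, \lambda A + c\}$ (where $\beta_1,\gamma_1$ are from Proposition~\ref{prop:metric_P1}, so that $\langle Q',R'\rangle$ is genuinely relatively quasiconvex, though that is not strictly needed here). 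Now suppose $Q' \leqslant Q$, $R' \leqslant R$ satisfy \descref{C1}--\descref{C5} with $B \geq \beta_2$, $C \geq \gamma_2$ and $\mathcal{P} \supseteq \mathcal{P}_1$, and take $g \in \langle Q',R'\rangle \setminus S$ with minimal type path representative $p = p_1 \dots p_n$, $p_- = 1$, $p_+ = g$. By Lemma~\ref{lem:pathreps_have_qgd_shortcutting}, $p$ is $(B,c_0,\zeta,\Theta_1)$-tamable and $\Sigma := \Sigma(p,\Theta_1) = f_0 e_1 f_1 \dots f_{m-1} e_m f_m$ is $(\lambda,c)$-quasigeodesic without backtracking with $\abs{e_k'}_X \geq \eta$ for each $k = 1,\dots,m$.

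\textit{Case $m \geq 1$:} then $\Sigma$ has an $\mathcal{H}$-component $e_1'$ with $\abs{e_1'}_X \geq \eta$, and since $\Sigma$ is without backtracking this component is isolated; applying Lemma~\ref{lem:qgds_with_long_comps} to the $(\lambda,c)$-quasigeodesic $\Sigma$ (which has the same endpoints as $p$, by Remark~\ref{rem:shortcutting}(b)) gives $\abs{g}_X = \abs{\Sigma}_X \geq A$, as desired.

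\textit{Case $m = 0$:} then $\Sigma = f_0$ is a single geodesic from $1$ to $g$ in $\ga$, and Remark~\ref{rem:shortcutting}(c) tells us that $p$ itself has no $\mathcal{H}$-component $h$ with $\abs{h}_X \geq \Theta_1$. Since $g \notin S$, Remark~\ref{rem:alt} gives $n \geq 1$ and $\elem{p_i} \in (Q' \cup R') \setminus S$ for $i = 2,\dots,n-1$; if $n \geq 3$ then $\abs{p_2}_X \geq B$ by Lemma~\ref{lem:C2_implies_old_C2}, and since every $\mathcal{H}$-component of the geodesic $p_2$ has $d_X$-length $< \Theta_1$, Lemma~\ref{lem:rel_geods_with_short_comps} gives $\ell(p_2) \geq \abs{p_2}_X/\Theta_1$; combined with the $(4,c_3)$-quasigeodesicity of $p$ (Lemma~\ref{lem:beta_quasigeodesic}, or directly since the whole of $p$ lies between $v_{s_0}$ and $v_{t_0}$) one bounds $\abs{p}_X$ below — but this is getting circuitous. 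A cleaner route for $m=0$: since $p$ has no long $\mathcal{H}$-components, Lemma~\ref{lem:rel_geods_with_short_comps} applied to the whole broken line gives $\abs{g}_X = \abs{p}_X \geq \ell(p)/\Theta_1 \cdot$(no — wrong direction). The right statement is $\abs{p}_X \le \Theta_1 \ell(p)$, which is the wrong inequality. Instead, since $g \notin S$ and $g \in Q' \cup R'$ whenever $n = 1$, we get $\abs{g}_X \geq B \geq \lambda A + c \geq A$ by Lemma~\ref{lem:C2_implies_old_C2}; and when $n \geq 2$, the path $p$ genuinely runs through long geodesic pieces: applying the $(4,c_3)$-quasigeodesicity of the subpath of $p$ from $v_{s_0}$ to $v_{t_0}$, which is all of $p$, one has $\ell(p) \leq 4 \dxh(1,g) + c_3 \leq 4\abs{g}_X + c_3$, while $\ell(p) \geq \ell(p_2) \geq \abs{p_2}_X/\Theta_1 \geq B/\Theta_1$ when $n \geq 3$, giving $\abs{g}_X \geq (B/\Theta_1 - c_3)/4 \geq A$ after enlarging $\beta_2$; the case $n = 2$ is handled by noting $p = p_1 p_2$ with both labels in $(Q' \cup R') \setminus S$ forces, via Remark~\ref{rem:alt}, that $g = \elem{p_1}\elem{p_2}$ with $\elem{p_1} \in Q'$, $\elem{p_2} \in R'$ (or vice versa), so $g \in Q'R' \subseteq \langle Q',R'\rangle$ is not in $S$ and one argues as before using $\langle (p_1)_-,(p_2)_+\rangle^{rel}_{(p_1)_+} \leq C_0$ to see $p$ is quasigeodesic (Lemma~\ref{lem:concat}) hence $\abs{g}_X = \abs{p}_X$ is comparable to $\ell(p) \geq \ell(p_i)$ for the long segment — but actually if $n = 2$ neither $p_1$ nor $p_2$ need be long, so here I simply invoke that $\elem{p_1} \in Q' \setminus S$ or $\elem{p_2} \in R' \setminus S$ has $\abs{\cdot}_X \geq B$ by Lemma~\ref{lem:C2_implies_old_C2}, and since $p$ is $(4,c_3)$-quasigeodesic, $\abs{g}_X \geq (\ell(p) - c_3)/4 \geq (\abs{p_1}_X - c_3)/4 \geq (B - c_3)/4 \geq A$ after enlarging $\beta_2$ to also exceed $4A + c_3$. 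Collecting the cases, in all of them $\abs{g}_X \geq A$, which proves $\minx(\langle Q',R'\rangle \setminus S) \geq A$.

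\textit{Main obstacle.} The substantive point is the $m = 0$ case, where the shortcutting procedure collapses $p$ to a single geodesic and the length bound $\abs{g}_X \geq A$ has to come purely from the quasigeodesicity of $p$ itself together with the lower bound $\abs{p_i}_X \geq B$ on the interior segments (Lemma~\ref{lem:C2_implies_old_C2}); one has to be careful about the short extremal cases $n \in \{1,2\}$, which are precisely why condition \descref{C2} is phrased as $\minx(Q\langle Q',R'\rangle Q \setminus Q) \geq B$ (covering $g = \elem{p_1} \in Q' \setminus Q \subseteq R\langle Q',R'\rangle R \setminus R$ and its analogues) rather than only controlling interior segments. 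Everything else is a routine reduction to Lemma~\ref{lem:qgds_with_long_comps} and Lemma~\ref{lem:pathreps_have_qgd_shortcutting}, with the constants chosen at the outset.
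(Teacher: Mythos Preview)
Your overall approach matches the paper's exactly: apply Lemma~\ref{lem:pathreps_have_qgd_shortcutting}, handle $m\geq 1$ via Lemma~\ref{lem:qgds_with_long_comps}, and in the $m=0$ case use that $p$ itself is $(4,c_3)$-quasigeodesic (Lemma~\ref{lem:beta_quasigeodesic}) with no long $\mathcal{H}$-components. However, your $m=0$ case is overcomplicated and contains a slip.

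First, you misread Remark~\ref{rem:alt}: when $g\notin S$ and $p$ has minimal type, \emph{every} segment label $\elem{p_i}$, including $\elem{p_1}$ and $\elem{p_n}$, lies in $(Q'\cup R')\setminus S$, not just the interior ones. So the split into sub-cases $n=1$, $n=2$, $n\geq 3$ is unnecessary. The paper simply takes $p_1$, observes $\abs{p_1}_X\geq B$ by Lemma~\ref{lem:C2_implies_old_C2}, applies Lemma~\ref{lem:rel_geods_with_short_comps} (since no $\mathcal{H}$-component of $p_1$ has $d_X$-length $\geq\Theta_1$) to get $\ell(p_1)\geq B/\Theta_1$, and concludes
\[
\abs{g}_X\;\geq\;\abs{g}_{X\cup\mathcal{H}}\;\geq\;\frac{1}{4}\bigl(\ell(p)-c_3\bigr)\;\geq\;\frac{1}{4}\Bigl(\frac{B}{\Theta_1}-c_3\Bigr)\;\geq\;A
\]
once $\beta_2\geq(4A+c_3)\Theta_1$.

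Second, in your $n=2$ sub-case you write $\ell(p)\geq\abs{p_1}_X$, which is false in general: $p_1$ is geodesic in the \emph{relative} Cayley graph, so $\ell(p_1)=\dxh((p_1)_-,(p_1)_+)\leq\abs{p_1}_X$, not $\geq$. You need the factor $1/\Theta_1$ from Lemma~\ref{lem:rel_geods_with_short_comps} here too, exactly as you correctly do in your $n\geq 3$ sub-case. With that fix (and the corresponding enlargement of $\beta_2$) your argument goes through, but the single-case treatment above is cleaner.
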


\begin{proof} 
    Given any \(A \geq 0\) let \(\eta = \eta(\lambda,c,A)\) be the constant provided by Lemma~\ref{lem:qgds_with_long_comps}.
    Using Lemma~\ref{lem:pathreps_have_qgd_shortcutting}, set 
    \[ \Theta = \Theta_1(\eta),
    ~ \gamma_2 = C_3(\eta)~ \text{ and }\beta_2 = \max\{B_1(\eta),(4A + c_3)\Theta\}.
    \]

    Suppose that \(Q'\) and \(R'\) satisfy conditions \descref{C1}-\descref{C5} with constants \(B \geq \beta_2\) and \(C \geq \gamma_2\), and let \(g \in \langle Q', R' \rangle\) be any element with \(\abs{g}_X < A\). Let \(p = p_1 \dots p_n\) be a path representative of \(g\) with minimal type.
    By Lemma~\ref{lem:pathreps_have_qgd_shortcutting}, $p$ is \((B,c_0,\zeta,\Theta_1)\)-tamable, the \(\Theta\)-shortcutting \(\Sigma(p,\Theta) = f_0 e_1 f_1 \dots f_{m-1} e_m f_m\) is \((\lambda,c)\)-quasigeodesic without backtracking, and, for each \(k = 1, \dots, m\), \(e'_k\), the \(\mathcal{H}\)-component of \(\Sigma(p,\Theta)\) containing \(e_k\), is isolated and satisfies \(\abs{e'_k}_X \geq \eta\).

    If \(m \geq 1\), then, according to Lemma~\ref{lem:qgds_with_long_comps}, \(\abs{g}_X = \abs{\Sigma(p,\Theta)}_X \geq A\), contradicting our assumption.
    Therefore it must be the case that \(m = 0\) and \(\Sigma(p,\Theta) = f_0\). 
    Since \(p_- = (f_0)_-\) and \(p_+ = (f_0)_+\), Lemma~\ref{lem:beta_quasigeodesic} tells us that \(p\) is \((4,c_3)\)-quasigeodesic.
    Moreover, following Remark~\ref{rem:shortcutting}(c), we see that \(p_i\) has no $\mathcal{H}$-component $h$ with $\abs{h}_X \ge \Theta$, for each $i=1,\dots,n$.

    Now, arguing by contradiction, suppose that $g \notin S$. Then $\elem{p_1} \in (Q' \cup R') \setminus S$ (by Remark~\ref{rem:alt}), so
    \(\abs{p_1}_X \geq B \ge \beta_2\), by Lemma~\ref{lem:C2_implies_old_C2}. Lemma~\ref{lem:rel_geods_with_short_comps} now implies that
    \[
        \ell(p_1)\ge \beta_2/\Theta \ge 4A+c_3.
    \]

    Since $\ell(p) \ge \ell(p_1)$,  the $(4,c_3)$-quasigeodesicity of \(p\) yields
    \[
        A > \abs{g}_{X} \ge |g|_{X\cup \mathcal{H}}  = |p|_{X\cup \mathcal{H}} \geq \frac{1}{4} \left(\ell(p) - c_3\right) \geq A,
    \]
    which is a contradiction. Therefore $g \in S$ and the lemma is proved.
\end{proof}

In order to prove that property \descref{P3} holds for the subgroups \(Q'\) and \(R'\), we need to consider path representatives of elements $g \in Q \langle Q', R' \rangle R$. These path representatives will necessarily have to be slightly different from those in Definition~\ref{def:path_reps}.

\begin{definition}[Path representative, II] 
\label{def:double_coset_path_reps}
    Let \(g\) be an element of \(Q \langle Q', R' \rangle R\).
    Suppose that \(p = q p_1 \dots p_n r\) is  a broken line in \(\Gamma(G,X\cup\mathcal{H})\), satisfying the following conditions:
    \begin{itemize}
        \item \(\elem{p} = g\);
        \item \(\elem{q} \in Q\)  and \(\elem{r} \in R\);
        \item \(\elem{p_i} \in Q' \cup R'\), for each \(i \in \{1, \dots n\}\).
    \end{itemize}
    Then we say that \(p\) is a \emph{path representative} of \(g\) in the product \(Q \langle Q', R' \rangle R\).
\end{definition}

Similarly to Definition~\ref{def:type_of_path_rep}, we can define types for such path representatives.

\begin{definition}[Type of a path representative, II] \label{def:type_of_double_coset_path_rep}
    Suppose that \(p = q p_1 \dots p_n r\) is a path representative of some \(g \in Q \langle Q', R' \rangle R\), as described in Definition~\ref{def:double_coset_path_reps}.
    Let \(Y\) denote the set of all \(\mathcal{H}\)-components of the segments of \(p\).
    We define the \emph{type} of the path representative \(p\) to be the triple
    \[\tau(p) = \Big(n, \ell(p),\sum_{y \in Y} |y|_X \Big) \in {\NN_0}^3.\]
\end{definition}

\begin{remark}
\label{rem:props_of_double_coset_path_reps}
    Note that, by Definition~\ref{def:double_coset_path_reps}, a path representative $p=q p_1 \dots p_n r$, of an element $g \in Q \langle Q', R' \rangle R\setminus QR$, must necessarily satisfy $n>0$. 
    Moreover, if $p$ has minimal type (so $n$ is the smallest possible) then $\elem{p_1} \in R' \setminus S$, $\elem{p_n} \in Q' \setminus S$ and the labels of $p_1,\dots,p_{n}$ will alternate between representing elements of $R' \setminus S$ and $Q' \setminus S$. 
    It follows that the integer $n$ must be even, so $n \ge 2$.
\end{remark}

For example, if $g \in R'Q'\setminus QR$ then a minimal type path representative of $g$ will have the form $qp_1p_2 r$, where $q$ and $r$ are trivial paths, $\elem{p_1} \in R'$ and  $\elem{p_2} \in Q'$.

It is not difficult to check that the results of Sections~\ref{sec:path_reps}, \ref{sec:adj_backtracking}, and \ref{sec:multitracking} hold equally well for minimal type path representatives of the above form for elements $g \in Q \langle Q', R' \rangle R \setminus QR$, with only superficial adjustments to the proofs in those sections. It follows that Lemma~\ref{lem:pathreps_have_qgd_shortcutting} also remains valid in these settings.

\begin{lemma}
\label{lem:metric_P3}
    In the statement of Lemma~\ref{lem:metric_P2} we can add that
    \[
        \minx \Big( Q \langle Q', R' \rangle R \setminus QR\Big) \geq A.
    \]
\end{lemma}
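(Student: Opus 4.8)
The plan is to run the proof of Lemma~\ref{lem:metric_P2} almost verbatim, now using path representatives of elements of $Q\langle Q',R'\rangle R$ in the sense of Definition~\ref{def:double_coset_path_reps} and their types from Definition~\ref{def:type_of_double_coset_path_rep}. As observed after Definition~\ref{def:type_of_double_coset_path_rep}, the whole machinery of Sections~\ref{sec:path_reps}--\ref{sec:multitracking}, and in particular Lemma~\ref{lem:pathreps_have_qgd_shortcutting}, carries over to minimal type path representatives of elements of $Q\langle Q',R'\rangle R\setminus QR$ (with only superficial changes: the attached geodesics $q$ and $r$ play the role of the outermost, possibly short, segments of the broken line). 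Keeping the constants $\eta=\eta(\lambda,c,A)$, $\Theta=\Theta_1(\eta)$, $\gamma_2=C_3(\eta)$ and $\beta_2=\max\{B_1(\eta),(4A+c_3)\Theta\}$ from the proof of Lemma~\ref{lem:metric_P2} (possibly enlarging $\beta_2$ and $\gamma_2$ to accommodate the generalised Lemma~\ref{lem:pathreps_have_qgd_shortcutting}, which does not affect Lemma~\ref{lem:metric_P2}), I would argue as follows.

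Let $g\in Q\langle Q',R'\rangle R$ with $\abs{g}_X<A$, and suppose towards a contradiction that $g\notin QR$. Pick a minimal type path representative $p=q\,p_1\dots p_n\,r$ of $g$. By Remark~\ref{rem:props_of_double_coset_path_reps} we have $n\geq 2$ and $\elem{p_i}\in(Q'\cup R')\setminus S$ for each $i=1,\dots,n$, so $\abs{p_i}_X\geq B\geq\beta_2$ for all such $i$ by Lemma~\ref{lem:C2_implies_old_C2}. The generalised Lemma~\ref{lem:pathreps_have_qgd_shortcutting} then tells us that $p$ is $(B,c_0,\zeta,\Theta)$-tamable and that its $\Theta$-shortcutting $\Sigma(p,\Theta)=f_0e_1f_1\dots f_{m-1}e_mf_m$ is a $(\lambda,c)$-quasigeodesic without backtracking (with the same endpoints as $p$, by Remark~\ref{rem:shortcutting}(b)) in which, for each $k=1,\dots,m$, the $\mathcal{H}$-component $e'_k$ of $\Sigma(p,\Theta)$ containing $e_k$ is isolated and satisfies $\abs{e'_k}_X\geq\eta$.

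If $m\geq 1$, then applying Lemma~\ref{lem:qgds_with_long_comps} to $\Sigma(p,\Theta)$ and its isolated $\mathcal{H}$-component $e'_1$ (recall $\eta=\eta(\lambda,c,A)$) yields $\abs{g}_X=\abs{\Sigma(p,\Theta)}_X\geq A$, contradicting $\abs{g}_X<A$. Hence $m=0$ and $\Sigma(p,\Theta)=f_0$, so $p$ itself is $(4,c_3)$-quasigeodesic by Lemma~\ref{lem:beta_quasigeodesic}, and by Remark~\ref{rem:shortcutting}(c) no $\mathcal{H}$-component $h$ of a segment of $p$ satisfies $\abs{h}_X\geq\Theta$. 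Then Lemma~\ref{lem:rel_geods_with_short_comps} gives $\ell(p_1)\geq\abs{p_1}_X/\Theta\geq\beta_2/\Theta\geq 4A+c_3$, and since $\ell(p)\geq\ell(p_1)$ and $\abs{g}_{X\cup\mathcal{H}}\leq\abs{g}_X$ we obtain
\[
A>\abs{g}_X\geq\abs{g}_{X\cup\mathcal{H}}=\abs{p}_{X\cup\mathcal{H}}\geq\frac14\bigl(\ell(p)-c_3\bigr)\geq A,
\]
which is absurd. Therefore $g\in QR$, and thus $\minx\bigl(Q\langle Q',R'\rangle R\setminus QR\bigr)\geq A$.

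The main obstacle is the one already flagged in the text preceding this lemma: one must check that every minimality argument in Sections~\ref{sec:path_reps}--\ref{sec:multitracking} (culminating in Lemma~\ref{lem:pathreps_have_qgd_shortcutting}) survives the presence of the extra, arbitrarily long outer geodesics $q$ and $r$ and the slightly altered alternation pattern of Remark~\ref{rem:props_of_double_coset_path_reps} — in particular, that the analogue of Lemma~\ref{lem:bddinnprod} also bounds the relative Gromov products at the two nodes adjacent to $q$ and $r$ (using Lemma~\ref{lem:shortening} and \descref{C1}, exactly as in the interior case), and that the backtracking estimates of Sections~\ref{sec:adj_backtracking} and \ref{sec:multitracking} remain valid when one of the two outermost segments is involved. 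Granting that routine bookkeeping, the argument above is a line-for-line transcription of the proof of Lemma~\ref{lem:metric_P2}.
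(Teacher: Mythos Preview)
Your proposal is correct and follows essentially the same approach as the paper's proof: both define the constants exactly as in Lemma~\ref{lem:metric_P2}, take a minimal type path representative of the form $p=q\,p_1\dots p_n\,r$ for an element $g\in Q\langle Q',R'\rangle R\setminus QR$ with $\abs{g}_X<A$, invoke the generalisation of Lemma~\ref{lem:pathreps_have_qgd_shortcutting} to these path representatives, and then run the $m\ge1$/$m=0$ dichotomy verbatim (using Remark~\ref{rem:props_of_double_coset_path_reps} to guarantee $n\ge2$ and $\elem{p_1}\in R'\setminus S$). The paper's write-up is simply more compressed, and it states that the constants can be taken \emph{exactly} as in Lemma~\ref{lem:metric_P2} rather than possibly enlarged --- your parenthetical caution about enlarging $\beta_2,\gamma_2$ is harmless but unnecessary.
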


\begin{proof}
    For any \(A \geq 0\) we define the constants $\eta$, $\Theta$, $\gamma_2$ and $\beta_2$ exactly as in Lemma~\ref{lem:metric_P2}.

    Suppose that for some element $g \in Q \langle Q',R'\rangle R \setminus QR$ we have $\abs{g}_X<A$. 
    Let  \(p = q p_1 \dots p_n r\) be a minimal type path representative of $g$, of the form described in Definition~\ref{def:double_coset_path_reps}.

    Arguing in the same way as in Lemma~\ref{lem:metric_P2}, we can deduce that $p$ is $(4,c_3)$-quasigeodesic and for each $i=1,\dots,n$, $p_i$ has no $\mathcal{H}$-component $h$ with $\abs{h}_X \ge\Theta$.

    According to Remark~\ref{rem:props_of_double_coset_path_reps}, $n \ge 2$ and  \(\elem{p_1} \in R' \setminus S\). So, by Lemma~\ref{lem:C2_implies_old_C2}, \(\abs{p_1}_X \geq B \ge \beta_2\). 
    The same argument as in Lemma~\ref{lem:metric_P2} now yields that $\abs{g}_X \ge A$, leading to a contradiction.
    Therefore it must be that $\abs{g}_X \ge A$ for any $g \in Q \langle Q',R'\rangle R \setminus QR$, and the proof is complete.
\end{proof}

We are finally able to prove Theorem \ref{thm:metric_qc}.

\begin{proof}[Proof of Theorem \ref{thm:metric_qc}]
    Choose $\mathcal{P}$ to be the finite family $\mathcal{P}_1$, defined in Notation~\ref{not:main_in_part_2}. Given any \(A \geq 0\), we apply Proposition~\ref{prop:metric_P1} and Lemma~\ref{lem:metric_P2} to define the constants \[B = \max\{\beta_1,\beta_2(A)\}~\text{ and }~ C = \max\{\gamma_1,\gamma_2(A)\}.\]
 
    Suppose that \(Q' \leqslant Q\) and \(R' \leqslant R\) are subgroups satisfying conditions \descref{C1}-\descref{C5} with constants \(B\) and \(C\) and the finite family of parabolic subgroups \(\mathcal{P}\).
    Then property \descref{P1} holds by Proposition~\ref{prop:metric_P1}, while properties \descref{P2} and \descref{P3} are satisfied by Lemmas~\ref{lem:metric_P2} and~\ref{lem:metric_P3}  respectively.
\end{proof}


\section{Using separability to establish the conditions of the quasiconvexity theorem}
\label{sec:sep->metric}

In this section we will show how one can prove the existence of finite index subgroups $Q' \leqslant_f Q$ and $R' \leqslant_f R$, satisfying the conditions \descref{C1}--\descref{C5} from Subsection~\ref{subsec:3.1}, using certain separability assumptions. We start with finding such assumptions for establishing
 \descref{C2} and \descref{C3}.

\begin{proposition}
\label{prop:sep->C2-C3} 
    Let $G$ be a group generated by a finite subset $X$, let $Q,R \leqslant G$ and $S=Q \cap R$, and let $\mathcal P$ be a finite collection of subgroups of $G$. Suppose that $Q$ and $R$ are separable in $G$ and $PS$ is separable in $G$, for each $P \in \mathcal{P}$.

    Then for any constants $B,C \ge 0$ there exists a finite index subgroup $L \leqslant_f G$, with $S \subseteq L$, such that conditions \descref{C2} and \descref{C3} are satisfied by arbitrary subgroups $Q' \leqslant Q \cap L$ and $R' \leqslant R \cap L$.
\end{proposition}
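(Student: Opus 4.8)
The plan is to construct a single finite index subgroup $L \leqslant_f G$ containing $S$ that simultaneously forces both metric conditions, and then observe that passing to any subgroups $Q' \leqslant Q \cap L$ and $R' \leqslant R \cap L$ only shrinks the relevant double cosets, so the inequalities persist. The key point is that conditions \descref{C2} and \descref{C3} are assertions that certain finitely many ``short'' elements (those of $d_X$-length less than $B$, respectively less than $C$) lie outside specified double cosets are actually \emph{realised} after intersecting with $L$; separability is exactly what lets us kill finitely many unwanted short elements.

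First I would handle \descref{C2}. Note that for any $Q' \leqslant Q \cap L$ and $R' \leqslant R \cap L$ we have $\langle Q', R'\rangle \subseteq L$, hence $Q\langle Q',R'\rangle Q \subseteq QLQ$ and $R \langle Q', R'\rangle R \subseteq RLR$. So it suffices to find $L$ with $S \subseteq L \leqslant_f G$ such that $\minx(QLQ \setminus Q) \ge B$ and $\minx(RLR \setminus R) \ge B$. To get this, I would argue as follows: since $Q$ is separable in $G$, the finite set $U_Q = \{g \in G \setminus Q \mid |g|_X < B\}$ is disjoint from $Q$, and for each $u \in U_Q$ we can write $u = q_1 g q_2$ for only finitely many ways with $q_1, q_2$ of bounded length (indeed the decompositions of $u$ as a product of three elements with the middle one constrained is controlled); more cleanly, the condition ``$u \notin QLQ$'' is equivalent to ``$L \cap Q^{-1}uQ^{-1}$... '' — actually the cleanest route is: $u \in QLQ$ iff $L$ meets the double coset $Q^{-1}uQ^{-1}$, but that double coset need not avoid $1$. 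Instead I would use Remark~\ref{rem:sep->metric}'s hint and Lemma~\ref{lem:sep->large_minx} via the following observation: the set of $g \in G$ with $|g|_X < B$ and $g \in QhQ \setminus Q$ for some $h$ with $|h|_X$ small is finite, and more directly one shows $QLQ \setminus Q$ consists of elements $q_1 \ell q_2$ with $\ell \in L \setminus (Q^{q_1^{-1}} \cap L)$-type cosets; rather than belabour this, the efficient argument is: apply Lemma~\ref{lem:sep->large_minx}(c) to $P = G$, $Z = Q$ (separable), and the finite set $A = \{1\} \cup \{$short conjugators$\}$ to obtain $N_Q \lhd_f G$ with $\minx(aQN_Q \setminus aQ) \ge B$ for all relevant $a$; a symmetric choice gives $N_R$. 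Then I would want $L$ contained in $N_Q \cap N_R$ and containing $S$; here Lemma~\ref{lem:Lemma_0} applied to $S \leqslant_f (\text{something})$ is not directly available since $S$ need not have finite index, so instead I would simply take $L = S \cdot (N_Q \cap N_R)$ — but that is not a subgroup unless $S$ normalises, so the correct move is to invoke separability of $S$: actually $S$ is separable since $S = Q \cap R$ is an intersection of separable sets, hence by Lemma~\ref{lem:sep->large_minx}-style reasoning combined with the standard fact (closedness of $S$) we can choose $N \lhd_f G$ with $N \cap$ (the finitely many short elements outside $S$) handled; then $L = SN \leqslant_f G$ works provided $N$ is small enough.

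The actual clean construction: since $Q$, $R$ are separable and each $PS$ ($P \in \mathcal P$) is separable, and since $S$ itself is separable (intersection of the separable sets $Q, R$), apply Lemma~\ref{lem:sep->large_minx}(a)/(c) to the group $G$ and each of these separable subsets with the finite ``obstruction sets'' of elements of $d_X$-length below $B$ or $C$ lying in the appropriate bad double cosets, and with finite translate-sets $A$ consisting of the (finitely many, by properness of $d_X$) short coset representatives of $Q$ and $R$; intersecting the resulting finitely many finite-index normal subgroups yields $N \lhd_f G$. Setting $L = SN \leqslant_f G$ (finite index because $N$ has finite index), we get $S \subseteq L$, and one checks directly that $QLQ = QSNQ = QNQ$ (as $S \leqslant Q$), so $\minx(QLQ \setminus Q) = \minx(QNQ \setminus Q) \ge B$ by the choice of $N$; similarly for $R$, and $PL = PSN = (PS)N$ so $\minx(PL \setminus PS) \ge C$, which with $Q', R' \subseteq L$ gives $PQ' \cup PR' \subseteq PL$ and hence \descref{C3}. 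Finally, monotonicity: replacing $L$ by $Q'$ and $R'$ only shrinks everything, so the inequalities are preserved.

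The main obstacle I anticipate is purely bookkeeping: verifying carefully that the ``bad short elements'' form \emph{finite} sets (this uses properness of $d_X$, i.e. finiteness of $X$) and that each such finite set can be separated from the relevant double coset by a \emph{single} finite-index normal subgroup — this is where separability of $Q$, $R$ and the $PS$ is used, via Lemma~\ref{lem:sep->large_minx}, and where one must be slightly careful that $QLQ \setminus Q$ is controlled by $QNQ \setminus Q$ rather than requiring separability of a double coset $QQ = Q$ (which is automatic) — so in fact the $\descref{C2}$ part only needs separability of the subgroups $Q$ and $R$, exactly as claimed, and the $\descref{C3}$ part needs separability of $PS$. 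I would also need to make sure the identity does not accidentally get excluded, i.e. that $1 \notin QLQ \setminus Q$ etc., which is clear since $1 \in Q$. Once the finiteness and the single-subgroup-separation are in place, the algebraic identities $QSN Q = QNQ$ and $(PS)N = PL$ and the monotonicity argument are routine.
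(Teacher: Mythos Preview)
Your overall construction matches the paper's: obtain a finite-index normal subgroup $N \lhd_f G$ from Lemma~\ref{lem:sep->large_minx} applied to the separable sets $Q$, $R$, and each $PS$, then set $L = SN$. Your handling of \descref{C3} is correct: $PL = P(SN) = (PS)N$ and Lemma~\ref{lem:sep->large_minx}(b) with $Z = PS$ gives $\minx(PL \setminus PS) \ge C$; since $PQ' \cup PR' \subseteq PL$, the condition follows.

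However, your justification of \descref{C2} has a genuine gap. You correctly reduce to showing $\minx(QNQ \setminus Q) \ge B$, but then invoke Lemma~\ref{lem:sep->large_minx}(c) with ``finite translate-sets $A$ consisting of the (finitely many, by properness of $d_X$) short coset representatives of $Q$ and $R$''. This does not work: an element $g$ with $|g|_X < B$ lying in $QNQ$ can be written as $q_1 n q_2$ with $q_1, q_2 \in Q$ of arbitrarily large $X$-length, so no finite set $A$ of translates captures this. The fix is a one-line algebraic observation that you never state but that the paper uses explicitly: since $N$ is \emph{normal},
\[
QNQ = Q(NQ) = Q(QN) = QN.
\]
Now Lemma~\ref{lem:sep->large_minx}(b) with $Z = Q$ gives $N$ such that $\minx(QN \setminus Q) \ge B$, and symmetrically for $R$. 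With this in place your argument goes through; the discussion of separability of $S$, the false starts, and the appeal to part~(c) of the lemma are all superfluous.
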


\begin{proof} 
    Combining the separability of $Q$ and $R$ in $G$ with Lemma~\ref{lem:sep->large_minx}, we can find $E_1,E_2 \lhd_f G$ such that $\minx(Q E_1 \setminus Q) \ge B$ and $\minx(R E_2 \setminus R) \ge B$. 
    Set $N_0=E_1 \cap E_2 \lhd_f G$ and observe that \[QS N_0 Q=QN_0Q=QQ N_0=Q N_0 \subseteq Q E_1,\] as $Q$ is a subgroup containing $S$ and normalising $N_0$ in $G$. 
    Similarly, $RSN_0 R=R N_0 \subseteq RE_2$, therefore
    \begin{equation}
    \label{eq:1N_0}
        \minx(QS N_0 Q \setminus Q) \ge B \text{ and } \minx(RS N_0 R \setminus R) \ge B.
    \end{equation}

    Let $\mathcal{P}=\{P_1,\dots,P_k\}$. 
    The assumptions imply that for every $i \in \{1,\dots,k\}$ the double coset $P_i S$ is separable in $G$, hence we can apply Lemma~\ref{lem:sep->large_minx} again to find finite index normal subgroups $N_i \lhd_f G$ satisfying
    \begin{equation}
    \label{eq:1N_i}
        \minx(P_i S N_i \setminus P_i S) \ge C,~\text{ for each } i=1,\dots,k.
    \end{equation}

    Now set $L= \bigcap_{i=0}^k SN_i \leqslant_f G$, and choose arbitrary subgroups $Q' \leqslant Q \cap L$ and $R' \leqslant R \cap L$. 
    Then $S \subseteq L$ and $\langle Q',R' \rangle \subseteq L \subseteq SN_i$, for all $i=0,\dots,k$, by construction, hence \descref{C2} holds by \eqref{eq:1N_0} and \descref{C3} holds by \eqref{eq:1N_i}, as desired.
\end{proof}

To establish condition \descref{C5} we need to be able to lift certain finite index subgroups of a maximal parabolic subgroup $P \leqslant G$ to finite index subgroups of $G$ in a controlled way. The next statement shows how a double coset separability assumption can help with this task.

\begin{lemma}
\label{lem:Lemma_1}
    Let \(G\) be a group, \(P, Q \leqslant G\) be subgroups of \(G\) and let \(K \leqslant_f P\) be a finite index subgroup of \(P\), with \(Q \cap P \subseteq K\). 
    If \(KQ\) is separable in \(G\), then there is a finite index subgroup \(M \leqslant_f G\) such that \(Q \subseteq M\) and \(M \cap P \subseteq K\).
\end{lemma}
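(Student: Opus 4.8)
The plan is to mimic the proof of Lemma~\ref{lem:Lemma_0}, replacing the separability of $K$ by that of the double coset $KQ$. Since $K \leqslant_f P$, we may write $P$ as a finite disjoint union of left cosets of $K$, say $P = K \sqcup Kp_1 \sqcup \dots \sqcup Kp_m$ with $p_1, \dots, p_m \in P \setminus K$. The single place where the hypothesis $Q \cap P \subseteq K$ enters is the elementary observation that none of the $p_i$ lies in $KQ$: indeed, if $p_i = kq$ with $k \in K$ and $q \in Q$, then $q = k^{-1}p_i \in P$ (as $k, p_i \in P$), so $q \in Q \cap P \subseteq K$, forcing $p_i = kq \in K$, a contradiction. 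Note also that $Q \subseteq KQ$, since $1 \in K$.

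Next I would use the separability of $KQ$ in $G$ to separate each $p_i$ from it at the level of a finite quotient. For each $i \in \{1,\dots,m\}$ there is a finite group $\Phi_i$ and a homomorphism $\varphi_i \colon G \to \Phi_i$ with $\varphi_i(p_i) \notin \varphi_i(KQ)$; equivalently, writing $N_i = \ker \varphi_i \lhd_f G$, one has $p_i \notin KQN_i$. (Alternatively one can argue directly from the definition of $\pt(G)$, exactly as in Lemma~\ref{lem:Lemma_0}.) Setting $N = \bigcap_{i=1}^m N_i \lhd_f G$, we get $p_i \notin KQN$ for every $i = 1, \dots, m$.

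Finally I would take $M = QN$. Since $N$ is normal of finite index in $G$, the subset $M$ is a subgroup of $G$ of finite index, and clearly $Q \subseteq M$. It remains to verify that $M \cap P \subseteq K$. Suppose $g \in QN \cap P$ but $g \notin K$; then $g \in Kp_i$ for some $i$, say $g = kp_i$ with $k \in K$, and hence $p_i = k^{-1}g \in K \cdot QN = KQN$, contradicting the choice of $N$. Therefore $g \in K$, so $M \cap P \subseteq K$, as required. I do not expect any genuine obstacle here: the two points one has to notice are that $Q \cap P \subseteq K$ is exactly what makes the $p_i$ avoid $KQ$, and that one should output the subgroup $M = QN$ rather than the (generally non-subgroup) set $KQN$.
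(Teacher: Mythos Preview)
Your proof is correct and follows essentially the same approach as the paper's: both write $P$ as a finite union of cosets $K, Kp_1,\dots,Kp_m$, observe that $p_i\notin KQ$ (the paper phrases this as $KQ\cap P=K(Q\cap P)=K$), use separability of $KQ$ to find $N\lhd_f G$ with $p_i\notin KQN$ for all $i$, and set $M=QN$. The only cosmetic difference is that the paper invokes Lemma~\ref{lem:sep->large_minx}(a) to get $N$ in one stroke rather than intersecting the individual $N_i$.
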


\begin{proof}
    Let \(P = K \cup Kh_1 \cup \dots \cup Kh_m\), where \(h_1, \dots, h_m \in P \setminus K\).
    Note that \(KQ \cap P = K(Q \cap P) = K\), so \(h_1, \dots, h_m \notin KQ\).
    The double coset \(KQ\) is profinitely closed, so, by Lemma~\ref{lem:sep->large_minx}(a), there exists $N\lhd_f G$ such that
    \[\{h_1, \dots, h_m\} \cap KQN = \emptyset.\]
    Let \(M = QN \leqslant_f G\), so that the above implies $Kh_i \cap M=\emptyset$, for each $i=1,\dots,m$.
    We then have $Q \subseteq M$ and $M \cap P \subseteq K$, as required.
\end{proof}

We are now in  position to prove the main result of this section.

\begin{theorem}
\label{thm:sep->qc_comb} 
    Assume that $G$ is a group generated by a finite set $X$,  $Q,R \leqslant G$ are subgroups of \(G\), and denote $S=Q \cap R$. 
    Let $\mathcal{P}$ be a finite collection of subgroups of $G$ such that for every $P \in \mathcal{P}$ all of the following hold:

    \begin{enumerate}[label={\normalfont (S\arabic*)}]
        \item \label{cond:1} $Q$ and $R$ are separable in $G$;
        \item \label{cond:2} the double coset $PS$ is separable in $G$;
        \item \label{cond:3} for all $K \leqslant_f P$ and $T \leqslant_f Q$, satisfying $S \subseteq T$ and $T \cap P \subseteq K$, the double coset $K T$ is separable in $G$;
        \item \label{cond:4} for all $U \leqslant_f Q \cap P$, with $S \cap P \subseteq U$, the double coset $U(R \cap P)$ is separable in $P$.
    \end{enumerate}

    Then, given arbitrary constants $B,C \ge 0$, there exist finite index subgroups $Q' \leqslant_f Q$ and $R' \leqslant_f R$ such that conditions \descref{C1}--\descref{C5} are all satisfied.

    More precisely, there exists $L \leqslant_f G$, with $S \subseteq L$, such that for any $L' \leqslant_f L$, satisfying $S \subseteq L'$, we can choose $Q'=Q\cap L' \leqslant _f Q$ and there exists $M \leqslant_f L'$, with $Q' \subseteq M$, such that for any $M' \leqslant_f M$, satisfying $Q' \subseteq M'$, we can choose $R'=R \cap M' \leqslant_f R$.
\end{theorem}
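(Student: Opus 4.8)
The plan is to build the subgroup $L \leqslant_f G$ in two stages, first handling conditions \descref{C2}, \descref{C3} together with Proposition~\ref{prop:sep->C2-C3}, and then handling condition \descref{C5} via Lemma~\ref{lem:Lemma_1}, while ensuring conditions \descref{C1} and \descref{C4} come essentially for free from the way $Q'$ and $R'$ are defined as intersections of $Q$ and $R$ with a finite index subgroup containing $S$.

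First I would apply Proposition~\ref{prop:sep->C2-C3}: hypotheses \ref{cond:1} and \ref{cond:2} are precisely what that proposition needs, so for the given constants $B,C$ we obtain $L \leqslant_f G$ with $S \subseteq L$ such that \descref{C2} and \descref{C3} hold automatically for any subgroups $Q' \leqslant Q \cap L$, $R' \leqslant R \cap L$. This $L$ is the subgroup named in the statement. Now fix any $L' \leqslant_f L$ with $S \subseteq L'$ and set $Q' = Q \cap L' \leqslant_f Q$. Observe that for any $R' \leqslant R \cap L'$ we have $Q' \cap R' = Q \cap R \cap L' = S$, so \descref{C1} holds; and for any maximal parabolic $P$, since $S \cap P = (Q' \cap R') \cap P \subseteq Q'_P$, the computation in the paragraph following Remark~\ref{rem:ab_periph->C4_and_C5} does not apply directly (that was the abelian case), so \descref{C4} must be arranged separately — this is where the real work lies.

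The main step is to arrange \descref{C4} and \descref{C5} simultaneously, one parabolic $P \in \mathcal{P}$ at a time, then intersect. Fix $P \in \mathcal{P}$. Using the double coset separability of $Q'_P R_P$ inside $P$ — which follows from \ref{cond:4} applied with $U = Q'_P = Q' \cap P$ (note $S \cap P \subseteq Q'_P$) — together with Lemma~\ref{lem:fi_dc} and Lemma~\ref{lem:sep->large_minx}(c) applied to the finitely many cosets of $Q'_P$ in $Q_P$ (here $Q_P$ is finitely generated by Lemma~\ref{lem:fg_qc_int_parab_is_fg} when $Q$ is f.g.; in general one uses that $Q'_P \leqslant_f Q_P$), we can find $K_P \leqslant_f P$ with $Q'_P \subseteq K_P$ such that, writing $Q_P = \bigsqcup_{j} a_j Q'_P$, we have $\minx(a_j \langle Q'_P, K_P \rangle R_P \setminus a_j Q'_P R_P) \geq C$ — this will yield \descref{C5} once $R' \cap P$ is squeezed inside $K_P$. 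We also shrink $K_P$ so that $K_P \cap Q_P = Q'_P$, ensuring the first equality of \descref{C4}; the second equality $R_P \cap \langle Q'_P, R'_P \rangle = R'_P$ will follow once $R'_P \subseteq K_P$ and $\langle Q'_P, R'_P \rangle \cap R_P$ is controlled, which can be forced by an analogous shrinking of $K_P$ relative to $R_P$. Then apply Lemma~\ref{lem:Lemma_1} with the pair $(P, Q')$ and the subgroup $K_P$: since $Q' \cap P = Q'_P \subseteq K_P$ and $K_P Q'$ is separable in $G$ by \ref{cond:3} (with $T = Q'$, noting $S \subseteq Q'$ and $Q' \cap P \subseteq K_P$), we obtain $M_P \leqslant_f G$ with $Q' \subseteq M_P$ and $M_P \cap P \subseteq K_P$. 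Set $M = L' \cap \bigcap_{P \in \mathcal{P}} M_P \leqslant_f L'$, which still contains $Q'$. Finally, for any $M' \leqslant_f M$ with $Q' \subseteq M'$, put $R' = R \cap M'$: then $R' \leqslant R \cap L'$ so \descref{C1}, \descref{C2}, \descref{C3} hold as above, and $R'_P = R' \cap P \subseteq M' \cap P \subseteq M \cap P \subseteq K_P$, which delivers \descref{C4} and \descref{C5} by the choices above.

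The main obstacle I anticipate is the bookkeeping around condition \descref{C4}: one must ensure that shrinking $K_P$ to force $K_P \cap Q_P = Q'_P$ and to control $\langle Q'_P, R'_P \rangle \cap R_P$ is compatible with the separability input used for \descref{C5}, and that all of these finitely many constraints (over $P \in \mathcal{P}$, and over the coset representatives $a_j$) can be met by a single finite index subgroup via finite intersection. A secondary subtlety is that \descref{C5} is stated with an arbitrary $q \in Q_P$, not just coset representatives, so one must invoke the reduction noted in the last bullet of Remark~\ref{rem:sep->metric} — rewriting the inequality over the finitely many cosets $a_j Q'_P$ — and check that $\langle Q'_P, R'_P \rangle \subseteq \langle Q'_P, K_P \rangle$ so that the $\minx$ bound obtained for $K_P$ descends to $R'_P$. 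None of this is deep, but it requires care to present cleanly.
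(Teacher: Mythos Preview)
Your approach is essentially the same as the paper's, and the overall architecture---use Proposition~\ref{prop:sep->C2-C3} for \descref{C2}, \descref{C3}; then for each $P\in\mathcal P$ use \ref{cond:4} to find $K_P\leqslant_f P$ containing $Q'_P$ giving the \descref{C5} bound, and use \ref{cond:3} with Lemma~\ref{lem:Lemma_1} to lift to $M_P\leqslant_f G$; intersect to get $M$; set $R'=R\cap M'$---is exactly what the paper does.

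The one place where you make things harder than they are is \descref{C4}. You flag it as ``where the real work lies'' and propose shrinking $K_P$ so that $K_P\cap Q_P=Q'_P$ (and analogously for $R$). This extra shrinking is not obviously available from the hypotheses (it would require $Q'_P$ to be separable in $P$, which is not among (S1)--(S4)), and more importantly it is unnecessary. Condition \descref{C4} is automatic from the construction: since $Q'\subseteq M'$ and $R'=R\cap M'$, we have $\langle Q',R'\rangle\subseteq M'$, and since also $Q'=Q\cap L'\supseteq Q\cap M'$ (as $M'\subseteq L'$) while $Q'\subseteq M'$ gives $Q'\subseteq Q\cap M'$, we get $Q\cap M'=Q'$. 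Hence $Q\cap\langle Q',R'\rangle=Q'$, and intersecting with any $P$ yields $Q_P\cap\langle Q'_P,R'_P\rangle=Q'_P$. The $R$-half is identical. So drop the shrinking step and simply observe this at the verification stage.

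A minor slip: your claim ``for any $R'\leqslant R\cap L'$ we have $Q'\cap R'=S$'' is false for arbitrary such $R'$; it holds once $S\subseteq R'$, which is guaranteed by your eventual choice $R'=R\cap M'$ with $Q'\subseteq M'$ (so $S\subseteq Q'\subseteq M'$).
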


\begin{proof}
    The idea is that assumption \ref{cond:1} will take care of condition \descref{C2}, \ref{cond:2} will take care of \descref{C3}, and \ref{cond:3}, \ref{cond:4} will take care of \descref{C5}. 
    The subgroups $Q'$ and $R'$ will satisfy $Q'=Q \cap M'$ and $R=R \cap M'$, for some $M' \leqslant_f G$, with $S \subseteq M'$, which will immediately imply \descref{C1} and \descref{C4}.

    Let $\mathcal{P}=\{P_1,\dots,P_k\}$.
    Arguing just like in the proof of Proposition~\ref{prop:sep->C2-C3} (using the assumptions \ref{cond:1} and \ref{cond:2}), we can find finite index normal subgroups $N_i \lhd_f G$, $i=0,\dots,k$, such that
    \[
        \minx(QS N_0 Q \setminus Q) \ge B,~ \minx(RS N_0 R \setminus R) \ge B~\text{ and }
    \]
    \[
        \minx(P_i S N_i \setminus P_i S) \ge C,~\text{ for each } i=1,\dots,k.
    \]

    We can now define a finite index subgroup $L \leqslant_f G$ by $L=\bigcap_{i=0}^k SN_i$.
    Note that $S \subseteq L$ by construction, and for each $i \in \{1,\dots,k\}$ we have
    \begin{equation}
    \label{eq:L}
        \minx(QLQ \setminus Q) \ge B,~\minx(RLR \setminus R) \ge B~\text{ and }~\minx(P_i L \setminus P_i S) \ge C.
    \end{equation}

    Choose an arbitrary finite index subgroup $L' \leqslant_f L$, with $ S \subseteq L'$, and define $Q'=Q \cap L'$, so that $S \leqslant Q' \leqslant_f Q$.

    To construct $R' \leqslant_f R$, consider any $i \in \{1,\dots,k\}$ and denote $Q_i=Q \cap P_i$, $R_i=R \cap P_i$ and $Q_i'=Q' \cap P_i \leqslant_f Q_i$.
    Choose some elements $a_{i1}, \dots,a_{i n_i} \in Q_i$ such that $Q_i=\bigsqcup_{j=1}^{n_i} a_{ij} Q_i'$. Assumption \ref{cond:4} implies that the subset $Q_i'R_i$ is separable in $P_i$, hence, by claim (c) of Lemma~\ref{lem:sep->large_minx}, there exists $F_i \lhd_f P_i$ such that
    \begin{equation}
    \label{eq:F_i}
        \minx \Bigl(a_{ij}Q'_i R_i F_i \setminus a_{ij}Q'_i R_i\Bigr) \ge C, ~\text{ for } j=1,\dots,n_i.
    \end{equation}
    Define
    $K_i=Q_i'F_i \leqslant_f P_i$. Then $Q' \cap P_i=Q_i' \subseteq K_i$ and $a_{ij} K_i R_i=a_{ij}Q'_i R_i F_i $, for each $j=1,\dots,n_i$.
    Therefore, from \eqref{eq:F_i} we can deduce that
    \begin{equation}
    \label{eq:K_i}
        \minx\Bigl(a_{ij}K_i R_i \setminus a_{ij}Q'_i R_i\Bigr) \ge C, ~\text{ for all } j=1,\dots,n_i.
    \end{equation}

    By assumption \ref{cond:3}, the double coset $K_iQ'$ is separable in $G$, so we can apply Lemma~\ref{lem:Lemma_1} to find $M_i \leqslant_f G$ such that $Q' \subseteq M_i$ and $M_i \cap P_i \subseteq K_i$.

    We now let $\displaystyle M=\bigcap_{i=1}^k M_i \cap L'$ and observe that $Q' \leqslant M \leqslant_f L'$ and $M \cap P_i \subseteq K_i$ for each $i \in \{1,\dots,k\}$. Inequality \eqref{eq:K_i} yields
    \begin{equation}
    \label{eq:M}
        \minx\Bigl(a_{ij}(M \cap P_i) R_i \setminus a_{ij}Q'_i R_i\Bigr) \ge C, ~\text{ for all } i=1,\dots,k \text{ and } j=1,\dots, n_i.
    \end{equation}

    We can now choose an arbitrary finite index subgroup $M' \leqslant_f M$, with $Q' \subseteq M'$, and define $R'=R \cap M'$. Observe that $M' \leqslant_f G$, by construction, hence $R' \leqslant_f R$.

    Let us check that the subgroups $Q'$ and $R'$ obtained above satisfy conditions \descref{C1}--\descref{C5}. Indeed, by construction, $S=Q \cap R \subseteq Q'$, so $S \subseteq R \cap M'=R'$, hence
    \[
        S \subseteq Q' \cap R' \subseteq Q \cap R=S,
    \]
    thus \descref{C1} holds. We also have $Q'=Q \cap L'=Q \cap M'$, as $Q' \subseteq M' \subseteq L'$, hence
    \[
        Q' \subseteq Q \cap \langle Q',R'\rangle \subseteq Q \cap M' =Q',\]
    thus  $Q \cap \langle Q',R'\rangle = Q'$. After intersecting both sides of the latter equation with an arbitrary  $P \in \mathcal{P}$, we get $Q_P \cap \langle Q',R'\rangle=Q'_P$, hence
    \[
        Q_P' \subseteq Q_P \cap \langle Q_P',R_P'\rangle \subseteq Q_P \cap \langle Q',R'\rangle = Q_P',
    \]
    thus $Q_P \cap \langle Q_P',R_P'\rangle=Q_P'$. Similarly, $R_P \cap \langle Q_P',R_P'\rangle=R_P'$, so condition \descref{C4} is satisfied.

    Conditions \descref{C2} and \descref{C3} hold by \eqref{eq:L}, because $Q', R' \subseteq L$ by construction.

    To prove \descref{C5}, take $P_i \in \mathcal{P}$ for any $i \in \{1,\dots,k\}$, and denote $Q_i=Q \cap P_i$, $Q_i'=Q' \cap P_i$, $R_i=R \cap P_i$ and $R_i'=R' \cap P_i$, as before.
    For any $q \in Q_i$ there exists $j \in \{1,\dots,n_i\}$ such that $q \in a_{ij}Q_i'$. It follows that
   \begin{equation}\label{eq:q->a_ij}
      q \langle Q_i',R_i' \rangle R_i=a_{ij} \langle Q_i',R_i' \rangle R_i~\text{ and }~q Q_i' R_i=a_{ij}Q_i'R_i.  
   \end{equation}

    Since $\langle Q_i',R_i' \rangle \leqslant M \cap P_i$, we can combine \eqref{eq:q->a_ij} with \eqref{eq:M} to deduce that
    \[
        \minx\Bigl(q\langle Q_i',R_i' \rangle R_i \setminus qQ'_i R_i\Bigr) \ge C,
    \]
    which establishes condition \descref{C5}.
    Thus the proof is complete.
\end{proof}


\section{Double coset separability in amalgamated free products}
\label{sec:dcs_in_amalgams}
In this section we  develop a method for establishing the separability assumptions \ref{cond:2} and \ref{cond:3} of Theorem~\ref{thm:sep->qc_comb} using amalgamated products. The idea is that when $G$ is a relatively hyperbolic group, $P$ is a maximal parabolic subgroup and $Q$ is a relatively quasiconvex subgroup of $G$, we can apply the combination theorem of Mart\'{i}nez-Pedroza (Theorem~\ref{thm:M-P_comb}) to find a finite index subgroup $H \leqslant_f P$ such that $A=\langle H,Q \rangle \cong H*_{H \cap Q} Q$, so proving the separability of $PQ$ in $G$ can be reduced to proving the separability of $HQ$ in the amalgamated free product $A$.

The next proposition gives a new criterion for showing separability of double cosets in amalgamated free products. This criterion may be of independent interest.  

\begin{proposition}
\label{prop:dc_in_am}
    Let $A=B*_D C$ be an amalgamated free product, where we consider $B$, $C$ and $D$ as subgroups of $A$ with $B \cap C=D$. 
    Suppose that $D$ is separable in $A$, and $U \subseteq B$, $V \subseteq C$ are arbitrary subsets.

    If the product $UD$ (respectively, $DV$) is separable in $A$ then the product $UC$ (respectively, $BV$) is separable in $A$.
\end{proposition}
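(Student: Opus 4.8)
The plan is to work with the normal form theory for amalgamated free products. Recall that every element $a \in A = B *_D C$ can be written in normal form $a = g_1 g_2 \cdots g_k d$, where $d \in D$, each $g_i$ lies in a fixed set of coset representatives of $D$ in $B$ or $C$, and consecutive $g_i$ come from different factors; the syllable length $k$ and the sequence of factors ($B$ or $C$) that the $g_i$ come from are invariants of $a$. By symmetry it suffices to treat the case where $UD$ is separable and we wish to show $UC$ is separable. Fix $a \in A \setminus UC$; I must produce a finite-index subgroup (equivalently a finite quotient) separating $a$ from $UC$.

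First I would reduce to a more convenient description of the complement. Since $UD$ is separable and $C$ is separable in $A$ (as $A$ is an amalgam and $D$ is separable — actually here one should be slightly careful, but $C$ is a free factor-like subgroup and the standard fact is that it is closed; alternatively one can argue directly), the key observation is that $UC = UDC$ because $D \subseteq C$, so $UC$ is the image of the separable set $UD$ under right multiplication by the (separable) subgroup $C$. The first real step is a structural lemma: an element $a$ lies in $UC$ if and only if, after pushing $a$ into a normal form, its ``$B$-part up to the last $C$-syllable'' lies in $UD$. More precisely, writing $a$ in normal form, if the last syllable is a $C$-syllable then $a \in UC$ iff the truncation (removing a terminal $C \cdot D$ block) lies in $UD$; if the last syllable is a $B$-syllable (or $a \in B$), then $a \in UC$ iff $a \in UD$. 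I would make this precise and verify it using uniqueness of normal forms.

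The second step is the separability argument itself. Given $a \notin UC$, I use the structural lemma to locate a specific element $a' \in B$ (a truncation of $a$, or $a$ itself) with $a' \notin UD$. Because $UD$ is separable in $A$ and $D$ is separable in $A$, Lemma~\ref{lem:sep->large_minx}(a) and Remark~\ref{rem:sep_props} give a finite quotient $\varphi \colon A \to A/N$, with $N \lhd_f A$, in which $\varphi(a')$ avoids $\varphi(UD)$ and moreover $N$ is chosen fine enough that the relevant coset structure of $D$ inside $B$ and $C$ is ``detected'' — that is, $\varphi$ is injective on the finitely many syllables of $a$ and distinguishes the relevant $D$-cosets. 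The point is that in $A/N$, which is finite, membership in $\varphi(U)\varphi(C)$ can be controlled: I show that $\varphi(a) \in \varphi(U)\varphi(C)$ would force, by the structural lemma applied at the level of the quotient (or rather, pulled back), that $\varphi(a') \in \varphi(UD)$, a contradiction. Taking the preimage of the complement of $\varphi(U)\varphi(C)$ then gives an open set in $\pt(A)$ containing $a$ and disjoint from $UC$.

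The main obstacle I anticipate is controlling the interaction between the normal form and the finite quotient: a priori, an element of $A$ that is not in $UC$ could map into $\varphi(U)\varphi(C)$ in a crude quotient simply because syllable structure collapses. The fix is to invoke separability of $D$ in $A$ (which is a hypothesis) together with the fact that $A$ acts on its Bass--Serre tree, so that for any finite set of elements one can find $N \lhd_f A$ whose quotient ``remembers'' enough of the tree structure — concretely, one uses that $BD$-double cosets and the partition of $B, C$ into $D$-cosets are separable consequences of $D$ being separable, allowing $\varphi$ to be chosen so that $\varphi^{-1}(\varphi(U)\varphi(D)) \cap B = UD$ and $\varphi^{-1}(\varphi(C)) = CN$. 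Making this ``enough separation'' precise and checking it really does force the normal-form truncation to behave well in the quotient is where the technical care lies; everything else is bookkeeping with normal forms and Remark~\ref{rem:sep_props}.
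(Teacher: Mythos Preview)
Your structural lemma is essentially correct and matches the paper's case analysis: an element of $UC$ has reduced length at most $2$ (pattern $B$-then-$C$), and in that case membership reduces to whether the $B$-part lies in $UD$. The paper makes exactly this division: length $\geq 3$ (Case~1), length $2$ with pattern $C$-then-$B$ (Case~2), and $g = bc$ with $b \in B$, $c \in C$ (Case~3, the only one needing separability of $UD$).

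The genuine gap is in your second step. You propose to pass directly to a finite quotient $A/N$ and argue that ``the structural lemma applied at the level of the quotient'' forces $\varphi(a') \in \varphi(UD)$. But in a finite quotient there is no amalgam structure and no normal form theorem, so there is no mechanism ensuring that $\varphi(a) \in \varphi(U)\varphi(C)$ implies anything about a truncation of $a$. Your proposed fix --- choosing $N$ so that $\varphi^{-1}(\varphi(U)\varphi(D)) \cap B = UD$ and $\varphi^{-1}(\varphi(C)) = CN$ --- asks a single finite quotient to separate $UD$ from \emph{all} of $B \setminus UD$ simultaneously, which is generally impossible (it would force $UD$ to be open, not merely closed).

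The paper's key idea, which you are missing, is an intermediate step: rather than going directly to a finite group, one uses separability of $D$ to find a finite quotient $\varphi\colon A \to M$ in which the finitely many syllables of $g$ stay outside $\varphi(D)$, and then forms the \emph{new amalgam} $\overline{A} = \overline{B} *_{\overline{D}} \overline{C}$ of the finite images $\overline{B} = \varphi(B)$, $\overline{C} = \varphi(C)$, $\overline{D} = \varphi(D)$. The natural map $\psi\colon A \to \overline{A}$ preserves the reduced form of $g$, so the normal form theorem in $\overline{A}$ gives $\psi(g) \notin \psi(U)\overline{C}$ in Cases~1 and~2; in Case~3 one additionally arranges $\varphi(b) \notin \varphi(UD)$ and checks $\psi(g) \notin \psi(U)\overline{C}$ directly. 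Finally, $\overline{A}$ is an amalgam of finite groups, hence virtually free and residually finite, so the \emph{finite} set $\psi(UC)$ is closed there and one can pass to a genuine finite quotient. This two-stage descent (first to an amalgam of finite groups, then to a finite group) is precisely what bridges the gap you identified but could not close.
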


\begin{proof} 
    We will prove the statement in the case of $UC$, as the other case is similar.

    If $U= \emptyset$ then $UC=\emptyset$, so we can suppose that $U$ is non-empty. 
    Take any $u \in U$.
    According to Remark~\ref{rem:sep_props}, without loss of generality we can replace $U$ with $u^{-1}U$ to assume that $1 \in U$.

    Consider any element $g \in A \setminus UC$; since $1 \in U$, we deduce that $g \notin C$. We will construct a homomorphism from $A$ to a finite group $L$ which separates the image of $g$ from the image of $UC$.

    Since $g \notin D$, it has a reduced form $g=x_1x_2 \dots x_k$, where $x_i$ belongs to one of the factors $B$, $C$, for each $i$, consecutive elements $x_i$, $x_{i+1}$ belong to different factors, and $x_i \notin D$ for all $i=1,\dots,k$ (see \cite[p. 187]{LS}).

    Since $D$ is separable in $A$, by Lemma~\ref{lem:sep->large_minx}(a) there is a finite group $M$ and a homomorphism $\varphi: A \to M$ such that
    \begin{equation}
    \label{eq:notinD}
        \varphi(x_i) \notin \varphi(D) ~\text{ in } M, \text{ for every } i=1,\dots,k.
    \end{equation}

    Denote by $\overbar{B}$, $\overbar{C}$ and $\overbar{D}$ the $\varphi$-images if $B$, $C$ and $D$ in $M$ respectively. 
    We can then consider the amalgamated free product $\overbar{A}=\overbar{B}*_{\overbar{D}} \overbar{C}$, together with the natural homomorphism $\psi:A \to \overbar{A}$, which is compatible with $\varphi$ on $B$ and $C$ (in other words, $\psi|_B=\varphi|_B$ and $\psi|_C=\varphi|_C$). 
    It follows that $\varphi$ factors through $\psi$.
    That is, $\varphi=\overbar{\varphi} \circ \psi$, where $\overbar\varphi:\overbar{A} \to M$ is the natural homomorphism extending the embeddings of $\overbar{B}$ and $\overbar{C}$ in $M$. 
    Equation \eqref{eq:notinD} now implies that
    \begin{equation}
    \label{eq:notinDbar}
        \psi(x_i) \notin \overbar{D} ~\text{ in } \overbar{A}, \text{ for every } i=1,\dots,k.
    \end{equation}

    Denote $\overline{x}_i=\psi(x_i) \in \overbar{A}$, $i=1,\dots,k$. In view of \eqref{eq:notinDbar}, $\psi(g)=\overline{x}_1\dots \overline{x}_k$ is a reduced form in the amalgamated free product $\overbar{A}$.
    We will now consider several cases.

    \medskip
    \underline{\emph{Case 1:}} assume that $k \ge 3$.
    Then the above reduced form for $\psi(g)$ has length $k \ge 3$, so by the normal form theorem for amalgamated free products \cite[Theorem IV.2.6]{LS}, it cannot be equal to an element from $\psi(UC)=\psi(U)\overbar{C}\subseteq \overbar{B}\overbar{C}$, which would necessarily have a reduced form of length at most $2$ in $\overbar{A}$. 
        Therefore $\psi(g) \notin \psi(UC)$ in $\overbar A$.

        Since $\overbar{B}$ and $\overbar C$ are finite groups, their amalgamated free product $\overbar A$ is residually finite (in fact, $\overbar A$ is a virtually free group -- see \cite[Proposition 2.6.11]{Serre}), so the finite subset $\psi(UC)$ is closed in the profinite topology on $\overbar A$.
        Hence there is a finite group $L$ and a homomorphism $\eta:\overbar A \to L$ such that $\eta(\psi(g)) \notin \eta(\psi(UC))$ in $L$. 
        The composition $\eta\circ \psi:A \to L$ is the required homomorphism separating the image of $g$ from the image of $UC$, and the consideration of Case~1 is complete.

    \medskip
    \underline{\emph{Case 2:}} suppose that $k=2$, $x_1\in C\setminus D$ and $x_2 \in B\setminus D$.
        Then $\overline{x}_1 \in \overbar{C}\setminus\overbar{D}$ and $\overline{x}_2 \in \overbar{B}\setminus\overbar{D}$ by \eqref{eq:notinDbar}, so that $\psi(g)=\overline{x}_1 \overline{x}_2$ is a reduced form of length $2$ in $\overbar A$. 
        Again, the normal form theorem for amalgamated free products implies that $\psi(g) \notin \overbar{B}\overbar{C}$ in $\overbar{A}$, hence $\psi(g) \notin \psi(U C)$ and we can find the required finite quotient $L$ of $A$ as in Case 1.

    \medskip
    \underline{\emph{Case 3:}} $g=bc$, where $b \in B\setminus UD$ and $c \in C$ (here we allow $c \in D$, so this case also covers the situation when $k=1$).

        This is the only case where we need to use the assumption that $UD$ is separable in $A$. This assumption implies that we can find a finite group $M$ and a homomorphism  $\varphi:A \to M$ satisfying
        \begin{equation*}
            \varphi(b) \notin \varphi(UD)   ~ \text{ in } M.
        \end{equation*}

        As above, we can construct the amalgamated free product $\overbar{A}=\overbar{B}*_{\overbar{D}} \overbar{C}$, together with the natural homomorphism $\psi:A \to \overbar{A}$, such that $\varphi$ factors through $\psi$.
        It follows that
        \begin{equation}
        \label{eq:notinUDbar}
            \psi(b) \notin \psi(UD)= \psi(U) \overbar{D} ~\text{ in } \overbar A.
        \end{equation}

        Observe that $\psi(g) \notin \psi(UC)=\psi(U) \overbar{C}$ in $\overbar{A}$. 
        Indeed, otherwise we would have 
        \[
            \psi(b)=\psi(g)\psi(c^{-1}) \in \psi(U)\overbar{C} \cap \overbar{B}=\psi(U)(\overbar{C} \cap \overbar{B})=\psi(U)\overbar{D},
        \] 
        which would contradict \eqref{eq:notinUDbar} (in the first equality we used the fact that $\overbar B$ is a subgroup of $\overbar{A}$ containing the subset $\psi(U)$).
        We can now argue as in Case 1 above to find a homomorphism from $A$ to a finite group $L$ separating the image of $g$ from the image of $UC$.

    It is not hard to see that since $g \notin UC$ in $A$, the above three cases cover all possibilities, hence the proof is complete.
\end{proof}

In the next two corollaries we assume that $A=B*_D C$  is the amalgamated free product of its subgroups $B,C$, with $B \cap C=D$.

\begin{corollary}
\label{cor:sep_double_coset_amalg} 
    Suppose that $D$ is a separable subgroup in $A$. Then $B$, $C$ and $BC$ are all separable in $A$.
\end{corollary}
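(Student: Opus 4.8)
The plan is to deduce everything from Proposition~\ref{prop:dc_in_am} by choosing the subsets $U$ and $V$ appropriately. The key observation is that separability of $D$ in $A$ gives us a good supply of separable products to feed into that proposition, since $D$ itself, together with its cosets, are separable, and products like $UD$ with $U$ a singleton are just cosets of $D$.

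First I would prove that $B$ is separable. Apply Proposition~\ref{prop:dc_in_am} with $U=\{1\} \subseteq B$ and note that $UD = D$ is separable in $A$ by hypothesis. The proposition then yields that $UC = C$ is separable in $A$. Symmetrically, taking $V=\{1\} \subseteq C$, the product $DV = D$ is separable, so $BV = B$ is separable in $A$. (One could also just observe directly that $B$ is separable in any amalgam with $D$ separable, but routing through the proposition keeps the argument uniform.)

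Next, for $BC$: apply Proposition~\ref{prop:dc_in_am} once more, this time with $U = B \subseteq B$. We need $UD = BD = B$ to be separable in $A$ — but this is exactly what we established in the previous step. Hence the proposition gives that $UC = BC$ is separable in $A$, which is the remaining claim.

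I do not expect a genuine obstacle here; the corollary is essentially a packaging of Proposition~\ref{prop:dc_in_am} with the trivial or singleton choices of $U,V$, plus one bootstrap step using separability of $B$ to handle $BC$. The only point requiring a moment's care is the order of the two applications: separability of $B$ must be derived before the application that produces $BC$, since the latter uses $UD = B$ being separable as its hypothesis. I would present the three claims in the order $C$, then $B$, then $BC$, making the dependency transparent.
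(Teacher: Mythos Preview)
Your proposal is correct and follows essentially the same approach as the paper: apply Proposition~\ref{prop:dc_in_am} with $U=\{1\}$ and $V=\{1\}$ to get separability of $C$ and $B$, then apply it again with $U=B$ (using $UD=B$, just shown separable) to obtain separability of $BC$. Your explicit remark about the order of applications matches the implicit dependency in the paper's terser version.
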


\begin{proof}
    The separability of $C$ and $B$ in $A$ follows from Proposition~\ref{prop:dc_in_am}, after choosing $U=\{1\}$ and $V=\{1\}$.

    The separability of $BC$ is also a consequence of Proposition~\ref{prop:dc_in_am}, where we take $U=B$ (so that $UD=BD=B$).
\end{proof}

We will not need the next corollary in this paper, but it may be of independent interest and can be used to strengthen some of the statements proved in Section~\ref{sec:dc_when_one_is_parab}.

\begin{corollary}
\label{cor:sep_triple_coset_amalg} 
    Suppose that $U \subseteq B$, $V \subseteq C$ are subsets such that  $UD$ and $DV$ are separable in $A$. Then the triple product $UDV$ is separable in $A$.
\end{corollary}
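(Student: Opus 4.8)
The plan is to combine the two halves of Proposition~\ref{prop:dc_in_am} in sequence, using the fact that the separability hypotheses propagate. First I would observe that the hypotheses are symmetric enough that it suffices to prove $UDV$ is separable; the key is to rewrite $UDV$ as a double coset of the form $(\text{subset of }B)\cdot C$ or $B \cdot (\text{subset of }C)$ and invoke Proposition~\ref{prop:dc_in_am}. Concretely, since $DV \subseteq C$, I would try to apply the ``$UC$'' half of Proposition~\ref{prop:dc_in_am} with the subset $V' = DV \subseteq C$ in the role of $V$: that proposition tells us $BV'$ is separable in $A$ provided $DV'$ is separable. But $DV' = DDV = DV$, which is separable in $A$ by hypothesis. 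Hence $B\cdot DV = BDV = BV'$ is separable in $A$. This is more than we need on one side, so instead I would work with $U$: apply the ``$UC$'' half with the original $U \subseteq B$, which gives that $UC$ is separable in $A$, but again this is a coarser statement than $UDV$.

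The cleaner route is the following. Note $UDV \subseteq UC$ (since $DV \subseteq C$), and more importantly $UDV = U(DV)$ where I now want to treat $DV$ as a subset of $C$ and peel off $U$ from the $B$-side. The statement I actually want is a ``one-sided'' version: for a subset $W \subseteq C$, if $DW$ is separable in $A$ then $UW$ is separable in $A$ whenever $UD$ is separable. Examining the proof of Proposition~\ref{prop:dc_in_am} (the ``$UC$'' case), the only place the full subgroup $C$ (rather than an arbitrary subset) is used is in Cases 1 and 2, where one argues that a reduced word of length $\ge 2$ cannot lie in $\overbar B \,\overbar C$ and then that the \emph{finite} set $\psi(UC)$ is closed in the virtually free group $\overbar A$; replacing $C$ by $W$ with $DW \subseteq C$ separable, the image $\psi(UW)$ is still finite (as $\psi(U)$ and $\psi(W)$ are finite subsets of the finite groups $\overbar B, \overbar C$), so the same closedness argument applies, and in Case~3 one needs $DW$ separable rather than all of $DC$. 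Thus I would first state and prove this mild generalisation of Proposition~\ref{prop:dc_in_am} as an intermediate lemma, or simply remark that its proof goes through verbatim with $C$ replaced by any $W \subseteq C$ with $DW$ separable in $A$.

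Granting that lemma, the corollary is immediate: take $W = DV \subseteq C$. By hypothesis $DV$ is separable in $A$, and $DW = D(DV) = DV$ is separable in $A$; together with the hypothesis that $UD$ is separable in $A$, the lemma yields that $UW = U(DV) = UDV$ is separable in $A$, as required. The main obstacle is the bookkeeping in adapting the proof of Proposition~\ref{prop:dc_in_am}: one must check carefully that every appeal to the structure of $C$ there survives replacement of $C$ by the subset $W$, in particular that the finiteness of the separated set and the reduced-word length bounds are unaffected; this is routine but must be stated precisely since the corollary is asserted without its own proof in the excerpt. No genuinely new ideas beyond Proposition~\ref{prop:dc_in_am} and the separability of $D$ in $A$ are needed.
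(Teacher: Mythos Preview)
Your approach is workable but takes a genuinely different and more laborious route than the paper. The paper's proof is a two-line observation: from the hypotheses one first checks that $D$ itself is separable (pick $u\in U$, $v\in V$ and note $D = u^{-1}UD \cap DVv^{-1}$, an intersection of two separable sets), then applies \emph{both} halves of Proposition~\ref{prop:dc_in_am} directly to get that $UC$ and $BV$ are separable, and finishes with the identity
\[
UC \cap BV = UDV \quad\text{in } A,
\]
which holds because any $g = uc = bv$ forces $u^{-1}b = cv^{-1}\in B\cap C = D$. No modification of Proposition~\ref{prop:dc_in_am} is needed.

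Your plan, by contrast, asks for a strengthened version of Proposition~\ref{prop:dc_in_am} with $C$ replaced by an arbitrary $W\subseteq C$ satisfying $DW$ separable. This does go through, but not quite ``verbatim'' as you claim: in Case~3 (where $g=bc$ with $b\in B$, $c\in C$) you must split into the subcase $b\notin UD$, handled via separability of $UD$ exactly as before, and the new subcase $b\in UD$, where writing $b=ud$ and $c'=dc$ one has $c'\notin DV$ and must invoke separability of $DV$ instead. So both separability hypotheses are genuinely used inside the adapted proof, not just one. You should also note that separability of $D$ in $A$ is not among the stated hypotheses of the corollary; it must be derived (as the paper does) before you can even run the case analysis of Proposition~\ref{prop:dc_in_am}. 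Your route yields a slightly more general intermediate lemma, but the paper's intersection trick is cleaner and avoids reopening the proof of the proposition.
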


\begin{proof} 
    If either $U$ or $V$ are empty then $UDV$ is empty, and, hence, separable in $A$. 
    Thus we can suppose that there exist some elements $u \in U$ and $v \in V$. By Remark~\ref{rem:sep_props}. the subsets $u^{-1}UD \subseteq B$ and $DVv^{-1} \subseteq C$ are separable in $A$. 
    Since both of them contain $D$, we see that $D=u^{-1}UD \cap DVv^{-1}$,  thus $D$ is separable in $A$.

    By Proposition~\ref{prop:dc_in_am}, the products $UC$ and $BV$ are separable in $A$, so the statement follows from the observation that
    \[
        UC \cap BV=UDV~\text{ in } A. \qedhere
    \]
\end{proof}

In the case when $U$ and $V$ are subgroups, the above corollary shows that we can use separability of double cosets $UD$ and $DV$ to deduce separability of the triple coset $UDV$. Moreover, if both $U$ and $V$ are subgroups containing $D$, Corollary~\ref{cor:sep_triple_coset_amalg} implies that the double coset $UV=UDV$ is separable in $A$, as long as $U$ and $V$ are separable in $A$.


\section{Separability of double cosets when one factor is parabolic} 
\label{sec:dc_when_one_is_parab}
Throughout this section we will assume that $G$ is group generated  by a finite subset $X$ and hyperbolic relative to a collection of peripheral subgroups \(\{H_\nu \mid \nu \in \Nu\}\), with \(|\Nu|<\infty\).

Our goal in this section will be to establish separability of double cosets required by conditions \ref{cond:2} and \ref{cond:3} of Theorem~\ref{thm:sep->qc_comb}.
All statements in this section will assume that finitely generated relatively quasiconvex subgroups of $G$ are separable -- that is, $G$ is QCERF (see Definition~\ref{def:QCERF}).

\begin{lemma}
\label{lem:SA1->prof_top_on_qc_sbgps_is_induced}
    Suppose that $G$ is QCERF. If $A$ is a finitely generated relatively quasiconvex subgroup of $G$ then every subset of $A$ which is closed in $\pt(A)$ is also closed in $\pt(G)$.
\end{lemma}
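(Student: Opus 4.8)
The statement to prove is Lemma~\ref{lem:SA1->prof_top_on_qc_sbgps_is_induced}: if $G$ is QCERF and $A \leqslant G$ is a finitely generated relatively quasiconvex subgroup, then every $\pt(A)$-closed subset of $A$ is $\pt(G)$-closed. The natural tool is Lemma~\ref{lem:induced_top}(b), which gives exactly this conclusion provided that \emph{every finite index subgroup of $A$ is separable in $G$}. So the whole task reduces to verifying that hypothesis.

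First I would take an arbitrary finite index subgroup $A' \leqslant_f A$. The plan is to show $A'$ is a finitely generated relatively quasiconvex subgroup of $G$, so that QCERF-ness of $G$ immediately yields its separability in $G$. Relative quasiconvexity of $A'$ follows from Lemma~\ref{lem:props_of_qc_sbgps}, which says precisely that any finite index subgroup of a relatively quasiconvex subgroup is itself relatively quasiconvex. Finite generation of $A'$ is a standard fact: a finite index subgroup of a finitely generated group is finitely generated (e.g.\ by the Reidemeister--Schreier procedure, or by a Schreier transversal argument). Hence $A'$ is a finitely generated relatively quasiconvex subgroup of $G$, so by Definition~\ref{def:QCERF} it is separable in $G$.

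Having established that every finite index subgroup of $A$ is separable in $G$, I would invoke Lemma~\ref{lem:induced_top}(b) directly: it states that under this condition every closed subset of $\pt(A)$ is closed in $\pt(G)$, which is exactly the conclusion of the lemma.

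There is essentially no hard part here — this is a short bookkeeping argument assembling three already-proved facts (Lemma~\ref{lem:props_of_qc_sbgps}, finite generation of finite index subgroups, and Lemma~\ref{lem:induced_top}(b)) together with the definition of QCERF. The only point requiring any care is making sure the finite index subgroup $A'$ genuinely inherits both properties needed to call it ``finitely generated relatively quasiconvex'' before applying the QCERF hypothesis; once that is in place the rest is immediate.
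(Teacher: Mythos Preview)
Your proposal is correct and follows essentially the same approach as the paper's proof: show that every finite index subgroup of $A$ is finitely generated and relatively quasiconvex (via Lemma~\ref{lem:props_of_qc_sbgps}), hence separable by QCERF, and then invoke Lemma~\ref{lem:induced_top}(b).
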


\begin{proof}  
    By Lemma~\ref{lem:props_of_qc_sbgps} every subgroup of finite index in $A$ is finitely generated and relatively quasiconvex, hence it is separable in $G$ as $G$ is QCERF. 
    The claim of the lemma now follows from Lemma~\ref{lem:induced_top}(b).
\end{proof}

The next statement is essentially a corollary of the combination theorem of Mart\'{i}nez-Pedroza (Theorem~\ref{thm:M-P_comb}).

\begin{proposition}
\label{prop:M-P_comb_in_prof_terms} 
    Suppose that $G$ is QCERF. Let $P$ be a maximal parabolic subgroup of $G$, let $Q \leqslant G$ be a finitely generated relatively quasiconvex subgroup and let $D=P \cap Q$. 
    Then there exists a finite index subgroup $H \leqslant_f P$ such that all of the following properties hold:
    \begin{itemize}
        \item $ H \cap Q=D$;
        \item the subgroup $A=\langle H,Q \rangle$ is relatively quasiconvex in $G$;
        \item $A$ is naturally isomorphic to $H*_{D} Q$;
        \item $D$ is separable in $A$;
        \item every subset of $A$ which is closed in $\pt(A)$ is also closed in $\pt(G)$.
    \end{itemize}
\end{proposition}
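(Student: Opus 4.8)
\textbf{Proof strategy for Proposition~\ref{prop:M-P_comb_in_prof_terms}.}
The plan is to obtain the finite index subgroup $H \leqslant_f P$ by combining the combination theorem of Mart\'{i}nez-Pedroza (Theorem~\ref{thm:M-P_comb}) with the separability hypotheses coming from QCERF-ness. First, Theorem~\ref{thm:M-P_comb} provides a constant $C \ge 0$ such that any subgroup $H \leqslant P$ satisfying $H \cap Q = D$ and $\minx(H \setminus D) \ge C$ will have the property that $A = \langle H, Q \rangle$ is relatively quasiconvex in $G$ and is naturally isomorphic to $H *_D Q$. So the first task is to produce such an $H$ of finite index in $P$. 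For this I would use Lemma~\ref{lem:fg_qc_int_parab_is_fg} to note that $P$ is finitely generated (since $G$ is, and $P$ is a conjugate of some $H_\nu$), and that $D = P \cap Q$ is finitely generated; in particular $D$ is relatively quasiconvex in $G$ by Corollary~\ref{cor:parab->qc}, hence separable since $G$ is QCERF. Now, since $P$ itself is a (finitely generated) parabolic subgroup, it is relatively quasiconvex by Corollary~\ref{cor:parab->qc}, so by Lemma~\ref{lem:SA1->prof_top_on_qc_sbgps_is_induced} the profinite topologies $\pt(P)$ and $\pt(G)$ agree on subsets of $P$; in particular $D$ is separable in $P$. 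Applying Lemma~\ref{lem:sep->large_minx}(b) inside $P$ to the separable subset $D$ and the constant $C$, we obtain $N \lhd_f P$ with $\minx(DN \setminus D) \ge C$. Set $H = DN \leqslant_f P$. Then $H \cap Q = DN \cap Q = D(N \cap Q) = D$, using that $D \leqslant N$-normalising... wait, more carefully: $D \subseteq H$ and $H \cap Q \supseteq D$; conversely if $g \in H \cap Q$ then $g \in DN$, and writing $g = dn$ with $d \in D, n \in N$ we get $n = d^{-1}g \in Q$ (since $D \subseteq Q$), so $n \in N \cap Q$; but we do not automatically have $N \cap Q = D$. To fix this cleanly I would instead choose $N$ to also satisfy $N \cap Q \subseteq D$: since $D$ is separable in $G$ and $D \leqslant_f ?$... actually the simplest route is to invoke Lemma~\ref{lem:Lemma_0}: $D$ is separable in $G$ and $D \leqslant_f $ the subgroup $\langle D \rangle = D$, hmm. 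Better: apply Lemma~\ref{lem:Lemma_0} with $K = D$ and $H = Q$ — but $D$ need not have finite index in $Q$. The correct tool is Lemma~\ref{lem:nbhdintersection} or a direct separability argument: since $D = P \cap Q$ is separable in $G$ and we want $N \lhd_f P$ with $(DN) \cap Q = D$, note $(DN) \cap Q = D((N \cap Q))$ ... I would instead argue that $Q \cap P = D$ is separable in $P$, so by Lemma~\ref{lem:Lemma_0} applied to... Let me restructure: apply Lemma~\ref{lem:sep->large_minx}(a) in $P$ to the separable subset $D$ and the finite set $U = \{g \in P : |g|_X < C\} \setminus D$, and simultaneously use separability of $D$ in $P$ to find $N_0 \lhd_f P$ with $N_0 \cap (Q \setminus D) = \emptyset$ — wait $Q \cap P \setminus D = \emptyset$ already since $Q \cap P = D$, so $N_0 \cap Q \subseteq D \cap N_0$ automatically once $N_0 \cap (P \cap Q) \subseteq D$, which holds trivially. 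Hence for \emph{any} $N \lhd_f P$, $(DN) \cap Q = D(N \cap Q) = D$ since $N \cap Q \subseteq P \cap Q = D$. Good — so the naive computation works after all.

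With $H = DN$ in hand where $N \lhd_f P$ satisfies $\minx(DN \setminus D) \ge C$, Theorem~\ref{thm:M-P_comb} gives that $A = \langle H, Q\rangle$ is relatively quasiconvex in $G$ and $A \cong H *_D Q$, establishing the first three bullet points. For the fourth bullet, $D = H \cap Q$ is an intersection of two relatively quasiconvex subgroups of $G$, hence relatively quasiconvex in $G$ by Hruska's theorem; moreover $D$ is finitely generated (as $D = P \cap Q$ with $Q$ finitely generated, by Lemma~\ref{lem:fg_qc_int_parab_is_fg}). Since $D \leqslant A$ and $A$ is itself finitely generated (being generated by the finitely generated groups $H$ and $Q$) and relatively quasiconvex, I would apply Lemma~\ref{lem:induced_top}(a) — or directly, since $G$ is QCERF, $D$ is separable in $G$, and so by Lemma~\ref{lem:induced_top}(a) $D$ is separable (closed in $\pt(A)$) in $A$. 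This gives the fourth bullet.

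The fifth bullet — that every subset of $A$ closed in $\pt(A)$ is closed in $\pt(G)$ — follows from Lemma~\ref{lem:SA1->prof_top_on_qc_sbgps_is_induced} applied to the finitely generated relatively quasiconvex subgroup $A$ of $G$, since we have already established that $A$ is finitely generated and relatively quasiconvex. That completes all five bullets.

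\textbf{Main obstacle.} The genuinely delicate point is verifying $H \cap Q = D$ for the subgroup $H = DN$ produced by the separability argument; one must be careful that $N \cap Q \subseteq D$, which here is automatic only because $N \leqslant P$ and $P \cap Q = D$ — if one instead tried to build $H$ as a preimage of a finite quotient of $P$ without controlling its intersection with $P$, this could fail, and one would need the extra separability input of Lemma~\ref{lem:Lemma_0} or Lemma~\ref{lem:sep->large_minx}(a) to correct it. The other mild subtlety is confirming that $A = \langle H, Q\rangle$ is finitely generated so that Lemma~\ref{lem:SA1->prof_top_on_qc_sbgps_is_induced} applies: this is immediate since $H \leqslant_f P$ with $P$ finitely generated (Lemma~\ref{lem:fg_qc_int_parab_is_fg}) and $Q$ is finitely generated by hypothesis, so $A$ is generated by a finite set. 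Everything else is a direct assembly of Theorem~\ref{thm:M-P_comb}, Hruska's intersection theorem, and the profinite-topology lemmas from Section~\ref{sec:prelim}.
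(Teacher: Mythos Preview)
Your proposal is correct and uses essentially the same ingredients as the paper, with one minor variation in how the finite index subgroup $H \leqslant_f P$ is produced. The paper uses separability of $Q$ in $G$ directly: it finds $N \lhd_f G$ with $\minx(QN \setminus Q) \ge C$ via Lemma~\ref{lem:sep->large_minx}, and sets $H = P \cap QN$; then $H \setminus D = H \setminus Q \subseteq QN \setminus Q$ immediately gives the length bound, and $H \cap Q = D$ is automatic since $D \subseteq H \subseteq P$. You instead first pass to separability of $D$ in $P$ and build $H = DN$ for $N \lhd_f P$. Both constructions work; the paper's is marginally more economical because it avoids the detour through showing $D$ is separable in $P$ (which you do need anyway for the fourth bullet, but the paper derives it simply as $D = P \cap Q$, an intersection of two separable subgroups of $G$, without invoking Lemma~\ref{lem:SA1->prof_top_on_qc_sbgps_is_induced}). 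The remaining bullets (fourth and fifth) are handled identically in both arguments, via Lemma~\ref{lem:induced_top}(a) and Lemma~\ref{lem:SA1->prof_top_on_qc_sbgps_is_induced} respectively.
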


\begin{proof}
    Let $C \ge 0$ be the constant provided by Theorem~\ref{thm:M-P_comb}, applied to the maximal parabolic subgroup $P$ and the relatively quasiconvex subgroup $Q$. 
    By QCERF-ness, $Q$ is separable in $G$, so by Lemma~\ref{lem:sep->large_minx} there exists $N \lhd_f G$ such that $\minx(QN \setminus Q) \ge C$. 
    Therefore, after setting $H=P \cap QN \leqslant_f P$, we get $\minx(H \setminus D)=\minx(H \setminus Q) \ge C$.

    Note that since $D=P \cap Q \subseteq H \subseteq P$, we have $H \cap Q=D$. 
    Hence we can apply Theorem~\ref{thm:M-P_comb} to conclude that $A=\langle H,Q \rangle$ is relatively quasiconvex in $G$ and is naturally isomorphic to the amalgamated free product $H*_{D} Q$.

    Recall, from Lemma~\ref{lem:fg_qc_int_parab_is_fg} and Corollary~\ref{cor:parab->qc}, that $P$ is finitely generated and relatively quasiconvex in $G$, hence it is separable in $G$ by QCERF-ness. 
    It follows that $D=P \cap Q$ is separable in $G$, which implies that it is separable in $A$ by Lemma~\ref{lem:induced_top}.

    Observe that $H$ and $Q$ are both finitely generated, hence $A$ is finitely generated and relatively quasiconvex in $G$. 
    Therefore Lemma~\ref{lem:SA1->prof_top_on_qc_sbgps_is_induced} yields the last assertion of the proposition, that every subset of $A$ which is closed in $\pt(A)$ is also closed in $\pt(G)$.
\end{proof}

By combining Proposition~\ref{prop:M-P_comb_in_prof_terms} with Proposition~\ref{prop:dc_in_am} we obtain the first double coset separability result when one of the factors is parabolic and the other one is finitely generated and relatively quasiconvex.

\begin{proposition} 
\label{prop:parab_times_fg_qc-sep}
    Assume that $G$ is QCERF. Let $P$ be a maximal parabolic subgroup of $G$, let $R \leqslant G$ be a finitely generated relatively quasiconvex subgroup of $G$. 
    Suppose that $D \leqslant P$ is a subgroup satisfying the following condition:
    \begin{equation}
    \label{eq:fin_ind_if_D}
        \text{ for each } U \leqslant_f D \text{ the double coset } U(P \cap R) \text{ is separable in } P.
    \end{equation}
    Then the double coset $DR$ is separable in $G$.
\end{proposition}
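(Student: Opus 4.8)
The plan is to reduce the separability of $DR$ in $G$ to the separability of a double coset in an amalgamated free product, using the machinery from Sections~\ref{sec:dcs_in_amalgams} and \ref{sec:dc_when_one_is_parab}. First I would apply Proposition~\ref{prop:M-P_comb_in_prof_terms} to the maximal parabolic subgroup $P$ and the finitely generated relatively quasiconvex subgroup $R$: this produces a finite index subgroup $H \leqslant_f P$ such that $H \cap R = P \cap R =: E$, the subgroup $A = \langle H, R\rangle$ is relatively quasiconvex in $G$ and naturally isomorphic to $H *_E R$, the edge group $E$ is separable in $A$, and, crucially, every subset of $A$ closed in $\pt(A)$ is closed in $\pt(G)$. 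So it suffices to prove that $DR$ is separable in $A$; and since $D \leqslant P$ while $H \leqslant_f P$, the subgroup $D \cap H$ has finite index in $D$, so by Lemma~\ref{lem:fi_dc} it is enough to show that $(D \cap H)\, R$ is separable in $A$.

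Now $A = B *_E C$ with $B = H$, $C = R$ and $B \cap C = E$, and $D \cap H$ is a subgroup of $B$. Proposition~\ref{prop:dc_in_am} (with $U = D \cap H \subseteq B$ and $V = \{1\}$) tells us that $(D\cap H)C = (D\cap H)R$ is separable in $A$ provided two things hold: that $E$ is separable in $A$ — which we already have from Proposition~\ref{prop:M-P_comb_in_prof_terms} — and that the product $(D\cap H)\,E$ is separable in $A$. So the whole argument comes down to verifying that $(D\cap H)E$ is separable in $A$. Since $(D\cap H)E$ is a subset of $B = H$, it is separable in $A$ as soon as it is separable in $\pt(H)$ (by Lemma~\ref{lem:induced_top}(a), closedness in $\pt(A)$ restricted to the subgroup $H$ is weaker; I actually want the converse direction — that closed subsets of $\pt(H)$ are closed in $\pt(A)$), which holds because $H$ is finitely generated and relatively quasiconvex in $G$ (being of finite index in the parabolic $P$, hence separable, and likewise all its finite index subgroups), so by Lemma~\ref{lem:SA1->prof_top_on_qc_sbgps_is_induced} any subset closed in $\pt(H)$ is closed in $\pt(G)$, and then closed in $\pt(A)$ by Lemma~\ref{lem:induced_top}(a) applied to $A \leqslant G$. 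Hence it is enough to show $(D\cap H)E$ is separable in $P$ — or even just in $H$.

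This is where hypothesis \eqref{eq:fin_ind_if_D} enters. We have $E = P\cap R$, and $D \cap H \leqslant_f D$, so \eqref{eq:fin_ind_if_D} applied with $U = D \cap H$ gives directly that $(D\cap H)(P\cap R) = (D\cap H)E$ is separable in $P$. By Lemma~\ref{lem:induced_top}(a) (with the subgroup $H \leqslant P$), the subset $(D\cap H)E$ of $H$ — note $(D\cap H)E \subseteq H$ since $E = P\cap R$ need not lie in $H$... here I should be careful: $E = H \cap R$ by construction, so indeed $E \leqslant H$ and $(D\cap H)E \subseteq H$ — is then closed in $\pt(H)$, and hence, by the previous paragraph, closed in $\pt(A)$. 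Assembling the pieces: $(D\cap H)E$ and $E$ are both separable in $A$, so Proposition~\ref{prop:dc_in_am} gives that $(D\cap H)R$ is separable in $A$, hence separable in $G$ by the transfer property of Proposition~\ref{prop:M-P_comb_in_prof_terms}, and finally Lemma~\ref{lem:fi_dc} upgrades this to separability of $DR$ in $G$. The main obstacle, and the step that needs the most care, is bookkeeping the two different "induced topology" lemmas in the right directions — making sure that separability passes from $P$ down to $H$, then up to $A$, then up to $G$ — and checking that the finite-index reduction $D \rightsquigarrow D\cap H$ is compatible with the hypothesis \eqref{eq:fin_ind_if_D}, which it is precisely because that hypothesis is stated for \emph{all} finite index subgroups $U$ of $D$.
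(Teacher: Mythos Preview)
Your proposal is correct and follows essentially the same route as the paper's proof: apply Proposition~\ref{prop:M-P_comb_in_prof_terms} to obtain the amalgam $A=H*_E R$, use hypothesis~\eqref{eq:fin_ind_if_D} with $U=D\cap H$ to get separability of $UE$, invoke Proposition~\ref{prop:dc_in_am} for separability of $UR$ in $A$, transfer to $G$, and finish with Lemma~\ref{lem:fi_dc}. The only differences are cosmetic: the paper transfers separability of $UE$ directly along $P\to G\to A$ (using that $P$ is finitely generated relatively quasiconvex), whereas you detour through $H$; and your opening sentence ``it suffices to prove that $DR$ is separable in $A$'' is imprecise since $D$ need not lie in $A$ --- but your final assembly paragraph states the logic in the correct order.
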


\begin{proof} 
    According to Proposition~\ref{prop:M-P_comb_in_prof_terms}, there exists $H \leqslant_f P$ such that the subgroup $A=\langle H,R \rangle$ is naturally isomorphic to the amalgamated free product $H*_{E} R$, where $E=P \cap R=H \cap R$ is separable in $A$, and every closed subset from $\pt(A)$ is separable in $G$.

    Denote $U=D \cap H \leqslant_f D$. 
    By assumption \eqref{eq:fin_ind_if_D}, $UE$ is separable in $P$. 
    Since $P$ is finitely generated and relatively quasiconvex in $G$, we can conclude that $UE$ is separable in $G$ by Lemma~\ref{lem:SA1->prof_top_on_qc_sbgps_is_induced}. 
    As $UE \subseteq A \leqslant G$, $UE$ will also be closed in $\pt(A)$, so we can apply Proposition~\ref{prop:dc_in_am} to deduce that the double coset $UR$ is closed in $\pt(A)$. 
    It follows that this double coset is separable in $G$ and, since $U \leqslant_f D$, Lemma~\ref{lem:fi_dc} implies that $DR$ is separable in $G$, as desired.
\end{proof}

We can now prove that assumption \ref{cond:3} of Theorem~\ref{thm:sep->qc_comb} holds as long as the relatively hyperbolic group $G$ is QCERF.

\begin{corollary}
\label{cor:fi_in_parab_times_fgqc_is_sep} 
    Suppose that $G$ is QCERF, $P$ is a maximal parabolic subgroup of $G$ and $Q \leqslant G$ is a finitely generated relatively quasiconvex subgroup. 
    Then for all finite index subgroups $K \leqslant_f P$ and $T \leqslant_f Q$ the double coset $KT$ is separable in $G$.
\end{corollary}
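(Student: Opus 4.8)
The plan is to deduce this from Proposition~\ref{prop:parab_times_fg_qc-sep} together with Lemma~\ref{lem:fi_dc}. Given $K \leqslant_f P$ and $T \leqslant_f Q$, I would like to apply Proposition~\ref{prop:parab_times_fg_qc-sep} with the subgroup $D$ of $P$ taken to be $K$ and the finitely generated relatively quasiconvex subgroup $R$ taken to be $T$. Note $T$ is indeed finitely generated (it has finite index in the finitely generated group $Q$) and relatively quasiconvex in $G$ by Lemma~\ref{lem:props_of_qc_sbgps}. So the only thing to verify is the hypothesis \eqref{eq:fin_ind_if_D}: that for every $U \leqslant_f K$, the double coset $U(P \cap T)$ is separable in $P$. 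Once this is established, Proposition~\ref{prop:parab_times_fg_qc-sep} gives that $KT$ is separable in $G$, which is exactly the conclusion.

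The key point, then, is the separability of $U(P \cap T)$ in $P$. Here $P$ is a maximal parabolic subgroup of $G$, hence a conjugate $gH_\nu g^{-1}$ of some peripheral subgroup $H_\nu$; it is finitely generated by Lemma~\ref{lem:fg_qc_int_parab_is_fg}. However, the statement as written does \emph{not} assume that the peripheral subgroups are double coset separable (that assumption appears only in the stronger Theorem~\ref{thm:sep->qc_comb} via condition~\ref{cond:4}), so I cannot simply invoke double coset separability of $P$ directly. Instead I would argue as follows: $U$ has finite index in $K$, which has finite index in $P$, so $U \leqslant_f P$; similarly $P \cap T$ has finite index in $P \cap Q$. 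The subgroup $P \cap Q$ need not have finite index in $P$ in general, but $P \cap T \leqslant_f P \cap Q$, and $P \cap Q$ is finitely generated and relatively quasiconvex in $G$ (again Lemma~\ref{lem:fg_qc_int_parab_is_fg} and the fact that parabolic subgroups are relatively quasiconvex, Corollary~\ref{cor:parab->qc}).

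I think the cleanest route is: since $U \leqslant_f P$, the double coset $U(P\cap T)$ is a \emph{finite union} of cosets of the form $U p (P \cap T)$ where $p$ ranges over a finite set of representatives — more precisely $U(P\cap T) = \bigcup_{i} U x_i$ where $\{x_i\}$ is a (finite) set of coset representatives for the action of $P \cap T$ on the finitely many cosets $P/U$ — so $U(P \cap T)$ is a finite union of \emph{left cosets of the finite index subgroup $U$} of $P$. Every left coset of a finite index subgroup is clopen in $\pt(P)$, hence closed, and a finite union of closed sets is closed. Therefore $U(P\cap T)$ is separable in $P$ with no further hypotheses, verifying \eqref{eq:fin_ind_if_D}. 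The main (very mild) obstacle is just getting this finiteness bookkeeping exactly right: $U(P\cap T)$ is a union of $U$-cosets, and since $[P:U] < \infty$ there are only finitely many $U$-cosets in total, so $U(P \cap T)$ is automatically a finite union of $U$-cosets and the argument goes through cleanly. Writing this up:

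\begin{proof}
    Fix $K \leqslant_f P$ and $T \leqslant_f Q$. Since $Q$ is finitely generated, so is $T$, and $T$ is relatively quasiconvex in $G$ by Lemma~\ref{lem:props_of_qc_sbgps}(b) (applied via Lemma~\ref{lem:props_of_qc_sbgps}(a), as in the proof of Lemma~\ref{lem:props_of_qc_sbgps}). We apply Proposition~\ref{prop:parab_times_fg_qc-sep} with the maximal parabolic subgroup $P$, the finitely generated relatively quasiconvex subgroup $R = T$, and the subgroup $D = K \leqslant P$.

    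To do so we must verify condition \eqref{eq:fin_ind_if_D}, namely that for every $U \leqslant_f K$ the double coset $U(P \cap T)$ is separable in $P$. Since $U \leqslant_f K \leqslant_f P$, we have $U \leqslant_f P$. As $P = \bigsqcup_{i=1}^{m} U y_i$ for some $y_1, \dots, y_m \in P$, the subset $U(P \cap T) \subseteq P$ is a union of some of the finitely many left cosets $Uy_1, \dots, Uy_m$. Each such coset is open in $\pt(P)$ (being a basic open set), hence its complement in $P$, a finite union of cosets of $U$, is also open; thus each $Uy_i$ is closed in $\pt(P)$. Therefore $U(P \cap T)$, being a finite union of sets closed in $\pt(P)$, is closed in $\pt(P)$, i.e.\ separable in $P$. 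This establishes \eqref{eq:fin_ind_if_D}.

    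By Proposition~\ref{prop:parab_times_fg_qc-sep}, the double coset $KT$ is separable in $G$.
\end{proof}
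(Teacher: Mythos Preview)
Your proof is correct and follows essentially the same approach as the paper: apply Proposition~\ref{prop:parab_times_fg_qc-sep} with $R=T$ and $D=K$, noting that condition~\eqref{eq:fin_ind_if_D} holds trivially because $U \leqslant_f P$ makes $U(P\cap T)$ a finite union of cosets of a finite-index subgroup, hence closed in $\pt(P)$. A minor remark: Lemma~\ref{lem:props_of_qc_sbgps} is not stated with parts (a) and (b), so your citation should simply be to that lemma; and your preliminary mention of Lemma~\ref{lem:fi_dc} is unnecessary since it is never used.
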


\begin{proof} 
    Note that $T$ is finitely generated and relatively quasiconvex in $G$ by Lemma~\ref{lem:props_of_qc_sbgps}. 
    Hence, to apply Proposition~\ref{prop:parab_times_fg_qc-sep} we simply need to check that for any $U \leqslant_f K$ the double coset $U(P \cap T)$ is separable in $P$. 
    The latter is true because $U(P \cap T)$ is a basic closed set in $\pt(P)$, being a finite union of right cosets to $U \leqslant_f P$. 
    Therefore $KT$ is separable in $G$ by Proposition~\ref{prop:parab_times_fg_qc-sep}.
\end{proof}

The proof of assumption \ref{cond:2} of Theorem~\ref{thm:sep->qc_comb} is slightly more involved because the intersection of two finitely generated relatively quasiconvex subgroups need not be finitely generated.

\begin{proposition}
\label{prop:parab_times_S_is_sep} 
    Let $P$ be a maximal parabolic subgroup of $G$, let $Q,R \leqslant G$ be finitely generated relatively quasiconvex subgroups, let $S=Q \cap R$ and $D=P \cap Q$. 
    Suppose that $G$ is QCERF and condition \eqref{eq:fin_ind_if_D} is satisfied. 
    Then the double coset $PS$ is separable in $G$.
\end{proposition}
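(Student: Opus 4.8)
The plan is to reduce the separability of $PS$ in $G$ to a statement about an amalgamated free product obtained from the Martínez-Pedroza combination theorem, then bootstrap from there to handle the non-finitely-generated subgroup $S$. First I would invoke Proposition~\ref{prop:M-P_comb_in_prof_terms} for the maximal parabolic $P$ and the finitely generated relatively quasiconvex subgroup $Q$: this produces a finite index subgroup $H \leqslant_f P$ such that $H \cap Q = D$, the subgroup $A = \langle H, Q\rangle$ is relatively quasiconvex in $G$ and naturally isomorphic to $H *_D Q$, $D$ is separable in $A$, and every subset of $A$ closed in $\pt(A)$ is closed in $\pt(G)$. By Lemma~\ref{lem:fi_dc}, to show $PS$ is separable in $G$ it suffices to show $(H \cap S')S$ is separable in $G$ for a suitable finite index subgroup — but more directly, since $H \leqslant_f P$, it is enough to prove $HS$ is separable in $G$; and since $HS \subseteq A$, it is enough to prove $HS$ is closed in $\pt(A)$.

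Next I would apply Proposition~\ref{prop:dc_in_am} inside the amalgam $A = H *_D Q$ with $B = H$, $C = Q$. Taking $U = H$ (so $UD = HD = H$), the proposition gives that $HQ$ is separable in $A$; but that is not quite what we want. Instead, the relevant application is with a subset $V \subseteq Q$: we want $BV = HV$ separable in $A$, which by Proposition~\ref{prop:dc_in_am} follows from $DV$ being separable in $A$. So the goal becomes: choose $V$ so that $HV \cap (\text{something}) = HS$, or directly arrange $S = Q \cap HS$-type relations. The cleanest route is to note $S = Q \cap R \leqslant Q$, write $S = D \cdot(\text{cosets})$? — no, $S$ need not contain $D$. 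Rather, observe that $HS$ need not have a nice form unless we first separate $D$ from $S$ appropriately. The key point is that $DS$ is a double coset in $A$ with $D \leqslant P$ finitely generated (by Lemma~\ref{lem:fg_qc_int_parab_is_fg}) and $S \leqslant Q$; using condition~\eqref{eq:fin_ind_if_D} (applied with $U = D$) we get that $D(P \cap R) = DR \cap$-type data is separable in $P$, hence in $G$ by Lemma~\ref{lem:SA1->prof_top_on_qc_sbgps_is_induced}, hence closed in $\pt(A)$. Then, since $S = Q \cap R$ and $D(P\cap R)$ is closed in $\pt(A)$, intersecting with $Q$ and translating shows $DS = DR \cap (DQ)$-style identities hold; more carefully, $D \cdot S$ equals $D(P\cap R) \cap Q$ up to the amalgam structure, so $DS$ is closed in $\pt(A)$. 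Applying Proposition~\ref{prop:dc_in_am} with $V = S$ then yields $HS = BV$ closed in $\pt(A)$, hence separable in $G$, and Lemma~\ref{lem:fi_dc} promotes this to separability of $PS$.

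The main obstacle I anticipate is precisely the identification of $DS$ (or the appropriate double coset in $A$) as a profinitely closed set, since $S$ is only a relatively quasiconvex subset of $G$ (an intersection of two finitely generated relatively quasiconvex subgroups, hence relatively quasiconvex by Hruska, but not necessarily finitely generated), so I cannot directly apply the QCERF hypothesis to $S$ itself. The workaround is to exploit that $D$ \emph{is} finitely generated (Lemma~\ref{lem:fg_qc_int_parab_is_fg}) and that $D(P\cap R)$ lives inside $P$, where condition~\eqref{eq:fin_ind_if_D} applies, combined with the fact that $S = Q \cap R$ and $P \cap R \leqslant R$, so that $DS$ inside $A$ can be recovered as an intersection of sets already known to be closed in $\pt(A)$ — namely $D(P\cap R)$ (closed in $\pt(P)$, hence in $\pt(G)$ by Lemma~\ref{lem:SA1->prof_top_on_qc_sbgps_is_induced}, hence in $\pt(A)$) intersected with $Q$ and using $D \leqslant P$, $B \cap C = D$. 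Writing out the coset arithmetic $DS = D(Q\cap R) = (DR) \cap Q$ carefully in the amalgam — using that $D(P\cap R) = DR$ when $D \leqslant P$ and $P \cap R \leqslant R$ — is the delicate computation; once $DS$ is closed in $\pt(A)$, the rest is a direct application of Proposition~\ref{prop:dc_in_am} and Lemma~\ref{lem:fi_dc}.

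\begin{proof}[Proof sketch]
    By Proposition~\ref{prop:M-P_comb_in_prof_terms}, there exists $H \leqslant_f P$ such that $H \cap Q = D$, the subgroup $A = \langle H, Q\rangle$ is relatively quasiconvex in $G$ and naturally isomorphic to $H *_D Q$ (so that $H \cap Q = D$ inside $A$), the subgroup $D$ is separable in $A$, and every subset of $A$ closed in $\pt(A)$ is closed in $\pt(G)$. By Lemma~\ref{lem:fg_qc_int_parab_is_fg}, $D = P \cap Q$ is finitely generated; applying condition~\eqref{eq:fin_ind_if_D} with $U = D$ shows that $D(P \cap R)$ is separable in $P$. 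Since $P$ is finitely generated and relatively quasiconvex in $G$ and $G$ is QCERF, Lemma~\ref{lem:SA1->prof_top_on_qc_sbgps_is_induced} gives that $D(P\cap R)$ is separable in $G$; as $D(P\cap R) \subseteq A$, it is in particular closed in $\pt(A)$ by Lemma~\ref{lem:induced_top}(a).

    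Now, since $D \leqslant P$ and $P \cap R \leqslant R$, we have $D(P\cap R) \subseteq DR \cap P$, and conversely, using $D \leqslant P$, any element of $DR \cap P$ has the form $dr$ with $d \in D$, $r \in R$, and $r = d^{-1}(dr) \in P \cap R$, so $DR \cap P = D(P\cap R)$. Intersecting with $Q$ and using $D \leqslant Q$ together with $Q \cap R = S$, we obtain
    \[
        D(P\cap R) \cap Q = D(P\cap R \cap Q) = D(Q \cap R) = DS
    \]
    (the first equality because $D \leqslant Q$). As $D(P\cap R)$ and $Q$ are both closed in $\pt(A)$, so is their intersection $DS$.

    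Applying Proposition~\ref{prop:dc_in_am} to the amalgam $A = H *_D Q$ with $B = H$, $C = Q$ and the subset $V = S \subseteq C$: since $DV = DS$ is separable in $A$, the product $BV = HS$ is separable in $A$, hence closed in $\pt(A)$. Therefore $HS$ is separable in $G$. Finally, since $H \leqslant_f P$, Lemma~\ref{lem:fi_dc} (with $A' = H \leqslant_f P$, $B' = S \leqslant S$) implies that $PS$ is separable in $G$.
\end{proof}
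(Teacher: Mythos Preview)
Your overall strategy matches the paper's: build the amalgam $A = H *_D Q$ via Proposition~\ref{prop:M-P_comb_in_prof_terms}, show that $DS$ is closed in $\pt(A)$, apply Proposition~\ref{prop:dc_in_am} with $V = S$ to get $HS$ closed in $\pt(A)$, and then use Lemma~\ref{lem:fi_dc}. However, the step where you show $DS$ is closed in $\pt(A)$ contains a genuine error.

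You claim
\[
    D(P\cap R)\cap Q = D(P\cap R\cap Q) = D(Q\cap R) = DS.
\]
The first equality is fine (since $D\le Q$), but the second is false in general: $P\cap R\cap Q = (P\cap Q)\cap R = D\cap R$, not $Q\cap R$. Hence $D(P\cap R)\cap Q = D(D\cap R) = D$, which is typically a proper subset of $DS$ (unless $S\subseteq P$). So intersecting the separable set $D(P\cap R)$ with $Q$ does \emph{not} recover $DS$.

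The paper fixes exactly this point by first invoking Proposition~\ref{prop:parab_times_fg_qc-sep} (which already uses condition~\eqref{eq:fin_ind_if_D}) to get that $DR$ is separable in $G$; then, since $D\le Q$, one has $DR\cap Q = D(R\cap Q) = DS$, so $DS$ is separable in $G$ and hence closed in $\pt(A)$. In other words, the detour through the ambient group $G$ (rather than staying inside $P$) is essential: the set $D(P\cap R)$ you work with lives entirely in $P$ and simply cannot see the part of $S$ that lies outside $P$. Once you replace your computation by the identity $DS = DR\cap Q$ with $DR$ separable in $G$, the rest of your argument goes through exactly as you wrote it.
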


\begin{proof} 
    Proposition~\ref{prop:parab_times_fg_qc-sep} tells us that the double coset $DR$ is separable in $G$, and \(G\) is QCERF so \(Q\) is separable in \(G\). 
    Now, observe that $DR \cap Q=D(R \cap Q)=DS$, because $D \leqslant Q$. 
    It follows that the double coset \(DS\) is separable in \(G\).

    According to Proposition~\ref{prop:M-P_comb_in_prof_terms}, there exists a finite index subgroup $H \leqslant_f P$ such that $H \cap Q=D$, $A=\langle H, Q\rangle \cong H*_D Q$, $D$ is separable in $A$ and every closed subset in $\pt(A)$ is closed in $\pt(G)$.
    The double coset $DS$ is separable in $A$ by Lemma~\ref{lem:induced_top}, so $HS$ is closed in $\pt(A)$ by Proposition~\ref{prop:dc_in_am}. 
    It follows that $HS$ is closed in $\pt(G)$, which implies that the double coset $PS$ is separable in $G$ by Lemma~\ref{lem:fi_dc}. 
    Thus the proof is complete.
\end{proof}


\section{Quasiconvexity of a virtual join from separability properties}
\label{sec:sep->qc}

In this section we will prove Theorems~\ref{thm:sep->qc_intro} and \ref{thm:sep->qc_for_ab_parab} from the Introduction. The latter follows from the following result and the observation that a finite index subgroup of a relatively quasiconvex subgroup is itself relatively quasiconvex (see  Lemma~\ref{lem:props_of_qc_sbgps}).

\begin{theorem}
\label{thm:sep->qc_for_ab_parab-detailed}
     Let \(G\) be a group generated by a finite set \(X\) and hyperbolic relative to a finite collection of abelian subgroups. Assume that $G$ is QCERF. 
     If \(Q, R \leqslant G\) are relatively quasiconvex subgroups and \(S = Q \cap R\) then for every \(A \geq 0\) there exists a finite index subgroup $L \leqslant_f G$, with $S \subseteq L$, such that properties \descref{P1}--\descref{P3}  from Subsection~\ref{subsec:3.1} hold for arbitrary subgroups $Q' \leqslant Q \cap L$ and $R' \leqslant R \cap L$ satisfying $Q' \cap R'=S$.
 \end{theorem}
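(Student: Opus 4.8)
The plan is to reduce Theorem~\ref{thm:sep->qc_for_ab_parab-detailed} to the Metric Quasiconvexity Theorem (Theorem~\ref{thm:metric_qc}) combined with Theorem~\ref{thm:sep->qc_comb}, exploiting Remark~\ref{rem:ab_periph->C4_and_C5} to discard conditions \descref{C4} and \descref{C5}. First I would fix the finite collection $\mathcal{P}$ of maximal parabolic subgroups provided by Theorem~\ref{thm:metric_qc} applied to $Q$ and $R$; enlarging it if necessary, I may assume $\mathcal{P} \supseteq \mathcal{P}_1$ (the family from Notation~\ref{not:main_in_part_2}, which depends only on $Q$, $R$). Note every $P \in \mathcal{P}$ is a conjugate of some $H_\nu$, hence abelian. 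Given $A \ge 0$, Theorem~\ref{thm:metric_qc} yields constants $B, C \ge 0$ such that any $Q' \leqslant Q$, $R' \leqslant R$ satisfying \descref{C1}--\descref{C5} with these constants and this family enjoy \descref{P1}--\descref{P3}. Since the peripheral subgroups are abelian, by Remark~\ref{rem:ab_periph->C4_and_C5} conditions \descref{C4} and \descref{C5} are automatic once \descref{C1} holds, so it suffices to arrange \descref{C1}, \descref{C2}, \descref{C3}.

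Next I would produce the subgroup $L \leqslant_f G$. The key input is Proposition~\ref{prop:sep->C2-C3}: it requires that $Q$ and $R$ be separable in $G$, and that $PS$ be separable in $G$ for each $P \in \mathcal{P}$. Separability of $Q$ and $R$ is immediate from QCERF-ness since $Q, R$ are finitely generated relatively quasiconvex. For separability of $PS$ I would invoke Proposition~\ref{prop:parab_times_S_is_sep}: it needs $G$ QCERF (given), $Q, R$ finitely generated relatively quasiconvex (given), and condition \eqref{eq:fin_ind_if_D}, namely that for each $U \leqslant_f D = P \cap Q$ the double coset $U(P \cap R)$ is separable in $P$. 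Here is where abelianness does the work: $P$ is abelian, so $P \cap R$ is a subgroup of the abelian group $P$, $U(P \cap R)$ is itself a subgroup of $P$, and it has finite index in the subgroup $U(P\cap R)$... more precisely $U(P\cap R)$ is a subgroup containing $U \leqslant_f P$, hence $U(P\cap R) \leqslant_f P$ and is therefore a finite union of cosets of the finite index subgroup $U(P\cap R)$ — it is open, hence closed, in $\pt(P)$. Thus \eqref{eq:fin_ind_if_D} holds trivially and Proposition~\ref{prop:parab_times_S_is_sep} gives separability of $PS$ in $G$ for every $P \in \mathcal{P}$.

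Having verified the hypotheses, Proposition~\ref{prop:sep->C2-C3} produces $L \leqslant_f G$ with $S \subseteq L$ such that arbitrary subgroups $Q' \leqslant Q \cap L$ and $R' \leqslant R \cap L$ satisfy \descref{C2} and \descref{C3} with the chosen constants $B, C$ and family $\mathcal{P}$. Now take any $Q' \leqslant Q \cap L$, $R' \leqslant R \cap L$ with $Q' \cap R' = S$: condition \descref{C1} holds by hypothesis, \descref{C2} and \descref{C3} hold by the choice of $L$, and \descref{C4}, \descref{C5} hold by Remark~\ref{rem:ab_periph->C4_and_C5}. (One should also check $S \subseteq Q'$ and $S \subseteq R'$, which follows since $S = Q' \cap R' \subseteq Q'$; and $S \subseteq L$ ensures $S \subseteq Q \cap L$, so taking $Q' = Q \cap L$, $R' = R\cap L$ in the theorem statement makes sense, though the statement allows more general $Q', R'$.) Theorem~\ref{thm:metric_qc} then gives \descref{P1}--\descref{P3}, completing the proof.

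I do not anticipate a serious obstacle here: the substantive work is entirely contained in the cited results, and the role of the abelian hypothesis is precisely to trivialize \descref{C4}, \descref{C5} (Remark~\ref{rem:ab_periph->C4_and_C5}) and to make condition \eqref{eq:fin_ind_if_D} automatic. The one point requiring a little care is the bookkeeping of constants and families: $\mathcal{P}$ must be chosen before $A$ (it depends only on $Q$, $R$), then $B$ and $C$ depend on $A$, and then $L$ depends on $B$, $C$, $\mathcal{P}$ — this matches the logical order in Theorem~\ref{thm:metric_qc} and Proposition~\ref{prop:sep->C2-C3}, so no circularity arises. I would also remark that the resulting $L$ works uniformly for all valid pairs $(Q', R')$, which is what the statement asserts.
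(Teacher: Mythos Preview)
Your overall approach is the same as the paper's, and the proof goes through, but two points need correction.

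First, the hypotheses of the theorem do not assume that $Q$ and $R$ are finitely generated, yet you invoke QCERF-ness and Proposition~\ref{prop:parab_times_S_is_sep} as if they were. The paper closes this gap by observing (via Lemma~\ref{lem:fg_qc_int_parab_is_fg}) that the peripheral subgroups are finitely generated abelian, hence slender, so by \cite[Corollary~9.2]{HruskaRHCG} every relatively quasiconvex subgroup of $G$ is automatically finitely generated. You should insert this observation before appealing to QCERF.

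Second, your verification of condition~\eqref{eq:fin_ind_if_D} contains an error: you write ``$U \leqslant_f P$'', but in fact $U \leqslant_f D = P \cap Q$, and $D$ need not have finite index in $P$, so $U(P \cap R)$ is generally not of finite index in $P$. The correct (and simpler) argument is the one the paper gives: $P$ is a finitely generated abelian group, hence LERF, and since $P$ is abelian the product $U(P \cap R)$ is a subgroup of $P$ and therefore separable. This yields~\eqref{eq:fin_ind_if_D} immediately.

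With these two fixes your argument is complete and coincides with the paper's proof.
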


\begin{proof}
    By combining the assumptions with Lemma~\ref{lem:fg_qc_int_parab_is_fg}, we know that maximal parabolic subgroups of $G$ are finitely generated abelian groups. 
    Since such groups are slender, all relatively quasiconvex subgroups of $G$ are finitely generated (see \cite[Corollary~9.2]{HruskaRHCG}). 
    Moreover, finitely generated abelian groups are LERF, and hence, they are double coset separable (because the product of two subgroups is again a subgroup). 
    Therefore the double coset $PS$ is separable in $G$ for any maximal parabolic subgroup $P \leqslant G$ by Proposition~\ref{prop:parab_times_S_is_sep}.

    In view of Proposition~\ref{prop:sep->C2-C3}, for any finite collection $\mathcal P$, of maximal parabolic subgroups of $G$, and any $B, C \ge 0$ there exists $L \leqslant_f G$, with $S \subseteq L$, such that any subgroups $Q' \leqslant Q \cap L$ and $R' \leqslant R \cap L$ satisfy conditions \descref{C1}--\descref{C3}, as long as $Q'\cap R'=S$. 
    Remark~\ref{rem:ab_periph->C4_and_C5} tells us that these subgroups automatically satisfy conditions \descref{C4} and \descref{C5}. 
    Thus we can obtain  the desired statement by applying Theorem~\ref{thm:metric_qc}.
\end{proof}

\begin{corollary}
\label{cor:virt_ab_periph}
    Suppose that \(G\) is a QCERF group generated by a finite subset \(X\) and hyperbolic relative to a finite family $\{H_\nu\mid \nu \in \Nu\}$ of virtually abelian subgroups. 
    Let \(Q, R \leqslant G\) be relatively quasiconvex subgroups and let \(S = Q \cap R\).
    Then there exists $L \leqslant_f G$ such that if $Q' \leqslant Q \cap L$ and $R' \leqslant R \cap L$ are relatively quasiconvex subgroups of $G$ satisfying $Q' \cap R'=S \cap L$ then the subgroup $\langle Q',R'\rangle$ is also relatively quasiconvex in $G$.
\end{corollary}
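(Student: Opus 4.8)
The plan is to reduce Corollary~\ref{cor:virt_ab_periph} to Theorem~\ref{thm:sep->qc_for_ab_parab-detailed} by passing to a finite index subgroup of $G$ whose peripheral subgroups are abelian. First I would invoke a standard fact about relatively hyperbolic groups with virtually abelian peripheral subgroups: there is a finite index subgroup $G_0 \leqslant_f G$ which is hyperbolic relative to a finite collection $\{H_\nu \cap G_0 \mid \nu \in \Nu'\}$ (up to conjugacy) of \emph{abelian} subgroups. (This is a routine consequence of the fact that each $H_\nu$ has a finite index abelian subgroup, plus the fact that relative hyperbolicity passes to finite index subgroups with respect to the induced peripheral structure; see e.g.\ \cite{OsinRHG, HruskaRHCG}.) Replacing $G_0$ by its normal core we may assume $G_0 \lhd_f G$. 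Since $G$ is QCERF, so is $G_0$: any finitely generated relatively quasiconvex subgroup of $G_0$ is, by Hruska's characterisation, finitely generated relatively quasiconvex in $G$, hence separable in $G$, hence separable in $G_0$ by Lemma~\ref{lem:induced_top}(a).

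Next I would set up the quasiconvex subgroups to feed into Theorem~\ref{thm:sep->qc_for_ab_parab-detailed}. Given the relatively quasiconvex subgroups $Q, R \leqslant G$ with $S = Q \cap R$, put $\widehat Q = Q \cap G_0 \leqslant_f Q$, $\widehat R = R \cap G_0 \leqslant_f R$ and $\widehat S = \widehat Q \cap \widehat R = S \cap G_0 \leqslant_f S$. By Lemma~\ref{lem:props_of_qc_sbgps}, $\widehat Q$ and $\widehat R$ are relatively quasiconvex in $G$, and being subgroups of $G_0$ they are relatively quasiconvex in $G_0$ as well (relative quasiconvexity of a subset is inherited by a finite index subgroup in which it is contained — this is again Lemma~\ref{lem:props_of_qc_subsets}/\ref{lem:props_of_qc_sbgps} applied inside $G_0$, or one can cite \cite{HruskaRHCG, MPComb}). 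Apply Theorem~\ref{thm:sep->qc_for_ab_parab-detailed} to $G_0$ and the relatively quasiconvex subgroups $\widehat Q, \widehat R$ with the constant $A = 0$: we obtain a finite index subgroup $L_0 \leqslant_f G_0$, with $\widehat S \subseteq L_0$, such that property \descref{P1} (together with \descref{P2}, \descref{P3}, which we do not need here) holds for any subgroups $\overline Q \leqslant \widehat Q \cap L_0$ and $\overline R \leqslant \widehat R \cap L_0$ satisfying $\overline Q \cap \overline R = \widehat S$.

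Now I would produce the subgroup $L \leqslant_f G$ claimed in the statement. Since $G_0 \lhd_f G$, the subgroup $L_0 \leqslant_f G_0$ has finite index in $G$; set $L = L_0$ (or, if one wants $S \cap L = \widehat S$ on the nose, this is automatic from $L_0 \subseteq G_0$ and $\widehat S = S \cap G_0 \subseteq L_0$, so $S \cap L = S \cap G_0 \cap L_0 = \widehat S$). Suppose $Q' \leqslant Q \cap L$ and $R' \leqslant R \cap L$ are relatively quasiconvex subgroups of $G$ with $Q' \cap R' = S \cap L = \widehat S$. Then $Q' \leqslant Q \cap L_0 = \widehat Q \cap L_0$ and likewise $R' \leqslant \widehat R \cap L_0$, and $Q' \cap R' = \widehat S$, so property \descref{P1} applies inside $G_0$: since $Q'$ and $R'$ are relatively quasiconvex in $G_0$ (Lemma~\ref{lem:props_of_qc_subsets}(b), as they sit at finite Hausdorff distance from themselves inside $G_0$ — more honestly, relative quasiconvexity in $G$ restricts to relative quasiconvexity in $G_0$ for subgroups of $G_0$), the join $\langle Q', R'\rangle$ is relatively quasiconvex in $G_0$, hence relatively quasiconvex in $G$ by Lemma~\ref{lem:props_of_qc_sbgps} (a relatively quasiconvex subgroup of a finite index subgroup is relatively quasiconvex in the whole group — this direction follows from the fact that $G_0$ is undistorted in $G$ and the two peripheral structures are commensurable, cf.\ \cite{HruskaRHCG}). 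This gives the conclusion.

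The main obstacle I anticipate is the bookkeeping around \emph{peripheral structures} when passing between $G$ and $G_0$: one must be careful that the maximal parabolic subgroups of $G_0$ are exactly the intersections with $G_0$ of the maximal parabolic subgroups of $G$ (up to finite index and conjugacy), and that ``relatively quasiconvex in $G$'' and ``relatively quasiconvex in $G_0$'' agree for subgroups of $G_0$ — both of these are known (\cite{HruskaRHCG}) but need to be cited precisely, and the abelianness of the new peripheral subgroups must be arranged simultaneously with the finite index, which is where a short lemma (or a pointer to the literature) is needed. Everything else is a routine translation of Theorem~\ref{thm:sep->qc_for_ab_parab-detailed} through the finite index subgroup.
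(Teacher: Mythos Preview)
Your approach is essentially the same as the paper's: pass to a finite index normal subgroup $G_0 \lhd_f G$ whose induced peripheral structure is abelian, apply Theorem~\ref{thm:sep->qc_for_ab_parab-detailed} inside $G_0$ to $Q \cap G_0$ and $R \cap G_0$, and then transport the conclusion back to $G$ using Hruska's equivalence of relative quasiconvexity in $G$ and in $G_0$ (\cite[Corollary~9.3]{HruskaRHCG}).

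The one point where the paper is more careful than your sketch is the construction of $G_0$. You call it ``a routine consequence'' of each $H_\nu$ having a finite index abelian subgroup plus inheritance of relative hyperbolicity, but the second ingredient alone does not force the induced peripherals $G_0 \cap gH_\nu g^{-1}$ to land inside the chosen abelian subgroups $K_\nu \leqslant_f H_\nu$. The paper uses QCERF-ness here: each $K_\nu$ is finitely generated relatively quasiconvex (Lemma~\ref{lem:fg_qc_int_parab_is_fg}, Corollary~\ref{cor:parab->qc}), hence separable in $G$, so Lemma~\ref{lem:Lemma_0} produces $L_\nu \leqslant_f G$ with $L_\nu \cap H_\nu = K_\nu$; intersecting the $L_\nu$ and passing to the normal core gives $G_1 \lhd_f G$ with $G_1 \cap g H_\nu g^{-1} \subseteq g K_\nu g^{-1}$ for all $g$. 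You flagged exactly this bookkeeping as the main obstacle, so you were looking in the right place --- just note that it genuinely consumes the QCERF hypothesis rather than being a black-box citation.
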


\begin{proof}
    By the assumptions for each $\nu \in \Nu$ there exists a finite index abelian subgroup $K_\nu\leqslant_f H_\nu$. 
    Since $G$ is QCERF, each $K_\nu$ is separable in $G$ (it is finitely generated by Lemma~\ref{lem:fg_qc_int_parab_is_fg} and it is relatively quasiconvex by Corollary~\ref{cor:parab->qc}). 
    Thus, in view of Lemma~\ref{lem:Lemma_0}, for every $\nu \in \Nu$ there exists $L_\nu \leqslant_f G$ such that $L_\nu \cap H_\nu=K_\nu$.

    Since $|\Nu|<\infty$, the intersection $\bigcap_{\nu \in \Nu} L_\nu $ has finite index in $G$, hence it contains a finite index normal subgroup $G_1 \lhd_f G$. 
    Note that for any $g \in G$ and any $\nu \in \Nu$ we have
    \begin{equation}
    \label{eq:G_1}
        G_1 \cap gH_\nu g^{-1}= g(G_1 \cap H_\nu)g^{-1} \subseteq g(L_\nu \cap H_\nu)g^{-1} = g K_\nu g^{-1},
    \end{equation}
    where the first equality follows from the normality of \(G_1\), the middle inclusion follows from the fact that \(G_1 \subseteq L_\nu\), and the last equality is due to the fact that \(L_\nu \cap H_\nu = K_\nu\).
    By Lemma~\ref{lem:props_of_qc_sbgps}, $G_1$ is finitely generated and relatively quasiconvex in $G$, hence, by \cite[Theorem 9.1]{HruskaRHCG} it is hyperbolic relative to representatives of $G_1$-conjugacy classes of the intersections $G_1 \cap g H_\nu g^{-1}$, $g \in G$. 
    Thus, in view of \eqref{eq:G_1}, all peripheral subgroups in $G_1$ are abelian.

    By \cite[Corollary 9.3]{HruskaRHCG}, a subgroup of $G_1$ is relatively quasiconvex in $G_1$ (with respect to the above family of peripheral subgroups) if and only if it is relatively quasiconvex in $G$. 
    Therefore $G_1$ is QCERF and $Q_1=Q \cap G_1 \leqslant_f Q$, $R_1=R \cap G_1 \leqslant_f R$ are finitely generated relatively quasiconvex subgroups of $G_1$ by Lemma~\ref{lem:props_of_qc_sbgps}. 
    After denoting $S_1=S \cap G_1=Q_1 \cap R_1$, we can apply Theorem~\ref{thm:sep->qc_for_ab_parab} to find a finite index subgroup $L \leqslant_f G_1$ such that $S_1 \subseteq L$ (thus, $S_1=S \cap L$) and the subgroup $\langle Q',R'\rangle$ is relatively quasiconvex in $G_1$, for arbitrary $Q' \leqslant Q_1 \cap L=Q \cap L$ and $R' \leqslant R_1 \cap L=R \cap L$ satisfying $Q' \cap R'=Q_1 \cap R_1=S_1$.
    We can use \cite[Corollary 9.3]{HruskaRHCG} again to deduce that  $\langle Q',R'\rangle$ is relatively quasiconvex in $G$.
\end{proof}

The following collects the results of the previous sections, allowing us to find subgroups \(Q'\) and \(R'\) to which Theorem~\ref{thm:metric_qc} can be applied.

\begin{proposition}
\label{prop:sep->C1-C5}
    Let \(G\) be a finitely generated QCERF relatively hyperbolic group with double coset separable peripheral subgroups, and let \(Q\) and \(R\) be finitely generated relatively quasiconvex subgroups.
    Then for any \(B \geq 0, C \geq 0,\) and finite family \(\mathcal{P}\) of maximal parabolic subgroups of \(G\), there are finite index subgroups \(Q' \leqslant_f Q\) and \(R' \leqslant_f R\) satisfying \descref{C1}-\descref{C5} with constants \(B\) and \(C\) and family \(\mathcal{P}\).
    
    More precisely, writing \(S = Q \cap R\), there exists $L \leqslant_f G$ with $S \subseteq L$ such that for any $L' \leqslant_f L$ satisfying $S \subseteq L'$, we can choose $Q'=Q\cap L' \leqslant _f Q$ and there exists $M \leqslant_f L'$ with $Q' \subseteq M$ such that for any $M' \leqslant_f M$ satisfying $Q' \subseteq M'$, we can choose $R'=R \cap M' \leqslant_f R$.
\end{proposition}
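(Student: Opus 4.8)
The plan is to deduce this from Theorem~\ref{thm:sep->qc_comb} by verifying that its hypotheses \ref{cond:1}--\ref{cond:4} hold for the subgroups $Q$, $R$ and the given finite family $\mathcal{P}$ of maximal parabolic subgroups of $G$, once we enlarge $\mathcal{P}$ if necessary so that each $P \in \mathcal{P}$ is a genuine maximal parabolic subgroup (which it already is by assumption). Condition \ref{cond:1} is immediate: since $Q$ and $R$ are finitely generated relatively quasiconvex subgroups of the QCERF group $G$, they are separable in $G$. Condition \ref{cond:4} is exactly the requirement that for all $U \leqslant_f Q \cap P$ with $S \cap P \subseteq U$, the double coset $U(R \cap P)$ is separable in $P$; here $Q \cap P$, $R \cap P$ are finitely generated by Lemma~\ref{lem:fg_qc_int_parab_is_fg}, so $U$ and $R \cap P$ are finitely generated subgroups of the peripheral subgroup $P$ (up to conjugation), and since the peripheral subgroups of $G$ are double coset separable, $U(R \cap P)$ is separable in $P$.

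The substantive input is conditions \ref{cond:2} and \ref{cond:3}, and these are precisely what Section~\ref{sec:dc_when_one_is_parab} was built to supply. For \ref{cond:3} I would invoke Corollary~\ref{cor:fi_in_parab_times_fgqc_is_sep}: for any maximal parabolic $P \in \mathcal{P}$, any $K \leqslant_f P$ and any $T \leqslant_f Q$ (in particular those with $S \subseteq T$ and $T \cap P \subseteq K$), the double coset $KT$ is separable in $G$ because $G$ is QCERF. For \ref{cond:2}, that $PS$ is separable in $G$ for each $P \in \mathcal{P}$, I would apply Proposition~\ref{prop:parab_times_S_is_sep}: its hypotheses are that $G$ is QCERF (given), that $Q, R$ are finitely generated relatively quasiconvex (given), and the auxiliary condition \eqref{eq:fin_ind_if_D}, namely that for each $U \leqslant_f D = P \cap Q$ the double coset $U(P \cap R)$ is separable in $P$ --- but this is again guaranteed by the double coset separability of the peripheral subgroups, exactly as in the verification of \ref{cond:4}. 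Thus all four hypotheses of Theorem~\ref{thm:sep->qc_comb} are met.

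Having verified \ref{cond:1}--\ref{cond:4}, Theorem~\ref{thm:sep->qc_comb} applied with the given constants $B$ and $C$ produces a finite index subgroup $L \leqslant_f G$ with $S \subseteq L$ such that for any $L' \leqslant_f L$ with $S \subseteq L'$ we may set $Q' = Q \cap L' \leqslant_f Q$, and then there is $M \leqslant_f L'$ with $Q' \subseteq M$ such that for any $M' \leqslant_f M$ with $Q' \subseteq M'$ we may set $R' = R \cap M' \leqslant_f R$, and these subgroups $Q', R'$ satisfy \descref{C1}--\descref{C5} with constants $B$, $C$ and family $\mathcal{P}$. This is verbatim the ``more precisely'' clause of the statement, and the first sentence of the statement is just its qualitative summary. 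Since $Q' \leqslant_f Q$ and $R' \leqslant_f R$ are automatically finitely generated and relatively quasiconvex by Lemma~\ref{lem:props_of_qc_sbgps}, nothing further is needed.

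I do not anticipate a genuine obstacle here: the proposition is essentially a packaging result, and all the work has already been done in Sections~\ref{sec:sep->metric}--\ref{sec:dc_when_one_is_parab}. The only point requiring a little care is bookkeeping --- checking that the finite-generation hypotheses needed to invoke Corollary~\ref{cor:fi_in_parab_times_fgqc_is_sep} and Propositions~\ref{prop:parab_times_fg_qc-sep} and \ref{prop:parab_times_S_is_sep} are all present (they follow from Lemma~\ref{lem:fg_qc_int_parab_is_fg} together with the standing assumption that $G$, $Q$ and $R$ are finitely generated), and that ``double coset separable peripheral subgroups'' is exactly the hypothesis needed to discharge condition \eqref{eq:fin_ind_if_D} and condition \ref{cond:4} simultaneously.
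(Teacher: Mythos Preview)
Your proposal is correct and follows essentially the same approach as the paper: verify hypotheses \ref{cond:1}--\ref{cond:4} of Theorem~\ref{thm:sep->qc_comb} using QCERF-ness, Corollary~\ref{cor:fi_in_parab_times_fgqc_is_sep}, Proposition~\ref{prop:parab_times_S_is_sep}, and double coset separability of the peripheral subgroups (via Lemma~\ref{lem:fg_qc_int_parab_is_fg} for the finite-generation bookkeeping), then invoke Theorem~\ref{thm:sep->qc_comb}. The paper's proof is just a more compressed version of what you wrote.
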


\begin{proof} 
    We check that all the assumptions of Theorem~\ref{thm:sep->qc_comb} are satisfied for every $P \in \mathcal{P}$. 
    Indeed, assumption~\ref{cond:1} holds because $G$ is QCERF and assumption \ref{cond:3} is true by Corollary~\ref{cor:fi_in_parab_times_fgqc_is_sep}.

    Note that the subgroups $D=Q \cap P$ and $R \cap P$ are finitely generated by Lemma~\ref{lem:fg_qc_int_parab_is_fg}, hence condition \eqref{eq:fin_ind_if_D} follows from the double coset separability of $P$, thus \ref{cond:4} is satisfied. 
    Finally, assumption \ref{cond:2} holds by Proposition~\ref{prop:parab_times_S_is_sep}.

    The statement now follows by applying Theorem~\ref{thm:sep->qc_comb}.
\end{proof}

\begin{theorem}
\label{thm:sep->qc_intro-detailed}
     Let \(G\) be a group generated by a finite set \(X\) and hyperbolic relative to a finite collection of subgroups $\{H_\nu \mid \nu \in \Nu\}$. 
     Suppose that $G$ is QCERF and $H_\nu$ is double coset separable, for each $\nu \in \Nu$. 
     If \(Q, R \leqslant G\) are finitely generated relatively quasiconvex subgroups and  \(S = Q \cap R\) then for every \(A \geq 0\) there exist finite index subgroups $Q'\leqslant_f Q$ and $R' \leqslant_f R$ which satisfy properties \descref{P1}--\descref{P3}.

    More precisely, there exists $L \leqslant_f G$ with $S \subseteq L$ such that for any $L' \leqslant_f L$ satisfying $S \subseteq L'$, we can choose $Q'=Q\cap L'  \leqslant _f Q$ and there exists $M \leqslant_f L'$ with $Q' \subseteq M$ such that for any $M' \leqslant_f M$ satisfying $Q' \subseteq M'$, we can choose $R'=R \cap M' \leqslant_f R$.
 \end{theorem}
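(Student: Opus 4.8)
The plan is to derive Theorem~\ref{thm:sep->qc_intro-detailed} directly by combining Proposition~\ref{prop:sep->C1-C5} with the Metric quasiconvexity theorem (Theorem~\ref{thm:metric_qc}). First I would invoke Theorem~\ref{thm:metric_qc} with the given relatively quasiconvex subgroups $Q$ and $R$ to obtain a finite collection $\mathcal{P}$ of maximal parabolic subgroups of $G$ (depending only on $Q$ and $R$, not on $A$) together with the assignment $A \mapsto (B,C)$ of constants. So, fixing the target constant $A \geq 0$, let $B = B(A) \geq 0$ and $C = C(A) \geq 0$ be the constants supplied by Theorem~\ref{thm:metric_qc} for this $\mathcal{P}$ and this $A$.

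Next I would apply Proposition~\ref{prop:sep->C1-C5} to $G$, $Q$, $R$ with these values of $B$, $C$ and this finite family $\mathcal{P}$. The hypotheses of that proposition are met: $G$ is a finitely generated QCERF relatively hyperbolic group (note $G$ is generated by the finite set $X$, so in particular finitely generated) and its peripheral subgroups $H_\nu$ are double coset separable by assumption. Proposition~\ref{prop:sep->C1-C5} then yields a finite index subgroup $L \leqslant_f G$ with $S \subseteq L$, and for any $L' \leqslant_f L$ with $S \subseteq L'$ a choice $Q' = Q \cap L' \leqslant_f Q$, and then a finite index subgroup $M \leqslant_f L'$ with $Q' \subseteq M$, and for any $M' \leqslant_f M$ with $Q' \subseteq M'$ a choice $R' = R \cap M' \leqslant_f R$, such that $Q'$ and $R'$ satisfy conditions \descref{C1}--\descref{C5} with constants $B$, $C$ and family $\mathcal{P}$.

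Finally I would feed these subgroups $Q'$ and $R'$ back into Theorem~\ref{thm:metric_qc}. Since $Q' \leqslant_f Q$ and $R' \leqslant_f R$ are finitely generated and relatively quasiconvex in $G$ by Lemma~\ref{lem:props_of_qc_sbgps}, and since they satisfy \descref{C1}--\descref{C5} with the constants $B(A)$, $C(A)$ that Theorem~\ref{thm:metric_qc} demands for this $A$, we conclude that $Q'$ and $R'$ enjoy properties \descref{P1}--\descref{P3}. The ``more precisely'' clause of the statement is exactly the refined output of Proposition~\ref{prop:sep->C1-C5}, carried over verbatim. There is essentially no obstacle here: the theorem is a bookkeeping assembly of the two main results of the part, and the only thing to be careful about is that the finite family $\mathcal{P}$ produced by Theorem~\ref{thm:metric_qc} is independent of $A$, so that a single $L$ (as a function of $A$) suffices and the quantifier structure in the ``more precisely'' part is respected.

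I would then note that Theorem~\ref{thm:sep->qc_intro} from the introduction follows immediately from Theorem~\ref{thm:sep->qc_intro-detailed} by taking, say, $A = 0$ (or any fixed value), since property \descref{P1} together with the relative quasiconvexity of the finite index subgroups $Q'$ and $R'$ gives that $\langle Q', R' \rangle$ is relatively quasiconvex in $G$, and \descref{C1} gives $Q' \cap R' = S$. The main genuine work — establishing \descref{C1}--\descref{C5} from the separability hypotheses, and establishing \descref{P1}--\descref{P3} from \descref{C1}--\descref{C5} — has already been done in Proposition~\ref{prop:sep->C1-C5} and Theorem~\ref{thm:metric_qc} respectively, so the present proof is short.
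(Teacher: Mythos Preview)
Your proposal is correct and follows exactly the same approach as the paper: invoke Theorem~\ref{thm:metric_qc} to obtain the family $\mathcal{P}$ and the constants $B,C$ for the given $A$, then apply Proposition~\ref{prop:sep->C1-C5} with these data to produce $Q'$ and $R'$ satisfying \descref{C1}--\descref{C5}, and feed them back into Theorem~\ref{thm:metric_qc}. The paper's proof is a one-line version of what you wrote, so there is nothing to add.
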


\begin{proof} 
    Let $\mathcal{P}$ be the finite collection of maximal parabolic subgroups of $G$ provided by Theorem~\ref{thm:metric_qc}. 
    The statement follows immediately from a combination of Theorem~\ref{thm:metric_qc} with Proposition~\ref{prop:sep->C1-C5}.
\end{proof}

Recall that \(Q\) and \(R\) are said to have almost compatible parabolics if for every maximal parabolic subgroup \(P \leqslant G\), either \(Q \cap P \preccurlyeq R \cap P\) or \(R \cap P \preccurlyeq Q \cap P\).
We find that in the case when $Q$ and $R$ have almost compatible parabolics, it is actually not necessary to assume that the peripheral subgroups are double coset separable:

\begin{theorem}
\label{thm:almost_compat->qc_comb} 
    Suppose that $G$ is a finitely generated QCERF relatively hyperbolic group, $Q,R \leqslant G$ are finitely generated relatively quasiconvex subgroups with almost compatible parabolics and  \(S = Q \cap R\). 
    Then for every \(A \geq 0\) there exist finite index subgroups $Q'\leqslant_f Q$ and $R' \leqslant_f R$ which satisfy properties \descref{P1}--\descref{P3}.

    More precisely, there exists $L \leqslant_f G$, with $S \subseteq L$, such that for any $L' \leqslant_f L$, satisfying $S \subseteq L'$, we can choose $Q'=Q\cap L' \leqslant _f Q$ and there exists $M \leqslant_f L'$, with $Q' \subseteq M$, such that for any $M' \leqslant_f M$, satisfying $Q' \subseteq M'$, we can choose $R'=R \cap M' \leqslant_f R$.
\end{theorem}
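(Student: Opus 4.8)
The plan is to reduce Theorem~\ref{thm:almost_compat->qc_comb} to Theorem~\ref{thm:metric_qc} by verifying conditions \descref{C1}--\descref{C5} through a variant of Theorem~\ref{thm:sep->qc_comb} in which the double coset separability hypothesis \ref{cond:4} on the peripheral subgroups is replaced by the almost compatibility of parabolics. The key observation is that almost compatibility makes condition \descref{C5} much easier: for each $P \in \mathcal{P}$, either $Q_P \preccurlyeq R_P$ or $R_P \preccurlyeq Q_P$, so one of the two factors has finite index over its intersection with the other.

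First I would fix the finite collection $\mathcal{P}$ of maximal parabolic subgroups supplied by Theorem~\ref{thm:metric_qc} (enlarged to contain $\mathcal{P}_1$), and constants $B, C \geq 0$. Conditions \descref{C1}, \descref{C2}, \descref{C3} and the separability input \ref{cond:1}, \ref{cond:2}, \ref{cond:3} are handled exactly as in the proof of Theorem~\ref{thm:sep->qc_intro-detailed}: assumption \ref{cond:1} holds since $G$ is QCERF, \ref{cond:3} holds by Corollary~\ref{cor:fi_in_parab_times_fgqc_is_sep}, and \ref{cond:2} --- the separability of $PS$ --- is where the new argument is needed. Here I would use that almost compatibility gives, for each $P \in \mathcal{P}$, either $D = Q_P \preccurlyeq R_P$ or $R_P \preccurlyeq Q_P = D$. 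In the first case $|D : (D \cap R_P)| < \infty$ by \cite[Lemma~2.1]{Min-Some_props_of_subsets}, so for any $U \leqslant_f D$ the double coset $U R_P$ is a finite union of right cosets $U (D \cap R_P) r_j$ intersected appropriately --- more carefully, $D \cap R_P = S_P$ has finite index in $D$, so $U S_P$ is already of finite index in $D$ and hence $U R_P = U S_P R_P$ is a finite union of translates of $R_P$, which is separable in $P$ since $R$, being finitely generated relatively quasiconvex, is separable in $G$ and $R_P$ is separable in $P$ by Lemma~\ref{lem:induced_top}(a). In the second case $R_P \preccurlyeq D$, so $U(P\cap R) = UR_P$ with $|R_P : (R_P \cap U)| = |R_P : (R_P \cap D) \cdot \ldots|$ --- again finite index considerations show $U R_P$ is a finite union of cosets of subgroups of finite index in $P$, hence separable in $P$. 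Thus condition \eqref{eq:fin_ind_if_D} holds and Proposition~\ref{prop:parab_times_S_is_sep} gives the separability of $PS$ in $G$.

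Next, with \ref{cond:1}--\ref{cond:3} and \eqref{eq:fin_ind_if_D} established, I would re-run the proof of Theorem~\ref{thm:sep->qc_comb} with the following modification to the construction of $R'$. The only place \ref{cond:4} was used was to choose $F_i \lhd_f P_i$ making $\minx(a_{ij} Q_i' R_i F_i \setminus a_{ij} Q_i' R_i) \geq C$ via separability of $Q_i' R_i$ in $P_i$. But $Q_i' R_i$ is separable in $P_i$ precisely under hypothesis \eqref{eq:fin_ind_if_D} applied with $U = Q_i' \leqslant_f D = Q_i$ (which holds once $S_P \subseteq Q_i'$, automatic from $S \subseteq Q'$); so the same choice of $F_i$ goes through. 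Everything downstream --- the definition of $K_i$, the application of Corollary~\ref{cor:fi_in_parab_times_fgqc_is_sep} and Lemma~\ref{lem:Lemma_1} to lift to $M_i \leqslant_f G$, and the verification of \descref{C1}--\descref{C5} --- is unchanged. Finally I would invoke Theorem~\ref{thm:metric_qc} to conclude that the resulting $Q' = Q \cap L'$ and $R' = R \cap M'$ satisfy \descref{P1}--\descref{P3}, with the nesting structure $L \leqslant_f G$, then $L' \leqslant_f L$, then $M \leqslant_f L'$, then $M' \leqslant_f M$ exactly as stated.

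The main obstacle is the careful bookkeeping in showing that $U(P \cap R)$ is separable in $P$ from the almost compatibility hypothesis: one must track which of the two alternatives $Q_P \preccurlyeq R_P$ or $R_P \preccurlyeq Q_P$ holds for each $P \in \mathcal{P}$, and in each case reduce the double coset to a finite union of cosets of a finite-index subgroup of $P$ (using that the relevant intersection has finite index in one of the two factors), then appeal to separability of $R$ (or $Q$) in $G$ together with Lemma~\ref{lem:induced_top}(a) to push separability down into $P$. There is also a minor subtlety that $D = Q \cap P$ need not be finitely generated a priori, but Lemma~\ref{lem:fg_qc_int_parab_is_fg} guarantees it is, so the finite-index arguments are legitimate. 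Once this lemma-level statement is in place, the rest is a routine adaptation of the already-established machinery.
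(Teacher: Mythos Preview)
Your approach is correct and essentially the same as the paper's. Two streamlining remarks: first, what you call ``condition \eqref{eq:fin_ind_if_D}'' \emph{is} assumption \ref{cond:4} of Theorem~\ref{thm:sep->qc_comb} (with $D=Q\cap P$), so once you have verified it there is no need to ``re-run'' that proof---you can simply invoke Theorem~\ref{thm:sep->qc_comb} as a black box and then apply Theorem~\ref{thm:metric_qc}. Second, your case analysis for the separability of $U(R\cap P)$ is exactly Lemma~\ref{lem:V_sep_and_U_leq_V->UV-sep}: since $U\leqslant_f Q\cap P$ gives $U$ and $Q\cap P$ equivalent under $\preccurlyeq$, almost compatibility yields either $U\preccurlyeq R\cap P$ or $R\cap P\preccurlyeq U$; both $U$ and $R\cap P$ are finitely generated parabolic (hence relatively quasiconvex and separable by QCERF), and the lemma applies directly. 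One small slip: in your Case~2 you write ``subgroups of finite index in $P$'', but $U$ has finite index in $D=Q\cap P$, not in $P$; what you need (and what works) is that $UR_P$ is a finite union of right cosets of $U$, which is separable in $G$ and hence in $P$.
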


\begin{proof} 
    As before, we will be verifying the assumptions of Theorem~\ref{thm:sep->qc_comb}.
    Let $P$ be an arbitrary maximal parabolic subgroup of $G$.
    Assumption~\ref{cond:1} follows from the QCERF-ness of $G$ and assumption \ref{cond:3} follows from Corollary~\ref{cor:fi_in_parab_times_fgqc_is_sep}.

    Let $D=Q \cap P$ and $U \leqslant_f D$. Since $Q$ and $R$ have almost compatible parabolics and $Q \cap P \preccurlyeq U$, we know that either $U \preccurlyeq R \cap P$ or $R \cap P \preccurlyeq U$. 
    Note that both $U$ and $R \cap P$ are finitely generated by Lemma~\ref{lem:fg_qc_int_parab_is_fg} and relatively quasiconvex by Corollary~\ref{cor:parab->qc}, so they are separable because $G$ is QCERF.  
    Lemma~\ref{lem:V_sep_and_U_leq_V->UV-sep} now implies that the double coset $U(R \cap P)$ is separable in $G$, thus condition \eqref{eq:fin_ind_if_D} is satisfied by Lemma~\ref{lem:induced_top}. 
    This shows that assumption \ref{cond:4} of Theorem~\ref{thm:sep->qc_comb} is satisfied; furthermore, assumption \ref{cond:2} holds by Proposition~\ref{prop:parab_times_S_is_sep}.

    We can now deduce the theorem by combining Theorem~\ref{thm:metric_qc} with Theorem~\ref{thm:sep->qc_comb}.
\end{proof}


\section{Separability of double cosets in QCERF relatively hyperbolic groups}
\label{sec:double_coset_sep}
In this section we prove Corollary~\ref{cor:double_cosets_sep} from the Introduction.

\begin{proof}[Proof of Corollary~\ref{cor:double_cosets_sep}] 
    Let $X$ be a finite generating set of $G$. Consider any $g \in G \setminus QR$, and set $A=|g|_X+1$.
    By Theorem \ref{thm:sep->qc_intro-detailed} there are subgroups \(Q' \leqslant_f Q\), \(R' \leqslant_f R\) satisfying properties \descref{P1} and \descref{P3}. 
    The latter property, combined with the definition of $A$, implies that  \(g \notin Q \langle Q', R' \rangle R\).

    On the other hand, property \descref{P1} tells us that $H=\langle Q',R' \rangle$ is relatively quasiconvex in $G$. Clearly it is also finitely generated, hence it must be separable in $G$ by QCERF-ness. Observe that since \(Q'\) and \(R'\) are finite index subgroups in \(Q\) and \(R\) respectively,
    \[
        Q H R = \bigcup_{i=1}^n \bigcup_{j=1}^m a_i H b_j,
    \]
    where \(a_1, \dots, a_n\) are left coset representatives of \(Q'\) in \(Q\), and \(b_1, \dots, b_m\) are right coset representatives of \(R'\) in \(R\). 
    Recalling Remark~\ref{rem:sep_props}, we see that the subset $QHR$ is separable in $G$, thus it is a closed set containing \(QR\) but not containing \(g\). 
    Since we found such a set for an arbitrary $g \in G \setminus QR$, we can conclude that $QR$ is closed in $\pt(G)$, as required.
\end{proof}

Corollary~\ref{cor:almost_comp->sep_dc} from the Introduction can be proved in the same way as Corollary~\ref{cor:double_cosets_sep}, except that one needs to use Theorem~\ref{thm:almost_compat->qc_comb} instead of Theorem~\ref{thm:sep->qc_intro-detailed}.


\part{Separability of products of subgroups}
\label{part:multicosets}
This part of the paper is dedicated to proving Theorem~\ref{thm:RZs} from the Introduction.  
In order to do this we must generalise the discussion of path representatives in Sections~\ref{sec:path_reps}-\ref{sec:multitracking}, adapting the arguments there to deal with additional technicalities.
Let us give a summary of the argument.

Let \(G\) be a QCERF finitely generated relatively hyperbolic group with a finite collection of peripheral subgroups \(\{H_\nu \mid \nu \in \Nu\}\).
Suppose that, for each \(\nu \in \Nu\), the subgroup \(H_\nu\) has property \(RZ_s\).
Let \(F_1, \dots, F_s \leqslant G\) be finitely generated relatively quasiconvex subgroups. 
In order to show that the product \(F_1 \dots F_s\) is separable, we proceed by induction on \(s\).
The case that \(s = 1\) is the QCERF condition and \(s = 2\) is Corollary~\ref{cor:double_cosets_sep}, so we may assume \(s > 2\).
For ease of reading we now relabel the subgroups \(F_1 = Q, F_2 = R, F_3 = T_1, \dots, F_s = T_m\), where \(m = s-2 > 0\).

We approximate the product \(Q R T_1 \dots T_m\) with sets of the form \(Q \langle Q', R' \rangle R T_1 \dots T_m\), where \(Q' \leqslant_f Q\) and \(R' \leqslant_f R\) are finite index subgroups of \(Q\) and \(R\) respectively.
Observe that we can write these sets as finite unions
\begin{equation}\label{eq:approx_subsets}
 Q \langle Q', R' \rangle R T_1 \dots T_m = \bigcup_{i,j} a_i \langle Q', R' \rangle b_j T_1 \dots T_m,    
\end{equation}
where the elements \(a_i\) and \(b_j\) are coset representatives of \(Q'\) and \(R'\) in $Q$ and $R$ respectively.
Note that the products on the right-hand side of \eqref{eq:approx_subsets} now involve only \(s-1\) subgroups. By Theorem~\ref{thm:sep->qc_intro}, the subgroups \(Q'\) and \(R'\) can be chosen so that \(\langle Q', R' \rangle\) is relatively quasiconvex, hence we can apply the induction hypothesis to show that such products are separable in $G$.

It then remains to prove that the product \(Q R T_1 \dots T_m\) is, in fact, an intersection of subsets of the form \(Q \langle Q', R' \rangle R T_1 \dots T_m\) as above.
To this end, we study path representatives \(q p_1 \dots p_n r t_1 \dots t_m\) of elements of \(Q \langle Q', R' \rangle R T_1 \dots T_m\) in a similar manner to Part~\ref{part:metric_qc_double_cosets}.
The main additional difficulty comes from controlling instances of multiple backtracking that involve segments in the \(t_1 \dots t_m\) part of the path.
We introduce new metric conditions \descref{C2-m} and \descref{C5-m} to deal with these technicalities.

\section{Auxiliary definitions}
\label{sec:multicoset_defs}

\begin{convention} 
\label{conv:main_multicoset}
    We write \(G\) for a group generated by a finite set \(X\) and hyperbolic relative to a family of subgroups \(\{H_\nu \mid \nu \in \Nu\}\), $|\Nu|<\infty$.
    Let $\mathcal{H}=\bigsqcup_{\nu \in \Nu} (H_\nu\setminus\{1\})$ and choose $\delta \in \NN$ so that the Cayley graph $\ga$ is $\delta$-hyperbolic (see Lemma~\ref{lem:Cayley_graph-hyperbolic}).

    We will assume that  \(Q, R, T_1, \dots , T_m \leqslant G\) are fixed relatively quasiconvex subgroups of \(G\), with quasiconvexity constant \(\varepsilon \ge 0\), where $m \in \NN_0$. Denote 
    \(S=Q \cap R\).
\end{convention}

Throughout this section we use \(Q'\) and \(R'\) to denote subgroups of $Q$ and $R$ respectively. We will also assume that \(Q' \cap R'= Q \cap R = S\) (that is, \(Q'\) and \(R'\) satisfy \descref{C1}).

\subsection{New metric conditions}
Suppose $B, C \ge 0$ are some constants, $\mathcal{P}$ is a finite collection of maximal parabolic subgroups of $G$, and \(\mathcal{U}\) is a finite family of finitely generated relatively quasiconvex subgroups of \(G\).
We will be interested in the following generalisations of conditions \descref{C2} and \descref{C5} to the multiple coset setting:

\begin{itemize}
    \descitem{C2-m}
        \(\minx \Bigl(R \langle Q', R' \rangle R T_1 \dots T_j \setminus R T_1 \dots T_j \Bigr) \geq B\), for each \(j=0,\dots, m\);
    \descitem{C5-m}
        \(\minx \Bigl(q \langle Q'_P, R'_P \rangle R_P (U_1)_P \dots (U_j)_P \setminus qQ'_P R_P (U_1)_P \dots (U_j)_P \Bigr) \geq C \), for each \(P \in \mathcal{P}\), all \(q \in Q_P\),  any $j \in \{0, \dots, m\}$  and arbitrary \(U_1, \dots, U_j \in \mathcal{U}\), where $(U_i)_P=U_i \cap P \leqslant P$.
\end{itemize}

\begin{remark} Let us make the following observations.
\label{rem:C5m->C5}
    \begin{itemize}
        \item  When $j=0$, the inequality from condition \descref{C2-m} reduces to $\minx(R \langle Q',R' \rangle R \setminus R) \ge B$, which is a part of \descref{C2}; on the other hand, the inequality from condition \descref{C5-m} simply becomes \descref{C5}. 
        In particular, for each $m \ge 0$, \descref{C5-m} implies  \descref{C5}.
        \item In our usage of \descref{C5-m}, the set $\mathcal U$  will consists of finitely many conjugates of $T_1,\dots,T_m$; in fact, $U_i=T_i^{a_i}$, for some $a_i \in G$, $i=1,\dots,m$.
    \end{itemize}
\end{remark}

\begin{remark}
    Similarly to conditions \descref{C1}-\descref{C5}, the above conditions are best understood with a view towards the profinite topology.
    \begin{itemize}
        \item
            To prove separability of products of relatively quasiconvex subgroups we argue by induction on the number of factors.
            That is, we assume that the product of \(m+1\) relatively quasiconvex subgroups is separable and then deduce the separability of the product of \(m+2\) relatively quasiconvex subgroups.
            The existence of finite index subgroups \(Q' \leqslant_f Q\) and \(R' \leqslant_f R\) realising condition \descref{C2-m} will be deduced from this inductive assumption.
        \item
            The existence of finite index subgroups \(Q' \leqslant_f Q\) and \(R' \leqslant_f R\) realising condition \descref{C5-m}, given a finite family \(\mathcal{U}\), will be deduced from the assumption that the peripheral subgroups \(\{H_\nu \, | \, \nu \in \Nu\}\) of \(G\) each satisfy the property \(\mathrm{RZ}_{m+2}\).
    \end{itemize}
\end{remark}

\subsection{Path representatives for products of subgroups}
In this subsection we define path representatives for elements of $Q \langle Q', R' \rangle RT_1\dots T_m$ similarly to the path representatives for elements of $Q \langle Q', R' \rangle R$ from Definition~\ref{def:double_coset_path_reps} and discuss their properties.

\begin{definition}[Path representative, III] 
\label{def:multicoset_path_reps}
    Let \(g\) be an element of \(Q \langle Q', R' \rangle R T_1 \dots T_m\).
    Suppose that \(p = q p_1 \dots p_n r t_1 \dots t_m\) is a broken line in \(\Gamma(G,X\cup\mathcal{H})\) satisfying the following properties:
    \begin{itemize}
        \item \(\elem{p} = g\);
        \item \(\elem{q} \in Q\)  and \(\elem{r} \in R\);
        \item \(\elem{p_i} \in Q' \cup R'\) for each \(i \in \{1, \dots n\}\);
        \item \(\elem{t_i} \in T_i\) for each \(i \in \{1, \dots m\}\).
    \end{itemize}
    Then we say that \(p\) is a \emph{path representative} of \(g\) in the product \(Q \langle Q', R' \rangle R T_1 \dots T_m\).
\end{definition}

The type of a path representative is defined as before (cf Definitions~\ref{def:type_of_path_rep} and \ref{def:type_of_double_coset_path_rep}).

\begin{definition}[Type and width of a path representative, III]  
\label{def:type_of_multicoset_path_rep}
    Let  \(g \in Q \langle Q', R' \rangle R T_1 \dots T_m\) and let \(p = q p_1 \dots p_n r t_1 \dots t_m\) be a path representative of $g$ in the sense of Definition~\ref{def:multicoset_path_reps}.
    Denote by \(Y\)  the set of all \(\mathcal{H}\)-components of the segments of \(p\).
    We define the \emph{width} of $p$ as the integer $n$ and the
    \emph{type} of \(p\) as the triple
    \[\tau(p) = \Big(n, \ell(p),\sum_{y \in Y} |y|_X \Big) \in {\NN_0}^3.\]
\end{definition}

The following observation will be useful.
\begin{remark}
\label{rem:path_rep_from_product} 
    Suppose $g \in Q \langle Q', R' \rangle R T_1 \dots T_m$ can be written as a product 
    \[
        g=x y_1 \dots y_n z u_1 \dots u_m,
    \] 
    where $x \in Q$, $y_1,\dots y_n \in Q' \cup R'$, $z \in R$ and $u_i \in T_i$, for each $i=1,\dots,m$. Then $g$ has a  path representative of width $n$.
\end{remark}

Similarly to path representatives of elements of \(\langle Q', R' \rangle\) (in the sense defined in Section~\ref{sec:path_reps}), we will be interested in path representatives whose type is minimal (as an element of \({\NN_0}^3\) under the lexicographic ordering).
Given an element \(g \in Q \langle Q', R' \rangle R T_1 \dots T_m\), such a path representative is always guaranteed to exist. Let us make the following observation (cf Remark~\ref{rem:props_of_double_coset_path_reps}).

\begin{remark}
\label{rem:multicoset_path_reps} 
    Suppose that \(p = q p_1 \dots p_n r t_1 \dots t_m\) is a minimal type path representative of an element \(g \in Q \langle Q', R' \rangle R T_1 \dots T_m\) such that $g \notin QR T_1 \dots T_m$. 
    Then $n>0$, $\elem{p_1} \in R' \setminus S$, $\elem{p_n} \in Q' \setminus S$ and the labels of $p_1,\dots,p_{n}$ alternate between representing elements of $R' \setminus S$ and $Q' \setminus S$. 
    In particular, the integer $n$ must be even.
\end{remark}

Note that in Definition~\ref{def:multicoset_path_reps} the geodesic paths $q$, $r$ and $t_1,\dots,t_m$ are always counted as segments of the path $p$, even if they end up being trivial paths. 
For example a minimal type path representative of an element $g \in R'Q'T_1\dots T_m\setminus QRT_1 \dots T_m$ will be a broken line $p=qp_1p_2rt_1 \dots t_m$ with $m+4$ segments, where $q$ and $r$ are trivial paths.

The proofs of the main results from Sections \ref{sec:path_reps} and \ref{sec:adj_backtracking} can be easily adapted to apply to minimal type path representatives of elements $g \in Q \langle Q', R' \rangle R T_1 \dots T_m\setminus QRT_1 \dots T_m$ (in the sense of Definitions~\ref{def:multicoset_path_reps} and \ref{def:type_of_multicoset_path_rep}), with only superficial differences, so the proofs of the following generalisations of Lemmas~\ref{lem:bddinnprod}, \ref{lem:shortspikes} and \ref{lem:longadjbacktracking}, respectively, will be omitted.

\begin{lemma}
\label{lem:multicoset_bdd_inn_prod}
    There is a constant \(C_0 \geq 0\) such that the following holds.

    Assume that $Q' \leqslant Q$ and $R' \leqslant R$  are subgroups satisfying condition \descref{C1}.
    Consider any element \(g \in Q \langle Q', R' \rangle R T_1 \dots T_m\) with \(g \notin QRT_1\dots T_m\). Let \(p = q p_1 \dots p_n r t_1 \dots t_m\) be a path representative of $g$ of minimal type,
    with nodes $f_0,\dots,f_{n+m+2}$ (that is, \(f_0 = q_-\), \(f_{i} = (p_i)_-\),  for each \(i \in \{1, \dots, n\}\), $f_{n+1}=r_-$,  \(f_{n+1+j} = (t_j)_-\), for each \(j \in \{1, \dots, m\}\), and $f_{n+m+2}=(t_m)_+$). 
    Then \(\langle f_{i-1}, f_{i+1} \rangle_{f_i}^{rel} \leq C_0\), for all \(i \in \{1, \dots, n+m+1\}\).
\end{lemma}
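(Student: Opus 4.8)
The plan is to reduce Lemma~\ref{lem:multicoset_bdd_inn_prod} to (a mild extension of) Lemma~\ref{lem:bddinnprod}, using the observation that a minimal type path representative $p = qp_1\dots p_n r t_1 \dots t_m$ of $g \notin QRT_1\dots T_m$ is ``locally'' of the same shape as the path representatives studied in Section~\ref{sec:path_reps}: every consecutive pair of segments has labels lying in (a conjugate of) one of the relatively quasiconvex subgroups $Q, R, T_1, \dots, T_m$, and minimality of the type forbids replacing any two consecutive segments by geodesics of smaller total length.

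\medskip
\noindent \emph{Step 1: the constant.} Let $\sigma \in \NN_0$ be the constant of Proposition~\ref{prop:osinslimtriangles}. Apply Lemma~\ref{lem:shortening} not only to the pair $(Q,R)$ but to every pair from the finite collection $\{Q, R, T_1, \dots, T_m\}$, obtaining constants $\mu_{kl} = \mu_{kl}(\sigma)$, and set $\mu = \max_{k,l} \mu_{kl}$. (Note that a single application of Lemma~\ref{lem:shortening} to two subgroups $A,B$ already gives the statement for the pair in either order, since $A\cap B = B\cap A$; we simply need it for all pairs among our $m+2$ fixed subgroups, of which there are finitely many.) Then put $C_0 = \mu + \delta + 2\sigma + 2$, exactly as in Lemma~\ref{lem:bddinnprod}.

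\medskip
\noindent \emph{Step 2: the local argument.} Fix $i \in \{1,\dots,n+m+1\}$ and let $s_i = p_i$ (for $1 \le i \le n$) or $s_i = q, r, t_j$ in the obvious way, so that $p = s_0 s_1 \dots s_{n+m+1}$ is the decomposition of $p$ into geodesic segments with $(s_i)_- = f_{i-1}$, $(s_i)_+ = f_i$. Exactly as in the proof of Lemma~\ref{lem:bddinnprod}, if $\langle f_{i-1}, f_{i+1}\rangle^{rel}_{f_i} \ge C_0$ we pick vertices $u \in s_i$, $v \in s_{i+1}$ at relative distance $\lfloor \langle f_{i-1},f_{i+1}\rangle^{rel}_{f_i}\rfloor$ from $f_i$, mark a vertex $u_1$ on $[u,f_i] \subseteq s_i$ at relative distance $\delta + \sigma + 1$ from $u$, apply Proposition~\ref{prop:osinslimtriangles} to the triangle on $u, f_i, v$ to produce $v_1 \in [f_i, v] \subseteq s_{i+1}$ with $d_X(u_1,v_1)\le\sigma$. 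Now $\widetilde{s_i}$ lies in one of $Q', R', T_1, \dots, T_m$ (we use here Definition~\ref{def:multicoset_path_reps}: the label of each segment lies in $Q'\cup R'$, $Q$, $R$, or some $T_j$; and since $Q',R'\le Q,R$ respectively this is at worst one of $Q, R, T_1,\dots,T_m$) and $\widetilde{s_{i+1}}$ lies in another; apply Lemma~\ref{lem:shortening} to the appropriate pair to obtain $z \in f_i\cdot(A\cap B)$ with $d_X(u_1,z), d_X(v_1,z) \le \mu$, where $A, B$ are the two subgroups in question. Replacing $s_i, s_{i+1}$ by geodesics $s_i'$ from $f_{i-1}$ to $z$ and $s_{i+1}'$ from $z$ to $f_{i+1}$ yields another path representative of $g$: indeed $\widetilde{s_i'} \in A\cdot(A\cap B) = A$ and $\widetilde{s_{i+1}'} \in (A\cap B)\cdot B = B$, and the remaining segments are unchanged, so the new broken line still satisfies all the bullet points of Definition~\ref{def:multicoset_path_reps} (in particular its width is still $n$, so it is still a legitimate path representative of the same element in the same product). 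Minimality of the type of $p$ then forces $\ell(s_i)+\ell(s_{i+1}) \le \ell(s_i')+\ell(s_{i+1}')$, and the rest of the computation from Lemma~\ref{lem:bddinnprod} (inequalities \eqref{eq:ineq_on_dist-1}--\eqref{eq:d(fi,v1)}) goes through verbatim to give $\langle f_{i-1},f_{i+1}\rangle^{rel}_{f_i}\le C_0$, contradicting our assumption.

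\medskip
\noindent \emph{The main point to be careful about} is checking in each case that after the surgery we still have a valid path representative, i.e.\ that the new segments' labels land in the correct subgroups. Since $Q'\cap R' = S = Q\cap R$ by \descref{C1}, whenever both $s_i$ and $s_{i+1}$ come from the $p_1\dots p_n$ block (so $A\cap B$ is either $Q'\cap R' = S$, or $Q'\cap Q' = Q'$, etc.) the argument is exactly as in Lemma~\ref{lem:bddinnprod}; when one or both segments come from $q$, $r$, $t_1, \dots, t_m$ the relevant intersection $A\cap B$ (with $A, B$ among $Q, R, T_1, \dots, T_m$) is simply absorbed into the larger of the two factors and no new subtlety arises. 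One should also note that for $i$ in the ranges where $s_i$ or $s_{i+1}$ might a priori be a trivial path (namely near the $q$ and $r$ segments), the inequality $\langle f_{i-1},f_{i+1}\rangle^{rel}_{f_i}\le C_0$ is automatic when either adjacent segment is trivial, so we may assume both are nontrivial geodesics and the argument above applies without change. This establishes the lemma.
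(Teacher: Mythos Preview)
Your proposal is correct and is exactly the adaptation the paper has in mind: the paper explicitly omits the proof, stating that Lemma~\ref{lem:bddinnprod} carries over with ``only superficial differences,'' and your write-up supplies precisely those differences (applying Lemma~\ref{lem:shortening} to the relevant pairs among $Q,R,T_1,\dots,T_m$ and checking that the surgery yields a valid path representative of the same width). A minor remark: you only need Lemma~\ref{lem:shortening} for the consecutive pairs $(Q,R)$, $(R,T_1)$, $(T_1,T_2),\dots,(T_{m-1},T_m)$ rather than all pairs, and there is a small indexing slip in your segment labelling (your convention $(s_i)_-=f_{i-1}$ clashes with $s_i=p_i$ since $(p_i)_-=f_i$), but neither affects the argument.
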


\begin{lemma}
\label{lem:multicoset_bdd_cusps}
    There is a constant \(C_1 \geq 0\) such that the following is true.

    Let $Q' \leqslant Q$ and $R' \leqslant R$  be subgroups satisfying condition \descref{C1}.
    Consider a minimal type path representative
    \(p = q p_1 \dots p_n r t_1 \dots t_m\) for an element \(g \in Q \langle Q', R' \rangle R T_1 \dots T_m \setminus QRT_1\dots T_m\).
    If $a$ and $b$ are adjacent segments of $p$, with $a_+=b_-$, and
    \(h\) and \(k\) are connected \(\mathcal{H}\)-components of $a$ and $b$ respectively,  then \(d_X(h_+,a_+) \leq C_1\) and \(d_X(a_+, k_-) \leq C_1\).
\end{lemma}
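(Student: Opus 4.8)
The plan is to imitate the proof of Lemma~\ref{lem:shortspikes} almost verbatim. That proof has exactly two inputs: the bound on relative Gromov products at the nodes of a minimal type path representative, and Lemma~\ref{lem:one_comp_in_cusp_is_bounded}, which bounds the $d_X$-length of an $\mathcal H$-component lying in a bounded region near a node. The first input is already available as Lemma~\ref{lem:multicoset_bdd_inn_prod}, so the real work is to establish an analogue of Lemma~\ref{lem:one_comp_in_cusp_is_bounded}: for all $\zeta,\xi\ge 0$ there is $\tau=\tau(\zeta,\xi)\ge 0$ such that if $p=qp_1\dots p_nrt_1\dots t_m$ is a minimal type path representative of some $g\in Q\langle Q',R'\rangle RT_1\dots T_m\setminus QRT_1\dots T_m$, $a$ and $b$ are adjacent segments of $p$ with $a_+=b_-$, and $s$, $t$ are connected $\mathcal H$-components of $a$ and $b$ with $d_X(s_-,t_+)\le\zeta$ and $d_X(s_+,a_+)\le\xi$, then $|s|_X,|t|_X\le\tau$. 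I would prove this first and then read off Lemma~\ref{lem:multicoset_bdd_cusps}.

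For the analogue of Lemma~\ref{lem:one_comp_in_cusp_is_bounded} I would copy that proof line by line, the only new point being the pair of subgroups that drives the path surgery. By Remark~\ref{rem:multicoset_path_reps} the ordered pair of adjacent segments $(a,b)$ is one of $(q,p_1)$, $(p_i,p_{i+1})$, $(p_n,r)$, $(r,t_1)$ or $(t_j,t_{j+1})$, and the labels of $p_1,\dots,p_n$ alternate between $Q'$ and $R'$. In each case one picks a pair $(A,B)$ of \emph{relatively quasiconvex} subgroups from the finite list $Q,R,T_1,\dots,T_m$ such that $\elem a$ lies in a coset of $A$ and $\elem b$ in a coset of $B$: namely $(A,B)=(Q,R)$ in the first three cases, $(R,T_1)$ in the fourth and $(T_j,T_{j+1})$ in the fifth. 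Applying Lemma~\ref{lem:shortening} to $(A,B)$ and then Lemma~\ref{lem:nbhdintersection} produces, exactly as in the original proof, an element $w\in a_+(A\cap B\cap cH_\nu c^{-1})$ with $d_X(s_-,w)\le k$, and one replaces $a,b$ by geodesics through $w$, replacing the $\mathcal H$-components $s,t$ by edges $e,f$. Since $A\cap B$ is automatically contained in both $A$ and $B$, the new labels stay in cosets of $A$ and $B$; and in the first three cases the node is moved inside a coset of $S=Q\cap R$, which equals $Q'\cap R'$ by \descref{C1} and hence lies in each of $Q$, $R$, $Q'$, $R'$, so the new labels land in whichever of these subgroups Definition~\ref{def:multicoset_path_reps} prescribes. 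No $p_i$ is gained or lost, so the width $n$ is unchanged, and minimality of the type of $p$ is contradicted unless the new segments are geodesic of the same lengths and the $\mathcal H$-component total does not drop; this forces $|s|_X+|t|_X\le|e|_X+|f|_X$, and the triangle inequality gives $|e|_X\le k+\xi$ and $|f|_X\le k+\xi+\zeta+L$ (also in the degenerate case $w=a_+$), so $\tau=2k+2\xi+\zeta+L$ works.

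Granting this analogue, I would prove Lemma~\ref{lem:multicoset_bdd_cusps} exactly as Lemma~\ref{lem:shortspikes}. With $C_0$ from Lemma~\ref{lem:multicoset_bdd_inn_prod}, $L$ from Lemma~\ref{lem:isol_comp_in_triangles_are_short}, $\kappa=\kappa(1,0,L)$ from Proposition~\ref{prop:osinbcp}, define $(\zeta_j),(\xi_j),(\tau_j)$ recursively as in Definition~\ref{def:tau_j} but with $\tau(\cdot,\cdot)$ the constant from the previous paragraph, and set $C_1=L+C_0+1+\sum_{j=1}^{\lfloor C_0+1\rfloor}\tau_j$. Given $a,b,h,k$ as in the statement, Lemma~\ref{lem:multicoset_bdd_inn_prod}, Remark~\ref{rem:Gr_prod_ineq} and $\dxh(h_+,k_-)\le1$ give $\dxh(h_+,a_+)\le C_0+1$, so the subpath $\alpha$ of $a$ from $h_+$ to $a_+$ has at most $C_0+1$ edges, and its $\mathcal H$-components $s_1,\dots,s_l$ (listed from $a_+$ backwards, single edges by Remark~\ref{rem:comp_of_geod_is_an_edge}) satisfy $l\le C_0+1$. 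By Lemma~\ref{lem:isol_comp_in_triangles_are_short}, $\alpha$ and the reverse of the corresponding subpath $\beta$ of $b$ are $L$-similar geodesics, so Proposition~\ref{prop:osinbcp} connects each $s_j$ with $|s_j|_X\ge\kappa$ to an $\mathcal H$-component of $\beta$ whose far endpoint is within $\kappa$ of $(s_j)_-$; an induction on $j$, feeding $d_X((s_j)_+,a_+)\le\ell(\alpha)+\sum_{i<j}|s_i|_X$ as the parameter $\xi_j$ into the analogue of Lemma~\ref{lem:one_comp_in_cusp_is_bounded}, gives $|s_j|_X\le\tau_j$. Summing over $j$ bounds $d_X(h_+,a_+)=|\alpha|_X$ by $C_1$, whence $d_X(a_+,k_-)\le L+d_X(h_+,a_+)\le C_1$.

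The only step I expect to require real care is the surgery above when $(a,b)$ lies in the tail, that is, $(a,b)=(r,t_1)$ or $(t_j,t_{j+1})$, where the relevant intersection $R\cap T_1$, respectively $T_j\cap T_{j+1}$, may be small or even trivial, so the "shortcut" through $w$ may move nothing. This is harmless: the argument never needs the intersection to be large, only that it is contained in each of the two ambient subgroups — which is automatic and keeps the new segment labels in the correct subgroups — and the degenerate case $w=a_+$ is absorbed by the bound $|e|_X\le k+\xi$ for the replacement edge, exactly as in the proof of Lemma~\ref{lem:one_comp_in_cusp_is_bounded}. Everything else transfers from Part~\ref{part:metric_qc_double_cosets} with only notational changes.
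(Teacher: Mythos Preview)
Your proposal is correct and follows precisely the approach the paper intends: the paper omits the proof entirely, saying only that the argument of Lemma~\ref{lem:shortspikes} (and its auxiliary Lemma~\ref{lem:one_comp_in_cusp_is_bounded}) adapts with ``only superficial differences''. Your outline supplies exactly those adaptations --- taking the pair $(A,B)$ from the finite list $\{Q,R,T_1,\dots,T_m\}$ according to which adjacent segments are involved, noting that $S=Q'\cap R'$ by \descref{C1} keeps the surgery compatible with Definition~\ref{def:multicoset_path_reps} in the first three cases, and that the surgery preserves the width $n$ --- and correctly observes that the degenerate possibility $w=a_+$ is absorbed by the same estimates.
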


\begin{lemma}
\label{lem:multicoset_adj_backtracking}
    For any \(\zeta \geq 0\) there is \(\Theta_0 = \Theta_0(\zeta) \in \NN\) such that the following is true.

    Let $Q' \leqslant Q$ and $R' \leqslant R$  be subgroups satisfying condition \descref{C1}.
    Consider a minimal type path representative
    \(p = q p_1 \dots p_n r t_1 \dots t_m\) for an element \(g \in Q \langle Q', R' \rangle R T_1 \dots T_m \setminus QRT_1\dots T_m\).
    Suppose that $a$ and $b$ are adjacent segments of $p$, with $a_+=b_-$, and \(h\) and \(k\) are connected \(\mathcal{H}\)-components of $a$ and $b$ respectively, such that
    \[
        \max\{ \abs{h}_X, \abs{k}_X \} \geq \Theta_0.
    \]
    Then \(d_X(h_-, k_+) \geq \zeta\).
\end{lemma}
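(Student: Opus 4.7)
The plan is to follow the proof of Lemma~\ref{lem:longadjbacktracking} verbatim, replacing its two key inputs by multicoset analogues. The first analogue is the already-stated Lemma~\ref{lem:multicoset_bdd_cusps}. The second is a multicoset version of Lemma~\ref{lem:one_comp_in_cusp_is_bounded}: for every $\zeta, \xi \ge 0$ there exists $\tau^*(\zeta,\xi) \ge 0$ such that in any minimal type path representative $p = q p_1 \dots p_n r t_1 \dots t_m$ of an element $g \in Q\langle Q', R'\rangle R T_1 \cdots T_m \setminus QRT_1 \cdots T_m$, whenever $a, b$ are adjacent segments of $p$ (with $a_+ = b_-$) and $s, t$ are connected $\mathcal{H}$-components of $a, b$ respectively satisfying $d_X(s_-, t_+) \le \zeta$ and $d_X(s_+, a_+) \le \xi$, one has $\max\{|s|_X, |t|_X\} \le \tau^*(\zeta,\xi)$. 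Given this, the lemma follows immediately by setting $\Theta_0 = \lfloor \tau^*(\zeta, C_1) \rfloor + 1$, with $C_1$ from Lemma~\ref{lem:multicoset_bdd_cusps}, and applying the multicoset version in contrapositive form (using that $d_X(h_+, a_+) \le C_1$ by Lemma~\ref{lem:multicoset_bdd_cusps}).

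The proof of the multicoset version of Lemma~\ref{lem:one_comp_in_cusp_is_bounded} will mirror the original argument line by line. Write $x = a_+ = b_-$, and let $A, B \in \{Q, R, T_1, \ldots, T_m\}$ be the outer relatively quasiconvex subgroups such that $\elem{a} \in A$ and $\elem{b} \in B$. Applying Lemma~\ref{lem:shortening} to the pair $(A, B)$ yields a point $z \in x(A\cap B)$ at bounded $d_X$-distance from $s_-$, and combining this with Lemma~\ref{lem:nbhdintersection} produces a vertex $w \in x(A\cap B) \cap s_+H_\nu$ (where $\nu \in \Nu$ is the peripheral index of $s$ and $t$) still at bounded $d_X$-distance from $s_-$. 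Parallel-translating the portions of $a$ and $b$ past $s$ and $t$ respectively, as in the original proof, yields modified geodesics $a', b'$ with $a'_- = a_-$, $a'_+ = b'_- = w$ and $b'_+ = b_+$, whose labels represent elements of $A$ and $B$. Condition~\descref{C1} ensures that when either of $\elem{a}, \elem{b}$ was originally required to lie in the smaller subgroup $Q'$ or $R'$, the labels of $a', b'$ remain in the correct subgroup (since $Q \cap R \subseteq Q' \cap R'$). Replacing $a, b$ by $a', b'$ in $p$ thus yields another path representative of $g$ of the same width, and minimality of $\tau(p)$ forces $|s|_X + |t|_X$ to be bounded by the total length of two replacement ``corner'' edges, which is itself bounded in terms of $\xi, \zeta$, and the universal constant $L$ from Lemma~\ref{lem:isol_comp_in_triangles_are_short}.

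The main obstacle is pure bookkeeping: one must enumerate the finitely many possible types of consecutive segments in the broken line $p$ (pairs of the forms $(Q, R'), (Q', R'), (R', Q'), (Q', R), (R, T_1), (T_i, T_{i+1})$, and so on), and verify in each case both that Lemma~\ref{lem:shortening} applies to the chosen outer quasiconvex pair $(A, B)$ and that condition \descref{C1} gives the correct subgroup containment for the modified labels. All of $Q, R, T_1, \ldots, T_m$ are relatively quasiconvex by Convention~\ref{conv:main_multicoset}, so only finitely many pairs $(A, B)$ need to be considered; taking the maximum over the resulting bounds gives a uniform $\tau^*(\zeta, \xi)$, and no new ideas beyond those of Section~\ref{sec:adj_backtracking} are required.
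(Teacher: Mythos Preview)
Your proposal is correct and matches the paper's approach exactly: the paper explicitly omits the proof of this lemma, stating that the arguments of Section~\ref{sec:adj_backtracking} adapt ``with only superficial differences'' to the multicoset setting. Your plan of running the proof of Lemma~\ref{lem:longadjbacktracking} verbatim with a multicoset analogue of Lemma~\ref{lem:one_comp_in_cusp_is_bounded} (obtained by applying Lemma~\ref{lem:shortening} to the relevant pair of outer quasiconvex subgroups and taking a maximum over the finitely many adjacent-segment types) is precisely the intended adaptation; the case analysis and the role of \descref{C1} you describe are the only places where any checking is needed, and you have identified them correctly. One small imprecision: the auxiliary point $w$ in the original argument lies in $x(A\cap B\cap aH_\nu a^{-1})$ (so that $wa\in s_+H_\nu$), not in $s_+H_\nu$ itself, but since you defer to the original construction this does not affect the validity of your sketch.
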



\section{Multiple backtracking in product path representatives: two special cases}
\label{sec:mcs_multitracking1}
Just like in Theorem~\ref{thm:metric_qc}, the main difficulty in proving Theorem~\ref{thm:RZs} consists in dealing with multiple backtracking in path representatives. In this section we will consider two of the possible cases.
We will be working under Convention~\ref{conv:main_multicoset}.

Throughout the rest of the paper we fix the following notation.

\begin{notation}
\label{not:C_1-P_1}
    let $C_1$ be the larger of the two constants provided by Lemmas~\ref{lem:shortspikes} and \ref{lem:multicoset_bdd_cusps}, and denote by $\mathcal{P}_1$ the finite collection of maximal parabolic subgroups of $G$ given by
    \[
        \mathcal{P}_1 = \{{H_\nu}^ b \, | \, \nu \in \Nu, \abs{b}_X \leq C_1\}.
    \]
\end{notation}

The following lemma is roughly analogous to Lemma~\ref{lem:end_sides_constr}.

\begin{lemma}
\label{lem:final_path_constr}
    For any \(L \geq 0\) and any relatively quasiconvex subgroup \(T \leqslant G\) there is a constant \(L' = L'(L,T) \geq 0\) such that the following is true.

    Let \(P={H_\nu}^ b \in \mathcal{P}_1\), for some \(\nu \in \Nu\) and \(b \in G\), with \(\abs{b}_X \leq C_1\), and let \(t\) be a geodesic path in \(\Gamma(G,X\cup\mathcal{H})\), with \(\elem{t} \in T\).
    Suppose that \(v \in Pb = bH_\nu\) is a vertex of \(t\) and \(u \in P\) is an element satisfying \(d_X(u,t_-) \leq L\). Denote \(a = u^{-1} t_- \in G\).
    Then there is a geodesic path \(t'\)  in $\ga$ such that
    \begin{itemize}
        \item \(t'_- = u\) and \(d_X(t'_+,v) \leq L'\);
        \item \(\elem{t'} \in T^{a} \cap P\);
        \item \({(t'_+)}^{-1}t_+ \in a T\).
    \end{itemize}
\end{lemma}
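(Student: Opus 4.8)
The statement is a translation lemma of the same flavour as Lemma~\ref{lem:end_sides_constr}: we have a geodesic $t$ with $\elem{t}\in T$, a vertex $v$ of $t$ lying in the coset $Pb=bH_\nu$, and a basepoint $u\in P$ close to $t_-$; we must produce a geodesic $t'$ starting at $u$, ending near $v$, whose label lies in $T^a\cap P$. The plan is to first pass to the translate of $t$ by $a^{-1}u^{-1}=a^{-1}(t_-a^{-1})^{-1}$ so as to recognise the relevant coset of $T^a$, then apply Lemma~\ref{lem:nbhdintersection} to the subgroups $T^a$ and $P$ to find a lattice point of their intersection near $v$, and finally take $t'$ to be any geodesic from $u$ to that point.

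\textbf{First step: set up the constant.} Let $\varepsilon\ge 0$ be a common quasiconvexity constant of the finitely many relatively quasiconvex subgroups under discussion (by Lemma~\ref{lem:props_of_qc_sbgps}, $T^a$ is relatively quasiconvex once $T$ is, with a quasiconvexity constant depending only on $\abs{a}_X$, hence — since $\abs{a}_X=d_X(u,t_-)\le L$ — on $L$ and $T$; here $a$ ranges over a bounded set in terms of $d_X$). Set $K=\max\{L,C_1,\varepsilon\}$ and define
\[
    L'=L'(L,T)=\max\bigl\{K'(T^{a}, P, K)\ \bigm|\ P\in\mathcal{P}_1,\ a\in G,\ \abs{a}_X\le L\bigr\},
\]
where $K'(T^a,P,K)$ is the constant from Lemma~\ref{lem:nbhdintersection}; this maximum is finite because $\abs{X}<\infty$, $\abs{\Nu}<\infty$ and $\abs{b}_X\le C_1$, so only finitely many subgroups $T^a$ and $P$ occur.

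\textbf{Second step: locate $v$ near $T^a\cap P$ and finish.} Write $x=t_-\in G$, so $u^{-1}x=a$, equivalently $x=ua$. The coset $xT=uaT$ (since $\elem{t}\in T$ and a geodesic from $x$ to $x\elem{t}$ stays near $xT$) contains $t$ within the $\varepsilon$-neighbourhood, so $d_X(v,uaT^{}) \le \varepsilon$; more precisely $v$ lies in the translate $u\,(aT)$ of the coset, and $u(aT)=u\,T^{a}\,a=uT^a a$, so $d_X(v, uT^a a)\le\varepsilon$. On the other hand $vb^{-1}\in P$ by hypothesis, so $d_X(v,uP)=d_X(v,P)\le\abs{b}_X\le C_1$ (using $uP=P$ as $u\in P$). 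Hence $v\in N_X(uT^a a, \varepsilon)\cap N_X(uP, C_1)\subseteq N_X(uT^a a,K)\cap N_X(uP,K)$; after correcting the first coset by the bounded element $a$ (a coset of $T^a$ differs from $uT^a a$ by right translation, which moves points a bounded $d_X$-distance $\le\abs{a}_X\le L$, so is absorbed into $K$), Lemma~\ref{lem:nbhdintersection} yields a point $w\in u(T^a\cap P)$ with $d_X(v,w)\le L'$. Now take $t'$ to be any geodesic in $\ga$ from $u$ to $w$. Then $t'_-=u$, $\elem{t'}=u^{-1}w\in T^a\cap P$, and $d_X(t'_+,v)=d_X(w,v)\le L'$, giving the first two bullet points. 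For the third, $(t'_+)^{-1}t_+ = w^{-1}\,x\,\elem{t} = w^{-1}(ua)\elem{t}$; since $w\in uT^a$ we have $u^{-1}w=a c a^{-1}$ for some $c\in T$, so $w^{-1}ua = a c^{-1} a^{-1} a = a c^{-1}\in aT$, whence $(t'_+)^{-1}t_+\in aT$, as required. This completes the proof.

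\textbf{Main obstacle.} The only subtlety is bookkeeping with the conjugate subgroup $T^a$ and the one-sided coset shifts: one must be careful that $a$ ranges over a set bounded in the $d_X$-metric (which it does, as $\abs{a}_X=d_X(u,t_-)\le L$) so that the quasiconvexity constant of $T^a$ and the Lemma~\ref{lem:nbhdintersection} constant can be chosen uniformly, and that the coset of $T^a$ containing $v$ is correctly identified as $u(aT)=uT^a a$ so that the right-translation by $a$ is correctly accounted for in both the quasiconvexity estimate and the application of Lemma~\ref{lem:nbhdintersection}. None of this is hard, but it is where the proof could go wrong if written carelessly.
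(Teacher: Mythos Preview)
Your approach is essentially the same as the paper's: use quasiconvexity of $T$ to place $v$ near the coset $xT=uaT=uT^a a$, shift by the bounded element $a$ to land near $uT^a$, intersect with $uP$ via Lemma~\ref{lem:nbhdintersection}, and take $t'$ to be a geodesic to the resulting point. The verification of the third bullet is also identical in spirit.

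One minor bookkeeping slip: your constant $K=\max\{L,C_1,\varepsilon\}$ is not quite large enough. After the shift by $a$ you have $d_X(v,uT^a)\le \varepsilon+\abs{a}_X\le \varepsilon+L$, so you need $K\ge \varepsilon+L$, not merely $K\ge\max\{\varepsilon,L\}$. The paper takes $K=\max\{C_1,\sigma+L\}$ (with $\sigma$ the quasiconvexity constant of $T$), which handles this correctly. Also note that it is cleaner to invoke quasiconvexity of $T$ directly (since the endpoints of $t$ lie in the left coset $xT$) rather than of $T^a$; this avoids having to track a quasiconvexity constant that varies with $a$.
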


\begin{proof} 
    Let $K=\max\{C_1,\sigma+L\}$, where $\sigma \ge 0$ is a quasiconvexity constant for $T$. Denote
    \begin{equation}
    \label{eq:def_of_new_D}
        L'= \max\{K'(P,T^a,K) \mid P \in \mathcal{P}_1,~a \in G,~\abs{a}_X \le L\},
    \end{equation}
    where $K'(P,T^a,K)$ is obtained from Lemma~\ref{lem:nbhdintersection}.

    The hypotheses that \(v \in Pb\) and \(\abs{b}_X \leq C_1\) imply that \(d_X(v,P) \leq \abs{b}_X \leq C_1\).
    As \(u \in P\), we have \(P = uP\) and so
    \begin{equation}
    \label{eq:v_dist_from_uP}
        d_X(v,uP) \leq C_1.
    \end{equation}

    Set $x= t_-=ua$. Since $\elem{t} \in T$, we have $d_X(v, xT) \le \sigma$, as $T$ is $\sigma$-quasiconvex. Hence
    \[
        d_X(v,u T^a)=d_X(v, xT a^{-1}) \le d_X(v,xT)+\abs{a}_X \le \sigma+L.
    \]

    Combining the latter inequality with \eqref{eq:v_dist_from_uP} allows us to apply Lemma~\ref{lem:nbhdintersection} to find an element $z \in u(T^a \cap P)$ such that $d_X(v,z) \le L'$, where $L' \ge 0$ is the constant from \eqref{eq:def_of_new_D}.
    Now take \(t'\) to be any geodesic in $\ga$ with \(t'_- = u\) and \(t'_+ = z\).
    It is straightforward to verify that \(t'\) satisfies the first two of the required properties. For the last property, observe that
    \[
        {(t'_+)}^{-1}t_+ =\left({(t'_+)}^{-1} u \right) \left(u^{-1}t_- \right) \left(t_-^{-1} t_+\right) =\elem{t'}^{-1} a \elem{t}\in T^a a T=aT. \qedhere
    \]
\end{proof}

The following notation will be fixed for  the remainder of the paper.

\begin{notation}
\label{not:Li_def}
    Let \(D\) be the constant from Lemma~\ref{lem:end_sides_constr}, corresponding to $C_1$ and $\mathcal{P}_1$ (from Notation~\ref{not:C_1-P_1}) and subgroups $Q,R$.
    We define constants $L_1, \dots. L_{m+1}$ as follows: 
    \[
        L_1 = D + C_1 \quad \textrm{and} \quad L_{i+1} = L'(L_{i},T_i)+C_1, \text{ for each }i = 1, \dots, m,
    \]
    where \(L'\) is  obtained from Lemma~\ref{lem:final_path_constr}.

    We also define the family of subgroups
    \[
        \mathcal{U}_1 = \bigcup_{i=1}^m \Big\{ T_i^g  \, \Big| \, i \in \{1, \dots, m\}, g \in G, \abs{g}_X \leq L_i \Big\},
    \]
    consisting of finitely many conjugates of the subgroups \(T_1, \dots, T_m\).
    Note that, by Lemma~\ref{lem:props_of_qc_sbgps}, each \(U \in \mathcal{U}_1\) is a relatively quasiconvex subgroup of \(G\).
\end{notation}

The next proposition describes how we deal with consecutive backtracking that involves the $t_1 \dots t_m$-part of a path representative of an element $g \in Q \langle Q', R' \rangle R T_1 \dots T_m \setminus Q R T_1 \dots T_m$; it complements Proposition~\ref{prop:multitracking_path} which takes care of backtracking within the $qp_1\dots p_n r$-part.

\begin{proposition} 
\label{prop:backtracking_in_t-tail} 
    Suppose that \(p = q p_1 \dots p_n r t_1 \dots t_m\) is a path representative of minimal type for an element \(g \in Q \langle Q', R' \rangle R T_1 \dots T_m\setminus  Q R T_1 \dots T_m\), where $Q' \leqslant Q$ and $R' \leqslant R$ are some subgroups satisfying \descref{C1}. 
    Let \(P={H_\nu}^ b \in \mathcal{P}_1\), for some \(\nu \in \Nu\) and \(b \in G\), with \(\abs{b}_X \leq C_1\).

    Suppose that $h_1,\dots,h_j$ are connected $H_\nu$-components of the segments $t_1,\dots,t_j$, respectively, with $j \in \{1,\dots,m\}$, such that $(h_1)_- \in Pb=bH_\nu$. If $u_1 \in P$ is an element satisfying $d_X(u_1,(t_1)_-) \le L_1$ then there exist elements $a_1,\dots,a_j \in G$ and a broken line $t_1' \dots t_j'$ in $\ga$ such that the following conditions hold:
    \begin{enumerate}
        \item[(i)] $(t_1')_-=u_1$ and $d_X((t_j')_+,(h_j)_+) \le L_{j+1}$;
        \item[(ii)] $a_{i+1} \in a_i T_i$, for $i=1,\dots,j-1$;
        \item[(iii)] $a_i=(t_i')_-^{-1} (t_i)_-$ and $\abs{a_i}_X \le L_i$, for each $i=1,\dots,j$;
        \item[(iv)] $\elem{t'_i} \in T_i^{a_i} \cap P$, for all $i=1,\dots,j$.
    \end{enumerate}
\end{proposition}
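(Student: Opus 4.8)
The statement is a ``tail'' analogue of Proposition~\ref{prop:multitracking_path}, and the natural strategy is to construct the segments $t_1',\dots,t_j'$ one at a time by induction on $i$, feeding the terminal vertex of $t_i'$ into the construction of $t_{i+1}'$, each time invoking Lemma~\ref{lem:final_path_constr}. First I would translate everything by $(t_1)_-^{-1}$ (or keep track of the ambient position) so that the bookkeeping is clean; the key point is that all the components $h_1,\dots,h_j$ lie in the same left coset $Pb=bH_\nu$, so once we have placed the start of $t_1'$ inside $P$ we stay anchored to $P$ throughout.

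\textbf{Base step.} We have $u_1\in P$ with $d_X(u_1,(t_1)_-)\le L_1$, so we may set $a_1=u_1^{-1}(t_1)_-$, which has $\abs{a_1}_X\le L_1$. Since $(h_1)_-\in Pb$ and $(h_1)_+$ is a vertex of $t_1$ lying in the same $H_\nu$-coset as $(h_1)_-$ (components are connected to themselves), we may take $v$ in Lemma~\ref{lem:final_path_constr} to be $(h_1)_+\in Pb$. Applying Lemma~\ref{lem:final_path_constr} to the path $t_1$, the vertex $v=(h_1)_+$, the element $u_1\in P$, and the relatively quasiconvex subgroup $T_1$, with $L=L_1$, produces a geodesic $t_1'$ with $(t_1')_-=u_1$, $d_X((t_1')_+,(h_1)_+)\le L'(L_1,T_1)\le L_2$, $\elem{t_1'}\in T_1^{a_1}\cap P$, and $(t_1')_+^{-1}(t_1)_+\in a_1 T_1$. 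This gives (i) (for $j=1$), (iii) and (iv) for $i=1$.

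\textbf{Inductive step.} Suppose $t_1',\dots,t_i'$ and $a_1,\dots,a_i$ have been constructed satisfying the analogues of (i)--(iv). Set $u_{i+1}=(t_i')_+\in P$. From $(t_i')_+^{-1}(t_i)_+\in a_i T_i$ we get $u_{i+1}^{-1}(t_{i+1})_-=u_{i+1}^{-1}(t_i)_+\in a_i T_i$, because $(t_{i+1})_-=(t_i)_+$ (consecutive segments of the broken line); hence setting $a_{i+1}=u_{i+1}^{-1}(t_{i+1})_-$ gives (ii), namely $a_{i+1}\in a_i T_i$, and moreover
\[
\abs{a_{i+1}}_X=d_X(u_{i+1},(t_{i+1})_-)\le d_X((t_i')_+,(h_i)_+)+d_X((h_i)_+,(t_{i+1})_-).
\]
The first summand is $\le L'(L_{i},T_i)$ by the inductive hypothesis; the second is $d_X((h_i)_+,(h_{i+1})_-)\le C_1$ by Lemma~\ref{lem:multicoset_bdd_cusps} (the components $h_i$ and $h_{i+1}$ are connected $\mathcal H$-components of adjacent segments $t_i,t_{i+1}$). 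Thus $\abs{a_{i+1}}_X\le L'(L_{i},T_i)+C_1=L_{i+1}$, giving the bound in (iii) for $i+1$. Since $h_{i+1}$ is connected to $h_1$, we have $(h_{i+1})_-\in (h_1)_-H_\nu=Pb$, hence $(h_{i+1})_+\in Pb$; now apply Lemma~\ref{lem:final_path_constr} to $t_{i+1}$, the vertex $v=(h_{i+1})_+\in Pb$, the element $u_{i+1}\in P$, the subgroup $T_{i+1}$, and $L=L_{i+1}$ (noting $U=T_{i+1}^{a_{i+1}}\in\mathcal U_1$), obtaining a geodesic $t_{i+1}'$ with $(t_{i+1}')_-=u_{i+1}=(t_i')_+$, $d_X((t_{i+1}')_+,(h_{i+1})_+)\le L'(L_{i+1},T_{i+1})\le L_{i+2}$, $\elem{t_{i+1}'}\in T_{i+1}^{a_{i+1}}\cap P$, and $(t_{i+1}')_+^{-1}(t_{i+1})_+\in a_{i+1}T_{i+1}$. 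This closes the induction; concatenating $t_1'\dots t_j'$ and reading off $i=j$ of the above gives all of (i)--(iv).

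\textbf{Main obstacle.} The proposition as stated is essentially a clean inductive bookkeeping argument, so I do not expect a genuine obstacle; the one point requiring care is verifying that at each stage the vertex $(h_i)_+$ really does lie in $Pb$ — this is where the hypothesis that $h_1,\dots,h_j$ are \emph{pairwise connected} $H_\nu$-components (not merely connected in sequence) is used, together with the fact that $(h_1)_-\in Pb$ — and checking that the accumulated constants telescope exactly into the recursively defined $L_i$ of Notation~\ref{not:Li_def}, which is precisely why those constants were defined that way. One should also note that Lemma~\ref{lem:multicoset_bdd_cusps} applies here because $p$ is a minimal type path representative of an element outside $QRT_1\dots T_m$, which is part of the hypothesis.
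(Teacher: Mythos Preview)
Your proposal is correct and follows essentially the same inductive approach as the paper's proof: set $a_1=u_1^{-1}(t_1)_-$, apply Lemma~\ref{lem:final_path_constr} to obtain $t_1'$, then repeatedly set $u_{i+1}=(t_i')_+$, use the output $(t_i')_+^{-1}(t_i)_+\in a_iT_i$ together with Lemma~\ref{lem:multicoset_bdd_cusps} to bound $|a_{i+1}|_X\le L_{i+1}$, and apply Lemma~\ref{lem:final_path_constr} again. One small slip: in the inductive step you write that the second summand is $d_X((h_i)_+,(h_{i+1})_-)$, but it is actually $d_X((h_i)_+,(t_{i+1})_-)=d_X((h_i)_+,(t_i)_+)$, which is the quantity Lemma~\ref{lem:multicoset_bdd_cusps} bounds by $C_1$; the conclusion is unaffected.
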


\begin{proof} 
    We start by setting $a_1=u_1^{-1} (t_1)_-$, so that $\abs{a_1}_X = d_X(u_1,(t_1)_-) \le L_1$. Note that $(h_1)_+=(h_1)_- \elem{h_1} \in bH_\nu=Pb $. 
    Therefore we can apply Lemma~\ref{lem:final_path_constr} to find a geodesic path $t_1'$ in $\ga$ such that $(t_1')_-=u_1$, $d_X((t_1')_+,(h_1)_+) \le L'(L_1,T_1)$, $\elem{t_1'} \in T_1^{a_1} \cap P$ and
    \begin{equation}
    \label{eq:t_1'-t_1}
        (t_1')_+^{-1} (t_1)_+ \in a_1T_1.
    \end{equation}
    It follows that properties (ii)--(iv) are satisfied for $i=1$, while property (i) holds because $L_2 \ge L'(L_1,T_1)$ by definition.
    If $j=1$ then property (ii) is vacuously true.

    We can now suppose that $j >1$. 
    Then $h_1$ is connected to the component $h_2$ of $t_2$, so, according to Lemma~\ref{lem:multicoset_bdd_cusps},  $d_X((h_1)_+,(t_1)_+) \le C_1$. Set $u_2=(t_1')_+$ and $a_2=u_2^{-1} (t_1)_+$. 
    Note that $a_2 \in a_1T_1$ by \eqref{eq:t_1'-t_1} and
    \[
        \abs{a_2}_X=d_X((t_1)'_+,(t_1)_+) \le d_X((t_1')_+,(h_1)_+)+d_X((h_1)_+,(t_1)_+) \le L'(L_1,T_1)+C_1=L_2.
    \] 
    Since $(t_2)_-=(t_1)_+$, we see that $a_2=u_2^{-1} (t_2)_-$ and $d_X(u_2,(t_2)_-)=\abs{a_2}_X \le L_2$.

    Now, observe that $u_2 =u_1 \elem{t_1'} \in P $ and $(h_2)_+ \in bH_\nu=Pb$, as $h_2$ is connected to $h_1$. 
    This allows us to use Lemma~\ref{lem:final_path_constr} to find a geodesic path $t_2'$ in $\ga$ such that $(t_2')_-=u_2=(t_1')_+$, $d_X((t_2')_+,(h_2)_+) \le L'(L_2,T_2)$, $\elem{t_2'} \in T_2^{a_2} \cap P $ and $(t_2')_+^{-1}t_+ \in a_2T_2$ (see Figure~\ref{fig:new_tail}).

    \begin{figure}[ht]
        \centering
        \includegraphics{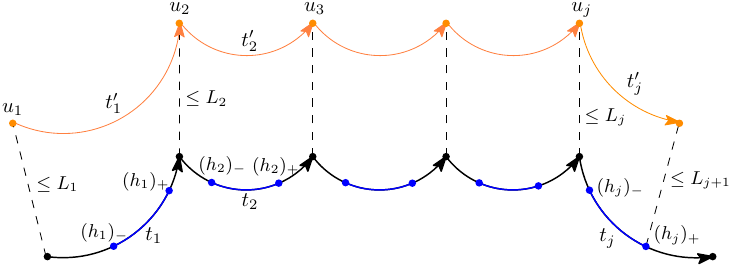}
        \caption{The new path \(t_1'\dots t_j'\) constructed in Proposition~\ref{prop:backtracking_in_t-tail}.}
        \label{fig:new_tail}
    \end{figure}

    If $j=2$ then we are done, otherwise  we construct the remaining elements $a_3, \dots,a_j$ and the paths $t_3',\dots,t_j'$ inductively, similarly to the construction of $a_2$ and $t_2'$ above.
\end{proof}

The next two propositions prove that, under certain conditions, instances of multiple backtracking are long. Essentially, they generalise Proposition~\ref{prop:long_multitracking}.
The first of these shows how we can use condition \descref{C5-m} to deal with particular instances of multiple backtracking.

\begin{proposition}
\label{prop:C5n_multiracking}
    For each \(\zeta \geq 0\) there is a constant \(C_2 = C_2(\zeta) \geq 0\) such that if $Q' \leqslant Q$ and $R' \leqslant R$ satisfy conditions \descref{C1}, \descref{C3} and \descref{C5-m} with constant \(C \geq C_2\) and finite  families $\mathcal{P}$ and $\mathcal{U}$, such that \(\mathcal{P}_1 \subseteq \mathcal{P}\) and \(\mathcal{U}_1 \subseteq \mathcal{U}\), then the following is true.

    Let \(p = q p_1 \dots p_n r t_1 \dots t_m\) be a  minimal type path representative for some \(g \in Q \langle Q', R' \rangle R T_1 \dots T_m\), with \(g \notin Q R T_1 \dots T_m\).
    Suppose that \(p\) has multiple backtracking along \(H_\nu\)-components \(h_1, \dots, h_k\) of its segments, for some $\nu \in \Nu$, such that
    \begin{itemize}
        \item \(h_1\) is an \(H_\nu\)-component of either \(q\) or \(p_i\), for some \(i \in \{ 1, \dots, n-1\}\), with \(\elem{p_i} \in Q'\);
        \item \(h_k\) is an \(H_\nu\)-component of a segment \(t_j\), for some $j \in \{1,\dots,m\}$.
    \end{itemize}
    Then \(d_X((h_1)_-,(h_k)_+) \geq \zeta\).
\end{proposition}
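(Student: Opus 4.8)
The plan is to follow the strategy of the proof of Proposition~\ref{prop:long_multitracking}: assume for contradiction that $d_X((h_1)_-,(h_k)_+) < \zeta$, use the structure results (Proposition~\ref{prop:multitracking_path} for the $qp_1\dots p_n r$-part of the backtracking and Proposition~\ref{prop:backtracking_in_t-tail} for the $t_1\dots t_m$-part) to replace the offending stretch of $p$ by a short path lying essentially inside a single parabolic subgroup $P \in \mathcal{P}_1$, and then invoke condition \descref{C5-m} to rewrite the group element represented by that short path in a controlled form. Finally, combining with condition \descref{C3} (to absorb the ``attaching'' elements into $S$) and the alternation of a minimal type path representative, I would produce a shorter path representative for $g$, contradicting minimality of the type of $p$.

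More concretely, first I would choose the constant. Let $\zeta \geq 0$ be given; I would set $C_2(\zeta) = \max\{2C_1,\ \zeta + 2D + 2\sum_{i=1}^{m+1}L_i\} + 1$ (or some similar explicit bound large enough to dominate the total ``drift'' introduced by Propositions~\ref{prop:multitracking_path} and \ref{prop:backtracking_in_t-tail}). Assuming the negation, the endpoints of the whole backtracking instance are within $\zeta$ of each other. There are two subcases according to whether $h_1$ lies on $q$ or on some $p_i$ with $\elem{p_i}\in Q'$. In either subcase, the components $h_1,\dots,h_k$ split as: a first block lying on segments of the $qp_1\dots p_n r$ portion up to some index, followed by a block $h_{\ell},\dots,h_k$ lying on $t_1,\dots,t_j$. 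I would apply Proposition~\ref{prop:multitracking_path} to the first block to obtain a parabolic path $p'$ with $\elem{p'_i}$'s in $P$, endpoints close (distance $\le D$) to $(h_1)_-$ and to the node separating the two blocks, with the intermediate ``$S$-corrections'' as in claim (ii) there. Then, using that the node separating the blocks lies in $Pb$ for the relevant $b$ with $|b|_X \le C_1$, I would apply Proposition~\ref{prop:backtracking_in_t-tail} starting from an element $u_1 \in P$ within distance $L_1$ of $(t_1)_-$, producing elements $a_1,\dots,a_j$ with $|a_i|_X \le L_i$ and geodesic segments $t_i'$ with $\elem{t_i'}\in T_i^{a_i}\cap P$ and $(t_i')_+^{-1}(t_i)_+ \in a_i T_i$, so that $T_i^{a_i} \in \mathcal{U}_1 \subseteq \mathcal{U}$.

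Now concatenating these pieces (together with short geodesics whose labels represent elements of $S$, supplied by claim (ii) of Proposition~\ref{prop:multitracking_path} and the relations in Proposition~\ref{prop:backtracking_in_t-tail}) gives a path $\widehat{p}$ from $(h_1)_-$-ish to $(h_k)_+$-ish, of length $< \zeta + 2D + 2\sum L_i < C$, whose label represents an element of the form $q_0 \langle Q_P', R_P'\rangle R_P (U_1)_P \dots (U_j)_P$ with $q_0 \in Q_P$ (here I would match the precise left-hand side of \descref{C5-m} using claim (iv) of Proposition~\ref{prop:multitracking_path}, which tells us $\elem{p'_1}\in Q_P$ since $\elem{p_i}\in Q'$, the alternation forcing the middle to land in $\langle Q'_P, R'_P\rangle$, and the $R_P$ factor coming from the $r$-segment). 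Condition \descref{C5-m} then forces this element to lie in $q_0 Q'_P R_P (U_1)_P \dots (U_j)_P$. Unwinding, the element $\elem{\widehat p}$ (and hence the subword of $g$ it came from) can be rewritten as a product $\tilde q \tilde r \tilde u_1 \dots \tilde u_j$ with $\tilde q \in Q'$, $\tilde r \in R$, $\tilde u_i \in T_i$ — i.e. with strictly fewer ``$Q'/R'$-alternations'' than the block it replaces, using condition \descref{C1} and \descref{C3} to absorb the $S$-corrections and the conjugating elements $a_i$ back into $S$ and into the $T_i$'s. Splicing this back into $p$ and passing to a broken line with the geodesic segments dictated by this factorisation (via Remark~\ref{rem:path_rep_from_product}) yields a path representative of $g$ of strictly smaller width, contradicting the minimality of $\tau(p)$. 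Hence $d_X((h_1)_-,(h_k)_+)\geq\zeta$.

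The main obstacle I anticipate is bookkeeping the interface between the two structural results: Proposition~\ref{prop:multitracking_path} is stated for backtracking along segments $p_i,\dots,p_j$ of the $\langle Q',R'\rangle$-part only, while here $h_1$ may sit on $q$ and $h_k$ on a $t_j$, so I must carefully handle the ``boundary'' components (those on $q$, on $r$, and at the transition into the $t$-tail), verifying that the element being produced indeed matches the precise shape $q\langle Q'_P,R'_P\rangle R_P (U_1)_P\cdots(U_j)_P$ appearing in \descref{C5-m}, with $q\in Q_P$ — in particular that the hypothesis ``$\elem{p_i}\in Q'$'' (or that $h_1$ lies on $q$) is exactly what guarantees the leftmost factor lands in $Q_P$ rather than $R_P$, so that \descref{C5-m} (which is not symmetric in $Q$ and $R$) applies. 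The other delicate point is ensuring all the auxiliary drifts ($D$ from Lemma~\ref{lem:end_sides_constr}, the $L_i$'s, and the $E$-type corrections) are absorbed into the single constant $C_2(\zeta)$ before condition \descref{C5-m} is invoked, which is why $\mathcal{U}_1$ and the $L_i$'s were defined in Notation~\ref{not:Li_def} precisely to be the conjugates that can occur.
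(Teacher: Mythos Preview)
Your proposal is correct and follows essentially the same approach as the paper's proof. The paper sets $C_2 = \max\{2C_1,\, D + \zeta + L_j \mid j = 1,\dots,m+1\} + 1$, builds the parabolic path $p' = p_i' \dots p_n' r' t_1' \dots t_j'$ in exactly the two stages you describe (the $qp_1\dots p_n r$-part via the construction of Proposition~\ref{prop:multitracking_path}, the tail via Proposition~\ref{prop:backtracking_in_t-tail}), applies \descref{C5-m} to $\elem{p'}$, and unwinds to a path representative of width $i < n$. One small technical point: Proposition~\ref{prop:multitracking_path} as stated applies to path representatives in $\langle Q',R'\rangle$, not to the extended representatives of Definition~\ref{def:multicoset_path_reps}, so the paper redoes its construction (invoking Lemmas~\ref{lem:end_sides_constr} and \ref{lem:(c3)->vertex_constr} directly) rather than citing the proposition; this is also where condition \descref{C3} is actually consumed, not in the final absorption step.
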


\begin{proof}
    Take \[C_2 = \max\{ 2C_1,  D + \zeta + L_{j} \mid j = 1, \dots, m+1 \} + 1,\] where \(D\) and \(L_{j}\) are defined in Notation~\ref{not:Li_def}, and suppose that \(C \geq C_2\).

    The proof employs the same strategy as Proposition~\ref{prop:long_multitracking}: we first construct a path whose endpoints are close to \((h_1)_-\) and \((h_k)_+\) and whose label represents an element of a parabolic subgroup.
    We will then obtain a contradiction with the minimality of the type of \(p\), using condition \descref{C5-m}.

    We will focus on the case when \(h_1\) is an \(H_\nu\)-component of \(p_i\), for some \(i \in \{1, \dots, n-1\}\) with \(\elem{p_i} \in Q'\), with the case when \(h_1\) is an \(H_\nu\)-component of \(q\) being similar. Note that since \(g \notin Q R T_1 \dots T_m\), it must be that \(n \geq 2\) by Remark~\ref{rem:multicoset_path_reps}.
    After translating by \((p_i)_+^{-1}\), we may assume that \((p_i)_+ = 1\).
    We write \(b = (h_1)_+\) and note that, according to Lemma~\ref{lem:multicoset_bdd_cusps},
    \begin{equation}
    \label{eq:mc_bdd_b}
        \abs{b}_X = d_X((h_1)_+,(p_i)_+) \leq C_1.
    \end{equation}

    Let \(P = bH_\nu b^{-1}\in \mathcal{P}_1 \subseteq \mathcal{P}\).
    Since \(h_1, \dots, h_k\) are pairwise connected, the vertices \((h_l)_+\) lie in the same left coset \(bH_\nu\), for all \(l = 1, \dots, k\), thus
    \begin{equation}
    \label{eq:hi+_in_Pb}
        (h_l)_+ \in Pb, \text{ for all } l = 1, \dots, k.
    \end{equation}

    We construct a new broken line \(p' = p_i' \dots p'_n r' t'_1 \dots t'_j\) in two steps. It will be used in conjunction with condition \descref{C5-m} to obtain a path representative of \(g\) with lesser type than \(p\).

    \medskip
    \noindent    
    \underline{\emph{Step 1:}}
        we start by constructing geodesic paths \(p'_i,p'_{i+1}, \dots, p'_n\) and $r'$ by using condition \descref{C3} and applying Lemmas~\ref{lem:end_sides_constr} and \ref{lem:(c3)->vertex_constr}, in exactly the same way as in the proof of Proposition~\ref{prop:multitracking_path}. The newly constructed paths will have the following properties:
        \begin{itemize}
            \item \(\elem{p'_i} \in Q_P\), \(\elem{p'_l} \in Q'_P \cup R'_P\), for each \(l = i+1, \dots, n\), and \(\elem{r'} \in R_P\);
            \item \(d_X((p'_i)_-,(h_1)_-) \leq D\) and \((p'_i)_+ = (p_i)_+=1\);
            \item \((p'_l)_+ = (p'_{l+1})_-\), for \(l = i, \dots, n-1\);
            \item \(r'_- = (p'_n)_+\) and \(d_X(r'_+, (h_{k-j})_+) \leq D\);
            \item \((p'_l)_+^{-1}(p_l)_+ \in S\), for \(l = i+1, \dots, n\).
        \end{itemize}
    \medskip
    \noindent    
    \underline{\emph{Step 2:}} 
        we now construct geodesic paths \(t'_1, \dots, t'_{j}\) as follows. Set $u_1=(r')_+$ and observe that since $(p_{i+1}')_-=(p_i')_+=1$, we have
        \[
            u_1=\elem{p_{i+1}'} \dots \elem{p_{n}'} \elem{r'} \in P.
        \]
        By Lemma~\ref{lem:multicoset_bdd_cusps}, we have \(d_X((h_{k-j})_+,(t_1)_-) =d_X((h_{k-j})_+,r_+) \leq C_1\). Moreover, by Step 1 above, \(d_X(u_1, (h_{k-j})_+) \leq D\). Therefore
        \begin{equation*} 
        \label{eq:dist_from_u_to_r+}
            d_X(u_1,(t_1)_-) \leq C_1 + D = L_1.
        \end{equation*}
        Together with \eqref{eq:hi+_in_Pb} this allows us to apply Proposition~\ref{prop:backtracking_in_t-tail} to find  elements $a_1,\dots,a_j \in G$ and a broken line $t_1' t_2' \dots t_j'$ in $\ga$ such that

        \begin{itemize}
            \item $(t_1')_-=u_1$ and $d_X((t_j')_+,(h_k)_+) \le L_{j+1}$;
            \item $a_{l+1} \in a_l T_l$, for $l=1,\dots,j-1$;
            \item $a_l=(t_l')_-^{-1} (t_l)_-$ and $\abs{a_l}_X \le L_l$, for each $l=1,\dots,j$;
            \item $\elem{t'_l} \in T_l^{a_l} \cap P$, for all $l=1,\dots,j$.
        \end{itemize}

        Observe that
        \begin{equation}
        \label{eq:a1_in_R}
            \begin{aligned}
                a_1 &= (t_1')_-^{-1} (t_1)_-=u_1^{-1}r_+= ({r'_+}^{-1} r'_-) ({r'_-}^{-1} r_-) (r_-^{-1} r_+) \\ 
                & = \elem{r'}^{-1} (p'_n)_+^{-1}(p_n)_+ \elem{r} \in R_P S R \subseteq R.   
            \end{aligned}
        \end{equation}

    We now define a new broken line $p'$ in $\ga$ by \[p'=p_i' \dots p_n' r' t_1' \dots t_j'.\] Note that \(d_X(p'_-,(h_1)_-) \leq D\), \(d_X(p'_+,(h_k)_+) \leq L_{j+1}\) and \(\elem{p'} \in \elem{p'_i} \langle Q'_P, R'_P \rangle R_P (T_1^{a_1})_P \dots (T_j^{a_j})_P\), where $\elem{p'_i} \in Q_P$. Moreover, $T_l^{a_l} \in \mathcal{U}_1 \subseteq \mathcal{U}$, for each $l=1,\dots,j$.

    Now, suppose, for a contradiction, that \(d_X((h_1)_-,(h_k)_+) < \zeta\).
    Then, by the triangle inequality,
    \[
        \abs{p'}_X \leq D + \zeta + L_{j+1} < C_2.
    \]
    Thus, as $C \ge C_2$, we can apply \descref{C5-m} to deduce that \(\elem{p'} \in \elem{p_i'}Q'_P R_P(T_1^{a_1})_P \dots (T_j^{a_j})_P\).
    Therefore, there exist elements $z \in \elem{p_i'}Q'_P$, \(x \in R\) and \(y_l \in T_l\), $l=1,\dots,j$, such that \(\elem{p'} = z x y_1^{a_1} \dots y_j^{a_j}\).
    By construction, for each \(l = 1, \dots, j-1\) there is \(b_l \in T_l\) such that \(a_{l+1} = a_l b_l\), and so
    $a_l^{-1} a_{l+1} = b_l \in T_l$.
    Recalling that \((p_i')_+ = (p_i)_+ = 1\), the above yields
    \begin{equation}
    \label{eq:end_of_p'}
        \elem{p'} = z x y_1^{a_1} \dots y_j^{a_j} = z x a_1 y_1 b_1 y_2 b_2 \dots b_{j-1} y_j a_j^{-1}.
    \end{equation}

    Let $\alpha$ and $\beta$ be geodesic segments in $\ga$ connecting $(p_i)_-$ with $(p_i')_-$ and $(t'_j)_+$ with $(t_j)_+$ respectively. Since $(p_i)_+=(p_i')_+$, we have
    \begin{equation} 
    \label{eq:elem_of_alpha}
        \elem{\alpha}=  (p_i)_-^{-1} (p_i')_-=(p_i)_-^{-1} (p_i)_+ (p_i')_+^{-1} (p_i')_-=
        \elem{p_i}\, \elem{p_i'}^{-1}.
    \end{equation}
    On the other hand, it follows from the construction that
    \begin{equation} 
    \label{eq:elem_of_beta}
        \elem{\beta}=(t_j')_+^{-1} (t_j)_+=\elem{t_{j}'}^{-1} (t_j')_-^{-1} (t_j)_- \elem{t_j}=\elem{t_{j}'}^{-1} a_j \elem{t_j} \in T_j^{a_j} a_j T_j=a_j T_j.
    \end{equation}
    The broken lines $p$ and $\gamma=qp_1 \dots p_{i-1} \alpha p' \beta t_{j+1} \dots t_m$ have the same endpoints in $\ga$.
    Hence, in view of \eqref{eq:elem_of_alpha} and \eqref{eq:end_of_p'}, we obtain
    \begin{equation}
    \label{eq:new_decomp_for_g}
        \begin{aligned}
            g &=\elem{p} =\elem{\gamma} = \elem{q}\, \elem{p_1} \dots \elem{p_{i-1}}\, \elem{\alpha} \,\elem{p'} \, \elem{\beta} \, \elem{t_{j+1}} \dots \elem{t_m} \\
            &= \elem{q}\, \elem{p_1} \dots \elem{p_{i-1}} (\elem{p_i} \, \elem{p'_i}^{-1}) (z x a_1 y_1 b_1 y_2 b_2 \dots b_{j-1} y_j a_j^{-1}) \elem{\beta} \, \elem{t_{j+1}} \dots \elem{t_m} \\
            &= \elem{q}\, \elem{p_1} \dots  \elem{p_{i-1}} (\elem{p_i} \, \elem{p'_i}^{-1} z) (x a_1) (y_1 b_1) \dots (y_{j-1} b_{j-1}) (y_j a_j^{-1} \elem{\beta}) \elem{t_{j+1}} \dots \elem{t_m}.
        \end{aligned} 
    \end{equation}

    Recall that $\elem{q} \in Q$, $\elem{p_1}, \dots, \elem{p_{i-1}} \in Q' \cup R'$ and $\elem{t_l} \in T_l$, for $l=j+1,\dots,m$, by definition. 
    On the other hand, $\elem{p_i} \, \elem{p'_i}^{-1} z \in Q'\, \elem{p'_i}^{-1}\, \elem{p'_i} \, Q'_P=Q'$,
    \(xa_1 \in R\) by \eqref{eq:a1_in_R} and \(y_l b_l \in T_l\), for each \(l = 1, \dots, j-1\), by construction. 
    Finally, \(y_j a_j^{-1} \, \elem{\beta} \in T_j a_j^{-1} a_jT_j= T_j\) by \eqref{eq:elem_of_beta}.
    Thus, following Remark~\ref{rem:path_rep_from_product}, the  product decomposition \eqref{eq:new_decomp_for_g} for \(g\) gives us a path representative of \(g\) with width \(i < n\).
    This contradicts the minimality of the type of \(p\), so the proposition is proved.
\end{proof}

Condition \descref{C2-m} can be used deal with another case of multiple backtracking.

\begin{proposition}
\label{prop:C2-m_multitracking}
    For every \(\zeta \geq 0\) there is a constant \(B_1 = B_1(\zeta) \geq 0\) such that if \(Q' \leqslant Q\) and \(R' \leqslant R\) satisfy condition \descref{C2-m} with constant \(B \geq B_1\) then the following is true.

    Let \(p = q p_1 \dots p_n r t_1 \dots t_m\) be a minimal type path representative for some \(g \in Q \langle Q', R' \rangle R T_1 \dots T_m\), with \(g \notin Q R T_1 \dots T_m\), and let  $\nu \in \Nu$.
    Suppose that \(p\) has multiple backtracking along \(H_\nu\)-components \(h_1, \dots, h_k\) of its segments  such that
    \begin{itemize}
        \item \(h_1\) is an \(H_\nu\)-component of \(p_i\), for some $i \in \{ 1, \dots, n-1\}$, with \(\elem{p_i} \in R'\);
        \item \(h_k\) is an \(H_\nu\)-component of \(t_j\) for some \(j \in \{ 1, \dots, m\}\).
    \end{itemize}
    Then \(d_X((h_1)_-,(h_k)_+) \geq \zeta\).
\end{proposition}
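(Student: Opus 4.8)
\textbf{Proof proposal for Proposition~\ref{prop:C2-m_multitracking}.}

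The plan is to mirror the argument of Proposition~\ref{prop:C5n_multiracking}, but replace the use of condition \descref{C5-m} with condition \descref{C2-m}. As in that proof, we may translate so that $(p_i)_+ = 1$; set $b = (h_1)_+$, so that $|b|_X \le C_1$ by Lemma~\ref{lem:multicoset_bdd_cusps}, and put $P = bH_\nu b^{-1} \in \mathcal{P}_1$. Since $h_1,\dots,h_k$ are pairwise connected, $(h_l)_+ \in Pb$ for every $l$. The constant should be taken as
\[
B_1(\zeta) = \max\{\, 2C_1,\ D + \zeta + L_j \mid j = 1,\dots, m+1 \,\} + 1,
\]
exactly as in Proposition~\ref{prop:C5n_multiracking}. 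First I would construct the auxiliary broken line $p' = p_i' \dots p_n' r' t_1' \dots t_j'$ in two steps, just as before: Step~1 uses Lemmas~\ref{lem:end_sides_constr} and \ref{lem:(c3)->vertex_constr} to build $p_i',\dots,p_n',r'$ with $\elem{p_i'} \in R_P$ (note the change from $Q_P$ to $R_P$, since now $\elem{p_i} \in R'$!), $\elem{p_l'} \in Q_P' \cup R_P'$, $\elem{r'} \in R_P$, $(p_i')_+ = 1$, $d_X((p_i')_-,(h_1)_-) \le D$, and $(p_l')_+^{-1}(p_l)_+ \in S$; Step~2 uses Proposition~\ref{prop:backtracking_in_t-tail} to build $t_1',\dots,t_j'$ together with elements $a_1,\dots,a_j \in G$ with $|a_l|_X \le L_l$, $a_{l+1} \in a_l T_l$, $\elem{t_l'} \in T_l^{a_l} \cap P$, and $d_X((t_j')_+,(h_k)_+) \le L_{j+1}$. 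The key computation to redo is $a_1 = (t_1')_-^{-1}(t_1)_- = \elem{r'}^{-1}(p_n')_+^{-1}(p_n)_+\elem{r} \in R_P\, S\, R \subseteq R$, which goes through unchanged.

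The main difference is in the final contradiction step. We have $\elem{p'} \in \elem{p_i'}\langle Q_P',R_P'\rangle R_P (T_1^{a_1})_P \dots (T_j^{a_j})_P$ with $\elem{p_i'} \in R_P$. Now instead of invoking \descref{C5-m}, I observe that $\elem{p_i'} \langle Q_P',R_P'\rangle \subseteq R\langle Q',R'\rangle R$ and $R_P (T_l^{a_l})_P \subseteq R\, T_l^{a_l}$, so that, after expanding the conjugates as in \eqref{eq:end_of_p'}, $\elem{p'}$ lies in a product of the shape $R\langle Q',R'\rangle R\, T_1 \dots T_j$ up to absorbing the $a_l$ into the $R$ and $T_l$ factors. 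More precisely, writing out $T_l^{a_l} = a_l T_l a_l^{-1}$ and using $a_l^{-1}a_{l+1} \in T_l$ telescopically exactly as in \eqref{eq:end_of_p'}, we get $\elem{p'} = w\, a_1\, y_1 b_1 y_2 b_2 \cdots b_{j-1} y_j\, a_j^{-1}$ with $w \in R\langle Q',R'\rangle R$ (using $\elem{p_i'} \in R_P$), $a_1 \in R$, $y_l b_l \in T_l$ for $l < j$, and $y_j \in T_j$, so $w a_1 \in R\langle Q',R'\rangle R$ and hence $\elem{p'} a_j \in R\langle Q',R'\rangle R\, T_1 \cdots T_j$. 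Supposing $d_X((h_1)_-,(h_k)_+) < \zeta$, the triangle inequality gives $|\elem{p'}|_X \le D + \zeta + L_{j+1} < B_1 \le B$, and, since $|a_j|_X \le L_j$ is also bounded, $|\elem{p'} a_j|_X$ is bounded by a constant that we fold into the definition of $B_1$ (enlarging it if necessary by $\max_j L_j$). Then \descref{C2-m} forces $\elem{p'} a_j \in R\, T_1 \cdots T_j$, i.e. $\elem{p'} \in R\, T_1 \cdots T_j a_j^{-1}$. Re-expanding the conjugates to reabsorb $a_j$, this says $\elem{p'} \in R\, T_1^{a_1} \cdots T_j^{a_j}$ in the appropriate sense, and substituting into the expression \eqref{eq:new_decomp_for_g} for $g = \elem{\gamma}$ with $\gamma = q p_1 \cdots p_{i-1}\alpha p'\beta t_{j+1}\cdots t_m$ yields a product decomposition of $g$ of the form $\elem{q}\,\elem{p_1}\cdots\elem{p_{i-1}}\cdot(\text{element of }R)\cdot(\text{elements of }T_1,\dots,T_m)$; here the factor $\elem{p_i}\elem{p_i'}^{-1}\cdot(\text{something in }R) \in R'\, R_P\, R = R$ absorbs into a single $R$-type factor since $\elem{p_i}\in R'$ and this is now permitted, and $\beta$ absorbs into the last $T_j$ factor exactly as in \eqref{eq:elem_of_beta}. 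The point is that the segments $q, p_1,\dots,p_{i-1}$ contribute width $i-1 < n$ (indeed $p_i$ itself no longer needs to be a separate $\langle Q',R'\rangle$-segment because it got merged into the $R$-factor), so by Remark~\ref{rem:path_rep_from_product} we obtain a path representative of $g$ of width strictly less than $n$, contradicting minimality of the type of $p$.

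The step I expect to require the most care is the final bookkeeping: tracking exactly which factors get absorbed where, and in particular verifying that after absorbing $\elem{p_i}$ (which represents an element of $R'$, not of $Q' \cup R'$ in the "middle" sense) into the trailing $R$-factor, the resulting decomposition genuinely has width $< n$ rather than merely $\le n$ with the same length. One has to be slightly careful that the decomposition $g = \elem{q}\,\elem{p_1}\cdots\elem{p_{i-1}}\cdot r''\cdot u_1\cdots u_m$ with $r'' \in R$, $u_l \in T_l$ has the $\langle Q',R'\rangle$-part of width $i-1$: the factors $\elem{p_1},\dots,\elem{p_{i-1}}$ all lie in $Q' \cup R'$, and since $i-1 < n$ this strictly decreases the width. (If $i = 1$ the $\langle Q',R'\rangle$-part is empty and $\elem{q}\,r'' \in Q\,R \subseteq QR$, which would put $g \in QR\,T_1\cdots T_m$, contradicting the hypothesis $g \notin QRT_1\cdots T_m$ directly.) Apart from this, every estimate is a verbatim copy of Proposition~\ref{prop:C5n_multiracking} with $Q'$ and $R'$ swapped in the relevant places and \descref{C5-m} replaced by \descref{C2-m}, so I would state the proof concisely, referring back to Proposition~\ref{prop:C5n_multiracking} for the parts that are identical.
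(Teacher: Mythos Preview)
Your approach would ultimately reach the same contradiction (a path representative of width $i-1 < n$), but it is far more elaborate than needed and, more importantly, it does not prove the proposition as stated. The statement of Proposition~\ref{prop:C2-m_multitracking} assumes only condition \descref{C2-m}; your Step~1 invokes Lemma~\ref{lem:(c3)->vertex_constr}, which explicitly requires conditions \descref{C1} and \descref{C3} with a family $\mathcal{P} \supseteq \mathcal{P}_1$, and your Step~2 requires the family $\mathcal{U}_1$ from Notation~\ref{not:Li_def}. So your argument establishes only a weaker result under stronger hypotheses. (In the downstream application, Proposition~\ref{prop:multicoset_multitracking}, all of these extra conditions are indeed in force, so your version would suffice there --- but it is not a proof of the proposition you were given.)

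The paper's proof bypasses the entire parabolic construction. The key point is that, unlike \descref{C5-m}, condition \descref{C2-m} is already ``global'': it bounds elements of $R\langle Q',R'\rangle R T_1 \dots T_j$ directly in $G$, so there is no need to first push everything into a parabolic subgroup $P$. One simply uses the relative quasiconvexity of $R$ and $T_j$ (standing hypotheses from Convention~\ref{conv:main_multicoset}) to find geodesics $p_i'$ and $t_j'$ with $\elem{p_i'} \in R$, $\elem{t_j'} \in T_j$, $(p_i')_+ = (p_i)_+$, $(t_j')_- = (t_j)_-$, and endpoints within $\varepsilon$ of $(h_1)_-$, $(h_k)_+$. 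Then the broken line $p' = p_i' p_{i+1} \dots p_n r t_1 \dots t_{j-1} t_j'$ has $\elem{p'} \in R\langle Q',R'\rangle R T_1 \dots T_j$ and $|p'|_X \le \zeta + 2\varepsilon$; taking $B_1 = \zeta + 2\varepsilon + 1$ and applying \descref{C2-m} gives $\elem{p'} \in R T_1 \dots T_j$ in one stroke, with no telescoping of conjugates and no auxiliary elements $a_l$. Substituting back produces a path representative of $g$ of width $i-1$. This is exactly why \descref{C2-m} is formulated in $G$ rather than inside the parabolics: it is designed so that the heavy machinery of Proposition~\ref{prop:C5n_multiracking} can be avoided here.
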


\begin{proof}
    Take \(B_1 = \zeta + 2 \varepsilon + 1\), where \(\varepsilon \geq 0\) is a quasiconvexity constant for the subgroups \(R\) and \(T_1, \dots, T_m\) (as in Convention~\ref{conv:main_multicoset}), and let \(B \geq B_1\).
    Suppose, for a contradiction, that \(d_X((h_1)_-,(h_k)_+) < \zeta\).

    Since $\elem{p_i} \in R'$, we have \(d_X((h_1)_-,(p_i)_+ \,R) \leq \varepsilon\), by the quasiconvexity of \(R\).
    Therefore there is a geodesic path \(p'_i\) in $\ga$, such that \(\elem{p'_i} \in R\), \(d_X((p'_i)_-,(h_1)_-) \leq \varepsilon\) and \((p'_i)_+ = (p_i)_+\).
    Similarly, using the quasiconvexity of \(T_j\), we can find a geodesic path \(t'_j\) in $\ga$, such that \(\elem{t'_j} \in T_j\), \((t'_j)_- = (t_j)_-\) and \(d_X((t'_j)_+,(h_k)_+) \leq \varepsilon\).
    Let \(p'\) be the broken line \(p'_i p_{i+1} \dots p_n r t_1 \dots t_{j-1} t'_j\).

    Observe that \(\elem{p'} \in R \langle Q', R' \rangle R T_1 \dots T_j\) and, by the triangle inequality, \(\abs{p'}_X \leq \zeta + 2 \varepsilon\).
    Therefore we can apply condition \descref{C2-m} to \(\elem{p'}\) to find that \(\elem{p'} = x y_1 \dots y_j\), where \(x \in R\) and \(y_l \in T_l\), for each \(l = 1, \dots, j\).

    The broken lines $p$ and $\gamma=qp_1 \dots p_i {p_i'}^{-1} p' {t'_j}^{-1} t_j \dots t_m$ have the same endpoints, hence
    \begin{equation}
    \label{eq:new_prod_for_g}
        \begin{aligned}
            g &=\elem{p}=\elem{\gamma} = \elem{q} \,\elem{p_1} \dots \elem{p_{i}}\, \elem{p'_i}^{-1} \, \elem{p'} \, \elem{t_j'}^{-1}\, \elem{t_j} \dots \elem{t_m}\\ 
            &= \elem{q} \,\elem{p_1} \dots \elem{p_{i-1}} (\elem{p_{i}}\, \elem{p'_i}^{-1} \, x) y_1 \dots y_{j-1} (y_j\,\elem{t_j'}^{-1}\, \elem{t_j}) \elem{t_{j+1}} \dots \elem{t_m}.
        \end{aligned}
    \end{equation}

    Note that $\elem{p_{i}} \, \elem{p'_i}^{-1} x \in R$ and $y_j\, \elem{t_j'}^{-1} \, \elem{t_j} \in T_j$. 
    In view of Remark~\ref{rem:path_rep_from_product}, the product decomposition of $g$ from \eqref{eq:new_prod_for_g} can be used to obtain a path representative $p''$ of $g$ with width $i-1<n$. Thus the type of \(p''\) is strictly less than the type of \(p\), which yields the desired contradiction.
\end{proof}

\section{Multiple backtracking in product path representatives: general case}
\label{sec:mcs_multitracking2}
Propositions~\ref{prop:long_multitracking}, \ref{prop:C5n_multiracking} and \ref{prop:C2-m_multitracking} above show that for $g  \in Q \langle Q', R' \rangle R T_1 \dots T_m \setminus Q R T_1 \dots T_m$, instances of multiple backtracking in a minimal type path representative $p=qp_1\dots p_n r t_1 \dots t_m$, that start at a component of $q$, $p_1$,$\dots$, or $p_{n-1}$, are long.
We cannot draw the same conclusion in all cases since we have no control over the elements $\elem{r}, \elem{t_1},\dots,\elem{t_m}$. 
Therefore in this section we use a different approach.
Proposition~\ref{prop:multicoset_multitracking} below shows that in the remaining cases we can find a path representative with one of the segments from the tail section $rt_1 \dots t_m$ being short with respect to the proper metric \(d_X\). 
Note that the main constant $\xi_0=\xi_0(Q',\zeta)$, produced in this proposition, will depend on $Q'$ (unlike the constants $C_1$, $D$, $C_2(\zeta)$, $B_1(\zeta)$, $\dots$, defined previously) but will be independent of $R'$.

As before, we work under Convention~\ref{conv:main_multicoset}. 
We will also keep using Notation~\ref{not:C_1-P_1} and \ref{not:Li_def}. Let us start with the following elementary observation.

\begin{lemma}
\label{lem:shorten_tail}
    For any \(\zeta \geq 0\) and any given subsets \(A_1, \dots, A_k \subseteq G\), $k \ge 1$, there is a constant \(\xi = \xi(\zeta, A_1, \dots, A_k) \geq 0\) such that if \(g \in A_1 \dots A_k\) and \(\abs{g}_X \leq \zeta\), then there exist \(a_1 \in A_1\),\(\dots\), \(a_k \in A_k\) such that \(g = a_1 \dots a_k\) and \(\abs{a_i}_X \leq \xi\), for all \(i \in \{1, \dots, k\}\).
\end{lemma}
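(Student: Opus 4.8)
\textbf{Proof plan for Lemma~\ref{lem:shorten_tail}.}

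The plan is to prove the statement by induction on $k$. The key point is that the ball $N_X(1,\zeta) = \{ g \in G \mid \abs{g}_X \leq \zeta \}$ is a finite set, since $X$ is finite and $d_X$ is proper. So there are only finitely many elements $g \in A_1 \dots A_k$ with $\abs{g}_X \leq \zeta$, and for each of them we simply fix, once and for all, a decomposition $g = a_1 \dots a_k$ with $a_i \in A_i$; then we take $\xi$ to be the maximum of $\abs{a_i}_X$ over this finite set of elements and the finitely many decompositions chosen. This already proves the lemma without any induction, but let me spell out the argument so it is clearly valid.

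First I would observe that the set $W = (A_1 \dots A_k) \cap N_X(1,\zeta)$ is finite: indeed $N_X(1,\zeta)$ is finite because $X$ generates $G$ and hence $d_X$ is a proper metric (every ball is finite). For each $g \in W$, by definition of the product $A_1 \dots A_k$ there is at least one tuple $(a_1^{(g)}, \dots, a_k^{(g)})$ with $a_i^{(g)} \in A_i$ and $g = a_1^{(g)} \dots a_k^{(g)}$; choose one such tuple for each $g \in W$ (this is a choice over a finite set, so no issues arise). Now set
\[
    \xi = \xi(\zeta, A_1, \dots, A_k) = \max \Bigl\{ \abs{a_i^{(g)}}_X \,\Big|\, g \in W,\ i \in \{1, \dots, k\} \Bigr\},
\]
with the convention that $\xi = 0$ if $W = \emptyset$. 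Then, given any $g \in A_1 \dots A_k$ with $\abs{g}_X \leq \zeta$, we have $g \in W$, so the chosen decomposition $g = a_1^{(g)} \dots a_k^{(g)}$ has each factor satisfying $\abs{a_i^{(g)}}_X \leq \xi$, as required.

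There is essentially no obstacle here — the whole content is the finiteness of balls in the word metric with respect to a finite generating set. The only mild subtlety to record is that $\xi$ genuinely depends on the chosen decompositions and on the subsets $A_1, \dots, A_k$, not merely on $\zeta$, which is exactly what the statement allows. I would also note (for use later in the paper) that if each $A_i$ is a subgroup or a coset one can be more explicit, but for the lemma as stated the crude finiteness argument suffices.
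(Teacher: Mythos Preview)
Your proof is correct and essentially identical to the paper's own argument: both fix, for each of the finitely many $g$ in the ball of radius $\zeta$ lying in $A_1\dots A_k$, one decomposition and take $\xi$ to be the maximum length of the chosen factors. The mention of induction is unnecessary (as you yourself note), but the core argument matches the paper exactly.
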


\begin{proof}
    For each \(g \in A_1 \dots A_k\) fix some elements \(a_{1,g} \in A_1, \dots, a_{k,g} \in A_k\) such that \(g = a_{1,g} \dots a_{k,g}\).
    Now we can define
    \[
        \xi = \max \Big\{ \abs{a_{1,g}}_X, \dots, \abs{a_{k,g}}_X \, \Big| \, g \in A_1 \dots A_k, ~\abs{g}_X \leq \zeta \Big\} < \infty.
    \]
    Clearly $\xi$ has the required property.
\end{proof}

\begin{definition}[Tail height] 
\label{def:tail_height} 
    Suppose that $Q' \leqslant Q$, $R' \leqslant R$ and $p= q p_1 \dots p_n r t_1 \dots t_m$ is a path representative of an element $g \in Q \langle Q', R' \rangle R T_1 \dots T_m$. 
    The \emph{tail height} of $p$, $th_X(p)$, is defined as 
    \[
        th_X(p)=   \min\{ \abs{r}_X, \abs{t_1}_X, \dots, \abs{t_{m-1}}_X\} .
    \]
\end{definition}

\begin{proposition}
\label{prop:multicoset_multitracking}
    For each \(\zeta \geq 0\), let $C_2=C_2(\zeta)$ be the larger of the two constants provided by Propositions~\ref{prop:long_multitracking} and \ref{prop:C5n_multiracking}, and let $B_1=B_1(\zeta)$ be given by Proposition~\ref{prop:C2-m_multitracking}. Set $B_2 = B_2(\zeta) =\max\{C_2(\zeta),B_1(\zeta)\}$.

    Suppose that $Q' \leqslant Q$ is a relatively quasiconvex subgroup of $G$ containing $S=Q \cap R$. Then there exists a constant \(\xi_0 = \xi_0(Q',\zeta) \geq 0\) such that if \(R'\leqslant R\) and $Q'$, $R'$ satisfy conditions \descref{C1}-\descref{C4}, \descref{C2-m} and \descref{C5-m}, with constants \(B \geq B_2\) and \(C \geq C_2\) and collections of subgroups \(\mathcal{P} \supseteq \mathcal{P}_1\) and \(\mathcal{U} \supseteq \mathcal{U}_1\), then the following is true.

    Let \(p = q p_1 \dots p_n r t_1 \dots t_m\) be a minimal type path representative for some \(g \in Q \langle Q', R' \rangle R T_1 \dots T_m\), with \(g \notin Q R T_1 \dots T_m\).
    Suppose that \(p\) has multiple backtracking along \(\mathcal{H}\)-components \(h_1, \dots, h_k\) of its segments, with $k \ge 3$ and \(d_X((h_1)_-,(h_k)_+) \leq \zeta\).
    Then $m \ge 1$ and there is a path representative \(p'\) for \(g\) (not necessarily of minimal type) such that $th_X(p') \le \xi_0$.
\end{proposition}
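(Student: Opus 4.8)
The aim is to show that if a minimal type path representative $p = q p_1 \dots p_n r t_1 \dots t_m$ of $g \in Q\langle Q',R'\rangle RT_1\dots T_m \setminus QRT_1\dots T_m$ contains an instance of multiple backtracking along $\mathcal H$-components $h_1,\dots,h_k$ (with $k\ge 3$) whose endpoints are $\zeta$-close in $d_X$, then $m\ge 1$ and we can produce \emph{some} path representative $p'$ of $g$ with small tail height. First I would observe that, by Propositions~\ref{prop:long_multitracking}, \ref{prop:C5n_multiracking} and \ref{prop:C2-m_multitracking} applied with the chosen constants $B\ge B_2$, $C\ge C_2$, the component $h_1$ cannot be an $\mathcal H$-component of $q$, $p_1,\dots,p_{n-1}$ (any such instance would force $d_X((h_1)_-,(h_k)_+)\ge\zeta$, contradicting the hypothesis). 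Similarly $h_k$ cannot be an $\mathcal H$-component of $p_1,\dots,p_{n-1}$ by symmetric reasoning (reading the inverse path), and it cannot lie in $p_n$ or $q$ without also forcing one of these contradictions. So $h_1$ is an $\mathcal H$-component of $p_n$, $r$, or one of $t_1,\dots,t_m$, and $h_k$ is an $\mathcal H$-component of $r$ or one of $t_1,\dots,t_m$; in particular the backtracking lives entirely in the tail $p_n r t_1\dots t_m$. If $m=0$ the tail is just $p_n r$ and a short argument (the same shortening-type reasoning using \descref{C1} and relative quasiconvexity of $Q$, $R$) would reduce the width of $p$ — contradicting minimality — so $m\ge 1$.

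Next, with $m\ge 1$ established, I would use Proposition~\ref{prop:backtracking_in_t-tail} (and, if $h_1$ lies in $p_n$ or $r$, a short preliminary step to get into a parabolic coset) to construct a broken line replacing the portion of the tail carrying $h_1,\dots,h_k$ by a path $t_1'\dots t_j'$ whose segments have labels in conjugates $T_l^{a_l}\cap P$ of the tail subgroups, where $P\in\mathcal P_1$ is the relevant parabolic. The hypothesis $d_X((h_1)_-,(h_k)_+)\le\zeta$, combined with the bounds $d_X((t_1')_-,(h_1)_-)\le L_1$ and $d_X((t_j')_+,(h_j)_+)\le L_{j+1}$ from that proposition, shows the replaced broken line has total $d_X$-length bounded by a constant depending only on $\zeta$ and the $L_i$. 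Concatenating this short replacement with the unchanged initial segment $q p_1 \dots p_n$ (or $q p_1\dots p_{n-1}p_n'$, after a short modification of $p_n$ and $r$ to reach $P$), together with geodesic correction segments of bounded length coming from Lemma~\ref{lem:shorten_tail}, yields a path representative $p'$ of $g$ whose tail segments all have $d_X$-length bounded by a single constant $\xi_0$ depending only on $Q'$ and $\zeta$. The dependence on $Q'$ (but not $R'$) enters through the coset representatives of $Q'_P$ in $Q_P$ needed to invoke \descref{C5-m}, exactly as in Remark~\ref{rem:sep->metric}; applying Lemma~\ref{lem:shorten_tail} to the finitely many subsets obtained from these representatives and the conjugates $T_l^{a_l}$ produces $\xi_0$.

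The main obstacle, and the step requiring the most care, is the bookkeeping of \emph{which} case we are in: depending on whether $h_1$ lies in $p_n$, in $r$, or in some $t_i$, and whether $\elem{p_n}\in Q'$ or $\elem{p_n}\in R'$, the argument for getting into a parabolic coset and the precise form of the replacement path differ. In each case one must verify that the label of the new path $p'$ genuinely lies in $Q\langle Q',R'\rangle R T_1\dots T_m$, so that $p'$ is a legitimate path representative — this uses \descref{C1} repeatedly to absorb elements of $S$ and the identities $a_{l+1}\in a_l T_l$ from Proposition~\ref{prop:backtracking_in_t-tail}. I would organise this as a short case analysis, noting that the cases where the backtracking begins strictly before $t_m$ are handled uniformly by Proposition~\ref{prop:backtracking_in_t-tail}, and the degenerate subcases (e.g. $h_1$ and $h_k$ both in $r$, or $j=m$) only require the already-established Lemmas~\ref{lem:end_sides_constr}, \ref{lem:(c3)->vertex_constr} and \ref{lem:shorten_tail}. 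Crucially, we do \emph{not} claim $p'$ has minimal type — we only need the tail-height bound, which is what subsequent sections will feed back into the shortcutting machinery.
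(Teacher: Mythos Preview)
Your case elimination is essentially correct: Propositions~\ref{prop:long_multitracking}, \ref{prop:C5n_multiracking} and \ref{prop:C2-m_multitracking} do force $h_1$ into the tail $p_n,r,t_1,\dots,t_m$ and $h_k$ into some $t_j$, so $m\ge 1$. (A minor point: your ``symmetric reasoning via the inverse path'' for locating $h_k$ is not quite right, since the inverse is not a path representative of the same form; the paper instead notes that if $h_k$ lies in $p_2,\dots,p_n$ or $r$ then the argument of Proposition~\ref{prop:long_multitracking} already applies.)

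Where your proposal diverges substantially is in the construction of $p'$. The paper does \emph{not} use Proposition~\ref{prop:backtracking_in_t-tail} or condition \descref{C5-m} in the remaining case at all. It argues directly from relative quasiconvexity: if $h_1$ lies in $p_n$ (necessarily $\elem{p_n}\in Q'$ by Remark~\ref{rem:multicoset_path_reps}), quasiconvexity of $Q'$ gives a geodesic $\alpha$ with $\elem\alpha\in Q'$, $\alpha_+=(p_n)_+$ and $d_X(\alpha_-,(h_1)_-)\le\varepsilon'$, where $\varepsilon'$ is a quasiconvexity constant for $Q'$; similarly quasiconvexity of $T_j$ gives $\beta$ with $\elem\beta\in T_j$ and $d_X(\beta_+,(h_k)_+)\le\varepsilon$. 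Then $\gamma=\alpha\,r\,t_1\dots t_{j-1}\,\beta$ has $\elem\gamma\in Q'RT_1\dots T_j$ and $|\gamma|_X\le\varepsilon'+\zeta+\varepsilon$, so a single application of Lemma~\ref{lem:shorten_tail} to the product $Q'RT_1\dots T_j$ yields $x\in Q'$, $y\in R$, $z_l\in T_l$ with $|y|_X\le\xi_0$, and the obvious rewriting of $g$ gives $p'$ with $th_X(p')\le\xi_0$. The cases $h_1$ in $r$ or in $t_i$ are identical with $R$ or $T_i$ replacing $Q'$.

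In particular, the dependence of $\xi_0$ on $Q'$ enters only through the quasiconvexity constant $\varepsilon'$ (and hence through the finite ball in Lemma~\ref{lem:shorten_tail}), \emph{not} through coset representatives of $Q'_P$ in $Q_P$ or through \descref{C5-m}; indeed, in the surviving case there is no $\langle Q'_P,R'_P\rangle$ factor for \descref{C5-m} to act on. Your route via parabolic replacements and \descref{C5-m} might be salvageable, but it is considerably longer, and the stated source of the $Q'$-dependence is incorrect.
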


\begin{proof}
    Let  \(\varepsilon' \geq 0\) be a quasiconvexity constant for \(Q'\).
    Take \(\xi_0=\xi_0(Q',\zeta) \geq 0\) to be the maximum, taken over all indices $i$ and $j$ satisfying \(1 \leq i \leq j \leq m\), of the constants
    \[ \xi(\zeta + \varepsilon + \varepsilon',Q', R, T_1, \dots, T_j), ~~ \xi(\zeta + 2\varepsilon,R, T_1, \dots, T_j) \text{ and } \xi(\zeta + 2\varepsilon,T_i, \dots, T_j),
    \]
    obtained from Lemma~\ref{lem:shorten_tail}.

    Suppose that \(h_1, \dots, h_k\) are as in the statement, with $ d_X((h_1)_-,(h_k)_+) \leq \zeta$.
    There are four possible cases to consider, depending on the segments of \(p\) to which the \(\mathcal{H}\)-components \(h_1\) and \(h_k\) belong to.
    If \(h_k\) is an \(\mathcal{H}\)-component of one of the segments \(p_2, \dots, p_n\) or \(r\), then one obtains a contradiction to the minimality of type of \(p\) by following the same argument as in Proposition~\ref{prop:long_multitracking} (recall that \descref{C5-m} implies \descref{C5} by Remark~\ref{rem:C5m->C5}).

    If \(h_1\) is an \(\mathcal{H}\)-component of one of the segments \(q, p_1, \dots, p_{n-1}\) and \(h_k\) is an \(\mathcal{H}\)-component of one of the segments \(t_1, \dots, t_m\), we obtain a contradiction by applying either Proposition~\ref{prop:C5n_multiracking} or \ref{prop:C2-m_multitracking} (depending on whether \(h_1\) is a component of a segment of $p$ representing an element of \(Q\) or \(R\), respectively).

    It remains to consider the possibility when \(h_1\) is an \(\mathcal{H}\)-component of one of the segments \(p_n, r, t_1, \dots, t_m\). 
    It follows that \(h_k\) is an \(\mathcal{H}\)-component of \(t_j\), for some \(j \in \{ 1, \dots, m\}\), in particular $m \ge 1$.
    For simplicity we treat only the case when \(h_1\) is an \(\mathcal{H}\)-component of \(p_n\); the remaining cases can be dealt with similarly.

    Note that \(\elem{p_n} \in Q'\) by Remark~\ref{rem:multicoset_path_reps}.
    By the relative quasiconvexity of \(Q'\) and \(T_j\) there are geodesic paths \(\alpha\) and \(\beta\) in $\ga$ satisfying
    \begin{gather*}
        d_X(\alpha_-,(h_1)_-) \leq \varepsilon', ~ \alpha_+ = (p_n)_+ \text{ and } \elem{\alpha} \in Q',\\
        \beta_- = (t_j)_-, ~d_X(\beta_+,(h_k)_+) \leq \varepsilon  ~ \text{ and } \elem{\beta} \in T_j.
    \end{gather*}
    Let \(\gamma = \alpha r t_1 \dots t_{j-1} \beta\). Observe that \(\elem{\gamma} \in Q' R T_1 \dots T_j\) and, by the triangle inequality,
    \[
        \abs{\gamma}_X=d_X(\alpha_-,\beta_+) \leq \varepsilon'+\zeta + \varepsilon.
    \]
    Thus, applying Lemma~\ref{lem:shorten_tail}, we can find elements \(x \in Q'\), \(y \in R\), \(z_1 \in T_1\), \(\dots\), \(z_j \in T_j\) such that \(\elem{\gamma} = x y z_1 \dots z_j\) and
    \begin{equation}
    \label{eq:new_path_rep_bds}
        \abs{y}_X \le \xi_0.
    \end{equation}

    Therefore
    \begin{equation}
    \label{eq:prod_decomp_for_p'}
        \begin{aligned}
            g &= \elem{p}=\elem{q}\, \elem{p_1} \dots \elem{p_n} (\elem{\alpha}^{-1}\,\elem{\alpha})\elem{r} \, \elem{t_1} \dots \elem{t_{j-1}} (\elem{\beta} \,\elem{\beta}^{-1}) \elem{t_j} \dots  \elem{t_m} \\
            &= \elem{q} \, \elem{p_1} \dots \elem{p_n} \,\elem{\alpha}^{-1} \, \elem{\gamma} \elem{\beta}^{-1} \, \elem{t_{j}} \dots \elem{t_m} \\
            &=\elem{q} \, \elem{p_1} \dots \elem{p_{n-1}} (\elem{p_n} \, \elem{\alpha}^{-1} \, x) y z_1 \dots z_{j-1} (z_j \, \elem{\beta}^{-1} \, \elem{t_j}) \elem{t_{j+1}} \dots \elem{t_m}.
        \end{aligned}
    \end{equation}

    Following Remark~\ref{rem:path_rep_from_product}, the product decomposition \eqref{eq:prod_decomp_for_p'} gives rise to a path representative $p'=q' p'_1 \dots p'_n r' t'_1 \dots t'_m$ for $g$, where $\elem{q'}=\elem{q} \in Q$, $\elem{p_i'}=\elem{p_i} \in Q' \cup R'$, for $i=1,\dots, n-1$, $\elem{p_n'}=\elem{p_n}\, \elem{\alpha}^{-1} \,x \in Q'$, $\elem{r'}=y \in R$, $\elem{t'_l}=z_l \in T_l$, for $l=1,\dots,j-1$, $\elem{t'_j}=z_j \, \elem{\beta}^{-1}\, \elem{t_j} \in T_j$ and $\elem{t'_s}=\elem{t_s} \in T_s$, for $s=j+1,\dots,m$.
    In view of (\ref{eq:new_path_rep_bds}), we see that \(th_X(p') \le\abs{y}_X \le \xi_0\), so the proof is complete.
\end{proof}

The following proposition is an analogue of Lemma~\ref{lem:pathreps_have_qgd_shortcutting}. It employs the constant \(c_0 = \max\{C_0,14\delta\}\), where \(C_0\) is provided by Lemma~\ref{lem:multicoset_bdd_inn_prod}, and the constants \(\lambda =\lambda(c_0) \geq 1\) and \(c=c(c_0) \geq 0\), given by Proposition~\ref{prop:shortcutting_quasigeodesic}.

\begin{proposition}
\label{prop:mcs_shortcutting_quasigeodesic}
    For any \(\eta \geq 0\) there are constants \(\zeta = \zeta(\eta) \geq 0\), \(C_3 = C_3(\eta) \geq 0\), \(\Theta_1 = \Theta_1(\eta) \in \NN\) and \(B_3 = B_3(\eta) \geq 0\) such that if \(Q' \leqslant Q\) is a relatively quasiconvex subgroup of $G$ and \(B \geq B_3\), \(C \geq C_3\) then there exists $E =E(\eta,Q',B) \ge 0$ such that the following holds.
    
    Suppose $Q'$ and some subgroup \(R'\leqslant R \) satisfy conditions \descref{C1}-\descref{C4}, \descref{C2-m} and \descref{C5-m}, with constants \(B\) and \(C\), and families \(\mathcal{P} \supseteq \mathcal{P}_1\) and \(\mathcal{U} \supseteq \mathcal{U}_1\).
    Let \(p\) be a minimal type path representative for an element  \(g \in Q \langle Q', R' \rangle R T_1 \dots T_m \setminus Q R T_1 \dots T_m\).
    Assume that for any path representative \(p'\) for \(g\) we have $th_X(p') \ge E$.
    Then \(p\) is \((B,c_0,\zeta,\Theta_1)\)-tamable.
  
    Let  \(\Sigma(p,\Theta_1) = f_0 e_1 f_1 \dots e_l f_l\) denote the \(\Theta_1\)-short\-cut\-ting  of \(p\), obtained by applying Procedure~\ref{proc:shortcutting}, and let $e_j'$ be the \(\mathcal{H}\)-component of \(\Sigma(p,\Theta_1)\) containing \(e_j\), $j=1,\dots,l$. Then \(\Sigma(p,\Theta_1)\) is a \((\lambda,c)\)-quasigeodesic without backtracking and  \(\abs{e'_j}_X \geq \eta\), for each \(j = 1, \dots, l\).
\end{proposition}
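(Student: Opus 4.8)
The plan is to reproduce the strategy of Lemma~\ref{lem:pathreps_have_qgd_shortcutting}, feeding the various multiple‑backtracking estimates of Sections~\ref{sec:mcs_multitracking1}--\ref{sec:mcs_multitracking2} into Proposition~\ref{prop:shortcutting_quasigeodesic}. First I would define the constants in the correct order of dependence: given $\eta$, set $\zeta=\zeta(\eta,c_0)$ to be the constant provided by Proposition~\ref{prop:shortcutting_quasigeodesic} (applied to our $c_0$ and $\eta$); then set $C_3=\max\{C_2(\zeta),C_3^{\mathrm{old}}(\eta)\}$ where $C_2(\zeta)$ is as in Proposition~\ref{prop:multicoset_multitracking} (so it dominates the constants of Propositions~\ref{prop:long_multitracking} and \ref{prop:C5n_multiracking}); set $\Theta_1=\max\{\Theta_0(\zeta),\zeta\}$, with $\Theta_0$ from Lemma~\ref{lem:multicoset_adj_backtracking}; and set $B_3=\max\{B_0(\Theta_1,c_0),\,C_2(\zeta),\,B_1(\zeta),\,\beta_2(\ldots)\}$, where $B_0$ is the remaining constant of Proposition~\ref{prop:shortcutting_quasigeodesic} and $B_1(\zeta)$ comes from Proposition~\ref{prop:C2-m_multitracking}. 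Finally, having fixed $Q'$, $B$ and $C$, I would take $E=E(\eta,Q',B)=\xi_0(Q',\zeta)+1$, where $\xi_0$ is the constant of Proposition~\ref{prop:multicoset_multitracking}; the point of the hypothesis ``$th_X(p')\ge E$ for every path representative $p'$'' is precisely to make the conclusion of Proposition~\ref{prop:multicoset_multitracking} vacuous, i.e. to rule out its exceptional outcome.

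Next I would verify the three tameness conditions of Definition~\ref{def:tamable} for the minimal type path representative $p=qp_1\dots p_n r t_1\dots t_m$. Condition~\ref{cond:tam_2}, the Gromov product bound $\langle(p_i)_-,(p_{i+1})_+\rangle^{rel}_{(p_i)_+}\le c_0$ at every node, is immediate from Lemma~\ref{lem:multicoset_bdd_inn_prod} together with $c_0\ge C_0$ (here I use $g\notin QRT_1\dots T_m$, which is part of the hypothesis). Condition~\ref{cond:tam_1}, $\abs{p_i}_X\ge B$ for the ``interior'' segments, needs the analogue of Lemma~\ref{lem:C2_implies_old_C2}: by Remark~\ref{rem:multicoset_path_reps} each segment of $p$ strictly between the first and last nontrivial ones represents an element of $(Q'\cup R')\setminus S$, and condition \descref{C2-m} (with $j=0$, which by Remark~\ref{rem:C5m->C5} contains the relevant part of \descref{C2}) forces such elements to have $\abs{\cdot}_X\ge B$; one must be a little careful to list exactly which segments among $q,p_1,\dots,p_n,r,t_1,\dots,t_m$ are the ones $\Theta$-shortcutting treats as interior, but this is bookkeeping. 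Condition~\ref{cond:tam_3} is the substantive one: given consecutive backtracking along $\mathcal H$‑components $h_1,\dots,h_k$ with $\max_j\abs{h_j}_X\ge\Theta_1\ge\zeta$, I must show $d_X((h_1)_-,(h_k)_+)\ge\zeta$. If $k=2$ this is Lemma~\ref{lem:multicoset_adj_backtracking} and the choice $\Theta_1\ge\Theta_0(\zeta)$. If $k\ge3$, suppose for contradiction $d_X((h_1)_-,(h_k)_+)<\zeta$; then Proposition~\ref{prop:multicoset_multitracking} applies (its hypotheses \descref{C1}--\descref{C4}, \descref{C2-m}, \descref{C5-m} with $B\ge B_2$, $C\ge C_2$ and $\mathcal P\supseteq\mathcal P_1$, $\mathcal U\supseteq\mathcal U_1$ are exactly what we assumed, and $B\ge B_3\ge B_2$), and it produces a path representative $p'$ of $g$ with $th_X(p')\le\xi_0(Q',\zeta)<E$, contradicting the standing assumption on $E$. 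Hence $p$ is $(B,c_0,\zeta,\Theta_1)$‑tamable.

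Once tameness is established, the final sentence is a direct invocation of Proposition~\ref{prop:shortcutting_quasigeodesic}: since $\Theta_1\ge\zeta$ and $B\ge B_3\ge B_0(\Theta_1,c_0)$, that proposition tells us the $\Theta_1$‑shortcutting $\Sigma(p,\Theta_1)=f_0e_1f_1\dots e_lf_l$ is $(\lambda,c)$‑quasigeodesic without backtracking, each $e_k$ is nontrivial, and each $\mathcal H$‑component $e_k'$ of $\Sigma(p,\Theta_1)$ containing $e_k$ satisfies $\abs{e_k'}_X\ge\eta$. I would remark, as the excerpt does between Definition~\ref{def:type_of_multicoset_path_rep} and Lemma~\ref{lem:multicoset_bdd_inn_prod}, that the auxiliary results of Sections~\ref{sec:path_reps}--\ref{sec:adj_backtracking} (in particular Lemmas~\ref{lem:multicoset_bdd_inn_prod}, \ref{lem:multicoset_bdd_cusps}, \ref{lem:multicoset_adj_backtracking}) carry over to this product setting with only cosmetic changes to the proofs, so no new work is needed there.

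\textbf{Main obstacle.} The genuine difficulty is entirely inside condition~\ref{cond:tam_3} for $k\ge3$ — i.e.\ it is the work already done in Propositions~\ref{prop:long_multitracking}, \ref{prop:C5n_multiracking}, \ref{prop:C2-m_multitracking} and \ref{prop:multicoset_multitracking}; at the level of this statement the only care required is to check that the constants are chosen so that each of those propositions is applicable (correct lower bounds on $B$ and $C$, correct inclusions $\mathcal P_1\subseteq\mathcal P$, $\mathcal U_1\subseteq\mathcal U$), and that the exceptional conclusion of Proposition~\ref{prop:multicoset_multitracking} is excluded by the hypothesis $th_X(p')\ge E=\xi_0(Q',\zeta)+1$. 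Everything else is an organizational exercise in threading the constants through in the right dependency order.
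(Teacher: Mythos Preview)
Your approach mirrors the paper's proof almost exactly, but there is one genuine gap in your verification of condition~\ref{cond:tam_1} (and correspondingly in your definition of \(E\)). You write that ``each segment of \(p\) strictly between the first and last nontrivial ones represents an element of \((Q'\cup R')\setminus S\)'' and then dismiss the remaining checks as bookkeeping. This is not correct: the interior segments of the broken line \(p=qp_1\dots p_n r t_1\dots t_m\) include \(r,t_1,\dots,t_{m-1}\), whose labels lie in \(R,T_1,\dots,T_{m-1}\), not in \(Q'\cup R'\). Condition \descref{C2} (or \descref{C2-m} with \(j=0\)) says nothing about the \(d_X\)-length of these segments.

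The paper handles this by setting \(E=\max\{B,\xi_0(Q',\zeta)+1\}\) rather than your \(E=\xi_0(Q',\zeta)+1\). The hypothesis \(th_X(p')\ge E\) for every path representative \(p'\), applied to \(p\) itself, then gives \(\abs{r}_X,\abs{t_1}_X,\dots,\abs{t_{m-1}}_X\ge E\ge B\) directly from Definition~\ref{def:tail_height}. So the tail-height assumption is doing double duty: it both excludes the exceptional outcome of Proposition~\ref{prop:multicoset_multitracking} (as you correctly identified) \emph{and} supplies the missing lower bounds for the tail segments in condition~\ref{cond:tam_1}. Once you adjust \(E\) accordingly and invoke the tail-height hypothesis for the tail segments, the rest of your argument goes through as written. (Your extra terms \(C_3^{\mathrm{old}}(\eta)\) and \(\beta_2(\ldots)\) in the definitions of \(C_3\) and \(B_3\) are harmless but unnecessary; the paper uses simply \(C_3=C_2(\zeta)\) and \(B_3=\max\{B_0(\Theta_1,c_0),B_2(\zeta)\}\).)
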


\begin{proof}
    The proof is similar to the argument in Lemma~\ref{lem:pathreps_have_qgd_shortcutting}.
    Let us define the necessary constants:
    \begin{itemize}
        \item \(\zeta = \zeta(\eta,c_0)\) is the constant from Proposition~\ref{prop:shortcutting_quasigeodesic};
        \item \(\Theta_1 = \max\{\Theta_0(\zeta),\zeta\}\), where \(\Theta_0\) is the constant from Lemma~\ref{lem:multicoset_adj_backtracking};
        \item $B_2(\zeta)$ and \(C_3 = C_2(\zeta)\) are the constants provided by Proposition~\ref{prop:multicoset_multitracking};
        \item \(B_3 = \max\{B_0(\Theta_1,c_0), B_2(\zeta)\} \), where \(B_0(\Theta_1,c_0)\) is the constant from Proposition~\ref{prop:shortcutting_quasigeodesic};
    \end{itemize}
    and, finally, for any given \(B \geq B_3, C \geq C_3\), we set
    \begin{itemize}
        \item \(E = \max \{B, \xi_0(\eta,Q') + 1\}\), where \(\xi_0(\eta,Q')\) is the constant from Proposition~\ref{prop:multicoset_multitracking}.
    \end{itemize}

    Suppose that $Q'$, $R'$, $g$ and \(p = q p_1 \dots p_n r t_1 \dots t_m\) are as in the statement of the proposition. We will now show that $p$ is \((B,c_0,\zeta,\Theta_1)\)-tamable.

    Since \(Q'\) and \(R'\) satisfy \descref{C2}, Lemma~\ref{lem:C2_implies_old_C2} together with Remark~\ref{rem:multicoset_path_reps} imply that \(\abs{p_i}_X \geq B\), for each \(i = 1, \dots, n\).
    Moreover, by assumption, \(\abs{r}_X, \abs{t_1}_X, \dots, \abs{t_{m-1}}_X \geq E \geq B\), so condition \ref{cond:tam_1} of Definition~\ref{def:tamable} is satisfied. On the other hand, condition \ref{cond:tam_2} is satisfied by Lemma~\ref{lem:multicoset_bdd_inn_prod}.

    If condition \ref{cond:tam_3} of Definition~\ref{def:tamable} is not satisfied then \(p\) must have consecutive backtracking along \(\mathcal{H}\)-components \(h_1, \dots, h_k\) of its segments, such that  
    \[
        \max{\Big\{ \abs{h_i}_X \, | \, i = 1, \dots, k \Big\}} \geq \Theta_1~\text{ and } d_X((h_1)_-,(h_k)_+) < \zeta.
    \]

    Lemma~\ref{lem:multicoset_adj_backtracking} rules out the case of adjacent backtracking (\(k = 2\)), so it must be that $k \ge 3$.
    That is, \(h_1, \dots, h_k\) is an instance of multiple backtracking in \(p\).
    Proposition~\ref{prop:multicoset_multitracking} now applies, giving a path representative \(p'\) for $g$ with \(th_X(p') \leq \xi_0(\eta,Q') < E\).
    This contradicts a hypothesis of the proposition, so $p$ must also satisfy condition \ref{cond:tam_3}.

    Therefore \(p\) is \((B,c_0,\zeta,\Theta_1)\)-tamable, and we can apply Proposition~\ref{prop:shortcutting_quasigeodesic} to achieve the desired conclusion.
\end{proof}


\section{Using separability to establish conditions \texorpdfstring{\descref{C2-m}}{(C2-m)} and \texorpdfstring{\descref{C5-m}}{(C5-m)}}
\label{sec:mcs_sep->metric}

In this section we exhibit, under suitable assumptions on \(G\), the existence of finite index subgroups \(Q' \leqslant_f Q\) and \(R' \leqslant_f R\) satisfying conditions \descref{C1}-\descref{C4}, \descref{C2-m} and \descref{C5-m}.

\begin{lemma}
\label{lem:sep->C2-n}
    Let \(G\) be a group generated by finite set \(X\),  let \(Q, R, T_1, \dots, T_m \leqslant G\) be some subgroups, and let $S=Q \cap R$. Suppose that \(R T_1 \dots T_l\) is separable in \(G\), for each \(l = 0, \dots, m\).
    Then for any \(B \geq 0\) there is a finite index subgroup \(N \leqslant_f G\), with \(S \subseteq N\), such that arbitrary subgroups \(Q' \leqslant  Q \cap N\) and \(R'\leqslant R \cap N\) satisfy condition \descref{C2-m} with constant \(B\).
\end{lemma}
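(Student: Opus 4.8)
The strategy mirrors the proof of Proposition~\ref{prop:sep->C2-C3}, using Lemma~\ref{lem:sep->large_minx} to turn the separability of the relevant double cosets into control over short elements, and the fact that $Q$ and $R$ normalise $S$. First I would fix $B \ge 0$. For each $l \in \{0, \dots, m\}$ the subset $R T_1 \dots T_l$ is separable in $G$ by hypothesis, so by Lemma~\ref{lem:sep->large_minx}(b) there is a finite index normal subgroup $N_l \lhd_f G$ such that
\[
    \minx\bigl(R T_1 \dots T_l\, N_l \setminus R T_1 \dots T_l\bigr) \ge B.
\]
Now set $N_0' = \bigcap_{l=0}^m N_l \lhd_f G$ and $N = S N_0' \leqslant_f G$; then $S \subseteq N$ and $N \subseteq S N_l$ for every $l$.

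Next, let $Q' \leqslant Q \cap N$ and $R' \leqslant R \cap N$ be arbitrary subgroups, and fix $j \in \{0, \dots, m\}$. I want to bound $\minx\bigl(R \langle Q', R'\rangle R T_1 \dots T_j \setminus R T_1 \dots T_j\bigr)$ from below by $B$. The key observation is that $\langle Q', R'\rangle \subseteq N \subseteq S N_j$, and since $Q' \subseteq Q$, $R' \subseteq R$, we actually have $\langle Q',R'\rangle \subseteq R\,\langle Q',R'\rangle \subseteq R N_j'$ — more carefully, using $S \subseteq R$ one gets $R\langle Q',R'\rangle R \subseteq R (SN_j') R = R N_j' R = R R N_j' = R N_j'$, because $N_j'$ is normal in $G$ and $R$ is a subgroup. (Here I use $N_j' \subseteq N_j$.) Therefore
\[
    R \langle Q', R'\rangle R T_1 \dots T_j \subseteq R N_j' T_1 \dots T_j = R T_1 \dots T_j\, N_j' \subseteq R T_1 \dots T_j\, N_j,
\]
where the middle equality again uses normality of $N_j'$ in $G$ to slide it past $T_1, \dots, T_j$. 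Consequently any element of $R \langle Q', R'\rangle R T_1 \dots T_j$ that does not lie in $R T_1 \dots T_j$ lies in $R T_1 \dots T_j N_j \setminus R T_1 \dots T_j$, and hence has $X$-length at least $B$ by the choice of $N_j$. This gives condition \descref{C2-m}.

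I do not anticipate a serious obstacle here; the only point requiring a little care is the bookkeeping with the normal subgroup $N_j'$ (as opposed to the individual $N_j$): one has to take the common refinement $N_0' = \bigcap_l N_l$ \emph{before} forming $N = SN_0'$, so that a single finite index subgroup $N$ works simultaneously for all $j \in \{0, \dots, m\}$, and then repeatedly use that a normal subgroup of $G$ commutes (as a set) with every subgroup of $G$. Everything else is a routine chain of set inclusions of the form $HN' = NH'$... in fact $HN' = N'H$ for $N' \lhd_f G$ and any $H \leqslant G$, together with the fact that $S \subseteq Q \cap R$ is absorbed into $R$ on the left and into the nothing-on-the-right (there is no right $Q$-factor in the statement). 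I would close by noting that $Q' \leqslant Q \cap N$ and $R' \leqslant R \cap N$ are genuinely arbitrary, so the conclusion holds as stated.
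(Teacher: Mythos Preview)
Your argument is correct and follows essentially the same route as the paper: apply Lemma~\ref{lem:sep->large_minx}(b) to each separable product $RT_1\dots T_l$ to obtain normal subgroups, intersect them, set $N=S\cdot(\text{intersection})$, and then use normality together with $S\subseteq R$ to slide the normal subgroup past $R$ and the $T_i$'s. The only blemish is notational: you define $N_0'=\bigcap_l N_l$ but then repeatedly write $N_j'$ where you mean this fixed intersection $N_0'$; once that is cleaned up, your proof matches the paper's almost line for line.
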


\begin{proof}
    For each \(l \in \{ 0, \dots, m\}\) the product \(R T_1 \dots T_l\) is separable, so, by Lemma~\ref{lem:sep->large_minx}(b), there is a finite index normal subgroup \(M_l \lhd_f G\)  such that
    \begin{equation}
    \label{eq:ineq_for_C2-m}
        \minx (R T_1 \dots T_lM_l  \setminus R T_1 \dots T_l) \geq B, \text{ for all } l=0,\dots,m.
    \end{equation}
    Define the subgroup
    \(        M = \bigcap_{l=0}^{m} M_l \lhd_f G    \),
    and take \(N = SM \leqslant_f G\).
    Observe that
    \begin{equation}\label{eq:2nd_ineq_for_C2-m}
    R N R T_1 \dots T_l=RSMRT_1 \dots T_l=RSR T_1 \dots T_lM=RT_1 \dots T_lM, \text{ for all } l=0,\dots,m.
    \end{equation}
    Now choose arbitrary subgroups \(Q' \leqslant  Q \cap N\) and \(R'\leqslant R \cap N\), so that $\langle Q', R' \rangle \subseteq N$. Since \(M \subseteq M_l\) for all $l$, we can  combine \eqref{eq:ineq_for_C2-m} with \eqref{eq:2nd_ineq_for_C2-m} to draw the desired conclusion.
\end{proof}

The next statement is similar to Theorem~\ref{thm:sep->qc_comb}.

\begin{lemma}
\label{lem:sep->C5-n}
    Suppose that \(G\) is a group generated by finite set \(X\) and $m \in \NN_0$. Let \(Q, R \leqslant G\) be some subgroups, and let \(\mathcal{P}, \mathcal{U}\) be finite collections of subgroups of $G$ such that
    \begin{enumerate}[label={\normalfont (\arabic*)}]
        \item \label{cond:C5m-1}  each \(P \in \mathcal{P}\) has property RZ$_{m+2}$;
        \item \label{cond:C5m-2} the subgroups \(Q \cap P\), \(R \cap P\) and \(U \cap P\) are finitely generated, for all \(P \in \mathcal{P}\) and   all \(U \in \mathcal{U}\);
        \item \label{cond:C5m-3} if \(P \in \mathcal{P}\), \(K \leqslant_f P\) and \(L \leqslant_f Q\) then \(KL\) is separable in \(G\).
    \end{enumerate}
    Then for any \(C \geq 0\) and any finite index subgroup \(Q' \leqslant_f Q\), there is a finite index subgroup \(O \leqslant_f G\), with \(Q' \subseteq O\), such for any  \(R' \leqslant R \cap O\) the subgroups \(Q'\) and \(R'\) satisfy \descref{C5-m} with constant \(C\) and collections \(\mathcal{P}\) and \(\mathcal{U}\).
\end{lemma}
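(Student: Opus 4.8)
Lemma~\ref{lem:sep->C5-n} is the natural multi-coset analogue of the construction of $R'$ in the proof of Theorem~\ref{thm:sep->qc_comb}, so the plan is to follow that argument with the product $R_P(U_1)_P\dots(U_j)_P$ in place of a single $R_P$. First I would fix $C\ge 0$ and $Q'\leqslant_f Q$, and, for each $P\in\mathcal P$, record notation: $Q_P=Q\cap P$, $R_P=R\cap P$, $Q'_P=Q'\cap P\leqslant_f Q_P$, and, for each $U\in\mathcal U$, $U_P=U\cap P$. Enumerating coset representatives $Q_P=\bigsqcup_{i} a_{Pi}Q'_P$ (finitely many, since $Q'_P\leqslant_f Q_P$), the inequality in \descref{C5-m} can be rewritten, exactly as in Remark~\ref{rem:sep->metric} and the proof of Theorem~\ref{thm:sep->qc_comb}, as a finite list of inequalities of the form
\[
\minx\bigl(a_{Pi}\,Q'_P R_P (U_1)_P\dots(U_j)_P\, F_P\setminus a_{Pi}\,Q'_P R_P (U_1)_P\dots(U_j)_P\bigr)\ge C,
\]
ranging over $P\in\mathcal P$, the finitely many $i$, every $j\in\{0,\dots,m\}$ and every tuple $(U_1,\dots,U_j)\in\mathcal U^j$ (all finite in number). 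So it suffices to produce, for each $P$, a finite index normal subgroup $F_P\lhd_f P$ making all of these hold, and then lift.

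The key input is assumption~\ref{cond:C5m-1}: since $P$ has property RZ$_{m+2}$, and by assumption~\ref{cond:C5m-2} the subgroups $Q'_P\leqslant_f Q_P$, $R_P$ and each $U_P$ are finitely generated, the product $Q'_P R_P (U_1)_P\dots(U_j)_P$ — a product of at most $m+2$ finitely generated subgroups of $P$ — is separable in $P$. Hence so is each left translate $a_{Pi}Q'_P R_P(U_1)_P\dots(U_j)_P$ by Remark~\ref{rem:sep_props}. Applying Lemma~\ref{lem:sep->large_minx}(c) to the finite set $\{a_{Pi}\}$ and the constant $C$ gives a single $F_P\lhd_f P$ with
\[
\minx\bigl(a_{Pi}\,Z\,F_P\setminus a_{Pi}\,Z\bigr)\ge C
\]
for $Z=Q'_P R_P(U_1)_P\dots(U_j)_P$; taking the intersection of the finitely many such $F_P$ over all choices of $j$ and $(U_1,\dots,U_j)$ yields one $F_P\lhd_f P$ that works uniformly. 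Set $K_P=Q'_P F_P\leqslant_f P$; then $Q'\cap P=Q'_P\subseteq K_P$ and $a_{Pi}K_P R_P(U_1)_P\dots(U_j)_P=a_{Pi}Q'_P R_P(U_1)_P\dots(U_j)_P F_P$ (since $F_P\lhd P$ absorbs on the right and $Q'_P F_P=K_P$), so the displayed inequality persists with $K_P$ in place of $Q'_P F_P$.

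Next I would lift each $K_P$ to $G$ using assumption~\ref{cond:C5m-3}: since $Q'\leqslant_f Q$, $K_P\leqslant_f P$, and $Q'\cap P=Q'_P\subseteq K_P$, and $K_P Q'$ is separable in $G$ by~\ref{cond:C5m-3}, Lemma~\ref{lem:Lemma_1} provides $M_P\leqslant_f G$ with $Q'\subseteq M_P$ and $M_P\cap P\subseteq K_P$. Put $O=\bigcap_{P\in\mathcal P}M_P\leqslant_f G$; then $Q'\subseteq O$ and $O\cap P\subseteq K_P$ for every $P$. Finally, for arbitrary $R'\leqslant R\cap O$ one has $\langle Q'_P,R'_P\rangle\leqslant O\cap P\subseteq K_P$, and for $q\in Q_P$ choosing $i$ with $q\in a_{Pi}Q'_P$ gives $q\langle Q'_P,R'_P\rangle R_P(U_1)_P\dots(U_j)_P=a_{Pi}\langle Q'_P,R'_P\rangle R_P(U_1)_P\dots(U_j)_P\subseteq a_{Pi}K_P R_P(U_1)_P\dots(U_j)_P$ and likewise $qQ'_P R_P(U_1)_P\dots(U_j)_P=a_{Pi}Q'_P R_P(U_1)_P\dots(U_j)_P$, so the inequality above forces $\minx(q\langle Q'_P,R'_P\rangle R_P(U_1)_P\dots(U_j)_P\setminus qQ'_P R_P(U_1)_P\dots(U_j)_P)\ge C$, which is precisely \descref{C5-m}. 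The only mild subtlety — not really an obstacle — is bookkeeping the finiteness: one must observe that the index set of tuples $(j;U_1,\dots,U_j)$ and of coset representatives $a_{Pi}$ is finite so that finitely many applications of Lemma~\ref{lem:sep->large_minx}(c) and finitely many intersections suffice; everything else is a direct transcription of the single-coset argument in Theorem~\ref{thm:sep->qc_comb}.
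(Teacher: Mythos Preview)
Your proof is correct and follows essentially the same route as the paper's: use RZ$_{m+2}$ and finite generation to get separability of each product $Q'_P R_P (U_1)_P\cdots(U_j)_P$ in $P$, apply Lemma~\ref{lem:sep->large_minx}(c) over the finite set of coset representatives and tuples to obtain a finite-index normal subgroup of $P$, set $K_P=Q'_PF_P$, and lift via Lemma~\ref{lem:Lemma_1} using assumption~\ref{cond:C5m-3}. The only cosmetic difference is that you intersect the $F_P$'s over all tuples before lifting (so you lift once per $P$), whereas the paper keeps a separate $K_{i,\underline{u}}$ for each pair $(P_i,\underline{u})$ and lifts each one individually; both are equally valid and the bookkeeping point you flag is exactly the right one.
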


\begin{proof}
    As usual, for subgroups \(H \leqslant G\) and \(P \in \mathcal{P}\) we denote \(H \cap P\) by \(H_P\).

    Fix an enumeration \(\mathcal{P} = \{P_1, \dots, P_k\}\) and
    let \(Q' \leqslant_f Q\) be a finite index subgroup of \(Q\). Given any \(i \in \{ 1, \dots, k\}\), we choose some coset representatives \(a_{i1}, \dots, a_{in_i} \in Q_{P_i}\) of \(Q'_{P_i}\), so that \(Q_{P_i} = \bigsqcup_{j=1}^{n_i} a_{ij} Q'_{P_i}\).
    Let $\mathbb{U}$ be the finite set consisting of all $l$-tuples $(U_1,\dots,U_l)$, where $l \in \{0,\dots, m\}$ and $U_1,\dots, U_l \in \mathcal{U}$.

    Consider any \(i \in \{ 1, \dots, k\}\) and $\underline{u}=(U_1,\dots,U_l) \in \mathbb{U}$, where $l \in \{0 \dots,m\}$.
    Note that \(Q'_{P_i} \leqslant_f Q_{P_i}\) is finitely generated, for each \(i = 1, \dots, k\), since \(Q_{P_i}\) is itself finitely generated by assumption \ref{cond:C5m-2}.   Combining assumptions \ref{cond:C5m-1} and \ref{cond:C5m-2}, the subset \(Q'_{P_i} R_{P_i} (U_1)_{P_i} \dots (U_l)_{P_i}\) is separable in \(P_i\). 
    Therefore, 
    by Lemma~\ref{lem:sep->large_minx}(c), for any $C \ge 0$  there is  \(F_{i,\underline{u}} \lhd_f P_i\) such that
    \begin{equation}
    \label{eq:mcs_sep_for_Fiu}
        \minx \Big( a_{ij} Q'_{P_i} F_{i,\underline{u}} R_{P_i} (U_1)_{P_i} \dots (U_l)_{P_i} \setminus a_{ij} Q'_{P_i} R_{P_i} (U_1)_{P_i} \dots (U_l)_{P_i} \Big) \geq C,
    \end{equation}
    for all  $j=1,\dots,n_i$.

    Define \(K_{i,\underline{u}} = Q'_{P_i} F_{i,\underline{u}} \leqslant_f P_i\).
    Then (\ref{eq:mcs_sep_for_Fiu}) implies that for every $j=1,\dots,n_i$ we have
    \begin{equation}
    \label{eq:mcs_sep_for_Kiu}
        \minx \Big( a_{ij} K_{i,\underline{u}} R_{P_i} (U_1)_{P_i} \dots (U_l)_{P_i} \setminus a_{ij} Q'_{P_i} R_{P_i} (U_1)_{P_i} \dots (U_l)_{P_i} \Big) \geq C.
    \end{equation}

    Assumption \ref{cond:C5m-3} tells us that the double coset \(K_{i,\underline{u}} Q'\) is separable in \(G\), and since \(Q' \cap P_i = Q'_{P_i}  \subseteq  K_{i,\underline{u}}\), we can apply Lemma~\ref{lem:Lemma_1} to find a finite index subgroup \(O_{i,\underline{u}} \leqslant_f G\) such that \(Q' \subseteq O_{i,\underline{u}}\) and \(O_{i,\underline{u}} \cap P_i \subseteq K_{i,\underline{u}}\).

    We can now define a finite index subgroup $O$ of $G$ by 
    \[
        O = \bigcap_{i=1}^k \bigcap_{\underline{u} \in \mathbb{U}} O_{i,\underline{u}} \leqslant_f G.
    \]
    Observe that \(Q' \subseteq O\) and  \(O \cap P_i \subseteq K_{i,\underline{u}}\), for each \(i = 1, \dots, k\) and all $\underline{u} \in \mathbb{U}$. Consider any subgroup \(R' \leqslant R \cap O\). 
    Then \(Q_{P_i}' \cup R'_{P_i} \subseteq O \cap P_i \), so (\ref{eq:mcs_sep_for_Kiu}) yields that
    \begin{equation}
    \label{eq:mcs_sep_for_Q'R'}
        \minx \Big( a_{ij} \langle Q'_{P_i}, R'_{P_i} \rangle R_{P_i} (U_1)_{P_i} \dots (U_l)_{P_i} \setminus a_{ij} Q'_{P_i} R_{P_i} (U_1)_{P_i} \dots (U_l)_{P_i} \Big) \geq C,
    \end{equation}
    for arbitrary \(i = 1, \dots, k\),  \(l = 0, \dots, m\), $U_1,\dots,U_l \in \mathcal{U}$ and any $j=1,\dots,n_i$.

    Given any \(i \in \{ 1, \dots, k\}\) and any \(q \in Q_{P_i}\), there is $j \in \{1,\dots,n_i\}$ such that $qQ_{P_i}'= a_{ij}Q_{P_i}'$.
    It follows that $q \langle Q'_{P_i}, R'_{P_i} \rangle = a_{ij} \langle Q'_{P_i}, R'_{P_i} \rangle$, which, combined with \eqref{eq:mcs_sep_for_Q'R'}, shows that \(Q'\) and \(R'\) satisfy condition \descref{C5-m}, as required.
\end{proof}

For the next result we will follow the notation of Convention~\ref{conv:main_multicoset}.

\begin{proposition}
\label{prop:msc_sep->metric} 
    Suppose that $G$ is QCERF, the product $RT_1 \dots T_l$ is separable in $G$, for every $l=0,\dots,m$, and the peripheral subgroup \(H_\nu\) has property RZ$_{m+2}$, for each \(\nu \in \Nu\).
    Let \(\mathcal{P}_1\) be a finite collection of maximal parabolic subgroups and let \(\mathcal{U}_1\) be a finite collection of finitely generated relatively quasiconvex subgroups in $G$.

    Then for any \(B, C \geq 0\) there exist finite index subgroups \(Q' \leqslant_f Q\) and \(R' \leqslant_f R\) such that: 
    \begin{itemize}
        \item $\langle Q',R' \rangle $ is relatively quasiconvex in $G$;
        \item $Q'$, $R'$ satisfy conditions \descref{C1}-\descref{C4}, \descref{C2-m} and \descref{C5-m} with constants \(B\) and \(C\) and collections \(\mathcal{P}_1\) and \(\mathcal{U}_1\).
    \end{itemize}

    More precisely, there is \(L_1 \leqslant_f G\), with \(S \subseteq L_1\), such that for any \(L' \leqslant_f L_1\), satisfying \(S \subseteq L'\), we can take \(Q' = Q \cap L'\leqslant_f Q\), and there is \(M_1 \leqslant_f L'\), with \(Q' \subseteq M_1\), such that for any \(M' \leqslant_f M_1\), satisfying \(Q' \subseteq M'\), the subgroups \(Q'\) and \(R' = R \cap M'\leqslant_f R\) enjoy the above properties.
\end{proposition}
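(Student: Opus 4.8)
The plan is to assemble Proposition~\ref{prop:msc_sep->metric} from the separability infrastructure already established, combining the ``layer-by-layer'' construction of finite-index subgroups as in Theorem~\ref{thm:sep->qc_comb} with the two new lemmas of this section (Lemmas~\ref{lem:sep->C2-n} and \ref{lem:sep->C5-n}). The key point is to choose $Q'$ and $R'$ to be intersections of $Q$ and $R$ with a single finite-index subgroup of $G$, which makes conditions \descref{C1}--\descref{C4} automatic (exactly as in the proof of Theorem~\ref{thm:sep->qc_comb}), while the metric conditions \descref{C2}, \descref{C3}, \descref{C5}, and their generalisations \descref{C2-m} and \descref{C5-m}, are arranged by taking successively smaller finite-index subgroups.

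First I would verify that the hypotheses of the auxiliary lemmas are met. For Lemma~\ref{lem:sep->C5-n}, applied with $\mathcal P=\mathcal P_1$ and $\mathcal U=\mathcal U_1$: assumption \ref{cond:C5m-1} holds since each $P\in\mathcal P_1$ is a maximal parabolic, hence a conjugate of some $H_\nu$, which has property RZ$_{m+2}$ by hypothesis; assumption \ref{cond:C5m-2} holds by Lemma~\ref{lem:fg_qc_int_parab_is_fg} (each of $Q$, $R$, and the members $U\in\mathcal U_1$ are finitely generated relatively quasiconvex, and intersections with maximal parabolics are finitely generated); and assumption \ref{cond:C5m-3} holds by Corollary~\ref{cor:fi_in_parab_times_fgqc_is_sep}, which uses QCERF-ness. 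For Lemma~\ref{lem:sep->C2-n}, the hypothesis that $RT_1\cdots T_l$ is separable in $G$ for each $l=0,\dots,m$ is exactly what we assume. I also record that $PS$ is separable in $G$ for every $P\in\mathcal P_1$, by Proposition~\ref{prop:parab_times_S_is_sep} (its hypothesis \eqref{eq:fin_ind_if_D} follows from the double coset separability of $P$, which holds because $P$ has property RZ$_{m+2}\le$ RZ$_2$, hence is double coset separable; here we use $m\ge 0$).

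Then I would carry out the construction in the stated order. Start by invoking Proposition~\ref{prop:sep->C2-C3} with the family $\mathcal P_1$ and constants $B,C$ to obtain $L_2\leqslant_f G$ with $S\subseteq L_2$ such that any $Q'\leqslant Q\cap L_2$, $R'\leqslant R\cap L_2$ satisfy \descref{C2} and \descref{C3}; and invoke Lemma~\ref{lem:sep->C2-n} with constant $B$ to obtain $N\leqslant_f G$ with $S\subseteq N$ such that any $Q'\leqslant Q\cap N$, $R'\leqslant R\cap N$ satisfy \descref{C2-m}. Set $L_1=L_2\cap N\leqslant_f G$, so $S\subseteq L_1$. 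Given any $L'\leqslant_f L_1$ with $S\subseteq L'$, put $Q'=Q\cap L'\leqslant_f Q$; this already contains $S$. Next, apply Theorem~\ref{thm:sep->qc_intro-detailed} (with $A$ arbitrary, say $A=0$)—or rather, apply its conclusion together with the flexibility in the choice of $L'$ and $M'$—to arrange that $\langle Q',R'\rangle$ is relatively quasiconvex; concretely, one feeds the finite-index subgroups provided here into that theorem, noting it also chooses $Q'=Q\cap L'$ and $R'=R\cap M'$ for nested finite-index subgroups, so the constructions are compatible. Simultaneously, apply Lemma~\ref{lem:sep->C5-n} with the finite-index subgroup $Q'$ just fixed, constant $C$, and families $\mathcal P_1$, $\mathcal U_1$, obtaining $O\leqslant_f G$ with $Q'\subseteq O$ such that any $R'\leqslant R\cap O$ makes $Q',R'$ satisfy \descref{C5-m}. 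Finally set $M_1=O\cap L'\cap(\text{the finite-index subgroup from Thm~\ref{thm:sep->qc_intro-detailed}})\leqslant_f L'$ with $Q'\subseteq M_1$; for any $M'\leqslant_f M_1$ with $Q'\subseteq M'$, put $R'=R\cap M'\leqslant_f R$. Then $Q'=Q\cap M'$ (since $Q'\subseteq M'\subseteq L'$) and $R'=R\cap M'$ give \descref{C1} and \descref{C4} immediately (intersecting with an arbitrary $P$), \descref{C2}/\descref{C3} from $M'\subseteq L'\subseteq L_2$, \descref{C2-m} from $M'\subseteq L_1\subseteq N$, \descref{C5-m} from $M'\subseteq O$, and relative quasiconvexity of $\langle Q',R'\rangle$ from the Theorem~\ref{thm:sep->qc_intro-detailed} factor. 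Since \descref{C5-m} implies \descref{C5} (Remark~\ref{rem:C5m->C5}), all required conditions hold.

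The main obstacle is bookkeeping: one must carefully interleave the nested finite-index subgroups coming from three different sources—Proposition~\ref{prop:sep->C2-C3}/Lemma~\ref{lem:sep->C2-n} (the ``$L$-layer''), Theorem~\ref{thm:sep->qc_intro-detailed} (the relative quasiconvexity of the join, which itself has an $L$-then-$M$ structure), and Lemma~\ref{lem:sep->C5-n} (the ``$M$-layer'', which crucially depends on the already-chosen $Q'$)—so that the final $Q'=Q\cap M'$, $R'=R\cap M'$ simultaneously lie inside all of them. The delicate point is that \descref{C5-m} can only be arranged \emph{after} $Q'$ is fixed, which forces the two-stage ``choose $L'$, then choose $M'$'' formulation in the statement; but this matches exactly the structure already present in Theorems~\ref{thm:sep->qc_comb} and \ref{thm:sep->qc_intro-detailed}, so no genuinely new difficulty arises—one just has to be scrupulous that every intersection taken is still of finite index and still contains the relevant subgroup ($S$ or $Q'$).
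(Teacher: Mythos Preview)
Your approach is essentially the same as the paper's: combine the $L$-layer (from separability of $Q$, $R$, $PS$, and $RT_1\cdots T_l$) with the $M$-layer (from Lemma~\ref{lem:sep->C5-n}, applied after $Q'$ is fixed), and check \descref{C1}, \descref{C4} directly from $Q'=Q\cap M'$, $R'=R\cap M'$. The paper streamlines this slightly by invoking Theorem~\ref{thm:sep->qc_comb} and Theorem~\ref{thm:metric_qc} directly (with the enlarged family $\mathcal P=\mathcal P_0\cup\mathcal P_1$, where $\mathcal P_0$ comes from Theorem~\ref{thm:metric_qc}), rather than going through Proposition~\ref{prop:sep->C2-C3} and Theorem~\ref{thm:sep->qc_intro-detailed} separately; but the content is identical.

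One small bookkeeping slip: you invoke Theorem~\ref{thm:sep->qc_intro-detailed} only \emph{after} fixing $Q'=Q\cap L'$, but that theorem has its own $L$-layer which must be respected \emph{before} $L'$ is chosen. As written, an arbitrary $L'\leqslant_f L_1=L_2\cap N$ need not lie inside the $L$ produced by Theorem~\ref{thm:sep->qc_intro-detailed}, so the theorem would not apply. The fix is exactly what your final paragraph anticipates: intersect that theorem's $L$ into $L_1$ from the outset (i.e.\ set $L_1=L_2\cap N\cap L_{\mathrm{thm}}$), and then intersect its $M$ into $M_1$ as you already do. With that adjustment the argument is complete.
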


\begin{proof} 
    Fix some constants $B,C \ge 0$. Let $\mathcal{P}_0$ be the finite collection of maximal parabolic subgroups of $G$ provided by Theorem~\ref{thm:metric_qc} and set $\mathcal{P}=\mathcal{P}_0 \cup \mathcal{P}_1$.

    Note that maximal parabolic subgroups of $G$ are double coset separable by the assumptions, as $m+2 \ge 2$. 
    Therefore the argument from the proof of Theorem~\ref{thm:sep->qc_intro-detailed} shows that $G$, its subgroups $Q,R$ and $S= Q \cap R$, and the finite collection $\mathcal P$ satisfy assumptions \ref{cond:1}--\ref{cond:4} of Theorem~\ref{thm:sep->qc_comb}.
    Let $L \leqslant_f G$, with $S \subseteq L$, be the finite index subgroup provided by this theorem.

    By the hypothesis on \(G\), the subsets \(R T_1 \dots T_l\) are separable in \(G\), for each \(l=0,\dots, m\).
    We can therefore apply Lemma~\ref{lem:sep->C2-n} to obtain a finite index subgroup \(N \leqslant_f G\) from its statement (in particular, $S \subseteq N$). 
    Now we define the finite index subgroup $L_1 \leqslant_f G$, from the statement of the proposition, by setting $L_1=L \cap N$.
    Clearly  $L_1$ contains $S$.
    Take any $L' \leqslant_f L_1$, with $S \subseteq L'$, and set $Q'=Q \cap L' \leqslant_f Q$. Let $M \leqslant_f L'$ be the subgroup provided by Theorem~\ref{thm:sep->qc_comb}, with $Q' \subseteq M$.

    Lemma~\ref{lem:fg_qc_int_parab_is_fg} and Corollary~\ref{cor:fi_in_parab_times_fgqc_is_sep} imply that all the assumptions of Lemma~\ref{lem:sep->C5-n} are satisfied, so let $O \leqslant_f G$ be the subgroup given by this lemma, with $Q' \subseteq O$. 
    We now define the finite index subgroup $M_1 \leqslant_f L'$, from the statement of the proposition, by $M_1=M \cap O$.

    Evidently, $M_1$ contains $Q'$.   Choose an arbitrary finite index subgroup $M' \leqslant_f M_1$, with $Q' \subseteq M'$, and set $R'=R \cap M'$. 
    Observe that $M' \leqslant_f G$, by construction, hence $R' \leqslant_f R$.

    The combined statements of Theorem~\ref{thm:sep->qc_comb}, Lemma~\ref{lem:props_of_qc_sbgps},  Theorem~\ref{thm:metric_qc}, Lemma~\ref{lem:sep->C2-n} and Lemma~\ref{lem:sep->C5-n} now imply that the subgroups $Q' \leqslant_f Q$ and $R' \leqslant_f R$, obtained above, satisfy all of the required properties. 
    Thus the proposition is proved.
\end{proof}


\section{Separability of quasiconvex products in QCERF relatively hyperbolic groups}
\label{sec:RZs_proof}

In this section we prove Theorem~\ref{thm:RZs} from the Introduction.

\begin{remark}
\label{rem:qc_prod_sep} 
    Let $G$ be a relatively hyperbolic group. 
    Suppose that $s \in \NN$ and the product of any \(s\) finitely generated relatively quasiconvex subgroups is separable in \(G\).
    If \(Q_1, \dots, Q_s\) are finitely generated quasiconvex subgroups of $G$ and  \(a_0, \dots, a_s \in G\) are arbitrary elements, then the subset \(a_0 Q_1 a_1 \dots  Q_s a_s\) is separable in $G$.
\end{remark}    

Indeed, observe that the subset
\[
    a_0 Q_1 a_1 \dots  Q_s a_s =  Q_1^{a_0} Q_2^{a_0 a_1} \dots Q_s^{a_0 \dots a_{s-1}} a_0 \dots a_s
\]
is a translate of a product of conjugates of the subgroups \(Q_1, \dots, Q_s\).
Combining Lemma~\ref{lem:props_of_qc_sbgps} with Remark~\ref{rem:sep_props} and the assumption on \(G\) yields the desired conclusion.

\begin{proof}[Proof of Theorem~\ref{thm:RZs}] 
    We induct on $s$. The case $s = 1$ is equivalent to the  QCERF property of $G$, while the case  $s = 2$  follows from Corollary \ref{cor:double_cosets_sep}.
    Thus we can assume that \(s > 2\) and the product of any $s-1$ finitely generated relatively quasiconvex subgroups is separable in $G$.
    
    Let $F_1,\dots,F_s$ be finitely generated relatively quasiconvex subgroups of $G$. 
    For ease of notation we write \(m = s - 2\), \(Q = F_1, R = F_2\) and \(T_i = F_{i+2}\), for \(i \in \{1, \dots, m\}\). 
    Choose a finite generating set $X$ for $G$ and let $\delta \in \NN$ be a hyperbolicity constant for the Cayley graph $\ga$, where $\mathcal{H}=\bigsqcup_{\nu \in \Nu} (H_\nu\setminus\{1\})$. 
    Denote by $\varepsilon \ge 0$ a common quasiconvexity constant for $Q,R,T_1, \dots, T_m$.

    Arguing by contradiction, suppose that the subset $QRT_1 \dots T_m=F_1 \dots F_s$ is not separable in $G$. 
    Then there exists \(g \in G \setminus QRT_1 \dots T_m\) such that \(g\) belongs to the profinite closure of \(QRT_1 \dots T_m\) in $G$.
    Let us fix the following notation for the remainder of the proof:
    \begin{itemize}
        \item \(c_0 = \max\{C_0,14\delta\} \geq 0\), where \(C_0\) is the constant obtained from Lemma~\ref{lem:multicoset_bdd_inn_prod};
        \item \(c_3 = c_3(c_0) \geq 0\) is the constant obtained from Lemma~\ref{lem:concat};
        \item \(\lambda = \lambda(c_0) \geq 1\) and \(c = c(c_0) \geq 0\) are  obtained from Proposition~\ref{prop:shortcutting_quasigeodesic}, applied with the constant \(c_0\);
        \item $\mathcal{P}_1$ is the finite family of maximal parabolic subgroups of $G$ from Notation~\ref{not:C_1-P_1};
        \item $\mathcal{U}_1$ is the finite collection of finitely generated relatively quasiconvex subgroups of $G$ from Notation~\ref{not:Li_def};
        \item $A=\abs{g}_X + 1$ and \(\eta = \eta(\lambda,c,A) \geq 0\) is obtained from Lemma~\ref{lem:qgds_with_long_comps};
        \item \(\zeta = \zeta(\eta) \geq 0\), \(\Theta_1 = \Theta_1(\eta) \geq 0\), \(C_3 = C_3(\eta) \geq 0\)  and  \(B_3 = B_3(\eta) \geq 0\) are the constants obtained from Proposition~\ref{prop:mcs_shortcutting_quasigeodesic};
        \item $B=\max\{B_3(\eta),(4A+c_3)\Theta_1\} $ and $C=C_3(\eta)$.
    \end{itemize}

    Observe that, by the induction hypothesis, the product $RT_1 \dots T_l$ is separable in $G$, for every $l=0,\dots,m$.
    Let \(L_1 \leqslant_f G\) be the finite index subgroup obtained from Proposition~\ref{prop:msc_sep->metric}, applied with finite families $\mathcal{P}_1$, $\mathcal{U}_1$ and constants $B$, $C$, given above. 
    Note that $S \subseteq L_1$, and define \(Q' = Q \cap L_1 \leqslant_f Q\).
    Again, by Proposition~\ref{prop:msc_sep->metric}, there is a finite index subgroup \(M_1 \leqslant_f L_1\) such that $Q' \subseteq M_1$ and for any \(M'\leqslant_f M_1\), with \(Q' \subseteq M'\), the subgroups \(Q'\) and \(R' = R \cap M' \leqslant_f R\) satisfy the conclusion of  Proposition~\ref{prop:msc_sep->metric}. 
    
    Let \(E = E(\eta,Q',B) \geq 0\) be the constant provided by Proposition~\ref{prop:mcs_shortcutting_quasigeodesic}.
    Let \(\{N_j \, | \, j \in \NN\}\) be an enumeration of the finite index subgroups of \(M_1\) containing \(Q'\), and define the subgroups
    \begin{equation}
    \label{eq:Ri_def}
       M'_i= \bigcap_{j=1}^i N_j \leqslant_f L'~\text{ and }~ R'_i =  M'_i \cap R\leqslant_f R,~i \in \NN.
    \end{equation}
    Note that for every $i \in \NN$,  $Q' \subseteq M'_i$, so the subgroups $Q'$ and $R'_i$ satisfy the conclusion of Proposition~\ref{prop:msc_sep->metric}. 
    In particular, the subgroup \(\langle Q', R_i' \rangle \) is relatively quasiconvex (and finitely generated) in $G$, and \(Q'\), \(R'_i\) satisfy conditions \descref{C1}-\descref{C4}, \descref{C2-m} and \descref{C5-m} with constants \(B\), \(C\) and families \(\mathcal{P}_1\), \(\mathcal{U}_1\), defined above. 
    For each \(i \in \NN\), consider the subset
    \[
        K_i = Q \langle Q', R'_i \rangle R T_1 \dots T_m.
    \]    
    Choose coset representatives \(x_1, \dots, x_a \in Q\) and \(y_{i,1}, \dots, y_{i,b_i} \in R\) such that \(Q=\bigcup_{j=1}^a x_jQ'\) and $R=\bigcup_{k=1}^{b_i} R'_i\, y_{i,k}$. 
    Then
    \begin{equation*}
    \label{eq:QQ'RiR}
        Q \langle Q', R'_i \rangle R = \bigcup_{j = 1}^a \bigcup_{k=1}^{b_i} x_j \langle Q', R'_i \rangle y_{i,k},
    \end{equation*}
    hence \(K_i\) may be written as the finite union  
    \begin{equation*}
           K_i = \bigcup_{j = 1}^a \bigcup_{k=1}^{b_i} x_j \langle Q', R'_i \rangle y_{i,k} T_1 \dots T_m.
    \end{equation*}
    Therefore, for every \(i \in \NN\), $K_i$ is separable in $G$ by Remark~\ref{rem:qc_prod_sep} and the induction hypothesis.
    Since each \(K_i\) contains \(Q R T_1 \dots T_m\) and \(g\) is in the profinite closure of \(Q R T_1 \dots T_m\), it must be the case that \(g \in K_i\), for every \(i \in \NN\).
    We will show that considering sufficiently large values of \(i\) leads to a contradiction.
    
    For each \(i \in \NN\), let \(\mathcal{S}_i\) be the set of path representatives of \(g\) in \(K_i = Q \langle Q', R'_i \rangle R T_1 \dots T_m\) (see Definition~\ref{def:multicoset_path_reps}, where $R'$ is replaced by $R'_i$).
    We will now consider two  cases.

    \medskip
    \underline{\emph{Case 1:}} 
        there exists $i \in \NN$ such that $\displaystyle \inf_{p' \in \mathcal{S}_i} th_X(p') \ge E$.

        Choose a path representative of minimal type \(p = q p_1 \dots p_n r t_1 \dots t_m \) for \(g\) in $K_i$.  
        Note that $n \ge 1$  and $\elem{p_1} \in R'_i \setminus S$ because $g \notin QRT_1 \dots T_m$ (see Remark~\ref{rem:multicoset_path_reps}). 
        By the assumptions of Case~1 and the above construction, we can apply Proposition~\ref{prop:mcs_shortcutting_quasigeodesic} to conclude that \(p\) is \((B,c_0,\zeta,\Theta_1)\)-tamable and the shortcutting \(\Sigma(p,\Theta_1) = f_0 e_1 f_1 \dots f_{l-1} e_l f_l\), obtained from Procedure~\ref{proc:shortcutting}, is \((\lambda,c)\)-quasigeodesic without backtracking, with \(\abs{e'_k}_X \geq \eta\) for each \(k = 1, \dots, l\) (where \(e'_k\) denotes the \(\mathcal{H}\)-component of \(\Sigma(p,\Theta_1)\) containing \(e_k\)).

        If \(l > 0\), then applying Lemma~\ref{lem:qgds_with_long_comps} to the path \(\Sigma(p,\Theta_1)\) gives
        \[
            \abs{g}_X = \abs{p}_X = \abs{\Sigma(p,\Theta_1)}_X \geq A > \abs{g}_X,
        \]
        by the choice of \(\eta\), which gives a contradiction. 
    
        Therefore it must be that \(l = 0\).  
        Then \(p\) is \((4,c_3)\)-quasigeodesic by Lemma~\ref{lem:beta_quasigeodesic} and, according to Remark~\ref{rem:shortcutting}(c), no segment of \(p\) contains an \(\mathcal{H}\)-component \(h\) with \(\abs{h}_X \geq \Theta_1\).
        By the quasigeodesicity of \(p\) and the fact that \(p_1\) is a subpath of \(p\), we have
        \begin{equation}
        \label{eq:p_qgd}
            \abs{g}_{X\cup\mathcal{H}} = \abs{p}_{X\cup\mathcal{H}} \geq \frac{1}{4}(\ell(p) - c_3) \geq \frac{1}{4}(\ell(p_1) - c_3).
        \end{equation}
        Applying Lemma~\ref{lem:rel_geods_with_short_comps} to the geodesic \(p_1\) in $\ga$ we obtain
        \begin{equation}
        \label{eq:r_len_bd}
             \ell(p_1) \geq \frac{1}{\Theta_1}\abs{p_1}_X \geq \frac{B}{\Theta_1} \ge 4A+c_3,
        \end{equation}
        where the second inequality follows from the fact that $\elem{p_1} \in R'_i \setminus S$ and Lemma~\ref{lem:C2_implies_old_C2}.
        Combining (\ref{eq:p_qgd}) and (\ref{eq:r_len_bd}), we get
        \[
            \abs{g}_X \ge \abs{g}_{X\cup\mathcal{H}} \geq \frac{1}{4}(4A+c_3-c_3)= A> \abs{g}_{X},
        \]
        which is a contradiction.

    \medskip
    \underline{\emph{Case 2:}} 
        for all $i \in \NN$ we have $\displaystyle \inf_{p' \in \mathcal{S}_i} th_X(p') < E$.

        Then for each \(i \in \NN\) there is a path representative \(p_i = q_i p_{1,i} \dots p_{n_i,i} r_i t_{1,i} \dots t_{m,i} \in \mathcal{S}_i\) for $g$ such that $th(p_i) \le E$. 
        It must either be the case that \(\displaystyle \liminf_{i \to \infty}\abs{r_i}_X \le E\) or \(\displaystyle \liminf_{i \to \infty}\abs{t_{j,i}}_X \le E\), for some \(j \in \{ 1, \dots, m\}\).
        We will consider the former case, as the latter is very similar.

        Since there are only finitely many elements \(x \in G\) with \(\abs{x}_X \le E \), we may pass to a subsequence $(p_{i_k})_{k \in \NN}$ such that \(\elem{r_{i_k}} = y \in R\) is some fixed element, for all \(k \in \NN\).
        It follows that
        \begin{equation}
        \label{eq:g_in_prod_case_1}
            g = \elem{p_{i_k}} \in Q \langle Q', R'_{i_k} \rangle y T_1 \dots T_m,~ \text{ for each } k \in \NN.
        \end{equation}

        Now, \(g \notin Q y T_1 \dots T_m\) (as $y \in R$), and the subset \(Q y T_1 \dots T_m\) is separable in \(G\) by
        the induction hypothesis and Remark~\ref{rem:qc_prod_sep}.
        By Lemma~\ref{lem:sep->large_minx}(a), there is a finite index normal subgroup \(O \lhd_f G\) such that \(g \notin Q O y T_1 \dots T_m\).
        The subgroup \(M_1 \cap QO\) has finite index in $M_1$ and contains $Q'$, therefore $M_1 \cap QO=N_{j_0}$, for some $j_0 \in \NN$. 
        
        Choose $k \in \NN$ such that $i_k \ge j_0$, so that $M'_{i_k} \subseteq N_{j_0} \subseteq QO$ (see \eqref{eq:Ri_def}). Then $R_{i_k}'=M'_{i_k} \cap R\subseteq QO $, hence
        \begin{equation}
        \label{eq:containment_for_case_2}
            Q \langle Q',R'_{i_k} \rangle yT_1 \dots T_m \subseteq QOyT_1 \dots T_m.    
        \end{equation}
        Since  \(g \notin Q O y T_1 \dots T_m\), inclusions \eqref{eq:g_in_prod_case_1} and \eqref{eq:containment_for_case_2} contradict each other.

    \medskip
    We have arrived to a contradiction at each of the two cases, hence the proof is complete.
\end{proof}


\section{New examples of product separable groups}
\label{sec:ex_prod_sep}
In this section we prove Theorem~\ref{thm:prod_sep}, which will follow from the three propositions below.

\begin{proposition}
\label{prop:limit_gps_are_prod_sep} 
    Limit groups are product separable.
\end{proposition}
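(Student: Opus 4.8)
The plan is to apply Theorem~\ref{thm:RZs}, so the task reduces to verifying its hypotheses for limit groups. First I would recall two structural facts: (1) every limit group $G$ is hyperbolic relative to its maximal non-cyclic abelian subgroups (this is a theorem of Dahmani, and also follows from Alibegovi\'c's work; it is by now standard), and (2) finitely generated abelian groups are product separable, being finitely generated and nilpotent — in fact, for any $s$, a product of finitely many subgroups of a finitely generated abelian group is itself a subgroup, hence separable since finitely generated abelian groups are LERF. Thus each peripheral subgroup $H_\nu$ has property RZ$_s$ for every $s \in \NN$.

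Next I would invoke QCERF-ness of limit groups, which is exactly Wilton's theorem \cite{WiltonLimitGps}: every finitely generated subgroup of a limit group is separable, and in particular every finitely generated relatively quasiconvex subgroup is separable, so $G$ is QCERF in the sense of Definition~\ref{def:QCERF}. (Alternatively, one could note that limit groups act geometrically on CAT($0$) cube complexes in the relatively hyperbolic sense and cite the Einstein--Groves/Groves--Manning machinery, but Wilton's result is the most direct.) A limit group is finitely generated by definition, so all the standing hypotheses of Theorem~\ref{thm:RZs} are met for every $s$.

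With these facts in hand, fix $s \in \NN$ and finitely generated subgroups $E_1, \dots, E_s \leqslant G$. Since limit groups are LERF, each $E_i$ is separable; moreover, by Wilton's work finitely generated subgroups of limit groups are relatively quasiconvex (they are undistorted — indeed limit groups are locally relatively quasiconvex). Hence Theorem~\ref{thm:RZs} applies and the product $E_1 \dots E_s$ is separable in $G$. As $s$ was arbitrary, $G$ has property RZ$_s$ for all $s$, i.e.\ $G$ is product separable.

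\textbf{Main obstacle.} There is essentially no deep obstacle here — the proposition is a harvest of Theorem~\ref{thm:RZs} together with known facts about limit groups. The only point requiring a little care is confirming that every finitely generated subgroup of a limit group is relatively quasiconvex with respect to the relatively hyperbolic structure (equivalently, that limit groups are \emph{locally relatively quasiconvex}); this is where I would be most careful in the write-up, citing the appropriate statement (e.g.\ from Dahmani's or Wilton's papers, using that f.g.\ subgroups of limit groups are quasi-isometrically embedded). Everything else is bookkeeping.
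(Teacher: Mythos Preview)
Your proposal is correct and follows essentially the same route as the paper: verify the hypotheses of Theorem~\ref{thm:RZs} using relative hyperbolicity with abelian peripherals (Dahmani/Alibegovi\'c), QCERF-ness via Wilton's LERF theorem, and local relative quasiconvexity. The only minor difference is attribution: the paper credits local quasiconvexity of limit groups to Dahmani \cite{Dahmani_Conv} rather than deriving it from undistortion arguments, so you should cite that directly for the point you flagged as the main obstacle.
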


\begin{proof} 
    Dahmani \cite{Dahmani_Conv} and, independently, Alibegovi\'c \cite{Alib} proved that every limit group is hyperbolic relative to a collection of conjugacy class representatives of its maximal non-cyclic finitely generated abelian subgroups. 

    Moreover, Wilton \cite{WiltonLimitGps} showed that limit groups are LERF and Dahmani \cite{Dahmani_Conv} showed they are \emph{locally quasiconvex} (that is, each of their finitely generated subgroups is relatively quasiconvex with respect to the given peripheral structure). 
    Therefore our Theorem~\ref{thm:RZs} yields that limit groups are product separable.
\end{proof}

Finitely generated Kleinian groups are not always locally quasiconvex, and we require the following two lemmas to deal with the case when one of the factors is not relatively quasiconvex.

\begin{lemma} \label{lem:Kl-1}
Let $N$ be a group and $n \ge 2$ be an integer. Suppose that  $H_1,\dots,H_n$ are subgroups of $N$ such that $H_i \lhd N$, for some $i\in \{1,\dots,n\}$, and the image of the product $H_1 \dots H_{i-1}H_{i+1} \dots H_n$ is separable in $N/H_i$. Then $H_1\dots H_n$ is separable in $N$.
\end{lemma}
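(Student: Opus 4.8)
The plan is to use the quotient map $\pi\colon N \to N/H_i$ and pull back separability. First I would observe that since $H_i \lhd N$, the product $H_1 \dots H_n$ is a union of cosets of $H_i$: indeed, because $H_i$ is normal, we may slide $H_i$ to the end (or to its position) freely, so
\[
    H_1 \dots H_n = H_1 \dots H_{i-1} H_{i+1} \dots H_n \, H_i = \pi^{-1}\bigl(\pi(H_1 \dots H_{i-1} H_{i+1} \dots H_n)\bigr).
\]
More precisely, $H_1 \dots H_n = \pi^{-1}(\overline{H_1} \dots \overline{H_{i-1}}\, \overline{H_{i+1}} \dots \overline{H_n})$, where $\overline{H_j} = \pi(H_j)$ denotes the image of $H_j$ in $N/H_i$; here I am using that $\pi(H_1\dots H_n) = \overline{H_1}\dots\overline{H_n}$ and that $\overline{H_i}$ is trivial, together with $H_i \subseteq H_1\dots H_n$ (as $n \ge 2$, some other factor could be trivial but $H_i$ itself appears, so the product contains $H_i$, whence it is a full preimage).

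Next I would invoke the hypothesis that $\overline{H_1} \dots \overline{H_{i-1}}\,\overline{H_{i+1}} \dots \overline{H_n}$ is separable in $N/H_i$, i.e. closed in $\pt(N/H_i)$. The quotient map $\pi$ is continuous when $N$ and $N/H_i$ are given their profinite topologies, since the preimage of a finite-index subgroup of $N/H_i$ is a finite-index subgroup of $N$; hence the preimage under $\pi$ of any closed set is closed. Applying this to the displayed equation shows that $H_1 \dots H_n$ is closed in $\pt(N)$, i.e. separable in $N$, as required.

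There is essentially no obstacle here beyond bookkeeping: the one point that needs a line of care is the identity $\pi(H_1\dots H_n) = \overline{H_1}\dots\overline{H_n}$ and the fact that $H_1\dots H_n = \pi^{-1}\pi(H_1\dots H_n)$, which holds precisely because $\ker\pi = H_i$ is one of the factors (so $H_1\dots H_n \cdot H_i = H_1 \dots H_n$ using normality of $H_i$ to move it through the later factors). I would write this out in two short displays and conclude. Since the statement is purely topological/combinatorial, no relative hyperbolicity or quasiconvexity input is needed for this particular lemma — it is a general group-theoretic fact that will later be combined with Theorem~\ref{thm:RZs} in the proof of Theorem~\ref{thm:prod_sep}.
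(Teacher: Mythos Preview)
Your proposal is correct and follows essentially the same argument as the paper: both use normality of $H_i$ to rewrite $H_1\dots H_n$ as $(H_1\dots H_{i-1}H_{i+1}\dots H_n)H_i = \varphi^{-1}(S)$, where $\varphi\colon N\to N/H_i$ is the quotient map and $S$ is the separable image, and then conclude by continuity of $\varphi$ with respect to the profinite topologies.
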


\begin{proof} Let $\varphi:N \to N/H_i$ denote the natural epimorphism. By the assumptions, the subset $S=\varphi(H_1 \dots H_{i-1}H_{i+1} \dots H_n)$ is separable in $N/H_i$. Observe that \[H_1\dots H_n=(H_1 \dots H_{i-1}H_{i+1} \dots H_n) H_i=\varphi^{-1}(S),\] as $H_i \lhd N$, whence 
$H_1\dots H_n$ is closed in the profinite topology on $N$ because group homomorphisms are continuous with respect to profinite topologies.
\end{proof}

\begin{lemma}
\label{lem:Kl-2}
    Let \(G\) be a group with finitely generated subgroups \(F_1, \dots, F_n \leqslant G\), $n \ge 2$.
    Suppose that there exists a finite index subgroup \(G' \leqslant_f G\) and an index \(i \in \{ 1, \dots, n\}\) such that \(F_i'=F_i \cap G' \lhd G'\) and \(G'/F_i'\) has property \(RZ_{n-1}\).
    Then the product \(F_1 \dots F_n\) is separable in $G$.
\end{lemma}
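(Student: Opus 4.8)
The plan is to reduce the statement about the product $F_1 \dots F_n$ in $G$ to a statement about a product in the finite-index subgroup $G'$, and then apply Lemma~\ref{lem:Kl-1} to pass to the quotient $G'/F_i'$, where property $RZ_{n-1}$ provides the desired separability.

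First I would observe that, since separability of a subset is preserved under passing to a finite-index subgroup and under finite unions of translates (cf.\ Lemma~\ref{lem:fi_dc} and Remark~\ref{rem:sep_props}), it suffices to prove that $F_1' \dots F_n'$ is separable in $G'$, where $F_j' = F_j \cap G'$ for each $j$. Indeed, writing each $F_j$ as a finite union of cosets of $F_j'$ and using that $G'$ is separable in $G$ (being finite index) together with Lemma~\ref{lem:induced_top}(b), one sees that separability of $F_1 \dots F_n$ in $G$ follows from separability in $G'$ of all the relevant translated products $a_1 F_1' a_2 \dots F_n' a_{n+1}$; and such a translated product is, after conjugating, again of the form (translate of) $F_{\sigma(1)}'^{\,g_1} \dots F_{\sigma(n)}'^{\,g_n}$ for suitable elements, so it is enough to handle products of conjugates. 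Here one must be slightly careful: conjugating $F_i'$ by an element of $G'$ keeps it normal in $G'$, and conjugating the quotient $G'/F_i'$ by that element is an isomorphism, so property $RZ_{n-1}$ is preserved. Thus we genuinely reduce to showing $H_1 \dots H_n$ is separable in $G'$, where $H_j = F_j'^{\,g_j}$ are finitely generated, $H_i \lhd G'$, and $G'/H_i$ has property $RZ_{n-1}$.

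Next I would apply Lemma~\ref{lem:Kl-1} with $N = G'$: it remains to check that the image of $H_1 \dots H_{i-1} H_{i+1} \dots H_n$ in $G'/H_i$ is separable there. But this image equals $\overline{H}_1 \dots \overline{H}_{i-1} \overline{H}_{i+1} \dots \overline{H}_n$, a product of $n-1$ subgroups of $G'/H_i$, each of which is finitely generated (being the image of a finitely generated subgroup). Since $G'/H_i$ has property $RZ_{n-1}$, this product is separable in $G'/H_i$, as required. Unwinding the reductions then gives that $F_1 \dots F_n$ is separable in $G$.

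The main obstacle is bookkeeping rather than a conceptual difficulty: one has to verify that after breaking $F_j$ into cosets of $F_j'$ and rearranging, the normal factor $F_i'$ (or a conjugate of it) still sits normally inside the relevant finite-index subgroup and that the quotient still has $RZ_{n-1}$. This is where care is needed, because the coset representatives of $F_j'$ in $F_j$ need not lie in the normaliser-friendly position; the clean way around it is to conjugate the whole product so that the $i$-th factor is put in a standard position and then use that $RZ_{n-1}$ is an isomorphism-invariant of the quotient group. Everything else is a routine application of the permanence properties of separable subsets recorded in Section~\ref{sec:prelim} (Remark~\ref{rem:sep_props}, Lemmas~\ref{lem:induced_top}, \ref{lem:fi_dc}).
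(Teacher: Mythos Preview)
Your overall strategy is the same as the paper's --- decompose $F_1 \dots F_n$ into finitely many translates of products of conjugates of the $F_j' = F_j \cap G'$, then invoke Lemma~\ref{lem:Kl-1} --- but there is a genuine gap in the reduction. The conjugating elements $g_j$ that appear are products of coset representatives of $F_j'$ in $F_j$; these lie in $G$, not in $G'$. Since $G'$ is only assumed to be of finite index (not normal) in $G$, the conjugates $(F_j')^{g_j}$ need not be contained in $G'$. Your proposed fix of conjugating the whole product so that the $i$-th factor becomes $F_i'$ does arrange $H_i = F_i' \lhd G'$, but the \emph{other} factors $(F_j')^{g_j g_i^{-1}}$, for $j\neq i$, still need not lie in $G'$. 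Hence the product $H_1 \dots H_n$ is not a subset of $G'$, and Lemma~\ref{lem:Kl-1} cannot be applied with $N = G'$.

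The paper repairs this by first passing to a finite-index subgroup $N \lhd_f G$ that is \emph{normal} in $G$ and contained in $G'$, and setting $H_j = F_j \cap N$. Normality of $N$ in $G$ guarantees that every conjugate $H_j^{g_j}$ stays inside $N$; moreover $H_i = F_i' \cap N$ is normal in $N$ (since $N \leqslant G'$ and $F_i' \lhd G'$), so $H_i^{g_i} \lhd N^{g_i} = N$ for any $g_i \in G$, and $N/H_i^{g_i} \cong N/H_i \hookrightarrow G'/F_i'$ inherits property RZ$_{n-1}$ as a subgroup. Lemma~\ref{lem:Kl-1} then applies with this $N$, and separability in $G$ follows via Lemma~\ref{lem:induced_top}(b). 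Once you insert this passage to a normal $N$, your argument goes through exactly as written.
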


\begin{proof} Let $N \lhd_f G$ be a finite index normal subgroup contained in $G'$, and set $H_j=F_j \cap N$, for $j=1,\dots,n$. 

Since $|F_j:H_j|<\infty$, for each $j=1,\dots,n$, the product $F_1\dots F_n$ can be written as a finite union of subsets of the form $h_1H_1h_2H_2 \dots h_n H_n$, where $h_1,\dots,h_n \in G$. Observe that 
\begin{equation*}\label{eq:prod_of_cosets}
h_1H_1h_2H_2 \dots h_n H_n= H_1^{g_1} H_2^{g_2} \dots H_n^{g_n} g_n, 
\end{equation*}
where $g_j=h_1 \dots h_j \in G$, $j=1,\dots,n$. Thus, in view of Remark~\ref{rem:sep_props}, in order to prove the separability of $F_1\dots F_n$ in $G$ it is enough to show that the product $H_1^{g_1} H_2^{g_2} \dots H_n^{g_n}$ is separable, for arbitrary $g_1,\dots,g_n \in G$.

Given any elements $g_1,\dots,g_n \in G$, the subgroups $H_1^{g_1}, H_2^{g_2}, \dots , H_n^{g_n} \leqslant G$ are finitely generated and are contained in $N$. Moreover, since the subgroup $H_i=F_i \cap N=F_i' \cap N$ is normal in $N$ and $N \leqslant G'$ is normal in $G$, we see that $H_i^{g_i} \lhd N$ and \[ N/H_i^{g_i}=N^{g_i}/H_i^{g_i} \cong N/H_i \leqslant G'/F_i'.\]

Therefore the group $N/H_i^{g_i}$ has RZ$_{n-1}$, as a subgroup of $G'/F_i'$, so the image of the product $H_1^{g_1} \dots H_{i-1}^{g_{i-1}}H_{i+1}^{g_{i+1}} \dots H_n^{g_n}$ is separable in $N/H_i^{g_i}$. Lemma~\ref{lem:Kl-1} now implies that $H_1^{g_1} H_2^{g_2} \dots H_n^{g_n}$ is separable in $N$, hence it is also separable in $G$ by Lemma~\ref{lem:induced_top}(b). As we observed above, the latter yields the separability of $F_1 \dots F_n$ in $G$, as required.
\end{proof}

\begin{proposition}
\label{prop:Kleinian_gps_are_prod_sep} Finitely generated Kleinian groups are product separable.
\end{proposition}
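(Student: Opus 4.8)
The plan is to reduce the general case to Theorem~\ref{thm:RZs} by passing to a finite-index subgroup that is relatively hyperbolic with "nice" peripheral structure, and then to handle the subgroups $F_i$ which fail to be relatively quasiconvex by using the geometric structure of finitely generated Kleinian groups together with Lemma~\ref{lem:Kl-2}.

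First I would recall the relevant structure theory. Let $G$ be a finitely generated Kleinian group. By Selberg's lemma $G$ has a torsion-free finite-index subgroup, and by the tameness theorem (Agol, Calegari--Gabai) together with the work of Canary, a torsion-free finitely generated Kleinian group is topologically tame, so after passing to a finite-index subgroup we may assume $G = \pi_1(M)$ for a tame hyperbolic $3$-manifold $M$. A finitely generated subgroup $F \leqslant G$ is then either geometrically finite — equivalently relatively quasiconvex with respect to the cusp subgroups — or geometrically infinite. By the covering theorem of Canary (and Thurston), a geometrically infinite finitely generated subgroup $F$ of infinite index virtually corresponds to a fibre: more precisely, there is a finite-index subgroup $G' \leqslant_f G$ containing $F$ such that $F \cap G' \lhd G'$ with $G'/(F\cap G')$ either infinite cyclic or a closed/ punctured-surface group times nothing extra — in any case a finitely generated Fuchsian or free group (with the fibre/semifibre normal). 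If $F$ has finite index in $G$ it may simply be absorbed into the ambient group. This is the geometric input that lets us invoke Lemma~\ref{lem:Kl-2}.

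Now I would argue by induction on the number $n$ of factors in the product $F_1 \dots F_n$, the goal being separability in $G$. The base cases $n\le 1$ are Agol's LERF theorem for finitely generated Kleinian groups. For the inductive step, pass to a torsion-free finite-index normal subgroup and then note it suffices (by Remark~\ref{rem:sep_props} and Lemma~\ref{lem:induced_top}) to prove separability of products of conjugates in that subgroup; so assume $G$ is torsion-free and tame. If every $F_i$ is geometrically finite, then each $F_i$ is relatively quasiconvex in $G$, and since $G$ is hyperbolic relative to its (virtually abelian, hence product separable) cusp subgroups, Theorem~\ref{thm:RZs} applies directly and $F_1\dots F_n$ is separable. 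Otherwise some $F_i$ is geometrically infinite (and of infinite index, since finite-index factors are trivial to remove by absorbing into $G$ and reducing $n$). By the covering-theorem discussion above there is $G' \leqslant_f G$ with $F_i' = F_i \cap G' \lhd G'$ and $G'/F_i'$ a finitely generated free or Fuchsian group. Finitely generated Fuchsian groups and free groups are product separable (free groups by Ribes--Zalesskii \cite{RibesZal}; finitely generated Fuchsian groups are locally quasiconvex LERF hyperbolic, hence product separable by \cite{MinGFERF}, or are virtually surface groups). In particular $G'/F_i'$ has property $\mathrm{RZ}_{n-1}$. Hence Lemma~\ref{lem:Kl-2} gives that $F_1\dots F_n$ is separable in $G$, completing the induction.

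The main obstacle is the geometric step: justifying precisely that a geometrically infinite finitely generated subgroup of infinite index in a tame Kleinian group is, after passing to a finite-index supergroup, a normal "fibre-type" subgroup with virtually free or Fuchsian quotient. This requires care with the covering theorem — one must pass to the cover $M_F \to M$, use tameness of $M_F$ to see it is homeomorphic to $\Sigma \times \mathbb{R}$ for a surface $\Sigma$, and then invoke Canary's covering theorem to deduce that this end is "close to" an end of a finite cover of $M$, which translates into the normality of $F$ (or a finite-index overgroup of $F$) with quotient acting on the $\mathbb R$-factor, i.e. virtually cyclic or with a surface acting. I would cite Canary's covering theorem and the tameness theorem for the precise statement, and remark that this is exactly the structural ingredient that Lemma~\ref{lem:Kl-2} was designed to consume. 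A secondary technical point is bookkeeping with finite-index subgroups and conjugation so that the hypotheses of Lemma~\ref{lem:Kl-2} are literally met for each product of conjugates arising in the decomposition; this is routine given Remark~\ref{rem:sep_props} and Lemma~\ref{lem:induced_top}.
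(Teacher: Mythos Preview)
Your overall strategy is the same as the paper's: use the tameness theorem together with Canary's covering theorem to obtain a dichotomy (each finitely generated subgroup is either geometrically finite, hence relatively quasiconvex, or a virtual fibre), then apply Theorem~\ref{thm:RZs} in the first case and Lemma~\ref{lem:Kl-2} in the second. However, there is a genuine gap, and it is exactly the step you flag as the ``main obstacle'' without actually resolving it.

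The covering theorem does \emph{not} give the virtual-fibre conclusion for an arbitrary tame hyperbolic $3$-manifold $M$. Its alternative (ii), which produces a finite cover of $M$ fibring over $S^1$ with $F$ as fibre, requires $M$ to have finite volume; alternative (i) only says that a neighbourhood of the geometrically infinite end of $M_F$ finitely covers a neighbourhood of an end of $M$, which does not force $F$ to be normal in any finite-index subgroup of $G$. In particular, if $G$ itself is geometrically infinite (e.g.\ a degenerate surface group), your argument does not go through. The paper closes this gap by a two-step reduction you omit: if $G\backslash\mathbb{H}^3$ has infinite volume then $G$ is abstractly isomorphic to a geometrically finite Kleinian group (via tameness and \cite[Theorem~4.10]{Matsuzki-Taniguchi}), and a geometrically finite Kleinian group embeds into a torsion-free lattice $G^*$ by Brooks' theorem. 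One then works entirely inside the finite-volume group $G^*$, where the dichotomy is valid.

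A smaller point: once one is in the finite-volume setting, the quotient $G'/(F_i\cap G')$ coming from a virtual fibre is simply $\mathbb{Z}$, not a Fuchsian or free group. Your description of the quotient as a ``closed/punctured-surface group'' is not what the covering theorem produces and is not needed; the $\mathbb{Z}$ quotient is product separable, which is all Lemma~\ref{lem:Kl-2} requires.
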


\begin{proof} Let $G$ be a finitely generated discrete subgroup of $\mathrm{Isom}(\mathbb{H}^3)$. We will first reduce the proof to the case when $G\backslash \mathbb{H}^3$ is a finite volume manifold. This idea is inspired by the argument of Manning and Mart\'{i}nez-Pedroza used in the proof of \cite[Corollary~1.5]{MMPSep}.

Using Selberg's lemma, we can find a torsion-free finite index subgroup $K \leqslant G$. Since product separability of $K$ implies that of $G$ (\cite[Lemma~11.3.5]{Ribes-book}),  without loss of generality we can assume that $G$ is torsion-free. It follows that $G$ acts freely and properly discontinuously on $\mathbb{H}^3$, so that $M=G \backslash \mathbb{H}^3$ is a complete hyperbolic $3$-manifold. 

If $M$ has infinite volume then, by \cite[Theorem~4.10]{Matsuzki-Taniguchi}, $G$ is isomorphic to a geometrically finite Kleinian group. Thus we can further assume that $G$ is geometrically finite, which allows us to apply a theorem of Brooks \cite[Theorem~2]{Brooks_Extensions} to find an embedding of $G$ into a torsion-free Kleinian group $G^*$ such that $G^* \backslash \mathbb{H}^3$ is a finite volume manifold. If $G^*$ is product separable, then so is any subgroup of it, hence we have made the promised reduction. 

Thus we can suppose that $G =\pi_1(M)$, for a hyperbolic $3$-manifold $M$ of finite volume. The tameness conjecture, proved by Agol \cite{Agol-tame} and Calegari-Gabai \cite{Cal-Gab}, combined with a result of Canary \cite[Corollary~8.3]{Canary}, imply that any finitely generated subgroup $F \leqslant G$ is either geometrically finite or  is a virtual fibre subgroup. The latter means that there is a finite index subgroup $G' \leqslant_f G$ such that $F'=F \cap G' \lhd G'$ and $G'/F' \cong \mathbb{Z}$.

By \cite[Theorem~3.7]{Matsuzki-Taniguchi}, $G$ is a geometrically finite subgroup of $\mathrm{Isom}(\mathbb{H}^3)$, hence it is finitely generated and hyperbolic relative to a finite collection of finitely generated virtually abelian subgroups, each of which is product separable by \cite[Lemma~11.3.5]{Ribes-book}. Moreover, by \cite[Corollary~1.6]{HruskaRHCG}, a subgroup of $G$ is relatively quasiconvex if and only if it is geometrically finite. Finally, $G$ is LERF (and, hence, QCERF) by \cite[Corollary~9.4]{Agol}.

Let $F_1, \dots, F_n$ be finitely generated subgroups of $G$, $n \ge 2$. If $F_j$ is geometrically finite, for all $j=1,\dots,n$, then the product $F_1 \dots F_n$ is separable in $G$ by Theorem~\ref{thm:RZs}. Thus we can suppose that $F_i$ is not geometrically finite, for some $i \in \{1,\dots,n\}$. By the above discussion, in this case $F_i$ must be a virtual fibre subgroup of $G$. Since $\mathbb{Z}$ is product separable, we can apply Lemma~\ref{lem:Kl-2} to conclude that $F_1 \dots F_n$ is separable in $G$, completing the proof. 
\end{proof}

\begin{proposition}
\label{prop:balanced_gps_are_prod_sep} 
    Let $G$ be the fundamental group of a finite graph of free groups with cyclic edge groups. 
    If $G$ is balanced then it is product separable.
\end{proposition}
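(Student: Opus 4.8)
The plan is to realise $G$ as a relatively hyperbolic group to which Theorem~\ref{thm:RZs} applies, so the crux is identifying an appropriate peripheral structure with product separable peripheral subgroups and verifying QCERF-ness. By a theorem of Wise \cite{Wise-balanced}, the balanced hypothesis is equivalent to $G$ containing no non-Euclidean Baumslag--Solitar subgroups $BS(m,n)$ with $n\ne\pm m$; the remaining $BS(m,m)$ subgroups, together with $\ZZ^2$ subgroups coming from the graph-of-groups structure, are the obstruction to hyperbolicity. The first step is therefore to invoke the structure theory for such groups (due to Wise \cite{Wise-balanced}, building on work of Kropholler and others): a balanced finite graph of free groups with cyclic edge groups is hyperbolic relative to a finite collection of subgroups each of which is either free abelian of rank $2$ or a Klein-bottle-type group, i.e.\ virtually $\ZZ^2$. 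In particular the peripheral subgroups are finitely generated virtually abelian, hence product separable (a finitely generated virtually abelian group is polycyclic, so product separable by \cite{L-W}, or directly: it is abelian-by-finite and products of subgroups in abelian groups are subgroups).

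Second, I would establish that $G$ is QCERF. Here one uses that $G$ acts properly and cocompactly on a CAT($0$) cube complex: this is precisely the content of Hsu--Wise \cite[Theorem~10.4]{Hsu-Wise} (already cited in the excerpt for Corollary~\ref{cor:virt_spec}). Combined with the balanced hypothesis, Wise's work shows $G$ is virtually compact special, so by Haglund--Wise \cite{Haglund-Wise} and Agol \cite{Agol} every quasiconvex (in the cube-complex sense) subgroup is separable. To convert this into QCERF-ness in the relatively hyperbolic sense, one invokes the comparison between relative quasiconvexity and quasiconvexity of the cubical action: for a relatively hyperbolic group acting properly cocompactly on a CAT($0$) cube complex in a way compatible with the peripheral structure, relatively quasiconvex subgroups are exactly the ones that are quasiconvex-with-respect-to-the-cubulation after passing to a finite index subgroup (this follows from the work of Sageev--Wise and the relative cubulation machinery; alternatively one can cite Einstein--Groves \cite{Einstein-Groves} or Groves--Manning \cite{Grov-Man-spec} whose framework directly gives that such groups are QCERF once the peripheral subgroups are LERF, which finitely generated virtually abelian groups are). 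Thus $G$ is a finitely generated QCERF relatively hyperbolic group whose peripheral subgroups are finitely generated virtually abelian.

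Third, I would simply apply Theorem~\ref{thm:RZs}: finitely generated virtually abelian groups have property $RZ_s$ for all $s$ (being product separable), so for any finitely generated relatively quasiconvex subgroups $F_1,\dots,F_s\leqslant G$ the product $F_1\dots F_s$ is separable in $G$. But for product separability of $G$ we need separability of products of \emph{arbitrary} finitely generated subgroups, not only the relatively quasiconvex ones. In a finite graph of free groups with cyclic edge groups every finitely generated subgroup $F$ is itself a finite graph of free groups with cyclic edge groups (by Bass--Serre theory, $F$ acts on the same tree), hence $F$ is again relatively hyperbolic with virtually-$\ZZ^2$ or trivial peripherals. If $F$ is, moreover, relatively quasiconvex in $G$ we are done; the potential failure is that $F$ may be \emph{distorted}. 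However, the relevant fact here is that in these groups finitely generated subgroups that are not relatively quasiconvex are controlled: I would argue, parallel to the Kleinian case in Proposition~\ref{prop:Kleinian_gps_are_prod_sep} via Lemma~\ref{lem:Kl-2}, that any finitely generated non-relatively-quasiconvex $F\leqslant G$ is a ``virtual fibre'' type subgroup, i.e.\ there is $G'\leqslant_f G$ with $F\cap G'\lhd G'$ and $G'/(F\cap G')$ virtually cyclic (or more generally with $G'/(F\cap G')$ product separable), allowing Lemma~\ref{lem:Kl-2} to reduce the separability of a general product $F_1\dots F_n$ to the relatively quasiconvex case handled by Theorem~\ref{thm:RZs}.

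\textbf{Expected main obstacle.} The delicate point is the last paragraph: precisely classifying the finitely generated subgroups of $G$ that fail to be relatively quasiconvex and showing each is normal-with-product-separable-quotient inside a finite index subgroup. Unlike the $3$-manifold case, where the tameness theorem gives a clean geometrically-finite-or-virtual-fibre dichotomy, here one must extract the analogous statement from the JSJ/graph-of-groups structure of $G$ and of $F$, using that edge groups are cyclic to control how $F$ can be distorted. I expect this to require a careful Bass--Serre-theoretic argument (possibly citing results of Wise or Hsu--Wise on the structure of subgroups), and it is the step most likely to need the full strength of the balanced hypothesis beyond what is used for relative hyperbolicity.
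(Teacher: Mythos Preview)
Your proposal has two genuine gaps and one misdirected worry.

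First, the peripheral structure is wrong. The peripheral subgroups arising for a balanced finite graph of free groups with cyclic edge groups are \emph{not} virtually $\ZZ^2$ in general: applying Bigdely--Wise \cite[Theorem~1.4]{Big-Wise} (as the paper does), each peripheral $Q$ is the fundamental group of a finite graph of infinite cyclic groups, i.e.\ a generalised Baumslag--Solitar group. Being balanced means unimodular in Levitt's sense, whence $Q$ is virtually $F\times\ZZ$ with $F$ free (possibly non-abelian), not virtually abelian. Product separability of such $Q$ then requires You's theorem that $F\times\ZZ$ is product separable \cite{You}; this is Lemma~\ref{lem:graph_of_cyclic_gps} in the paper.

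Second, your route to QCERF is circular. You want to deduce QCERF from virtual specialness, but virtual specialness of these groups is precisely Corollary~\ref{cor:virt_spec}, which the paper obtains as a \emph{consequence} of product separability (confirming the Hsu--Wise conjecture). The paper instead uses Wise's theorem \cite[Theorem~5.1]{Wise-balanced} that balanced implies LERF directly; since the paper also proves $G$ is locally relatively quasiconvex (every finitely generated subgroup is relatively quasiconvex, via Bigdely--Wise's ``tamely generated'' criterion \cite[Theorem~2.6]{Big-Wise}), LERF immediately gives QCERF.

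Finally, your anticipated main obstacle---classifying non-relatively-quasiconvex finitely generated subgroups and running a virtual-fibre argument via Lemma~\ref{lem:Kl-2}---does not arise: local quasiconvexity means there are no such subgroups, so Theorem~\ref{thm:RZs} applies to all finitely generated subgroups and you are done. The dichotomy you sketch does not hold for these groups and is not needed.
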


Limit groups and Kleinian groups are hyperbolic relative to virtually abelian subgroups. The peripheral subgroups from relatively hyperbolic structures on groups in Proposition~\ref{prop:balanced_gps_are_prod_sep} will be fundamental groups of graphs of cyclic groups, which motivates the next auxiliary lemma.

\begin{lemma}
\label{lem:graph_of_cyclic_gps} 
    Suppose that $G$ is the fundamental group of a finite graph of infinite cyclic groups. If $G$ is balanced then it is product separable. 
\end{lemma}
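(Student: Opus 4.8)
The plan is to prove Lemma~\ref{lem:graph_of_cyclic_gps} by induction on the complexity of the graph-of-groups decomposition, reducing to the two classical Bass--Serre constructions: HNN extensions and amalgamated free products over infinite cyclic subgroups. Since $G$ is the fundamental group of a \emph{finite} graph of infinite cyclic groups, $G$ is a finitely generated group; moreover, since $G$ is balanced, every vertex and edge group is infinite cyclic, and no non-Euclidean Baumslag--Solitar subgroup $BS(m,n)$ with $n \neq \pm m$ can occur. I would first record that $G$ is then an elementarily free-like group: more precisely, by Wise's work \cite{Wise-balanced} (or by a direct argument), a balanced fundamental group of a finite graph of cyclic groups is either virtually a finitely generated abelian group, or it splits as a finite graph of groups whose peripheral structure makes it hyperbolic relative to finitely generated virtually abelian subgroups. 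The cleanest route is to show that $G$ is hyperbolic relative to a finite collection of finitely generated virtually abelian (indeed, virtually $\ZZ^k$) subgroups, that $G$ is QCERF, and that $G$ is locally relatively quasiconvex; then Theorem~\ref{thm:RZs} applies directly because finitely generated virtually abelian groups are product separable.

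The key steps, in order, would be as follows. First, reduce to the case where $G$ is \emph{not} virtually abelian: if $G$ is virtually (finitely generated) abelian, it is product separable by \cite[Lemma~11.3.5]{Ribes-book} and we are done, so assume otherwise. Second, establish a relatively hyperbolic structure: a balanced graph of infinite cyclic groups with no Euclidean pathology is hyperbolic relative to the set of maximal ``abelian-like'' peripheral subgroups arising from the non-acylindrical parts of the splitting; these peripheral subgroups are themselves balanced graphs of cyclic groups of strictly smaller complexity, so by the inductive hypothesis they are product separable — but in fact one can arrange them to be finitely generated virtually abelian, at which point product separability is immediate. Third, verify that $G$ is QCERF: balanced graphs of free groups with cyclic edge groups are LERF by Wise \cite{Wise-balanced}, and a balanced graph of cyclic groups embeds appropriately or can be handled by the same subgroup-separability machinery; alternatively cite that such groups are virtually special (this is where the later Corollary~\ref{cor:virt_spec}-type input or Hsu--Wise \cite{Hsu-Wise} could be invoked), hence QCERF. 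Fourth, verify local relative quasiconvexity: every finitely generated subgroup of $G$ is relatively quasiconvex with respect to the chosen peripheral structure — this follows from the structure of subgroups of graphs of cyclic groups (they are again graphs of cyclic groups, and the undistorted ones are quasiconvex), combined with Hruska's criteria \cite{HruskaRHCG}. Finally, apply Theorem~\ref{thm:RZs}: since $G$ is a finitely generated QCERF relatively hyperbolic group whose peripheral subgroups are product separable, and every finitely generated subgroup is relatively quasiconvex, the product of any finite collection of finitely generated subgroups is separable, i.e.\ $G$ is product separable.

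I expect the main obstacle to be the second step: correctly identifying the peripheral structure and proving that the peripheral subgroups are finitely generated virtually abelian (or at least product separable via induction on complexity). The subtlety is that an arbitrary balanced graph of infinite cyclic groups need not be relatively hyperbolic with abelian peripherals in an obvious way — one must pass to a JSJ-type or acylindrical decomposition and control the ``hanging'' abelian pieces. Wise's theorem \cite{Wise-balanced} tells us balancedness is equivalent to the absence of non-Euclidean Baumslag--Solitar subgroups, so the bad case ($BS(1,n)$ etc.) is excluded, but the \emph{Euclidean} Baumslag--Solitar group $BS(n,n) = \ZZ \rtimes \ZZ$ (Klein-bottle-like, when $n=\pm1$) and more generally finitely generated subgroups isomorphic to $\ZZ^2$ or to fundamental groups of torus/Klein-bottle-like objects can appear, and these are precisely what must form the peripheral collection; one needs that there are only finitely many conjugacy classes of maximal such subgroups and that they are finitely generated. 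I would handle this by induction on the number of edges of the graph of groups: splitting $G$ as an HNN extension or amalgam over a cyclic subgroup, applying the inductive hypothesis to the vertex group(s), and using a combination theorem for relative hyperbolicity (e.g.\ \cite{Osin-comb}) to assemble the peripheral structure, being careful that the stable letter conjugation relations in the balanced case only produce $\ZZ^2$-like peripherals rather than solvable Baumslag--Solitar ones. Steps three and four are then comparatively routine given the literature (\cite{Wise-balanced, Hsu-Wise, HruskaRHCG}), and step five is a direct citation of Theorem~\ref{thm:RZs}.
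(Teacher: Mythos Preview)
Your approach has a genuine gap at step two, and it is fatal rather than a technicality. In the key case where every edge group is infinite cyclic (so $G$ is a generalised Baumslag--Solitar group), the balanced hypothesis means $G$ is \emph{unimodular} in Levitt's sense, and by \cite[Proposition~2.6]{Levitt-GBS} such a group is virtually $F \times \ZZ$ with $F$ free. But $F \times \ZZ$ with $F$ non-abelian is a prototypical example of a group that is \emph{not} relatively hyperbolic with respect to any proper peripheral structure: it has infinite centre, whereas a non-elementary relatively hyperbolic group has finite centre. Equivalently, the maximal abelian subgroups of $F \times \ZZ$ all share the central $\ZZ$, so no almost-malnormal collection of virtually abelian peripherals exists. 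Hence the relatively hyperbolic structure you are hoping to build simply does not exist, and Theorem~\ref{thm:RZs} cannot be invoked.

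The paper's proof avoids this entirely and is much shorter. It inducts on the number of edges: trivial edge groups are stripped off via free-product decompositions and Coulbois's theorem \cite{Coulb}, reducing to the GBS case. Then Levitt's structure theorem gives a finite-index subgroup $K \cong F \times \ZZ$, and You's theorem \cite[Theorem~5.1]{You} says $F \times \ZZ$ is product separable; product separability passes to finite-index supergroups. Note also that this lemma is exactly what supplies the product separability of the \emph{peripheral} subgroups in the proof of Proposition~\ref{prop:balanced_gps_are_prod_sep}, where Theorem~\ref{thm:RZs} is legitimately applied to the ambient graph of free groups; so trying to use Theorem~\ref{thm:RZs} already for the lemma would invert the logical order of the argument.
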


\begin{proof} 
    Suppose that $G=\pi_1(G_{-},\Gamma)$, where $(G_{-},\Gamma)$ is a graph of groups, associated to a finite connected graph $\Gamma$ with vertex set $V \Gamma$ and edge set $E\Gamma$. 
    According to the assumptions, each vertex group $G_v$, $v \in V\Gamma$, is infinite cyclic. 
    As usual, we use $G_e$ to denote the edge group corresponding to an edge $e \in E\Gamma$ (see Dicks and Dunwoody \cite[Section I.3]{Dicks-Dunw} for the definition and general theory of graphs of groups).

    If $|E\Gamma|=0$ then $G$ is cyclic and, thus, product separable. Let us proceed by induction on $|E\Gamma|$.

    Assume first that one of the edge groups $G_e$ is trivial. 
    If removing $e$ disconnects $\Gamma$ then $G$ splits as a free product $G_1*G_2$, where $G_1$, $G_2$ are the fundamental groups of finite graphs of infinite cyclic groups corresponding to the two connected components of $\Gamma\setminus\{e\}$. 
    Otherwise, $G \cong G_1*{G_2}$, where $G_1$ the fundamental group of a finite graph of infinite cyclic groups corresponding to the graph $\Gamma\setminus\{e\}$ and $G_2$ is infinite cyclic. 
    Moreover, $G_1$ and $G_2$ will be balanced as subgroups of a balanced group $G$. Hence $G_1$ and $G_2$ will be product separable by induction, so $G \cong G_1*G_2$ will be product separable by Coulbois' theorem \cite[Theorem~1]{Coulb}.

    Therefore we can assume that every edge group $G_e$ is infinite cyclic. This means that $G$ is a \emph{generalised Baumslag-Solitar group}. 
    The assumption that $G$ is balanced now translates into the assumption that $G$ is \emph{unimodular}, using Levitt's terminology from \cite{Levitt-GBS}. 
    We can now apply \cite[Proposition~2.6]{Levitt-GBS} to deduce that $G$ has a finite index subgroup $K$ isomorphic to the direct product $F \times \mathbb{Z}$, where $F$ is a free group.

    Now, $K \cong F\times \mathbb{Z}$ is product separable by You's result \cite[Theorem~5.1]{You}, hence $G$ is product separable as a finite index supergroup of $K$ (see \cite[Lemma~11.3.5]{Ribes-book}). Thus the lemma is proved.
\end{proof}

\begin{proof}[Proof of Proposition~\ref{prop:balanced_gps_are_prod_sep}] 
    Suppose that $G$ splits as the fundamental group of a finite graph of free groups $(G_-,\Gamma)$ with cyclic edge groups.

    Without loss of generality we can assume that each vertex group is a finitely generated free group (in particular, $G$ is finitely generated). 
    Indeed, otherwise $G \cong G_1*F$, where $G_1$ is the fundamental group of a finite graph of finitely generated free groups with cyclic edge groups and $F$ is free (this follows from the fact that any element of a free group is the product of only finitely many free generators). 
    In this case we can deduce the product separability of $G$ from the product separability of $G_1$ and $F$ by \cite[Theorem~1]{Coulb} (recall that $F$ is product separable by Ribes and Zalesskii \cite[Theorem ~2.1]{RibesZal}).

    Now, for each vertex group $G_v$, choose and fix a finite family of maximal infinite cyclic subgroups $\mathbb{P}_v$ such that

    \begin{itemize}
        \item[(a)] no two subgroups from $\mathbb{P}_v$ are conjugate in $G_v$;
        \item[(b)] for every edge $e$ incident to $v$ in $\Gamma$, the image of the cyclic group $G_e$ in $G_v$ is conjugate into one of the subgroups from $\mathbb{P}_v$.
    \end{itemize}

    Condition (a) means that each $G_v$ is hyperbolic relative to the finite family $\mathbb{P}_v$ (for example, by \cite[Theorem~7.11]{BowditchRHG}), and condition (b) means that each edge group of the given splitting of $G$ is parabolic in the corresponding vertex groups. 
    Therefore we can apply the work of Bigdely and Wise \cite[Theorem~1.4]{Big-Wise} to conclude that $G$ is hyperbolic relative to a finite collection of subgroups $\mathbb{Q}$, where each $Q \in \mathbb{Q}$ acts cocompactly on a \emph{parabolic tree} (see \cite[Definition~1.3]{Big-Wise}) with vertex stabilisers conjugate to elements of $\bigcup_{v \in V\Gamma} \mathbb{P}_v$ and edge stabilisers conjugate to elements of $\{ G_e \mid e \in \Gamma \}$. 
    The structure theorem for groups acting on trees (\cite[Section~I.4.1]{Dicks-Dunw}) implies that every  $Q \in \mathbb{Q}$ is isomorphic to the fundamental group of a finite graph of infinite cyclic groups. 
    Since $Q$ is balanced, being a subgroup of $G$, we can apply Lemma~\ref{lem:graph_of_cyclic_gps} to conclude that each  $Q \in \mathbb{Q}$ is product separable.
    By Wise's result \cite[Theorem~5.1]{Wise-balanced} $G$ is LERF, hence we can apply our Theorem~\ref{thm:RZs} to deduce that the product of a finite number of finitely generated relatively quasiconvex subgroups is separable in $G$. 

    To establish the product separability of $G$ it remains to show that it is locally quasiconvex. To achieve this we will again use the results of Bigdely and Wise. 
    More precisely, according to \cite[Theorem~2.6]{Big-Wise}, a subgroup of $G$ is relatively quasiconvex if it is \emph{tamely generated}.  

    Let $H \leqslant G$ be a finitely generated subgroup. 
    The splitting of $G$ as the fundamental group of the graph of groups $(G_-,\Gamma)$ induces a splitting of $H$ as the fundamental group of a graph of groups $(H_-,\Delta)$, where for each vertex $u \in V\Delta$ the stabiliser $H_u$ is equal to $H \cap {G_v}^g$, for some $v \in V\Gamma$ and some $g \in G$. 
    Moreover, the graph $\Delta$ is finite, because $H$ is finitely generated (see \cite[Proposition~I.4.13]{Dicks-Dunw}). 
    Note that every edge group from $(H_-,\Delta)$ is cyclic, hence each vertex group $H_u$, $u \in V\Delta$, must be finitely generated as $H$ is finitely generated (see \cite[Lemma~2.5]{Big-Wise}).

    According to \cite[Definition~0.1]{Big-Wise}, $H$ is tamely generated if for every $u \in V\Delta$ the subgroup $H_u=H \cap {G_v}^g$ is relatively quasiconvex in ${G_v}^g$, equipped with the peripheral structure ${\mathbb{P}_v}^g$. 
    But the latter is true because ${G_v}^g$ is a finitely generated free group, so any finitely generated subgroup is undistorted, and hence it is relatively quasiconvex with respect to any peripheral structure on ${G_v}^g$, by \cite[Theorem~1.5]{HruskaRHCG}. 
    Thus every finitely generated subgroup $H \leqslant G$ is tamely generated, and so it is relatively quasiconvex in $G$ by \cite[Theorem~2.6]{Big-Wise}.
\end{proof}

\begin{remark}
    In the case when the graph of groups has two vertices and one edge (so that $G$ is a free amalgamated product of two free groups over a cyclic subgroup), Proposition~\ref{prop:balanced_gps_are_prod_sep} was originally proved by Coulbois in his thesis: see \cite[Theorem~5.18]{Coulbois-thesis}. 
    We can use similar methods to recover another result of Coulbois: if $G=H*_{C} F$, where $H$ is product separable, $F$ is free and $C$ is a maximal cyclic subgroup in $F$ then $G$ is product separable \cite[Theorem~5.4]{Coulbois-thesis}.
    Indeed, in this case $G$ will be hyperbolic relative to $\mathbb{Q}=\{H\}$ and will be LERF by Gitik's theorem \cite[Theorem~4.4]{Gitik-LERF}. 
    As in the proof of Proposition~\ref{prop:balanced_gps_are_prod_sep}, the results from \cite{Big-Wise} imply that $G$ is locally quasiconvex. Therefore $G$ is product separable by Theorem~\ref{thm:RZs}.
\end{remark}

\begin{remark} Using recent work of Shepherd and Woodhouse \cite[Theorem~1.2]{Shep-Wood}, Proposition~\ref{prop:balanced_gps_are_prod_sep} can be immediately extended to balanced groups $G$ that split as fundamental groups of finite graphs of groups with virtually free vertex groups and virtually cyclic edge groups. In fact, by \cite[Proposition~3.13]{Shep-Wood}, $G$ has a torsion-free finite index subgroup $K$. Then $K$ is balanced and is isomorphic to the fundamental group of a finite graph of free groups with cyclic edge groups. So the product separability of $G$ follows by combining  Proposition~\ref{prop:balanced_gps_are_prod_sep} with \cite[Lemma~11.3.5]{Ribes-book}.
\end{remark}

\printbibliography

\end{document}